\DeclareSymbolFont{rsfs}{U}{rsfs}{m}{n}
\DeclareSymbolFontAlphabet{\mathscrsfs}{rsfs}
\newtheorem*{rep@theorem}{\rep@title}
\newcommand{\newreptheorem}[2]{%
\newenvironment{rep#1}[1]{%
 \def\rep@title{#2 \ref{##1}}%
 \begin{rep@theorem}}%
 {\end{rep@theorem}}}
\newtheorem{thm}{Theorem}[section]
\newtheorem{lem}[thm]{Lemma}
\newtheorem{ppn}[thm]{Proposition}
\newtheorem{cor}[thm]{Corollary}
\newtheorem{fac}[thm]{Fact}
\theoremstyle{definition}
\newtheorem{rmk}[thm]{Remark}
\numberwithin{equation}{section}
\def\beq#1\eeq{%
    \begin{equation}%
    #1%
    \end{equation}%
}
\def\baln#1\ealn{%
    \begin{align*}%
    #1%
    \end{align*}%
}
\def\balnn#1\ealnn{%
    \begin{align}%
    #1%
    \end{align}%
}
\def\fr{\frac}
\def\lt{\left}
\def\rt{\right}
\def\la{\langle}
\def\ra{\rangle}
\def\<{\langle}
\def\>{\rangle}
\def\eps{{\varepsilon}}
\def\norm#1{\lt\|#1\rt\|}
\def\tnorm#1{\|#1\|}
\def\bbE{{\mathbb{E}}}
\def\bbN{{\mathbb{N}}}
\def\bbP{{\mathbb{P}}}
\def\bbQ{{\mathbb{Q}}}
\def\bbR{{\mathbb{R}}}
\def\bbS{{\mathbb{S}}}
\def\indep{{\perp\!\!\!\perp}}
\DeclareMathOperator*{\EE}{\bbE}
\DeclareMathOperator*{\PP}{\bbP}
\DeclareMathOperator*{\Cov}{Cov}
\DeclareMathOperator*{\Var}{Var}
\DeclareMathOperator*{\semic}{sc}
\DeclareMathOperator*{\plim}{p-lim}
\DeclareMathOperator*{\argmin}{arg\,min}
\DeclareMathOperator*{\argmax}{arg\,max}
\def\cC{{\mathcal{C}}}
\def\cE{{\mathcal{E}}}
\def\cF{{\mathcal{F}}}
\def\cG{{\mathcal{G}}}
\def\cH{{\mathcal{H}}}
\def\cI{{\mathcal{I}}}
\def\cK{{\mathcal{K}}}
\def\cL{{\mathcal{L}}}
\def\cN{{\mathcal{N}}}
\def\cP{{\mathcal{P}}}
\def\cR{{\mathcal{R}}}
\def\cS{{\mathcal{S}}}
\def\cT{{\mathcal{T}}}
\def\sH{{\mathscr{H}}}
\def\bfzero{{\boldsymbol 0}}
\def\bb{{\boldsymbol b}}
\def\be{{\boldsymbol e}}
\def\bg{{\boldsymbol g}}
\def\bm{{\boldsymbol m}}
\def\bu{{\boldsymbol u}}
\def\bv{{\boldsymbol v}}
\def\bw{{\boldsymbol w}}
\def\bx{{\boldsymbol x}}
\def\by{{\boldsymbol y}}
\def\bz{{\boldsymbol z}}
\def\bV{{\boldsymbol V}}
\def\bA{{\boldsymbol A}}
\def\bE{{\boldsymbol E}}
\def\bG{{\boldsymbol G}}
\def\bM{{\boldsymbol M}}
\def\bW{{\boldsymbol W}}
\def\bZ{{\boldsymbol Z}}
\def\tbW{\tilde{\boldsymbol W}}
\def\tbw{\tilde{\boldsymbol w}}
\def\tW{\tilde{W}}
\def\bU{{\boldsymbol U}}
\def\bDel{{\boldsymbol \Delta}}
\def\bDelM{{\boldsymbol E}}
\def\vby{{\vec \by}}
\def\tbeta{{\widetilde \beta}}
\def\tq{{\widetilde q}}
\def\hm{\hat{m}}
\def\hH{\widehat{H}}
\def\thH{\underline{H}}
\def\uE{\underline{E}}
\def\oZ{\overline{Z}}
\def\oa{\bar{a}}
\def\ob{\bar{b}}
\def\Err{{\sf Err}}
\def\bfone{{\boldsymbol 1}}
\def\bT{{\boldsymbol T}}
\def\by{{\boldsymbol y}}
\def\bbeta{{\boldsymbol \beta}}
\def\bsig{{\boldsymbol \sigma}}
\def\brho{{\boldsymbol \rho}}
\def\bzero{{\boldsymbol 0}}
\def\btau{{\boldsymbol \tau}}
\def\bB{{\boldsymbol B}}
\def\bI{{\boldsymbol I}}
\def\ind{{\boldsymbol 1}}
\def\bLambda{{\boldsymbol \Lambda}}
\def\bq{{\boldsymbol q}}
\def\tH{{\widetilde H}}
\def\tV{{\widetilde V}}
\def\tZ{{\widetilde Z}}
\def\tnu{{\widetilde \nu}}
\def\txi{{\widetilde \xi}}
\def\tbm{{\widetilde \bm}}
\def\tbx{{\widetilde \bx}}
\def\tbz{{\widetilde \bz}}
\def\tpartial{{\widetilde \partial}}
\def\tg{{\widetilde g}}
\def\cxi{{\widecheck \xi}}
\def\cq{{\widecheck q}}
\def\cbm{{\widecheck \bm}}
\def\cbw{{\widecheck \bw}}
\def\sym{{\mathrm{sym}}}
\def\cbbP{{\widecheck \bbP}}
\def\cbbQ{{\widecheck \bbQ}}
\def\ccH{{\widecheck H}}
\def\cmu{{\widecheck \mu}}
\def\hq{{\widehat q}}
\def\hH{{\widehat H}}
\def\hV{{\widehat V}}
\def\hZ{{\widehat Z}}
\def\hmu{{\widehat \mu}}
\def\hxi{{\widehat \xi}}
\def\hbm{{\widehat \bm}}
\def\hbw{{\widehat \bw}}
\def\hby{{\widehat \by}}
\def\hbsig{{\widehat \bsig}}
\def\Proj{{\sf P}}
\def\Ts{{\sf T}}
\def\corr{{\Delta}}
\def\bcorr{{\boldsymbol \Delta}}
\def\bQ{{\boldsymbol Q}}
\def\bR{{\boldsymbol R}}
\def\bD{{\boldsymbol D}}
\def\Vol{{\mathsf{Vol}}}
\def\alg{{\mathsf{alg}}}
\def\bayes{{\mathsf{bayes}}}
\def\cond{{\mathsf{cond}}}
\def\de{{\mathsf{d}}}
\def\dist{{\mathsf{dist}}}
\def\nul{{\mathsf{null}}}
\def\pl{{\mathsf{pl}}}
\def\rd{{\mathsf{rad}}}
\def\smc{{\mathsf{sc}}}
\def\sp{{\mathsf{sp}}}
\def\spec{{\mathsf{spec}}}
\def\spn{{\mathsf{span}}}
\def\tn{{\mathsf{tan}}}
\def\unif{{\mathsf{unif}}}
\def\Ball{{\mathsf{B}}}
\def\Band{{\mathsf{Band}}}
\def\Crt{{\mathsf{Crt}}}
\def\AMP{{\mathsf{AMP}}}
\def\sAMP{{\mbox{\tiny\sf AMP}}}
\def\GD{{\mathsf{GD}}}
\def\GOE{{\mathsf{GOE}}}
\def\KL{{\mathsf{KL}}}
\def\Tr{{\mathsf{Tr}}}
\def\TAP{{\mathsf{TAP}}}
\def\sTAP{{\mbox{\tiny\sf TAP}}}
\def\sMALA{{\mbox{\tiny\sf MALA}}}
\def\TV{{\mathsf{TV}}}
\def\proj{{\mathsf{proj}}}
\def\ssh{{\mbox{\tiny\sf sh}}}
\def\sSL{{\mbox{\tiny\sf SL}}}
\def\Ts{{\sf T}}   
\def\Lip{{\rm Lip}}
\def\op{\mbox{{\tiny \rm op}}}
\def\inj{\mbox{{\tiny \rm inj}}}
\def\ed{\stackrel{\mbox{\tiny \rm d}}{=}}
\author{
    Brice Huang\thanks{Department of Electrical Engineering and Computer Science, Massachusetts Institute of Technology}
    \and
    Andrea Montanari\thanks{Department of Statistics and Department of Mathematics, Stanford University}
    \and
    Huy Tuan Pham\thanks{Department of Mathematics, Stanford University}
}
\title{Sampling from Spherical Spin Glasses in Total Variation via Algorithmic Stochastic Localization}
\date{\today}
\begin{document}

\maketitle

\begin{abstract}
    We consider the problem of algorithmically sampling from the Gibbs measure of a mixed $p$-spin spherical spin glass.
    We give a polynomial-time algorithm that samples from the Gibbs measure up to vanishing total variation error, for any model whose mixture satisfies
    \[
        \xi''(s) < \fr{1}{(1-s)^2}, \qquad \forall s\in [0,1).
    \]
    This includes the pure $p$-spin glasses above a critical temperature that is within an absolute ($p$-independent) constant of the so-called shattering phase transition.

    Our algorithm follows the algorithmic stochastic localization approach introduced in \cite{alaoui2022sampling}.
    A key step of this approach is to estimate the mean of a sequence of tilted measures.
    We produce an improved estimator for this task by identifying a suitable correction to the TAP fixed point selected by approximate message passing (AMP).
    As a consequence, we improve the algorithm's guarantee over previous work, from normalized Wasserstein to total variation error. In particular, the new algorithm and analysis opens the way to perform inference about one-dimensional projections of the measure.
\end{abstract}

\tableofcontents

\section{Introduction}

Let $\gamma_2,\gamma_3,\ldots \ge 0$ satisfy $\sum_{p\ge 2} 2^p \gamma_p^2 < \infty$.
The mixed $p$-spin glass Hamiltonian $H_N : \bbR^N \to \bbR$ is
\beq
    \label{eq:def-HN}
    H_N(\bsig) =
     \sum_{p\ge 2}
     \fr{\gamma_p}{N^{(p-1)/2}}
     \sum_{i_1,\ldots,i_p = 1}^N
     G_{i_1,\ldots,i_p}
     \sigma_{i_1} \cdots \sigma_{i_p},
     \qquad
     G_{i_1,\ldots,i_p} \stackrel{i.i.d.}{\sim} \cN(0,1).
\eeq
Define the mixture function $\xi(s) = \sum_{p\ge 2} \gamma_p^2 s^p$, so that $H_N$ is the Gaussian process with covariance
\[
    \EE H_N(\bsig^1) H_N(\bsig^2) = N\xi\big(\<\bsig^1,\bsig^2\>/N\big)\, .
\]
The Gibbs measure of this model is the probability measure over the sphere $S_N = \{\bx \in \bbR^N : \norm{\bx}_2^2 = N\}$ given by
\beq
    \label{eq:gibbs-measure}
    \mu_{H_N}(\de \bsig) = \fr{1}{Z_N} \exp(H_N(\bsig)) ~\mu_0(\de \bsig), \qquad
    Z_N = \int_{S_N} \exp(H_N(\bsig)) ~\mu_0(\de \bsig)\, .
\eeq
Here and below, $\mu_0$ denotes the uniform probability measure on $S_N$. We will denote by
$\bG= (G_{i_1,\ldots,i_p})_{p\ge2, i_{\ell \le N}}$ the vector of couplings that defines the Hamiltonian.

In this paper, we consider the problem of efficiently sampling from this Gibbs measure.
For $\dist$ a distance on $\cP(\bbR^N)$ (the set of probability measures over $\bbR^N$), we seek a computationally efficient algorithm that generates $\bsig^\alg$ whose law $\mu^\alg$ satisfies $\dist(\mu^\alg, \mu_{H_N}) = o_N(1)$, with high probability over $H_N$.

We follow the algorithmic stochastic localization approach introduced by \cite{alaoui2022sampling}, which is in turn motivated by the stochastic localization process \cite{eldan2020taming}, and closely related to the denoising diffusions
method in machine learning \cite{sohl2015deep,ho2020denoising,song2021score}
(see \cite{montanari2023sampling} for a discussion of the connection).
The basic idea is to generate (an approximation of) a sample path from the following Ito diffusion on $\bbR^N$:
\begin{align}\label{eq:sl-sde}
    \de \by_t = \bm(\by_t,t)\,\de t +\de\bB_t\, , \;\;\;\;\; \by_0=\bfzero\, ,
\end{align}
where $(\bB_t)_{t\ge 0}$ is a standard Brownian motion and $\bm(\by,t) = \EE[\bsig | t\bsig + \sqrt{t}\bg=\by]$ (conditioning over $\bG$ is implicit here),
with the conditional expectation being taken with respect to $(\bsig,\bg)\sim \mu_{H_N}\otimes\cN(\bzero,\bI_N)$.
The key remark (see Section \ref{subsec:sl}) is that $\by_t$ thus defined has the same distribution at $t\bsig+\bB_t'$ (with $\bB'_t$ a different Brownian motion) and therefore $\by_t/t$ converges to a sample from the desired measure.
Of course, constructing an actual algorithm requires to discretize time and --- crucially --- to
define an efficient algorithm that approximates the conditional mean $\bm(\,\cdot\,,t)$ well enough. 

The analysis of \cite{alaoui2022sampling} establishes that this approach samples from the Gibbs measure of the
Sherrington-Kirkpatrick model on the (more difficult) cube $\Sigma_N = \{-1,1\}^N$, up to vanishing normalized Wasserstein error.
That is, with probability $1-o_N(1)$ over the Sherrington-Kirkpatrick Hamiltonian $H_N$, there is a coupling of $\mu_{H_N}$ and $\mu^\alg$ such that for $(\bsig,\bsig^\alg)$ drawn from this coupling,
\begin{align}
    \fr1N \EE_{\bsig,\bsig^\alg} \tnorm{\bsig - \bsig^\alg}_2^2 = o_N(1).
    \label{eq:NormalizedWasserstein}
\end{align}

Our main result is an improved version of this general sampling scheme that samples from $\mu_{H_N}$, in the stronger sense of vanishing \textbf{total variation} error, for any spherical spin glass whose mixture satisfies
\beq
    \label{eq:amp-works}
    \xi''(s) < \fr{1}{(1-s)^2}, \qquad \forall s\in [0,1).
\eeq
\begin{rmk}
    For the special case of pure models $\xi(s) = \beta^2 s^p$, \eqref{eq:amp-works} holds for all $\beta < \beta_\sSL(p)$, where we defined the
    stochastic localization inverse temperature as
    \[
        \beta_{\sSL}(p) := \fr12 \sqrt{\lt(\fr{p}{p-1}\rt)\lt(\fr{p}{p-2}\rt)^{p-2}}.
    \]
    For large $p$ we have $\beta_{\sSL}(p) = e/2 + O(1/p)$.
\end{rmk}

As mentioned above, the key challenge in implementing the algorithmic stochastic localization approach is
the construction of an efficient algorithm to approximate the mean of
the measure $\mu_{H_N}(\de \bsig)$, as well as its conditional mean given Gaussian observations.
The latter corresponds to the mean of a exponential tilt
$\mu_{H_N,\by}(\de \bsig)  \propto \exp(\<\by,\bsig\>)\mu_{H_N}(\de \bsig)$.
Approximating $\bm(\by)$ was achieved in \cite{alaoui2022sampling} by a variational approach that requires minimizing the so
called Thouless-Anderson-Palmer (TAP) free energy \cite{thouless1977solution}. The same paper established
that the resulting estimate satisfies (with high probability) $\|\bm(\by)-\bm^{\sTAP}(\by)\|^2 = o(N)$.
(For the case of a measure supported over $S_N$, the function $\bm(\cdot)$
does not depend on $t$, and we will therefore omit this argument.)

Note that $\|\bm(\by)\|^2 = \Theta(N)$, and therefore
 \cite{alaoui2022sampling} establishes the weakest non-trivial upper bound on
 $\|\bm(\by)-\bm^{\sTAP}(\by)\|^2$.
 However, in order to obtain a sampling algorithm  with guarantees in total variation distance,
 it is necessary to construct an efficient  estimator $\hbm(\by)$
 satisfying $\|\bm(\by)-\hbm(\by)\|^2 = o(1)$. The construction and analysis of such an estimator is the main
 problem solved in the present paper.

 In fact we prove the following:
 \begin{enumerate}
 \item The TAP estimator is significantly more accurate than what could be hoped from the
 analysis of \cite{alaoui2022sampling,alaoui2023sampling}. Namely,
 we prove that $\|\bm(\by)-\bm^{\sTAP}(\by)\|^2 = O(1)$.
 \item We design a correction $\bcorr(\by)$ to the TAP estimator that can be computed efficiently
 and such that, letting $\hbm(\by) = \bm^{\sTAP}(\by)+\bcorr(\by)$,
 we achieve the desired accuracy $\|\bm(\by)-\hbm(\by)\|^2 = o(1)$.
 \end{enumerate}

\subsection{Background and related work}

A substantial line of work in probability theory studies Langevin dynamics for the Gibbs measure \eqref{eq:gibbs-measure}.
This is defined as the following diffusion on $S_N$
\begin{align}
\de\bsig_t = \left( \Proj^{\perp}_{\bsig_t}\nabla H_N(\bsig_t)-\frac{N-1}{2N}\bsig_t\right)\de t
+\sqrt{2}\Proj^{\perp}_{\bsig_t}\de\bB_t\,,
\end{align}
where $\bB_t$ is a standard $N$-dimensional Brownian motion, and $\Proj^{\perp}_{\bsig_t}$
is the projector orthogonal to $\bsig_t$. Langevin dynamics is a Markov process reversible
for the measure  $\mu_{H_N}$ of Eq.~\eqref{eq:gibbs-measure}. Therefore, suitable discretizations
of Langevin dynamics can be used to sample from $\mu_{H_N}$.

An asymptotically exact characterization of Langevin dynamics on short times horizons $t=O(1)$,
in the high-dimensional limit $N\to\infty$, is provided by the so-called Cugliandolo-Kurchan equations.
These were studied first in physics \cite{crisanti1993spherical,cugliandolo1993analytical} and
subsequently established rigorously in probability theory \cite{ben2006cugliandolo}.
Unfortunately, this approach does not give access to  mixing times. On top of that, the
Cugliandolo-Kurchan equations proved difficult to analyze rigorously except at sufficiently `high temperature'
(i.e. when $\xi(s) = \beta^2\xi_1(s)$, for a fixed $\xi_1$ and $\beta$ small enough) \cite{dembo2007limiting}.

Based on a postulated asymptotic form of the Cugliandolo-Kurchan equations, as well as
on thermodynamic calculations, physicists conjecture
a phase transition in the mixing time of Langevin dynamics,
when initialized uniformly at random \cite{crisanti1993spherical,cugliandolo1993analytical} .
Namely, they expect the mixing time to be polynomial in $N$
for 
\begin{align}
        \xi'(q) < \fr{q}{1-q}, \qquad \forall q\in (0,1)\, .\label{eq:FirstShatteringEq}
\end{align}
and exponentially large in the opposite case, and
more precisely when $\sup_{q\in (0,1)}(1-q)\xi'(q)/q >1$.
This is commonly referred to as the `dynamical phase transition,' and corresponds to a phase transition in the
geometry of the Gibbs measure, known as `shattering phase transition.'
In the homogeneous case $\xi(t)=\beta^2 t^p$,
the above formula implies that the
dynamical/shattering phase transition
takes place at $\beta=\beta_{\ssh}(p)$ given by
\begin{align}
\beta_{\ssh}(p) = \sqrt{\frac{(p-1)^{p-1}}{p(p-2)^{p-2}}}\, .
\end{align}
For large $p$, $\beta_{\ssh}(p) = \sqrt{e}+O(1/p)$. We
also recall that a second phase transition (`condensation'
or `static' or `replica symmetry breaking') takes place
at a lower temperature
\begin{align}
 \beta_c^2(p)
    =
    \inf_{s\in [0,1]}
    \lt(\frac{1}{s^p}\log\lt(\frac{1}{1-s}\rt)-\frac{1}{s^{p-1}}\rt)\, .
    \label{eq:PspinExplicit}
\end{align}
This corresponds to a non-analiticity of the free energy,
and to the temperature at which the overlap stops concentrating
\cite{chen2013aizenman}. For large $p$, we have
$\beta_c(p) = \sqrt{\log p} (1+o_p(1))$.

Towards the goal of proving the dynamical phase transition phenomenon,
Ben Arous and Jagannath \cite{arous2024shattering} established
that --- for the homogeneous model ---
shattering takes place in a non-empty temperature interval, implying in particular $\beta_{\ssh}(p)<\beta_c(p)$ strictly.
A order-optimal bound was proven in \cite{alaoui2023shattering}, who proved
$\beta_{\ssh}(p)\le C$ for a $p$-independent constant $C$.

A bolder version of the dynamical phase transition conjecture
postulates that not only Langevin dynamics is slow, but indeed
sampling is fundamentally hard beyond the shattering
phase transition. Rigorous evidence was provided in
\cite{alaoui2023shattering}, which proves that `stable algorithms' fail to sample from $\mu_{H_N}$ under shattering.

In the positive direction Gheissari and Jagannath \cite{gheissari2019spectral}  proved that there exists
$\underline{\beta}(p)>0$ such that Langevin dynamics
mixes rapidly for $\beta<\underline{\beta}(p)$.
These authors also note that their proof technique extends
to mixed models.

A closely related model is the Ising version
of model \eqref{eq:gibbs-measure}, whereby the uniform measure
$\mu_0$ over the sphere $S_N$ is replaced by the uniform measure over the hypercube $\{+1,-1\}^N$. A
dynamical/shattering phase transition was conjectured in  that setting as well \cite{kirkpatrick1987p}, although at a different
temperature.  In this context, shattering for a non-empty interval of temperatures was proven in  \cite{gamarnik2023shattering}, while mixing of Glauber dynamics at high temperature was proven in
\cite{adhikari2022spectral,anari2023universality}. As for the spherical case, positive and negative
results are separated by a large gap, indeed diverging with $p$.

The algorithmic stochastic localization approach was applied to Ising mixed $p$-spin spin classes in \cite{alaoui2023sampling}, which established the Wasserstein guarantee \eqref{eq:NormalizedWasserstein}.

\subsection{Notations}

Throughout this paper, $\norm{\bsig}_N = \norm{\bsig}/ \sqrt{N}=\sqrt{\bsig^{\top}\bsig/N}$ is the norm corresponding to the inner product $\<\bsig_1,\bsig_2\>_N =\<\bsig_1,\bsig_2\>/N=\bsig_1^{\top}\bsig_2/N$.
There will be no confusion with the $\ell_p$ norm, which will not appear.
Given a matrix $\bA$, we denote by $\|\bA\|_{\mathrm{F}}$ its Frobenius norm. 
For $\bm \in \bbR^N$, measurable $I\subseteq \bbR$, and  $\rho>0$, we define
\begin{align*}
    \Band(\bm,I) & := \lt\{
        \bsig \in S_N : \<\bm,\bsig\>_N \in I
    \rt\}\, ,\\
    \Ball_N(\bm,\rho) &:= \lt\{
        \bx \in \bbR^N : \|\bx-\bm\|_N\le \rho
    \rt\}\, .
\end{align*}
We will occasionally abuse notations and write, for $q\in\bbR$,  $\Band(\bm,q)$
instead of  $\Band(\bm,\{q\})$.

We will often state that certain events occur with probability $1-e^{-cN}$.
When we do, $c>0$ is an unspecified constant, which may change from line to line and may depend on all parameters other than $N$. We use $\plim$ to denote limit in probability.

We write $\bG\sim\GOE(N)$ if $\bG$ is a symmetric matrix with independent centered Gaussian entries on
or above the diagonal with $G_{ii}\sim\cN(0,2/N)$ and $G_{ij}\sim\cN(0,1/N)$ for $i<j$.

Throughout the paper, the mixture $\xi$ is fixed and various constants can depend on $\xi$
but we will track this dependence. If $\iota$ is a small constant, we write $\iota'=o_{\iota}(1)$
if $|\iota'|\le h(\iota)$ where $h$ is a function independent of $N$, such that
$\lim_{\iota\to 0}h(\iota) = 0$.

\section{Main result}

In this section we describe the sampling algorithm and state our main result.
Throughout, we assume the model $\xi$ satisfies \eqref{eq:amp-works}.

\subsection{Mean estimation of tilted measure}
\label{subsec:alg-mean}

We first describe the main subroutine of our algorithm, which estimates the mean of the following exponentially tilted version of $\mu_{H_N}$.
For $\by \in \bbR^N$, define
\beq
    \label{eq:tilted-msr}
    \mu_{H_N,\by}(\de \bsig) = \fr{1}{Z(\by)} \exp\lt\{
        H_N(\bsig) + \la \by, \bsig \ra
    \rt\} ~\mu_0(\de \bsig)\, .
\eeq
The tilt $\by$ will be generated by the outer loop of the algorithm described in Subsection~\ref{subsec:alg-sl}, which implements a discretized version of the stochastic localization process.
The outer loop also provides a time $t>0$, which this subroutine will take as input.
The algorithm consists of three steps as outlined below. We defer the
description of the correction $\bcorr(\bm)$ to Section
\ref{sec:CorrectionDef}.
\begin{enumerate}[label=(\arabic*)]
    \item Let $\xi_t(s) = \xi(s) + ts$, and define the sequence $\{q_k : k\ge 0\}$ by $q_0=0$ and
    \beq
        \label{eq:def-q-seq}
        q_{k+1} = \fr{\xi'_t(q_k)}{1+\xi'_t(q_k)}.
    \eeq
    Starting from initialization $\bm^{-1} = \bw^0 = \bzero$, run the approximate message passing (AMP) iteration
    \balnn
        \label{eq:main-amp}
        \bm^k &= (1-q_k) \bw^k, &
        \bw^{k+1} &= \nabla H_N(\bm^k) + \by - (1-q_k) \xi''(q_k) \bm^{k-1},
    \ealnn
    for $K_\sAMP$ iterations.
    Let $\bm^\sAMP = \bm^{K_{\AMP}}$.
    \item Define
    \beq
        \label{eq:theta}
        \theta(s) = \xi(1) - \xi(s) - (1-s)\xi'(s)
    \eeq
    and the TAP free energy
    \beq
        \label{eq:FTAP}
        \cF_\sTAP(\bm;\by) = H_N(\bm) + \la \by, \bm \ra + \fr{N}{2} \theta(\norm{\bm}_N^2) + \fr{N}{2} \log (1-\norm{\bm}_N^2).
    \eeq
    Starting from $\bm^\sAMP$, run gradient ascent on $\cF_\sTAP(\cdot;\by)$ for $K_{\GD}(N) := \lfloor K^*_\GD \log N\rfloor$ iterations, and let the resulting point be $\bm^\GD$.
    \item  Output $\bm^\alg : = \bm^\GD +\bcorr(\bm^{\GD})$, with
    $\bcorr(\bm)$ defined as in Section \ref{sec:CorrectionDef}.
\end{enumerate}




Pseudocode for the computation of $\bm^{\alg}$ is provided in Algorithm \ref{alg:mean}.

\begin{algorithm}
\DontPrintSemicolon 
\KwIn{$H_N$, $\by \in \bbR^N$, $t > 0$. Parameters: $K_\sAMP$, $K_\GD(N)$, $\eta > 0$}
$\bm^{-1} = \bw^0 = \bzero$,\\
For $k=0,\ldots,K_\sAMP$, run iteration \eqref{eq:main-amp}\\
 Let $\bu^0 = \bm^\sAMP = \bm^{K_\sAMP}$\\
\For{$k=0,\ldots,K_\GD(N)-1$} {
$\bu^{k+1} = \bu^k - \eta \nabla \cF_\sTAP (\bu^k;\by)$
}
Let $\bm^\GD = \bu^{K_\GD(N)}$ \\
\Return{$\bm^\alg(H_N,\by,t) = \bm^\GD + \bcorr(\bm^{\GD})$}\;
\caption{{\sc Approximate mean computation}}\label{alg:mean}
\end{algorithm}

\subsection{Stochastic localization sampling}
\label{subsec:alg-sl}

We are now in position to describe the sampling algorithm, which uses Algorithm \ref{alg:mean}
as a subroutine.
The main idea is to truncate the diffusion process \eqref{eq:sl-sde}
to the interval $[0,T]$, and to replace it by its Euler discretization
(see Step \ref{eq:Euler} in Algorithm  \ref{alg:main} below).

We will prove that, for $T$ a sufficiently large constant,
the tilted measure of Eq.~\eqref{eq:tilted-msr}, with $\by=\by_T$ is well approximated
by a strongly log-concave measure.
As a consequence, we can sample from it in total variation using standard approaches such as the Metropolis-adjusted Langevin algorithm, or MALA (see \cite{chewi2021optimal} and references therein).
Formally, define
\balnn
    \label{eq:stereographic-proj-inv}
    \bsig_\by(\brho) &= \fr{\hby + \bU \brho}{\sqrt{1 + \norm{\brho}_N^2}}, &
    \hby &= \fr{\by}{\norm{\by}_N},
\ealnn
where $\bU \in \bbR^{N\times (N-1)}$ is an orthonormal basis of the orthogonal complement of $\by$, and
\beq
    \label{eq:def-Hproj}
    H_{N,\by}^{\proj}(\brho)
    = H_{N,\by}(\bsig_\by(\brho))
    - \fr{N}{2} \log(1 + \norm{\brho}_N^2).
\eeq
Note that $\bsig_\by$ is the inverse of the stereographic projection $\bT_\by$ from $S_N \cap \{\bsig: \la \bsig,\by \ra > 0\}$ to the affine plane $\{\hby + \bU \brho : \brho \in \bbR^{N-1}\}$.
We will see (Lemma~\ref{lem:push-forward-cap-law}) that the push-forward of $\mu_{H_N,\by}(\cdot | \la \bsig,\by \ra > 0)$ under $\bT_\by$ is precisely
\beq
    \label{eq:ProjMeasureNoApprox}
    \nu_{H_N,\by}^{\proj}(\de \brho)
    = \fr{1}{\hZ(\by)}
    \exp H_{N,\by}^{\proj}(\brho)
    ~\de \brho.
\eeq
Let $\eps_0 = 0.1$ and $\varphi : [0,+\infty) \to [0,+\infty)$ be a twice continuously differentiable function satisfying $\varphi(x) = 0$ for $x\in [0,\eps_0]$ and
\balnn
    \label{eq:varphi-desiderata}
    \fr{1}{(1+x)^{3/2}} + \varphi'(x) &\ge \eps_0, &
    \fr{1-2x}{(1+x)^{5/2}} + \varphi'(x) + 2x\varphi''(x) &\ge \eps_0
\ealnn
for all $x\ge 0$.
(Existence of such a function is shown in Fact~\ref{fac:varphi-exists}.)
Define the following measure on $\bbR^{N-1}$:
\balnn
    \label{eq:ProjMeasure}
    \tnu_{H_N,\by}^{\proj}(\de \brho)
    &= \fr{1}{\tZ(\by)}
    \exp \tH_{N,\by}^{\proj}(\brho)
    ~\de \brho, &
    \tH_{N,\by}^{\proj}(\brho)
    = H_{N,\by}^{\proj}(\brho) - \fr{TN}{2} \varphi(\norm{\brho}_N^2).
\ealnn
We will show that for sufficiently large $T$, $\tnu_{H_N,\by}^{\proj}$ is strongly log-concave (Proposition~\ref{ppn:tHproj-concave}) and approximates $\nu_{H_N,\by}^{\proj}$ in total variation (Corollary~\ref{cor:tnu-nu-approx}).
Thus, we may sample from it using MALA, and produce samples from $\mu_{H_N,\by}$ by pushing forward through $\bsig_\by$.

\begin{algorithm}
\DontPrintSemicolon %
\KwIn{$H_N$.
Parameters: $K_\sAMP$, $K_\GD(N)$, $\eta$, $T>0$, where $T$ is a multiple of $N^{-4}$}
Set $\delta=N^{-4}$,
$L = T / \delta$\\
Set $\by^0 = \bzero$\\
\For{$\ell = 0, \ldots, L-1$} {
Let $\bm^\ell= \bm^\alg(H_N,\by^{\ell},\ell\delta)$ be the output of Algorithm~\ref{alg:mean} on input $(H_N,\by^\ell,\ell\delta,K_\sAMP(N),K_\GD,\eta)$\\
Draw $\bw^\ell \sim \cN(0,\bI_N)$ independent of everything else\\
 Set $\by^{\ell+1} = \by^\ell + \delta \bm^\ell + \sqrt{\delta} \bw^\ell$\label{eq:Euler}
}
Let $\tnu_{H_N,\by^L}^{\proj}$ be defined according to Eq.~\eqref{eq:ProjMeasure}\\
Use MALA to sample from $\brho^{\sMALA}\sim \nu^{\sMALA}$, to accuracy $\TV(\nu^{\sMALA},\tnu_{H_N,\by^L}^{\proj})\le 1/N$\\
\Return{$\bsig_{\by^L}(\brho^{\sMALA})$}\;
\caption{{\sc Sampling}}\label{alg:main}
\end{algorithm}
%

\begin{thm}
    \label{thm:main}
    Suppose $\xi$ satisfies \eqref{eq:amp-works}.
    There exist constants $K_\sAMP,K_\GD^*,\eta,T$ depending on $\eps$ and $\xi$ such that
    running Algorithm \ref{alg:main} with parameters
     $K_\sAMP$, $K_{\GD}(N) = K_\GD^*\log N$, $\eta $, $T$, the following holds.
    With probability $1-o_N(1)$ over $H_N$, $\mu^\alg = \cL(\bsig^\alg)$ satisfies
    \[
        \TV(\mu^\alg,\mu_{H_N}) \le o_N(1).
    \]
    Further the complexity of the algorithm is upper bounded by $CN^4\, (N + \chi_{\nabla H}) \log N+
    \chi_{\mbox{\rm\tiny log-conc}}$, where
    $\chi_{\nabla H}$ is the complexity of evaluating $\nabla H_N(\bm)$ at a point $\bm$ with $\|\bm\|_N \le 1$,
    and $\chi_{\mbox{\rm\tiny log-conc}}$ is the complexity of sampling from a $1$-strongly log-concave
    measure in $N$ dimension using {\rm MALA} to accuracy $1/N$ in total variation.
\end{thm}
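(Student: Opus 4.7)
The argument decomposes along the stages of Algorithm~\ref{alg:main}. The starting point is the stochastic localization identity: if $(\by_t)_{t\ge 0}$ solves the exact SDE \eqref{eq:sl-sde}, then $\by_t \ed t\bsig + \sqrt{t}\,\bg$ with $\bsig\sim\mu_{H_N}$ and $\bg\sim \cN(\bzero,\bI_N)$ independent, so the conditional law of $\bsig$ given $\by_T$ is exactly $\mu_{H_N,\by_T}$. Thus a hypothetical exact sample from $\mu_{H_N,\by^L}$, where $\by^L$ comes from the \emph{ideal} process, would be an exact sample from $\mu_{H_N}$. The overall TV error therefore splits, via the triangle inequality and data processing for the push-forward $\bsig_{\by^L}$, into (i) the distance between $\cL(\by^L)$ produced by Algorithm~\ref{alg:main} and $\cL(\by_T)$ under the exact SDE, and (ii) for each fixed $\by^L$, the distance between the MALA output pushed forward by $\bsig_{\by^L}$ and $\mu_{H_N,\by^L}$.

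For (i), I would compare the two processes using Girsanov's theorem applied to the piecewise-constant interpolation of the Euler scheme. This yields, schematically,
\[
\KL\!\big(\cL(\by^L)\,\|\,\cL(\by_T)\big) \;\le\; \fr12 \sum_{\ell=0}^{L-1} \delta\, \EE \tnorm{\bm^\alg(\by^\ell,\ell\delta) - \bm(\by^\ell,\ell\delta)}_2^2 \;+\; \cE_{\mathrm{disc}},
\]
where $\cE_{\mathrm{disc}}$ accounts for the oscillation of the exact drift over one step of size $\delta = N^{-4}$; since $\bm(\,\cdot\,,t)$ is bounded and varies only slightly between consecutive grid points, this contribution is negligible. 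The decisive input is the mean-estimation guarantee for Algorithm~\ref{alg:mean}, to be proved elsewhere in the paper: with probability $1-o_N(1)$ over $H_N$, uniformly over $t\in[0,T]$ and over the relevant distribution of $\by$, one has $\tnorm{\bm^\alg(\by,t)-\bm(\by,t)}_2^2 = o_N(1)$. Combined with $L\delta = T = O(1)$, this gives $\KL = o_N(1)$ and hence $\TV(\cL(\by^L),\cL(\by_T))=o_N(1)$ by Pinsker.

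For (ii), given $\by^L$ (with $\norm{\by^L}_N = \Theta(T)$ with high probability), I pass to the stereographic chart: Lemma~\ref{lem:push-forward-cap-law} identifies the pushforward of $\mu_{H_N,\by^L}(\,\cdot\,|\,\la\bsig,\by^L\ra>0)$ under $\bT_{\by^L}$ as $\nu^{\proj}_{H_N,\by^L}$, and the conditioning on the hemisphere costs only exponentially small TV since $\mu_{H_N,\by^L}$ concentrates on $\{\la\bsig,\by^L\ra>0\}$ for $T$ a large constant. Proposition~\ref{ppn:tHproj-concave} then provides strong log-concavity of the smoothed measure $\tnu^{\proj}_{H_N,\by^L}$ (the cutoff $\varphi$ being designed precisely to enforce \eqref{eq:varphi-desiderata}), while Corollary~\ref{cor:tnu-nu-approx} controls $\TV(\nu^{\proj}_{H_N,\by^L},\tnu^{\proj}_{H_N,\by^L})=o_N(1)$. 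Standard MALA bounds produce a sample within $1/N$ in TV of $\tnu^{\proj}_{H_N,\by^L}$, and $\bsig_{\by^L}$ is deterministic so does not inflate TV. Chaining these gives $\TV(\mu^\alg,\mu_{H_N}) = o_N(1)$; the complexity claim follows from $L = TN^4$ Euler steps, each invoking $O(K_\sAMP + K_\GD^*\log N)$ gradient evaluations of $H_N$, plus the final MALA cost on a $1$-strongly log-concave measure in $N-1$ dimensions.

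The main obstacle is the uniform-in-$\ell$ mean approximation $\tnorm{\bm^\alg(\by^\ell,\ell\delta)-\bm(\by^\ell,\ell\delta)}_2^2 = o_N(1)$. Two distinct improvements over \cite{alaoui2022sampling,alaoui2023sampling} are needed. First, one must upgrade the TAP analysis to show that $\bm^\GD$ (the gradient-ascent-stabilized AMP fixed point) already achieves the sharp bound $\tnorm{\bm^\GD(\by,t)-\bm(\by,t)}_2^2 = O(1)$, rather than the $o(N)$ bound available from earlier work; this relies on $\xi''(s)<1/(1-s)^2$, which guarantees AMP convergence and TAP concavity along the localization trajectory. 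Second, the correction $\bcorr(\bm^\GD)$ from Section~\ref{sec:CorrectionDef} must absorb the remaining $O(1)$ gap down to $o_N(1)$. A further delicate point is adaptivity: the tilts $\by^\ell$ depend on $H_N$ through the whole algorithmic history. I would handle this by first proving the mean-estimation bound for $\by$ drawn from the planted law $t\bsig + \sqrt{t}\bg$ (uniformly in $t\in[0,T]$), and then transferring to the adaptive setting by folding the Radon--Nikodym derivative between the planted and algorithmic tilts into the same KL accounting --- a bootstrap in the spirit of \cite{alaoui2022sampling,alaoui2023sampling}.
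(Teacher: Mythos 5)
Your proposal matches the paper's proof essentially step for step: the same decomposition into (i) a Girsanov/Pinsker comparison of the Euler-discretized drift against the exact localization drift, with the decisive input being the $o(1)$ unnormalized mean-estimation error from Theorem~\ref{thm:mean}, and (ii) the stereographic projection, the $\varphi$-smoothed strongly log-concave measure, and MALA, all transferred between the planted and null models by contiguity. The one place your route differs is the direction of the KL divergence in the Girsanov step: you bound $\KL(\cL(\by^L)\,\|\,\cL(\by_T))$, which puts the expectation of the drift error along the \emph{algorithmic} path and forces the adaptivity bootstrap you describe at the end; the paper instead bounds $\KL(\cL(\by_T)\,\|\,\cL(\by^L))$ (Lemma~\ref{lem:KL-girsanov}), so the expectation is taken along the true planted path where Theorem~\ref{thm:mean} applies directly and no bootstrap is needed --- since TV is symmetric, this simpler choice suffices.
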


\begin{rmk}
The main result of  \cite{chewi2021optimal}  implies that,  for a `warm start' initialization
$\chi_{\mbox{\rm\tiny log-conc}}$ is of order $N^{3/2} \log N$. In the present case we do not have a good warm start,
and obtain $\chi_{\mbox{\rm\tiny log-conc}}\le C\cdot N^{5/2}$. We believe this bound is suboptimal, but
made no attempt at improving it.
\end{rmk}

\subsection{The correction $\bcorr(\bm)$}
\label{sec:CorrectionDef}

We now describe the computation of the correction
$\bcorr(\bm)$.
Let $\Ts_{\bm}$ be the $(N-1)$-dimensional subspace orthogonal to $\bm$ and
define $H_N(\,\cdot\, ;\bm):\Ts_{\bm}\to \bbR$ via
$H_N(\bx;\bm) := H_N(\bm+ \bx)$.
We then define the tensors
\begin{align}
\bA^{(2)}(\bm) := \nabla_{\bx}^2  H_N(\bzero;\bm)\, , \;\;\;\;
\bA^{(3)}(\bm) := \nabla_{\bx}^3  H_N(\bzero;\bm)\, .\label{eq:A2A3-Def}
\end{align}
These should be interpreted as tensors $\bA^{(i)}(\bm)\in \Ts^{\otimes i}_{\bm}$.
Let  $\gamma_{*,N}(\bm)$ be the unique solution of
\begin{align}
\begin{cases}\Tr\big((\gamma_{*,N}\bI_{N-1}-\bA^{(2)}(\bm))^{-1}\big)= N \cdot\Big(1-\|\bm\|^2/N\Big)&,\\
\gamma_{*,N}>\lambda_{\max}(\bA^{(2)}(\bm))&\, .
\end{cases}\label{eq:GammaStarCorr}
\end{align}
Here $\bI_{N-1}$ denotes the identity matrix acting on $\Ts_\bm$, and the inverse is over quadratic forms on $\Ts_\bm$.

Then we define
\begin{align}
\corr_i(\bm)
&= \frac{1}{2}\<\bA^{(3)}(\bm),\bQ(\bm)\otimes \bQ(\bm)_{i,\cdot}\>=\frac{1}{2} \sum_{a,b,c = 1}^{N} A^{(3)}_{abc}(\bm)Q_{ia}(\bm)Q_{bc}(\bm)\, \label{eq:def-corr},\\
\bQ(\bm)&: = \big(\gamma_{*,N}(\bm)\bI_{N-1}-\bA^{(2)}(\bm)\big)^{-1}\, .
\end{align}

It is useful to make two additional remarks about the evaluation of $\bcorr(\bm)$:
\begin{enumerate}
\item For any fixed $\bm$, $\bA^{(2)}(\bm) \ed \sqrt{\xi''(\tnorm{\bm}_N^2) \cdot \fr{N-1}{N}}\, \bW$, for $\bW\sim\GOE(N-1)$.
It turns out that, although $\bm^\sTAP$ is itself random, this nonetheless gives the correct
asymptotics for $\gamma_{*,N}(\bm^\sTAP)$.
Let $q_\ast = q_\ast(t)$ be the solution to $\fr{q_\ast}{1-q_\ast} = \xi'_t(q_\ast)$, existence and uniqueness of which is shown in Fact~\ref{fac:qt-unique}.
We will show (see Proposition~\ref{ppn:local-concavity-and-conditioning}) that typically $\tnorm{\bm^\sTAP}_N^2 = q_\ast + o_N(1)$, and (see Lemma~\ref{lem:mtap-looks-like-goe}) $\gamma_{\ast,N}(\bm^\sTAP) = \gamma_\ast + o_N(1)$, for $\gamma_\ast = (1-q_\ast)^{-1}+(1-q_\ast)\xi''(q_\ast)$.
For the computation of $\bcorr$, we can replace $\gamma_{*,N}$ by $\gamma_*$ with negligible error.
%
\item The tensors $\bA^{(2)}(\bm)$ and $\bA^{(3)}(\bm)$ can be written as explicit linear functions of the
couplings $\bg$, and hence can be computed efficiently without need to take any numerical derivative.
\end{enumerate}


\subsection{Fundamental limits of algorithmic SL, replica symmetry breaking, and
shattering}

It is useful to compare condition \eqref{eq:amp-works} with the
condition for (absence of) shattering, and replica symmetry breaking:
\begin{itemize}
\item As mentioned above (cf. Eq.~\eqref{eq:FirstShatteringEq}), it is conjectured 
\cite{crisanti1993spherical,crisanti1995thouless,bouchaud1998out} that shattering is absent if and only if 
\begin{align}
\xi'(q) < \frac{q}{1-q}\, ,\;\;\; \forall q\in (0,1)\, .\label{eq:Shattering1}
\end{align}
This is implied by the condition under which our algorithm succeeds, 
namely Eq.~\eqref{eq:amp-works}, by integrating once.
\item  The tight condition for replica symmetry was identified in \cite[Proposition 2.3]{talagrand2006spherical}.
\beq
    \label{eq:RS-tal06}
    \xi(q) + q + \log(1-q) \le 0, \qquad \forall q\in [0,1)
\eeq
Note that this holds under \eqref{eq:Shattering1} by integrating once,
 and hence under \eqref{eq:amp-works}.
\end{itemize}
In this section, we prove that the condition  \eqref{eq:amp-works}
is necessary not only for Algorithm \ref{alg:main} to succeed,
but indeed for a broader class of stochastic localization
schemes that we next introduce. This points at a fundamental gap
between such schemes and the possible computational limit for sampling, a fact that was suggested in \cite{ghio2023sampling} and, in a related context,
in \cite{montanari2007solving}.

By the key remark below \eqref{eq:sl-sde}, the process $\by_t$ generated by \eqref{eq:sl-sde} consists of observations of some $\bsig \sim \mu_{H_N}$ through a progressively less noisy Gaussian channel.
A natural generalization of this process outputs observations of $\bsig, \bsig^{\otimes 2}, \bsig^{\otimes 3}, \ldots$ through Gaussian channels of varying signal strengths, and can similarly be converted to a sampling algorithm.

Consider any $J\in \bbN$ and continuously differentiable, coordinate-wise increasing $\tau : [0,+\infty) \to [0,+\infty)^J$, normalized to $\tnorm{\tau(t)}_1 = t$ for all $t \in [0,+\infty)$, and such that $\lim_{t\to\infty} \tau_j(t) = \infty$ for at least one odd $j \le J$.
For each $j \le J$, let $(\bB^j_t)_{t\ge 0}$ be a standard Brownian motion in $(\bbR^N)^{\otimes j}$.
Let $(\vby_t)_{t\ge 0} = (\by^1_t,\ldots,\by^J_t) \in \bbR^N \times \cdots \times (\bbR^N)^{\otimes J}$ be given by the Ito diffusion
\beq
    \label{eq:gen-sl-sde}
    \de \by^j_t = \tau'_j(t) \bm_j(\vby_t,t)~\de t + \tau'_j(t)^{1/2} ~\de \bB^j_t, \qquad \vby_0 = \bzero,
\eeq
where, with expectation over $\bsig \sim \mu_{H_N}$ and $\bG^j \sim \cN(0,\bI_N^{\otimes j})$,
\begin{align}
    \bm_j(\vby_t,t) = \bbE[\bsig^{\otimes j} | \tau_i(t) \bsig^{\otimes i} + \tau_i(t)^{1/2} \bG^i = \by^i_t, \forall 1\le i \le J].\label{eq:Mj_def}
\end{align}
The process \eqref{eq:sl-sde} corresponds to the case $J = 1$. As
in that case, a sampling algorithm can be constructed from Eq.~\eqref{eq:gen-sl-sde}
by discretizing time and approximating the calculation of$\bm_j(\vby_t,t)$
(see Remark \ref{rmk:Generalized-SL} below).

For $\bA \in (\bbR^N)^{\otimes j}$ and $1\le \ell\le j$, let $\bA^{(\ell)}$ be the tensor obtained by rotating coordinates by $i \pmod j$, that is
\[
    \bA^{(\ell)}_{i_1,\ldots,i_j} = \bA_{i_{\ell+1},\ldots,i_j,i_1,\ldots,i_\ell}.
\]
Then, for $\bB \in (\bbR^N)^{\otimes j-1}$, let $(\bA, \bB)_\sym \in \bbR^N$ be the vector satisfying
\[
    \la \bv, (\bA, \bB)_\sym \ra
    = \sum_{\ell=1}^j \la \bB \otimes \bv, \bA^{(\ell)} \ra
\]
for all $\bv \in \bbR^N$.
Let
\[
    \cxi_t(s) = \xi(s) + \sum_{j=1}^J \tau_j(t) s^j
\]
and define sequence $\{\cq_k : k\ge 0\}$ by $q_0 = 0$ and
\beq
    \label{eq:def-cq}
    \cq_{k+1} = \fr{\cxi'_t(q_k)}{1 + \cxi'_t(q_k)}
\eeq
Finally define an AMP iteration analogous to \eqref{eq:main-amp} by
\balnn
    \label{eq:gen-amp}
    \cbm^k &= (1 - \cq_j) \cbw^k, &
    \cbw^{k+1} &= \nabla H_N(\cbm^k) + \sum_{j=1}^J \fr{1}{N^{j-1}} ((\cbm^k)^{\otimes j-1}, \by^j_t)_\sym - (1-\cq_k)\cxi''(\cq_k) \cbm^{k-1}.
\ealnn
The next theorem is proved in Section~\ref{sec:sl-hardness}, under the following condition which is a strict form of \eqref{eq:RS-tal06}.
\beq
    \label{eq:replica-symmetry}
    \xi''(0) < 1, \qquad
    \xi(q) + q + \log(1-q) < 0, \quad \forall q\in (0,1).
\eeq
%
\begin{thm}
    \label{thm:sl-hardness}
    Suppose that \eqref{eq:replica-symmetry} holds and
    that there exists $q\in [0,1)$ such that $\xi''(q) > \fr{1}{(1-q)^2}$.
    There exists a positive measure set $\cI \subseteq [0,+\infty)$ such that for all $t\in \cI$ the following holds.
    There exists $1\le j\le J$ such that $\tau'_j(t) > 0$ and, for $\vby_t$ generated from \eqref{eq:gen-sl-sde},
    \[
        \lim_{k\to\infty} \liminf_{N\to\infty}
        \bbE \fr{1}{N^j}
        \norm{(\cbm^k)^{\otimes j} - \bm_j(\vby_t,t)}_2^2
        > 0.
    \]
\end{thm}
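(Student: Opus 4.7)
The plan is to reduce the theorem to a discrepancy between two scalar fixed-point problems: one tracking the state evolution of the generalized AMP \eqref{eq:gen-amp} and the other describing the Bayes-optimal overlap under replica symmetry. The hypothesis $\xi''(q_0)(1-q_0)^2 > 1$ for some $q_0$ should engineer tilt strengths at which the AMP iteration is trapped at a fixed point strictly smaller than the Bayes-optimal one, and this gap propagates to a macroscopic error in the tensorized reconstruction.

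First I would establish state evolution for \eqref{eq:gen-amp}. A conditional-Gaussianity argument, extended to handle the symmetrized tensor contractions $((\cbm^k)^{\otimes j-1}, \by^j_t)_\sym$, gives that the empirical overlaps $\tnorm{\cbm^k}^2/N$ and $\la \cbm^k, \bsig^\star\ra/N$ (with $\bsig^\star \sim \mu_{H_N,\vby_t}$ the planted spin) both converge to $\cq_{k+1}$, and the recursion $\cq_{k+1} = F_t(\cq_k)$ with $F_t(q) := \cxi_t'(q)/(1+\cxi_t'(q))$ reproduces \eqref{eq:def-cq}. The Nishimori identity associated with the planted-signal coupling is what forces the norm-squared and signal overlap to agree. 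Taking $k\to\infty$ from $\cq_0 = 0$, the iteration converges to $\cq_*^\sAMP(t)$, the smallest fixed point of $F_t$ in $[0,1)$.

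Next I would analyze the Bayes side. Under the strict replica-symmetry condition \eqref{eq:replica-symmetry} (which persists for the tilted mixture $\cxi_t$ because the tilt only strengthens concavity of the Franz--Parisi potential in the RS regime), the overlap of two independent samples from $\mu_{H_N,\vby_t}$ concentrates at a value $q_*^\bayes(t)$ characterized by the largest fixed point of $F_t$. Via Nishimori once more, this gives $\tnorm{\bm_j(\vby_t,t)}^2/N^j \to q_*^\bayes(t)^j$. Since each $\cbm^k$ is a measurable function of $\vby_t$, another Nishimori application yields $\la (\cbm^k)^{\otimes j}, \bm_j(\vby_t,t) \ra/N^j \to \cq_{k+1}^j$, so expanding the square gives
\[
    \lim_{k\to\infty}\liminf_{N\to\infty} \tfrac{1}{N^j}\, \bbE \tnorm{(\cbm^k)^{\otimes j} - \bm_j(\vby_t,t)}_2^2 \ge q_*^\bayes(t)^j - \cq_*^\sAMP(t)^j.
\]
Thus any strict gap $\cq_*^\sAMP(t) < q_*^\bayes(t)$ suffices to prove the theorem at $t$.

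Finally, I must exhibit a positive-measure set of times where the gap holds. At any fixed point $q$ of $F_t$ one has $1+\cxi_t'(q) = 1/(1-q)$ and hence $F_t'(q) = \cxi_t''(q)(1-q)^2 \ge \xi''(q)(1-q)^2$. Using monotonicity of $\tau$ together with the assumption that at least one odd $\tau_j$ diverges, I would choose $t_0$ so that $q_0$ is a fixed point of $F_{t_0}$, which is possible because $\cxi_{t}'(q_0)$ ranges from $\xi'(q_0)$ to $+\infty$ as $t$ grows through values where $\tau'_j(t) > 0$. At this $t_0$, the hypothesis gives $F_{t_0}'(q_0) > 1$, so $q_0$ is an unstable fixed point; therefore the smallest fixed point lies strictly below $q_0$ while the largest stable fixed point lies strictly above, yielding $\cq_*^\sAMP(t_0) < q_0 < q_*^\bayes(t_0)$. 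The main technical obstacle is passing from the single time $t_0$ to a positive-measure set: I would argue that $F_t$ and its derivative depend continuously on $t$, so the zero-crossing pattern of $F_t(q) - q$ is locally stable; in particular the unstable middle fixed point persists in an open neighborhood of $t_0$, and hence so do the strict inequalities $\cq_*^\sAMP(t) < q_*^\bayes(t)$. One must also verify $\tau'_j(t) > 0$ for some $j$ somewhere in this neighborhood, which is immediate since otherwise $\tau$ would be locally constant, contradicting the normalization $\tnorm{\tau(t)}_1 = t$.
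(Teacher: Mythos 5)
Your overall architecture matches the paper's: pass to the planted model, show via state evolution that the AMP overlap converges to the smallest solution $q_\sAMP(t)$ of $\cxi'_t(q)=\fr{q}{1-q}$, identify the Bayes error through the replica-symmetric potential, and conclude from a gap $q_\sAMP(t)<q_\bayes(t)$ on a positive-measure set of times. The genuine gap is in how you produce that gap. You characterize the Bayes overlap as ``the largest (stable) fixed point of $F_t$,'' but the correct characterization --- and the one the paper actually proves, via the Parisi formula and a differentiability/envelope argument in the coefficients of $\cxi_t$ --- is that $q_\bayes(t)$ is a \emph{global maximizer} of $q\mapsto \cxi_t(q)+q+\log(1-q)$. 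Every such maximizer is a fixed point of $F_t$, but it need not be the largest one, and it need not lie above your unstable point $q_0$. Concretely, if between the smallest fixed point $q_1=q_\sAMP(t_0)$ and the next fixed point the potential is \emph{decreasing} (equivalently $F_{t_0}<\mathrm{id}$ there), the global maximizer can be $q_1$ itself, in which case $q_\sAMP(t_0)=q_\bayes(t_0)$ and your lower bound vanishes even though an unstable fixed point exists. The paper's proof confronts exactly this: it splits into two cases according to the sign of $\cxi'_t(q)-\fr{q}{1-q}$ between consecutive zeros, and in the bad case it must advance to a strictly later time $t''$ before the integral comparison $\int_{q_1}^{q_2}\big(\cxi'_t(q)-\fr{q}{1-q}\big)\,\de q>0$ forces $q_\sAMP\neq q_\bayes$. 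Your argument has no mechanism for this case, and the same defect undermines the ``persistence under perturbation'' step, since what must persist is the inequality between the smallest fixed point and the global maximizer, not merely the existence of an unstable fixed point.

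Two smaller points. First, choosing $t_0$ ``so that $q_0$ is a fixed point of $F_{t_0}$'' requires $\xi'(q_0)\le \fr{q_0}{1-q_0}$, since $\cxi'_t(q_0)$ only increases with $t$; condition \eqref{eq:replica-symmetry} controls the integrated quantity $\xi(q)+q+\log(1-q)$ but not this pointwise derivative inequality, so the construction can fail (the paper instead runs a contradiction argument against uniqueness of the zero and never needs to place a fixed point at $q_0$). Second, the Bayes-side inputs you take for granted --- concentration of the overlap at the RS maximizer and the identity $\tnorm{\bm_j(\vby_t,t)}_2^2/N^j\to q_\bayes^j$ --- are where most of the paper's work lies (Proposition~\ref{ppn:sl-hardness-bayes} and Lemma~\ref{lem:P-differentiable}); these require uniqueness of the maximizer (hence excising the exceptional set $T_1$) and a nonvanishing $q^j$-coefficient of $\cxi_t$ for the relevant $j$ (hence excising $T_0$), neither of which appears in your proposal, and both of which are needed for the ``positive measure set'' conclusion to come out correctly.
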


\begin{rmk}
In this theorem we assume Eq.~\eqref{eq:replica-symmetry} to hold, but note that this
an artifact of our proof technique. Indeed efficient sampling is believed to be impossible
beyond the threshold \eqref{eq:replica-symmetry}. Indeed \cite{alaoui2023shattering}
implies that `stable' algorithms fail under replica symmetry breaking.
\end{rmk}

\begin{rmk}\label{rmk:Generalized-SL}
    As alluded to above, we can define a natural analog of Algorithm~\ref{alg:mean} for this generalized setting, which computes an estimator $\cbm^\alg$ for $\bm_1(\vby_t,t)$.
    For some $K_\sAMP \in \bbN$, the point $\cbm^{K_\sAMP}$ is the result of the first phase of this algorithm.
    The output $\cbm^\alg$ of this algorithm satisfies $\tnorm{\cbm^{K_\sAMP} - \cbm^\alg}_N \to 0$ as $K_\sAMP \to \infty$; see Theorem~\ref{thm:mean} and Proposition~\ref{ppn:local-concavity-and-conditioning} below, which show this for Algorithm~\ref{alg:mean} when \eqref{eq:amp-works} holds.

    The analog of Algorithm~\ref{alg:main} simulates the SDE \eqref{eq:gen-sl-sde} via an Euler discretization, estimating each $\bm_j(\vby_t,t)$ with $(\cbm^\alg)^{\otimes j}$.
    Theorem~\ref{thm:sl-hardness} shows that for a interval of $t$ of positive measure, this algorithm fails for a tensor order $j$ relevant to the Euler discretization.
\end{rmk}

\section{Preliminaries}

In this section we provide further background.
The contents of Subsections~\ref{subsec:sl} and \ref{subsec:planted} are known and we often refer to \cite[Sections 3 and 4.1]{alaoui2022sampling} for proofs.
Subsection~\ref{subsec:conditioning-lemma} introduces a lemma about conditioning a Gaussian process on a random vector: this is a fairly standard but crucial technical tool.

\subsection{Stochastic localization}
\label{subsec:sl}

Fix a realization of $H_N$. The stochastic localization process is defined
by the SDE \eqref{eq:sl-sde}, which has unique strong solutions provided $\by \mapsto \bm(\by,t)$ is 
Lipschitz continuous.
Note that, for $\mu_{H_N,\by_t}$ as in \eqref{eq:tilted-msr},
$\bm$ is the mean
\[
    \bm(\by,t) = \int \bsig ~\mu_{H_N,\by}(\de \bsig).
\]
Therefore Lipschitz continuity is implied
by $\sup_{\by}\|\Cov(\mu_{H_N,\by})\|_{\op}<\infty$ which always holds since
$\mu_{H_n,\by}$ is supported on a compact set.

As already mentioned in the introduction, we have the following facts
(see for instance \cite{alaoui2022information}).
\begin{ppn} \label{ppn:BasicSL}
Let  $(\by_t)_{t\ge 0}$ be the unique solution of the SDE \eqref{eq:sl-sde}.
Then there exists a standard Brownian motion $\bB'_t$ independent
of $\bsig\sim \mu_{H_N}$, such that, for all $t$, $\by_t=t\bsig+\bB'_t$.

Further,
$\bbE\Cov(\mu_{H_N,\by_t})\preceq \bI_N\,/\, t$.
In particular $\mu_{H_N,\by_t}\Rightarrow \delta_{\bsig}$ almost surely
as $t\to\infty$.
\end{ppn}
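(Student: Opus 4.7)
The plan is to first establish the representation $\by_t = t\bsig + \bB'_t$ by a filtering argument, then deduce the covariance bound via a matrix differential inequality, and finally obtain a.s.\ convergence from bounded martingale convergence.  For the representation, on an auxiliary probability space take $\bsig\sim \mu_{H_N}$ and an independent standard Brownian motion $\bB'$, and define $\hby_t := t\bsig + \bB'_t$. Because $\|\bsig\|^2 = N$ is deterministic on $S_N$, Bayes' rule gives
\[
\cL(\bsig \mid \cF^{\hby}_t) \propto \exp(-\|\hby_t - t\bsig\|^2/(2t))\,\mu_{H_N}(\de\bsig) \propto \exp(\la \hby_t,\bsig\ra)\,\mu_{H_N}(\de\bsig) = \mu_{H_N,\hby_t}(\de\bsig),
\]
so $\bbE[\bsig\mid \cF^{\hby}_t] = \bm(\hby_t,t)$. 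The innovation representation from nonlinear filtering then shows that $\bB''_t := \hby_t - \int_0^t \bm(\hby_s,s)\,\de s$ is a standard $\cF^{\hby}_t$-Brownian motion, so $\hby_t$ solves \eqref{eq:sl-sde} driven by $\bB''$. Lipschitz continuity of $\bm(\cdot,t)$ (noted before the proposition) gives strong uniqueness for \eqref{eq:sl-sde}, hence $\cL((\hby_s)_{s\le t}) = \cL((\by_s)_{s\le t})$; transferring the pair $(\bsig,\bB')$ back to the original probability space yields the claimed representation.

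For the covariance bound, let $\bA_t := \bbE\,\Cov(\mu_{H_N,\by_t})$. The filtering formula also gives $\de\bm(\by_t,t) = \Cov(\mu_{H_N,\by_t})\,\de\bB^{\mathrm{inn}}_t$ for an innovation Brownian motion $\bB^{\mathrm{inn}}$. Combining this with the law of total covariance $\bbE[\bsig\bsig^\top] = \bbE[\bm(\by_t,t)\bm(\by_t,t)^\top] + \bA_t$ and differentiating, one obtains $\fr{\de}{\de t}\bA_t = -\bbE[\Cov(\mu_{H_N,\by_t})^2]$. Since $\bbE[X^2]\succeq(\bbE X)^2$ for any random symmetric matrix $X$ (the matrix variance identity), this yields $\fr{\de}{\de t}\bA_t \preceq -\bA_t^2$; integrating (e.g.\ by differentiating $\bA_t^{-1}$, with a limiting argument if $\bA_0$ is singular) gives $\bA_t^{-1} \succeq t\bI$, hence $\bA_t \preceq \bI_N/t$.

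Finally, the bounded martingale $\bm(\by_t,t) = \bbE[\bsig\mid \cF^{\by}_t]$ converges a.s.\ as $t\to\infty$. The representation from the first step shows $\by_t/t\to\bsig$ a.s., so $\bsig$ is $\cF^{\by}_\infty$-measurable and the limit equals $\bsig$; combined with $\Cov(\mu_{H_N,\by_t})\to 0$ in probability (from $\bA_t\to 0$) and compactness of $S_N$, this yields $\mu_{H_N,\by_t}\Rightarrow \delta_\bsig$ a.s. The step requiring most care is the innovation representation: a clean derivation uses L\'evy's criterion applied to $\bB''$, verifying $\bbE[\bB''_t\mid \cF^{\hby}_s]=\bB''_s$ via the conditional-mean identity and computing the quadratic variation directly; the required integrability is immediate from $\|\bsig\|=\sqrt{N}$.
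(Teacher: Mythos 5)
Your argument is correct and is essentially the standard one: the paper does not prove Proposition~\ref{ppn:BasicSL} itself but defers to \cite{alaoui2022information}, and your three steps (Bayes identification of the posterior as $\mu_{H_N,\by_t}$ plus the innovation theorem and uniqueness in law; the martingale/Riccati argument $\fr{\de}{\de t}\bA_t=-\bbE[\Cov_t^2]\preceq-\bA_t^2$; bounded martingale convergence) are exactly the route taken there. The only soft spot is the very last step: convergence of $\Cov(\mu_{H_N,\by_t})$ \emph{in probability} does not by itself upgrade to the claimed \emph{almost sure} weak convergence. This is easily patched without any new input: since $\mu_{H_N,\by_t}$ is supported on $S_N$, one has $\Tr\Cov(\mu_{H_N,\by_t}) = N - \tnorm{\bm(\by_t,t)}_2^2$, and your a.s.\ martingale limit $\bm(\by_t,t)\to\bsig\in S_N$ then forces $\Tr\Cov(\mu_{H_N,\by_t})\to 0$ a.s., which together with $\bm(\by_t,t)\to\bsig$ gives $\int\|\bx-\bsig\|^2\,\mu_{H_N,\by_t}(\de\bx)\to 0$ a.s.\ and hence $\mu_{H_N,\by_t}\Rightarrow\delta_\bsig$ a.s.
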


\subsection{Planted model and contiguity}
\label{subsec:planted}

Recall that $\mu_0$ denotes the uniform probability measure on $S_N$. 
Further, let $\sH_N$ be the space of Hamiltonians $H_N$
(i.e. continuous functions $H_N:S_N\to \bbR$ endowed with the
uniform convergence topology and the induced Borel sigma-algebra)
and $\mu_\nul \in \cP(\sH_N)$ be the law induced on $H_N$ by Eq.~\eqref{eq:def-HN}.
Define the planted measure $\mu_\pl \in \cP(S_N \times \sH_N)$ by
\[
    \mu_\pl(\de \bx, \de H_N)
    := \fr{1}{Z_\pl} \exp \big\{H_N(\bx)\big\} ~\de \mu_0(\bx) \de \mu_\nul(H_N).
\]
For $H_N \in \sH_N$, define the partition function
\[
    Z(H_N) := \int \exp \big\{H_N(\bsig)\big\} ~\mu_0(\de \bsig)\, .
\]
\begin{lem}[Proved in Section~\ref{sec:partition-fn-fluctuations}]
    \label{lem:partition-fn-fluctuations}
    Suppose $\xi$ satisfies \eqref{eq:replica-symmetry}.
    Let $W \sim \cN(-\fr12 \sigma^2, \sigma^2)$, where $\sigma^2 = -\fr12 \log(1-\xi''(0))$.
    As $N\to\infty$, for $H_N \sim \mu_\nul$, the Radon-Nykodym derivative
    of $\mu_{\pl}$ with respect to $\mu_{\nul}$ is
    \[
        \frac{\de\mu_{\pl}}{\de\mu_{\nul}}
        (H_N) = \fr{Z(H_N)}{\EE Z(H_N)} \stackrel{d}{\to} \exp(W).
    \]
\end{lem}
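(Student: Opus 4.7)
The Radon--Nikodym derivative is immediate from the definition: marginalizing out $\bx\in S_N$ in $\mu_\pl(\de\bx,\de H_N)$ shows the marginal of $\mu_\pl$ on $\sH_N$ has density $Z(H_N)/Z_\pl$ against $\mu_\nul$, with $Z_\pl = \int Z(H_N)\,\de\mu_\nul(H_N) = \EE Z(H_N) = \exp(N\xi(1)/2)$ (by Fubini and the Gaussian MGF, since $H_N(\bsig)\sim\cN(0,N\xi(1))$ for each $\bsig\in S_N$). It then suffices to prove $\hat Z_N := Z(H_N)/\EE Z(H_N) \Rightarrow \exp W$.

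My plan is to establish this distributional limit via a Gaussian interpolation combined with Ito calculus. Construct a jointly Gaussian process $\widetilde H_N: [0,1]\times S_N \to \bbR$ with covariance $\EE \widetilde H_N(s_1,\bsig^1)\widetilde H_N(s_2,\bsig^2) = (s_1\wedge s_2)\, N\xi(\la\bsig^1,\bsig^2\ra_N)$; thus $\widetilde H_N(\cdot,\bsig)$ is a Brownian motion of variance $N\xi(1)$ for each $\bsig$ and $\widetilde H_N(1,\cdot) \ed H_N$. Setting
\[
    \hat Z_s := \exp\!\lt(-\tfrac{sN\xi(1)}{2}\rt) \int_{S_N} \exp \widetilde H_N(s,\bsig)\,\mu_0(\de\bsig), \qquad s\in[0,1],
\]
a direct Ito calculation shows $(\hat Z_s)$ is a continuous positive martingale with $\hat Z_0=1$ and $\hat Z_1\ed \hat Z_N$. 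Applying Ito to $F_s:=\log \hat Z_s$ gives $F_s = M_s - \tfrac12 \la M\ra_s$, where $M$ is a continuous local martingale whose quadratic variation is
\[
    \la M\ra_s = \int_0^s N\,\EE^{\otimes 2}_u[\xi(R_{12})]\,\de u,
\]
with $\EE^{\otimes 2}_u$ denoting two-replica expectation under the Gibbs measure of $\widetilde H_N(u,\cdot)$ and $R_{12}=\la\bsig^1,\bsig^2\ra_N$.

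The main technical step is to show $\la M\ra_1\to \sigma^2$ in probability. Since $\xi(0)=\xi'(0)=0$, one has $N\xi(R_{12}) \approx \tfrac{\xi''(0)}{2}(NR_{12}^2)$ for small $R_{12}$. At each $u\in[0,1]$, the scaled mixture $u\xi$ still satisfies \eqref{eq:replica-symmetry}, so a two-replica second-moment / Nishimori argument yields the pointwise limit $N\EE^{\otimes 2}_u[\xi(R_{12})]\to \xi''(0)/(2(1-u\xi''(0)))$, corresponding to $\sqrt N R_{12}$ being asymptotically $\cN(0,(1-u\xi''(0))^{-1})$. Uniform $L^2$-boundedness of $(\hat Z_s)$ under \eqref{eq:replica-symmetry} (since $\EE \hat Z_s^2 = \int \exp(sN\xi(R_{12}))\,\mu_0^{\otimes 2}(\de\bsig^1,\de\bsig^2) \to (1-s\xi''(0))^{-1/2}$) together with Gaussian--Poincar\'e upgrades pointwise convergence of the mean into convergence in probability of $\la M\ra_1$, giving
\[
    \la M\ra_1 \Rightarrow \int_0^1 \fr{\xi''(0)}{2(1-u\xi''(0))}\,\de u = -\tfrac12\log(1-\xi''(0)) = \sigma^2.
\]
Once $\la M\ra_1$ concentrates on the deterministic constant $\sigma^2$, Dambis--Dubins--Schwarz (equivalently the continuous-time martingale CLT) yields $M_1\Rightarrow \cN(0,\sigma^2)$, hence $\log \hat Z_N = M_1 - \tfrac12\la M\ra_1 \Rightarrow \cN(-\sigma^2/2,\sigma^2)=W$, and the continuous mapping theorem concludes.

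The principal obstacle is the third step: controlling the two-replica overlap statistics under the \emph{random} Gibbs measure uniformly in $u\in[0,1]$. Convergence of the annealed mean $\EE\la M\ra_1\to\sigma^2$ is routine via a Gaussian integration-by-parts on $\mu_0^{\otimes 2}$; the delicate point is concentration of $\la M\ra_1$ about its mean, where the strict RS hypothesis \eqref{eq:replica-symmetry} enters essentially --- it is what ensures that the overlap second moment of the random Gibbs measure does not fluctuate on the scale of $N^{-1}$ across the whole interpolation. This is the natural place to invoke the full strength of the RS assumption.
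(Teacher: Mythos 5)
Your route is genuinely different from the paper's: the paper does not prove a CLT from scratch, but instead reduces $Z_N/\EE Z_N$ to the purely quadratic partition function $Z_{N,2}/\EE Z_{N,2}$ (by truncating to the typical set $T(\delta)$ of Eq.~\eqref{eq:TypicalDef} and annealing the degree-$\ge 3$ part via Lemma~\ref{lem:truncate} and the conditional variance bound \eqref{eq:E-ge3-variance-bound}), and then quotes the known lognormal fluctuations of the spherical quadratic model from \cite{baik2016fluctuations}. Your stochastic-calculus/martingale-CLT interpolation is the classical Comets--Neveu style argument and would, in principle, give a self-contained proof.

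However, there is a genuine gap at the step you yourself flag as delicate, and it is not merely a technicality. You justify concentration of $\la M\ra_1$ via ``uniform $L^2$-boundedness of $\hat Z_s$,'' claiming $\EE \hat Z_s^2 = \int \exp(sN\xi(R_{12}))\,\mu_0^{\otimes 2}(\de\bsig) \to (1-s\xi''(0))^{-1/2}$ under \eqref{eq:replica-symmetry}. By Laplace's method this convergence requires $s\xi(q) + \tfrac12\log(1-q^2) < 0$ for all $q\neq 0$, and in particular at $s=1$ it requires $\xi(q)+\tfrac12\log(1-q^2)<0$. But for $q\in(0,1)$ one has $q+\log(1-q)\le \tfrac12\log(1-q^2)$ (the difference $q+\tfrac12\log\tfrac{1-q}{1+q}$ has nonpositive derivative $1-\tfrac{1}{1-q^2}$), so condition \eqref{eq:replica-symmetry} bounds the \emph{smaller} of the two quantities and does not imply the annealed second-moment condition. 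For mixed models with a nontrivial degree-$\ge 3$ component the two thresholds genuinely differ (e.g.\ $\xi(s)=\eps s^2+\beta^2 s^p$, $p\ge 3$, with $\beta$ between the second-moment and Talagrand RS thresholds satisfies \eqref{eq:replica-symmetry} while $\EE\hat Z_1^2/(\EE\hat Z_1)^2$ grows exponentially). In that regime your $L^2$ bound fails, and with it the stated upgrade from convergence of $\EE\la M\ra_1$ to convergence in probability. The Gaussian--Poincar\'e step has the same issue: the derivative of $\EE_u^{\otimes 2}[\xi(R_{12})]$ in the disorder involves higher Gibbs moments of the overlap, so bounding its variance presupposes exactly the quenched overlap concentration you are trying to prove.

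The fix is a truncated (quenched) second moment: one must restrict to configurations whose Gibbs-weighted overlap with a typical replica is small before squaring -- this is precisely the content of Lemma~\ref{lem:truncate} (resting on \cite[Proposition 3.1]{huang2023constructive}, which is where the full strength of the strict RS condition enters) and of the annealing estimate \eqref{eq:E-ge3-variance-bound}. Once you have that machinery, though, the paper's shortcut through $Z_{N,2}$ and \cite{baik2016fluctuations} is available and avoids the martingale analysis altogether; so as written your argument does not close without importing the same truncation apparatus the paper uses.
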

\begin{rmk}
    In most of this paper, we are interested in $\xi$ satisfying the condition \eqref{eq:amp-works}, which implies \eqref{eq:replica-symmetry} by integrating twice.
    However, the proof of Theorem~\ref{thm:sl-hardness} in Section~\ref{sec:sl-hardness} only assumes $\xi$ satisfies \eqref{eq:replica-symmetry}, so we state this lemma with the more general condition.
\end{rmk}
For any $T>0$, let $\bbP, \bbQ \in \cP(S_N\times\sH_N \times C([0,T], \bbR^N))$ be the laws of $(\bsig, H_N,(\by_t)_{t\in [0,T]})$,
generated as follows.
\begin{itemize}
    \item Under $\bbQ$:
    \begin{align}
    H_N \sim \mu_\nul,\;\;\;\; \bsig\sim\mu_{H_N},\;\;\;\;
    \by_t = t\bsig+\bB_t\, ,
    \end{align}
    for $\bB_t$ a standard Brownian motion independent of
    $\bsig,H_N$. By Proposition \ref{ppn:BasicSL}, an equivalent description
    of this distribution is:  $H_N \sim \mu_\nul$,
    $(\by_t)_{t\ge 0}$ given by the SDE \eqref{eq:sl-sde}
    and $\bsig = \lim_{t\to\infty}\by_t/t$.
    \item Under $\bbP$:
    \begin{align}
    (H_N,\bsig) \sim \mu_\pl,\;\;\;\;\;\;
    \by_t = t\bsig+\bB_t\, , \label{eq:planted-process}
    \end{align}
    for $\bB_t$ a standard Brownian motion independent of
    $\bsig,H_N$. As before, we can equivalently generate first $H_N$, then $(\by_t)_{t\ge 0}$ given by the SDE \eqref{eq:sl-sde} and finally $\bsig$.

    The joint distribution of
    $(H_N,\bsig) \sim \mu_\pl$ can be described in two equivalent ways.
    In the first one, we generate first $H_N$ and then $\bsig$
    conditional on $H_N$:
     \balnn
        \label{eq:planted-HN-first}
     H_N\sim \mu_\pl(\de H_N) = \fr{Z(H_N)}{\bbE Z(H_N)} \mu_{\nul}(\de H_N)\, ,
     \;\;\;\;\;\;\;
    \bsig\sim\mu_{H_N}\, .
    \ealnn
    In the second, we generate first $\bsig$ and then
     $H_N$:
    \balnn
        \label{eq:planted-bsig-first}
    \bsig\sim\mu_0\, ,  \;\;\;\;\;\;\;
        H_N &\sim \mu_\pl(\de H_N| \bsig)\,\propto\,  e^{H_N(\bsig)}\mu_{\nul}(\de H_N).
    \ealnn
\end{itemize}
A short calculation shows that $H_N \sim \mu_\pl(\cdot | \bx)$ is given by
\beq
    \label{eq:planted-H-law}
    H_N(\bsig) = N\xi(\<\bx,\bsig\>_N) + \tH_N(\bsig),
\eeq
where $\tH_N \sim \mu_\nul$.
The above definition has the following immediate consequence.
\begin{ppn}[{\cite[Proposition 4.2]{alaoui2022sampling}}]
    \label{ppn:planted-likelihood-ratio}
    For all $T\ge 0$,
    \[
        \fr{\de \bbP}{\de \bbQ}(\bsig, H_N,(\by_t)_{t\in [0,T]}) = \fr{Z(H_N)}{\EE Z(H_N)}.
    \]
\end{ppn}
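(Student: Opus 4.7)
The proof proposal centers on the observation that $\bbP$ and $\bbQ$ agree on the conditional law of $(\by_t)_{t\in[0,T]}$ given $(\bsig,H_N)$, so the entire Radon--Nikodym derivative is determined by the ratio of the marginal laws of $(\bsig,H_N)$.

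First I would note that under \emph{both} $\bbP$ and $\bbQ$, the path $(\by_t)_{t\in[0,T]}$ is constructed as $\by_t = t\bsig + \bB_t$ for a standard Brownian motion $\bB_t$ that is independent of $(\bsig,H_N)$. Thus the regular conditional distribution of $(\by_t)_{t\in[0,T]}$ given $(\bsig,H_N)$ is the same under the two measures (it is the law of $t\bsig+\bB_t$ in both cases). Consequently, if $\nu_\bbP$ and $\nu_\bbQ$ denote the marginals of $\bbP$ and $\bbQ$ on $(\bsig,H_N)$, then
\[
\frac{\de\bbP}{\de\bbQ}(\bsig, H_N, (\by_t)_{t\in[0,T]}) = \frac{\de\nu_\bbP}{\de\nu_\bbQ}(\bsig,H_N),
\]
provided the latter is well defined.

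Next I would compute the marginal ratio. Under $\bbQ$, $H_N\sim\mu_\nul$ and $\bsig\mid H_N \sim \mu_{H_N}$, so
\[
\nu_\bbQ(\de\bsig,\de H_N) = \frac{1}{Z(H_N)} e^{H_N(\bsig)}\,\mu_0(\de\bsig)\,\mu_\nul(\de H_N).
\]
Under $\bbP$, by the definition of the planted measure,
\[
\nu_\bbP(\de\bsig,\de H_N) = \mu_\pl(\de\bsig,\de H_N) = \frac{1}{Z_\pl} e^{H_N(\bsig)}\,\mu_0(\de\bsig)\,\mu_\nul(\de H_N),
\]
with $Z_\pl = \iint e^{H_N(\bsig)}\,\mu_0(\de\bsig)\,\mu_\nul(\de H_N) = \EE_{H_N\sim\mu_\nul} Z(H_N) = \EE Z(H_N)$ by Fubini. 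Taking the ratio, the factors $e^{H_N(\bsig)}$ and $\mu_0(\de\bsig)\mu_\nul(\de H_N)$ cancel, leaving
\[
\frac{\de\nu_\bbP}{\de\nu_\bbQ}(\bsig,H_N) = \frac{Z(H_N)}{\EE Z(H_N)},
\]
which depends only on $H_N$.

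Combining the two displays gives the claim. There is no real obstacle here: the statement is essentially a bookkeeping calculation, with the only point requiring care being the factorization of the path law — specifically, that $\bB_t$ is independent of $(\bsig, H_N)$ under both $\bbP$ and $\bbQ$, so the path contribution cancels in the Radon--Nikodym derivative. No issues of absolute continuity arise because the measures on $(\bsig,H_N)$ are mutually absolutely continuous (the density $Z(H_N)/\EE Z(H_N)$ is strictly positive and finite $\mu_\nul$-a.s.), and the shared conditional law of $(\by_t)_{t\in[0,T]}$ preserves absolute continuity at the level of the joint laws.
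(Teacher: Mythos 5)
Your proof is correct and is essentially the argument the paper has in mind: the paper introduces this proposition as an ``immediate consequence'' of the definitions (citing \cite[Proposition 4.2]{alaoui2022sampling}), and the intended reasoning is exactly your factorization --- the conditional law of $(\by_t)_{t\in[0,T]}$ given $(\bsig,H_N)$ is the same kernel under $\bbP$ and $\bbQ$, so the Radon--Nikodym derivative reduces to the ratio of the marginals on $(\bsig,H_N)$, which is $Z(H_N)/\EE Z(H_N)$ after the $e^{H_N(\bsig)}$ factors cancel.
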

As a consequence of Lemma~\ref{lem:partition-fn-fluctuations} and Proposition~\ref{ppn:planted-likelihood-ratio}, Le Cam's first lemma implies the following.
\begin{cor}
    \label{cor:contiguous}
    The measures $\bbP$ and $\bbQ$ are mutually contiguous.
    That is, for any sequence of events $\cE_N$, $\bbP(\cE_N) \to 0$ if and only if $\bbQ(\cE_N) \to 0$.
\end{cor}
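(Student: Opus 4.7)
The plan is to invoke Le Cam's first lemma directly, since the two ingredients it requires are essentially packaged by the preceding results. First I identify the relevant likelihood ratio: Proposition~\ref{ppn:planted-likelihood-ratio} shows that
\[
    \frac{\de\bbP}{\de\bbQ}(\bsig, H_N, (\by_t)_{t\in [0,T]}) = \frac{Z(H_N)}{\EE Z(H_N)},
\]
a function of $H_N$ alone. Since $H_N \sim \mu_\nul$ under $\bbQ$, Lemma~\ref{lem:partition-fn-fluctuations} transfers verbatim to give
\[
    \frac{\de\bbP}{\de\bbQ} \stackrel{d}{\longrightarrow} L := \exp(W), \qquad W \sim \cN(-\sigma^2/2, \sigma^2),
\]
under $\bbQ$, with $\sigma^2 = -\tfrac{1}{2}\log(1-\xi''(0))$ finite because the hypothesis \eqref{eq:replica-symmetry} ensures $\xi''(0) < 1$.

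Next I verify the two properties of $L$ that Le Cam's first lemma requires for mutual contiguity. The first is $L > 0$ almost surely, which is immediate since $\exp$ is strictly positive. The second is $\EE L = 1$, which holds by the standard Gaussian moment generating function computation: $\EE \exp(W) = \exp(-\sigma^2/2 + \sigma^2/2) = 1$. By Le Cam's first lemma, $\EE L = 1$ gives $\bbP(\cE_N) \to 0$ whenever $\bbQ(\cE_N) \to 0$, while $L > 0$ almost surely gives the reverse implication. Together these yield the conclusion of the corollary.

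I do not expect any substantive obstacle. The one step worth unpacking in a self-contained argument is the uniform-integrability upgrade implicit in Le Cam's lemma: since $\de\bbP/\de\bbQ \ge 0$ has $\bbQ$-expectation identically $1$ and converges in distribution to $L$ with $\EE L = 1$, a Scheffé-type criterion for nonnegative variables gives uniform integrability under $\bbQ$, hence $\bbP(\cE_N) = \EE_\bbQ[(\de\bbP/\de\bbQ)\mathbf{1}_{\cE_N}] \to 0$ whenever $\bbQ(\cE_N) \to 0$. For the opposite direction, a standard truncation at a continuity point $\eps$ of $L$ yields $\bbQ(\cE_N) \le \eps^{-1}\bbP(\cE_N) + \bbQ(\de\bbP/\de\bbQ \le \eps)$, and the last term can be made arbitrarily small by first choosing $\eps$ so that $\bbP(L \le \eps)$ is small and then invoking portmanteau.
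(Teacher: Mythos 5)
Your proposal is correct and follows essentially the same route as the paper, which likewise derives the corollary by applying Le Cam's first lemma to the likelihood-ratio identity of Proposition~\ref{ppn:planted-likelihood-ratio} together with the distributional limit $\exp(W)$ from Lemma~\ref{lem:partition-fn-fluctuations}. Your verification that $\EE\exp(W)=1$ (from $W\sim\cN(-\sigma^2/2,\sigma^2)$) and $\exp(W)>0$ a.s., and the assignment of these two conditions to the two directions of contiguity, is exactly the content the paper leaves implicit.
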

Thus it suffices to analyze our algorithm under the planted distribution $\bbP$.



\subsection{Basic regularity estimate}


For a tensor $\bA \in (\bbR^N)^{\otimes k}$, define the operator
norm
\[
    \norm{\bA}_{\op,N}
    = \fr{1}{N} \sup_{\tnorm{\bsig^1}_N,\ldots,\tnorm{\bsig^k}_N \le 1}
    |\la \bA, \bsig^1 \otimes \cdots \otimes \bsig^k \ra|.
\]
Notice that this normalization is different from the standard injective norm $\norm{\cdot}_{\inj}$ in that $\norm{\bA}_{\op,N} = N^{(k-2)/2} \norm{\bA}_{\inj}$.

\begin{ppn}[{\cite[Proposition 2.3]{huang2022tight}}]
    \label{ppn:gradients-bounded}
    There exists a sequence of constants $(C_k)_{k\ge 0}$ independent of $N$ for which the following holds.
    Define the event
    \[
        K_N := \lt\{
            \sup_{\norm{\bsig}_N \le 1}
            \tnorm{\nabla^k H_N(\bsig)}_{\op,N} \le C_k \;\;
            \forall k\ge 0
        \rt\}.
    \]
    Then $\PP(K_N) \ge 1-e^{-cN}$.
\end{ppn}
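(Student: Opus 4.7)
The plan is to decompose the Hamiltonian into its homogeneous Gaussian pieces, control each piece by an epsilon-net-and-concentration argument, and sum the resulting bounds using the coupling-decay assumption $\sum_p 2^p\gamma_p^2<\infty$. Write $H_N = \sum_{p\ge 2} H_N^{(p)}$ where $H_N^{(p)}(\bsig) = \gamma_p N^{-(p-1)/2}\la \bG_p, \bsig^{\otimes p}\ra$ and $\bG_p$ is the order-$p$ coupling tensor. For $k\le p$ and for fixed $\bsig, \bv^1, \ldots, \bv^k$ with $\tnorm{\bsig}_N, \tnorm{\bv^i}_N \le 1$, the contraction
\[
X_{p,k}(\bsig,\bv^1,\ldots,\bv^k)
:=\frac{1}{N}\la \nabla^k H_N^{(p)}(\bsig),\bv^1\otimes\cdots\otimes\bv^k\ra
\]
is a centered Gaussian. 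Unfolding the $k$-th derivative as a sum over the $p!/(p-k)!$ injections $\pi\colon [k]\to [p]$ of contractions $\la \bG_p, T_\pi\ra$ with $\tnorm{T_\pi}_F \le N^{p/2}$, then using Cauchy--Schwarz on the cross-covariances, I get $\Var(X_{p,k}) \le \gamma_p^2 (p!/(p-k)!)^2 / N$.

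Next, take a $\tfrac12$-net of the unit $\tnorm{\cdot}_N$-ball of cardinality at most $5^N$, form the product net over the $k+1$ factors, and union-bound the Gaussian tail. This yields that with probability $1 - e^{-cN}$, the supremum of $|X_{p,k}|$ over the product net is $O\bigl(\gamma_p \cdot p!/(p-k)! \cdot \sqrt{k+1}\bigr)$. Passing from net to the full product of balls loses only a constant factor, either via a one-off $k{+}1$-st derivative bound (applying the same argument at the next order and iterating a finite number of times) or via a short chaining step. Hence
\[
\sup_{\tnorm{\bsig}_N \le 1} \tnorm{\nabla^k H_N^{(p)}(\bsig)}_{\op,N} \le C_{p,k}
\]
with probability $1-e^{-cN}$, where $C_{p,k} = O(\gamma_p \cdot p!/(p-k)!)$.

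Summing over $p\ge k$ via Cauchy--Schwarz,
\[
\sum_{p\ge k}\gamma_p \frac{p!}{(p-k)!}
\le
\biggl(\sum_p 2^p \gamma_p^2\biggr)^{1/2}
\biggl(\sum_p 2^{-p}\bigl(p!/(p-k)!\bigr)^2\biggr)^{1/2},
\]
and both factors are finite (the first by assumption, the second since $p!/(p-k)! \le p^k$ and $\sum_p p^{2k} 2^{-p} < \infty$). This produces a finite $C_k$ for each $k$. A final union bound over $k\ge 0$ with failure probabilities made geometric in $k$ (by inflating $C_k$ to boost the concentration exponent to something like $2^k$) places all the events in a single $K_N$ of probability $1 - e^{-cN}$.

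The main obstacle is the competition between the combinatorial factor $p!/(p-k)!$ coming from the $k$-th derivative and the decay required to sum over $p$: the hypothesis $\sum_p 2^p \gamma_p^2 < \infty$ is exactly what is needed for the Cauchy--Schwarz step to close, and the infinitely many derivative orders are then handled cheaply by a geometric union bound. The Lipschitz-to-continuous-sup extension is routine but needs to be set up carefully so as not to blow up $C_k$ or reintroduce an unbounded union bound.
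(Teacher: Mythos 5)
The paper does not prove this proposition; it is imported verbatim from \cite[Proposition 2.3]{huang2022tight}, and your argument is essentially the standard proof of that fact (chaos decomposition into homogeneous Gaussian pieces, variance bound, $\eps$-net plus union bound, summation over $p$ via Cauchy--Schwarz against $\sum_p 2^p\gamma_p^2<\infty$), so it is correct in substance. Two small points deserve explicit care: the union bound is over infinitely many degrees $p$ as well as over $k$, so the degree-$p$ failure probability must itself be made summable (e.g. $e^{-cpN}$, obtained by taking the threshold a large constant multiple of $\gamma_p\, p!/(p-k)!\,\sqrt{p}$, which is harmless since it only rescales $C_{p,k}$); and the passage from the net to the full ball in the $\bsig$ variable is cleanest not by chaining or iterating derivatives but by observing that for the degree-$p$ homogeneous piece the contraction $\la\nabla^k H_N^{(p)}(\bsig),\bv^1\otimes\cdots\otimes\bv^k\ra$ unfolds into a multilinear form in $p$ vector arguments ($p-k$ copies of $\bsig$ and the $\bv^i$), for which the standard multilinear net comparison applies directly.
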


\subsection{Conditioning lemma}
\label{subsec:conditioning-lemma}


\begin{lem}
    \label{lem:conditioning-lemma-v2}
    Let $D \subseteq \bbR^N$ be an open set and $\cF : D \to \bbR$ be a (not necessarily centered) $C^2$ Gaussian process on a probability space $(\Omega,\Sigma,\bbP)$.
    Let $X$ be a random variable on $(\Omega,\Sigma)$ taking values in $[0,1]$, and $\bm_0$ be a random vector on the same space taking values in $\bbR^N$.
    For $\eps, c_\spec, c_{\op} > 0$ satisfying $\eps\le c_\spec^2/10c_{\op}$, define $U_{\bm_0} := \Ball_N(\bm_0,5\eps / c_\spec)$ and the events
    \baln
        \cG(\eps,c_\spec) &:=\Big\{
            \|\nabla \cF(\bm_0)\|_N \le \eps\, ,\;\;\;
            \nabla^2 \cF(\bm_0)\preceq -c_\spec\bI_n
        \Big\}\, ,\\
        \cH(c_{\op}) &:=\Big\{
            \sup_{\bm\in D} \|\nabla^2 \cF(\bm)\|_{\op,N}
            \le c_{\op},\;\;\;
            \sup_{\bm\in D} \|\nabla^3 \cF(\bm)\|_{\op,N}
            \le c_{\op}
        \Big\}\, ,\\
        \cE_{\cond} &:= \cG(\eps,c_\spec) \cap \cH(c_{\op}) \cap \{\norm{\bm_0}_N \le 1\} \cap \{U_{\bm_0} \subseteq D\}\,.
    \ealn
    Finally, assume $\bm \mapsto \EE \nabla \cF(\bm)$ is continuous and $\lambda_{\min}(\Cov(\nabla \cF(\bm)))$ is bounded away from $0$ uniformly over $\bm \in D$.
    Then, with $\varphi_{\nabla \cF(\bm)}$ the probability density of $\nabla \cF(\bm)$ w.r.t. Lebesgue measure on $\bbR^N$ and $\de^N$ denoting integration against this measure,
    \[
        \bbE(X \ind\{\cE_{\cond}\})
        = \int_D \EE \lt[
            |\det \nabla^2 \cF(\bm)|
            X\ind\{\cE_{\cond} \cap \{\bm\in U_{\bm_0}\}\}
            \big| \nabla\cF(\bm)=\bzero
        \rt]\,
        \varphi_{\nabla \cF(\bm)}(\bzero)~\de^N \bm.
    \]
\end{lem}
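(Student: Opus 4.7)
The plan is to combine a quantitative implicit function argument (showing that on $\cE_{\cond}$ the function $\cF$ has a unique critical point in $U_{\bm_0}$) with the standard Kac-Rice formula applied to the Gaussian vector field $\nabla\cF : D \to \bbR^N$.

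\textbf{Step 1 (unique critical point in $U_{\bm_0}$).} First I would show that on $\cE_{\cond}$, $\cF$ has exactly one critical point in $U_{\bm_0}$. Unwinding the $N$-normalization, the bound $\|\nabla^3 \cF(\bm)\|_{\op,N} \le c_{\op}$ on $\cH(c_{\op})$ gives that $\nabla^2 \cF$ is $c_{\op}$-Lipschitz as an operator-valued map in the $\|\cdot\|_N$ metric. For any $\bm \in U_{\bm_0}$ this yields
\[
    \|\nabla^2 \cF(\bm) - \nabla^2 \cF(\bm_0)\|_{\op}
    \le c_{\op} \cdot \frac{5\eps}{c_\spec}
    \le \frac{c_\spec}{2},
\]
using the hypothesis $\eps \le c_\spec^2/(10 c_{\op})$. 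Combined with $\nabla^2 \cF(\bm_0) \preceq -c_\spec \bI$, this gives $\nabla^2 \cF(\bm) \preceq -\tfrac{c_\spec}{2}\bI$ throughout $U_{\bm_0}$, so $\nabla \cF$ is strictly monotone on the convex set $U_{\bm_0}$ and has at most one zero. For existence, I would run a Newton iteration starting at $\bm_0$: the first step satisfies $\|\bm_1 - \bm_0\|_N \le c_\spec^{-1} \|\nabla \cF(\bm_0)\|_N \le \eps/c_\spec$, and a Kantorovich-type quadratic-convergence estimate keeps the iterates inside $U_{\bm_0}$, converging them to a unique critical point $\bm^\ast \in U_{\bm_0}$.

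\textbf{Steps 2 and 3 (rewriting and Kac-Rice).} Step 1 yields the almost-sure pointwise identity
\[
    X\ind\{\cE_{\cond}\}
    = \sum_{\bm \in D:\, \nabla\cF(\bm) = \bzero} X\ind\{\cE_{\cond} \cap \{\bm \in U_{\bm_0}\}\}.
\]
Off $\cE_{\cond}$ every term on the right vanishes, and on $\cE_{\cond}$ exactly one summand survives (the unique critical point $\bm^\ast \in U_{\bm_0}$), contributing $X$. The hypotheses on $\cF$ ($C^2$ regularity, continuous mean of $\nabla\cF$, and $\lambda_{\min}(\Cov(\nabla\cF(\bm)))$ uniformly bounded below on $D$) are the classical regularity conditions under which the Kac-Rice formula applies to the Gaussian vector field $\nabla\cF : D \to \bbR^N$. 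Taking expectations in the identity above and applying Kac-Rice with the measurable weight $g(\bm,\omega) = X\ind\{\cE_{\cond} \cap \{\bm \in U_{\bm_0}\}\}$ yields precisely the claimed formula.

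\textbf{Main obstacle.} The delicate piece is Step 1: the explicit smallness condition $\eps \le c_\spec^2/(10 c_{\op})$ is calibrated so that the Newton iterates remain inside $U_{\bm_0} = \Ball_N(\bm_0, 5\eps/c_\spec)$ and the Hessian stays uniformly $-\tfrac{c_\spec}{2}$-definite on the whole neighborhood, which is what pins the critical point down to $U_{\bm_0}$ and ensures its uniqueness. Once Step 1 is secured, the remainder is essentially standard Gaussian process theory, the random weight $g$ being handled by the usual version of Kac-Rice with a measurable test functional.
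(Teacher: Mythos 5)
Your Step 1 matches the paper's: the same Lipschitz-Hessian bound $\|\nabla^2\cF(\bm)-\nabla^2\cF(\bm_0)\|_{\op,N}\le 5\eps c_{\op}/c_\spec\le c_\spec/2$ gives uniform strong concavity on $U_{\bm_0}$, hence injectivity of $\nabla\cF$ there, and existence of the unique critical point (the paper does not even need Newton iteration; strong concavity plus $\|\nabla\cF(\bm_0)\|_N\le\eps$ forces the maximizer of $\cF$ over the closed ball into the interior). Where you diverge is the final step. You rewrite $X\ind\{\cE_{\cond}\}$ as a sum over zeros of $\nabla\cF$ weighted by $X\ind\{\cE_{\cond}\cap\{\bm\in U_{\bm_0}\}\}$ and then invoke ``the usual version of Kac--Rice with a measurable test functional'' as a black box. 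The paper instead proves exactly this weighted identity from scratch: it uses the area formula to write $1=|\Ball_N(\bzero,\iota)|^{-1}\int_{U_{\bm_0}}|\det\nabla^2\cF(\bm)|\ind\{\|\nabla\cF(\bm)\|_N\le\iota\}\,\de^N\bm$ on $\cE_{\cond}$, multiplies by $X\ind\{\cE_{\cond}\}$, applies Fubini, and sends $\iota\to 0$ by dominated convergence (using the determinant bound $c_{\op}^N$, boundedness of the density from the nondegeneracy assumption, and continuity of the density from continuity of the mean). This is the part of the lemma that actually requires work: the weight here involves an arbitrary $[0,1]$-valued random variable $X$ and an arbitrary random center $\bm_0$ on the underlying probability space, neither of which need be a local functional of the field, so the off-the-shelf weighted Kac--Rice statements (which typically assume the mark is a function of $\bm$ and of the field and its derivatives at $\bm$, or impose continuity conditions on the conditional expectation in $\bm$) do not obviously apply verbatim. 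Your overall strategy is sound and the identity you want is true, but to make the argument complete you would either need to verify the hypotheses of a sufficiently general Kac--Rice theorem for this class of weights, or reproduce the paper's elementary area-formula argument, which is self-contained precisely because the general theorem is awkward to cite here.
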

\begin{proof}
    On event $\cE_{\cond}$, for all $\bm \in U_{\bm_0}$ we have
    \beq
        \label{eq:conditioning-lem-strong-concavity}
        \lambda_{\max}(\nabla^2\cF(\bm))
        \le \lambda_{\max}(\nabla^2\cF(\bm_0)) + c_{\op} \norm{\bm - \bm_0}_N
        \le -c_\spec + \fr{5\eps c_{\op}}{c_\spec} \le -\fr12 c_\spec.
    \eeq
    Since $\|\nabla \cF(\bm_0)\|_N \le \eps$, there is exactly one solution to $\nabla \cF(\bm_\ast) = \bzero$ in $U_{\bm_0}$, which is measurable on $(\Omega,\Sigma)$ and furthermore lies in $\Ball_N(\bm_0,4\eps/c_\spec)$.
    The strong concavity \eqref{eq:conditioning-lem-strong-concavity} implies that $\nabla \cF$ is injective on $U_{\bm_0}$ and its image contains a neighborhood of $\bzero$.
    By the area formula, for sufficiently small $\iota > 0$,
    \[
        1 = \fr{1}{|\Ball_N(\bzero,\iota)|} \int_{U_{\bm_0}} |\det \nabla^2 \cF(\bm)| \ind\{\norm{\nabla \cF(\bm)}_N \le \iota\} ~\de^N \bm.
    \]
    Multiplying by $X\ind\{\cE_{\cond}\}$ and taking expectations of both sides by Fubini yields
    \baln
        &\EE( X \ind\{\cE_{\cond}\}) \\
        &= \fr{1}{|\Ball_N(\bzero,\iota)|}
        \int_D
        \EE \lt[|\det \nabla^2 \cF(\bm)| X \ind\{\cE_{\cond} \cap \{\bm \in U_{\bm_0}\} \cap \{\norm{\nabla \cF(\bm)}_N \le \iota\}\}\rt]
        ~\de^N \bm \\
        &= \int_D
        \EE \lt[
            |\det \nabla^2 \cF(\bm)|
            X \ind\{\cE_{\cond} \cap \{\bm \in U_{\bm_0}\}\}
            \big|
            \norm{\nabla \cF(\bm)}_N \le \iota
        \rt]
        \fr{\PP(\norm{\nabla \cF(\bm)}_N \le \iota)}{|\Ball_N(\bzero,\iota)|}
        ~\de^N \bm.
    \ealn
    Note that on $\cE_{\cond}$, $|\det \nabla^2 \cF(\bm)| \le c_{\op}^N$.
    Since $\cE_{\cond}$ is contained in the event $\norm{\bm_0}_N \le 1$, $\{\bm \in U_{\bm_0}\}$ can only occur for $\bm$ on a bounded set.
    Since $\lambda_{\min}(\Cov(\nabla \cF(\bm)))$ is bounded away from $0$, $\varphi_{\nabla \cF(\bm)}$ is bounded, and thus so is $\PP(\norm{\nabla \cF(\bm)}_N \le \iota) / |\Ball_N(\bzero,\iota)|$.
    Therefore the integral in the last display is dominated by a bounded integrable function.
    Continuity of $\EE \nabla \cF(\bm)$ implies that $\varphi_{\nabla \cF(\bm)}(\bz)$ is continuous in $\bz$ in a neighborhood of $\bzero$.
    We take the $\iota \to 0$ limit of the last display by dominated convergence to conclude.
\end{proof}

\section{Analysis of mean computation algorithm}\label{sec:analysis-mean-alg}

The next several sections are devoted to the analysis of Algorithm~\ref{alg:mean}.
We fix $t \in [0,T]$ and consider $(\bx, H_N,(\by_t)_{t\ge 0})\in S_N\times\sH_N \times C([0,T], \bbR^N)$
distributed according to the planted law $\PP$ defined in Eq.~\eqref{eq:planted-process}.
Define
\begin{align}
    \label{eq:HNt-with-y}
    H_{N,t}(\bsig)& = H_N(\bsig) +\la \by_t, \bsig \ra\\
    &=N\xi(\<\bx,\bsig\>_N) + \tH_N(\bsig) + \la \by_t, \bsig \ra.\nonumber
\end{align}
where we recall $\tH_N(\bsig) \sim \mu_\nul$.
The tilted measure $\mu_t = \mu_{H_N,\by_t}$ defined in \eqref{eq:tilted-msr} has the form
\[
    \mu_t(\de \bsig) = \fr{1}{Z} \exp H_{N,t}(\bsig) ~\mu_0(\de \bsig).
\]
Let $\bm_t$ be the mean of $\mu_t$.
The main result of our analysis is the following.
\begin{thm}
    \label{thm:mean}
    Under condition \eqref{eq:amp-works}, there exist parameters $(K_\sAMP, K_\GD^*, \eta)$ depending only on $(\xi,t)$ such that the point $\bm^\alg$ output by Algorithm~\ref{alg:mean} on input $(H_N,\by_t)$,
    with parameters $K_\sAMP$, $K_\GD(N)=K^*_{\GD}\log N$, $\eta$ satisfies
    \[
        \EE \tnorm{\bm^\alg - \bm_t}_N^2 = o(N^{-1}).
    \]
\end{thm}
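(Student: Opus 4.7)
The plan is to bound $\EE\|\bm^\alg - \bm_t\|_N^2$ by tracking three successive approximations to $\bm_t$: the AMP output $\bm^\sAMP$, a nearby TAP critical point $\bm^\sTAP$ (reached by the gradient ascent phase), and finally the corrected point $\bm^\sTAP + \bcorr(\bm^\sTAP)$. All the analysis is done under the planted law $\bbP$ of Section \ref{subsec:planted}; transferring the conclusion to the null law $\bbQ$ is handled by Corollary \ref{cor:contiguous}.

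For the AMP phase, standard spherical-AMP state evolution (adapted to the tilt $\by_t$) shows that under condition \eqref{eq:amp-works} the iteration \eqref{eq:main-amp} has a unique stable fixed point with $\|\bm^\sAMP\|_N^2 \to q_\ast(t)$, and $\|\nabla\cF_\sTAP(\bm^\sAMP;\by_t)\|_N = o_{K_\sAMP}(1)$. Next, a conditioning/Kac-Rice analysis (foreshadowed by Proposition \ref{ppn:local-concavity-and-conditioning}) produces in a ball $U$ around $\bm^\sAMP$ a unique TAP critical point $\bm^\sTAP$ at which $\nabla^2\cF_\sTAP(\bm^\sTAP;\by_t) \preceq -c_\spec\bI$ with $c_\spec > 0$. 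Strong concavity of $\cF_\sTAP$ on $U$ then implies gradient ascent contracts toward $\bm^\sTAP$ at rate $(1-c\eta)$ per step, so after $K^*_\GD\log N$ iterations we get $\|\bm^\GD - \bm^\sTAP\|_N^2 \le N^{-C}$ for any prescribed $C$.

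The heart of the proof is the estimate $\EE\|\bm^\sTAP + \bcorr(\bm^\sTAP) - \bm_t\|_N^2 = o(N^{-1})$. Apply Lemma \ref{lem:conditioning-lemma-v2} with $\cF = \cF_\sTAP(\cdot;\by_t)$: after conditioning on $\bm = \bm^\sTAP$ being the critical point in $U$, the local measure $\mu_t$ near $\bm$, parametrized by $\bsig = \bm + \bx$ with $\bx\in\Ts_\bm$, takes the form
\[
    \mu_t(\bm + \de\bx) \propto \exp\Bigl(\tfrac12\la\bA^{(2)}(\bm)\bx,\bx\ra - \tfrac{\gamma_{\ast,N}(\bm)}{2}\|\bx\|^2 + \tfrac16\la\bA^{(3)}(\bm),\bx^{\otimes 3}\ra + R(\bx)\Bigr)\,\de\bx,
\]
where the $-\gamma_{\ast,N}\bI$ term absorbs the spherical constraint plus the explicit quadratic part of $\cF_\sTAP$, and $R(\bx) = O(\|\bx\|^4)$ collects higher-order contributions. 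Under condition \eqref{eq:amp-works} and on the regularity event of Proposition \ref{ppn:gradients-bounded}, the quadratic form $\gamma_{\ast,N}\bI - \bA^{(2)}$ is uniformly positive definite and $\bQ = (\gamma_{\ast,N}\bI - \bA^{(2)})^{-1}$ has bounded operator norm, so the local measure is essentially Gaussian with $O(N^{-1})$ coordinate variance. Computing $\EE[\bx]$ by Gaussian integration by parts against the cubic term yields precisely $\bcorr(\bm)$ as defined in \eqref{eq:def-corr}, with the remainder from $R$ and from interactions between the cubic and higher terms scaling as $o(N^{-1})$ in $\|\cdot\|_N^2$. Outside $U$, $\mu_t$ has exponentially small mass by a large-deviations argument rooted in the strict global concavity budget that \eqref{eq:amp-works} supplies, so this complement contributes negligibly to $\bm_t$.

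Finally, to pass from $\bcorr(\bm^\sTAP)$ to $\bcorr(\bm^\GD)$, I would use Lipschitz continuity of $\bm\mapsto\bcorr(\bm)$ on the relevant domain (which follows from boundedness of $\bA^{(2)}, \bA^{(3)}, \bQ$ via Proposition \ref{ppn:gradients-bounded}) together with $\|\bm^\GD - \bm^\sTAP\|_N^2 \le N^{-C}$ to get $\|\bm^\alg - (\bm^\sTAP + \bcorr(\bm^\sTAP))\|_N^2 \le N^{-C'}$. Combining these estimates yields the theorem. The main obstacle I anticipate is the third step: rigorously extracting the cubic correction with an $o(N^{-1})$ remainder in squared norm. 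This requires combining the Kac-Rice/conditioning formula with delicate Gaussian moment bounds for a locally non-Gaussian measure whose log-density is only approximately quadratic, and then uniformly controlling the outside-ball contribution across realizations of $\by_t$.
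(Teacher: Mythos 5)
Your overall architecture matches the paper's: work under the planted law, use state evolution for the AMP phase, locate a unique well-conditioned TAP critical point near $\bm^\sAMP$ via the Kac--Rice/conditioning machinery, contract to it by gradient ascent, and transfer the key estimate from a fixed $\bm$ conditioned on $\nabla\cF_\sTAP(\bm)=\bzero$ to the random $\bm^\sTAP$ via Lemma~\ref{lem:conditioning-lemma-v2} and a $(2+\delta)$-moment bound. The first, second and fourth steps are essentially the paper's Propositions~\ref{ppn:amp-performance}, \ref{ppn:local-concavity-and-conditioning} and \ref{ppn:additional-event-pre-conditioning}, and your treatment of them is fine in outline.

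The gap is in your third step, and it is not merely technical: the local picture you use has the wrong geometry. The Gibbs measure $\mu_t$ is \emph{not} concentrated in any small ball $U$ around $\bm^\sTAP$; it concentrates on the macroscopic band $\Band(\bm^\sTAP,I)\cap\Band(\bx,I)$, a codimension-two sphere on which a typical $\bsig$ satisfies $\tnorm{\bsig-\bm^\sTAP}_N^2\approx 1-q_\ast=\Theta(1)$, i.e.\ $\|\bsig-\bm^\sTAP\|_2^2=\Theta(N)$ with per-coordinate variance $\Theta(1)$, not $O(N^{-1})$. Consequently the statement ``outside $U$, $\mu_t$ has exponentially small mass'' is false for a small ball, the cubic term $\la\bA^{(3)}(\bm),\bx^{\otimes 3}\ra$ along typical fluctuations is $\Theta(\sqrt{N})$ rather than a small perturbation, and the quartic remainder $R(\bx)=O(\|\bx\|^4)$ is $\Theta(N^2)$ --- so the claim that Gaussian integration by parts against the cubic term yields $\bcorr(\bm)$ with an $o(N^{-1})$ remainder in $\tnorm{\cdot}_N^2$ does not follow from positive-definiteness of $\gamma_{\ast,N}\bI-\bA^{(2)}$. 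Your formal expansion does recover the correct \emph{formula} for $\bcorr$, but justifying it to the required accuracy ($o(1)$ in unnormalized $\ell_2$ for the mean of a measure spread over an $\Theta(\sqrt N)$-diameter set) is the actual content of the theorem. The paper handles this (Section~\ref{sec:LocalComputation}) by decomposing the band into sub-bands $D_N(a,b)$ indexed by the overlaps with $\bm$ and $\bx$, showing each band-conditional measure is an $(N-2)$-dimensional replica-symmetric spherical model with a small external field, computing its magnetization exactly via Laplace transforms of the quadratic part (Lemma~\ref{lem:laplace}) combined with a high-accuracy annealing argument over the degree-$\ge 3$ disorder (Lemma~\ref{lem:high-acc-anneal}, which gives errors $N^{-L/2}$ for arbitrary $L$ and is where the resolvent formula for $\bcorr$ actually emerges), and then integrating over $(a,b)$ with second-moment control of the restricted partition functions. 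Without some substitute for this band decomposition and annealing step, the proposal does not close.
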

\noindent Recall that we defined $\xi_t(q) = \xi(q) + tq$.
\begin{fac}
    \label{fac:qt-unique}
    For any $t\in[0,\infty)$, there  is a unique solution $q_\ast =q_*(t)
    \in [0,1)$ to
    \beq
        \label{eq:def-qt}
        \xi'_t(q) = \fr{q}{1-q}.
    \eeq
\end{fac}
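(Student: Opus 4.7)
The plan is to recast the fixed-point equation as the root-finding problem $f(q)=0$ for
\[
f(q) := \frac{q}{1-q} - \xi'(q) - t
\]
on $[0,1)$, and then invoke the intermediate value theorem together with strict monotonicity.

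First I would check strict monotonicity of $f$ on $[0,1)$. Differentiating gives
\[
f'(q) = \frac{1}{(1-q)^2} - \xi''(q),
\]
which is strictly positive on $[0,1)$ by the standing hypothesis \eqref{eq:amp-works}. This already yields at most one solution, so the entire content of the fact is existence plus this observation.

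Next I would check the boundary behavior. Since $\xi(s)=\sum_{p\ge 2}\gamma_p^2 s^p$, we have $\xi'(0)=0$, whence $f(0)=-t\le 0$. At the other end, $\tfrac{q}{1-q}\to +\infty$ as $q\to 1^-$, while $\xi'(q)$ stays bounded on $[0,1)$: indeed, the standing summability assumption $\sum_{p\ge 2} 2^p\gamma_p^2<\infty$ implies $\xi'(1)=\sum_{p\ge 2} p\gamma_p^2<\infty$, so $f(q)\to +\infty$. Continuity of $f$ together with $f(0)\le 0$ and $f(q)\to+\infty$ give existence of a root in $[0,1)$, and strict monotonicity gives uniqueness.

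There is no real obstacle here; the only point worth flagging is that \eqref{eq:amp-works} is used in exactly the spot where one would expect it, namely to force $f$ to be strictly increasing. For $t=0$ the root is simply $q_\ast=0$, consistent with $\xi'(0)=0$.
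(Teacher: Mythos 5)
Your proof is correct and is essentially the paper's argument up to a sign convention: both reduce to the monotone function $\xi'_t(q)-\tfrac{q}{1-q}$, use the boundary values at $0$ and $1^-$ for existence, and use \eqref{eq:amp-works} for strict monotonicity and hence uniqueness. If anything, your handling of the endpoint is slightly more careful, since you explicitly cover the case $t=0$ (where $f(0)=0$ and $q_\ast=0$), whereas the paper's proof asserts $f(0)=t>0$.
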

\begin{proof}
    Define $f(q) = \xi'_t(q) - \fr{q}{1-q}$.
    Since $f(0) = t > 0$ and $\lim_{q\to 1^-} f(q) = -\infty$, there is at least one solution.
    As
    \[
        \fr{\de}{\de q} \lt(\xi'_t(q) - \fr{q}{1-q}\rt)
        = \xi''(q) - \fr{1}{(1-q)^2}
        \stackrel{\eqref{eq:amp-works}}{<} 0,
    \]
    this solution is unique.
\end{proof}
Henceforth let $q_\ast$ denote this solution.
It will also be useful to rewrite \eqref{eq:HNt-with-y} as
\beq
    \label{eq:planted-rewrite}
    H_{N,t}(\bsig) = N\xi_t(\<\bx,\bsig\>_N) + \tH_{N,t}(\bsig),
\eeq
where
\beq
    \label{eq:HtildeDef}
    \tH_{N,t}(\bsig) = \tH_N(\bsig) + \la \bB_t, \bsig \ra
\eeq
is a spin glass with mixture $\xi_t$.
In the proofs below, we will switch between these two representations of $H_{N,t}$ as convenient.

The first step of our analysis characterizes the limiting performance of the AMP iteration \eqref{eq:main-amp}, on $(H_N,\by_t)$ generated from the planted process \eqref{eq:planted-process}.
Recall the TAP free energy $\cF_\sTAP$ introduced in \eqref{eq:FTAP}.
With the notation \eqref{eq:planted-rewrite}, we can write
\[
    \cF_\sTAP(\bm) = N \xi_t(\<\bx, \bm\>_N) + \tH_{N,t}(\bm) + \fr{N}{2} \theta(\norm{\bm}_N^2) + \fr{N}{2} \log(1-\norm{\bm}_N^2).
\]
\begin{ppn}
    \label{ppn:amp-performance}
    For any $\iota>0$, there exists $k_0 \in \bbN$, depending only on $(\xi,t,\iota)$, such that for any fixed $k$,
    $k\ge k_0$ the following holds with probability $1-e^{-cN}$.
    The AMP iterate $\bm^k$ satisfies
    \beq
        \label{eq:amp-limiting-overlap}
        |\<\bx,\bm^k\>_N - q_\ast|,
        |\<\bm^k,\bm^k\>_N - q_\ast|
        \le \iota
    \eeq
    and
    \beq
        \label{eq:amp-limiting-gradient}
        \norm{\nabla \cF_\sTAP(\bm^k)}_N,
        \norm{
            \nabla \tH_{N,t}(\bm^k)
            + \xi'_t(q_\ast) \bx
            - \lt((1-q_\ast)\xi''(q_\ast) + \fr{1}{1-q_\ast}\rt) \bm^k
        }_N \le \iota.
    \eeq
    Moreover, with $I = I(\iota) = [q_\ast-\iota,q_\ast+\iota]$,
    \beq
        \label{eq:amp-dominate-gibbs}
        \mu_t(\Band(\bm^k,I) \cap \Band(\bx,I)) \ge 1-e^{-cN}.
    \eeq
\end{ppn}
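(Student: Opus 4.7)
The plan is to analyze the AMP iteration under the planted decomposition \eqref{eq:planted-rewrite}, which writes $\nabla H_N(\bm)+\by_t=\xi'_t(\<\bx,\bm\>_N)\bx+\nabla\tH_{N,t}(\bm)$, with $\tH_{N,t}$ an independent spin-glass Hamiltonian of mixture $\xi_t$. This places us in the standard planted-AMP framework for mixture $\xi_t$, and all three claims will follow from (i) state evolution, (ii) an algebraic manipulation of the recursion, and (iii) a Nishimori-plus-overlap-concentration argument.

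For \eqref{eq:amp-limiting-overlap}, I would invoke the Bolthausen-type conditioning analysis of AMP with its Onsager correction $(1-q_k)\xi''(q_k)\bm^{k-1}$, instantiated for the planted spin glass with signal $\bx$. The state evolution predicts that the joint empirical distribution of $(\bx,\bm^0,\dots,\bm^k)$ concentrates (with $1-e^{-cN}$ probability via Gaussian isoperimetry) so that $\<\bm^k,\bx\>_N,\<\bm^k,\bm^k\>_N\to q_k$, where $q_k$ is the deterministic sequence \eqref{eq:def-q-seq}. Because $\xi'_t$ is increasing and $x\mapsto x/(1+x)$ is increasing, the scalar recursion $q_{k+1}=\xi'_t(q_k)/(1+\xi'_t(q_k))$ is monotone. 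Condition \eqref{eq:amp-works} gives $\tfrac{d}{dq}(\xi'_t(q)-q/(1-q))<0$, so by Fact~\ref{fac:qt-unique} its unique fixed point $q_\ast$ is attracting, hence $q_k\to q_\ast$; choose $k_0$ large enough that $|q_k-q_\ast|\le\iota$ for $k\ge k_0$.

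For \eqref{eq:amp-limiting-gradient}, I would compute the TAP gradient directly. Using $\theta'(s)=-(1-s)\xi''(s)$,
\[
    \nabla\cF_\sTAP(\bm)=\nabla H_N(\bm)+\by_t-(1-\|\bm\|_N^2)\xi''(\|\bm\|_N^2)\bm-\frac{\bm}{1-\|\bm\|_N^2}.
\]
From the AMP recursion $\nabla H_N(\bm^k)+\by_t=\bw^{k+1}+(1-q_k)\xi''(q_k)\bm^{k-1}$ and the identity $\bm^k=(1-q_k)\bw^k$, which gives $\bw^k=\bm^k/(1-q_k)$, substituting $\|\bm^k\|_N^2\approx q_k\approx q_\ast$ and $\bm^{k-1},\bm^{k+1}\approx\bm^k$ (from the state evolution, using that $q_{k-1},q_{k+1}$ are close to $q_\ast$) produces cancellation of all terms up to $O(\iota)$, yielding the first bound. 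For the second bound, substitute the planted decomposition $\nabla H_N(\bm^k)+\by_t=\xi'_t(\<\bx,\bm^k\>_N)\bx+\nabla\tH_{N,t}(\bm^k)$ into the AMP recursion and solve for $\nabla\tH_{N,t}(\bm^k)$, using $\<\bx,\bm^k\>_N\to q_\ast$ and $\bw^{k+1}\approx\bm^k/(1-q_\ast)$ to collect the coefficient $(1-q_\ast)\xi''(q_\ast)+1/(1-q_\ast)$ of $\bm^k$ and $-\xi'_t(q_\ast)$ of $\bx$.

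For \eqref{eq:amp-dominate-gibbs}, I would invoke the Nishimori identity of the planted model (Subsection~\ref{subsec:planted}): conditional on $H_N$, a fresh draw $\bsig\sim\mu_t$ has the same law as $\bx$. Condition \eqref{eq:amp-works} implies the RS bound \eqref{eq:RS-tal06} by integrating twice, which yields overlap concentration: for two independent draws $\bsig,\bsig'\sim\mu_t$, $\<\bsig,\bsig'\>_N\to q_\ast$ with probability $1-e^{-cN}$ via standard spin-glass concentration (Gaussian isoperimetry on the couplings and on $\bsig$). By Nishimori this gives $\mu_t(\Band(\bx,I))\ge 1-e^{-cN}$. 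For $\mu_t(\Band(\bm^k,I))$, observe that $\bm^k$ is measurable with respect to $H_N$ (hence independent of a fresh sample $\bsig\sim\mu_t$ given $H_N$), and apply Gaussian concentration in $\bsig$ to $\<\bm^k,\bsig\>_N$ around its conditional mean $\<\bm^k,\bm_t\>_N$. Finally, the Nishimori identity gives $\bbE[\<\bm^k,\bsig\>_N\mid H_N]=\<\bm^k,\bm_t\>_N=\bbE[\<\bm^k,\bx\>_N\mid H_N]$, and the latter concentrates at $q_\ast$ by \eqref{eq:amp-limiting-overlap}.

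The main obstacle is obtaining the $1-e^{-cN}$ rate in \eqref{eq:amp-dominate-gibbs}: overlap concentration on the sphere at the exponential scale under the RS condition must be combined carefully with the Nishimori identity to handle both the $\bx$-band and the $\bm^k$-band simultaneously. The state-evolution analysis in the first two paragraphs is now technically standard for planted spin-glass AMP, but still requires care to track the joint cavity structure across both $\bx$ and the sequence $(\bm^j)_{j\le k}$ so that the coupling of the two overlaps in \eqref{eq:amp-limiting-overlap} and the vector identities in \eqref{eq:amp-limiting-gradient} are both controlled with the same exceptional probability.
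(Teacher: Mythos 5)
Your treatment of \eqref{eq:amp-limiting-overlap} and \eqref{eq:amp-limiting-gradient} is essentially the paper's argument (state evolution, monotone convergence of the scalar recursion to $q_\ast$, and the algebraic rearrangement of the iteration against $\nabla\cF_\sTAP$), and is fine. The issues are both in \eqref{eq:amp-dominate-gibbs}.

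For the band $\Band(\bm^k,I)$: you propose to control $\<\bm^k,\bsig\>_N$ for $\bsig\sim\mu_t$ by ``Gaussian concentration in $\bsig$'' around its conditional mean. This is not a valid mechanism: $\bsig$ is a Gibbs sample, not a Gaussian, and exponential concentration of $\<\bm^k,\bsig\>_N$ under $\mu_t$ is precisely the kind of statement at issue (it would require control of the Gibbs covariance, which you do not have at this stage). The repair is to apply the Nishimori identity to the \emph{indicator} of the bad event rather than to the mean: since the conditional law of $\bx$ given $(H_N,\by_t)$ under $\bbP$ is $\mu_t$ and $\bm^k$ is $(H_N,\by_t)$-measurable, one has $\EE\la \ind\{\<\bm^k,\bsig\>_N\notin I\}\ra = \PP(\<\bm^k,\bx\>_N\notin I)\le e^{-cN}$ by \eqref{eq:amp-limiting-overlap}, and Markov's inequality then gives $\mu_t(\Band(\bm^k,I))\ge 1-e^{-cN/2}$ with probability $1-e^{-cN/2}$. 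With that fix your route is correct and in fact shorter than the paper's, which instead proves this by an inductive self-reduction (Propositions~\ref{ppn:self-similarity}--\ref{ppn:band-recursion-dominate-gibbs}) peeling off one AMP iteration at a time; the paper only uses the indicator-plus-Markov trick for the first band $\Band(\by_t,\cdot)$ in Lemma~\ref{lem:band-recursion-one-step}.

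For the band $\Band(\bx,I)$: here Nishimori converts the claim into two-replica overlap concentration, which is \emph{not} implied by the AMP analysis, and your justification --- that the RS condition \eqref{eq:RS-tal06} ``yields overlap concentration \ldots via standard spin-glass concentration (Gaussian isoperimetry)'' --- is a genuine gap. Gaussian isoperimetry gives concentration of free energies, not of overlaps; exponential overlap concentration at $q_\ast>0$ for the tilted planted measure is a restricted free-energy statement. The paper's Proposition~\ref{ppn:amp-dominate-gibbs-2} proves it by decomposing $Z$ over bands $\<\bsig,\bx\>_N=q$, upper bounding each band free energy by the replica-symmetric value $h(q)=\fr12(\xi_t(1)+\xi_t(|q|)+|q|+\log(1-|q|))$ via the spherical Parisi formula, checking that equality holds at $q=q_\ast$ (which requires verifying the extremality conditions \eqref{eq:parisi-extremality-1}--\eqref{eq:parisi-extremality-2}, and uses \eqref{eq:amp-works} essentially), and showing $h$ is uniquely maximized at $q_\ast$ (Lemma~\ref{lem:band-fe-maximization}). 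None of this is supplied by your sketch, and it is the technical heart of \eqref{eq:amp-dominate-gibbs}.
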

The proof of this proposition is presented in Section \ref{sec:Proof-amp-performance}.
For $\iota > 0$, define
\begin{align}
    \cS_\iota := \lt\{
        \bm \in \bbR^N :
        |\<\bm,\bx\>_N - q_\ast|,
        |\<\bm,\bm\>_N - q_\ast| \le \iota
    \rt\}.\label{eq:SiotaDef}
\end{align}
\begin{ppn}
    \label{ppn:local-concavity-and-conditioning}
    There exist $C^{\spec}_{\max} > C^{\spec}_{\min} > 0$ and $L>0$
    such that, for any sufficiently small $\iota > 0$, there is an event $\cE_0$ with probability $1-e^{-cN}$, on which the following holds.
    \begin{enumerate}[label=(\alph*)]
        \item \label{itm:gradients-bounded} The event $K_N$ from Proposition~\ref{ppn:gradients-bounded} holds.
        \item \label{itm:crit-unique} $\cF_\sTAP$ has a unique critical point $\bm^\sTAP$ in $\cS_\iota$,
        which further satisfies
        \beq
            \label{eq:mtap-well-conditioned}
            \spec(\nabla^2 \cF_\sTAP(\bm^\sTAP)) \subseteq [-C^{\spec}_{\max},-C^{\spec}_{\min}].
        \eeq
        \item \label{itm:amp-near-crit} For $K_\sAMP$ large enough (depending on $\iota$), we have $\bm^\sAMP \in \cS_{\iota/2}$ and $\tnorm{\bm^\sAMP - \bm^\sTAP}_N \le \iota/2$.
    \end{enumerate}
    Note that under \ref{itm:gradients-bounded}, there exists $c_{\op}$ such that $\norm{\nabla^2 \cF_\sTAP(\bm)}_{\op,N}, \norm{\nabla^3 \cF_\sTAP(\bm)}_{\op,N} \le c_{\op}$ uniformly over $\bm \in \cS_\iota$, for all sufficiently small $\iota > 0$.
    Let
    \beq
        \label{eq:def-eps-local-concavity}
        \eps = \min\lt(
            \fr{\iota c_{\op}}{10},
            \fr{(C^\spec_{\min})^2}{40c_{\op}}
        \rt).
    \eeq
    Let $\cE = \cE_0 \cap \{\norm{\nabla \cF_\sTAP(\bm^\sAMP)}_N \le \eps\}$.
    (For $K_\sAMP$ large enough, this holds with probability $1-e^{-cN}$ by Proposition~\ref{ppn:amp-performance}.)
    We further have:
    \begin{enumerate}[resume,label=(\alph*)]
        \item \label{itm:conditioning-bound} For any $\delta > 0$ there exists $C_\delta > 0$ such that the following holds.
        For any random variable $X$ with $0\le X\le 1$ almost surely,
        \[
            \EE [X \ind\{\cE\}]
            \le C_\delta
            \sup_{\bm \in \cS_\iota}
            \EE \lt[
                X^{1+\delta} \ind\{\cE\}
                \big| \nabla \cF_\sTAP(\bm) = \bzero
            \rt]^{1/(1+\delta)}.
        \]
    \end{enumerate}
\end{ppn}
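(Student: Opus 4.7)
Part (a) is immediate by taking $\cE_0 \subseteq K_N$ and invoking Proposition~\ref{ppn:gradients-bounded}. For (b) and (c) I would perform a spectral analysis of $\nabla^2 \cF_\sTAP(\bm)$ for $\bm \in \cS_\iota$. Using the planted representation $H_{N,t}(\bsig) = N\xi_t(\la\bx,\bsig\ra_N) + \tH_{N,t}(\bsig)$, the Hessian splits into a scaled GOE bulk of radius $2\sqrt{\xi''(q_\ast)}$, a deterministic diagonal shift $-\big((1-q_\ast)\xi''(q_\ast)+(1-q_\ast)^{-1}\big)\bI$, a planted rank-1 term $\xi''(q_\ast)\bx\bx^\top/N$, and a TAP-curvature rank-1 term in $\bm\bm^\top$. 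Combining bulk and shift produces the perfect-square edge $-\big((1-q_\ast)^{1/2}\sqrt{\xi''(q_\ast)}-(1-q_\ast)^{-1/2}\big)^2$, strictly negative under~\eqref{eq:amp-works}. The finite-rank outlier analysis, involving the planted direction $\bx$, the TAP direction $\bm$, and the subleading non-GOE components of $\nabla^2\tH_{N,t}(\bm)$ aligned with $\bm$, is handled by an explicit $2$-dimensional computation exploiting the identity $\xi'_t(q_\ast) = q_\ast/(1-q_\ast)$, yielding the uniform spectral bound $[-C^\spec_{\max},-C^\spec_{\min}]$ on the entire spectrum throughout $\cS_\iota$. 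Uniform strict concavity on a suitable neighborhood then gives the uniqueness in (b). For (c), Proposition~\ref{ppn:amp-performance} produces $\bm^\sAMP$ with small gradient and overlap within $\iota/2$; a Newton-type inversion yields $\|\bm^\sAMP - \bm^\sTAP\|_N \le (C^\spec_{\min})^{-1}\|\nabla\cF_\sTAP(\bm^\sAMP)\|_N \le \iota/2$ for $K_\sAMP$ sufficiently large.

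\textbf{Part (d): the conditioning bound.} I would apply Lemma~\ref{lem:conditioning-lemma-v2} with $\cF = \cF_\sTAP$, $\bm_0 = \bm^\sAMP$, and random variable $X\ind\{\cE\}$, to obtain the Kac-Rice identity
\[
\EE[X\ind\{\cE\}] = \int_{\cS_\iota}\EE\Big[|\det\nabla^2\cF_\sTAP(\bm)|\,X\,\ind\{\cE\cap\{\bm\in U_{\bm_0}\}\}\,\Big|\,\nabla\cF_\sTAP(\bm)=\bzero\Big]\,\varphi_{\nabla\cF_\sTAP(\bm)}(\bzero)\,d^N\bm.
\]
Inside the conditional expectation I apply H\"older with exponents $1+\delta$ and $q = (1+\delta)/\delta$ to separate $X$ from $|\det|$, then pull the supremum over $\bm$ outside. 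This produces the target bound with
\[
C_\delta := \int_{\cS_\iota}\EE\Big[|\det\nabla^2\cF_\sTAP(\bm)|^{q}\,\ind\{\cE\cap\{\bm\in U_{\bm_0}\}\}\,\Big|\,\nabla\cF_\sTAP(\bm)=\bzero\Big]^{1/q}\varphi_{\nabla\cF_\sTAP(\bm)}(\bzero)\,d\bm.
\]
To bound $C_\delta$ by an $N$-independent constant, I would use that on $\cE$ the eigenvalues of $\nabla^2\cF_\sTAP(\bm)$ lie in $[-C^\spec_{\max},-C^\spec_{\min}]$, which together with standard concentration of linear spectral statistics of a GOE with rank-$O(1)$ perturbations gives subexponential fluctuations of $\log|\det\nabla^2\cF_\sTAP(\bm)|$. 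Consequently $\EE[|\det|^q\,|\,\cdot]^{1/q} = \EE[|\det|\,|\,\cdot]\cdot e^{o(N)}$, and the integral reduces up to subexponential factors to the annealed critical-point count of $\cF_\sTAP$ in $\cS_\iota \cap U_{\bm_0}$. The Laplace-method cancellation between $\EE[|\det\nabla^2\cF_\sTAP(\bm)|\,|\,\cdot]$ and $\varphi_{\nabla\cF_\sTAP(\bm)}(\bzero)$ in the replica-symmetric regime ensured by~\eqref{eq:amp-works} makes this integral $O(1)$; in fact part (b) controls it by $1 + o(1)$.

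\textbf{Main obstacle.} The bulk spectral computation in (b) is easy, but controlling the few rank-$O(1)$ outlier eigenvalues coming from the planted direction $\bx\bx^\top$, the TAP direction $\bm\bm^\top$, and the intrinsic structure of $\nabla^2\tH_{N,t}(\bm)$ requires a careful explicit computation that uses the defining identity of $q_\ast$ in full. Similarly, the $N$-independence of $C_\delta$ in (d) rests on sub-exponential concentration of the log-determinant of the conditional Hessian---a quenched (rather than merely annealed) complexity statement---which in turn reduces to concentration of linear spectral statistics of a GOE with rank-$O(1)$ perturbations. Both technical points are where condition~\eqref{eq:amp-works} is used decisively.
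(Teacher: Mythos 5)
Your outline of part (d) is structurally the same as the paper's: apply Lemma~\ref{lem:conditioning-lemma-v2}, separate $X$ from the determinant by H\"older, control the higher moment of the determinant by a reverse-H\"older inequality for shifted GOE determinants, and bound the remaining Kac--Rice integral by a Laplace computation. Two caveats. The reverse-H\"older step must yield an $N$-independent constant, not the $e^{o(N)}$ factor you allow yourself: the Laplace integral is only $O(1)$ with no exponential slack (its rate function vanishes at the maximizer), so an $e^{o(N)}$ loss would destroy the bound. The Guionnet--Zeitouni concentration used in Lemma~\ref{lem:goe-concentration} gives $O(1)$ fluctuations of $\Tr\log|r\bI-\bG|$, hence a genuine constant, so this is reparable but must be stated at that strength. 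Also, the $O(1)$ bound on the integral is not a consequence of part (b), which is a quenched count; it is an annealed first-moment computation over the two-parameter family of shells in $\cS_\iota$ (Lemma~\ref{lem:kacrice-fullspace-computation}), requiring one to verify that the rate function $F(q_\bm,q_\bx)$ vanishes at $(q_\ast,q_\ast)$ with strictly negative-definite Hessian. You treat this as obvious; it is one of the two main technical lemmas of the section.

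The genuine gap is in parts (b)--(c). You propose to prove $\nabla^2\cF_\sTAP(\bm)\preceq -C^\spec_{\min}\bI$ \emph{unconditionally and uniformly} over $\bm\in\cS_\iota$ and to deduce uniqueness from uniform concavity. This fails. At a generic $\bm\in\cS_\iota$ the unconditional Hessian contains the planted rank-one term $\xi''(q_\ast)\bx\bx^\top/N$ on top of a GOE bulk of radius $2\sqrt{\xi''(q_\ast)}$; since $\xi''(q_\ast)>\sqrt{\xi''(q_\ast)}$ this produces a BBP outlier at $\xi''(q_\ast)+1$, which after the diagonal shift $-\big((1-q_\ast)\xi''(q_\ast)+\tfrac{1}{1-q_\ast}\big)$ sits at $q_\ast\big(\xi''(q_\ast)-\tfrac{1}{1-q_\ast}\big)$ --- strictly positive for the entire range $\tfrac{1}{1-q_\ast}<\xi''(q_\ast)<\tfrac{1}{(1-q_\ast)^2}$ permitted by \eqref{eq:amp-works}. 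Negative-definiteness holds only \emph{conditionally on} $\nabla\cF_\sTAP(\bm)=\bzero$, where the conditioning replaces $\bx\bx^\top$ by the much weaker $(\bx-\bm)(\bx-\bm)^\top$ (Corollary~\ref{cor:nabla2-F-conditional-law}); moreover a fixed-$\bm$ high-probability statement cannot be union-bounded over the continuum $\cS_\iota$. Even granting concavity everywhere on $\cS_\iota$, uniqueness would not follow: $\cS_\iota$ is a thickened codimension-two sphere, hence non-convex, and the segment joining two putative critical points leaves it. The paper's actual argument is global: the restriction of $\tH_{N,t}$ to $\cS_0$ is a spherical model with mixture $\txi$ satisfying $\txi'(1)>\txi''(1)$, so topological trivialization gives exactly two Riemannian critical points $\bm_\pm$; all approximate critical points of $\cF_\sTAP$ in $\cS_\iota$ localize near one of them; the sign of the radial derivative rules out $\bm_-$; and the spectral bound is then proved only at the selected critical point via the conditional law of the Hessian, propagated through a Kac--Rice/Cauchy--Schwarz argument (Lemma~\ref{lem:conditioning-cs}). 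Your Newton-type estimate in (c) presupposes exactly this localization and is circular without it.
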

\begin{ppn}
    \label{ppn:additional-event-pre-conditioning}
    For sufficiently small $\iota>0$, with probability $1-e^{-cN}$, the event $\cE$ from Proposition~\ref{ppn:local-concavity-and-conditioning} holds and:
    \begin{enumerate}[label=(\alph*)]
        \item \label{itm:crit-band} For $I = I(\iota)$ as above, we have
        \[
            \mu_t(\Band(\bm^\sTAP,I) \cap \Band(\bx, I)) \ge 1-e^{-cN}.
        \]
        \item \label{itm:gd-near-crit} For $\eta$ small enough and $K_\GD^*$ large enough, we have $\tnorm{\bm^\GD - \bm^\sTAP}_N \le N^{-10}$.
        \item \label{itm:LipCorr} For any $\bm_1,\bm_2\in \Ball_N(\bm^{\sTAP},\iota)$,
        we have $\|\bcorr(\bm_1)-\bcorr(\bm_2)\|_N\le \fr{L}{N} \|\bm_1-\bm_2\|_N$.
    \end{enumerate}
\end{ppn}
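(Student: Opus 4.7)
The plan is to handle the three parts in turn. For part \ref{itm:crit-band}, I transfer the band concentration from $\bm^\sAMP$ to $\bm^\sTAP$ using their closeness. Apply Proposition~\ref{ppn:amp-performance} with parameter $\iota/2$ in place of $\iota$ and $k = K_\sAMP$ large enough to obtain, with probability $1-e^{-cN}$, $\mu_t(\Band(\bm^\sAMP,[q_*-\iota/2,q_*+\iota/2]) \cap \Band(\bx,I)) \ge 1-e^{-cN}$. By Proposition~\ref{ppn:local-concavity-and-conditioning}\ref{itm:amp-near-crit}, we may further assume $\|\bm^\sAMP - \bm^\sTAP\|_N \le \iota/2$. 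For any $\bsig \in S_N$, Cauchy--Schwarz gives $|\<\bm^\sTAP - \bm^\sAMP,\bsig\>_N| \le \|\bm^\sTAP-\bm^\sAMP\|_N \le \iota/2$, so every $\bsig$ in the $\bm^\sAMP$-band lies in $\Band(\bm^\sTAP,I)$.

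For part \ref{itm:gd-near-crit}, I use the standard local strong-concavity analysis for gradient ascent. Proposition~\ref{ppn:local-concavity-and-conditioning}\ref{itm:crit-unique} gives $\nabla^2 \cF_\sTAP(\bm^\sTAP) \preceq -C^\spec_{\min}\bI$, and the note following that proposition bounds $\|\nabla^3 \cF_\sTAP\|_{\op,N}$ by a constant $c_{\op}$ uniformly on $\cS_\iota$. Hence on $\Ball_N(\bm^\sTAP,r)$ with $r = C^\spec_{\min}/(2c_{\op})$, the Hessian remains $\preceq -\tfrac{1}{2} C^\spec_{\min} \bI$, so $-\cF_\sTAP$ is $\tfrac{1}{2} C^\spec_{\min}$-strongly convex and $c_{\op}$-smooth there. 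Shrinking $\iota$ so that $\iota \le r$, the initialization $\bu^0 = \bm^\sAMP$ lies in $\Ball_N(\bm^\sTAP,\iota/2) \subseteq \Ball_N(\bm^\sTAP,r)$. A standard descent-lemma argument with step $\eta = 1/c_{\op}$ then gives $\|\bu^{k+1}-\bm^\sTAP\|_N^2 \le (1-c_1)\|\bu^k-\bm^\sTAP\|_N^2$ for some $c_1 \in (0,1)$, so iterates remain in the ball and contract geometrically. After $K_\GD^* \log N$ iterations, $\|\bm^\GD - \bm^\sTAP\|_N \le \iota \cdot N^{-c_1 K_\GD^*/2}$, which is $\le N^{-10}$ for $K_\GD^*$ large enough.

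Part \ref{itm:LipCorr} is the main technical obstacle. The plan is to differentiate $\corr_i(\bm) = \tfrac{1}{2} \sum_{a,b,c} A^{(3)}_{abc}(\bm) Q_{ia}(\bm) Q_{bc}(\bm)$ in $\bm$ and bound the resulting Jacobian's operator norm. I first establish Lipschitz properties of the constituent maps: $\bm \mapsto \bA^{(k)}(\bm)$ for $k \in \{2,3\}$ are Lipschitz in operator norm via the uniform bounds on $\|\nabla^3 H_N\|_{\op,N}$ and $\|\nabla^4 H_N\|_{\op,N}$ from Proposition~\ref{ppn:gradients-bounded}; $\bm \mapsto \gamma_{*,N}(\bm)$ is $O(1)$-Lipschitz in $\|\cdot\|_N$ by the implicit function theorem applied to \eqref{eq:GammaStarCorr}, using that the derivative in $\gamma$ equals $-\Tr((\gamma_{*,N}\bI - \bA^{(2)})^{-2}) = -\Theta(N)$ because $\gamma_{*,N}$ stays bounded away from $\lambda_{\max}(\bA^{(2)})$ on $\Ball_N(\bm^\sTAP,\iota)$, while the $\bm$-derivative of the constraint is $O(N)$; consequently $\bm \mapsto \bQ(\bm)$ is Lipschitz in operator norm with $O(1)$ constant. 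The chain rule then expresses $\partial_{m_j} \corr_i$ as a sum of three families of tensor contractions of $\bA^{(3)}$ or $\bA^{(4)}$ against tensors built from $\bQ$ and its $\bm$-derivatives. The hard part, which is what makes the $1/N$ factor nontrivial, is that naive entrywise bounds would only yield a Lipschitz constant of order $1/\sqrt{N}$; to reach $L/N$ one must write each test bilinear form $\<\bv, D\bcorr \cdot \bu\>$ (for unit $\bu,\bv$) as a contraction against $\bA^{(k)}$ and exploit $\|\bA^{(k)}\|_{\op,N} = O(1)$ (equivalently $\|\bA^{(k)}\|_{\inj} = O(N^{(2-k)/2})$) jointly with the spectral decomposition of $\bQ$ and the bounds $\|\bQ\|_{\op} = O(1)$, $\|\bQ\|_F = O(\sqrt{N})$. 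The main effort is the bookkeeping needed to recover the correct $1/N$ scaling from each family of contractions.
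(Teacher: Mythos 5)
Parts \ref{itm:crit-band} and \ref{itm:gd-near-crit} of your argument are correct and essentially identical to the paper's: transfer the band from $\bm^\sAMP$ to $\bm^\sTAP$ via $\tnorm{\bm^\sAMP-\bm^\sTAP}_N\le\iota/2$ and Cauchy--Schwarz, and run the standard strongly-concave/smooth gradient-descent contraction on a ball where the Hessian spectrum is pinned by \eqref{eq:mtap-well-conditioned} and the Lipschitzness of $\bm\mapsto\nabla^2\cF_\sTAP(\bm)$ on $K_N$.

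For part \ref{itm:LipCorr} your skeleton is right (Lipschitzness of $\bA^{(2)},\bA^{(3)}$ from $K_N$, then of $\gamma_{*,N}$, then of $\bQ$, then assemble), but there are two genuine gaps. First, you assert that $\gamma_{*,N}(\bm)$ stays bounded away from $\lambda_{\max}(\bA^{(2)}(\bm))$ on $\Ball_N(\bm^\sTAP,\iota)$ without justification. This is not a soft fact: a priori the defining equation \eqref{eq:GammaStarCorr} only gives $\gamma_{*,N}>\lambda_{\max}$, and if the two were close, $\tnorm{\bQ}_{\op}$ would blow up and the whole estimate fails. The paper proves the separation in Lemma~\ref{lem:mtap-looks-like-goe}: one must identify the law of $\bA^{(2)}(\bm^\sTAP)$ at the \emph{random} point $\bm^\sTAP$ via the conditioning bound of Proposition~\ref{ppn:local-concavity-and-conditioning}\ref{itm:conditioning-bound} (conditionally on $\nabla\cF_\sTAP(\bm)=\bzero$ it is a scaled GOE), show $\gamma_{*,N}=\gamma_*+o_\iota(1)$ via concentration of the resolvent trace, and then invoke the identity $\gamma_*-2\sqrt{\xi''(q_*)}=\fr{1}{1-q_*}(1-(1-q_*)\xi''(q_*)^{1/2})^2>0$, which is exactly where hypothesis \eqref{eq:amp-works} enters. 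None of this is implied by the deterministic event $K_N$, so it cannot be skipped. Second, you explicitly defer the key quantitative step --- showing that each contraction of $\bA^{(3)}$ (or its increment) against $\bQ\otimes\bQ$ and a unit test vector is $O(N^{-1/2})\tnorm{\bm_1-\bm_2}_N$, which is what produces the $L/N$ Lipschitz constant after converting $\tnorm{\cdot}_2$ to $\tnorm{\cdot}_N$ --- calling it ``bookkeeping.'' That estimate is the content of the lemma; as written the proposal establishes at best an unquantified Lipschitz bound. A minor further point: the paper works with finite differences of $\gamma_{*,N}$ and $\bQ$ (including a rotation $\bR$ identifying $\Ts_{\bm_1}$ with $\Ts_{\bm_2}$) rather than differentiating, which avoids introducing $\nabla^4 H_N$-contractions and the differentiability of $\gamma_{*,N}$; your IFT route is workable but would need the same spectral-gap input to control the derivative of the constraint.
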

The proofs of the last two propositions are given in Section \ref{sec:proof-local-concavity-and-conditioning}.

For $\iota > 0$, define the truncated magnetization
\[
    \tbm_\iota(\bm) = \fr{
        \int_{\Band(\bm,I(\iota)) \cap \Band(\bx,I(\iota))}
        \bsig \exp(H_{N,t}(\bsig)) ~\mu_0(\de \bsig)
    }{
        \int_{\Band(\bm,I(\iota)) \cap \Band(\bx,I(\iota))}
        \exp(H_{N,t}(\bsig)) ~\mu_0(\de \bsig)
    }.
\]
\begin{ppn}
    \label{ppn:local-barycenter}
    Let $\bcorr(\,\cdot\,)$ be defined as in Section \ref{sec:CorrectionDef}.
    Then, for sufficiently small $\iota, \delta > 0$, we have
    \[
        \sup_{\bm \in \cS_\iota}\EE \lt[
            \tnorm{\bm + \bcorr(\bm) - \tbm_{2\iota}(\bm)}_N^{2+\delta}
            \big | \nabla \cF_\sTAP(\bm) = \bzero
        \rt]
        \le N^{-(1+\delta)}.
    \]
\end{ppn}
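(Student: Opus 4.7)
The plan is to establish $\tbm_{2\iota}(\bm) \approx \bm + \bcorr(\bm)$ via a Laplace expansion: under the conditioning $\nabla \cF_\sTAP(\bm) = \bzero$, the restricted Gibbs measure on $\Band(\bm, I(2\iota)) \cap \Band(\bx, I(2\iota))$ becomes an approximate Gaussian around $\bm$ in tangent coordinates, and $\bcorr(\bm)$ is precisely the leading correction to its mean produced by the cubic term of the Hamiltonian. Throughout, I fix $\bm \in \cS_\iota$, work on the high-probability event from Proposition~\ref{ppn:local-concavity-and-conditioning} (so $\bA^{(2)}(\bm)$ has top eigenvalue below $\gamma_{*,N}(\bm)$, ensuring strong log-concavity of the effective quadratic form $\gamma_{*,N}\bI-\bA^{(2)}$), and condition on $\nabla \cF_\sTAP(\bm) = \bzero$.

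The computation proceeds as follows. Parametrize $\bsig = \bm + \bz$ and Taylor expand
\[
H_{N,t}(\bsig) = H_{N,t}(\bm) + \la \nabla H_{N,t}(\bm), \bz\ra + \fr12 \la \bz, \bA^{(2)}(\bm)\bz\ra + \fr16 \bA^{(3)}(\bm)[\bz,\bz,\bz] + R_4(\bz).
\]
The critical point condition forces $\nabla H_{N,t}(\bm)$ to lie in the span of $\bm$, so the linear term reduces on the band to a small multiple of $\la \bm, \bz\ra$. Combining the quadratic form with the sphere constraint $\tnorm{\bm+\bz}_N^2 = 1$ and the band restrictions via a Lagrange multiplier yields an effective Gaussian $\bz \sim \cN(\bzero, \bQ(\bm))$ on the tangent space $\Ts_\bm$; the multiplier is precisely $\gamma_{*,N}(\bm)$ from \eqref{eq:GammaStarCorr}, chosen so that $\Tr \bQ(\bm) = N(1-\tnorm{\bm}_N^2)$ matches the typical squared norm of $\bz$ on the band. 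The cubic term then gives the leading correction to the mean via Gaussian Wick calculus:
\[
\EE[\bz] \approx \EE_{\bz \sim \cN(\bzero, \bQ(\bm))}\Big[\bz \cdot \fr16 \bA^{(3)}(\bm)[\bz,\bz,\bz]\Big] = \bcorr(\bm),
\]
where the three equal Wick contractions contribute $\fr{3}{6} \sum_{a,b,c} A^{(3)}_{abc}(\bm)\, Q_{ia}(\bm)\, Q_{bc}(\bm) = \corr_i(\bm)$ by symmetry of $\bA^{(3)}$.

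For the $L^{2+\delta}$ error bound I would combine: (a) strong log-concavity of the effective band measure, giving sub-Gaussian tails on $\bz-\EE\bz$ and thus all polynomial moments; (b) the tensor norm bounds of Proposition~\ref{ppn:gradients-bounded}, which ensure that the fourth-order remainder $R_4(\bz)$ and all subleading Gaussian corrections each contribute $O(N^{-1/2})$ pointwise to the mean; and (c) Gaussian Poincaré/concentration to upgrade pointwise bounds to $(2+\delta)$-moment bounds. The main obstacle is quantitative error control at the required precision $N^{-(1+\delta)}$: this requires carefully decoupling the three geometric constraints (sphere, band in $\bm$, band in $\bx$), rigorously justifying the Laplace expansion beyond leading order (in particular, controlling how the conditioning on $\nabla \cF_\sTAP(\bm)=\bzero$ modifies the joint law of $\bA^{(2)}(\bm)$ and $\bA^{(3)}(\bm)$, which on the tangent space behave like approximately independent $\GOE$ and Gaussian $3$-tensor respectively after conditioning), and justifying the replacement of $\gamma_{*,N}(\bm)$ by its deterministic limit $\gamma_\ast$. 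Lemma~\ref{lem:conditioning-lemma-v2} together with the $\GOE$-like structure of $\bA^{(2)}(\bm)$ discussed in Section~\ref{sec:CorrectionDef} provides the framework for these computations.
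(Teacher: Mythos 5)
Your heuristic is the right one---the formula for $\bcorr$ does arise from Wick-contracting the cubic tensor against an effective Gaussian with covariance $(\gamma_{*,N}\bI-\bA^{(2)})^{-1}$, and $\gamma_{*,N}$ does play the role of a saddle point enforcing the sphere constraint---but the proposal has genuine gaps exactly where the difficulty of the proposition lives. First, your error budget is off by the critical margin. The target $\EE\tnorm{\bm+\bcorr(\bm)-\tbm_{2\iota}(\bm)}_N^{2+\delta}\le N^{-(1+\delta)}$ forces $\tnorm{\bm+\bcorr-\tbm_{2\iota}}_2^2=o(1)$, i.e.\ \emph{per-coordinate} accuracy $o(N^{-1/2})$. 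Since the correction itself is of size $\Theta(N^{-1/2})$ per coordinate, an error term that "contributes $O(N^{-1/2})$ pointwise to the mean" is of the same order as the signal and destroys the bound. Second, the degree-$\ge 3$ part of the Hamiltonian is an order-$N$ perturbation of the quadratic part, so the restricted band measure is neither log-concave nor approximately Gaussian at the precision required; one cannot get sub-$N^{-1/2}$ control of its mean from a truncated Taylor expansion plus concentration. The paper instead anneals over the degree-$\ge 3$ disorder (Lemmas~\ref{lem:high-acc-anneal} and \ref{lem:anneal}), proving via $2k$-th moment computations with combinatorial cancellations that the partition function and per-coordinate magnetization deviate from their annealed values by a \emph{relative} factor $O(N^{-1/2-c}(|u_1|+N^{-1/2}))$, and only the annealed quantity is evaluated by the exact inverse-Laplace (contour-integral) representation of the spherical integral (Lemma~\ref{lem:laplace}), which is what makes your "Lagrange multiplier chosen so that $\Tr\bQ=N(1-\tnorm{\bm}_N^2)$" rigorous.

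Third, you have omitted the integration over bands entirely. The truncated magnetization $\tbm_{2\iota}(\bm)$ averages over a continuum of bands $D_N(a,b)$ with prescribed overlaps with $\bm$ and $\bx$; on each such band the effective external field $\bu(a,b)$ vanishes at the central band $(a_*,b_*)$ but grows like $\sqrt{N}(|a-a_*|+|b-b_*|)$ away from it, and its linear-response contribution $\frac{1}{2}(\gamma_{*}-\bA^{(2)})^{-1}\bu(a,b)$ to the band magnetization is \emph{not} negligible band by band. The paper's Section~\ref{sec:SecondLocal} shows the band free energy $E(a,b)$ is strictly concave at $(a_*,b_*)$ (Lemmas~\ref{lem:Grad-E}, \ref{lem:Hess-E}), controls the fluctuations of $Z(a,b)/Z$, and uses the symmetrization $(a,b)\mapsto 2(a_*,b_*)-(a,b)$ (Lemma~\ref{lem:lip-quad-G}, Eq.~\eqref{eq:G-sym}) to cancel these linear contributions after integration. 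Without that cancellation argument your computation would produce a spurious contribution to the mean of order $N^{-1/2}$ per coordinate, again at the same scale as $\bcorr$ itself. So the proposal identifies the correct leading-order mechanism but does not constitute a proof: the annealing machinery, the sharp $o(N^{-1/2})$ per-coordinate error control, and the band integration with its symmetrization are all load-bearing and missing.
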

The proof of this proposition is given in Section \ref{sec:LocalComputation}.

\begin{proof}[Proof of Theorem~\ref{thm:mean}]
    Let $\cE_1$ be the intersection of $\cE$ from Proposition~\ref{ppn:local-concavity-and-conditioning} and the event in Proposition~\ref{ppn:additional-event-pre-conditioning}.
    On $\cE_1$, the point $\bm^\sTAP$ is well-defined and we can write
    \baln
        \bm^\alg - \bm_t
        &= \bm^{\GD} + \bcorr(\bm^{\GD}) - \bm_t \\
        &= (\bm^{\GD} - \bm^{\sTAP})
        + (\bcorr(\bm^\GD) - \bcorr(\bm^\sTAP))
        + (\tbm_{2\iota}(\bm^\sTAP) - \bm_t) \\
        &\qquad + (\bm^\sTAP + \bcorr(\bm^\sTAP) - \tbm_{2\iota}(\bm^\sTAP))\,,
    \ealn
    whence
    \baln
        \tnorm{\bm^\alg - \bm_t}_N^2
        &\le 4 \tnorm{\bm^{\GD} - \bm^{\sTAP}}_N^2
        + 4 \tnorm{\bcorr(\bm^\GD) - \bcorr(\bm^\sTAP)}_N^2
        + 4 \tnorm{\tbm_{2\iota}(\bm^\sTAP) - \bm_t}_N^2 \\
        &\qquad + 4 \tnorm{\bm^\sTAP + \bcorr(\bm^\sTAP) - \tbm_{2\iota}(\bm^\sTAP)}_N^2
    \ealn
    The following also holds on $\cE_1$.
    By Proposition~\ref{ppn:additional-event-pre-conditioning}\ref{itm:gd-near-crit} and
    \ref{ppn:additional-event-pre-conditioning}\ref{itm:LipCorr}, for some constant $C$ (changing from line to line below),
    \[
        \tnorm{\bm^{\GD} - \bm^{\sTAP}}_N^2,
        \tnorm{\bcorr(\bm^\GD) - \bcorr(\bm^\sTAP)}_N^2
        \le CN^{-20}.
    \]
    By Proposition~\ref{ppn:local-concavity-and-conditioning}\ref{itm:crit-band}, the complement of $\Band(\bm^\sTAP,I) \cap \Band(\bx,I)$ accounts for a $e^{-cN}$ fraction of the Gibbs measure.
    Because the spins $\bsig$ are bounded, this implies
    \[
        \tnorm{\tbm_{2\iota}(\bm^\sTAP) - \bm_t}_N^2 \le e^{-cN}.
    \]
    Therefore, on $\cE_1$, for all sufficiently large $N$
    \[
        \tnorm{\bm^\alg - \bm_t}_N^2
        \le CN^{-20} + 4 \tnorm{\bm^\sTAP + \bcorr(\bm^\sTAP) - \tbm_{2\iota}(\bm^\sTAP)}_N^2.
    \]
    Thus
    \baln
        \EE [\tnorm{\bm^\alg - \bm_t}_N^2]
        &\le
        \PP(\cE_1^c) +
        \EE [\tnorm{\bm^\alg - \bm_t}_N^2 \ind\{\cE_1\} ] \\
        &\le
        CN^{-20} + 4 \EE \lt[
            \tnorm{\bm^\sTAP + \bcorr(\bm^\sTAP) - \tbm_{2\iota}(\bm^\sTAP)}_N^2\ind \{\cE_1\}
        \rt] \\
        &\le
        CN^{-20} + 4 C_{\delta/2}
        \sup_{\bm \in \cS_\iota} \EE \lt[
            \tnorm{\bm + \bcorr(\bm) - \tbm_{2\iota}(\bm)}_N^{2+\delta}\ind \{\cE_1\}
            \big| \nabla \cF_\sTAP(\bm) = \bzero
        \rt]^{1/(1+\delta/2)} \\
        &\le
        CN^{-20} + 4C_{\delta/2} N^{-(1+\delta) / (1+\delta/2)}
        = o(N^{-1}).
    \ealn
    In the second-last line, we applied Proposition~\ref{ppn:local-concavity-and-conditioning}\ref{itm:conditioning-bound}, noting that on $\cE_1$ and conditioned on $\nabla \cF_\sTAP(\bm) = \bzero$, we have $\bm^\sTAP = \bm$ almost surely.
    The last line is Proposition~\ref{ppn:local-barycenter}.



\end{proof}

\section{Analysis of AMP iteration: proof of Proposition~\ref{ppn:amp-performance}}
\label{sec:Proof-amp-performance}

\subsection{State evolution limit}

We first prove \eqref{eq:amp-limiting-overlap} and \eqref{eq:amp-limiting-gradient} using the state evolution result of \cite{bolthausen2014iterative,bayati2011dynamics,javanmard2013state}.
Recalling the change of notation \eqref{eq:planted-rewrite}, the AMP iteration \eqref{eq:main-amp} can be rewritten as $\bm^{-1} = \bw^0 = \bzero$,
\balnn
    \label{eq:amp-planted-model}
    \bm^k &= (1-q_k) \bw^k, \\
    \notag
    \bw^{k+1} &= \nabla H_{N,t}(\bm^k) - (1-q_k) \xi''(q_k) \bm^{k-1} \\
    \notag
    &= \nabla \tH_{N,t}(\bm^k) + \xi'_t(\<\bx,\bm^k\>_N) \bx - (1-q_k) \xi''(q_k) \bm^{k-1}.
\ealnn
 Here and below, the sequence $(q_k)_{k\ge 0}$ is defined as per Eq.~\eqref{eq:def-q-seq}.

Set $\gamma_0 = \Sigma_{0,i} = \Sigma_{i,0} = 0$ for all $i\ge 0$, and define the following recurrence.
Sample $X \sim \cN(0,1)$ and, for $k\ge 0$,
\[
    (G_1,\ldots,G_k) \sim \cN(0, \Sigma_{\le k}), \qquad
    W_i = G_i + \gamma_i X.
\]
Then, let
\begin{align}
    \gamma_{k+1} &= \xi'_t((1-q_k) \gamma_k) \\
    \Sigma_{k+1,j+1} &= \xi'_t\lt(
        (1-q_k)(1-q_j)
        \EE [W_kW_j]
    \rt).
\end{align}

The following proposition is an immediate consequence of \cite[Proposition 3.1]{el2021optimization},
which generalizes to the tensor case \cite[Theorem 1]{bayati2011dynamics}.
\begin{ppn}
    \label{ppn:state-evolution}
    For any $k\ge 0$, the empirical distribution of the AMP iterates' coordinates converges in $W_2$ in probability:
    \[
        \fr1N \sum_{i=1}^N \delta_{x_i,w_i^1,\ldots,w_i^k} \overset{W_2}{\to} \cL(X,W_1,\ldots,W_k).
    \]
    (In words, the left-hand side is the probability distribution on $\bbR^{k+1}$ that puts mass $1/N$ on
    each point $(x_i,w_i^1,\ldots,w_i^k)$, for $i\in [N]$.)
\end{ppn}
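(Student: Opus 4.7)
The plan is to invoke the general tensor-AMP state evolution theorem of \cite[Proposition 3.1]{el2021optimization}, which generalizes \cite[Theorem 1]{bayati2011dynamics} and \cite{bolthausen2014iterative}. That theorem provides, for AMP iterations driven by symmetric Gaussian tensor ensembles with Lipschitz separable denoisers and the correct Onsager correction, joint $W_2$-convergence of the empirical distribution of the iterates together with any fixed ``side information'' to the Gaussian law defined by a deterministic state-evolution recursion. Essentially all the work is to fit the iteration \eqref{eq:amp-planted-model} into the standard form and check that the resulting state-evolution recursion matches the one for $\gamma_k$ and $\Sigma_{k+1,j+1}$ in the statement.

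First I would identify the AMP ingredients in \eqref{eq:amp-planted-model}. The null Hamiltonian $\tH_{N,t}$ is a centered mixed $p$-spin glass with mixture $\xi_t$, so $\nabla\tH_{N,t}(\bm)$ is a sum over $p\ge 2$ of Gaussian $p$-tensor contractions against $\bm^{\otimes(p-1)}$, plus an independent $\cN(0,t\bI_N)$ vector coming from $\bB_t$. The rank-one term $\xi_t'(\<\bx,\bm^k\>_N)\bx$ is exactly the signal contribution that the ``side information'' slot of the theorem handles: under the planted law $\bbP$, conditional on $\bx$, the full gradient $\nabla H_{N,t}$ decomposes precisely as null noise $\nabla\tH_{N,t}$ plus this explicit rank-one shift. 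The denoiser $\bm^k=(1-q_k)\bw^k$ is separable and linear with constant scalar derivative $1-q_k$, and a direct specialization of the Onsager formula of \cite{el2021optimization} gives Onsager coefficient $(1-q_k)\xi''(q_k)$, matching the term in \eqref{eq:amp-planted-model}.

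With the iteration in standard form I would apply the cited theorem to obtain joint $W_2$-convergence of $(x_i,w_i^1,\ldots,w_i^k)_{i\le N}$ to the law of $(X,W_1,\ldots,W_k)$, where $X\sim\cN(0,1)$ is the weak limit of the coordinate-distribution of $\bx\in S_N$, $W_j=G_j+\gamma_j X$, and $(G_1,\ldots,G_k)$ are centered Gaussians independent of $X$. It remains to check that the SE recursion produced by the theorem reduces to the one in the statement. The signal coefficient $\gamma_{k+1}$ can be read off from the asymptotic projection of $\bw^{k+1}$ onto $\bx$: after the Onsager term has absorbed the spurious correlation of the null component $\nabla\tH_{N,t}(\bm^k)$ with past iterates, this projection equals $\xi_t'(\<\bx,\bm^k\>_N)\to\xi_t'((1-q_k)\gamma_k)$ in probability. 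The covariance $\Sigma_{k+1,j+1}$ is the large-$N$ limit of $\EE\,\<\nabla\tH_{N,t}(\bm^k),\nabla\tH_{N,t}(\bm^j)\>_N=\xi_t'(\<\bm^k,\bm^j\>_N)$, which by the induction hypothesis converges to $\xi_t'((1-q_k)(1-q_j)\EE[W_k W_j])$ using $\bm^k=(1-q_k)\bw^k$.

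The only real obstacle is routine bookkeeping: verifying that the Onsager coefficient in the tensor-AMP framework specializes to $(1-q_k)\xi''(q_k)$ for the linear denoiser $f_k(w)=(1-q_k)w$ and mixture $\xi_t$, and that the spherical side information $\bx$ can be replaced by a standard Gaussian at the level of $W_2$ empirical convergence (harmless since the uniform measure on $S_N$ and $\cN(0,\bI_N)$ agree in $W_2$ coordinate-empirical sense). There is no new probabilistic content beyond that of the cited state-evolution literature.
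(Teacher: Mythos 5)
Your proposal is correct and follows essentially the same route as the paper, which simply observes that the statement is an immediate consequence of \cite[Proposition 3.1]{el2021optimization} (the tensor generalization of \cite[Theorem 1]{bayati2011dynamics}); the bookkeeping you describe — decomposing the planted gradient into null noise plus the rank-one signal, identifying the linear denoiser and Onsager term, and matching the state-evolution recursion — is exactly the implicit content of that citation.
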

\begin{lem}\label{lemma:gamma_vs_q}
    For all $k,j\ge 0$, we have $\Sigma_{k,j} = \gamma_{k \wedge j} = \fr{q_{k\wedge j}}{1-q_{k\wedge j}}$.
\end{lem}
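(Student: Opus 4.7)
The plan is to prove both identities by induction. The second identity $\gamma_k = q_k/(1-q_k)$ is essentially a direct consequence of the definition of $q_{k+1}$. Rewriting $q_{k+1} = \xi'_t(q_k)/(1+\xi'_t(q_k))$ in the equivalent form $q_{k+1}/(1-q_{k+1}) = \xi'_t(q_k)$, and noting $\gamma_0 = 0 = q_0/(1-q_0)$, induction gives $(1-q_k)\gamma_k = q_k$, and hence
\[
    \gamma_{k+1} = \xi'_t((1-q_k)\gamma_k) = \xi'_t(q_k) = \frac{q_{k+1}}{1-q_{k+1}}.
\]

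For the first identity $\Sigma_{k,j} = \gamma_{k\wedge j}$, I would induct on $m := k\wedge j$, assuming without loss of generality $k \le j$. The base case $m = 0$ is the convention $\Sigma_{0,j} = 0 = \gamma_0$. For the inductive step, I use that $W_i = G_i + \gamma_i X$ with $X$ independent of $(G_1,\ldots,G_k)$, which gives
\[
    \EE[W_{m-1} W_j] = \Sigma_{m-1,j} + \gamma_{m-1}\gamma_j.
\]
By the inductive hypothesis $\Sigma_{m-1,j} = \gamma_{m-1}$ (valid whenever $j \ge m-1$) and the already-established $\gamma_i = q_i/(1-q_i)$, the argument of $\xi'_t$ appearing in the $\Sigma$-recurrence simplifies as
\[
    (1-q_{m-1})(1-q_j)\left(\frac{q_{m-1}}{1-q_{m-1}} + \frac{q_{m-1} q_j}{(1-q_{m-1})(1-q_j)}\right) = q_{m-1}(1-q_j) + q_{m-1} q_j = q_{m-1}.
\]
Therefore $\Sigma_{m, j+1} = \xi'_t(q_{m-1}) = q_m/(1-q_m) = \gamma_m$. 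Letting $j$ range over all $j \ge m-1$ propagates the identity to all pairs $(m, j')$ with $j' \ge m$, closing the induction.

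There is no real obstacle: the content is the algebraic collapse $(1-q_{m-1})\gamma_{m-1}\cdot\bigl((1-q_j) + q_j\bigr)/(1-q_j) = q_{m-1}$, which reduces the $\Sigma$-recurrence to the $\gamma$-recurrence. The only bookkeeping subtlety is that a single inductive step on $m$ must simultaneously establish $\Sigma_{m,j} = \gamma_m$ for every $j \ge m$; this is why I apply the $\Sigma_{m, j+1}$ recurrence for every admissible $j$ rather than only for $j+1 = m$.
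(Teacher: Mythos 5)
Your proof is correct and follows essentially the same route as the paper: first establish $\gamma_k = q_k/(1-q_k)$ by induction from the definition of $q_{k+1}$, then use $\EE[W_kW_j] = \Sigma_{k,j}+\gamma_k\gamma_j$ and the same algebraic collapse $(1-q_{k\vee j})q_{k\wedge j}+q_kq_j = q_{k\wedge j}$ to reduce the $\Sigma$-recurrence to the $\gamma$-recurrence. The only cosmetic difference is that you organize the induction explicitly over $m=k\wedge j$ (handling all $j\ge m$ in one step), whereas the paper phrases it as a single induction over all pairs; both are sound.
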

\begin{proof}
    We first prove by induction that $\gamma_k = \fr{q_k}{1-q_k}$.
    For $k=0$ this is clear, and then by induction
    \[
        \gamma_{k+1} = \xi'_t(q_k) = \fr{q_{k+1}}{1-q_{k+1}}.
    \]
    Similarly, by induction
    \[
        (1-q_k)(1-q_j) \EE[W_kW_j]
        = (1-q_k)(1-q_j) \lt(\Sigma_{k,j} + \gamma_k \gamma_j\rt)
        = (1-q_{k\vee j})q_{k\wedge j} + q_kq_j
        = q_{k\wedge j},
    \]
    and thus
    \[
        \Sigma_{k+1,j+1} = \xi'_t(q_{k\wedge j}) = \fr{q_{k\wedge j + 1}}{1 - q_{k\wedge j + 1}}.
    \]
\end{proof}
\begin{lem}
    \label{lem:scalar-amp-recursion-converge}
    As $k\to\infty$, we have $q_k \to q_\ast$.
\end{lem}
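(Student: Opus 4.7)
The recurrence $q_{k+1} = \xi'_t(q_k)/(1+\xi'_t(q_k))$ is the iteration of the map $F:[0,1)\to[0,1)$ defined by $F(q) = \xi'_t(q)/(1+\xi'_t(q))$, so I would simply analyze this one-dimensional dynamical system. First, note that $q \in [0,1)$ is a fixed point of $F$ iff $q(1+\xi'_t(q)) = \xi'_t(q)$, equivalently $\xi'_t(q) = q/(1-q)$. By Fact~\ref{fac:qt-unique} this equation has the unique solution $q_\ast \in [0,1)$, so $q_\ast$ is the only fixed point of $F$ in $[0,1)$.

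Next I would collect two monotonicity facts. Since $\xi$ has nonnegative Taylor coefficients of order $\ge 2$, $\xi'_t$ is nonnegative and nondecreasing on $[0,1)$, and the map $u \mapsto u/(1+u)$ is increasing on $[0,\infty)$; hence $F$ is nondecreasing on $[0,1)$. In particular, since $F(q_\ast) = q_\ast$, $F$ maps $[0,q_\ast]$ into itself. Moreover, for any $q \in [0, q_\ast)$ we have $F(q) > q$: this is equivalent to $\xi'_t(q)(1-q) > q$, i.e.\ $g(q) := \xi'_t(q) - q/(1-q) > 0$. Under hypothesis \eqref{eq:amp-works}, $g'(q) = \xi''(q) - (1-q)^{-2} < 0$ on $[0,1)$, so $g$ is strictly decreasing; combined with $g(q_\ast) = 0$ this gives $g(q) > 0$ on $[0, q_\ast)$.

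Starting from $q_0 = 0 \le q_\ast$, invariance of $[0,q_\ast]$ under $F$ gives $q_k \in [0,q_\ast]$ for all $k$, and the inequality $F(q) > q$ on $[0,q_\ast)$ then shows $(q_k)_{k\ge 0}$ is nondecreasing. A monotone bounded sequence converges, so $q_k \to q_\infty$ for some $q_\infty \in [0,q_\ast]$; by continuity of $F$, $q_\infty$ is a fixed point, and by uniqueness $q_\infty = q_\ast$. There is no real obstacle here; the only thing to be careful about is treating the edge case $t=0$ (where $q_\ast = 0$ and the sequence is identically zero) together with the strictly-increasing analysis above, which is handled uniformly by the observation that $g$ is strictly decreasing with unique zero $q_\ast$.
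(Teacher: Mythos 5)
Your proof is correct and follows essentially the same route as the paper's: the paper also argues that $f(q)=\xi'_t(q)/(1+\xi'_t(q))$ is increasing, so the iterates form a monotone bounded sequence converging to a fixed point, which is $q_\ast$ by Fact~\ref{fac:qt-unique}. Your version merely spells out the monotonicity, the invariance of $[0,q_\ast]$, and the $t=0$ edge case in more detail.
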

\begin{proof}
    Since the function $f(q) =  \fr{\xi'_t(q)}{1+\xi'_t(q)}$ is increasing, with $f(0) > 0$, $f(1) < 1$, $q_k$ must converge to a solution of $q = f(q)$.
    This rearranges to $\xi'_t(q) = \fr{q}{1-q}$, which has unique solution $q_\ast$ by Fact~\ref{fac:qt-unique}.
\end{proof}
\begin{ppn}
    \label{ppn:amp-overlaps}
    With probability $1-e^{-cN}$, \eqref{eq:amp-limiting-overlap} and \eqref{eq:amp-limiting-gradient} hold for all $k \ge k_0$.
\end{ppn}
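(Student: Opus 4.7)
My plan is to combine the state evolution limit (Proposition~\ref{ppn:state-evolution}) with the scalar recursion analysis in Lemmas~\ref{lemma:gamma_vs_q} and \ref{lem:scalar-amp-recursion-converge}. Three observations drive the argument: (a) state evolution pins down $\<\bx,\bm^k\>_N$ and $\|\bm^k\|_N^2$ as scalar functions of $q_k$, which converges to $q_\ast$; (b) consecutive iterates $\bm^{k+1},\bm^k,\bm^{k-1}$ become arbitrarily close in $\|\cdot\|_N$ as $k\to\infty$; and (c) the AMP recursion itself expresses $\nabla H_{N,t}(\bm^k)$ as an explicit combination of $\bm^{k+1},\bm^{k-1}$ whose leading-order value, when substituted into the TAP gradient formula, cancels.

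For (a), I apply Proposition~\ref{ppn:state-evolution} to the pseudo-Lipschitz test functions $(x,w)\mapsto xw$ and $(x,w)\mapsto w^2$. Lemma~\ref{lemma:gamma_vs_q} gives $\EE[XW_k]=\gamma_k=q_k/(1-q_k)$ and $\EE[W_k^2]=\gamma_k(1+\gamma_k)=q_k/(1-q_k)^2$, and multiplying by the $(1-q_k)$ prefactors in $\bm^k=(1-q_k)\bw^k$ yields $\<\bx,\bm^k\>_N\to q_k$ and $\|\bm^k\|_N^2\to q_k$ in probability; Lemma~\ref{lem:scalar-amp-recursion-converge} then gives $q_k\to q_\ast$, producing \eqref{eq:amp-limiting-overlap} once $k_0$ is taken large. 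For (b), the same strategy applied to the joint empirical distribution of $(w_i^k,w_i^{k+1})$ gives
\[
    \|\bm^{k+1}-\bm^k\|_N^2 \;\longrightarrow\; q_{k+1}+q_k - 2(1-q_k)(1-q_{k+1})\EE[W_kW_{k+1}] \;=\; q_{k+1}-q_k \;\to\; 0,
\]
where Lemma~\ref{lemma:gamma_vs_q} is used to compute $(1-q_k)(1-q_{k+1})\EE[W_kW_{k+1}]=q_k$.

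For (c), the AMP recursion together with $\bm^{k+1}=(1-q_{k+1})\bw^{k+1}$ rearranges to
\[
    \nabla H_{N,t}(\bm^k) \;=\; \frac{\bm^{k+1}}{1-q_{k+1}} + (1-q_k)\xi''(q_k)\,\bm^{k-1}.
\]
Substituting this into the explicit TAP gradient
\[
    \nabla\cF_\sTAP(\bm^k;\by_t) \;=\; \nabla H_{N,t}(\bm^k) - (1-\|\bm^k\|_N^2)\xi''(\|\bm^k\|_N^2)\,\bm^k - \frac{\bm^k}{1-\|\bm^k\|_N^2}
\]
and invoking (a) and (b) to replace $q_k,q_{k+1},\|\bm^k\|_N^2$ by $q_\ast$ and $\bm^{k\pm 1}$ by $\bm^k$ to leading order, every term cancels and the first bound of \eqref{eq:amp-limiting-gradient} follows. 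The second bound is obtained identically by isolating $\nabla\tH_{N,t}(\bm^k)+\xi'_t(\<\bx,\bm^k\>_N)\bx$ on the left-hand side of the rewritten recursion \eqref{eq:amp-planted-model} and exploiting $\xi'_t(\<\bx,\bm^k\>_N)\to\xi'_t(q_\ast)$ together with $\bm^{k\pm 1}\approx\bm^k$.

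The one genuinely technical step will be upgrading the convergence in probability delivered by Proposition~\ref{ppn:state-evolution} to the exponential bound $1-e^{-cN}$. For any fixed $k$, each AMP iterate is a smooth function of the disorder that, on the high-probability regularity event of Proposition~\ref{ppn:gradients-bounded}, has Lipschitz constant independent of $N$; Gaussian concentration for Lipschitz functionals of the couplings therefore yields sub-Gaussian concentration of the quadratic observables above around their deterministic limits, upgrading (a)--(c) to hold with probability $1-e^{-cN}$ and finishing the proof.
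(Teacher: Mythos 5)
Your proposal is correct and follows essentially the same route as the paper: state evolution plus Lemmas~\ref{lemma:gamma_vs_q} and \ref{lem:scalar-amp-recursion-converge} for the overlaps, the rearranged recursion \eqref{eq:RearrangeIter} combined with $\tnorm{\bm^{k+1}-\bm^k}_N\to 0$ for the gradient bounds, and Lipschitz-in-the-disorder concentration to upgrade to probability $1-e^{-cN}$. The only detail you elide is that the Lipschitz property holds only on a high-probability event, so before invoking Gaussian concentration one should replace the iterate by a globally Lipschitz extension (the paper uses Kirszbraun's theorem for this); this is a standard fix.
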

\begin{proof}
    Let $\simeq$ denote equality up to an additive error $o_{P,N}(1)$ (a term vanishing in probability as $N\to\infty$).
    By Proposition~\ref{ppn:state-evolution},
    \beq
        \label{eq:se-limiting-overlap-xm}
        \<\bx, \bm^k\>_N
        = (1-q_k) \<\bx, \bw^k\>_N
        \simeq (1-q_k) \gamma_k
        = q_k.
    \eeq
    Moreover,
    \beq
        \label{eq:se-limiting-overlap-mm}
        \<\bm^k,\bm^k\>_N
        = (1-q_k)^2 \<\bw^k,\bw^k\>_N
        \simeq (1-q_k)^2 \lt(\Sigma_{k,k} + \gamma_k^2\rt)
        = q_k.
    \eeq
   By Lemma \ref{lem:scalar-amp-recursion-converge},  for all $k$ large enough we have $|q_k - q_*| \le \iota/3$,
   whence \eqref{eq:amp-limiting-overlap} holds with high probability.
    Rearranging the AMP iteration gives
    \begin{align}
        \nabla \tH_{N,t}(\bm^k)
        &= -\xi'_t(\<\bx,\bm^k\>_N) \bx + \bw^{k+1}
        + (1-q_k) \xi''(q_k) \bm^{k-1} \nonumber\\
        &= -\xi'_t(\<\bx,\bm^k\>_N) \bx + \fr{1}{1-q_{k+1}} \bm^{k+1}
        + (1-q_k) \xi''(q_k) \bm^{k-1},\label{eq:RearrangeIter}
    \end{align}
     By Proposition \ref{ppn:state-evolution},
    Lemma \ref{lemma:gamma_vs_q}, and Lemma \ref{lem:scalar-amp-recursion-converge},
    we have
    \begin{align}
    \lim_{k\to\infty}\plim_{N\to\infty}\|\bm^{k+1}-\bm^k\| &= 0\, ,\\
     \lim_{k\to\infty}\plim_{N\to\infty}\|\bw^{k+1}-\bw^k\| &= 0\, .
     \end{align}
    and therefore, by Eq.~\eqref{eq:RearrangeIter},
    \[
         \lim_{k\to\infty}\plim_{N\to\infty}\norm{
        \nabla\tH_{N,t}(\bm^k)
            +\xi'_t(q_\ast) \bx + \lt(\fr{1}{1-q_\ast} + (1-q_\ast) \xi''(q_\ast) \rt) \bm^k
        }_N =0 \, .
    \]
    As
    \beq
        \label{eq:nabla-cF-sTAP}
        \nabla \cF_\sTAP(\bm)
        = \nabla\tH_{N,t}(\bm)
        +\xi'_t(\<\bx,\bm\>_N) \bx + \lt(\fr{1}{1-\tnorm{\bm}_N^2} + (1-\tnorm{\bm}_N^2) \xi''(\tnorm{\bm}_N^2) \rt) \bm,
    \eeq
    equations \eqref{eq:se-limiting-overlap-xm}, \eqref{eq:se-limiting-overlap-mm} further imply
    \[
        \lim_{k\to\infty}\plim_{N\to\infty}
        \tnorm{\nabla \cF_\sTAP(\bm^k)}_N =0 \, .
    \]
    Thus, for large enough $k$, \eqref{eq:amp-limiting-gradient} holds with high probability.

    To improve these assertions to $1-e^{-cN}$ probability, note that by \cite[Section 8]{huang2022tight}, the AMP iterate $\bm^k$ is, on an event $\cE_\Lip$ with probability $1-e^{-cN}$, a $O(1)$-Lipschitz function of the disorder Gaussians in $\tH_{N,t}$.
    By Kirszbraun's extension theorem, there is a measurable, $O(1)$-Lipschitz function $\tbm^k$ of the disorder which agrees with $\bm^k$ on $\cE_\Lip$.
    Thus $\<\bx,\tbm^k\>_N$ and $\<\tbm^k,\tbm^k\>_N$ are $O(N^{-1/2})$-Lipschitz in the disorder.
    By Gaussian concentration of measure
    \[
        |\<\bx,\tbm^k\>_N - \EE \<\bx,\tbm^k\>_N|,
        |\<\tbm^k,\tbm^k\>_N - \EE \<\tbm^k,\tbm^k\>_N| \le \iota/3
    \]
    with probability $1-e^{-cN}$.
    Since $\bm^k = \tbm^k$ on $\cE_\Lip$, \eqref{eq:amp-limiting-overlap} holds with probability $1-e^{-cN}$.

    By Proposition~\ref{ppn:gradients-bounded}, $\bm \mapsto \nabla \tH_{N,t}(\bm)$ is also $O(1)$-Lipschitz over $\norm{\bm}_N \le 1$ with probability $1-e^{-cN}$.
    A similar argument shows that \eqref{eq:amp-limiting-gradient} holds with probability $1-e^{-cN}$.
\end{proof}

\subsection{Overlap with AMP iterates}

The following proposition constitutes the first half of the proof of Eq.~\eqref{eq:amp-dominate-gibbs}.
\begin{ppn}
    \label{ppn:amp-dominate-gibbs-1}
    Let $\iota > 0$ and $I = I(\iota)$.
    With probability $1-e^{-cN}$, for all $k\ge k_0$ (with $k_0$ a sufficiently large constant depending on $(\xi,t,\iota)$),
    \[
        \mu_t(\Band(\bm^k,I)) \ge 1-e^{-cN}.
    \]
\end{ppn}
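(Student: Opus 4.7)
The plan is to leverage the Bayes-optimality structure of the planted model $\bbP$ to reduce the claim to the exponential overlap concentration already established in Proposition~\ref{ppn:amp-overlaps}, followed by a Markov bound. This route sidesteps any direct second moment, Franz--Parisi, or free-energy analysis of the overlap between a Gibbs sample and the AMP iterate.

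The starting point is the observation that, under $\bbP$, the conditional law of the planted spin $\bx$ given $(H_N,\by_t)$ is precisely the tilted Gibbs measure $\mu_t$. This is a direct Bayes' rule computation from \eqref{eq:planted-H-law} combined with the Gaussian observation $\by_t = t\bx + \bB_t$: the posterior density of $\bx$ is proportional to $\exp\{H_N(\bx) + \la \by_t, \bx\ra\}$ on $S_N$, which matches \eqref{eq:tilted-msr}. In particular, for any event $A$ measurable with respect to $(H_N,\by_t)$,
\[
    \bbE_\bbP[\mu_t(A)] = \bbP(\bx \in A).
\]
Applying this with $A := \bigcup_{k\ge k_0} \Band(\bm^k,I)^c$ (which is measurable with respect to $(H_N,\by_t)$ since each $\bm^k$ is) and invoking Proposition~\ref{ppn:amp-overlaps}, which furnishes a single event of $\bbP$-probability $\ge 1-e^{-cN}$ on which $|\la\bx,\bm^k\ra_N - q_\ast|\le \iota$ for all $k\ge k_0$, yields $\bbE_\bbP[\mu_t(A)] \le e^{-cN}$.

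A Markov bound then gives $\bbP\bigl(\mu_t(A) > e^{-cN/2}\bigr) \le e^{-cN/2}$, and since $\mu_t(\Band(\bm^k,I)^c) \le \mu_t(A)$ for every $k\ge k_0$, this is the claimed bound with $c$ replaced by $c/2$. There is no serious obstacle here: the difficult work, namely Gaussian concentration via Lipschitz dependence of the AMP iterates on the disorder, has already been done in Proposition~\ref{ppn:amp-overlaps}, and the Bayes-optimality identity converts that overlap concentration for free into concentration of the Gibbs mass on the band around $\bm^k$.
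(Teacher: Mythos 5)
Your proof is correct, and it takes a genuinely different and much shorter route than the paper's. The one ingredient you need beyond Fact~\ref{fac:nishimori} as literally stated is that the posterior identity holds conditionally on $H_N$ as well as on $\by_t$ --- i.e.\ that $\cL(\bx \mid H_N,\by_t)=\mu_t$ under $\bbP$ --- since $\bm^k$ for $k\ge 2$ depends on $H_N$ and not only on $\by_t$. This stronger identity is true (it is exactly the Bayes computation you describe, and is asserted explicitly in the proof of Lemma~\ref{lem:girsanov-error}), and together with the $(H_N,\by_t)$-measurability of the AMP iterates it gives $\bbE_{\bbP}\,\mu_t\big(\Band(\bm^k,I)^c\big)=\bbP\big(\<\bm^k,\bx\>_N\notin I\big)\le e^{-cN}$ directly from Proposition~\ref{ppn:amp-overlaps}, after which Markov finishes. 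The paper, by contrast, only ever applies the Nishimori identity to statistics of $\by_t$ alone (Lemma~\ref{lem:band-recursion-one-step}) and compensates with an inductive self-reduction (Propositions~\ref{ppn:self-similarity}, \ref{ppn:self-similarity-amp}, \ref{ppn:band-recursion-dominate-gibbs}): it restricts to the band $\<\bsig,\by_t\>_N\approx t$, identifies the restricted model as a fresh planted model one dimension down whose first observation encodes the next AMP iterate, and iterates. That machinery yields the stronger conclusion that $\mu_t$ concentrates on the simultaneous intersection $V_k(\iota)$ of the bands around all of $\bm^1,\dots,\bm^k$, but only the single band around $\bm^k$ is used downstream, so for the stated proposition your argument suffices. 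One small caveat: as in the paper's own proof, the conclusion is really for each fixed $k\ge k_0$ (the constant $c$ may degrade with $k$); if you want a single event uniform over all $k\ge k_0$ via your union $A=\bigcup_{k\ge k_0}\Band(\bm^k,I)^c$, you are leaning on the corresponding uniform reading of Proposition~\ref{ppn:amp-overlaps}, exactly as the paper does.
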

To prove Proposition~\ref{ppn:amp-dominate-gibbs-1}, we will combine Lemma~\ref{lem:band-recursion-one-step} below, which identifies a band on which the Gibbs measure $\mu_t$ concentrates, with a self-reduction argument.
We return to the earlier representation \eqref{eq:HNt-with-y} of $H_{N,t}$, which we reproduce below.
\baln
    H_{N,t}(\bsig) &= H_N(\bsig) + \la \by_t, \bsig \ra, \qquad \text{where} \\
    H_N(\bsig) &= N \xi(\<\bx,\bsig\>_N) + \tH_N(\bsig), \\
    \by_t &= t\bx + \sqrt{t} \bg, \qquad \bg \sim \cN(0,\bI_N).
\ealn
Let $\la \cdot \ra$ denote average with respect to $\bsig \sim \mu_t$.
The following fact is a restatement of Bayes theorem: sampling $\bx$ and then $\by_t$
is equivalent to sampling $\by_t$ and then $\bx$ from the posterior. In the context of statistical
physics, this is known as `Nishimori's property.'
\begin{fac}
    \label{fac:nishimori}
    For any bounded measurable $f$, $\EE f(\bx,\by_t) = \EE \la f(\bsig,\by_t) \ra$.
\end{fac}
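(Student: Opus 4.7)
\medskip

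\noindent\textbf{Proof proposal.} The plan is to reduce the claim to a one-line Bayes theorem calculation, by computing the joint density of $(\bx,H_N,\by_t)$ under $\bbP$ and identifying the conditional law of $\bx$ given $(H_N,\by_t)$ as exactly $\mu_t$. Recall from \eqref{eq:planted-HN-first} that $\mu_\pl(\de \bx,\de H_N) \propto \exp(H_N(\bx))\, \mu_0(\de\bx)\, \mu_\nul(\de H_N)$, and from \eqref{eq:planted-process} that conditional on $\bx$, $\by_t = t\bx + \bB_t$, so $\by_t \mid \bx \sim \cN(t\bx, t\bI_N)$. Thus the joint law of $(\bx,H_N,\by_t)$ admits the density
\[
    p(\bx,H_N,\by_t) \;\propto\; \exp(H_N(\bx))\, \exp\!\lt(-\fr{1}{2t}\|\by_t - t\bx\|^2\rt)
\]
with respect to $\mu_0(\de\bx)\, \mu_\nul(\de H_N)\, \de\by_t$.

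The key step is to expand the quadratic in the exponent using the fact that $\bx \in S_N$, so $\|\bx\|^2 = N$ is a constant:
\[
    -\fr{1}{2t}\|\by_t - t\bx\|^2
    = -\fr{1}{2t}\|\by_t\|^2 + \la \bx, \by_t\ra - \fr{tN}{2}.
\]
The two terms that do not involve $\bx$ are absorbed into the normalization in $\by_t$, while the cross term $\la \bx,\by_t\ra$ combines with $H_N(\bx)$ to yield precisely $H_{N,t}(\bx)$ as defined in \eqref{eq:HNt-with-y}. Consequently the conditional density of $\bx$ given $(H_N,\by_t)$ satisfies
\[
    p(\bx \mid H_N, \by_t) \;\propto\; \exp(H_{N,t}(\bx))\, \mu_0(\de\bx),
\]
which is exactly $\mu_t(\de\bx)$ by \eqref{eq:tilted-msr}.

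Given this identification, the fact follows by the tower property: for any bounded measurable $f$,
\[
    \EE f(\bx,\by_t) = \EE\big[ \EE[f(\bx,\by_t)\mid H_N,\by_t]\big] = \EE \la f(\bsig,\by_t)\ra,
\]
since inside the inner conditional expectation $\by_t$ is fixed and $\bx$ is drawn from $\mu_t$. I do not anticipate a serious obstacle here; the only mild care needed is bookkeeping of the reference measures and normalizing constants (in particular, ensuring that the $\by_t$-dependent factors do not contaminate the $\bx$-marginal), which the above decomposition handles cleanly because $\|\bx\|^2$ is constant on $S_N$.
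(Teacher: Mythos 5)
Your proof is correct and takes the same route as the paper, which offers no written proof beyond the remark that the fact "is a restatement of Bayes theorem"; your density computation (using $\|\bx\|^2=N$ on $S_N$ to absorb the $\bx$-free Gaussian terms and identify the posterior of $\bx$ given $(H_N,\by_t)$ as $\mu_t$) is exactly the calculation that remark is alluding to.
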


\begin{lem}
    \label{lem:band-recursion-one-step}
    Let $\iota > 0$ be arbitrary.
    With probability $1-e^{-cN}$,
    \[
        \lt|\norm{\by_t}_N^2 - t^2 - t\rt| \le \iota, \quad
        |\<\bx,\by_t\>_N - t| \le \iota, \quad
        \mu_t(\Band(\by_t, [t-\iota,t+\iota])) \ge 1-e^{-cN}.
    \]
\end{lem}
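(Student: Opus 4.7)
The first two claims reduce to standard Gaussian concentration applied to $\by_t = t\bx + \sqrt t\,\bg$, with $\bg \sim \cN(0,\bI_N)$ independent of $\bx \in S_N$. Expanding, one has $\|\by_t\|_N^2 = t^2 + 2t^{3/2}\<\bx,\bg\>_N + t\|\bg\|_N^2$ and $\<\bx,\by_t\>_N = t + \sqrt t\,\<\bx,\bg\>_N$. Conditionally on $\bx$, $\<\bx,\bg\>_N \sim \cN(0,1/N)$, and $\|\bg\|_N^2$ concentrates at $1$ at Gaussian rate, so each of the first two bounds holds with probability $1-e^{-cN}$.

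For the third claim, let $A = \{\bsig \in S_N : |\<\bsig,\by_t\>_N - t| > \iota\}$. The plan is to combine an annealed bound via Nishimori with free-energy concentration. The Nishimori identity (Fact~\ref{fac:nishimori}) gives
\[
    \bbE\,\mu_t(A) = \PP\bigl(|\<\bx,\by_t\>_N - t| > \iota\bigr) \le e^{-c_0 N},
\]
by the same Gaussian concentration of $\<\bx,\bg\>_N$ as in the second claim. To upgrade this expectation bound to a high-probability bound, I would argue in three steps. First, by Markov applied to $\bbE[\mu_t(A) \mid \bx,\by_t]$, with probability $\ge 1-e^{-c_0 N/2}$ over $(\bx,\by_t)$ we have $\bbE[\mu_t(A)\mid\bx,\by_t] \le e^{-c_0 N/2}$, whence Jensen gives $\bbE[\log\mu_t(A)\mid\bx,\by_t] \le -c_0 N/2$ on this event. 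Second, conditionally on $(\bx,\by_t)$ the set $A$ is deterministic, and a routine spin-glass calculation (using $\sum_i\sigma_i^2 = N$ on the sphere together with Cauchy--Schwarz) bounds $\|\nabla_\bG \log Z(\by_t)\|^2$ and $\|\nabla_\bG \log \int_A e^{H_{N,t}}\,\mu_0\|^2$ by $N\xi(1)$, so that both are $O(\sqrt N)$-Lipschitz in $\bG$. Third, Borell--TIS applied to their difference yields $|\log\mu_t(A) - \bbE[\log\mu_t(A)\mid\bx,\by_t]| \le c_0 N/4$ with probability $\ge 1-e^{-c'N}$. A union bound then gives $\mu_t(A) \le e^{-cN}$ with probability $\ge 1-e^{-cN}$, as required.

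The delicate point is the interaction between the random set $A$ and the Gaussian disorder: because $A$ depends on $\by_t$, and hence on $\bg$, applying Borell--TIS directly to the full randomness $(\bG,\bg)$ would require differentiating an indicator in $\bg$. Conditioning on $(\bx,\by_t)$ first freezes the set $A$ and isolates the remaining randomness in $\bG$, where spin-glass Lipschitz estimates are standard. With this two-stage conditioning in place, the combination of Nishimori (for the mean) and Borell--TIS (for the deviation) is routine.
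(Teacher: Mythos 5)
Your proof is correct, and the first two claims are handled exactly as in the paper (standard Gaussian concentration for $\|\by_t\|_N^2$ and $\<\bx,\by_t\>_N$). For the third claim you also start the same way, with the Nishimori identity giving $\EE\,\mu_t(A)\le e^{-c_0N}$, but you then take a considerably longer route than necessary. The paper simply applies Markov's inequality directly to the nonnegative random variable $\mu_t(A)\in[0,1]$: since its unconditional mean is at most $e^{-c_0N}$, we get $\PP(\mu_t(A)\ge e^{-c_0N/2})\le e^{-c_0N/2}$, which is the desired statement after adjusting $c$. Your three-stage argument --- conditioning on $(\bx,\by_t)$, Markov plus Jensen for the conditional mean of $\log\mu_t(A)$, and Gaussian (Lipschitz) concentration of the restricted free energy in the disorder $\bG$ --- does go through (the Lipschitz bound $\sqrt{N\xi(1)}$ for both $\log Z$ and $\log\int_A e^{H_{N,t}}\,\mu_0(\de\bsig)$ is right, modulo checking that $A$ has positive $\mu_0$-measure so the logarithms are finite), but all of it is superfluous. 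In particular, the ``delicate point'' you flag about the set $A$ depending on $\bg$ is not an obstruction that needs to be circumvented: Markov's inequality makes no smoothness or independence demands, so the exponentially small mean alone already yields the high-probability bound. Your extra machinery buys nothing here beyond what a one-line Markov argument gives.
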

\begin{proof}
    Clearly $\norm{\by_t}_N^2 \simeq t^2 + t$ and $\<\bx,\by_t\>_N \simeq t$, so the first two conclusions follow by standard concentration arguments.
    By Fact~\ref{fac:nishimori},
    \[
        \EE \lt\la \ind\lt\{\<\bsig, \by_t\>_N \not \in [t-\iota,t+\iota]\rt\} \rt\ra
        = \PP \lt(\<\bx, \by_t\>_N \not \in [t-\iota,t+\iota] \rt) \le e^{-cN}.
    \]
    By Markov's inequality,
    \[
        \PP \lt\{
            \lt\la \ind\lt\{\<\bsig, \by_t\>_N \not \in [t-\iota,t+\iota]\rt\} \rt\ra
            \ge e^{-cN/2}
        \rt\} \le e^{-cN/2}.
    \]
    This implies the final conclusion after adjusting $c$.
\end{proof}
We next introduce a self-reduction property of models obtained by restriction to a certain band.
Define
\[
    U = \lt\{
        \bsig \in \bbR^N : \<\bsig, \by_t\>_N = 0
    \rt\}.
\]

Recall that $(q_k)_{k\ge 0}$ is defined by Eq.~\eqref{eq:def-q-seq}, and in particular
$q_1= t/(1+t)$.
Let $\hby_t = \by_t / \norm{\by_t}_N$ and $r = \sqrt{q_1}$.
Consider the Hamiltonian on $\brho \in U$ defined by
\[
    \hH (\brho) = H_N(r\hby_t + \sqrt{1-r^2} \brho) - H_N(r\hby_t).
\]
Further define
\[
    \xi_{(1)}(s) = \xi(q_1 + (1-q_1)s) - \xi(q_1).
\]
Let $r_1 = \<\bx, \hby_t\>_N$ and define $\bx^\perp \in U$ by $\bx = r_1 \hby_t + \sqrt{1-r_1^2} \bx^\perp$.
Note that conditionally on $(\by_t,r_1)$, $\bx^\perp$ is a uniformly  random vector in $U \cap S_N$.
Also define the Hamiltonian
\[
    \hH'(\brho) = N\xi_{(1)}(\<\bx^\perp,\brho\>_N) + \tH'(\brho),
\]
where $\tH'$ is a Gaussian process on $U$ with covariance
\[
    \EE \tH'(\brho^1) \tH'(\brho^2) = N\xi_{(1)}(\<\brho^1,\brho^2\>_N).
\]
Note that $\hH'$ is of the form \eqref{eq:planted-rewrite}, with one fewer dimension and $\xi_{(1)}$ in place of $\xi_t$.
\begin{ppn}[Self-reduction]
    \label{ppn:self-similarity}
    There exists a constant $C$ such that the following holds.
    Let $\iota > 0$.
    Let $S$ be the $(\by_t,r_1)$-measurable event
    \beq
        \label{eq:def-self-similarity-event}
        \lt|\norm{\by_t}_N - \sqrt{t(1+t)}\rt|,
        |\<\bx,\hby_t\>_N - \sqrt{q_1}| \le \iota.
    \eeq
    Then $\PP(S) \ge 1-e^{-cN}$ and for any $(\by_t,r_1) \in S$ the following holds.
    There is a coupling $\cC$ of $\cL(\hH | \by_t, r_1)$ and $\cL(\hH')$ such that almost surely,
    \beq
        \label{eq:self-similar-approx}
        \begin{aligned}
       & \fr1N \sup_{\brho \in U \cap S_N} |\hH(\brho) - \hH'(\brho)|\le C\iota\, ,\\
        & \sup_{\brho \in U \cap S_N} \norm{\nabla_U \hH(\brho) - \nabla_U \hH'(\brho)}_N
        \le C\iota.
        \end{aligned}
    \eeq
\end{ppn}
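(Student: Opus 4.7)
I would build the coupling from the planted decomposition $H_N(\bsig) = N\xi(\<\bx,\bsig\>_N) + \tH_N(\bsig)$ of \eqref{eq:planted-H-law}, together with the orthogonal decomposition $\bx = r_1\hby_t + \sqrt{1-r_1^2}\,\bx^\perp$ of the planted vector. Conditional on $(\by_t, r_1)$, $\bx^\perp$ is uniform on $U\cap S_N$ and $\tH_N$ is an independent centered spin glass of mixture $\xi$; I match the signal and noise pieces of $\hH'$ to these by using the same $\bx^\perp$ and by setting $\tH'(\brho) := \tH_N(\bsig(\brho)) - \tH_N(r\hby_t)$ on $U$, where $\bsig(\brho) = r\hby_t + \sqrt{1-r^2}\,\brho$. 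Since $\brho \in U$ is orthogonal to $\hby_t$, a direct expansion yields $\<\bsig(\brho^1),\bsig(\brho^2)\>_N = q_1 + (1-q_1)\<\brho^1,\brho^2\>_N$ and $\<\bsig(\brho^i),r\hby_t\>_N = q_1 = \<r\hby_t,r\hby_t\>_N$, which collapses the covariance of $\tH'$ to $N\xi_{(1)}(\<\brho^1,\brho^2\>_N)$, as required for $\hH'$ to carry the stated law. The bound $\PP(S) \ge 1-e^{-cN}$ follows from standard Gaussian concentration applied to $\by_t = t\bx + \sqrt{t}\bg$: the quantities $\|\bg\|_N^2$ and $\<\bx,\bg\>/N$ concentrate at $1$ and $0$ respectively with $e^{-cN}$ tails, propagating to $\|\by_t\|_N^2 = t^2+t+O(\iota)$ and $\<\bx,\by_t\>_N = t+O(\iota)$; the second inequality in $S$ then follows by a ratio argument, since $t/\sqrt{t(1+t)} = \sqrt{q_1}$.

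By construction of the coupling, the fluctuating parts of $\hH$ and $\hH'$ coincide, and the entire discrepancy sits in the planted term:
\[
    \hH(\brho) - \hH'(\brho) = N\xi(\<\bx,\bsig(\brho)\>_N) - N\xi(rr_1) - N\xi_{(1)}(\<\bx^\perp,\brho\>_N).
\]
Writing $\<\bx,\bsig(\brho)\>_N = rr_1 + \sqrt{(1-r^2)(1-r_1^2)}\,\<\bx^\perp,\brho\>_N$, the defining inequalities of $S$ give $rr_1 = q_1 + O(\iota)$ and $\sqrt{(1-r^2)(1-r_1^2)} = (1-q_1) + O(\iota)$, so the argument of $\xi$ lies within $O(\iota)$ of $q_1 + (1-q_1)\<\bx^\perp,\brho\>_N$ uniformly in $\brho \in U \cap S_N$. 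Combined with $\xi_{(1)}(s) = \xi(q_1+(1-q_1)s) - \xi(q_1)$ and boundedness of $\xi, \xi'$ on $[-1,1]$, a first-order Taylor expansion yields the Hamiltonian bound. For the gradient, the noise gradients $\sqrt{1-r^2}\,\Proj^U \nabla\tH_N(\bsig(\brho))$ likewise cancel and $\Proj^U \bx = \sqrt{1-r_1^2}\,\bx^\perp$, so
\[
    \nabla_U\hH(\brho) - \nabla_U\hH'(\brho) = \lt[\sqrt{(1-r^2)(1-r_1^2)}\,\xi'(\<\bx,\bsig(\brho)\>_N) - \xi_{(1)}'(\<\bx^\perp,\brho\>_N)\rt]\bx^\perp,
\]
and the identity $\xi_{(1)}'(s) = (1-q_1)\xi'(q_1+(1-q_1)s)$ together with the same expansions bounds the bracket by $O(\iota)$, yielding the claimed gradient inequality after noting $\|\bx^\perp\|_N = 1$.

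The main subtlety, such as it is, lies in choosing the coupling so that the centered Gaussian piece emerges with covariance exactly $N\xi_{(1)}$: subtracting $\tH_N(r\hby_t)$ rather than simply reading off $\tH_N(\bsig(\brho))$ is essential, since the latter would carry a spurious additive $N\xi(q_1)$ in its covariance and leave a residual $O(N)$ shift not absorbable into the $O(N\iota)$ error. Once this algebraic matching is done, nothing beyond Taylor expansion and boundedness of the derivatives of $\xi$ on $[-1,1]$ is required, and all estimates are uniform in $\brho \in U\cap S_N$ because $|\<\bx^\perp,\brho\>_N| \le 1$ by Cauchy-Schwarz.
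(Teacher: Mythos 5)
Your proposal is correct and follows essentially the same route as the paper: the same split of $\hH$ into the planted term $N\xi(\<\bx,\cdot\>_N)$ and the recentered noise $\tH_N(r\hby_t+\sqrt{1-r^2}\,\brho)-\tH_N(r\hby_t)$, the same exact coupling of the noise part to $\tH'$ via the covariance identity, and the same Taylor expansion of $\xi$ on the event $S$ to absorb the planted discrepancy into $O(N\iota)$. Your explicit treatment of the gradient bound is slightly more detailed than the paper's (which leaves it implicit), but the argument is identical in substance.
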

\begin{proof}
    Suppose the event in Lemma~\ref{lem:band-recursion-one-step} holds.
    Then, using $q_1 = t/(1+t)$,
    \[
        r_1
        = \fr{\<\bx,\by_t\>_N}{\norm{\by_t}_N}
        = \fr{t + O(\iota)}{\sqrt{t(1+t)} + O(\iota)}
        = \sqrt{q_1} + O(\iota).
    \]
    This proves $\PP(S) \ge 1-e^{-cN}$, after adjusting $\iota$ by a constant factor.
    Now suppose $(\by_t,r_1) \in S$.
    We have $\hH(\brho) = \hH_1(\brho) + \hH_2(\brho)$, where
    \baln
        \hH_1(\brho)
        &= N \lt\{\xi \lt(R\lt(r\hby_t + \sqrt{1-r^2} \brho, r_1\hby_t + \sqrt{1-r_1^2} \bx^\perp\rt)\rt)
        - \xi \lt(R\lt(r\hby_t, r_1\hby_t + \sqrt{1-r_1^2} \bx^\perp\rt)\rt)\rt\}, \\
        \hH_2(\brho) &= \lt\{\tH_N \lt(r\hby_t + \sqrt{1-r^2} \brho\rt) - \tH_N (r\hby_t)\rt\}.
    \ealn
    The first summand simplifies as
    \[
        \hH_1(\brho)
        = N \lt\{
            \xi\lt(rr_1 + \sqrt{(1-r^2)(1-r_1^2)} \<\brho,\bx^\perp\>_N \rt)
            - \xi(rr_1)
        \rt\}
        = N \xi_{(1)}(\<\brho,\bx^\perp\>_N)
        + N\cdot O(\iota).
    \]
    The second summand is a Gaussian process on $U$ with covariance
    \[
        \EE \hH_2(\brho^1) \hH_2(\brho^2)
        = N\lt(
            \xi(r^2 + (1-r^2) \<\brho^1,\brho^2\>_N) - \xi(r^2)
        \rt)
        = N\xi_{(1)}(\<\brho^1,\brho^2\>_N).
    \]
    Thus we can couple $\hH_2$ and $\tH'$ so that $\hH_2 = \tH'$ almost surely.
\end{proof}
Define $\hq_0 = 0$ and, similarly to \eqref{eq:def-q-seq},
\[
    \hq_{k+1} = \fr{\xi'_{(1)}(\hq_k)}{1 + \xi'_{(1)}(\hq_k)}.
\]
\begin{lem}
    \label{lem:q-to-hq}
    For all $k\ge 0$, we have $q_1 + (1-q_1) \hq_k = q_{k+1}$.
\end{lem}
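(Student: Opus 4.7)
The plan is a straightforward induction on $k$, using two simple identities: the chain rule applied to the definition of $\xi_{(1)}$, and the explicit value $q_1 = t/(1+t)$.

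First I would record that, differentiating $\xi_{(1)}(s) = \xi(q_1 + (1-q_1)s) - \xi(q_1)$, one has
\[
    \xi'_{(1)}(s) = (1-q_1)\,\xi'(q_1 + (1-q_1)s).
\]
The base case $k=0$ is immediate: $q_1 + (1-q_1)\hq_0 = q_1$.

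For the inductive step, suppose $q_1 + (1-q_1)\hq_k = q_{k+1}$. By the chain-rule identity above and the induction hypothesis,
\[
    \xi'_{(1)}(\hq_k) = (1-q_1)\,\xi'(q_{k+1}).
\]
Plugging into the recursion for $\hq_{k+1}$ and simplifying,
\[
    q_1 + (1-q_1)\hq_{k+1}
    = \frac{q_1\bigl(1+\xi'_{(1)}(\hq_k)\bigr) + (1-q_1)\xi'_{(1)}(\hq_k)}{1+\xi'_{(1)}(\hq_k)}
    = \frac{q_1 + (1-q_1)^2\xi'(q_{k+1})}{1+(1-q_1)\xi'(q_{k+1})}.
\]
Substituting $q_1 = t/(1+t)$, so $1-q_1 = 1/(1+t)$, the numerator becomes $(t+\xi'(q_{k+1}))/(1+t)$ and the denominator becomes $(1+t+\xi'(q_{k+1}))/(1+t)$. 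The ratio is $\xi'_t(q_{k+1})/(1+\xi'_t(q_{k+1})) = q_{k+2}$, completing the induction.

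No step is really an obstacle here — the whole argument is a one-line algebraic verification once one notes that the tilt $t$ in $\xi_t$ and the rescaling in $\xi_{(1)}$ combine consistently because $q_1$ is precisely the AMP fixed-point value corresponding to a single step of the $\xi_t$ recursion from $0$. The only place that deserves any care is bookkeeping the factor $(1-q_1)$ picked up by the chain rule, and making sure one uses $q_1 = t/(1+t)$ (which follows from $q_1 = \xi'_t(0)/(1+\xi'_t(0))$ and $\xi'(0)=0$) at the final algebraic simplification.
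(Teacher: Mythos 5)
Your proof is correct and follows essentially the same induction as the paper's, using the chain-rule identity $\xi'_{(1)}(\hq_k) = (1-q_1)\,\xi'(q_{k+1})$ together with $q_1 = t/(1+t)$. One small slip: the intermediate numerator should be $q_1 + (1-q_1)\xi'(q_{k+1})$ rather than $q_1 + (1-q_1)^2\xi'(q_{k+1})$ (since $q_1\xi'_{(1)}(\hq_k) + (1-q_1)\xi'_{(1)}(\hq_k) = \xi'_{(1)}(\hq_k)$); your subsequent substitution giving $(t+\xi'(q_{k+1}))/(1+t)$ is consistent with the corrected expression, so the conclusion stands.
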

\begin{proof}
    We induct on $k$.
    The base case $k=0$ is trivial.
    Recalling $q_1 = \fr{t}{1+t}$, the inductive step follows from
    \baln
        q_1 + (1-q_1) \hq_{k+1}
        &= q_1 + (1-q_1) \fr{\xi'_{(1)}(\hq_k)}{1 + \xi'_{(1)}(\hq_k)}
        = 1 - (1-q_1) \lt(1 - \fr{\xi'_{(1)}(\hq_k)}{1 + \xi'_{(1)}(\hq_k)} \rt)\\
        &= 1 - \fr{1-q_1}{1 + (1-q_1) \xi'(q_{k+1})}
        = 1 - \fr{1}{1+t+\xi'(q_{k+1})} \\
        &= \fr{\xi'_t(q_{k+1})}{1+\xi'_t(q_{k+1})}
        = q_{k+2}.
    \ealn
\end{proof}
Define the AMP iteration, analogous to \eqref{eq:amp-planted-model}, on the reduced model $\hH'$, by $\hbm^{-1} = \hbw^0 = \bzero$ and
\[
    \hbm^k = (1-\hq_k) \hbw^k, \qquad
    \hbw^{k+1} = \nabla_U \hH'(\hbm^k) - (1-\hq_k)\xi''_{(1)}(\hq_k)\hbm^{k-1}.
\]
Note that $\hbm^k,\hbw^k \in U$.
\begin{ppn}[Self-reduction of AMP iterates]
    \label{ppn:self-similarity-amp}
    Let $\iota > 0$.
    Suppose $(\by_t, r_1) \in S$ for $S$ as in Proposition~\ref{ppn:self-similarity}, and couple $\cL(\hH | \by_t, r_1)$ and $\hH'$ as in that proposition.
    Then (conditionally on $\by_t, r_1$) with probability $1-e^{-cN}$, for all $1\le k\le O(1)$,
    \beq
        \label{eq:self-similarity-amp}
        \tnorm{\bm^k - \tbm^k}_N \le O(\iota), \qquad \text{where} \qquad
        \tbm^{k+1} = \sqrt{q_1} \hby_t + \sqrt{1-q_1} \hbm^k.
    \eeq
\end{ppn}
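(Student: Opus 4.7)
I would proceed by induction on $k$, working on the high-probability intersection $\cE_*$ of the events from Propositions~\ref{ppn:gradients-bounded}, \ref{ppn:amp-overlaps}, and \ref{ppn:self-similarity}. For the base case $k=1$: since $H_N$ has no constant or linear terms, $\nabla H_N(\bzero) = \bzero$, so $\bw^1 = \by_t$ and $\bm^1 = (1-q_1)\by_t$. On the event $S$, $\norm{\by_t}_N = \sqrt{t(1+t)} + O(\iota)$; using $q_1 = t/(1+t)$, one has $(1-q_1)\sqrt{t(1+t)} = \sqrt{q_1}$, so $\bm^1 = \sqrt{q_1}\hby_t + O(\iota) = \tbm^1 + O(\iota)$.

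For the inductive step, assume $\tnorm{\bm^j - \tbm^j}_N \le C\iota$ for all $j\le k$. By Proposition~\ref{ppn:gradients-bounded}, $\nabla H_N$ is $O(1)$-Lipschitz on $\{\tnorm{\bsig}_N \le 1\}$, so replacing $\bm^k$ by $\tbm^k$ and $\bm^{k-1}$ by $\tbm^{k-1}$ in the update for $\bw^{k+1}$ incurs error $O(\iota)$. Splitting $\bm^{k+1}$ into its $\hby_t$- and $U$-components, I handle the $U$-component first: writing $\tbm^k = r\hby_t + \sqrt{1-r^2}\hbm^{k-1}$ and using the chain rule for $\hH(\brho) = H_N(r\hby_t + \sqrt{1-r^2}\brho) - H_N(r\hby_t)$ gives $\sqrt{1-r^2}\Proj_U \nabla H_N(\tbm^k) = \nabla_U \hH(\hbm^{k-1})$, which by Proposition~\ref{ppn:self-similarity} equals $\nabla_U \hH'(\hbm^{k-1}) + O(\iota)$. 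Combining Lemma~\ref{lem:q-to-hq} (giving $1-q_{k+1} = (1-q_1)(1-\hq_k)$ and $1-q_k = (1-q_1)(1-\hq_{k-1})$) with the direct computation $\xi''_{(1)}(\hq_{k-1}) = (1-q_1)^2 \xi''(q_k)$, the two Onsager contributions match exactly, yielding $\Proj_U \bm^{k+1} = \sqrt{1-r^2}\hbm^k + O(\iota) = \Proj_U \tbm^{k+1} + O(\iota)$.

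For the $\hby_t$-component, I would invoke state evolution to show $\la\bm^{k+1}, \hby_t\ra_N = r + O(\iota)$. The key observation is that $\bw^1 = \by_t$, so Proposition~\ref{ppn:state-evolution} combined with Lemma~\ref{lemma:gamma_vs_q} yields $\la\bw^{k+1}, \bw^1\ra_N \to \Sigma_{k+1,1} + \gamma_{k+1}\gamma_1 = t/(1-q_{k+1})$ and $\tnorm{\bw^1}_N \to \sqrt{t(1+t)}$, whence $\la\bm^{k+1}, \hby_t\ra_N = (1-q_{k+1}) \la\bw^{k+1}, \bw^1\ra_N / \tnorm{\bw^1}_N \to \sqrt{q_1} = r$. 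Upgrading this convergence-in-probability statement to $1-e^{-cN}$ concentration uses the Lipschitz-in-disorder property of AMP iterates together with Gaussian concentration of measure, exactly as in the proof of Proposition~\ref{ppn:amp-overlaps}.

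The main obstacle is the Onsager cancellation in the $U$-component: the two reparametrization identities --- the change of variables $\xi \to \xi_{(1)}$ and the correspondence $(q_k) \leftrightarrow (\hq_k)$ of Lemma~\ref{lem:q-to-hq} --- must combine precisely so that $(1-q_{k+1})(1-q_k)\xi''(q_k) = (1-\hq_k)(1-\hq_{k-1})\xi''_{(1)}(\hq_{k-1})$, reflecting that the Onsager correction tailored to $(\xi,q_k)$ is intrinsically compatible with the reduced model $(\xi_{(1)},\hq_{k-1})$ under the embedding. The parallel component is comparatively routine once state evolution identifies the relevant covariances in closed form.
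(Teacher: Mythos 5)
Your proof is correct and follows essentially the same route as the paper: induction with the identical base case, the chain-rule identity $\sqrt{1-q_1}\,\Proj_U\nabla H_N(\tbm^K)=\nabla_U\hH(\hbm^{K-1})$ combined with the coupling of Proposition~\ref{ppn:self-similarity}, Lemma~\ref{lem:q-to-hq}, and state evolution upgraded to exponential concentration via Lipschitz-in-disorder. The only difference is presentational: where the paper closes the inductive step tersely by noting $\bm^{K+1}$ and $\tbm^{K+1}$ lie in spans of pairwise-close vectors with matching overlap structure \eqref{eq:state-evolution-event}, you make the underlying coefficient matching explicit via the Onsager identity $(1-q_{k+1})(1-q_k)\xi''(q_k)=(1-\hq_k)(1-\hq_{k-1})\xi''_{(1)}(\hq_{k-1})$, which is a correct and arguably more transparent rendering of the same step.
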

\begin{proof}
    We induct on the claim that \eqref{eq:self-similarity-amp} holds for all $1\le k\le K$.
    First, we have
    \beq
        \label{eq:bm1-calculation}
        \bm^1 = (1-q_1) \by_t = \fr{\by_t}{1+t}, \qquad
        \tbm^1 = \sqrt{q_1} \hby_t = \sqrt{\fr{t}{1+t}} \hby_t.
    \eeq
    For $(\by_t, r_1) \in S$, we have $|\norm{\by_t}_N - \sqrt{t(1+t)}| \le \iota$, and thus
    \[
        \tnorm{\bm^1 - \tbm^1}_N
        = \bigg|\fr{\sqrt{\norm{\by_t}_N}}{1+t} - \sqrt{\fr{t}{1+t}}\bigg|
        \le \fr{\iota}{1+t}.
    \]
    This proves the base case $K=1$.
    Suppose \eqref{eq:self-similarity-amp} holds for $1\le k\le K$.
    By Proposition~\ref{ppn:state-evolution}, for all $1\le j,k \le K+1$,
    \[
        \<\bm^j,\bm^k\>_N \to_p q_{j\wedge k}, \qquad
        \<\hbm^j,\hbm^k\>_N \to_p \hq_{j\wedge k},
    \]
    and thus, by Lemma~\ref{lem:q-to-hq},
    \[
        \<\tbm^j,\tbm^k\>_N \to_p q_1 + (1-q_1) \hq_{(j-1)\wedge (k-1)} = q_k.
    \]
    Because AMP iterates are Lipschitz in the disorder (see the proof of Proposition~\ref{ppn:amp-overlaps}), on an event with probability $1-e^{-cN}$,
    \beq
        \label{eq:state-evolution-event}
        \<\bm^j,\bm^{k}\>_N, \<\tbm^j,\tbm^{k}\>_N \in [q_{j\wedge k} - \iota, q_{j\wedge k} + \iota]
    \eeq
    for all $1\le j,k \le K+1$.
    Since $\bm^1$ is a multiple of $\by_t = \nabla H_{N,t}(\bzero)$,
    \[
        \bm^{K+1} \in \spn(\bm^1,\ldots,\bm^K,\nabla H_{N,t}(\bm^K))
        = \spn(\bm^1,\ldots,\bm^K,\nabla_U H_N(\bm^K)).
    \]
    As
    \[
        \hbm^K \in \spn(\hbm^1,\ldots,\hbm^{K-1},\nabla_U \hH'(\hbm^{K-1})),
    \]
    we have
    \[
        \tbm^{K+1} \in \spn(\tbm^1,\ldots,\tbm^K,\nabla_U \hH'(\hbm^{K-1})).
    \]
    Note that $\sqrt{1-q_1} \nabla_U H_N(\tbm^K) = \nabla_U \hH(\hbm^{K-1})$.
    Thus (on an event where $\nabla H_N$ is $O(1)$-Lipschitz, and the event in Proposition~\ref{ppn:self-similarity}, both of which are probability $1-e^{-cN}$)
    \baln
        \norm{\sqrt{1-q_1} \nabla_U H_N(\bm^K) - \nabla_U \hH'(\hbm^{K-1})}_N
        &\le \sqrt{1-q_1} \norm{\nabla_U H_N(\bm^K) - \nabla_U H_N(\tbm^K)}_N \\
        &+ \norm{\nabla_U \hH(\hbm^{K-1}) - \nabla_U \hH'(\hbm^{K-1})}_N
        = O(\iota).
    \ealn
    This and \eqref{eq:state-evolution-event} imply $\tnorm{\bm^{K+1}-\tbm^{K+1}}_N = O(\iota)$, completing the induction.
\end{proof}
\begin{ppn}
    \label{ppn:band-recursion-dominate-gibbs}
    For all $\iota > 0$ and $k \ge 1$ fixed, the following holds.
    Let
    \[
        V_k(\iota) = \lt\{
            \bsig \in S_N:
            |\<\bsig, \bm^j\>_N - q_j| \le \iota, \quad \forall 1\le j\le k
        \rt\}.
    \]
    Then, with probability $1-e^{-cN}$,
    \[
        \mu_t(V_k(\iota)) \ge 1 - e^{-cN}.
    \]
\end{ppn}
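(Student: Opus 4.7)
The plan is to induct on $k$, at each step invoking the one-step self-reduction (Propositions~\ref{ppn:self-similarity} and \ref{ppn:self-similarity-amp}) to pass from the original model to the reduced Hamiltonian $\hH'$ on the subspace $U$, and to apply the inductive hypothesis to this reduced model. The reduction preserves condition \eqref{eq:amp-works}: a direct calculation gives $\xi_{(1)}''(s) = (1-q_1)^2\,\xi''(q_1 + (1-q_1)s) < 1/(1-s)^2$ for $s\in[0,1)$, so the proposition at the reduced level is a legitimate inductive hypothesis, provided one quantifies over all mixtures satisfying \eqref{eq:amp-works} and all external field parameters $t\ge 0$.

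The base case $k=1$ is direct. Because the Hamiltonian \eqref{eq:def-HN} contains only $p\ge 2$ interaction terms, $\nabla H_N(\bzero)=\bzero$, so the first AMP iterate is $\bm^1 = (1-q_1)\by_t = \by_t/(1+t)$, and $\<\bsig,\bm^1\>_N = \<\bsig,\by_t\>_N/(1+t)$. Lemma~\ref{lem:band-recursion-one-step} applied with tolerance $(1+t)\iota$ then gives $\mu_t(V_1(\iota)) \ge 1-e^{-cN}$ with probability $1-e^{-cN}$.

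For the inductive step, fix $\iota>0$ and introduce auxiliary thresholds $\iota'', \iota_\star>0$ to be shrunk in terms of $\iota$. Lemma~\ref{lem:band-recursion-one-step} places all but an $e^{-cN}$ fraction of $\mu_t$ on $\Band(\by_t,[t-\iota'',t+\iota''])$, where $r := \<\bsig,\hby_t\>_N = \sqrt{q_1}+O(\iota'')$. Parametrising $\bsig = r\hby_t + \sqrt{1-r^2}\,\brho$ with $\brho\in U\cap S_N$, the conditional law of $\brho$ given $r$ is $\propto e^{\hH(\brho)}$, and Proposition~\ref{ppn:self-similarity} couples $\hH$ to $\hH'$ so that $\sup_\brho|\hH(\brho)-\hH'(\brho)|/N \le C\iota''$, yielding the Radon--Nikodym bound $\de\mu_{\hH}/\de\mu_{\hH'}\le e^{2C\iota''N}$. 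The reduced model $\hH'$ is a planted spin glass of the same form (with mixture $\xi_{(1)}$ and effective external field parameter $t' = (1-q_1)\xi'(q_1)$, absorbing the linear part of $\xi_{(1)}$), so the inductive hypothesis at level $k-1$ and tolerance $\iota_\star$ gives, with probability $1-e^{-c_\star N}$, mass $\ge 1-e^{-c_\star N}$ for $\mu_{\hH'}$ on $\{|\<\brho,\hbm^j\>_N - \hq_j|\le \iota_\star,\ 1\le j\le k-1\}$. Taking $\iota'' < c_\star/(4C)$ transfers this concentration to $\mu_{\hH}$ with error $e^{-c_\star N/2}$. Finally, Proposition~\ref{ppn:self-similarity-amp} yields $\|\bm^{j+1}-\tbm^{j+1}\|_N = O(\iota'')$ with $\tbm^{j+1} = \sqrt{q_1}\hby_t+\sqrt{1-q_1}\hbm^j$, hence
\begin{align*}
\<\bsig,\bm^{j+1}\>_N \;\approx\; \sqrt{q_1}\,r + \sqrt{(1-q_1)(1-r^2)}\,\<\brho,\hbm^j\>_N \;\approx\; q_1+(1-q_1)\hq_j \;=\; q_{j+1}
\end{align*}
by Lemma~\ref{lem:q-to-hq}, and tuning $\iota_\star,\iota''$ small enough in $\iota$ closes the induction.

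The main obstacle is the Radon--Nikodym factor $e^{O(\iota'')N}$ from replacing $\hH$ by $\hH'$: this is exponentially large in $N$ and would swamp the $e^{-c_\star N}$ concentration supplied by the inductive hypothesis unless $\iota''$ is chosen much smaller than $c_\star(\iota_\star)$. The nested choice of constants $\iota \to \iota_\star \to \iota'' \to \cdots$ is permissible because $k$ is fixed and the recursion has only finitely many levels, but the final constant $c(\iota,k)$ degrades rapidly in $k$ — which is harmless since we only need positivity.
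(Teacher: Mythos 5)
Your proof is correct and follows essentially the same route as the paper's: induction on $k$, base case via Lemma~\ref{lem:band-recursion-one-step} and $\bm^1=\by_t/(1+t)$, and an inductive step that applies the hypothesis to the reduced model $\hH'$ with mixture $\xi_{(1)}$ and transfers the concentration back through the coupling of Proposition~\ref{ppn:self-similarity} (your Radon--Nikodym bound $e^{O(\iota'')N}$ beaten by $e^{-c_\star N}$ is exactly the paper's ``for $\iota_1$ small enough in $\iota$'' step), finishing with Proposition~\ref{ppn:self-similarity-amp} and Lemma~\ref{lem:q-to-hq}. Your explicit check that $\xi_{(1)}$ inherits condition \eqref{eq:amp-works} and your remark that the induction must be quantified over all admissible mixtures and field strengths are details the paper leaves implicit, and are welcome.
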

\begin{proof}
    We induct on $k$.
    By Lemma~\ref{lem:band-recursion-one-step}, with probability $1-e^{-cN}$,
    \beq
        \label{eq:amp-dominate-gibbs-induction}
        \mu_t(\Band(\by_t,[t-\iota,t+\iota])) \ge 1-e^{-cN}.
    \eeq
    As calculated in \eqref{eq:bm1-calculation}, $\bm^1 = \by_t/(1+t)$, so $\bsig \in \Band(\by_t, [t-\iota,t+\iota])$ if and only if
    \[
        \<\bsig, \bm^1\>_N = \fr{t}{1+t} + O(\iota) = q_1 + O(\iota).
    \]
    This proves the base case $k=1$ after adjusting $\iota$ by a constant factor.

    For the inductive step, let $\iota_1$ be suitably small in $\iota$.
    Let $S_1$ be the event \eqref{eq:def-self-similarity-event} with right-hand side $\iota_1$.
    By Proposition~\ref{ppn:self-similarity}, $(\by_t,r_1) \in S_1$ with probability $1-e^{-cN}$.
    Condition on any such $(\by_t,r_1)$.
    Along with \eqref{eq:amp-dominate-gibbs-induction}, this implies
    \[
        \mu_t(\Band(\hby_t,[\sqrt{q_1}-C\iota_1,\sqrt{q_1}+C\iota_1])) \ge 1-e^{-cN}
    \]
    for suitable $C$.
    For $r_2 \in [\sqrt{q_1} - C\iota_1, \sqrt{q_1} + C\iota_1]$, let $\hmu_t^{r_2}$ be the Gibbs measure on $U \cap S_N$ given by
    \[
        \hmu_t^{r_2}
        = Q_{\#} \mu_t(\cdot | \<\bsig,\hby_t\>_N = r_2), \qquad \text{where} \qquad
        Q(\bsig) = \fr{P_{\hby_t}^\perp(\bsig)}{\tnorm{P_{\hby_t}^\perp(\bsig)}_N}.
    \]
    Note that $\hmu_t^{\sqrt{q_1}}$ is the Gibbs measure on $U \cap S_N$ corresponding to Hamiltonian $\hH$.
    Couple $\hH$ and $\hH'$ as in Proposition~\ref{ppn:self-similarity}, and let $\hmu'_t$ be the Gibbs measure on $U \cap S_N$ corresponding to Hamiltonian $\hH'$.

    By the inductive hypothesis \textbf{applied to Hamiltonian $\hH'$ and mixture $\xi_{(1)}$}, with probability $1-e^{-cN}$, $\hmu'_t(\hV_k(\iota)) \ge 1-e^{-cN}$, where
    \[
        \hV_k(\iota) = \lt\{
            \brho \in U \cap S_N:
            |\<\brho, \hbm^j\>_N - \hq_j| \le \iota, \quad \forall 1\le j\le k
        \rt\}.
    \]
    By Proposition~\ref{ppn:self-similarity},
    \[
        \fr1N \sup_{\brho \in U \cap S_N}
        |\hH(\brho) - \hH'(\brho)| \le \iota_1.
    \]
    For $\iota_1$ small enough in $\iota$, this implies
    \[
        \hmu_t^{\sqrt{q_1}}(\hV_k(2\iota)) \ge 1-e^{-cN}.
    \]
    By Lipschitz continuity of $H_{N,t}$, for $\iota_1$ small enough in $\iota$, we have
    \[
        \hmu_t^{r_2}(\hV_k(3\iota)) \ge 1-e^{-cN}, \qquad
        \forall r_2 \in [\sqrt{q_1} - C\iota_1, \sqrt{q_1} + C\iota_1].
    \]
    This implies $\mu_t(\tV_{k+1}(4\iota)) \ge 1-e^{-cN}$, where
    \[
        \tV_k(\iota)
        = \lt\{
            \bsig \in S_N :
            |\<\bsig, \tbm^j\>_N - q_j| \le \iota, \quad \forall 1\le j\le k
        \rt\}.
    \]
    However, by Proposition~\ref{ppn:self-similarity-amp}, with probability $1-e^{-cN}$, $\tnorm{\bm^j - \tbm^j}_N \le \iota$ for all $1\le j\le k+1$.
    On this event, $\tV_{k+1}(4\iota) \subseteq V_k(5\iota)$.
    Thus $\mu_t(V_k(5\iota)) \ge 1-e^{-cN}$.
    This completes the induction, upon adjusting $\iota$.
\end{proof}

\begin{proof}[Proof of Proposition~\ref{ppn:amp-dominate-gibbs-1}]
    Let
    \[
        V^+_k(\iota) = \lt\{
            \bsig \in S_N:
            |\<\bsig, \bm^k\>_N - q_k| \le \iota
        \rt\},
    \]
    so clearly $V^+_k(\iota) \supseteq V_k(\iota)$.
    By Proposition~\ref{ppn:amp-overlaps}, for all $k\ge k_0$ we have $|q_k - q_\ast| \le \iota$.
    Thus
    \[
        \Band(\bm^k,[q_\ast-2\iota,q_\ast+2\iota]) \supseteq V^+_k(\iota).
    \]
    By Proposition~\ref{ppn:band-recursion-dominate-gibbs}, with probability $1-e^{-cN}$,
    \[
        \mu_t(\Band(\bm^k,[q_\ast-2\iota,q_\ast+2\iota])) \ge 1-e^{-cN}.
    \]
    The result follows by adjusting $\iota$.
\end{proof}

\subsection{Overlap with planted signal}

The following proposition completes the proof of \eqref{eq:amp-dominate-gibbs}.
\begin{ppn}
    \label{ppn:amp-dominate-gibbs-2}
    Let $\iota > 0$ and $I = I(\iota)$.
    With probability $1-e^{-cN}$,
    \[
        \mu_t(\Band(\bx,I)) \ge 1-e^{-cN}.
    \]
\end{ppn}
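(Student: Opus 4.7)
My plan is to reduce the claim to a two-replica overlap concentration statement via Nishimori's identity and then combine it with Proposition~\ref{ppn:amp-dominate-gibbs-1} and the self-reduction machinery already developed. Under $\bbP$, the planted $\bx$ and an independent posterior sample $\bsig\sim\mu_t$ are conditionally iid given $(H_N,\by_t)$, so $(\bx,\bsig,H_N,\by_t)\stackrel{d}{=}(\bsig^1,\bsig^2,H_N,\by_t)$ for iid draws $\bsig^1,\bsig^2\sim\mu_t$. Fact~\ref{fac:nishimori} then gives
\[
    \EE\,\mu_t(\Band(\bx,I)^c)
    = \PP(|\<\bx,\bsig\>_N - q_\ast|>\iota)
    = \EE \la \ind\{|\<\bsig^1,\bsig^2\>_N - q_\ast|>\iota\}\ra_{\mu_t^{\otimes 2}},
\]
and Markov's inequality reduces the proposition to a quenched exponential concentration of the two-replica overlap at $q_\ast$.

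For this two-replica bound, I would apply Proposition~\ref{ppn:amp-dominate-gibbs-1} to each replica separately: on a disorder event of probability $1-e^{-cN}$, the joint mass $\mu_t^{\otimes 2}(\bsig^1,\bsig^2\in\Band(\bm^k,I(\iota/3)))$ is at least $1-e^{-cN}$. On this intersection, decompose $\bsig^i=\alpha^i\bm^k+\bsig^{i,\perp}$ with $\bsig^{i,\perp}\perp\bm^k$; Proposition~\ref{ppn:amp-overlaps} gives $\|\bm^k\|_N^2\approx q_\ast$ and $\<\bsig^i,\bm^k\>_N\approx q_\ast$, forcing $\alpha^i\approx 1$, so
\[
    \<\bsig^1,\bsig^2\>_N
    = \alpha^1\alpha^2\,\|\bm^k\|_N^2 + \<\bsig^{1,\perp},\bsig^{2,\perp}\>_N
    \approx q_\ast + \<\bsig^{1,\perp},\bsig^{2,\perp}\>_N.
\]
It remains to show $\<\bsig^{1,\perp},\bsig^{2,\perp}\>_N$ concentrates at $0$. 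For this I would invoke Propositions~\ref{ppn:self-similarity} and \ref{ppn:self-similarity-amp}: the restriction of $\mu_t$ to the band around $\hby_t$ is well-approximated by the Gibbs measure of $\hH'$ with mixture $\xi_{(1)}$ on a subsphere. A short computation shows $\xi_{(1)}$ still satisfies \eqref{eq:amp-works}, and the reduced target overlap $\hat q_\ast=(q_\ast-q_1)/(1-q_1)$ pulls back to $q_\ast$ on the original model (as in Lemma~\ref{lem:q-to-hq}). Iterating this self-reduction, mirroring the induction of Proposition~\ref{ppn:band-recursion-dominate-gibbs}, drives the free subsphere dimension down until the residual orthogonal overlap is forced near $0$ deterministically.

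The principal obstacle is propagating the $e^{-cN}$ bound through this chain without losing the exponential rate: Markov costs one exponential factor, and each self-reduction step introduces an approximation error from Proposition~\ref{ppn:self-similarity}. A direct $L^2$ estimate using $\Var_{\mu_t^{\otimes 2}}(\<\bsig^1,\bsig^2\>_N)=\|\Cov(\mu_t)\|_{\mathrm{F}}^2/N^2$, combined with the operator-norm-type bound from Proposition~\ref{ppn:BasicSL}, gives only polynomial concentration, which is not enough. To boost to exponential concentration I would combine the $L^2$ bound with Gaussian concentration in the disorder (exploiting the Lipschitz dependence of the AMP iterates on the disorder, as in the proof of Proposition~\ref{ppn:amp-overlaps}). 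An alternative strategy, which I find cleaner, is to track bands $\Band(\bx,I)$ directly in tandem with $\Band(\bm^j,I)$ throughout the recursion of Proposition~\ref{ppn:band-recursion-dominate-gibbs}, using the Nishimori identity of Lemma~\ref{lem:band-recursion-one-step} together with the state-evolution estimate $\<\bx,\bm^k\>_N\approx q_\ast$ as the base case and letting each self-reduction step simultaneously inherit the planted direction via Proposition~\ref{ppn:self-similarity-amp}.
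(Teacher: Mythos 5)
Your reduction via Nishimori to two--replica overlap concentration is legitimate (the planted $\bx$ and a posterior sample are exchangeable under $\bbP$, and Markov's inequality converts an annealed $e^{-cN}$ bound into the quenched statement at a halved rate). The gap is in the second half: you have no valid argument that $\<\bsig^{1,\perp},\bsig^{2,\perp}\>_N$ concentrates at $0$. The claim that iterating the self-reduction ``drives the free subsphere dimension down until the residual orthogonal overlap is forced near $0$ deterministically'' is false: after $k$ self-reduction steps the free subsphere still has dimension $N-O(k)$, and two unit vectors on it can have any overlap in $[-1,1]$; what the recursion controls is only the overlap with the observable AMP directions $\bm^1,\dots,\bm^k$, never the overlap with the hidden signal beyond what those directions explain. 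Ruling out a macroscopic orthogonal overlap is precisely an overlap-concentration (replica symmetry) statement for the band model, and none of Propositions~\ref{ppn:amp-dominate-gibbs-1}, \ref{ppn:self-similarity}, \ref{ppn:self-similarity-amp} or \ref{ppn:band-recursion-dominate-gibbs} supplies it. Your proposed rescue also fails: the Gibbs average $\la \ind\{|\<\bsig^1,\bsig^2\>_N-q_\ast|>\iota\}\ra$ is only an $O(\sqrt{N})$-Lipschitz function of the disorder Gaussians (its gradient involves covariances with $\partial_g H_N$, which sum to order $N$ in square), so Gaussian concentration in the disorder gives fluctuations at scale $O(\sqrt N)$ --- vacuous for a $[0,1]$-valued quantity --- and cannot boost an $L^2$ bound to an exponential one.

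The paper proves this proposition by an entirely different, first-moment route that you should compare against: it slices $S_N$ into bands $\Band(\bx,q)$, observes that the restricted Hamiltonian on each band is a spin glass in one fewer dimension, and applies the replica-symmetric upper bound from the spherical Parisi formula (Proposition~\ref{ppn:parisi-rs-ub}) to get $\plim \fr1N\log Z(q)\le h(q)$ with equality at $q=q_\ast$ (Proposition~\ref{ppn:band-model-rs-ub}); Lemma~\ref{lem:band-fe-maximization}, which is where hypothesis \eqref{eq:amp-works} enters, shows $h$ is uniquely maximized at $q_\ast$, and a discretization plus concentration of $\fr1N\log Z(q)$ yields the exponential bound. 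Some input of this Parisi-formula type is unavoidable here: condition \eqref{eq:amp-works} is exactly what makes the band free energy strictly concave in the overlap, and no amount of AMP state evolution or self-reduction can substitute for it, since those tools only see the algorithmically accessible directions.
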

\begin{lem}
    \label{lem:band-fe-maximization}
    The function
    \[
        f(q) = \xi_t(q) + q + \log(1-q)
    \]
    is maximized over $[0,1]$ uniquely at $q=q_\ast$.
\end{lem}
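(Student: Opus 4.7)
The plan is to show $f$ is strictly concave on $[0,1)$ and that $q_\ast$ is its unique critical point there, so the unique interior critical point must be the global maximum (the boundary $q=1$ being ruled out because $\log(1-q)\to-\infty$).

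First I would differentiate: since $\xi_t(q)=\xi(q)+tq$, we have
\[
f'(q)=\xi'_t(q)+1-\frac{1}{1-q}=\xi'_t(q)-\frac{q}{1-q},
\qquad
f''(q)=\xi''_t(q)-\frac{1}{(1-q)^2}=\xi''(q)-\frac{1}{(1-q)^2}.
\]
By hypothesis \eqref{eq:amp-works}, $f''(q)<0$ for every $q\in[0,1)$, so $f$ is strictly concave on $[0,1)$. Consequently $f$ has at most one critical point in $[0,1)$, and any such critical point is the unique global maximizer on $[0,1)$.

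Next I would observe that the equation $f'(q)=0$ is precisely $\xi'_t(q)=q/(1-q)$, whose unique solution on $[0,1)$ is $q_\ast$ by Fact~\ref{fac:qt-unique}. Thus $q_\ast$ is the unique critical point and hence the unique maximizer on $[0,1)$. To rule out the endpoint, note $\lim_{q\to 1^-}f(q)=-\infty$ (the $\log(1-q)$ term dominates the bounded $\xi_t(q)+q$), so the supremum over $[0,1]$ is attained strictly inside $[0,1)$, and therefore at $q_\ast$ uniquely.

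There is essentially no obstacle here: the only subtlety is the edge case $t=0$, in which $f'(0)=t=0$ so $q_\ast=0$ sits at the boundary, but the strict concavity argument applies unchanged and still gives uniqueness of the maximizer at $q_\ast=0$. All other steps are direct consequences of \eqref{eq:amp-works} and Fact~\ref{fac:qt-unique}.
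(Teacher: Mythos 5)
Your proof is correct and follows essentially the same route as the paper: compute $f'(q)=\xi'_t(q)-\tfrac{q}{1-q}$ and $f''(q)=\xi''(q)-\tfrac{1}{(1-q)^2}$, invoke \eqref{eq:amp-works} for strict concavity and \eqref{eq:def-qt} (via Fact~\ref{fac:qt-unique}) for stationarity at $q_\ast$. Your added remarks about the endpoint $q=1$ and the $t=0$ edge case are fine but not needed beyond what the paper records.
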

\begin{proof}
    We calculate
    \[
        f'(q) = \xi'_t(q) - \fr{q}{1-q}, \qquad
        f''(q) = \xi''(q) - \fr{1}{(1-q)^2}.
    \]
    By \eqref{eq:def-qt}, $f$ is stationary at $q_\ast$.
    By \eqref{eq:amp-works}, it is concave on $[0,1)$.
\end{proof}
We will use the following replica-symmetric upper bound on the free energy.
Let $\hH_N$ be the Hamiltonian a spherical spin glass with mixture $\hxi$, which may contain a degree-$1$ term
(i.e., possibly $\hxi'(0) > 0$).

Define the partition function
\beq
    \label{eq:def-hZ}
    \hZ_N = \int_{S_N} \exp \big\{\hH_N(\bsig)\big\} ~\mu_0(\de \bsig).
\eeq
\begin{ppn}
    \label{ppn:parisi-rs-ub}
    For any $u\in [0,1)$, we have
    \beq
        \label{eq:parisi-rs-ub}
        \plim_{N\to\infty} \fr1N \log \hZ_N
        \le
        \fr12 \lt(
            \hxi(1) - \hxi(u) + \fr{u}{1-u} + \log(1-u)
        \rt).
    \eeq
    Furthermore, equality holds if
    \beq
        \label{eq:parisi-extremality-1}
        g(s) = \int_0^s \lt(\hxi'(r) - \fr{r}{(1-u)^2}\rt) ~\de r
    \eeq
    is maximized over $s\in [0,u]$ at $s=u$, and $\hxi_u(s) = \hxi(u + (1-u)s) - \hxi(u) - (1-u)\hxi'(u)s$ satisfies
    \beq
        \label{eq:parisi-extremality-2}
        \hxi_u(s) + s + \log(1-s) \le 0
    \eeq
    for all $s\in [0,1)$.
\end{ppn}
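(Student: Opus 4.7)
\textbf{The plan} is to establish the upper bound via Guerra's replica-symmetric interpolation combined with a calculus lemma, and to obtain the matching lower bound via a ``restricted band'' decomposition that reduces the problem to a zero-external-field RS spin glass.

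\textbf{Step 1 (Guerra interpolation).} For $s\in[0,1]$, introduce
\[
\cH_s(\bsig) = \sqrt{s}\,\hH_N(\bsig) + \sqrt{1-s}\,\sqrt{\hxi'(u)}\,\la \bz,\bsig\ra,
\qquad \bz\sim\cN(0,\bI_N),
\]
independent of the disorder of $\hH_N$, and let $\Psi(s) = \tfrac{1}{N}\bbE\log\int_{S_N}e^{\cH_s(\bsig)}\mu_0(\de\bsig)$. Gaussian integration by parts on $\Psi'(s)$ yields
\[
\Psi'(s) = \tfrac12\,\bbE\big\la \hxi(1) - \hxi(R_{12}) - \hxi'(u)(1-R_{12})\big\ra_s,
\]
with $R_{12} = \la\bsig^1,\bsig^2\ra_N$ and $\la\cdot\ra_s$ the two-replica Gibbs average for $\cH_s$. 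Convexity of $\hxi$ (all coefficients nonnegative) gives the tangent bound $\hxi(R_{12}) \ge \hxi(u) + \hxi'(u)(R_{12}-u)$, hence $\Psi'(s) \le \tfrac12[\hxi(1) - \hxi(u) - (1-u)\hxi'(u)]$. Integrating yields $\Psi(1) \le \Psi(0) + \tfrac12[\hxi(1) - \hxi(u) - (1-u)\hxi'(u)]$.

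\textbf{Step 2 (Linear endpoint and a calculus lemma).} At $s=0$, the Hamiltonian is purely linear; aligning $\bz$ with a pole and integrating via Laplace on the polar angle gives, with $v:=\hxi'(u)$,
\[
\Psi(0) \;=\; \phi(v) + o(1), \qquad
\phi(v) := \sup_{q\in[0,1)} \Big[\sqrt{v\,q} + \tfrac12\log(1-q)\Big].
\]
A direct calculus check shows that, for any fixed $v\ge 0$, the function
\[
F(u,v) := \tfrac12\Big[\tfrac{u}{1-u} + (1-u)v + \log(1-u)\Big]
\]
is minimized in $u$ at the point $u_\ast = u_\ast(v)$ solving $v = u/(1-u)^2$, with minimum value exactly $\phi(v)$ (using $\sqrt{v u_\ast} = u_\ast/(1-u_\ast)$). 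Hence $\phi(\hxi'(u)) \le F(u,\hxi'(u))$ for every $u$. Plugging this into Step~1 and cancelling the $(1-u)\hxi'(u)$ term gives exactly \eqref{eq:parisi-rs-ub}.

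\textbf{Step 3 (Matching lower bound under extremality).} To prove equality under \eqref{eq:parisi-extremality-1}--\eqref{eq:parisi-extremality-2}, fix any $\bx\in S_N$ and lower-bound $\hZ_N$ by the contribution of the band $\Band(\bx,[u-\iota,u+\iota])$ (taking $\iota\to 0$ at the end). Parameterize $\bsig = r\hat\bx + \sqrt{1-r^2}\,\brho$ with $\brho$ on the unit sphere of $\bx^\perp$; using the self-reduction principle of Proposition~\ref{ppn:self-similarity}, $\hH_N(\bsig)$ decomposes (up to $o(N)$) into a Gaussian contribution along $\bx$ and an independent spin glass on the $(N-1)$-sphere with mixture $\hxi_u(s) = \hxi(u + (1-u)s) - \hxi(u) - (1-u)\hxi'(u)s$ (which has $\hxi_u(0) = \hxi_u'(0) = 0$, i.e., no linear term). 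Condition \eqref{eq:parisi-extremality-2} is the Talagrand RS criterion \cite{talagrand2006spherical} for $\hxi_u$, so the inner spin-glass free energy equals $\tfrac12\hxi_u(1) = \tfrac12[\hxi(1) - \hxi(u) - (1-u)\hxi'(u)]$. The radial Laplace integral over $r$ contributes the $\tfrac12[u/(1-u) + \log(1-u)]$ term (plus an extra $\tfrac12(1-u)\hxi'(u)$ that matches and cancels correctly); condition \eqref{eq:parisi-extremality-1} is exactly what is needed for the saddle point in $r$ to lie at $r=\sqrt{u}$ and dominate over $s\in[0,u]$. Summing yields $\plim\tfrac{1}{N}\log\hZ_N \ge F(u)$, matching the upper bound.

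\textbf{Main obstacle.} Step~1 is standard Gaussian IBP, Step~2 is elementary. The bulk of the work is Step~3: correctly identifying the residual mixture $\hxi_u$ from the self-reduction, reducing the outer radial integral to a saddle dominated at $r=\sqrt{u}$ under precisely \eqref{eq:parisi-extremality-1}, and invoking the RS result of \cite{talagrand2006spherical} for the inner (zero-field) model under \eqref{eq:parisi-extremality-2}. The conditions \eqref{eq:parisi-extremality-1}--\eqref{eq:parisi-extremality-2} are designed exactly to remove all slack in these two steps.
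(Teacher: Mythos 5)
Your route is genuinely different from the paper's: the paper proves this proposition in two lines, by observing that \eqref{eq:parisi-rs-ub} is the spherical Parisi formula of \cite{talagrand2006spherical} evaluated at the one-atom order parameter $\delta_u$ (any admissible order parameter yields an upper bound), and that the equality criterion is exactly Talagrand's extremality condition (his Proposition 2.1). Your self-contained derivation of the upper bound (Steps 1--2) is essentially the standard Guerra RS bound, and the calculus step $\phi(\hxi'(u))\le F(u,\hxi'(u))$ correctly recovers \eqref{eq:parisi-rs-ub}; but it relies on the tangent inequality $\hxi(R_{12})\ge\hxi(u)+\hxi'(u)(R_{12}-u)$ for \emph{all} $R_{12}\in[-1,1]$. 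Since $\hxi$ may contain odd-degree terms (a degree-$1$ term is explicitly allowed, and the mixtures $\hxi(s)=\xi_t(q^2+(1-q^2)s)-\xi_t(q^2)$ arising in the application have odd coefficients), $\hxi$ need not be convex on $[-1,0]$: for $\hxi(s)=s^3$ one has $\hxi(R)-\hxi(u)-\hxi'(u)(R-u)=(R-u)^2(R+2u)<0$ when $R<-2u$. Controlling negative overlaps is precisely the obstruction the cited literature handles (overlap positivity via Ghirlanda--Guerra perturbations), and your argument is silent on it.

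The more serious gap is in Step 3. Writing $\bsig=r\sqrt{N}\hat\bx+\sqrt{1-r^2}\brho$ on the band, the restricted process in $\brho$ has covariance $N\hxi(r^2+(1-r^2)Q_{12})$, i.e.\ mixture $\txi(s)=\hxi(r^2+(1-r^2)s)-\hxi(r^2)$, whose linear coefficient $(1-r^2)\hxi'(r^2)$ is strictly positive. The inner model is therefore \emph{not} the zero-field spin glass with mixture $\hxi_u$: it carries an effective random external field, so \eqref{eq:parisi-extremality-2} (Talagrand's zero-field RS criterion) cannot be invoked for it directly, and removing that field by a further conditioning leads back to a statement of the same type as the one being proved. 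In addition, "fix any $\bx\in S_N$" is not permissible when $\hxi'(0)>0$ --- the band direction must be adapted to the external field, else the residual field on $\bx^\perp$ is unaccounted for --- and Proposition~\ref{ppn:self-similarity} concerns the planted pair $(H_N,\by_t)$ conditioned on $(\by_t,r_1)$, so it does not furnish the decomposition you invoke for a generic $\hH_N$. As written, Step 3 does not establish the equality case; the clean way to get it is the one the paper takes, namely Talagrand's extremality conditions.
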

\begin{proof}
    The bound \eqref{eq:parisi-rs-ub} is the spherical Parisi formula \cite[Theorem 1.1]{talagrand2006spherical} with order parameter $\delta_u$.
    The equality condition follows from the extremality condition \cite[Proposition 2.1]{talagrand2006spherical}.
\end{proof}
Let $H_{N,t}$ be as in \eqref{eq:planted-rewrite}.
Let $\psi_N$ denote the probability density of $z_1$, where $\bz$ is a sample from the uniform Haar measure on the unit sphere $\bbS^{N-1}$.
It is known that
\beq
    \label{eq:psi-Nq-density}
    \psi_N(q) = \fr{1}{Z_{N,\psi}} (1-q^2)^{(N-3)/2}, \qquad q\in [-1,1]
\eeq
for some normalizing constant $Z_{N,\psi}$.
For $q\in [-1,1]$, define
\beq
    \label{eq:def-Z-q}
    Z(q) = \int_{\Band(\bx,q)} \exp \big\{H_{N,t}(\bsig)\big\} ~\de \mu_{(q)}(\bsig)\,,
\eeq
where $\mu_{(q)}$ is the uniform measure on $\Band(\bx,q)$, normalized to $\mu_{(q)}(\Band(\bx,q)) = \psi_N(q)$.
Note that
\[
    \int_{-1}^1 Z(q) ~\de q = \int_{S_N} \exp \big\{H_{N,t}(\bsig)\big\} ~\de \mu_0(\bsig).
\]
%
%
\begin{ppn}
    \label{ppn:band-model-rs-ub}
    For any fixed $q\in (-1,1)$,
    \beq
        \label{eq:band-model-rs-ub}
        \plim_{N\to\infty} \fr1N \log Z(q) \le \fr12 \Big(\xi_t(1) + \xi_t(|q|) + |q| + \log(1-|q|)\Big).
    \eeq
    Equality holds for $q = q_\ast$, and does not hold for any $q<0$.
\end{ppn}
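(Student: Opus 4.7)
The plan is to reduce $Z(q)$ to the partition function of a lower-dimensional spherical spin glass and apply the replica-symmetric Parisi upper bound of Proposition~\ref{ppn:parisi-rs-ub}. I would parametrize $\bsig \in \Band(\bx,q)$ as $\bsig = q\bx + \sqrt{1-q^2}\,\btau$ with $\btau \in \bx^\perp \cap S_N$, and identify $\bx^\perp \cap S_N$ with $S_{N-1}$ up to a factor of $\sqrt{(N-1)/N}$. On this band the planted term $N\xi_t(\<\bx,\bsig\>_N) = N\xi_t(q)$ is deterministic, while the identity $\<\bsig^1,\bsig^2\>_N = q^2 + (1-q^2)\<\btau^1,\btau^2\>_N$ pulls $\tH_{N,t}$ back to a Gaussian process $\hH$ on $S_{N-1}$ whose mixture is $\hxi(s) = \xi_t(q^2 + (1-q^2)s)$, up to a benign factor $N/(N-1) = 1+o(1)$ in the covariance. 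Binomial expansion shows $\hxi$ has non-negative coefficients, and the constant $\hxi(0)$ contributes only a shared $O(\sqrt N)$ Gaussian to $\hH$ which is $o(N)$ in the log. Combined with $\fr{1}{N}\log\psi_N(q) \to \fr12\log(1-q^2)$ from \eqref{eq:psi-Nq-density}, this gives
\[
    \plim_{N\to\infty} \fr{1}{N}\log Z(q) = \xi_t(q) + \fr12\log(1-q^2) + \plim_{N\to\infty} \fr{1}{N}\log \hZ_{N-1}.
\]

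Next, I would apply Proposition~\ref{ppn:parisi-rs-ub} to $\hxi$ with the order parameter $u = |q|/(1+|q|)$, a choice tailored so that $\hxi(u) = \xi_t(|q|)$, $u/(1-u) = |q|$, and $\log(1-u) = -\log(1+|q|)$. This yields
\[
    \plim_{N\to\infty} \fr{1}{N}\log \hZ_{N-1}
    \le \fr12 \lt(\xi_t(1) - \xi_t(|q|) + |q| - \log(1+|q|)\rt).
\]
Substituting together with $\log(1-q^2) = \log(1-|q|) + \log(1+|q|)$ collapses the $\log(1+|q|)$ contributions and produces
\[
    \plim_{N\to\infty} \fr{1}{N}\log Z(q)
    \le \fr12 \lt(\xi_t(1) + \xi_t(|q|) + |q| + \log(1-|q|)\rt) - \big(\xi_t(|q|) - \xi_t(q)\big).
\]
For $q \ge 0$ the subtracted term vanishes, giving \eqref{eq:band-model-rs-ub}. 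For $q < 0$, $\xi_t(|q|) - \xi_t(q) = \big(\xi(|q|) - \xi(q)\big) + 2t|q| > 0$ (the first summand is non-negative since $|q|^p \ge q^p$ for every $p \ge 2$, the second is positive for $t>0$), yielding strict inequality.

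For equality at $q = q_\ast$, I would verify the two extremality conditions of Proposition~\ref{ppn:parisi-rs-ub} at $u_\ast = q_\ast/(1+q_\ast)$. With $r = q_\ast^2 + (1-q_\ast^2)s$, the derivative in \eqref{eq:parisi-extremality-1} simplifies to
\[
    g'(s) = (1-q_\ast^2)\xi_t'(r) - \fr{(1+q_\ast)(r - q_\ast^2)}{1-q_\ast},
\]
which vanishes at $s = u_\ast$ (equivalently $r = q_\ast$) by the defining equation $\xi_t'(q_\ast) = q_\ast/(1-q_\ast)$. For $s \in [0,u_\ast]$, i.e.\ $r \in [q_\ast^2, q_\ast]$, assumption \eqref{eq:amp-works} gives $\xi''(r) < 1/(1-r)^2 \le 1/(1-q_\ast)^2$, whence $g''(s) < 0$ on this range; together with $g'(u_\ast) = 0$ this forces $g'(s) > 0$ on $[0, u_\ast)$, so $g$ attains its maximum on $[0, u_\ast]$ at $u_\ast$. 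For \eqref{eq:parisi-extremality-2}, the substitution $r = q_\ast + (1-q_\ast)s$ identifies $\hxi_{u_\ast}(s) + s + \log(1-s) = F(r) - F(q_\ast)$ with $F(r) := \xi_t(r) + r + \log(1-r)$, which is non-positive on $r \in [q_\ast, 1)$ because $q_\ast$ is the unique maximizer of $F$ by Lemma~\ref{lem:band-fe-maximization}. Both extremality conditions hold, so the Parisi bound is sharp at $u_\ast$ and equality in \eqref{eq:band-model-rs-ub} follows at $q = q_\ast$. The main technical obstacle is the careful bookkeeping in the reduction, in particular the treatment of the constant $\hxi(0)$ and the $N/(N-1)$ rescaling; algebraically, everything collapses gracefully thanks to the tailored choice of $u$.
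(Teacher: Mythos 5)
Your proof is correct and follows essentially the same route as the paper: the same band parametrization reducing $Z(q)$ to a lower-dimensional spin glass with mixture $\xi_t(q^2+(1-q^2)s)$, the same application of Proposition~\ref{ppn:parisi-rs-ub} at $u=|q|/(1+|q|)$, the same observation that only $\xi_t(q)$ versus $\xi_t(|q|)$ distinguishes $q<0$, and the same verification of the two extremality conditions via Lemma~\ref{lem:band-fe-maximization}. The only (harmless) deviations are that you keep the constant $\xi_t(q^2)$ in the effective mixture and discard it as an $O(\sqrt N)$ shared Gaussian rather than subtracting it at the outset, and your argument that $g''<0$ on all of $[0,u_\ast]$ proceeds directly from \eqref{eq:amp-works} and monotonicity of $r\mapsto 1/(1-r)^2$ instead of via convexity of $\hxi'$.
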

\begin{proof}
    Consider first $q \in [0,1]$.
    On $\Band(\bx,q)$, if we write $\bsig = q \bx + \sqrt{1-q^2} \brho$, where $\la \bx, \brho \ra = 0$, then the random part
    \[
        \hH_{N,q}(\brho) := \tH_{N,t}(\bsig)- \tH_{N,t}(q\bx)  = \tH_{N,t}(q \bx + \sqrt{1-q^2} \brho)
    \]
    is a spin glass with one fewer dimension and mixture $\xi$ replaced by
    \[
        \hxi(s) = \xi_t(q^2 + (1-q^2)s) - \xi_t(q^2).
    \]
    Then,
    \beq
        \label{eq:band-model-fe}
        \plim_{N\to\infty} \fr1N \log Z(q)
        = \xi_t(q) + \fr12 \log(1-q^2)
        + \plim_{N\to\infty} \fr1N \log \hZ_{N,q},
    \eeq
    where $\hZ_{N,q}$ is the free energy of the spin glass with Hamiltonian $\hH_{N,q}$.
    By Proposition~\ref{ppn:parisi-rs-ub} with $u = \fr{q}{1+q}$,
    \beq
        \label{eq:band-model-rs-ub-1}
        \plim_{N\to\infty}
        \fr1N \log \hZ_{N,q}
        \le \fr12 \lt(
            \xi_t(1) - \xi_t(q) + q - \log(1+q)
        \rt).
    \eeq
    Combining with \eqref{eq:band-model-fe} proves \eqref{eq:band-model-rs-ub}.
    For $q<0$, \eqref{eq:band-model-fe} still holds.
    Since $\xi_t(q) < \xi_t(|q|)$, and the remaining terms on the right-hand side of \eqref{eq:band-model-fe} depend on $q$ only through $|q|$, \eqref{eq:band-model-rs-ub} holds with strict inequality.

    To show that equality holds in \eqref{eq:band-model-rs-ub} for $q=q_\ast$, we will verify that \eqref{eq:band-model-rs-ub-1} holds with equality.
    Let $u_\ast = \fr{q_\ast}{1+q_\ast}$. Then
    \[
        \frac{\de\hxi}{\de u}(u_\ast)
        = (1-q_\ast^2)\xi'_t(q_\ast)
        \stackrel{\eqref{eq:def-qt}}{=} q_\ast(1+q_\ast)
        = \fr{u_\ast}{(1-u_\ast)^2},
    \]
    while
    \[
        \frac{\de^2\hxi}{\de u^2}(u_\ast)
        = (1-q_\ast^2)^2 \xi''(q_\ast)
        \stackrel{\eqref{eq:amp-works}}{<} (1+q_\ast)^2
        = \fr{1}{(1-u_\ast)^2}.
    \]
    Thus, for $g$ in \eqref{eq:parisi-extremality-1}, $g'(u_\ast) = 0$ and $g''(u_\ast) < 0$.
    However, over $s\in [0,u_\ast]$,
    \[
        g'(s) = \hxi'(s) - \fr{1}{(1-u_\ast)^2}
    \]
    is convex because $\hxi'$ is convex.
    So, $g''(s) < 0$ for all $s\in [0,u_\ast]$, which implies $g'(s) \ge 0$ for all $s\in [0,u_\ast]$.
    It follows that $g(s)$ is maximized over $s\in [0,u_\ast]$ at $u_\ast$, verifying \eqref{eq:parisi-extremality-1}.
    Since
    \baln
        \hxi_{u_\ast}(s)
        &= \xi_t(q_\ast + (1-q_\ast)s) - \xi_t(q_\ast) - (1-q_\ast)\xi'_t(q_\ast)s \\
        &\stackrel{\eqref{eq:def-qt}}{=}
        \xi_t(q_\ast + (1-q_\ast)s) - \xi_t(q_\ast) - q_\ast s,
    \ealn
    we have
    \baln
        \hxi_{u_\ast}(s) + s + \log(1-s)
        &= \Big\{
            \xi_t(q_\ast + (1-q_\ast)s) + (q_\ast + (1-q_\ast)s) + \log\lt[1 - (q_\ast + (1-q_\ast)s)\rt]
        \Big\} \\
        &\quad - \Big\{
            \xi_t(q_\ast) + q_\ast + \log(1-q_\ast)
        \Big\}
        \le 0,
    \ealn
    where the final inequality is by Lemma~\ref{lem:band-fe-maximization}.
    This verifies \eqref{eq:parisi-extremality-2} and completes the proof.
\end{proof}

\begin{proof}[Proof of Proposition~\ref{ppn:amp-dominate-gibbs-2}]
    Fix $\iota>0$ arbitrarily (independent of $N$). We will choose $\upsilon = \upsilon(\iota)$ a sufficiently
    small constant to verify the derivations below.
    Let
    \[
        q^+_k = q_\ast + \iota + k\upsilon, \qquad
        q^-_k = q_\ast - \iota - k\upsilon,
    \]
    and let $k^+$ (resp. $k^-$) be the largest integer such that $q^+_{k^+} \le 1$ (resp. $q^-_{k^-} \ge -1$).
    Let
    \[
        J = \{q^-_{k^-},\ldots,q^-_1,q^+_1,\ldots,q^+_{k^+}\}.
    \]
    Define $h(q) = \fr12 (\xi_t(1) + \xi_t(|q|) + |q| + \log(1-|q|))$ to be the right-hand side of \eqref{eq:band-model-rs-ub}.
    Consider the event:
    \begin{itemize}
        \item $K_N$ from Proposition~\ref{ppn:gradients-bounded} holds,
        \item $\fr1N \log Z(q_\ast) \ge h(q_\ast) - \upsilon$,
        \item $\fr1N \log Z(q) \le h(q) + \upsilon$ for all $q\in J$.
    \end{itemize}
    This holds with probability $1-e^{-cN}$ by concentration properties of $Z(q)$.
    Further let
    \baln
        Z_0 &= \int_{S_N}
        \ind\{\<\bsig,\bx\>_N \in [q_\ast - \upsilon, q_\ast + \upsilon]\}
        \big\{\exp H_{N,t}(\bsig)\big\} \de \mu_0(\bsig)
        = \int_{q_\ast - \upsilon}^{q_\ast + \upsilon} Z(q)~\de q, \\
        Z^+_k &= \int_{S_N}
        \ind\{\<\bsig,\bx\>_N \in [q^+_k, q^+_k + \upsilon]\}
        \big\{\exp H_{N,t}(\bsig)\big\} \de \mu_0(\bsig)
        = \int_{q^+_k}^{q^+_k + \upsilon} Z(q)~\de q, \\
        Z^-_k &= \int_{S_N}
        \ind\{\<\bsig,\bx\>_N \in [q^-_k - \upsilon, q^-_k]\}
        \big\{\exp H_{N,t}(\bsig)\big\} \de \mu_0(\bsig)
        = \int_{q^-_k - \upsilon}^{q^-_k} Z(q)~\de q.
    \ealn
    Since $K_N$ holds, $H_{N,t}(\bsig)$ is $O(1)$-Lipschitz, and thus
    \[
        Z_0 \ge Z(q_\ast) e^{-o_\upsilon(1)N}, \qquad
        Z^+_k \le Z(q^+_k) e^{o_\upsilon(1)N}, \qquad
        Z^-_k \le Z(q^-_k) e^{o_\upsilon(1)N}\,.
    \]
Here and below, $o_{\upsilon}(1)$ denotes a term independent of $N$ that vanishes as $\upsilon\to 0$.
    So
    \[
        \fr1N \log \int_{S_N}
        \ind\{\<\bsig,\bx\>_N \in [q_\ast - \iota, q_\ast + \iota]\}
        \exp H_{N,t}(\bsig) \de \mu_0(\bsig)
        \ge \fr1N \log Z_0
        \ge h(q_\ast) - o_\upsilon(1)
    \]
    while
    \baln
        \fr1N \log \int_{S_N}
        \ind\{\<\bsig,\bx\>_N \not\in [q_\ast - \iota, q_\ast + \iota]\}
        &\le \fr1N \log \lt(\sum_{k=0}^{k^+} Z^+_k + \sum_{k=0}^{k^-} Z^-_k\rt) \\
        &\le \max_{q\in J} h(q) + o_\upsilon(1).
    \ealn
    By Lemma~\ref{lem:band-fe-maximization}, for $\upsilon$ small enough,
    \[
        h(q_\ast) - o_\upsilon(1) > \max_{q\in J} h(q) + o_\upsilon(1)
    \]
    and thus $\mu_t([q_\ast-\iota,q_\ast+\iota]) \ge 1-e^{-cN}$.
\end{proof}

\begin{proof}[Proof of Proposition~\ref{ppn:amp-performance}]
    Follows from Propositions~\ref{ppn:amp-overlaps}, \ref{ppn:amp-dominate-gibbs-1}, and \ref{ppn:amp-dominate-gibbs-2}.
\end{proof}

\section{Description of TAP fixed point: proof of Proposition \ref{ppn:local-concavity-and-conditioning}}
\label{sec:proof-local-concavity-and-conditioning}

\subsection{Existence and uniqueness of TAP fixed point}

We say that $\bm$ is a $\iota$-approximate critical point of $\cF_\sTAP$ if $\tnorm{\nabla \cF_\sTAP(\bm)}_N \le \iota$.
In this subsection we show the following result.
\begin{ppn}
    \label{ppn:tap-existence-uniqueness}
    There exist $C^\spec_{\max} > C^\spec_{\min} > 0$ such that, for sufficiently small $\iota > 0$, the following holds with probability $1-e^{-cN}$.
    \begin{enumerate}[label=(\alph*)]
        \item \label{itm:tap-existence-uniqueness} $\cF_\sTAP$ has a unique critical point $\bm^\sTAP$ in $\cS_\iota$, which further satisfies \eqref{eq:mtap-well-conditioned}.
        \item \label{itm:tap-approx-crits} There exists $\iota' = o_\iota(1)$ such that any $\iota$-approximate critical point $\bm \in \cS_\iota$ of $\cF_\sTAP$ satisfies $\tnorm{\bm - \bm^\sTAP}_N \le \iota'$.
    \end{enumerate}
\end{ppn}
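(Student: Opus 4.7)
The plan is to establish uniform strong concavity of $\cF_\sTAP$ on a neighborhood of $\cS_\iota$, from which both parts of the proposition then follow easily: existence of $\bm^\sTAP$ from gradient ascent on $\bm^\sAMP$, the Hessian bound \eqref{eq:mtap-well-conditioned} from the uniform spectral control, and both uniqueness and part (b) from integrating the gradient-growth inequality along paths.

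\textbf{Uniform Hessian bound.} Differentiating \eqref{eq:nabla-cF-sTAP} gives $\nabla^2\cF_\sTAP(\bm) = \nabla^2\tH_{N,t}(\bm) + \bD(\bm)$, where $\bD(\bm)$ collects the deterministic second derivatives of $N\xi_t(\<\bx,\bm\>_N)+(N/2)\theta(\|\bm\|_N^2)+(N/2)\log(1-\|\bm\|_N^2)$. For $\bm\in\cS_\iota$, the isotropic part of $\bD$ equals $-\gamma_\ast\bI + O(\iota)$ with $\gamma_\ast = (1-q_\ast)^{-1} + (1-q_\ast)\xi''(q_\ast)$, plus a rank-$2$ correction supported on $\mathrm{span}(\bx,\bm)$. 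The random Hessian $\nabla^2\tH_{N,t}(\bm)$ is $\GOE$-like with variance scale $\xi''(q_\ast)+O(\iota)$, so its spectrum concentrates on $[-2\sqrt{\xi''(q_\ast)},2\sqrt{\xi''(q_\ast)}]$. AM-GM gives $\gamma_\ast \ge 2\sqrt{\xi''(q_\ast)}$, with strict inequality under \eqref{eq:amp-works} (which forces $\xi''(q_\ast)(1-q_\ast)^2 < 1$), pinning down a constant spectral gap. Promoting this to uniform control on $\cS_\iota$ (indeed on a slight enlargement $\cS_{2\iota}$) proceeds by an $\eps$-net combined with Gaussian concentration of GOE edges and the Lipschitz bound on $\nabla^3 H_N$ from Proposition~\ref{ppn:gradients-bounded}, yielding $\mathrm{spec}(\nabla^2\cF_\sTAP(\bm))\subseteq[-C^\spec_{\max},-C^\spec_{\min}]$ uniformly on $\cS_{2\iota}$ with probability $1-e^{-cN}$.

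\textbf{Existence, uniqueness, and (b).} For $K_\sAMP$ large, Proposition~\ref{ppn:amp-performance} produces $\bm^\sAMP\in\cS_{\iota/2}$ with $\|\nabla\cF_\sTAP(\bm^\sAMP)\|_N$ arbitrarily small. Gradient ascent inside the strong-concavity region converges linearly to a critical point $\bm^\sTAP\in\cS_\iota$ satisfying \eqref{eq:mtap-well-conditioned}. For any $\bm\in\cS_\iota$ connected to $\bm^\sTAP$ by a rectifiable path in $\cS_{2\iota}$, integrating the Hessian bound along the path gives
\[
    \|\nabla\cF_\sTAP(\bm)\|_N \ge (C^\spec_{\min}/2)\,\|\bm-\bm^\sTAP\|_N.
\]
Applying this with $\bm$ a genuine critical point yields uniqueness, while applying it with $\bm$ an $\iota$-approximate critical point gives $\|\bm-\bm^\sTAP\|_N \le 2\iota/C^\spec_{\min} = o_\iota(1)$, which is part (b).

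\textbf{Main expected obstacle.} The delicate ingredient is verifying the connectivity hypothesis: $\cS_\iota$ is not convex because $\|\bm\|_N^2\ge q_\ast-\iota$ excludes an inner ball, so straight-line interpolation between two points of $\cS_\iota$ may exit the region. I would address this by explicitly constructing, for each $\bm\in\cS_\iota$, a path to $\bm^\sTAP$ inside $\cS_{2\iota}$ in two stages: first scale $\bm\mapsto c\bm$ along the ray through the origin (which preserves $\<\bx,\bm\>_N/\|\bm\|_N$) to adjust the norm to $\|\bm^\sTAP\|_N$; then interpolate by a geodesic rotation on the resulting sphere, chosen to fix $\bx$ as much as possible, which preserves $\|\cdot\|_N^2$ exactly and distorts $\<\bx,\cdot\>_N$ only by $O(\iota)$. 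Both legs stay in $\cS_{2\iota}$ for sufficiently small $\iota$, which closes the argument.
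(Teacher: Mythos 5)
Your argument has a genuine gap at its central step, the claimed uniform strong concavity of $\cF_\sTAP$ on $\cS_{2\iota}$. For a \emph{fixed} $\bm$ the spectrum of $\nabla^2\tH_{N,t}(\bm)$ does concentrate on $[-2\sqrt{\xi''(q_\ast)},2\sqrt{\xi''(q_\ast)}]$ with probability $1-e^{-cN}$, and your observation that $\gamma_\ast>2\sqrt{\xi''(q_\ast)}$ under \eqref{eq:amp-works} is correct (this is exactly \eqref{eq:gamma-ast-bounded-from-bulk}). But the promotion to uniform control over $\cS_{2\iota}$ by an $\eps$-net does not close: $\cS_{2\iota}$ is an $(N-2)$-dimensional set, so any $\eps$-net has cardinality $(C/\eps)^{N}$, while the edge large-deviation bound $\PP(\lambda_{\max}(\bG)>2+\delta)\le e^{-c(\delta)N}$ has a rate $c(\delta)$ that does not grow as the net is refined. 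One therefore needs $\log(C/\eps)<c(\delta)$ while also needing $\eps\lesssim\delta/c_{\op}$ to absorb the Lipschitz error between net points; for a spectral gap $\delta$ that is merely a fixed constant (and which degenerates as $\xi''(q_\ast)\uparrow(1-q_\ast)^{-2}$), these two requirements are incompatible. More is true: $\sup_{\bm\in\cS_\iota}\lambda_{\max}(\nabla^2\tH_{N,t}(\bm))$ converges to a ground-state constant strictly larger than $2\sqrt{\xi''(q_\ast)}$ whenever $\xi$ has degree $\ge 3$ terms, and nothing in \eqref{eq:amp-works} guarantees that this excess is smaller than the gap $\gamma_\ast-2\sqrt{\xi''(q_\ast)}$. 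So the uniform concavity you rely on is not just unproved by your method; it can fail. Since both your uniqueness argument and part (b) are obtained by integrating the gradient-growth inequality along paths inside the uniformly concave region, they collapse with it. (Your handling of the non-convexity of $\cS_\iota$ via the two-leg path is fine as far as it goes, but it is not the real obstruction.)

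The paper circumvents precisely this difficulty by never controlling the Hessian at all points simultaneously. Instead it restricts $\tH_{N,t}$ to the codimension-two sphere $\cS_0$, invokes topological trivialization (Proposition~\ref{ppn:trivialization}, built on \cite{fyodorov2015high,belius2022triviality,huang2023strong} and the Kac--Rice formula) to show there are exactly two Riemannian critical points $\bm_\pm$ and that every approximate critical point is near one of them, and then controls $\nabla\cF_\sTAP$ and $\nabla^2\cF_\sTAP$ only \emph{conditionally on being at a critical point}, where the Hessian is a single GOE sample (Corollary~\ref{cor:nabla2-F-conditional-law}, Proposition~\ref{ppn:conditional-tap-hessian}); the sign of the radial derivative then selects $\bm_+$ over $\bm_-$. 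To repair your proof you would need to replace the net-plus-concentration step with an argument of this conditional, first-moment type.
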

The proof of this proposition depends on an understanding of the landscape of $\tH_{N,t}$ restricted to $\cS_0$, given in Proposition~\ref{ppn:trivialization} below (recall that $\tH_{N,t}$ is the centered version of the
Hamiltonian $H_{N,t}$, cf. Eqs.~\eqref{eq:planted-rewrite} and \eqref{eq:HtildeDef}).
Note that $\cS_0$ is an affine transformation of the sphere $S_{N-2}$; we will view it as a Riemannian manifold.
We first recall notions of Riemannian gradient and Hessian.
For $\bm \in \cS_0$, let
\[
    \bm^\perp = \fr{\bm - q_\ast \bx}{\sqrt{q_\ast(1-q_\ast)}},
\]
so that $\<\bx,\bm^\perp\>_N = 0$ and $\tnorm{\bm^\perp}_N = 1$.
The Riemannian gradient and radial derivative of $\tH_{N,t}$ are
\[
    \nabla_\sp \tH_{N,t}(\bm) = P_{\spn(\bm,\bx)}^\perp \nabla \tH_{N,t}(\bm), \qquad
    \partial_\rd \tH_{N,t}(\bm) = \la \bm^\perp, \nabla \tH_{N,t}(\bm) \ra / \sqrt{N}.
\]
In the below calculations, it will be convenient to work with the following rescaled radial derivative, whose typical maximum is $O(1)$:
\[
    \tpartial_\rd \tH_{N,t}(\bm)
    = \partial_\rd \tH_{N,t}(\bm) / \sqrt{N}
    = \<\bm^\perp, \nabla \tH_{N,t}(\bm)\>_N.
\]
Similarly to above, we say $\bm \in \cS_0$ is a \textbf{Riemannian critical point} of $\tH_{N,t}$ if $\nabla_\sp \tH_{N,t}(\bm) = \bzero$, and an \textbf{$\iota$-approximate Riemannian critical point} if $\tnorm{\nabla_\sp \tH_{N,t}(\bm)}_N \le \iota$.
Further define the tangential and Riemannian Hessian (these will be used in the next subsection)
\baln
    \nabla^2_\tn \tH_{N,t}(\bm)
    &= P_{\spn(\bm,\bx)}^\perp
    \nabla^2 \tH_{N,t} (\bm)
    P_{\spn(\bm,\bx)}^\perp, \\
    \nabla^2_\sp \tH_{N,t}(\bm)
    &= \nabla^2_\tn \tH_{N,t}(\bm)
    - \fr{\tpartial_\rd \tH_{N,t}(\bm)}{\sqrt{q_\ast (1-q_\ast)}}
    P_{\spn(\bm,\bx)}^\perp.
\ealn
\begin{ppn}
    \label{ppn:trivialization}
    There exist $C^\spec_{\max} > C^\spec_{\min} > 0$ such that for any $\iota > 0$, the following holds with probability $1-e^{-cN}$.
    \begin{enumerate}[label=(\alph*)]
        \item \label{itm:triv} $\tH_{N,t}$ has exactly two Riemannian critical points $\bm_{\pm}$ on $\cS_0$, and their (rescaled) radial derivatives satisfy
        \beq
            \label{eq:radial-deriv-estimate}
            \lt|
                \tpartial_\rd \tH_{N,t}(\bm_{\pm})
                \mp \sqrt{\fr{q_\ast}{1-q_\ast}} \lt(1 + (1-q_\ast)^2 \xi''(q_\ast)\rt)
            \rt| \le \iota.
        \eeq
        Moreover, there exists $\iota' = o_\iota(1)$ such that all $\iota$-approximate Riemannian critical points $\bm$ on $\cS_0$ satisfy $\tnorm{\bm - \bm_\pm}_N \le \iota'$ for some choice of sign $\pm$.
        \item \label{itm:triv-approx-crit} The point $\bm_+$ is an $\iota$-approximate critical point of $\cF_\sTAP$
        (i.e. $\tnorm{\nabla \cF_\sTAP(\bm)}_N \le \iota$).
        \item \label{itm:triv-local-max} The point $\bm_+$ satisfies
        \[
            \spec(\nabla^2 \cF_\sTAP(\bm_+)) \subseteq [-C^\spec_{\max},-C^\spec_{\min}].
        \]
    \end{enumerate}
\end{ppn}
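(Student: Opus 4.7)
The plan is to reduce the problem to a spherical spin-glass landscape analysis on $\cS_0$. Parameterizing $\bm = q_\ast \bx + \sqrt{q_\ast(1-q_\ast)}\,\bm^\perp$ identifies $\cS_0$ with the $(N-2)$-sphere $\{\bm^\perp : \<\bx,\bm^\perp\> = 0,\;\|\bm^\perp\|_N = 1\}$, on which $\tH_{N,t}$ restricts to a Gaussian process with covariance obtained by substituting $q_\ast^2 + q_\ast(1-q_\ast)\<\bm_1^\perp,\bm_2^\perp\>_N$ into $\xi_t$. I would prove (a) by a Kac--Rice / complexity computation, then derive (b) and (c) as consequences: (b) via Proposition~\ref{ppn:amp-performance} together with uniqueness of the critical point with positive radial derivative, and (c) via a semicircle-law computation of the TAP Hessian at $\bm_+$.

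For (a), the key observation is $\Cov(\nabla \tH_{N,t}(\bm)) = \xi'_t(q_\ast)\bI_N + \xi''(q_\ast)\bm\bm^\top/N$, which in the orthogonal basis $(\bx/\sqrt{N},\bm^\perp/\sqrt{N},\text{tangent})$ makes the tangent components independent of the normal ones. Conditional on $\nabla_\sp \tH_{N,t}(\bm) = \bzero$ and $\tpartial_\rd \tH_{N,t}(\bm) = r$, the tangential Hessian is asymptotically $\sqrt{\xi''(q_\ast)}\,\bG - \fr{r}{\sqrt{q_\ast(1-q_\ast)}}\,\bI_{N-2}$ with $\bG \sim \GOE(N-2)$, so Kac--Rice yields an explicit first-moment complexity $\Sigma(r)$ via the semicircle log-determinant integral. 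Condition \eqref{eq:amp-works} at $q_\ast$ (equivalently $(1-q_\ast)\sqrt{\xi''(q_\ast)} < 1$) is precisely what forces $\Sigma(r) \le 0$ with equality only at the two values $r_\pm$ from \eqref{eq:radial-deriv-estimate}, where the radial shift $r_\pm/\sqrt{q_\ast(1-q_\ast)} = \pm\lt((1-q_\ast)\xi''(q_\ast) + (1-q_\ast)^{-1}\rt)$ lands strictly beyond the semicircle edge (a BBP-type transition). The first-moment count of $2+o(1)$ concentrated near $r_\pm$, combined with the lower bound $\ge 2$ from compactness (the continuous function $\tH_{N,t}|_{\cS_0}$ attains its max and min on $\cS_0$), forces exactly two Riemannian critical points $\bm_\pm$; the approximate-critical-point clustering follows by the implicit function theorem from non-degeneracy of $\nabla^2_\sp \tH_{N,t}(\bm_\pm)$, itself a byproduct of the same semicircle calculation.

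For (b), instead of a direct Kac--Rice derivation, I invoke Proposition~\ref{ppn:amp-performance}: decomposing $\bm^\sAMP = q_\ast \bx + \sqrt{q_\ast(1-q_\ast)}\,(\bm^\sAMP)^\perp$ and substituting into \eqref{eq:amp-limiting-gradient} (using $\xi'_t(q_\ast) = q_\ast/(1-q_\ast)$) yields
\[
\nabla \tH_{N,t}(\bm^\sAMP)
\;\approx\;
q_\ast(1-q_\ast)\xi''(q_\ast)\,\bx
+ \sqrt{\fr{q_\ast}{1-q_\ast}}\lt(1 + (1-q_\ast)^2 \xi''(q_\ast)\rt)(\bm^\sAMP)^\perp,
\]
so $\bm^\sAMP$ is an approximate Riemannian critical point whose radial derivative matches $r_+$. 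By (a), $\bm^\sAMP$ is $o_\iota(1)$-close to $\bm_+$, and continuity of $\nabla \cF_\sTAP$ transfers the $\iota$-approximate TAP-criticality to $\bm_+$. For (c), differentiating twice gives
\[
\nabla^2 \cF_\sTAP(\bm)
= \nabla^2 \tH_{N,t}(\bm) + \xi''(\<\bx,\bm\>_N)\,\bx\bx^\top/N - \lt[(1-\|\bm\|_N^2)\xi''(\|\bm\|_N^2) + (1-\|\bm\|_N^2)^{-1}\rt]\bI_N + R(\bm),
\]
with $R(\bm)$ rank-$1$ in $\bm$. Restricting to the tangent space of $\cS_0$ at $\bm_+$ kills the $\bx\bx^\top$ and $\bm\bm^\top$ contributions and leaves $\nabla^2_\tn \tH_{N,t}(\bm_+) - \Delta \bI$ with $\Delta = (1-q_\ast)\xi''(q_\ast) + (1-q_\ast)^{-1}$. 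The semicircle bound $\|\nabla^2_\tn \tH_{N,t}(\bm_+)\|_{\op,N} \le 2\sqrt{\xi''(q_\ast)} + o(1)$ together with the identity $\Delta - 2\sqrt{\xi''(q_\ast)} = ((1-q_\ast)\sqrt{\xi''(q_\ast)} - 1)^2/(1-q_\ast) > 0$ under \eqref{eq:amp-works} yields the spectrum bounds on the tangent block; the two normal directions are controlled by direct computation using the concentration of $(\tilde\alpha,\tilde\beta) \approx (q_\ast(1-q_\ast)\xi''(q_\ast), r_+)$ and the regularity bound of Proposition~\ref{ppn:gradients-bounded}.

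The main obstacle is the exact count of critical points in (a): the Kac--Rice first moment gives only an upper bound, so pinning down exactly two requires combining (i) strict complexity negativity $\Sigma(r) < 0$ away from $r_\pm$, (ii) the compactness lower bound of $\ge 2$, and (iii) concentration (not merely a bound) of the first moment near $r_\pm$, ruling out typical fluctuations of the count between $0$ and $4$ — this typically demands a second-moment calculation or a path-based argument using the strict Hessian gap at $\bm_\pm$. A related subtlety is that the non-radial component $\tilde\alpha \approx q_\ast(1-q_\ast)\xi''(q_\ast)$ of $\nabla \tH_{N,t}(\bm_+)$ must concentrate at its predicted value; my plan obtains this indirectly through the AMP closeness $\bm^\sAMP \approx \bm_+$, but in principle it should also fall out of the joint conditional Gaussian analysis underlying (a).
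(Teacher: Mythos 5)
Your overall architecture matches the paper's (identify $\cS_0$ with a sphere, get (a) from topological trivialization, derive (b) and (c) from it), and your route to (b) is a genuinely different and viable alternative: the paper computes the conditional law of $\<\bx,\nabla\tH_{N,t}(\bm)\>_N$ given $\nabla_\sp\tH_{N,t}(\bm)=\bzero$ and $\tpartial_\rd\tH_{N,t}(\bm)=r$ and shows it concentrates at $q_\ast(1-q_\ast)\xi''(q_\ast)$, whereas you transfer approximate TAP-criticality from $\bm^\sAMP$ to $\bm_+$ using the clustering statement in (a), the radial-derivative match with $r_+$, and Lipschitzness of $\nabla\cF_\sTAP$ on $K_N$. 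Given (a), your version of (b) is shorter and avoids the conditional Gaussian computation.

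However, there are two genuine gaps. In (a), the first-moment Kac--Rice bound $\EE|\Crt|=2+o(1)$ together with the deterministic lower bound $|\Crt|\ge 2$ only gives $\PP(|\Crt|>2)\le \EE|\Crt|-2=o(1)$, and even a second-moment computation improves this only polynomially; the proposition requires probability $1-e^{-cN}$, as does the clustering of approximate critical points. You flag this as the main obstacle but do not resolve it. The paper resolves it by verifying $\txi'(1)>\txi''(1)$ for the effective mixture $\txi(s)=\xi_t(q_\ast^2+q_\ast(1-q_\ast)s)-\xi_t(q_\ast^2)$ and invoking the strong topological trivialization theorem \cite[Theorem 1.6]{huang2023strong}, which supplies both the exact count and the approximate-critical-point clustering at exponential probability. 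Without citing or reproving that result, part (a) as stated is not established.

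In (c), your block-diagonal argument misses the coupling between $\spn(\bm_+,\bx)$ and its orthogonal complement. Conditional on the gradient, the rows of $\nabla^2\tH_{N,t}(\bm)$ indexed by the $\bm$- and $\bx$-directions are Gaussian vectors of norm $\Theta(1)$ (and the $\bm$-row has the modified variance $\psi=\xi''(q_\ast)+q_\ast\xi^{(3)}(q_\ast)-q_\ast\xi''(q_\ast)^2/\xi'_t(q_\ast)$, not $\xi''(q_\ast)$), so a test vector mixing the special directions with the bulk can a priori exceed the maximum of either block; since the bulk gap $\gamma_\ast-2\sqrt{\xi''(q_\ast)}=((1-q_\ast)\sqrt{\xi''(q_\ast)}-1)^2/(1-q_\ast)$ can be arbitrarily small under \eqref{eq:amp-works}, a crude operator-norm bound on the off-diagonal block does not close the argument. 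The paper handles this via the two-parameter optimization over $\bbS^{N-1}_{a,b}$ in Proposition~\ref{ppn:conditional-tap-hessian}, using the GOE-plus-external-field ground state of Lemma~\ref{lem:spherical-SK-ground-state}. Relatedly, both (b) and (c) require evaluating Gaussian laws at the \emph{random} point $\bm_+$ rather than at a fixed point; the paper makes this rigorous through the Kac--Rice conditioning of Lemma~\ref{lem:conditioning-cs}, with the determinant reweighting controlled by Lemma~\ref{lem:goe-concentration}, a step your proposal does not address (your (b) sidesteps it via AMP, but your (c) does not).
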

We will prove this proposition in Subsection~\ref{subsec:riemannian-crits}.
We first show Proposition~\ref{ppn:tap-existence-uniqueness} given Proposition~\ref{ppn:trivialization}.
\begin{lem}
    \label{lem:select-crit}
    For sufficiently small $\iota > 0$, with probability $1-e^{-cN}$, $\cF_\sTAP$ has a unique critical point $\bm$ in the region $\norm{\bm - \bm_+}_N \le \iota$, which further satisfies \eqref{eq:mtap-well-conditioned}.
\end{lem}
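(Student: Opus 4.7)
The plan is to apply a quantitative inverse function theorem (equivalently, a Banach contraction argument) in a small Euclidean ball around $\bm_+$, feeding in the two pieces of information that Proposition~\ref{ppn:trivialization} provides: that $\bm_+$ is an $\iota$-approximate critical point of $\cF_\sTAP$, and that $\nabla^2\cF_\sTAP(\bm_+)$ is uniformly negative-definite.

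First, I would check that on the event where both $K_N$ of Proposition~\ref{ppn:gradients-bounded} and the conclusions of Proposition~\ref{ppn:trivialization} hold (probability $1-e^{-cN}$), the Hessian $\bm\mapsto\nabla^2\cF_\sTAP(\bm)$ is $L$-Lipschitz on $\Ball_N(\bm_+,\iota)$ in the $\norm{\cdot}_{\op,N}$ norm, with $L$ independent of $N$ and of small $\iota$. The random part's Lipschitz constant follows from the $\nabla^3 H_N$ bound inside $K_N$; the deterministic TAP correction $\tfrac{N}{2}\theta(\norm{\bm}_N^2)+\tfrac{N}{2}\log(1-\norm{\bm}_N^2)$ is analytic and hence smoothly Lipschitz on any ball bounded away from $\norm{\bm}_N=1$, which holds because $\norm{\bm_+}_N^2=q_\ast<1$ and $\iota$ is small. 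Consequently, on $\Ball_N(\bm_+,\iota)$ the spectrum of $\nabla^2\cF_\sTAP$ is contained in $[-C^\spec_{\max}-L\iota,\,-C^\spec_{\min}+L\iota]$, so $\cF_\sTAP$ is uniformly strongly concave throughout this ball once $L\iota$ is small relative to $C^\spec_{\min}$.

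Next, set $A:=-(\nabla^2\cF_\sTAP(\bm_+))^{-1}$, which satisfies $\norm{A}_{\op,N}\le 1/C^\spec_{\min}$, and define the Newton-type map $F(\bm):=\bm+A\nabla\cF_\sTAP(\bm)$, whose fixed points are exactly the critical points of $\cF_\sTAP$. Two facts then drive the argument: (i) $DF(\bm_+)=I+A\nabla^2\cF_\sTAP(\bm_+)=0$, so the Hessian-Lipschitz bound of Step~1 yields $\norm{DF(\bm)}_{\op,N}\le (L/C^\spec_{\min})\tnorm{\bm-\bm_+}_N$ on the ball; and (ii) $\tnorm{F(\bm_+)-\bm_+}_N=\tnorm{A\nabla\cF_\sTAP(\bm_+)}_N\le \iota/C^\spec_{\min}$ by Proposition~\ref{ppn:trivialization}\ref{itm:triv-approx-crit}. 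Choosing $\iota$ small enough that $L\iota/C^\spec_{\min}\le 1/2$, the map $F$ is a $\tfrac{1}{2}$-contraction sending $\Ball_N(\bm_+,2\iota/C^\spec_{\min})$ into itself, and Banach's fixed point theorem delivers a unique fixed point $\bm^\sTAP$ in that ball. Rescaling $\iota$ by a factor of $C^\spec_{\min}/2$ at the outset, this unique fixed point lies in $\Ball_N(\bm_+,\iota)$ itself, giving both existence and uniqueness of the critical point asserted in the lemma.

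Finally, because $\tnorm{\bm^\sTAP-\bm_+}_N=O(\iota)$, the Lipschitz Hessian bound of Step~1 gives $\nabla^2\cF_\sTAP(\bm^\sTAP)=\nabla^2\cF_\sTAP(\bm_+)+O_{\op,N}(\iota)$, so for $\iota$ small enough its spectrum is contained in $[-C^\spec_{\max},-C^\spec_{\min}]$ once we allow the constants in Proposition~\ref{ppn:local-concavity-and-conditioning} to be slightly looser than those in Proposition~\ref{ppn:trivialization}, which verifies \eqref{eq:mtap-well-conditioned}. I do not expect any conceptual obstacle here; the main bookkeeping burden is tracking the normalization conventions $\norm{\cdot}_N$ versus $\norm{\cdot}_{\op,N}$ and fixing a single pair of constants $(C^\spec_{\max},C^\spec_{\min})$ that works across propositions.
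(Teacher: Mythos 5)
Your proposal is correct and follows essentially the same route as the paper's proof: both combine the spectral bound on $\nabla^2\cF_\sTAP(\bm_+)$ from Proposition~\ref{ppn:trivialization}\ref{itm:triv-local-max} with the $O(1)$-Lipschitzness of the Hessian on $K_N$ to get uniform strong concavity on a small ball around $\bm_+$, and then use the approximate criticality from Proposition~\ref{ppn:trivialization}\ref{itm:triv-approx-crit} (with a tolerance taken suitably small relative to $\iota$) to conclude existence and uniqueness of the critical point. The paper leaves the existence step implicit, whereas you spell it out with a Newton/Banach contraction; that is a fine way to fill in the same detail.
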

\begin{proof}
    Throughout this proof, assume the event $K_N$ from Proposition~\ref{ppn:gradients-bounded} holds, which occurs with probability $1-e^{-cN}$.
    By Proposition~\ref{ppn:trivialization}\ref{itm:triv-local-max}, with probability $1-e^{-cN}$, $\bm_+$ is well-defined and
    \[
        \spec(\nabla^2 \cF_\sTAP(\bm_+)) \subseteq [-C^\spec_{\max},-C^\spec_{\min}].
    \]
    On $K_N$, the maps $\bm \mapsto \lambda_{\max}(\nabla^2 \cF_\sTAP(\bm))$ and $\bm \mapsto \lambda_{\min}(\nabla^2 \cF_\sTAP(\bm_+))$ are $O(1)$-Lipschitz (over $\norm{\bm}_N \le 1-\eps$, for any $\eps>0$).
    Thus, for suitably small $\iota$,
    \beq
        \label{eq:concavity-near-mplus}
        \spec(\nabla^2 \cF_\sTAP(\bm)) \subseteq \lt[-2C^\spec_{\max},-\fr{1}{2} C^\spec_{\min}\rt] \qquad
        \forall \norm{\bm - \bm_+}_N \le \iota.
    \eeq
    Let $\upsilon$ be suitably small in $\iota$.
    By Proposition~\ref{ppn:trivialization}\ref{itm:triv-approx-crit}, with probability $1-e^{-cN}$, $\norm{\nabla \cF_\sTAP(\bm_+)}_N \le \upsilon$.
    Combined with \eqref{eq:concavity-near-mplus}, this implies $\cF_\sTAP$ has a unique critical point $\bm$ in the region $\norm{\bm - \bm_+}_N \le \iota$.
    By \eqref{eq:concavity-near-mplus}, this critical point also satisfies \eqref{eq:mtap-well-conditioned}, upon adjusting the constants $C^\spec_{\min},C^\spec_{\max}$.
\end{proof}

\begin{lem}
    \label{lem:crit-unique}
    For any sufficiently small $\iota > 0$, there exists $\iota' = o_\iota(1)$ such that with probability $1-e^{-cN}$, all $\iota$-approximate critical points $\bm \in \cS_\iota$ of $\cF_\sTAP$ satisfy $\norm{\bm - \bm_+}_N \le \iota'$.
\end{lem}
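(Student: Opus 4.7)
The plan is to project an approximate critical point $\bm\in\cS_\iota$ of $\cF_\sTAP$ to a nearby point $\tilde\bm\in\cS_0$, apply Proposition~\ref{ppn:trivialization}\ref{itm:triv} to deduce $\tilde\bm$ is $o_\iota(1)$-close to $\bm_+$ or $\bm_-$, and then use the radial component of the criticality equation to rule out $\bm_-$. I work throughout on the intersection of $K_N$ from Proposition~\ref{ppn:gradients-bounded} with the event of Proposition~\ref{ppn:trivialization}, which has probability $1-e^{-cN}$; on $K_N$, $\nabla\tH_{N,t}$ is uniformly $O(1)$-Lipschitz on $\{\tnorm{\bm}_N\le 1\}$, as is $\tpartial_\rd\tH_{N,t}$.

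Given $\bm\in\cS_\iota$ with $\tnorm{\nabla\cF_\sTAP(\bm)}_N\le\iota$, let $\bv:=\bm-\<\bm,\bx\>_N\bx$ (so $\bv\perp\bx$) and set $\tilde\bm:=q_\ast\bx+\sqrt{q_\ast(1-q_\ast)}\,\bv/\tnorm{\bv}_N\in\cS_0$; the defining inequalities of $\cS_\iota$ give $\tnorm{\bv}_N^2=\tnorm{\bm}_N^2-\<\bm,\bx\>_N^2=q_\ast(1-q_\ast)+O(\iota)$, hence $\tnorm{\bm-\tilde\bm}_N=O(\iota)$. Rewriting \eqref{eq:nabla-cF-sTAP} schematically as $\nabla\tH_{N,t}(\bm)+\xi'_t(\<\bx,\bm\>_N)\bx+\alpha(\tnorm{\bm}_N^2)\bm=O(\iota)$ and applying the orthogonal projector $P_{\spn(\tilde\bm,\bx)}^\perp$, the $\bx$ term is annihilated, while $P_{\spn(\tilde\bm,\bx)}^\perp\bm=P_{\spn(\tilde\bm,\bx)}^\perp(\bm-\tilde\bm)$ has norm $O(\iota)$. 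Combined with $\nabla\tH_{N,t}(\tilde\bm)=\nabla\tH_{N,t}(\bm)+O(\iota)$ (Lipschitzness on $K_N$), this yields $\tnorm{\nabla_\sp\tH_{N,t}(\tilde\bm)}_N=O(\iota)$. Thus $\tilde\bm$ is an $O(\iota)$-approximate Riemannian critical point of $\tH_{N,t}$ on $\cS_0$, so Proposition~\ref{ppn:trivialization}\ref{itm:triv} supplies $\iota_1'=o_\iota(1)$ and a sign such that $\tnorm{\tilde\bm-\bm_\pm}_N\le\iota_1'$.

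To rule out the minus sign, take the inner product of the criticality equation with the unit radial vector $\tilde\bm^\perp=(\tilde\bm-q_\ast\bx)/\sqrt{q_\ast(1-q_\ast)}$, which satisfies $\<\tilde\bm^\perp,\bx\>_N=0$ and $\<\tilde\bm^\perp,\tilde\bm\>_N=\sqrt{q_\ast(1-q_\ast)}$. Using $\xi'_t(q_\ast)=q_\ast/(1-q_\ast)$ from \eqref{eq:def-qt} together with the $O(\iota)$ bounds above, a direct computation (consistent with passing \eqref{eq:RearrangeIter} to the AMP fixed-point limit and consistent with Proposition~\ref{ppn:trivialization}\ref{itm:triv-approx-crit}) pins down
\[
\tpartial_\rd\tH_{N,t}(\tilde\bm)=+\sqrt{\tfrac{q_\ast}{1-q_\ast}}\bigl(1+(1-q_\ast)^2\xi''(q_\ast)\bigr)+O(\iota).
\]
On the other hand, Lipschitzness of $\tpartial_\rd\tH_{N,t}$, $\tnorm{\tilde\bm-\bm_\pm}_N\le\iota_1'$, and \eqref{eq:radial-deriv-estimate} together force $\tpartial_\rd\tH_{N,t}(\tilde\bm)=\pm\sqrt{q_\ast/(1-q_\ast)}(1+(1-q_\ast)^2\xi''(q_\ast))+o_\iota(1)$. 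Since \eqref{eq:amp-works} gives $(1-q_\ast)^2\xi''(q_\ast)\in[0,1)$ and hence $1+(1-q_\ast)^2\xi''(q_\ast)\in[1,2)$ bounded away from zero, only the $+$ sign is compatible once $\iota$ is small enough. Setting $\iota':=\iota_1'+O(\iota)=o_\iota(1)$ yields $\tnorm{\bm-\bm_+}_N\le\iota'$, as claimed. The main technical obstacle is cleanly extracting the radial component of the criticality equation after the projection $\bm\mapsto\tilde\bm$ and keeping track of $O(\iota)$ versus $o_\iota(1)$ errors; the qualitative content is that criticality of $\cF_\sTAP$ pins $\tpartial_\rd\tH_{N,t}$ to a specific positive value, which is the signed feature that separates $\bm_+$ from $\bm_-$.
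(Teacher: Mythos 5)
Your proposal is correct and follows essentially the same route as the paper's proof: project the approximate critical point to a nearby point of $\cS_0$ (the paper simply takes the nearest point, incurring the same $O(\iota)$ displacement as your explicit construction), deduce approximate Riemannian criticality of $\tH_{N,t}$, invoke Proposition~\ref{ppn:trivialization}\ref{itm:triv}, and use the radial derivative value $+\sqrt{q_\ast/(1-q_\ast)}\,(1+(1-q_\ast)^2\xi''(q_\ast))+O(\iota)$ extracted from the criticality equation to exclude $\bm_-$ via \eqref{eq:radial-deriv-estimate}. The bookkeeping of $O(\iota)$ versus $o_\iota(1)$ errors matches the paper's.
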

\begin{proof}
    Suppose $K_N$ holds.
    Let $\bm \in \cS_\iota$ be an $\iota$-approximate critical point of $\cF_\sTAP$, and let $\tbm$ be the nearest point in $\cS_0$ to $\bm$, so that $\norm{\bm - \tbm}_N \le 2\iota$.
    On $K_N$, the map $\bm \mapsto \nabla \cF_\sTAP(\bm)$ is $O(1)$-Lipschitz.
    Thus $\tbm$ is a $O(\iota)$-approximate critical point of $\cF_\sTAP(\bm)$, i.e.
    \beq
        \label{eq:tbm-approx-crit}
        \norm{
            \nabla \tH_{N,t}(\tbm)
            + \xi'_t(q_\ast) \bx
            - \lt((1-q_\ast)\xi''(q_\ast) + \fr{1}{1-q_\ast}\rt)\tbm
        }_N \le O(\iota).
    \eeq
    Thus $\tnorm{\nabla_\sp \tH_{N,t}(\tbm)}_N \le O(\iota)$.
    By Proposition~\ref{ppn:trivialization}\ref{itm:triv}, there exists $\iota' = o_\iota(1)$ such that on an event with probability $1-e^{-cN}$, $\tnorm{\tbm - \bm_\pm}_N \le \iota'/2$ for some choice of sign $\pm$.
    We now show the sign must be $+$.
    By \eqref{eq:tbm-approx-crit},
    \balnn
        \notag
        \tpartial_\rd \tH_{N,t}(\tbm)
        &= \fr{1}{\sqrt{q_\ast(1-q_\ast)}}
        R\lt(
            \tbm - q_\ast \bx,
            - \xi'_t(q_\ast) \bx
            + \lt((1-q_\ast)\xi''(q_\ast) + \fr{1}{1-q_\ast}\rt)\tbm
        \rt)
        + O(\iota) \\
        \label{eq:radial-deriv-of-TAP}
        &= \sqrt{\fr{q_\ast}{1-q_\ast}} \lt(1 + (1-q_\ast)^2 \xi''(q_\ast)\rt)
        + O(\iota).
    \ealnn
    If we had $\tnorm{\tbm - \bm_-}_N \le \iota'/2$, then Eq.~\eqref{eq:radial-deriv-estimate} and Lipschitzness of $\bm \mapsto \nabla \cF_\sTAP(\bm)$ would imply
    \[
        \tpartial_\rd \tH_{N,t}(\tbm)
        = - \sqrt{\fr{q_\ast}{1-q_\ast}} \lt(1 + (1-q_\ast)^2 \xi''(q_\ast)\rt)
        + O(\iota'),
    \]
    which contradicts \eqref{eq:radial-deriv-of-TAP} for small enough $\iota$.
    Thus $\tnorm{\tbm - \bm_+}_N \le \iota'/2$.
    Recalling $\norm{\bm - \tbm}_N \le 2\iota$ implies the conclusion.
\end{proof}

\begin{proof}[Proof of Proposition~\ref{ppn:tap-existence-uniqueness}]
    By Lemma~\ref{lem:select-crit}, ($\bm_+$ is well-defined and) there is a unique critical point of $\cF_\sTAP$ in the region $\norm{\bm - \bm_+}_N \le \iota$, which also satisfies \eqref{eq:mtap-well-conditioned}.
    Let $\bm^\sTAP$ denote this point.

    Let $\iota' = o_\iota(1)$ be given by Lemma~\ref{lem:crit-unique}.
    For $\iota$ sufficiently small, Lemma~\ref{lem:select-crit} also implies that $\bm^\sTAP$ is the unique critical point of $\cF_\sTAP$ in the region $\norm{\bm - \bm_+}_N \le \iota'$.

    By Lemma~\ref{lem:crit-unique}, all $\iota$-approximate critical points $\bm \in \cS_\iota$ of $\cF_\sTAP$ satisfy $\tnorm{\bm - \bm_+}_N \le \iota'$.
    In particular all critical points are in this region, and thus $\bm^\sTAP$ is the unique critical point.
    This proves part~\ref{itm:tap-existence-uniqueness}.
    Furthermore, for $\iota$-approximate critical points $\bm \in \cS_\iota$,
    \[
        \tnorm{\bm - \bm^\sTAP}_N
        \le \tnorm{\bm - \bm_+}_N
        + \tnorm{\bm^\sTAP - \bm_+}_N
        \le 2\iota'.
    \]
    This proves part~\ref{itm:tap-approx-crits} upon adjusting $\iota'$.
\end{proof}

\subsection{Characterization of Riemannian critical points: proof of Proposition~\ref{ppn:trivialization}}
\label{subsec:riemannian-crits}

The proof builds on a sequence of recent results on \textbf{topological trivialization} in
spherical spin glasses \cite{fyodorov2014topology, fyodorov2015high, belius2022triviality, huang2023strong}.
\begin{proof}[Proof of Proposition~\ref{ppn:trivialization}\ref{itm:triv}]
    For $\bm \in \cS_0$, we may write $\bm = q_\ast\bx + \sqrt{q_\ast(1-q_\ast)} \btau$, where $\<\bx,\btau\>_N = 0$ and $\norm{\btau}_N = 1$.
    Let
    \[
        \hH(\btau)
        = \tH_{N,t}(q_\ast \bx + \sqrt{q_\ast(1-q_\ast)} \btau)
        - \tH_{N,t}(q_\ast \bx).
    \]
    This is a spin glass (in $1$ fewer dimension) with mixture
    \beq
        \label{eq:triv-equivalent-mixture}
        \txi(s) = \xi_t(q_\ast^2 + q_\ast(1-q_\ast)s) - \xi_t(q_\ast^2).
    \eeq
    Note that
    \baln
        \txi'(1) &= q_\ast(1-q_\ast) \xi'_t(q_\ast) \stackrel{\eqref{eq:def-qt}}{=} q_\ast^2, &
        \txi''(1) &= q_\ast^2(1-q_\ast)^2 \xi''(q_\ast) \stackrel{\eqref{eq:amp-works}}{<} q_\ast^2.
    \ealn
    Thus $\txi'(1) > \txi''(1)$, which is the condition for topological trivialization identified in \cite[Equation 64]{fyodorov2015high}, see also \cite[Theorem 1.1]{belius2022triviality}.
    Thus, with high probability, $\hH$ has exactly two critical points $\btau_\pm$, which have radial derivative
    \[
        \tpartial_\rd \hH(\btau_{\pm})
        = \pm \lt(
            \sqrt{\txi'(1)} + \fr{\txi''(1)}{\sqrt{\txi'(1)}}
        \rt) + O(\iota)
        = \pm q_\ast\lt(
            1 + (1-q_\ast)^2 \xi''(1)
        \rt) + O(\iota).
    \]
    By \cite[Theorem 1.6]{huang2023strong}, this actually holds with probability $1-e^{-cN}$.
    On this event, $\tH_{N,t}$ has exactly two Riemannian critical points $\bm_\pm$ on $\cS_0$, which have radial derivative
    \[
        \tpartial_\rd \tH_{N,t}(\bm_\pm)
        = \fr{1}{\sqrt{q_\ast(1-q_\ast)}} \cdot \tpartial_\rd \hH(\btau^\pm)
        = \pm \sqrt{\fr{q_\ast}{1-q_\ast}} \lt(
            1 + (1-q_\ast)^2 \xi''(1)
        \rt) + O(\iota).
    \]
    The estimate \eqref{eq:radial-deriv-estimate} holds by adjusting $\iota$.
    The claim about approximate critical points also follows from \cite[Theorem 1.6]{huang2023strong}, which shows that all approximate critical points are close to exact critical points.
\end{proof}
We will prove parts~\ref{itm:triv-approx-crit} and \ref{itm:triv-local-max} by slightly modifying the calculation in \cite{fyodorov2015high,belius2022triviality}.
This calculation is based on the Kac--Rice formula, which we now recall.
Let $\Crt$ denote the set of Riemannian critical points of $\tH_{N,t}$ on $\cS_0$ and $\mu_{\cS_0}$ denote the $(N-2)$-dimensional Hausdorff measure on $\cS_0$.
The Kac--Rice Formula \cite{rice1944mathematical,kac1948average} (see \cite{adler2007random} for a textbook treatment), applied to $\nabla \tH_{N,t}$ on the Riemannian manifold $\cS_0$, states that for any (random) measurable set $\cT \subseteq \cS_0$,
\beq
    \label{eq:kac-rice}
    \EE |\Crt \cap \cT|
    = \int_{\cS_0} \EE\lt[
        |\det \nabla^2_\sp \tH_{N,t}(\bm)|
        \ind\{\bm \in \cT\}
        \big | \nabla_\sp \tH_{N,t}(\bm) = \bzero
    \rt]
    \varphi_{\nabla_\sp \tH_{N,t}(\bm)}(\bzero) ~\de \mu_{\cS_0}(\bm).
\eeq
Here $\varphi_X$ denotes the probability density of the random variable $X$, and $\nabla^2_\sp \tH_{N,t}(\bm)$ is understood as a $(N-2)\times (N-2)$ matrix.
The following fact is standard, see, e.g., \cite[Lemma 1]{auffinger2013complexity}.
\begin{fac}
    \label{fac:derivative-laws}
    For any $\bm \in \cS_0$, the random variables $\partial_\rd \tH_{N,t}(\bm)$, $\nabla_\sp \tH_{N,t}(\bm)$, and $\nabla^2_\tn \tH_{N,t}(\bm)$ are independent and Gaussian.
    Moreover, with $\bG \sim \GOE(N-2)$, we have
    \[
        \nabla^2_\tn \tH_{N,t}(\bm)
        \stackrel{d}{=}
        \sqrt{\xi''(q_\ast) \fr{N-2}{N}} \bG.
    \]
\end{fac}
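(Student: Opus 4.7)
The field $\tH_{N,t}$ is a centered Gaussian process on $\bbR^N$ with covariance kernel $\bbE\tH_{N,t}(\bsig^1)\tH_{N,t}(\bsig^2)=N\xi_t(\<\bsig^1,\bsig^2\>_N)$, so at any fixed $\bm$ the vector $(\nabla\tH_{N,t}(\bm),\nabla^2\tH_{N,t}(\bm))$ is jointly Gaussian, and the three random objects in the statement are linear functionals of it. The plan is thus to reduce the independence claim to pairwise decorrelation and then to read off the law of the tangential Hessian from its covariance structure.

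Differentiating the kernel four times and evaluating at $\bsig^1=\bsig^2=\bm$, $\|\bm\|_N^2=q_\ast$ yields, entrywise,
\begin{align*}
    \Cov\bigl(\partial_i \tH_{N,t}(\bm),\partial_j \tH_{N,t}(\bm)\bigr) &= \xi_t'(q_\ast)\delta_{ij} + N^{-1}\xi_t''(q_\ast)\,m_i m_j,\\
    \Cov\bigl(\partial_k \tH_{N,t}(\bm),\partial^2_{ij}\tH_{N,t}(\bm)\bigr) &= N^{-2}\xi_t'''(q_\ast)\,m_im_jm_k + N^{-1}\xi_t''(q_\ast)\bigl(\delta_{ik}m_j+\delta_{jk}m_i\bigr),\\
    \Cov\bigl(\partial^2_{ij}\tH_{N,t}(\bm),\partial^2_{kl}\tH_{N,t}(\bm)\bigr) &= N^{-1}\xi_t''(q_\ast)(\delta_{ik}\delta_{jl}+\delta_{il}\delta_{jk}) + R_{ijkl}(\bm),
\end{align*}
where every monomial in the remainder $R_{ijkl}(\bm)$ carries at least one factor $m_a$ with $a\in\{i,j,k,l\}$. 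The crucial observation is that both $\bm$ and the radial direction $\bm^\perp$ lie in $\spn(\bm,\bx)$, hence $P_{\spn(\bm,\bx)}^\perp\bm=P_{\spn(\bm,\bx)}^\perp\bm^\perp=\bzero$. Contracting the three covariance tensors above with the projections that define $\nabla_\sp$, $\tpartial_\rd$, and $\nabla^2_\tn$ therefore annihilates every monomial in which a projected index carries a factor of $\bm$; this kills all three cross-covariances as well as the remainder $R_{ijkl}$. Since pairwise decorrelation of jointly Gaussian quantities implies joint independence, this proves the first half of the statement.

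Only the isotropic piece of the Hessian covariance survives the double projection. Since $\xi_t''=\xi''$ (the linear term $ts$ contributes nothing to second derivatives) and $P_{\spn(\bm,\bx)}^\perp$ acts as the identity on its image, the surviving covariance reads
\[
    \Cov\bigl([\nabla^2_\tn\tH_{N,t}(\bm)]_{ab},[\nabla^2_\tn\tH_{N,t}(\bm)]_{cd}\bigr) = N^{-1}\xi''(q_\ast)\bigl(\delta_{ac}\delta_{bd}+\delta_{ad}\delta_{bc}\bigr),
\]
where $a,b,c,d$ index an orthonormal basis of $\spn(\bm,\bx)^\perp$. Comparing with the paper's normalization $G_{ii}\sim\cN(0,2/(N-2))$, $G_{ij}\sim\cN(0,1/(N-2))$ for $i<j$ of $\GOE(N-2)$, whose entries have covariance $(N-2)^{-1}(\delta_{ac}\delta_{bd}+\delta_{ad}\delta_{bc})$, the scaling constant that reproduces the displayed covariance is exactly $\sqrt{\xi''(q_\ast)(N-2)/N}$. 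The main (and essentially only) obstacle is careful bookkeeping of the chain-rule derivatives of $N\xi_t(\<\cdot,\cdot\>_N)$ together with the two-sided codimension-$2$ projection; the argument is the natural adaptation of the classical sphere computation \cite[Lemma 1]{auffinger2013complexity} to the present setting where one also projects out the planted direction $\bx$.
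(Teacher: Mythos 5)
Your proof is correct and is exactly the standard covariance computation that the paper itself invokes without proof (it cites \cite[Lemma 1]{auffinger2013complexity} and calls the fact standard): differentiate the kernel $N\xi_t(\<\cdot,\cdot\>_N)$, observe that the projections $P^\perp_{\spn(\bm,\bx)}$ annihilate $\bm$ and $\bm^\perp$ so all cross-covariances and the anisotropic remainder vanish, and match the surviving covariance $N^{-1}\xi''(q_\ast)(\delta_{ac}\delta_{bd}+\delta_{ad}\delta_{bc})$ to that of $\sqrt{\xi''(q_\ast)(N-2)/N}\,\bG$ with $\bG\sim\GOE(N-2)$. The appeal to pairwise decorrelation implying joint independence for jointly Gaussian vectors is valid, so there is nothing to add.
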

We defer the proof of the following lemma to Subsection~\ref{subsec:goe-determinants}.
\begin{lem}
    \label{lem:goe-concentration}
    Let $\bG \sim \GOE(N)$.
    For any $t \ge 1$, $r > 2$, there exists $C_{r,t} > 0$, uniform for $r$ in compact subsets of $(2,+\infty)$, such that
    \[
        \EE \lt[|\det (r\bI - \bG)|^t \rt]^{1/t}
        \le
        C_{r,t} \EE \lt[|\det (r\bI - \bG)| \rt].
    \]
\end{lem}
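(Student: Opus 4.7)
The central idea is that $\log|\det(r\bI-\bG)|$ is a smooth linear spectral statistic of $\bG$, and since $r>2$ lies outside the asymptotic spectrum $[-2,2]$, it enjoys Gaussian concentration on an $O(1)$ scale in $N$ rather than the $O(\sqrt N)$ scale a naive CLT would suggest. This $N$-free subgaussian concentration forces every fixed moment of $|\det(r\bI-\bG)|$ to be comparable to the first moment up to a multiplicative constant that depends only on $r$ and $t$.

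First I would build a smooth majorant $\phi\in C^2(\bbR)$ of $\lambda\mapsto\log|r-\lambda|$: setting $\delta=(r-2)/2>0$, take $\phi$ to agree with $\log|r-\lambda|$ on $\{|\lambda-r|\ge\delta\}$, stay $\ge\log\delta$ on $\{|\lambda-r|<\delta\}$, and be globally $(2/\delta)$-Lipschitz. The linear statistic $F(\bG) := \sum_{i=1}^N \phi(\lambda_i(\bG))$ then satisfies the pointwise bound $|\det(r\bI-\bG)|\le e^{F(\bG)}$. A direct computation of the gradient in the free entries of $\bG$ (using orthonormality of eigenvectors) gives $\|\nabla F\|_2^2\le 2\sum_i \phi'(\lambda_i)^2 \le 2N/\delta^2$, and since the $\GOE$ entries have variance at most $2/N$, Gaussian concentration (Borell--Sudakov--Tsirelson) yields
\[
\EE[\exp(s(F-\EE F))]\le \exp(C s^2/\delta^2)\qquad\text{for all }s\in\bbR,
\]
with $C$ an absolute constant. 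The essential cancellation is between the $\sqrt N$ from the Lipschitz constant and the $1/\sqrt N$ from the entry standard deviation.

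The upper bound on the $t$-th moment is then immediate: $\EE[|\det(r\bI-\bG)|^t]\le\EE[e^{tF}]\le \exp(t\EE F+C t^2/\delta^2)$. For the matching lower bound on $\EE[|\det(r\bI-\bG)|]$, I introduce the event $\cA=\{\lambda_{\max}(\bG)\le r-\delta\}$, on which $F$ coincides with $\log|\det(r\bI-\bG)|$; Gaussian concentration of the top eigenvalue gives $\PP(\cA^c)\le e^{-c_rN}$ for some $c_r>0$. Jensen gives $\EE[e^F]\ge e^{\EE F}$, while Cauchy--Schwarz combined with the concentration bound yields
\[
\EE[e^F\ind_{\cA^c}]\le \EE[e^{2F}]^{1/2}\PP(\cA^c)^{1/2}\le \exp(\EE F+2C/\delta^2-c_rN/2)\le \tfrac12 e^{\EE F}
\]
for $N\ge N_0(r)$. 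Hence $\EE[|\det(r\bI-\bG)|]\ge\EE[e^F\ind_\cA]\ge\tfrac12 e^{\EE F}$, and combining the two bounds gives $\EE[|\det|^t]^{1/t}\le 2\exp(C t/\delta^2)\cdot\EE[|\det|]$. Since $\delta=(r-2)/2$ is bounded below on any compact subset of $(2,\infty)$, the resulting constant $C_{r,t}$ is uniform there; the finitely many small-$N$ cases ($N<N_0(r)$) are absorbed via continuity of both expectations in $r$.

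\textbf{Main obstacle.} The singularity of $\log|r-\lambda|$ at $\lambda=r$ rules out any global Lipschitz bound for $\log|\det(r\bI-\bG)|$, so the majorization-based truncation is unavoidable. The truncation is free in the upper bound (by the pointwise majorant $|\det|\le e^F$), but in the lower bound it forces us to control the contribution of the unlikely event $\cA^c$ to $\EE[e^F]$; that control is precisely the Cauchy--Schwarz step leveraging the subgaussian moment generating function from the concentration estimate, and it relies crucially on the $N$-independence of the subgaussian parameter.
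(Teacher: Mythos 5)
Your proof is correct and follows essentially the same route as the paper: truncate $\log|r-\lambda|$ away from its singularity, use the $N$-independent subgaussian concentration of the resulting linear spectral statistic (the paper cites Guionnet--Zeitouni for exactly the bound you rederive via the Lipschitz-plus-Borell--TIS computation), majorize pointwise for the $t$-th moment, and restrict to the high-probability spectral event together with Jensen and Cauchy--Schwarz for the lower bound on the first moment. The only cosmetic differences are that the paper works directly with the Lipschitz (non-smooth) truncation $f(x)=\log\max(|r-x|,\eps)$ and the event $\{\tnorm{\bG}_{\op}\le 2+\eps\}$ in place of your smooth majorant and $\{\lambda_{\max}(\bG)\le r-\delta\}$.
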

\begin{ppn}
    \label{ppn:crt-equal-2}
    We have $\EE |\Crt| = 2 + o_N(1)$.
\end{ppn}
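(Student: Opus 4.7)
The plan is to apply the Kac--Rice formula to count Riemannian critical points, reduce by symmetry to a one-point GOE computation, and invoke the topological trivialization asymptotics for spherical spin glasses established in \cite{fyodorov2015high, belius2022triviality}.

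Under the parameterization $\bm = q_\ast\bx + \sqrt{q_\ast(1-q_\ast)}\btau$ used in the proof of Proposition~\ref{ppn:trivialization}\ref{itm:triv}, Riemannian critical points of $\tH_{N,t}$ on $\cS_0$ correspond bijectively (with Jacobian $(q_\ast(1-q_\ast))^{(N-2)/2}$ that will cancel against the scaling of $\nabla_\sp$) to Riemannian critical points of the centered spherical spin glass $\hH$ on $S_{N-2}$, whose mixture $\txi$ from \eqref{eq:triv-equivalent-mixture} satisfies $\txi'(1) > \txi''(1)$. It therefore suffices to show that the expected number of Riemannian critical points of $\hH$ on $S_{N-2}$ equals $2 + o_N(1)$.

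Applying \eqref{eq:kac-rice} to $\hH$ on $S_{N-2}$ with $\cT = S_{N-2}$, and using the rotational invariance of the law of $\hH$ to reduce to a single base point $\btau_0$, we obtain
\[
    \EE|\Crt| = \Vol(S_{N-2}) \cdot \EE\big[|\det \nabla^2_\sp \hH(\btau_0)| \;\big|\; \nabla_\sp \hH(\btau_0) = \bzero \big] \cdot \varphi_{\nabla_\sp \hH(\btau_0)}(\bzero).
\]
By Fact~\ref{fac:derivative-laws} applied to $\hH$, the three random objects $\nabla_\sp \hH(\btau_0)$, $\tpartial_\rd \hH(\btau_0)$, and $\nabla^2_\tn \hH(\btau_0)$ are independent, so the conditioning is vacuous. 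Moreover, $\nabla^2_\sp \hH(\btau_0) = \nabla^2_\tn \hH(\btau_0) - \tpartial_\rd\hH(\btau_0) \bI_{N-2}$ decomposes as an $\GOE(N-2)$-law (scaled by $\sqrt{\txi''(1)}$) shifted by an independent Gaussian multiple of the identity whose variance is an explicit function of $\txi'(1)$. The expected determinant thus becomes $\int \EE_{\bG}\,|\det(\sqrt{\txi''(1)}\bG - z\bI)| \,\gamma(z)\,\de z$ for a centered Gaussian density $\gamma$.

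This is precisely the integral evaluated asymptotically in \cite[Theorem 1.1]{belius2022triviality}, following \cite{fyodorov2015high}: standard GOE determinant asymptotics combined with Laplace's method identify two dominant saddles, located outside the semicircle support $[-2\sqrt{\txi''(1)},2\sqrt{\txi''(1)}]$ and corresponding (via the sign of the radial derivative) exactly to $\btau_\pm$. Each saddle contributes $1 + o_N(1)$ once the exponential large-deviation exponent is shown to vanish (this uses $\txi'(1) > \txi''(1)$ to place the saddles in the large-deviation-free regime), and the subexponential polynomial prefactors cancel exactly against $\Vol(S_{N-2})\cdot \varphi_{\nabla_\sp \hH(\btau_0)}(\bzero)$. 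Summing yields $\EE|\Crt| = 2 + o_N(1)$. The main obstacle is tracking these prefactors and verifying that the two saddles lie outside the bulk support; both are carried out in \cite[Section 2]{belius2022triviality} for the mixture $\txi$, which applies verbatim since we have already checked $\txi'(1) > \txi''(1)$.
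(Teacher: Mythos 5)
Your proposal is correct and takes essentially the same route as the paper: the paper's proof is simply the observation that the restriction of $\tH_{N,t}$ to $\cS_0$ is, after reparametrization, a spherical spin glass with mixture $\txi$ satisfying $\txi'(1) > \txi''(1)$, followed by a citation of \cite[Equation 64]{fyodorov2015high} or \cite[Theorem 1.2]{belius2022triviality}. Your additional unpacking of the Kac--Rice/saddle-point computation is a faithful description of what those references establish, but it is not needed beyond the reduction and the citation.
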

\begin{proof}
    As shown in the proof of Proposition~\ref{ppn:trivialization}\ref{itm:triv} above, after reparametrizing $\cS_0$ to a sphere of radius $\sqrt{N}$, the restriction of $\tH_{N,t}$ to $\cS_0$ is a spherical spin glass in one fewer dimension with mixture $\txi$ \eqref{eq:triv-equivalent-mixture}, which satisfies $\txi'(1) > \txi''(1)$.
    The claim follows from \cite[Equation 64]{fyodorov2015high} or \cite[Theorem 1.2]{belius2022triviality}.
\end{proof}
We will use \eqref{eq:kac-rice} through the following lemma.
Let
\begin{align}
    r_\ast = \sqrt{\fr{q_\ast}{1-q_\ast}} \lt(1+(1-q_\ast)^2\xi''(q_\ast)\rt).\label{eq:R-star-def}
\end{align}
\begin{lem}
    \label{lem:conditioning-cs}
    Let $\iota > 0$ be sufficiently small, $I_\iota = [r_\ast - \iota, r_\ast + \iota]$, and
    \[
        \cT_\iota = \lt\{
            \bm \in \cS_0 :
            \tpartial_\rd \tH_{N,t}(\bm) \in I_\iota
        \rt\}.
    \]
    There exists a constant $C>0$ (independent of $\iota$) such that for any measurable $\cT \subseteq \cT_\iota$,
    \[
        \EE |\Crt \cap \cT|
        \le C
        \sup_{\bm \in \cS_0}
        \sup_{r\in I_\iota}
        \PP\lt[
            \bm \in \cT
            \big | \nabla_\sp \tH_{N,t}(\bm) = \bzero,
            \tpartial_\rd \tH_{N,t}(\bm) = r
        \rt]^{1/2}.
    \]
\end{lem}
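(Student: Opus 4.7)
The plan is to apply the Kac--Rice formula \eqref{eq:kac-rice}, insert an extra conditioning on the radial derivative (using the independence granted by Fact~\ref{fac:derivative-laws}), split $|\det\nabla^2_\sp \tH_{N,t}(\bm)|\,\ind\{\bm\in\cT\}$ via Cauchy--Schwarz, and then dispose of the $L^2$-norm of the determinant using Lemma~\ref{lem:goe-concentration}. The leftover factor is precisely $\EE|\Crt|$, which is bounded by Proposition~\ref{ppn:crt-equal-2}.

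First, by Fact~\ref{fac:derivative-laws} the variables $\nabla_\sp \tH_{N,t}(\bm)$, $\tpartial_\rd \tH_{N,t}(\bm)$ and $\nabla^2_\tn \tH_{N,t}(\bm)$ are jointly independent Gaussians, and
\[
    \nabla^2_\sp \tH_{N,t}(\bm) = \nabla^2_\tn \tH_{N,t}(\bm) - \frac{\tpartial_\rd \tH_{N,t}(\bm)}{\sqrt{q_\ast(1-q_\ast)}}\, P^\perp_{\spn(\bm,\bx)}.
\]
Conditioning on $\nabla_\sp \tH_{N,t}(\bm)=\bzero$ and $\tpartial_\rd \tH_{N,t}(\bm)=r$, the matrix $\nabla^2_\sp \tH_{N,t}(\bm)$ is distributed as
\[
    \sqrt{\xi''(q_\ast)\tfrac{N-2}{N}}\,\bG - \tfrac{r}{\sqrt{q_\ast(1-q_\ast)}}\bI_{N-2},
    \qquad \bG\sim\GOE(N-2).
\]
Factoring out $\sqrt{\xi''(q_\ast)(N-2)/N}$, this is (up to scalar) $\bG-\tilde r\bI_{N-2}$ with $\tilde r = r/\sqrt{\xi''(q_\ast)q_\ast(1-q_\ast)(N-2)/N}$. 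For $r=r_\ast$, a direct calculation gives $\tilde r_\ast = a^{-1}+a$ where $a:=(1-q_\ast)\sqrt{\xi''(q_\ast)}$, and condition \eqref{eq:amp-works} yields $a<1$, hence $\tilde r_\ast>2$. By continuity, $\tilde r>2$ uniformly for $r\in I_\iota$ provided $\iota$ is small enough, and $\tilde r$ ranges over a compact subset of $(2,\infty)$.

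Second, since $\cT\subseteq \cT_\iota$, the event $\{\bm\in\cT\}$ forces $\tpartial_\rd \tH_{N,t}(\bm)\in I_\iota$. Conditioning on $\tpartial_\rd\tH_{N,t}(\bm)=r$, Cauchy--Schwarz gives
\begin{align*}
    \EE\bigl[|\det\nabla^2_\sp \tH_{N,t}(\bm)|\,\ind\{\bm\in\cT\}\,\big|\,\nabla_\sp=\bzero,\tpartial_\rd=r\bigr]
    \le \EE\bigl[|\det\nabla^2_\sp|^2\,\big|\,\cdots\bigr]^{1/2}
    \PP\bigl[\bm\in\cT\,\big|\,\cdots\bigr]^{1/2}.
\end{align*}
Applying Lemma~\ref{lem:goe-concentration} to $\bG-\tilde r\bI$ (with $t=2$, $\tilde r$ in a compact subset of $(2,\infty)$) and restoring the scalar factor, we obtain a constant $C$, uniform in $r\in I_\iota$, such that
\[
    \EE\bigl[|\det\nabla^2_\sp|^2\,\big|\,\nabla_\sp=\bzero,\tpartial_\rd=r\bigr]^{1/2}
    \le C\,\EE\bigl[|\det\nabla^2_\sp|\,\big|\,\nabla_\sp=\bzero,\tpartial_\rd=r\bigr].
\]

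Finally, integrating over $r$ against the density $\varphi_{\tpartial_\rd\tH_{N,t}(\bm)}(r)$ (which is allowed because $\{\bm\in\cT\}\Rightarrow r\in I_\iota$), then over $\bm\in\cS_0$ against $\varphi_{\nabla_\sp \tH_{N,t}(\bm)}(\bzero)\,\de\mu_{\cS_0}(\bm)$, and pulling the sup outside the integral gives
\[
    \EE|\Crt\cap\cT|
    \le C \sup_{\bm\in\cS_0}\sup_{r\in I_\iota}\PP\bigl[\bm\in\cT\,\big|\,\nabla_\sp=\bzero,\tpartial_\rd=r\bigr]^{1/2}\cdot\EE|\Crt|,
\]
where the last factor is obtained by recognizing the remaining integral as the Kac--Rice representation of $\EE|\Crt|$. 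Proposition~\ref{ppn:crt-equal-2} bounds $\EE|\Crt|=2+o_N(1)$ by an absolute constant, completing the proof after absorbing it into $C$.

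The only delicate point is verifying that $\tilde r_\ast>2$ so that Lemma~\ref{lem:goe-concentration} applies; this is precisely where the AMP condition \eqref{eq:amp-works} is used (equivalently, this is the statement that $r_\ast$ sits strictly outside the semicircle bulk, so the conditioned determinant concentrates on its mean). All remaining manipulations are standard Fubini/Cauchy--Schwarz rearrangements.
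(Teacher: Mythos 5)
Your proof is correct and follows essentially the same route as the paper: explicit integration over the radial derivative using the independence from Fact~\ref{fac:derivative-laws}, Cauchy--Schwarz on the conditioned Kac--Rice integrand, Lemma~\ref{lem:goe-concentration} applied to the shifted GOE (after checking via \eqref{eq:amp-works} that the shift exceeds $2$), and identification of the leftover integral with $\EE|\Crt|$ bounded by Proposition~\ref{ppn:crt-equal-2}. No gaps.
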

\begin{proof}
    By Fact~\ref{fac:derivative-laws}, $\tpartial_\rd \tH_{N,t}(\bm)$ is independent of $\nabla_\sp \tH_{N,t}(\bm)$.
    Explicitly integrating $\tpartial_\rd \tH_{N,t}(\bm)$ in \eqref{eq:kac-rice} gives
    \baln
        \EE |\Crt \cap \cT|
        &=
        \int_{\cS_0} \int_{I_\iota} \EE\lt[
            |\det \nabla^2_\sp \tH_{N,t}(\bm)|
            \ind\{\bm \in \cT\}
            \big | \nabla_\sp \tH_{N,t}(\bm) = \bzero,
            \tpartial_\rd \tH_{N,t}(\bm) = r
        \rt] \\
        &\qquad \times
        \varphi_{\tpartial_\rd \tH_{N,t}(\bm)}(r)
        \varphi_{\nabla_\sp \tH_{N,t}(\bm)}(\bzero)
        ~\de r ~\de \mu_{\cS_0}(\bm).
    \ealn
    By Cauchy-Schwarz,
    \baln
        &\EE\lt[
            |\det \nabla^2_\sp \tH_{N,t}(\bm)|
            \ind\{\bm \in \cT\}
            \big | \nabla_\sp \tH_{N,t}(\bm) = \bzero,
            \tpartial_\rd \tH_{N,t}(\bm) = r
        \rt] \\
        &\le
        \EE\lt[
            |\det \nabla^2_\sp \tH_{N,t}(\bm)|^2
            \big | \nabla_\sp \tH_{N,t}(\bm) = \bzero,
            \tpartial_\rd \tH_{N,t}(\bm) = r
        \rt]^{1/2} \\
        &\qquad \times \PP\lt[
            \bm \in \cT
            \big | \nabla_\sp \tH_{N,t}(\bm) = \bzero,
            \tpartial_\rd \tH_{N,t}(\bm) = r
        \rt]^{1/2}.
    \ealn
    By Fact~\ref{fac:derivative-laws}, conditional on $\nabla_\sp \tH_{N,t}(\bm) = \bzero$, $\tpartial_\rd \tH_{N,t}(\bm) = r$,
    \baln
        \nabla^2_\sp \tH_{N,t}(\bm)
        &\stackrel{d}{=}
        \sqrt{\xi''(q_\ast) \fr{N-2}{N}} \bG - \fr{r}{\sqrt{q_\ast (1-q_\ast)}} \bI \\
        &= \sqrt{\xi''(q_\ast) \fr{N-2}{N}} \lt(
            \bG - \sqrt{\fr{N}{N-2}} \fr{r}{\sqrt{q_\ast (1-q_\ast) \xi''(q_\ast)}} \bI
        \rt).
    \ealn
    In light of \eqref{eq:amp-works},
    \[
        \fr{r_\ast}{\sqrt{q_\ast (1-q_\ast) \xi''(q_\ast)}}
        = (1-q_\ast) \xi''(q_\ast)^{1/2} + \fr{1}{(1-q_\ast) \xi''(q_\ast)^{1/2}} > 2,
    \]
    and thus, for $r\in I_\iota$ and $\iota$ suitably small,
    \[
        \sqrt{\fr{N}{N-2}} \fr{r}{\sqrt{q_\ast (1-q_\ast) \xi''(q_\ast)}} > 2.
    \]
    By Lemma~\ref{lem:goe-concentration}, for some $C>0$,
    \balnn
        \notag
        &\EE\lt[
            |\det \nabla^2_\sp \tH_{N,t}(\bm)|^2
            \big | \nabla_\sp \tH_{N,t}(\bm) = \bzero,
            \tpartial_\rd \tH_{N,t}(\bm) = r
        \rt]^{1/2} \\
        \notag
        &= \EE\lt[
            \lt|\det
                \sqrt{\xi''(q_\ast) \fr{N-2}{N}} \lt(
                    \bG - \sqrt{\fr{N}{N-2}} \fr{r}{\sqrt{q_\ast (1-q_\ast) \xi''(q_\ast)}} \bI
                \rt)
            \rt|^2
        \rt]^{1/2} \\
        \notag
        &\le C
        \EE\lt[
            \lt|\det
                \sqrt{\xi''(q_\ast) \fr{N-2}{N}} \lt(
                    \bG - \sqrt{\fr{N}{N-2}} \fr{r}{\sqrt{q_\ast (1-q_\ast) \xi''(q_\ast)}} \bI
                \rt)
            \rt|
        \rt] \\
        \label{eq:goe-concentration-application}
        &= C
        \EE\lt[
            |\det \nabla^2_\sp \tH_{N,t}(\bm)|
            \big | \nabla_\sp \tH_{N,t}(\bm) = \bzero,
            \tpartial_\rd \tH_{N,t}(\bm) = r
        \rt].
    \ealnn
    Combining, we find
    \baln
        \EE |\Crt \cap \cT|
        &\le
        C \sup_{\bm \in \cS_0} \sup_{r\in I_\iota}
        \PP\lt[
            \bm \in \cT
            \big | \nabla_\sp \tH_{N,t}(\bm) = \bzero,
            \tpartial_\rd \tH_{N,t}(\bm) = r
        \rt]^{1/2} \\
        &\times
        \int_{\cS_0} \int_{I_\iota} \EE\lt[
            |\det \nabla^2_\sp \tH_{N,t}(\bm)|
            \big | \nabla_\sp \tH_{N,t}(\bm) = \bzero,
            \tpartial_\rd \tH_{N,t}(\bm) = r
        \rt] \\
        &\qquad \times
        \varphi_{\tpartial_\rd \tH_{N,t}(\bm)}(r)
        \varphi_{\nabla_\sp \tH_{N,t}(\bm)}(\bzero)
        ~\de r ~\de \mu_{\cS_0}(\bm).
    \ealn
    By the Kac--Rice formula, the last integral is the expected number of Riemannian critical points $\bm$ of $\tH_{N,t}$ with radial derivative $\tpartial_\rd \tH_{N,t}(\bm) \in I_\iota$.
    This is upper bounded by $\EE |\Crt| = 2+o_N(1)$, by Proposition~\ref{ppn:crt-equal-2}.
\end{proof}

\begin{ppn}
    \label{ppn:conditional-tap-gradient-hessian}
    There exist $C^\spec_{\max} > C^\spec_{\min} > 0$ such that for all sufficiently small $\iota > 0$, there exists $\iota' =h(\iota) =o_\iota(1)$ such that the following holds.
    For any $\bm \in \cS_0$ define the events
    \[
        E_1(\bm,\iota') := \lt\{
            \tnorm{\nabla \cF_\sTAP(\bm)}_N \le \iota'
        \rt\}, \qquad
        E_2(\bm) := \lt\{
            \spec(\nabla^2 \cF_\sTAP(\bm)) \subseteq [-C^\spec_{\max}, -C^\spec_{\min}]
        \rt\}.
    \]
    Then,
    \[
        \inf_{r\in I_\iota}
        \PP\lt[
            E_1(\bm,\iota') \cap E_2(\bm) \big|
            \nabla_\sp \tH_{N,t}(\bm) = \bzero,
            \tpartial_\rd \tH_{N,t}(\bm) = r
        \rt]
        \ge 1-e^{-cN}.
    \]
    Here the constant $c$ is uniform over $\bm \in \cS_0$.
\end{ppn}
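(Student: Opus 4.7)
The plan is to prove both claims via Gaussian conditional distribution computations, exploiting the decomposition of $\bbR^N$ into three pairwise orthogonal subspaces adapted to $\bm \in \cS_0$: the line $\spn(\bx)$, the line $\spn(\bm^\perp)$ where $\bm^\perp := (\bm - q_\ast \bx)/\sqrt{q_\ast(1-q_\ast)}$ is the unit radial direction, and the $(N-2)$-dimensional tangent space $\spn(\bx,\bm)^\perp$. The conditioning $\nabla_\sp\tH_{N,t}(\bm) = \bzero$, $\tpartial_\rd\tH_{N,t}(\bm) = r$ fixes the tangential component of the gradient to zero and its $\bm^\perp$-component to $r$, leaving the $\bx$-component $\alpha := \<\bx,\nabla\tH_{N,t}(\bm)\>_N$ as the only remaining Gaussian scalar. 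By Fact~\ref{fac:derivative-laws} the tangential Hessian $\nabla^2_\tn\tH_{N,t}(\bm)$ is independent of the entire gradient and equal in law to $\sqrt{\xi''(q_\ast)(N-2)/N}\,\bG$ for $\bG\sim\GOE(N-2)$; this is the central input for both claims.

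For the gradient bound $E_1$, I substitute $\nabla\tH_{N,t}(\bm) = \alpha\bx + r\bm^\perp$ and $\bm = q_\ast\bx + \sqrt{q_\ast(1-q_\ast)}\bm^\perp$ into the formula for $\nabla\cF_\sTAP(\bm)$. Using $\xi'_t(q_\ast) = q_\ast/(1-q_\ast)$ from \eqref{eq:def-qt} and the definition \eqref{eq:R-star-def} of $r_\ast$, the $\bm^\perp$-coefficient of $\nabla\cF_\sTAP(\bm)$ collapses to $r - r_\ast$ (of size $O(\iota)$ since $r\in I_\iota$) and the $\bx$-coefficient collapses to $\alpha - q_\ast(1-q_\ast)\xi''(q_\ast)$. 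Since $\nabla\tH_{N,t}(\bm)$ has covariance $\fr{1}{N}\bigl(\xi'_t(q_\ast)\bI + \xi''(q_\ast)\bm\bm^\top\bigr)$, the pair $(\alpha,\beta)$ with $\beta := \<\bm^\perp,\nabla\tH_{N,t}(\bm)\>_N$ is centered bivariate Gaussian, and a short computation yields $\bbE[\alpha \mid \beta = r_\ast] = q_\ast(1-q_\ast)\xi''(q_\ast)$ exactly, with $O(\iota)$ error for $r\in I_\iota$ and conditional variance $O(1/N)$. Gaussian concentration then gives $|\alpha - q_\ast(1-q_\ast)\xi''(q_\ast)| \le h(\iota)$ with probability $1-e^{-cN}$ for some $h(\iota) = o_\iota(1)$, which yields $\tnorm{\nabla\cF_\sTAP(\bm)}_N \le \iota'$.

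For the Hessian bound $E_2$, I expand $\nabla^2 \cF_\sTAP(\bm) = \nabla^2\tH_{N,t}(\bm) + \fr{\xi''(q_\ast)}{N}\bx\bx^\top + c_1 \bI + \fr{c_2}{N}\bm\bm^\top$, where $c_1 = -\fr{1}{1-q_\ast} - (1-q_\ast)\xi''(q_\ast)$ and $c_2$ is an explicit bounded constant. Restricting to $\spn(\bx,\bm)^\perp$ kills both rank-one corrections, leaving the tangent block $\nabla^2_\tn\tH_{N,t}(\bm) + c_1\bI_\tn$, a shifted GOE that by Fact~\ref{fac:derivative-laws} is independent of the conditioning. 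Setting $v := (1-q_\ast)\sqrt{\xi''(q_\ast)}$, hypothesis \eqref{eq:amp-works} gives $v<1$ strictly, so the shifted semicircle edges $c_1 \pm 2\sqrt{\xi''(q_\ast)} = -(1\mp v)^2/(1-q_\ast)$ are both strictly negative. Exponential-tail edge concentration for GOE then places the tangent block's spectrum inside $\bigl[-(1+v)^2/(1-q_\ast) - o(1),\;-(1-v)^2/(1-q_\ast) + o(1)\bigr]$ with probability $1-e^{-cN}$.

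The main obstacle is extending this to the full $N\times N$ Hessian, since Cauchy interlacing allows up to two eigenvalues of $\nabla^2\cF_\sTAP(\bm)$ to escape the tangent spectrum via the normal subspace $\spn(\bx,\bm^\perp)$. To control these, I will analyze the $2\times 2$ block $P_{\spn(\bx,\bm^\perp)}\nabla^2\cF_\sTAP(\bm)P_{\spn(\bx,\bm^\perp)}$ directly: its off-diagonal coupling with the tangent block consists of $O(N^{-1/2})$-scale Gaussian entries by standard spin-glass covariance estimates, while its diagonal entries are $O(1)$ deterministic shifts (e.g.\ $\xi''(q_\ast) + c_1 + c_2 q_\ast^2$ in the $\bx\bx$ slot) plus Gaussian pieces whose conditional means under the Kac--Rice event are computable from the higher-order spin-glass covariance formulas. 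A direct but careful algebraic check --- consistent with the known local-maximum structure of $\bm_+$, which is the $r=r_\ast$ specialization of the present statement (cf.\ Proposition~\ref{ppn:trivialization}\ref{itm:triv-local-max}) --- should show this $2\times 2$ block has both eigenvalues bounded away from zero on the negative side, after which Weyl's inequality absorbs the $O(1)$ correction into the constants $(C^\spec_{\min}, C^\spec_{\max})$. Since all covariance computations depend on $\bm$ only through $\tnorm{\bm}_N^2 = q_\ast$ and $\<\bx,\bm\>_N = q_\ast$, the resulting probability bounds are uniform in $\bm\in\cS_0$.
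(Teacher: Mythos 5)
Your treatment of $E_1$ is correct and is essentially the paper's argument (Lemma~\ref{lem:x-grad-H-conditional-law} and Proposition~\ref{ppn:conditional-tap-gradient}): under the conditioning the only free Gaussian is $\alpha=\<\bx,\nabla\tH_{N,t}(\bm)\>_N$, its conditional mean is $q_\ast(1-q_\ast)\xi''(q_\ast)+O(\iota)$, its conditional variance is $O(1/N)$, and the $\bx$- and $\bm^\perp$-coefficients of $\nabla\cF_\sTAP(\bm)$ collapse as you describe. Your identification of the tangent block as a shifted $\GOE(N-2)$ with edges $-(1\mp v)^2/(1-q_\ast)$, $v=(1-q_\ast)\sqrt{\xi''(q_\ast)}<1$, is also correct and matches Fact~\ref{fac:derivative-laws} and the paper's computation.

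The gap is in how you close the Hessian bound on the $2$-dimensional normal subspace. The off-diagonal coupling between $\spn(\tbm,\tbx)$ and the tangent space consists of two $(N-2)$-dimensional Gaussian vectors whose \emph{entries} are of scale $N^{-1/2}$ but whose \emph{norms} are $\Theta(1)$ (they are $\sqrt{\psi}\,\bg^1$ and $\sqrt{\xi''(q_\ast)}\,\bg^2$ with $\bg^i\sim\cN(0,\bI_{N-2}/N)$, in the paper's notation). This is an $O(1)$ perturbation in operator norm, not a vanishing one, so Weyl's inequality cannot "absorb" it: a crude bound of the form $\lambda_{\max}(\bA)\le \max(\lambda_{\max}(\text{tangent block}),\lambda_{\max}(2\times2\text{ block}))+\tnorm{\text{coupling}}_{\op}$ adds a constant of order $\sqrt{\xi''(q_\ast)}$ that has no relation to the gap $(1-v)^2/(1-q_\ast)$ and in general destroys negativity. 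The actual mechanism, in Proposition~\ref{ppn:conditional-tap-hessian}, is a joint variational analysis over $\bv=a\be_1+b\be_2+\sqrt{1-a^2-b^2}\,\bw$: Lemma~\ref{lem:spherical-SK-ground-state} gives $\sup_\bw\{a'\<\bG\bw,\bw\>+2b'\<\bg,\bw\>\}=2\sqrt{(a')^2+(b')^2}$, and the resulting $2\sqrt{f(a,b)}$ must be cancelled against the diagonal entries via the exact square-completion
\[
-\lt(\sqrt{(1-q_\ast)\lt((1-a^2)\xi''(q_\ast)+a^2\psi\rt)}-\sqrt{\tfrac{1-a^2-b^2}{1-q_\ast}}\rt)^2-c_0(a^2+b^2),
\]
which is strictly negative at $(a,b)=(0,0)$ by \eqref{eq:amp-works} and negative elsewhere because of the $-c_0(a^2+b^2)$ term, followed by a net/union bound over $(a,b)$. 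This cancellation is the substance of the proof; the phrase "a direct but careful algebraic check should show" is precisely where it is hidden, and the Weyl-based route you propose would not recover it. (A secondary omission: to even state the conditional law of the Hessian you need the analogue of Lemma~\ref{lem:nabla2-H-conditional-law}/Corollary~\ref{cor:nabla2-F-conditional-law}, i.e.\ the rank-two deterministic shift and the exact variance profile $\psi$ of the coupling rows, since $\psi\ne\xi''(q_\ast)$ enters the cancellation above.)
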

We prove this proposition in the next subsection.
Assuming it, we first complete the proof of Proposition~\ref{ppn:trivialization}.
\begin{proof}[Proof of Proposition~\ref{ppn:trivialization}\ref{itm:triv-approx-crit}\ref{itm:triv-local-max}]
    Let $\upsilon$ be small enough that $\max(\upsilon,h(\upsilon)) \le \iota$, for the $h$ from Proposition~\ref{ppn:conditional-tap-gradient-hessian}.
    Also let $C^\spec _{\max}, C^\spec_{\min}$ be given by this proposition.
    Let $\cT \subseteq \cS_0$ be the set of points $\bm$ such that
    \begin{itemize}
        \item $\tpartial_\rd \tH_{N,t}(\bm) \in I_\upsilon$, and
        \item $E_1(\bm,\iota) \cap E_2(\bm)$ does not hold.
    \end{itemize}
    Thus $\cT \subseteq \cT_\upsilon$.
    By Lemma~\ref{lem:conditioning-cs} and Proposition~\ref{ppn:conditional-tap-gradient-hessian} (with $\upsilon$ for $\iota$)
    \baln
        \EE |\Crt \cap \cT|
        &\le C
        \sup_{\bm \in \cS_0}
        \sup_{r\in I_\upsilon}
        \PP\lt[
            (E_1(\bm,\iota) \cap E_2(\bm))^c
            \big | \nabla_\sp \tH_{N,t}(\bm) = \bzero,
            \tpartial_\rd \tH_{N,t}(\bm) = r
        \rt]^{1/2} \\
        &\le e^{-cN}.
    \ealn
    Thus, with probability $1-e^{-cN}$, there do not exist points $\bm \in \cS_0$ such that $\tpartial_\rd \tH_{N,t}(\bm) \in I_\upsilon$ and $E_1(\bm,\iota) \cap E_2(\bm)$ does not hold.

    However, by Proposition~\ref{ppn:trivialization}\ref{itm:triv} with $\upsilon$ in place of $\iota$, $\tpartial_\rd \tH_{N,t}(\bm_+) \in I_\upsilon$ with probability $1-e^{-cN}$.
    Thus $E_1(\bm_+,\iota) \cap E_2(\bm_+)$ holds, completing the proof.
\end{proof}

\subsection{Approximate stationarity and local concavity of $\cF_\sTAP$: proof of Proposition \ref{ppn:conditional-tap-gradient-hessian}}

\begin{lem}
    \label{lem:x-grad-H-conditional-law}
    Let $\bm \in \cS_0$ and $r \in I_\iota$.
    Conditional on $\nabla_\sp \tH_{N,t}(\bm) = \bzero$ and $\tpartial_\rd \tH_{N,t}(\bm) = r$, $\<\bx, \nabla \tH_{N,t}(\bm))$ is Gaussian with mean $q_\ast (1-q_\ast) \xi''(q_\ast) + O(\iota)$ and variance $O(N^{-1})$.
\end{lem}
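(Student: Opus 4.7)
The plan is a direct Gaussian conditioning calculation. All quantities in sight---$\<\bx,\nabla\tH_{N,t}(\bm)\>_N$ and the two conditioning variables $\nabla_\sp\tH_{N,t}(\bm)$, $\tpartial_\rd\tH_{N,t}(\bm)$---are linear functionals of the centered Gaussian field $\tH_{N,t}$ (treating $\bx$ as fixed, since $\bm\in\cS_0$ depends on $\bx$), hence jointly Gaussian. I will compute the joint covariance structure, identify which coordinates are being fixed, and apply the standard affine conditional-mean formula.

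First, differentiating $\EE[\tH_{N,t}(\bsig_1)\tH_{N,t}(\bsig_2)] = N\xi_t(\<\bsig_1,\bsig_2\>_N)$ at $\bsig_1=\bsig_2=\bm$ yields
\[
    \Cov(\nabla\tH_{N,t}(\bm)) = \xi_t'(q_\ast)\bI_N + \fr{\xi_t''(q_\ast)}{N}\bm\bm^\top.
\]
Since $\bm = q_\ast\bx + \sqrt{q_\ast(1-q_\ast)}\,\bm^\perp$, the rank-one tilt is supported in the two-dimensional subspace $\spn(\bx,\bm^\perp)$. Working in an orthonormal basis beginning with $\bx/\sqrt{N},\bm^\perp/\sqrt{N}$, I decompose $\nabla\tH_{N,t}(\bm) = \tilde\alpha\,\bx/\sqrt{N} + \tilde\beta\,\bm^\perp/\sqrt{N} + \bg^\perp$. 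Then $\bg^\perp = \nabla_\sp\tH_{N,t}(\bm)$ is independent of $(\tilde\alpha,\tilde\beta)$ (the rank-one piece has no component outside the first two basis vectors), and $\tilde\beta = \sqrt{N}\,\tpartial_\rd\tH_{N,t}(\bm)$. Consequently the conditioning $\nabla_\sp\tH_{N,t}(\bm)=\bzero$ is inert for the law of $\tilde\alpha$, and only the conditioning $\tilde\beta = r\sqrt{N}$ plays a role.

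A short algebraic computation using the basis representation $\bm = q_\ast\sqrt{N}\cdot(\bx/\sqrt{N}) + \sqrt{Nq_\ast(1-q_\ast)}\cdot(\bm^\perp/\sqrt{N})$ yields
\[
    \Var(\tilde\beta) = \xi_t'(q_\ast) + q_\ast(1-q_\ast)\xi''(q_\ast), \qquad
    \Cov(\tilde\alpha,\tilde\beta) = q_\ast\sqrt{q_\ast(1-q_\ast)}\,\xi''(q_\ast).
\]
Using $\xi_t'(q_\ast) = q_\ast/(1-q_\ast)$ from \eqref{eq:def-qt} simplifies $\Var(\tilde\beta) = \fr{q_\ast}{1-q_\ast}(1+(1-q_\ast)^2\xi''(q_\ast))$, and substituting $r=r_\ast$ from \eqref{eq:R-star-def} into $\EE[\tilde\alpha\mid\tilde\beta = r\sqrt{N}] = \tfrac{\Cov(\tilde\alpha,\tilde\beta)}{\Var(\tilde\beta)}\cdot r\sqrt{N}$ collapses to $q_\ast(1-q_\ast)\xi''(q_\ast)\sqrt{N}$. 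Since $\<\bx,\nabla\tH_{N,t}(\bm)\>_N = \tilde\alpha/\sqrt{N}$, this gives the claimed mean $q_\ast(1-q_\ast)\xi''(q_\ast)$ at $r=r_\ast$; for $r\in I_\iota$, the affinity of the conditional mean in $r$ produces the additive $O(\iota)$ error. The conditional variance $\Var(\tilde\alpha) - \Cov(\tilde\alpha,\tilde\beta)^2/\Var(\tilde\beta) = O(1)$, and dividing by $N$ gives the stated $O(N^{-1})$. There is no real obstacle; the only bookkeeping to watch is verifying that $\bm\bm^\top$ acts only on the $(\tilde\alpha,\tilde\beta)$ block, which is immediate from $\bm\in\spn(\bx,\bm^\perp)$.
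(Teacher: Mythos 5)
Your proposal is correct and follows essentially the same route as the paper: both reduce the statement to a direct Gaussian conditioning computation, observing that the spherical gradient is independent of the $(\bx,\bm^\perp)$-components so only the radial derivative matters, arriving at the same conditional-mean coefficient $q_\ast^{3/2}(1-q_\ast)^{1/2}\xi''(q_\ast)/(\xi_t'(q_\ast)+q_\ast(1-q_\ast)\xi''(q_\ast))$ and the same variance argument (unconditional variance $O(N^{-1})$, conditioning only decreases it). The only difference is that you spell out the $2\times 2$ covariance block explicitly where the paper merely invokes "a short linear-algebraic calculation."
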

\begin{proof}
    All the random variables considered are jointly Gaussian, so it suffices to compute the conditional mean and variance.
    A short linear-algebraic calculation shows
    \[
        \EE \lt[
            \<\bx, \nabla \tH_{N,t}(\bm)\>_N
            \big| \nabla_\sp \tH_{N,t}(\bm), \tpartial_\rd \tH_{N,t}(\bm)
        \rt]
        = \fr{q_\ast^{3/2} (1-q_\ast)^{1/2} \xi''(q_\ast)}{\xi'_t(q_\ast) + q_\ast (1-q_\ast) \xi''(q_\ast)}
        \tpartial_\rd \tH_{N,t}(\bm).
    \]
    Thus
    \baln
        &\EE \lt[
            \<\bx, \nabla \tH_{N,t}(\bm)\>_N
            \big| \nabla_\sp \tH_{N,t}(\bm) = \bzero, \tpartial_\rd \tH_{N,t}(\bm) = r
        \rt] \\
        &= \fr{q_\ast^{3/2} (1-q_\ast)^{1/2} \xi''(q_\ast)}{\xi'_t(q_\ast) + q_\ast (1-q_\ast) \xi''(q_\ast)}
        r_\ast + O(\iota) \\
        &\stackrel{\eqref{eq:def-qt},\eqref{eq:R-star-def}}{=} \fr{q_\ast^{3/2} (1-q_\ast)^{1/2} \xi''(q_\ast)}{\fr{q_\ast}{1-q_\ast} + q_\ast (1-q_\ast) \xi''(q_\ast)}
        \cdot \sqrt{\fr{q_\ast}{1-q_\ast}} \lt(1+(1-q_\ast)^2\xi''(q_\ast)\rt)
        + O(\iota) \\
        &= q_\ast (1-q_\ast) \xi''(q_\ast) + O(\iota).
    \ealn
    Before any conditioning, $\<\bx, \nabla \tH_{N,t}(\bm)\>_N$ is Gaussian with variance $O(N^{-1})$, and conditioning only reduces variance.
\end{proof}

\begin{ppn}
    \label{ppn:conditional-tap-gradient}
    Let $\bm \in \cS_0$ and $r \in I_\iota$.
    Conditional on $\nabla_\sp \tH_{N,t}(\bm) = \bzero$ and $\tpartial_\rd \tH_{N,t}(\bm) = r$, $E_1(\bm,\iota')$ holds with probability $1-e^{-cN}$, for some $\iota' = o_\iota(1)$.
\end{ppn}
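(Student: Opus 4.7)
The plan is to reduce the claim to Gaussian concentration of a single scalar coordinate of $\nabla \tH_{N,t}(\bm)$. On $\cS_0$ we have $\<\bx,\bm\>_N = q_\ast$ and $\tnorm{\bm}_N^2 = q_\ast$, so the formula \eqref{eq:nabla-cF-sTAP} for $\nabla \cF_\sTAP(\bm)$ reduces to $\nabla \tH_{N,t}(\bm)$ plus an explicit deterministic linear combination of $\bx$ and $\bm$ with coefficients depending on $\xi'_t(q_\ast)$ and $\alpha := (1-q_\ast)^{-1} + (1-q_\ast)\xi''(q_\ast)$. Since $\{\bx,\bm^\perp\}$ forms an orthonormal basis of the two-dimensional normal space to $\cS_0$ at $\bm$, I would decompose
\[
\nabla \tH_{N,t}(\bm) = \nabla_\sp \tH_{N,t}(\bm) + a\,\bx + \tpartial_\rd \tH_{N,t}(\bm)\,\bm^\perp,
\qquad a := \<\bx, \nabla \tH_{N,t}(\bm)\>_N.
\]
Under the conditioning of the proposition, the first summand is $\bzero$ and the radial coefficient equals the prescribed $r \in I_\iota$, so the only remaining randomness in $\nabla \tH_{N,t}(\bm)$ is the scalar $a$.

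Next, I would invoke Lemma~\ref{lem:x-grad-H-conditional-law}: conditionally, $a$ is Gaussian with mean $q_\ast(1-q_\ast)\xi''(q_\ast) + O(\iota)$ and variance $O(N^{-1})$. Since the fluctuation scale $N^{-1/2}$ is much smaller than the fixed constant $\iota$, a standard Gaussian tail bound yields
\[
|a - q_\ast(1-q_\ast)\xi''(q_\ast)| \le \iota
\]
with conditional probability $1-e^{-cN}$, where $c>0$ depends on $\iota,\xi,t$ and is uniform over $\bm \in \cS_0$.

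On this event, I would substitute the decomposition into the simplified formula for $\nabla \cF_\sTAP(\bm)$, expand $\bm^\perp = (\bm - q_\ast \bx)/\sqrt{q_\ast(1-q_\ast)}$, and collect coefficients of $\bx$ and of $\bm$. The fixed-point equation $\xi'_t(q_\ast) = q_\ast/(1-q_\ast)$ from Fact~\ref{fac:qt-unique}, together with the defining formula \eqref{eq:R-star-def} for $r_\ast$ which yields the identity $r_\ast/\sqrt{q_\ast(1-q_\ast)} = \alpha$, causes both collected coefficients to cancel to within $O(\iota)$. Setting $\iota' = C\iota$ for a suitable constant $C$ then gives $\tnorm{\nabla \cF_\sTAP(\bm)}_N \le \iota' = h(\iota) = o_\iota(1)$ as required. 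There is no substantive obstacle: the value of $r_\ast$ and the conditional mean in Lemma~\ref{lem:x-grad-H-conditional-law} are tuned exactly so that these cancellations happen, reflecting the fact that Riemannian critical points of $\tH_{N,t}|_{\cS_0}$ with radial derivative near $r_\ast$ are automatically approximate stationary points of $\cF_\sTAP$; the only probabilistic input is the sub-Gaussian concentration of the single scalar $a$.
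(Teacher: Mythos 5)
Your proposal is correct and follows essentially the same route as the paper's proof: condition, note that the only remaining randomness in $\nabla\tH_{N,t}(\bm)$ is the scalar $\<\bx,\nabla\tH_{N,t}(\bm)\>_N$, apply Lemma~\ref{lem:x-grad-H-conditional-law} with a Gaussian tail bound, and then verify via the fixed-point relation $\xi'_t(q_\ast)=q_\ast/(1-q_\ast)$ and the identity $r_\ast/\sqrt{q_\ast(1-q_\ast)}=(1-q_\ast)^{-1}+(1-q_\ast)\xi''(q_\ast)$ that the resulting expression cancels the deterministic part of $\nabla\cF_\sTAP(\bm)$ up to $O(\iota)$. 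The coefficient bookkeeping you describe matches the paper's computation exactly, so no further changes are needed.
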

\begin{proof}
    By Lemma~\ref{lem:x-grad-H-conditional-law}, with conditional probability $1-e^{-cN}$,
    \[
         |\<\bx, \nabla \tH_{N,t}(\bm)\>_N- q_\ast (1-q_\ast) \xi''(q_\ast)| \le O(\iota).
    \]
    Suppose this event holds.
    Since $\nabla_\sp \tH_{N,t}(\bm) = \bzero$,
    \baln
        \nabla \tH_{N,t}(\bm)
        &= \tpartial_\rd \tH_{N,t}(\bm) \fr{\bm - q_* \bx}{\sqrt{q_\ast (1-q_\ast)}}
        + \<\bx, \nabla \tH_{N,t}(\bm)\> \bx \\
        &= \sqrt{\fr{q_\ast}{1-q_\ast}} \lt(1+(1-q_\ast)^2\xi''(q_\ast)\rt) \fr{\bm - q_* \bx}{\sqrt{q_\ast (1-q_\ast)}}
        + q_\ast (1-q_\ast) \xi''(q_\ast) \bx
        + O(\iota) \bx + O(\iota) \bm \\
        &= -\xi'_t(q_\ast) \bx + \lt(\fr{1}{1-q_\ast} + (1-q_*)\xi''(q_\ast) \rt) \bm
        + O(\iota) \bx + O(\iota) \bm.
    \ealn
    Since
    \[
        \nabla \cF_\sTAP(\bm)
        = \nabla \tH_{N,t}(\bm) + \xi'_t(q_\ast) \bx - \lt(\fr{1}{1-q_\ast} + (1-q_*)\xi''(q_\ast) \rt) \bm,
    \]
    it follows that $\tnorm{\nabla \cF_\sTAP(\bm)}_N \le O(\iota)$.
\end{proof}
The next lemma is a linear-algebraic calculation of the conditional law given $\nabla \tH_{N,t}(\bm)$ of $\nabla^2 \tH_{N,t}(\bm)$, now as a Hessian in $\bbR^N$ rather than a Riemannian Hessian in $\cS_0$.
While $\bm \in \cS_0$ for the proofs in the current subsection, we will not assume this for use in Fact~\ref{fac:Km-form} below.
\begin{lem}
    \label{lem:nabla2-H-conditional-law}
    Let $\bm \in \bbR^N$ with $\norm{\bm}_N^2 = q_\bm < 1$.
    Conditional on $\nabla \tH_{N,t}(\bm) = \bz$, we have
    \[
        \nabla^2 \tH_{N,t}(\bm) \stackrel{d}{=}
        \fr{\xi''(q_\bm)}{\xi'_t(q_\bm)}
        \cdot \fr{\bm \bz^\top + \bz \bm^\top}{N}
        + \fr{\<\bm,\bz\>_N}{\xi'_t(q_\bm) + q_\bm \xi''(q_\bm)}
        \lt(\xi^{(3)}(q_\bm) - \fr{2\xi''(q_\bm)^2}{\xi'_t(q_\bm)}\rt)
        \fr{\bm\bm^\top}{N}
        + \bM,
    \]
    where $\bM$ is the following symmetric random matrix.
    Let $(\be_1,\ldots,\be_N)$ be an orthonormal basis of $\bbR^N$ with $\be_1 = \bm / \norm{\bm}_2$, and to reduce notation let $\bM(i,j) = \la \bM \be_i, \be_j \ra$.
    Then the random variables $\{\bM(i,j) : 1\le i\le j\le N\}$ are independent centered Gaussians with variance
    \beq
        \label{eq:def-bM}
        \EE \bM(i,j)^2
        = N^{-1} \times
        \begin{cases}
            (\text{irrelevant $O(1)$}) & 1=i=j \\
            \xi''(q_\bm) + q_\bm \xi^{(3)}(q_\bm) - \fr{q_\bm\xi''(q_\bm)^2}{\xi'_t(q_\bm)} & 1=i<j \\
            2 \xi''(q_\bm) & 1<i=j \\
            \xi''(q_\bm) & 1<i<j
        \end{cases}
    \eeq
\end{lem}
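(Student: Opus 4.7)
\textbf{Proof proposal for Lemma~\ref{lem:nabla2-H-conditional-law}.}
The plan is to treat this as a Gaussian conditioning (regression) computation: the random tensor $(\nabla \tH_{N,t}(\bm),\nabla^2\tH_{N,t}(\bm))$ is jointly Gaussian, so the conditional law of $\nabla^2\tH_{N,t}(\bm)$ given $\nabla \tH_{N,t}(\bm)=\bz$ is Gaussian with mean given by the regression of the Hessian onto the gradient, and covariance equal to the unconditional Hessian covariance minus the ``explained'' part. The task therefore reduces to computing three covariance tensors and plugging them into the regression formula.

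All three are obtained by differentiating the two-point function $\bbE\,\tH_{N,t}(\bsig^1)\tH_{N,t}(\bsig^2)=N\xi_t(\la\bsig^1,\bsig^2\ra/N)$ at $\bsig^1=\bsig^2=\bm$, and using $\tH_{N,t}=\tH_N+\la\bB_t,\cdot\ra$, which contributes an extra $t\bI$ to the gradient covariance while leaving higher derivatives governed by $\xi$ alone (so $\xi''_t=\xi''$, $\xi^{(3)}_t=\xi^{(3)}$, etc.). A direct calculation gives
\begin{align*}
\Cov(\partial_i\tH_{N,t}(\bm),\partial_j\tH_{N,t}(\bm)) &= \xi'_t(q_\bm)\delta_{ij}+\xi''(q_\bm)m_im_j/N,\\
\Cov(\partial_{ij}\tH_{N,t}(\bm),\partial_k\tH_{N,t}(\bm)) &= \xi''(q_\bm)(\delta_{jk}m_i+\delta_{ik}m_j)/N+\xi^{(3)}(q_\bm)m_im_jm_k/N^2,\\
\Cov(\partial_{ij}\tH_{N,t}(\bm),\partial_{k\ell}\tH_{N,t}(\bm)) &= (\delta_{ik}\delta_{j\ell}+\delta_{i\ell}\delta_{jk})\xi''(q_\bm)/N \\
&\quad+(\delta_{jk}m_im_\ell+\delta_{ik}m_jm_\ell+\delta_{j\ell}m_im_k+\delta_{i\ell}m_jm_k)\xi^{(3)}(q_\bm)/N^2\\
&\quad+m_im_jm_km_\ell\,\xi^{(4)}(q_\bm)/N^3.
\end{align*}

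To make the regression tractable I would pass to the orthonormal basis $(\be_1,\dots,\be_N)$ with $\be_1=\bm/\|\bm\|_2$, in which $\bm$ has a single non-zero coordinate $\bm_1=\sqrt{Nq_\bm}$. In this basis the gradient covariance becomes diagonal, with $\Cov(z_1,z_1)=\xi'_t(q_\bm)+q_\bm\xi''(q_\bm)$ and $\Cov(z_k,z_k)=\xi'_t(q_\bm)$ for $k>1$, so its inverse is trivial. The cross-covariances $\Cov(H_{ab},z_c)$ vanish unless $\{a,b\}\cap\{1\}\cup\{c=1\}$, reducing everything to finitely many cases. Computing $\sum_{k,k'}\Cov(H_{ab},z_k)[\Cov(\bz,\bz)]^{-1}_{kk'}z_{k'}$ and undoing the change of basis recovers precisely the two rank-one terms $\tfrac{\xi''(q_\bm)}{\xi'_t(q_\bm)}\cdot\tfrac{\bm\bz^\top+\bz\bm^\top}{N}$ and $\tfrac{\la\bm,\bz\ra_N}{\xi'_t(q_\bm)+q_\bm\xi''(q_\bm)}\bigl(\xi^{(3)}(q_\bm)-\tfrac{2\xi''(q_\bm)^2}{\xi'_t(q_\bm)}\bigr)\tfrac{\bm\bm^\top}{N}$ in the stated formula; the algebraic simplification here is the only slightly delicate point, and follows from grouping $\xi'_t/(\xi'_t+q_\bm\xi'')$ and $q_\bm\xi''/(\xi'_t+q_\bm\xi'')$.

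For the residual $\bM$, the same case analysis in the basis $(\be_a)$ shows that the conditional covariance is block-diagonal. When $a,b>1$ all cross-covariances with $\bz$ vanish, so $\bM(a,b)$ inherits the unconditional GOE-type covariance $(\delta_{ac}\delta_{bd}+\delta_{ad}\delta_{bc})\xi''(q_\bm)/N$, giving the $1<i=j$ and $1<i<j$ entries in \eqref{eq:def-bM}. For the mixed entries $(1,b)$ with $b>1$, only $\Cov(H_{1b},z_b)=\xi''(q_\bm)\sqrt{q_\bm/N}$ is non-zero, and subtracting $\Cov(H_{1b},z_b)^2/\xi'_t(q_\bm)$ from $\Cov(H_{1b},H_{1b})=(\xi''(q_\bm)+q_\bm\xi^{(3)}(q_\bm))/N$ produces the stated $1=i<j$ variance $\xi''(q_\bm)+q_\bm\xi^{(3)}(q_\bm)-q_\bm\xi''(q_\bm)^2/\xi'_t(q_\bm)$, once divided by $N$. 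The $(1,1)$ entry is handled by the same regression but yields a messier expression that plays no role in the sequel, which is why the lemma leaves it as ``irrelevant $O(1)$.'' Independence of the $\bM(i,j)$ follows from jointly Gaussian residuals having zero conditional cross-covariance, which one checks by the same basis-level enumeration. The main obstacle is purely bookkeeping; conceptually there is nothing beyond Gaussian regression and differentiation under the covariance kernel.
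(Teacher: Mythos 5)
Your proposal is correct and follows essentially the same route as the paper: both compute the conditional law by Gaussian regression of the Hessian onto the gradient (the paper phrases the mean computation as finding $\bv$ with $\la \nabla^2\tH_{N,t}(\bm)\bu^1,\bu^2\ra - \la\bv,\nabla\tH_{N,t}(\bm)\ra$ independent of all linear functionals of the gradient, which is the same regression formula), and both exploit the adapted basis $\be_1=\bm/\norm{\bm}_2$, in which $\la\be_i,\bm\ra=0$ for $i\neq 1$ diagonalizes the gradient covariance and collapses the case analysis, to read off the residual covariance \eqref{eq:def-bM}. The only difference is organizational — the paper does the mean computation basis-free with test vectors $\bu^1,\bu^2$ before switching to the adapted basis for the covariance, while you work coordinate-wise in that basis throughout — and your covariance tensors and the resulting entries all check out.
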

\begin{rmk}
    \label{rmk:variance-positive-by-cs}
    The covariance calculation in the proof of Lemma~\ref{lem:nabla2-H-conditional-law} implies $\xi''(q_\bm) + q_\bm\xi^{(3)}(q_\bm) - \fr{q_\bm\xi''(q_\bm)^2}{\xi'_t(q_\bm)} \ge 0$, but this can also be seen directly by Cauchy-Schwarz:
    \baln
        \lt(\xi''(q_\bm) + q_\bm\xi^{(3)}(q_\bm)\rt) \xi'_t(q_\bm)
        &\ge q_\bm\lt(\sum_{p\ge 2} p(p-1)^2 \gamma_p^2 (q_\bm)^{p-2} \rt)
        \lt(\sum_{p\ge 2} p\gamma_p^2 (q_\bm)^{p-2}\rt) \\
        &\ge q_\bm\lt(\sum_{p\ge 2} p(p-1) \gamma_p^2 (q_\bm)^{p-2} \rt)^2
        = q_\bm\xi''(q_\bm)^2.
    \ealn
\end{rmk}
\begin{proof}
    It suffices to compute the conditional mean and covariance.
    Let $\bu^1,\bu^2 \in \bbS^{N-1}$.
    Then
    \[
        \EE \lt[
            \la \nabla^2 \tH_{N,t}(\bm) \bu^1, \bu^2 \ra
            \big| \nabla \tH_{N,t}(\bm)
        \rt] = \la \bv, \nabla \tH_{N,t} (\bm) \ra
    \]
    for $\bv = \bv(\bu^1,\bu^2,\bm)$ such that for all $\bw \in \bbR^N$,
    \[
        \la \nabla^2 \tH_{N,t}(\bm) \bu^1, \bu^2 \ra
        - \la \bv, \nabla \tH_{N,t} (\bm) \ra
        \indep
        \la \bw, \nabla \tH_{N,t} (\bm) \ra.
    \]
    We calculate that
    \baln
        \EE \la \nabla^2 \tH_{N,t}(\bm) \bu^1, \bu^2 \ra
        \la \bw, \nabla \tH_{N,t}(\bm) \ra
        &= N^{-1} \lt(
            \la \bu^1, \bw \ra \la \bu^2, \bm \ra
            + \la \bu^1, \bm \ra \la \bu^2, \bw \ra
        \rt) \xi''(q_\bm) \\
        &+ N^{-2} \la \bu^1, \bm \ra \la \bu^2, \bm \ra \la \bw, \bm \ra \xi^{(3)}(q_\bm), \\
        \EE \la \bv, \nabla \tH_{N,t}(\bm) \ra \la \bw, \nabla \tH_{N,t}(\bm) \ra
        &= \la \bv, \bw \ra \xi'_t(q_\bm)
        + N^{-1} \la \bv, \bm \ra \la \bw, \bm \ra \xi''(q_\bm).
    \ealn
    Thus, $\bv$ must satisfy
    \baln
        &N^{-1} \lt(
            \la \bu^2, \bm \ra \bu^1
            + \la \bu^1, \bm \ra \bu^2
        \rt) \xi''(q_\bm)
        + N^{-2} \la \bu^1, \bm \ra \la \bu^2, \bm \ra \xi^{(3)}(q_\bm) \bm \\
        &=
        \xi'_t(q_\bm) \bv
        + N^{-1} \la \bv, \bm \ra \xi''(q_\bm) \bm.
    \ealn
    This has solution $\bv = a_1 \bu^1 + a_2 \bu^2 + a_3 \bm$, where
    \baln
        a_1 &= \fr{\xi''(q_\bm)}{N\xi'_t(q_\bm)} \la \bu^2, \bm \ra, \qquad
        a_2 = \fr{\xi''(q_\bm)}{N\xi'_t(q_\bm)} \la \bu^1, \bm \ra, \\
        a_3 &= \fr{\la \bu^1, \bm \ra \la \bu^2, \bm \ra}{N^2(\xi'_t(q_\bm) + q_\bm \xi''(q_\bm))} \lt(\xi^{(3)}(q_\bm) - \fr{2\xi''(q_\bm)^2}{\xi'_t(q_\bm)}\rt).
    \ealn
    Thus
    \[
        \EE \lt[
            \la \nabla^2 \tH_{N,t}(\bm) \bu^1, \bu^2 \ra
            \big| \nabla \tH_{N,t}(\bm)
        \rt]
        = a_1 \la \bu^1, \nabla \tH_{N,t}(\bm) \ra
        + a_2 \la \bu^2, \nabla \tH_{N,t}(\bm) \ra
        + a_3 \la \bm, \nabla \tH_{N,t}(\bm) \ra,
    \]
    which implies
    \baln
        \EE \lt[
            \nabla^2 \tH_{N,t}(\bm)
            \big| \nabla \tH_{N,t}(\bm)
        \rt]
        &=
        \fr{\xi''(q_\bm)}{N\xi'_t(q_\bm)}
        (\bm \nabla \tH_{N,t}(\bm)^\top + \nabla \tH_{N,t}(\bm) \bm^\top) \\
        &+ \fr{\<\bm,\nabla \tH_{N,t}(\bm)\>_N}{N(\xi'_t(q_\bm) + q_\bm \xi''(q_\bm))}
        \lt(\xi^{(3)}(q_\bm) - \fr{2\xi''(q_\bm)^2}{\xi'_t(q_\bm)}\rt)
        \bm\bm^\top,
    \ealn
    as desired.
    The conditionally random part of $\nabla^2 \tH_{N,t}$ is thus
    \[
        \bM =
        \nabla^2 \tH_{N,t}(\bm) -
        \EE \lt[
            \nabla^2 \tH_{N,t}(\bm)
            \big| \nabla \tH_{N,t}(\bm)
        \rt].
    \]
    Direct evaluation of covariances $\EE \bM(i_1,j_1) \bM(i_2,j_2)$ gives the covariance structure \eqref{eq:def-bM}.
    The calculation is greatly simplified by the fact that $\la \be_i, \bm \ra = 0$ for all $i\neq 1$, which implies e.g. that $\bM(i,j) = \la \nabla^2 \tH_{N,t}(\bm) \be_i, \be_j \ra$ for all $i,j \neq 1$.
\end{proof}
\begin{cor}
    \label{cor:nabla2-F-conditional-law}
    Let $\iota > 0$ be sufficiently small.
    Let $\bm, \bz \in \bbR^N$ with $\norm{\bm}_N^2 = q_\bm$ and $\<\bm, \bx\>_N = q_\bx$, such that $|q_\bm-q_\ast|, |q_\bx-q_\ast|, \tnorm{\bz}_N \le \iota$.
    Conditional on $\nabla \cF_\sTAP(\bm) = \bz$,
    \balnn
        \notag
        \nabla^2 \cF_\sTAP(\bm)
        &\stackrel{d}{=}
        \lt(\fr{2+q_\ast}{q_\ast} \xi''(q_\ast) - (1-q_\ast) \xi^{(3)}(q_\ast) - \fr{2}{(1-q_\ast)^2} \rt) \fr{\bm\bm^\top}{N}
        + \xi''(q_\ast) \fr{(\bx - \bm)(\bx - \bm)^\top}{N} \\
        \label{eq:conditional-nabla2-F}
        &- \lt((1-q_\bm) \xi''(q_\bm) + \fr{1}{1-q_\bm} \rt) \bI
        + \bM
        + \bDelM.
    \ealnn
    Here, $\bM$ is as in \eqref{eq:def-bM}, and $\bDelM$ is a $(\bx,\bm,\bz)$-measurable symmetric matrix satisfying $\tnorm{\bDelM}_{\op} \le o_\iota(1)$, whose kernel contains $\spn(\bx,\bm,\bz)^\perp$.
\end{cor}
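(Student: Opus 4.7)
The plan is a direct calculation: (i) differentiate $\cF_\sTAP$ to relate $\nabla^2 \cF_\sTAP(\bm)$ to $\nabla^2 \tH_{N,t}(\bm)$; (ii) translate the conditioning $\{\nabla\cF_\sTAP(\bm)=\bz\}$ to $\{\nabla\tH_{N,t}(\bm)=\bw\}$ via the companion gradient identity; (iii) apply Lemma~\ref{lem:nabla2-H-conditional-law} and simplify the resulting deterministic piece using $\xi'_t(q_\ast)=q_\ast/(1-q_\ast)$ from Fact~\ref{fac:qt-unique}. With $\theta'(s)=-(1-s)\xi''(s)$ and $\theta''(s)=\xi''(s)-(1-s)\xi^{(3)}(s)$, straightforward differentiation gives
\[
\nabla^2 \cF_\sTAP(\bm) = \nabla^2 \tH_{N,t}(\bm) + \fr{\xi''(q_\bx)}{N}\bx\bx^\top + \fr{2\theta''(q_\bm) - 2(1-q_\bm)^{-2}}{N}\bm\bm^\top - \alpha(q_\bm)\bI,
\]
where $\alpha(s):=(1-s)\xi''(s)+(1-s)^{-1}$, and the analogous first-derivative identity yields $\bw = \bz - \xi'_t(q_\bx)\bx + \alpha(q_\bm)\bm$. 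Plugging this $\bw$ into Lemma~\ref{lem:nabla2-H-conditional-law} expresses $\nabla^2 \tH_{N,t}(\bm)$ as $\bM$ plus a deterministic symmetric matrix whose range lies in $\spn(\bx,\bm,\bz)$.

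Two simplifications drive the leading-order identification of coefficients. First, $\xi''(q_\bm)\xi'_t(q_\bx)/\xi'_t(q_\bm) = \xi''(q_\ast) + O(\iota)$, so the $-(\bm\bx^\top+\bx\bm^\top)/N$ piece of the rank-two term from Lemma~\ref{lem:nabla2-H-conditional-law} combines with the deterministic $\xi''(q_\bx)\bx\bx^\top/N$ into $\xi''(q_\ast)(\bx-\bm)(\bx-\bm)^\top/N$ modulo a $-\xi''(q_\ast)\bm\bm^\top/N$ shift and an $O(\iota)$ error. Second, $\<\bm,\bw\>_N = q_\ast(1-q_\ast)\xi''(q_\ast) + q_\ast + O(\iota)$, using $|\<\bm,\bz\>_N|\le \sqrt{q_\bm}\norm{\bz}_N = O(\iota)$ and $\xi'_t(q_\ast)(1-q_\ast)=q_\ast$, so the prefactor $\<\bm,\bw\>_N/(\xi'_t(q_\bm)+q_\bm\xi''(q_\bm))$ collapses to $1-q_\ast+O(\iota)$.

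The main algebraic step is then the $\bm\bm^\top/N$ accounting. Summing (i) $2\xi''(q_\ast)\alpha(q_\ast)/\xi'_t(q_\ast) = 2(1-q_\ast)^2\xi''(q_\ast)^2/q_\ast + 2\xi''(q_\ast)/q_\ast$ from the rank-two term, (ii) $(1-q_\ast)[\xi^{(3)}(q_\ast) - 2(1-q_\ast)\xi''(q_\ast)^2/q_\ast]$ from the Lemma's $\bm\bm^\top$ term, (iii) the deterministic $2\xi''(q_\ast) - 2(1-q_\ast)\xi^{(3)}(q_\ast) - 2(1-q_\ast)^{-2}$, and (iv) the $-\xi''(q_\ast)$ shift from reassembling $(\bx-\bm)(\bx-\bm)^\top$, the two $(1-q_\ast)^2\xi''(q_\ast)^2/q_\ast$ pieces cancel exactly and the remainder collapses to $\tfrac{2+q_\ast}{q_\ast}\xi''(q_\ast) - (1-q_\ast)\xi^{(3)}(q_\ast) - 2(1-q_\ast)^{-2}$, as required. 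This cancellation, rooted in the defining equation $\xi'_t(q_\ast)(1-q_\ast)=q_\ast$, is the only nontrivial identity in the proof; everything else is bookkeeping. Finally, $\bDelM$ collects the $(\bm\bz^\top+\bz\bm^\top)/N$ piece (of operator norm $\le 2\norm{\bm}_N\norm{\bz}_N = O(\iota)$) together with all $O(\iota)$ coefficient discrepancies between $q_\bm,q_\bx$ and $q_\ast$ applied to the bounded rank-one matrices $\bx\bx^\top/N,\bm\bm^\top/N,(\bx\bm^\top+\bm\bx^\top)/N$; each summand is symmetric with range in $\spn(\bx,\bm,\bz)$, hence kernel containing $\spn(\bx,\bm,\bz)^\perp$, and $\tnorm{\bDelM}_{\op} = O(\iota) = o_\iota(1)$ by smoothness of $\xi$.
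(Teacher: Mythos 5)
Your proposal is correct and follows essentially the same route as the paper's proof: translate the conditioning to $\nabla\tH_{N,t}(\bm)=\tbz$ with $\tbz=\bz-\xi'_t(q_\bx)\bx+\alpha(q_\bm)\bm$, apply Lemma~\ref{lem:nabla2-H-conditional-law}, use $\xi'_t(q_\ast)=q_\ast/(1-q_\ast)$ to simplify the prefactors (in particular $\<\bm,\tbz\>_N/(\xi'_t+q_\ast\xi'')\to 1-q_\ast$), and absorb the $\bm\bz^\top+\bz\bm^\top$ piece and all $q_\bm,q_\bx$ versus $q_\ast$ discrepancies on the rank-one/rank-two terms into $\bDelM$. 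Your $\bm\bm^\top$ bookkeeping, including the exact cancellation of the $2(1-q_\ast)^2\xi''(q_\ast)^2/q_\ast$ terms, reproduces the paper's computation.
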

\begin{proof}
    In the below calculations, $\bDelM$ is an error term satisfying the above, which may change from line to line.
    Conditioning on $\nabla \cF_\sTAP(\bm) = \bz$ is equivalent to conditioning on
    \[
        \nabla \tH_{N,t}(\bm) = \tbz \equiv \bz - \xi'_t(q_\bx) \bx + \lt((1-q_\bm)\xi''(q_\bm) + \fr{1}{1-q_\bm}\rt) \bm.
    \]
    By Lemma~\ref{lem:nabla2-H-conditional-law},
    \baln
        \nabla^2 \tH_{N,t}(\bm)
        &\stackrel{d}{=}
        \fr{\xi''(q_\bm)}{\xi'_t(q_\bm)}
        \cdot \fr{\bm \tbz^\top + \tbz \bm^\top}{N}
        + \fr{\<\bm,\tbz\>_N}{\xi'_t(q_\bm) + q_\bm \xi''(q_\bm)}
        \lt(\xi^{(3)}(q_\bm) - \fr{2\xi''(q_\bm)^2}{\xi'_t(q_\bm)}\rt)
        \fr{\bm\bm^\top}{N}
        + \bM \\
        &= - \xi''(q_\ast) \fr{\bm \bx^\top + \bx \bm^\top}{N}
        + \fr{2\xi''(q_\ast)}{\xi'_t(q_\ast)}
        \lt((1-q_\ast)\xi''(q_\ast) + \fr{1}{1-q_\ast}\rt)
        \fr{\bm\bm^\top}{N} \\
        &+ \fr{q_\ast (-\xi'_t(q_\ast) + (1-q_\ast) \xi''(q_\ast) + \fr{1}{1-q_\ast})}{\xi'_t(q_\ast) + q_\ast \xi''(q_\ast)}
        \lt(\xi^{(3)}(q_\ast) - \fr{2\xi''(q_\ast)^2}{\xi'_t(q_\ast)}\rt) \fr{\bm\bm^\top}{N}
        + \bM
        + \bDelM \\
        &\stackrel{\eqref{eq:def-qt}}{=} - \xi''(q_\ast) \fr{\bm \bx^\top + \bx \bm^\top}{N}
        + \lt(\fr{2\xi''(q_\ast)}{q_\ast} + (1-q_\ast) \xi^{(3)}(q_\ast) \rt) \fr{\bm\bm^\top}{N}
        + \bM
        + \bDelM.
    \ealn
    Then
    \baln
        \nabla^2 \cF_\sTAP(\bm)
        &= \nabla^2 \tH_{N,t}(\bm)
        + \xi''(q_\ast) \fr{\bx \bx^\top}{N}
        - \lt( (1-q_\ast)\xi^{(3)}(q_\ast) - \xi''(q_\ast) + \fr{1}{(1-q_\ast)^2} \rt)
        \fr{2\bm\bm^\top}{N} \\
        &- \lt((1-q_\bm)\xi''(q_\bm) + \fr{1}{1-q_\bm} \rt) \bI
        + \bDelM.
    \ealn
    Combining gives the conclusion.
\end{proof}
\begin{lem}
    \label{lem:gaussian-mtx-coupling}
    Let $\iota > 0$ be sufficiently small and $|q_\bm-q_\ast| \le \iota$.
    Fix an orthonormal basis $\be_1,\ldots,\be_N$ of $\bbR^N$ as discussed above \eqref{eq:def-bM}.
    Let $\bM$ be as in \eqref{eq:def-bM}.
    Let $\bM_\ast$ be sampled from the same law, except with $q_\bm$ replaced by $q_\ast$, and with
    \[
        \bM_\ast(i,j) = 0, \qquad \forall i,j \in \{1,2\}.
    \]
    There is a coupling of $\bM, \bM_\ast$ such that with probability $1-e^{-cN}$, $\tnorm{\bM - \bM_\ast}_{\op} \le o_\iota(1)$.
\end{lem}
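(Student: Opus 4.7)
The plan is to construct an explicit coupling that shares standard Gaussians between $\bM$ and $\bM_\ast$, decompose the difference into three pieces---a finite-rank block on $\{1,2\}^2$, a rank-two first-row/column correction, and a GOE-like bulk---and bound each by standard Gaussian concentration.

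Sample independent standard Gaussians $\{g_{ij} : 1 \le i \le j \le N\}$ once, and set $\bM(i,j) = \sigma_{ij}(q_\bm) g_{ij}$ for $i\le j$, where $\sigma_{ij}(q) \ge 0$ is the positive square root of the variance in \eqref{eq:def-bM} evaluated at $q$ (well-defined by Remark~\ref{rmk:variance-positive-by-cs}), symmetrized by $\bM(j,i) = \bM(i,j)$; define $\bM_\ast$ in the same way with $q_\ast$ in place of $q_\bm$, except $\bM_\ast(i,j) = 0$ for $(i,j) \in \{1,2\}^2$. Decompose $\bM - \bM_\ast = \bDel_1 + \bDel_2 + \bDel_3$, where $\bDel_1$ is supported on $\{1,2\}^2$ (and equals $\bM$ there), $\bDel_2$ on $\{(1,j),(j,1) : j \ge 3\}$, and $\bDel_3$ on $\{2,\ldots,N\}^2 \setminus \{(2,2)\}$. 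The block $\bDel_1$ has three independent Gaussian entries of variance $O(N^{-1})$, so $\|\bDel_1\|_{\op} \le \|\bDel_1\|_{\mathrm{F}} = O(N^{-1/2})$ with probability $1 - e^{-cN}$. The row piece is $\bDel_2 = \sum_{j \ge 3} v_j (\be_1 \be_j^\top + \be_j \be_1^\top)$ with $v_j = (\sigma_{1j}(q_\bm) - \sigma_{1j}(q_\ast)) g_{1j}$; since $q \mapsto \sqrt{N}\,\sigma_{1j}(q)$ is a fixed continuous function of $q$, independent of $j$ and $N$, we have $|\sigma_{1j}(q_\bm) - \sigma_{1j}(q_\ast)| = o_\iota(N^{-1/2})$, and $\chi^2$-concentration yields $\|\bv\|_2 = o_\iota(1)$ with probability $1 - e^{-cN}$, hence $\|\bDel_2\|_{\op} \le 2\|\bv\|_2 = o_\iota(1)$.

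For the bulk piece $\bDel_3$: on $\spn(\be_2, \ldots, \be_N)$ it equals $(\sqrt{\xi''(q_\bm)} - \sqrt{\xi''(q_\ast)})\,\bar\bG$, where $\bar\bG$ has the law of a $\GOE(N-1)$ matrix with the $\be_2\be_2^\top$ entry zeroed out---a rank-one perturbation of a true GOE, and thus of operator norm $O(1)$ with probability $1-e^{-cN}$ by the Borell--TIS concentration inequality applied to the $O(N^{-1/2})$-Lipschitz functional $\bar\bG \mapsto \|\bar\bG\|_{\op}$. The scalar prefactor is $o_\iota(1)$ by continuity of $\sqrt{\xi''(\cdot)}$, so $\|\bDel_3\|_{\op} = o_\iota(1)$. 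Summing the three bounds gives $\|\bM - \bM_\ast\|_{\op} = o_\iota(1) + O(N^{-1/2}) = o_\iota(1)$ on the intersection of the three high-probability events, as required. The argument is essentially bookkeeping; the only subtle point is that in the degenerate case $\xi''(q_\ast) = 0$ (or analogous vanishing of the first-row variance), the scalar prefactor $|\sqrt{\xi''(q_\bm)} - \sqrt{\xi''(q_\ast)}|$ is only $O(\sqrt{\iota})$ rather than $O(\iota)$, which is why the conclusion is phrased with $o_\iota(\cdot)$ rather than a linear rate in $\iota$.
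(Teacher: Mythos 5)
Your proposal is correct, and it reaches the conclusion by a genuinely different coupling than the paper's. The paper couples $\bM$ and $\bM_\ast$ entrywise by adding independent Gaussian noise of variance $|\EE\bM(i,j)^2-\EE\bM_\ast(i,j)^2|$ to whichever of the two has the smaller variance; the resulting difference $\bE$ then has independent entries of standard deviation $O(\iota^{1/2}/\sqrt N)$ but with entry-dependent signs, and the paper bounds $\tnorm{\bE}_{\op}$ by a symmetrization trick (construct an independent $\bE'$ with $\bE+\bE'\ed\upsilon\bG$, $\bG\sim\GOE(N)$, and use convexity of the operator-norm ball together with $\PP(\bE+\bE'\notin\cK\mid\bE\notin\cK)\ge\tfrac12$). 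You instead share the underlying standard Gaussians, which makes the difference an explicit sum of a finite block, a rank-two first-row correction, and a scalar multiple of a (slightly perturbed) GOE; each piece is then controlled by off-the-shelf concentration, and no symmetrization is needed. This is a clean and arguably more transparent route, and you correctly flag the only delicate point, namely that $\sqrt{\cdot}$ is not Lipschitz at $0$, so the scalar prefactors are only $o_\iota(1)$ rather than $O(\iota)$ — which is all the lemma claims. One small slip: your assertion that $\tnorm{\bDel_1}_{\mathrm F}=O(N^{-1/2})$ holds with probability $1-e^{-cN}$ cannot be literally true, since a Gaussian of variance $\Theta(1/N)$ exceeds any fixed multiple of $N^{-1/2}$ with constant probability; you should instead bound $\tnorm{\bDel_1}_{\mathrm F}\le\iota$ with probability $1-e^{-c(\iota)N}$ (exactly as the paper does for its $\bM-\bM'$ step), which is permitted since the paper's convention allows $c$ to depend on $\iota$, and which suffices for the $o_\iota(1)$ conclusion.
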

\begin{proof}
    Let $\bM'$ be the matrix with $\bM'(i,j) = 0$ for all $i,j\in \{1,2\}$, and otherwise $\bM'(i,j) = \bM(i,j)$.
    Since the $\bM(i,j)$ have variance $O(N^{-1})$, with probability $1-e^{-cN}$, $\tnorm{\bM - \bM'}_{\op} \le \iota$.

    For all $(i,j) \not \in \{1,2\}^2$, $|\EE \bM(i,j)^2 - \EE \bM_\ast(i,j)^2| \le O(\iota)/N$.
    We couple $\bM$ and $\bM_\ast$ as follows.
    If $\EE \bM(i,j)^2 \le \EE \bM_\ast(i,j)^2$, we first sample $\bM(i,j)$ from its law, and then sample
    \[
        \bM_\ast(i,j) = \bM(i,j) + \upsilon_{i,j} g_{i,j},
    \]
    for $g_{i,j} \sim \cN(0,1/N)$ and suitable $\upsilon_{i,j} = O(\iota^{1/2})$.
    If $\EE \bM(i,j)^2 \ge \EE \bM_\ast(i,j)^2$, we follow a similar procedure, sampling $\bM_\ast(i,j)$ first.
    Let $\bE = \bM' - \bM_\ast$.
    Then
    \[
        \bE(i,j) = (\eps_{i,j} \upsilon_{i,j} g_{i,j})_{i,j \in [N]},
    \]
    for some (deterministic) signs $\eps_{i,j} \in \{\pm 1\}$.
    Let $\upsilon = \max_{i,j} (\upsilon_{i,j})$.
    There exists a random symmetric Gaussian matrix $\bE'$, independent of $\bE$, such that $\bE + \bE' =_d \upsilon \bG$, where $\bG \sim \GOE(N)$.
    Define
    \[
        \cK = \lt\{\bA \in \bbR^{N\times N}~\text{symmetric} : \tnorm{\bA}_{\op} \le 3\upsilon \rt\},
    \]
    Note that
    \[
        \PP(\bE + \bE' \not\in \cK) = \PP(\tnorm{\bG}_{\op} > 3) \le e^{-cN},
    \]
    while by convexity of $\cK$ and symmetry of $\bE'$,
    \[
        \PP(\bE + \bE' \not\in \cK | \bE \not\in \cK) \ge \fr12.
    \]
    It follows that $\PP(\bE \not\in \cK) \le 2e^{-cN}$, concluding the proof.
\end{proof}
\begin{lem}
    \label{lem:spherical-SK-ground-state}
    Let $\bG \sim \GOE(N)$ and $\bg \sim \cN(0, \bI_N/N)$.
    For any $a,b \in \bbR$, $\iota > 0$,
    \[
        \lt|
            \sup_{\bv \in \bbS^{N-1}}
            \lt\{a\la \bG \bv, \bv \ra + 2b \la \bg, \bv \ra\rt\}
            - 2\sqrt{a^2+b^2}
        \rt| \le \iota
    \]
    with probability $1-e^{-cN}$.
\end{lem}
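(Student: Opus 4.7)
The plan is to reduce the sphere maximization to a one-dimensional convex problem via Lagrangian duality, identify the deterministic limit using the Stieltjes transform of the semicircle law, and conclude by resolvent concentration. First I would reduce to $a, b \ge 0$ using the symmetries $\bG \stackrel{d}{=} -\bG$, $\bg \stackrel{d}{=} -\bg$, and the invariance of the quadratic term under $\bv \mapsto -\bv$. The degenerate cases $a=0$ (where the supremum is $2b\|\bg\|$) and $b=0$ (where it is $a\lambda_{\max}(\bG)$) are standard Gaussian Lipschitz and GOE-edge concentration statements, so the focus is the generic regime $a,b > 0$.

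For $\lambda > a\|\bG\|_\op$, completing the square in $\bv^\top(\lambda\bI - a\bG)\bv - 2b\la\bg,\bv\ra$ gives the trust-region S-lemma identity
\[
\sup_{\bv \in \bbS^{N-1}} \bigl\{a\la\bG\bv,\bv\ra + 2b\la\bg,\bv\ra\bigr\}
\;=\; \inf_{\lambda > a\|\bG\|_\op} \Phi_N(\lambda),
\qquad
\Phi_N(\lambda) := \lambda + b^2 \la\bg,(\lambda\bI - a\bG)^{-1}\bg\ra.
\]
Strong duality holds because $\Phi_N$ is strictly convex on its domain and blows up at both endpoints (at the left edge because $\bg$ has nonzero overlap with the top eigenvector of $\bG$), so the infimum is attained at a unique $\lambda_N^\ast$. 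Passing to the deterministic limit, let $m_a(\lambda) = (\lambda - \sqrt{\lambda^2 - 4a^2})/(2a^2)$ be the Stieltjes transform of the rescaled semicircle on $[-2a,2a]$ and $\phi(\lambda) := \lambda + b^2 m_a(\lambda)$ on $(2a,\infty)$. Solving $\phi'(\lambda^\ast)=0$ gives $\lambda^\ast = (2a^2+b^2)/\sqrt{a^2+b^2}$, which exceeds $2a$ since $(2a^2+b^2)^2 - 4a^2(a^2+b^2) = b^4 > 0$, and a direct computation yields $\phi(\lambda^\ast) = 2\sqrt{a^2+b^2}$, the target value of the lemma.

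To upgrade this to an $e^{-cN}$ statement I would work on the high-probability event $\{\|\bG\|_\op \le 2 + \eps\} \cap \{|\|\bg\|^2 - 1| \le \eps\}$, on which $(\lambda\bI - a\bG)^{-1}$ has $O(1)$ operator norm for $\lambda$ in a fixed compact interval $I = [\lambda^\ast - \delta, \lambda^\ast + \delta] \subset (2a,\infty)$. For each fixed $\lambda \in I$, the pointwise bound $|\Phi_N(\lambda) - \phi(\lambda)| \le \iota/4$ with probability $1 - e^{-cN}$ follows from two standard ingredients: Gaussian Lipschitz concentration of the linear spectral statistic $\bG \mapsto \tfrac{1}{N}\Tr(\lambda\bI - a\bG)^{-1}$ around $m_a(\lambda)$, and Hanson--Wright concentration of the quadratic form $\la\bg,(\lambda\bI - a\bG)^{-1}\bg\ra$ around $\tfrac{1}{N}\Tr(\lambda\bI - a\bG)^{-1}$ conditionally on $\bG$. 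A finite $\iota$-net on $I$, combined with the uniform Lipschitz bound on $\Phi_N$ coming from the operator-norm control, upgrades this to uniform concentration on $I$; the strict convexity of $\phi$ then pins $\lambda_N^\ast \in I$ and yields $|\inf_\lambda \Phi_N - \phi(\lambda^\ast)| \le \iota$. Minimizers outside $I$ are excluded by $\Phi_N(\lambda) \ge \lambda$ (for large $\lambda$), the boundary blow-up $\Phi_N(\lambda) \uparrow +\infty$ as $\lambda \downarrow a\|\bG\|_\op$ (at the left edge), and the same uniform control on a slightly enlarged interval (for intermediate $\lambda$). The main technical obstacle is keeping the resolvent concentration uniform in $\lambda$ near the spectral edge, but because both $\phi$ and $\Phi_N$ are strictly convex with minimizers in the interior of their domains, only concentration on the fixed compact interval $I$ is ultimately needed.
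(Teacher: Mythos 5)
Your proposal is correct in substance but takes a genuinely different route from the paper. The paper's proof is two lines: it quotes the ground-state formula $\plim_{N\to\infty}\sup_{\|\bv\|_2=1}\{a\la\bG\bv,\bv\ra+2b\la\bg,\bv\ra\}=2\sqrt{a^2+b^2}$ from Chen--Sen \cite[Proposition 1.1]{chen2017parisi}, then upgrades convergence in probability to exponential concentration by observing that the supremum is an $O(N^{-1/2})$-Lipschitz functional of the Gaussian disorder and applying Borell--TIS. You instead give a self-contained derivation: the trust-region duality $\sup_{\bv\in\bbS^{N-1}}\{\cdots\}=\inf_{\lambda>a\|\bG\|_{\op}}\{\lambda+b^2\la\bg,(\lambda\bI-a\bG)^{-1}\bg\ra\}$, the deterministic limit via the Stieltjes transform (your computations of $\lambda^\ast=(2a^2+b^2)/\sqrt{a^2+b^2}$ and $\phi(\lambda^\ast)=2\sqrt{a^2+b^2}$ check out), and concentration via linear spectral statistics plus Hanson--Wright on a compact interval. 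What the paper's route buys is brevity; what yours buys is independence from the Parisi-formula machinery behind the cited result and an explicit identification of the optimizer.

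One step to tighten: excluding minimizers with $\lambda\in(a\|\bG\|_{\op},\,2a+\delta)$. The blow-up of $\Phi_N$ as $\lambda\downarrow a\|\bG\|_{\op}$ only rules out the minimizer sitting at the very edge, and resolvent concentration is unavailable on this strip, so as written there is a small uncovered region. The clean fix is monotonicity of the resolvent: for such $\lambda$,
\[
\Phi_N(\lambda)\;\ge\;a\|\bG\|_{\op}+b^2\la\bg,((2a+\delta)\bI-a\bG)^{-1}\bg\ra\;\ge\;2a-o_\delta(1)+b^2 m_a(2a+\delta)-\iota\;\approx\;\frac{2a^2+b^2}{a},
\]
and $(2a^2+b^2)/a>2\sqrt{a^2+b^2}$ strictly (their squares differ by $b^4/a^2$), so the left strip cannot contain a near-minimizer. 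With that one-line addition your argument is complete.
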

\begin{proof}
    By \cite[Proposition 1.1]{chen2017parisi},
    \[
        \plim_{N\to\infty}
        \sup_{\norm{\bv}_2 = 1}
        a\la \bG \bv, \bv \ra + 2b \la \bg, \bv \ra
        = 2\sqrt{a^2+b^2}.
    \]
    For each fixed $\bv \in \bbS^{N-1}$, $a\la \bG \bv, \bv \ra + 2b \la \bg, \bv \ra$ has variance $O(N^{-1})$.
    The result follows from Borell-TIS.
\end{proof}

\begin{ppn}
    \label{ppn:conditional-tap-hessian}
    Let $\nabla^2 \cF_\sTAP(\bm)$ be as in Eq.~\eqref{eq:conditional-nabla2-F}.
    There exist $C^\spec_{\max} > C^\spec_{\min} > 0$ such that for sufficiently small $\iota > 0$,
    \[
        \spec(\nabla^2 \cF_\sTAP(\bm)) \subseteq [-C^\spec_{\max}, -C^\spec_{\min}]
    \]
    with probability $1-e^{-cN}$.
\end{ppn}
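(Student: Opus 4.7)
Decompose the conditional Hessian given in \eqref{eq:conditional-nabla2-F} as
\[
\nabla^2 \cF_\sTAP(\bm) = -\beta_{q_\bm}\bI + \bA_{\det} + \bM + \bDelM,
\qquad
\beta_{q_\bm} := (1-q_\bm)\xi''(q_\bm) + \tfrac{1}{1-q_\bm},
\]
where $\bA_{\det}$ is the rank-$\le 2$ deterministic piece supported on $\spn(\bx, \bm)$. The first step is the key scalar inequality: by AM--GM, $\beta_{q_\ast} \ge 2\sqrt{\xi''(q_\ast)}$ with equality iff $(1-q_\ast)^2\xi''(q_\ast) = 1$, and condition \eqref{eq:amp-works} rules out equality. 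This yields a strict gap $\Delta := \beta_{q_\ast} - 2\sqrt{\xi''(q_\ast)} > 0$. I set $C^\spec_{\min} := \Delta/3$ and fix $C^\spec_{\max}$ a sufficiently large constant.

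Second, I control the random part. Apply Lemma~\ref{lem:gaussian-mtx-coupling} to couple $\bM$ with $\bM_\ast$ so that $\|\bM - \bM_\ast\|_{\op} = o_\iota(1)$ with probability $1 - e^{-cN}$. The restriction $\bM_\ast|_{\be_1^\perp}$ is, up to zeroing out one diagonal entry, a $\sqrt{\xi''(q_\ast)}$-scaled $\GOE(N-1)$ matrix, so standard edge concentration (via Borell--TIS applied to the operator norm as a Lipschitz function of the Gaussian entries) gives
\[
\tnorm{\bM_\ast|_{\be_1^\perp}}_{\op} \le 2\sqrt{\xi''(q_\ast)} + o_N(1)
\]
with probability $1 - e^{-cN}$. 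Let $W := \spn(\bx,\bm,\bz,\be_1)$, which has dimension at most $4$. Then $\bA_{\det}$ and $\bDelM$ are supported on $W$, while the part of $\bM_\ast$ connecting $\be_1$ to $\be_1^\perp$ is a rank-$2$ perturbation. Therefore on $W^\perp$,
\[
\nabla^2 \cF_\sTAP(\bm)\big|_{W^\perp}
= -\beta_{q_\bm}\bI|_{W^\perp} + \bM_\ast|_{W^\perp} + \bE,
\qquad
\|\bE\|_{\op} = o_\iota(1),
\]
whose spectrum lies in $[-\beta_{q_\ast} - 2\sqrt{\xi''(q_\ast)} - o(1),\, -\beta_{q_\ast} + 2\sqrt{\xi''(q_\ast)} + o(1)] \subseteq [-C^\spec_{\max}, -C^\spec_{\min}]$ for $\iota$ small enough.

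Third, I handle the finitely many outlier eigenvalues, i.e.\ eigenvalues $\lambda$ whose eigenvectors $\bv$ have $\|P_W\bv\|_2 = \Omega(1)$. Working in an orthonormal basis of $\spn(\bx,\bm)$ adapted to $\{\bm/\tnorm{\bm}_2,\,(\bx-(q_\bx/q_\bm)\bm)/\text{norm}\}$ and using $q_\bm=q_\bx=q_\ast+O(\iota)$, the deterministic $2\times 2$ block from $\bA_{\det}-\beta_{q_\bm}\bI$ is diagonal to leading order, with eigenvalues
\[
\xi''(q_\ast)(1-q_\ast) - \beta_{q_\ast} = -\tfrac{1}{1-q_\ast},
\qquad
\alpha_1 q_\ast - \beta_{q_\ast},
\]
where $\alpha_1 = \frac{2+q_\ast}{q_\ast}\xi''(q_\ast) - (1-q_\ast)\xi^{(3)}(q_\ast) - \frac{2}{(1-q_\ast)^2}$. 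The first is strictly negative by inspection. The second must also be shown strictly negative; the remaining outlier contributions from $\bM_\ast$ in the $\be_1$ row and from $\bDelM$ only add an $O(1)$ random shift (bounded using Lemma~\ref{lem:spherical-SK-ground-state} with parameters read from \eqref{eq:def-bM}), which can be absorbed into $C^\spec_{\max}$ on the lower side. The main obstacle will be controlling the $\alpha_1 q_\ast - \beta_{q_\ast}$ eigenvalue, since $\alpha_1$ involves $\xi^{(3)}(q_\ast)$ which \eqref{eq:amp-works} does not directly constrain; I would address this by exploiting the stationarity identity $\xi'_t(q_\ast)=q_\ast/(1-q_\ast)$ and the fact that $\bA_{\det}$ is itself the Hessian of the deterministic part of $\cF_\sTAP$ at $\bm=q_\ast\bx+\sqrt{q_\ast(1-q_\ast)}\btau$, whose concavity structure combined with \eqref{eq:amp-works} forces the relevant eigenvalue to be bounded above by a strictly negative quantity.
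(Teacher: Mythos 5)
Your treatment of the bulk is fine and matches the paper's: the gap $\beta_{q_\ast}-2\sqrt{\xi''(q_\ast)}=\bigl(\sqrt{(1-q_\ast)\xi''(q_\ast)}-\tfrac{1}{\sqrt{1-q_\ast}}\bigr)^2>0$ forced by \eqref{eq:amp-works}, the coupling of $\bM$ to $\bM_\ast$, and GOE edge concentration all appear in the paper's argument. The genuine gap is in your third step. The rank-two block of $\bM_\ast$ coupling $\be_1=\bm/\tnorm{\bm}_2$ to $\{\be_3,\dots,\be_N\}$ is a Gaussian vector of norm $\approx\sqrt{\psi}$ with $\psi=\xi''(q_\ast)+q_\ast\xi^{(3)}(q_\ast)-q_\ast\xi''(q_\ast)^2/\xi'_t(q_\ast)=O(1)$, and such an $O(1)$ off-block coupling can push an eigenvalue \emph{up}, toward or past $0$. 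It therefore threatens precisely the upper bound $\preceq -C^\spec_{\min}\bI$, which is the hard direction; it cannot be ``absorbed into $C^\spec_{\max}$ on the lower side.'' To rule out such an outlier you must show quantitatively that the deterministic diagonal entry in the $\be_1$ direction is negative enough to beat the coupling term $2|a|\sqrt{(1-a^2-b^2)\psi}$, and your proposal leaves both the sign and the magnitude of that entry unresolved (you flag the $\xi^{(3)}$ dependence as ``the main obstacle'' and gesture at a concavity argument that is not supplied).

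The paper's resolution is an algebraic identity you would need to discover: using $\xi'_t(q_\ast)=q_\ast/(1-q_\ast)$, the coefficient $\alpha_1 q_\ast-\beta_{q_\ast}$ you isolate equals $-(1-q_\ast)\psi+q_\ast\Delta_0-(1-q_\ast)^2\Delta_0^2$ with $\Delta_0=\xi''(q_\ast)-\tfrac{1}{(1-q_\ast)^2}<0$ and $\psi\ge 0$ (the latter by Cauchy--Schwarz on the mixture coefficients, Remark~\ref{rmk:variance-positive-by-cs} --- not by any concavity of the deterministic part). The term $-(1-q_\ast)\psi$ is exactly calibrated to the variance of the $\be_1$-row of $\bM_\ast$: restricting to the slice $\bbS^{N-1}_{a,b}=\{\bv:\<\bv,\be_1\>=a,\ \<\bv,\be_2\>=b\}$ and applying Lemma~\ref{lem:spherical-SK-ground-state}, one gets
\[
\sup_{\bv\in\bbS^{N-1}_{a,b}}\<\bA\bv,\bv\>
\le -\Bigl(\sqrt{(1-q_\ast)\bigl((1-a^2)\xi''(q_\ast)+a^2\psi\bigr)}-\sqrt{\tfrac{1-a^2-b^2}{1-q_\ast}}\Bigr)^2-c_0(a^2+b^2)+O(\iota),
\]
a perfect square plus a strictly negative remainder, uniformly in $(a,b)$ after a net argument. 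Without this cancellation (or an equivalent one), the ``bulk plus finite-rank outliers'' decomposition does not close, because a finite-rank perturbation of $O(1)$ operator norm can move an eigenvalue by $O(1)$.
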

\begin{proof}
    Let $\tbm = \bm / \norm{\bm}_2$, $\tbx = P_\bm^\perp \bx / \tnorm{P_\bm^\perp \bx}_2$.
    Throughout this proof, we will denote by $\bDelM$, $\bDelM_1$,  $\bDelM_2$, and so on
     error terms with the same meaning as in Corollary~\ref{cor:nabla2-F-conditional-law},
     namely  $(\bx,\bm,\bz)$-measurable symmetric matrices satisfying
     $\tnorm{\bDelM}_{\op} \le o_\iota(1)$, whose kernel contains $\spn(\bx,\bm,\bz)^\perp$.
     In particular
    \[
        q_\ast \tbm\tbm^\top - \fr{\bm\bm^\top}{N}=:\bDelM_1,\;\;
        (1-q_\ast) \tbx\tbx^\top - \fr{(\bx - \bm)(\bx - \bm)^\top}{N} =: \bDelM_2.
    \]
    Let $\be_1,\ldots,\be_N$ be an orthonormal basis of $\bbR^N$ with $\be_1 = \tbm$, $\be_2 = \tbx$.
    Let $\bM_\ast$ be defined in Lemma~\ref{lem:gaussian-mtx-coupling}, coupled to $\bM$ so that $\tnorm{\bM-\bM_\ast}_{\op} \le o_\iota(1)$ with probability $1-e^{-cN}$.
    Taking $\iota$ small, it suffices to show
    \beq
        \label{eq:spectral-bound-goal}
        -C^\spec_{\max} \bI \preceq \bA \preceq -C^\spec_{\min} \bI,
    \eeq
    for
    \baln
        \bA &=
        \lt((2+q_\ast) \xi''(q_\ast) - q_\ast(1-q_\ast) \xi^{(3)}(q_\ast) - \fr{2q_\ast}{(1-q_\ast)^2} \rt) \tbm\tbm^\top
        + (1-q_\ast) \xi''(q_\ast) \tbx\tbx^\top \\
        & - \lt((1-q_\ast) \xi''(q_\ast) + \fr{1}{1-q_\ast} \rt) \bI
        + \bM_\ast.
    \ealn
    By comparing $\bM_\ast$ to a large constant multiple of a GOE, identically to the proof of Lemma~\ref{lem:gaussian-mtx-coupling}, we can show $\tnorm{\bM_\ast}_{\op} = O(1)$ with probability $1-e^{-cN}$.
    On this event, all terms in $\bA$ have bounded operator norm, and thus $-C^\spec_{\max} \bI \preceq \bA$.
    For the upper bound in \eqref{eq:spectral-bound-goal}, let
    \[
        \psi = \xi''(q_\ast) + q_\ast \xi^{(3)}(q_\ast) - \fr{q_\ast \xi''(q_\ast)^2}{\xi'_t(q_\ast)}.
    \]
    which (recall Remark~\ref{rmk:variance-positive-by-cs}) is nonnegative.
    Then
    \baln
        \bA
        &= \lt(
            -(1-q_\ast) \psi
            + q_\ast \lt(\xi''(q_\ast) - \fr{1}{(1-q_\ast)^2}\rt)
            - (1-q_\ast)^2 \lt(\xi''(q_\ast) - \fr{1}{(1-q_\ast)^2}\rt)^2
        \rt) \be_1\be_1^\top \\
        &- \fr{1}{1-q_\ast} \be_2\be_2^\top
        - \lt((1-q_\ast)\xi''(q_\ast) + \fr{1}{1-q_\ast}\rt) \sum_{i=3}^N \be_i\be_i^\top
        + \bM_\ast
    \ealn
    By \eqref{eq:amp-works}, there exists $c_0 > 0$ depending only on $\xi$ such that
    \baln
        \bA &\preceq \bA' - c_0 (\be_1\be_1^\top + \be_2\be_2^\top), \qquad \text{where} \\
        \bA' &=
        - (1-q_\ast) \psi \be_1\be_1^\top
        - (1-q_\ast) \xi''(q_\ast) \be_2\be_2^\top
        - \lt((1-q_\ast)\xi''(q_\ast) + \fr{1}{1-q_\ast}\rt) \sum_{i=3}^N \be_i\be_i^\top
        + \bM_\ast \\
        &=
        - (1-q_\ast) \psi \be_1\be_1^\top
        - (1-q_\ast) \xi''(q_\ast) \sum_{i=2}^N \be_i\be_i^\top
        - \fr{1}{1-q_\ast} \sum_{i=3}^N \be_i\be_i^\top
        + \bM_\ast.
    \ealn
    By \eqref{eq:def-bM}, (with $\bM_\ast(i,j)$ having the same meaning as above)
    \baln
        \lt(\bM_\ast(1,i) : 3\le i\le N\rt)
        &\stackrel{d}{=} \sqrt{\psi} \bg^1, &
        \bg^1 &\sim \cN(0,\bI_{N-2}/N), \\
        \lt(\bM_\ast(2,i) : 3\le i\le N\rt)
        &\stackrel{d}{=} \sqrt{\xi''(q_\ast)} \bg^2, &
        \bg^2 &\sim \cN(0,\bI_{N-2}/N), \\
        \lt(\bM_\ast(i,j) : 3\le i,j\le N\rt)
        &\stackrel{d}{=} \sqrt{\xi''(q_\ast) \cdot \fr{N-2}{N}} \bG &
        \bG &\sim \GOE(N-2),
    \ealn
    and $\bg^1,\bg^2,\bG$ are independent.
    Fix $a,b$ with $a^2+b^2 \le 1$ and consider temporarily the restricted set
    \[
        \bbS^{N-1}_{a,b} = \lt\{\bv \in \bbS^{N-1} : \la \bv, \be_1 \ra = a, \la \bv, \be_2 \ra = b\rt\}.
    \]
    For any $\bv \in \bbS^{N-1}_{a,b}$ we can write
    \[
        \bv = a \be_1 + b \be_2 + \sqrt{1-a^2-b^2} \bw,
    \]
    where $\bw \in \bbS^{N-1}_{0,0}$.
    Because we defined $\bM_\ast(i,j)=0$ for all $i,j \in \{1,2\}$,
    \baln
        \la \bM_\ast \bv, \bv \ra
        &=
        2a \sqrt{(1-a^2-b^2) \psi} \la \bg^1, \bw \ra
        + 2b \sqrt{(1-a^2-b^2) \xi''(q_\ast)} \la \bg^2, \bw \ra \\
        &+ (1-a^2-b^2) \sqrt{\xi''(q_\ast) \cdot \fr{N-2}{N}} \la \bG \bw, \bw \ra.
    \ealn
    By Lemma~\ref{lem:spherical-SK-ground-state}, with probability $1-e^{-cN}$,
    \[
        \bigg|\sup_{\bv \in \bbS^{N-1}_{a,b}} \la \bM_\ast \bv, \bv \ra - 2\sqrt{f(a,b)}\bigg| \le \iota,
    \]
    where
    \baln
        f(a,b)
        &= (1-a^2-b^2)^2 \xi''(q_\ast)
        + a^2 (1-a^2-b^2) \psi
        + b^2 (1-a^2-b^2) \xi''(q_\ast) \\
        &= (1-a^2-b^2)\lt((1-a^2)\xi''(q_\ast) + a^2 \psi\rt).
    \ealn
    On this event, for all $\bv \in \bbS^{N-1}_{a,b}$,
    \baln
        \la \bA \bv, \bv \ra
        &\le
        - (1-q_\ast) \psi a^2
        - (1-q_\ast) \xi''(q_\ast) (1-a^2)
        - \fr{1-a^2-b^2}{1-q_\ast}
        + 2\sqrt{f(a,b)}
        - c_0 (a^2+b^2)
        + \iota \\
        &=
        -\lt(
            \sqrt{(1-q_\ast) \lt((1-a^2)\xi''(q_\ast) + a^2 \psi\rt)}
            - \sqrt{\fr{1-a^2-b^2}{1-q_\ast}}
        \rt)^2
        - c_0 (a^2+b^2)
        + \iota.
    \ealn
    At $a=b=0$, the first term is strictly negative by \eqref{eq:amp-works}.
    So, there exists $c_1 > 0$, depending only on $\xi$, such that for all $a^2+b^2 \le 1$,
    \[
        -\lt(
            \sqrt{(1-q_\ast) \lt((1-a^2)\xi''(q_\ast) + a^2 \psi\rt)}
            - \sqrt{\fr{1-a^2-b^2}{1-q_\ast}}
        \rt)^2
        - c_0 (a^2+b^2)
        \le -c_1.
    \]
    We have thus shown that, for fixed $a,b$, with probability $1-e^{-cN}$,
    \beq
        \label{eq:subsphere-concavity-bd}
        \sup_{\bv \in \bbS^{N-1}_{a,b}} \la \bA \bv, \bv \ra
        \le -c_1 + \iota.
    \eeq
    Recall that $\tnorm{\bA}_{\op} = O(1)$ with probability $1-e^{-cN}$.
    So, the map
    \[
        (a,b) \mapsto \sup_{\bv \in \bbS^{N-1}_{a,b}} \la \bA \bv, \bv \ra
    \]
    is $O(1)$-Lipschitz.
    By a union bound, with proability $1-e^{-cN}$ \eqref{eq:subsphere-concavity-bd} holds for all $(a,b)$ in a $\iota$-net of $a^2+b^2 \le 1$.
    On this event,
    \[
        \sup_{\bv \in \bbS^{N-1}} \la \bA \bv, \bv \ra
        \le -c_1 + O(\iota).
    \]
    Taking $C^\spec_{\min} = c_1/2$ and $\iota$ small enough completes the proof.
\end{proof}
\begin{proof}[Proof of Proposition~\ref{ppn:conditional-tap-gradient-hessian}]
    Let $\iota'$ be given by Proposition~\ref{ppn:conditional-tap-gradient}.
    By this proposition, for any $\bm \in \cS_0$, $r\in I_\iota$,
    \[
        \PP\lt(
            E_1(\bm,\iota')^c
            \big|
            \nabla_\sp \tH_{N,t}(\bm) = \bzero,
            \tpartial_\rd \tH_{N,t}(\bm) = r
        \rt)
        \le e^{-cN}.
    \]
    Since $\tnorm{\nabla \cF_\sTAP(\bm)}_N \le \iota'$ on $E_1(\bm,\iota')$, and $\nabla_\sp \tH_{N,t}(\bm), \tpartial_\rd \tH_{N,t}(\bm)$ are $\nabla \cF_\sTAP(\bm)$-measurable,
    \[
        \PP\lt(
            E_1(\bm,\iota') \cap E_2(\bm)^c
            \big|
            \nabla_\sp \tH_{N,t}(\bm) = \bzero,
            \tpartial_\rd \tH_{N,t}(\bm) = r
        \rt)
        \le
        \sup_{\tnorm{\bz}_N \le \iota'}
        \PP \lt(
            E_2(\bm)^c
            \big| \nabla \cF_\sTAP(\bm) = \bz
        \rt).
    \]
    By Corollary~\ref{cor:nabla2-F-conditional-law} and Proposition~\ref{ppn:conditional-tap-hessian}, this last probability is $\le e^{-cN}$.
    This completes the proof.
\end{proof}

\subsection{Proof of conditioning bound}

Propositions~\ref{ppn:gradients-bounded} and \ref{ppn:tap-existence-uniqueness} directly imply parts \ref{itm:gradients-bounded} and \ref{itm:crit-unique} of Proposition~\ref{ppn:local-concavity-and-conditioning}.
We now prove the remainder of this proposition.

\begin{proof}[Proof of Proposition~\ref{ppn:local-concavity-and-conditioning}\ref{itm:amp-near-crit}]
    Set $\upsilon>0$ small enough that $\max(\upsilon, \iota'(\upsilon)) \le \iota/2$, for the function $\iota'$ from Proposition~\ref{ppn:tap-existence-uniqueness}.
    Suppose $K_N$ holds and the events in Propositions~\ref{ppn:amp-performance} and \ref{ppn:tap-existence-uniqueness} hold with tolerance $\upsilon$.
    This occurs with probability $1-e^{-cN}$.

    By \eqref{eq:amp-limiting-overlap} and \eqref{eq:amp-limiting-gradient}, for suitably large $K_\sAMP$, $\bm^\sAMP \in \cS_\upsilon \subseteq \cS_{\iota/2}$ and $\bm^\sAMP$ is an $\upsilon$-approximate critical point of $\cF_\sTAP$.
    By Proposition~\ref{ppn:tap-existence-uniqueness}\ref{itm:tap-approx-crits}, this implies $\tnorm{\bm^\sAMP - \bm^\sTAP}_N \le \iota/2$.
\end{proof}

We now turn to the proof of part~\ref{itm:conditioning-bound}.
Define
\[
    K(\bm) = P_{\spn(\bx,\bm)}^\perp \nabla^2 \cF_\sTAP(\bm) P_{\spn(\bx,\bm)}^\perp.
\]
We will treat this as a $(N-2)\times (N-2)$ matrix, after a suitable change of coordinates.
The following fact is a consequence of Corollary~\ref{cor:nabla2-F-conditional-law}.
\begin{fac}
    \label{fac:Km-form}
    Let $\tnorm{\bm}_N^2 = q_\bm < 1$.
    Conditional on $\nabla \cF_\sTAP(\bm) = \bzero$,
    \[
        K(\bm) \stackrel{d}{=} - \lt((1-q_\bm) \xi''(q_\bm) + \fr{1}{1-q_\bm} \rt) \bI + \sqrt{\xi''(q_\bm) \fr{N-2}{N}} \bG,
        \qquad \bG \sim \GOE(N-2).
    \]
\end{fac}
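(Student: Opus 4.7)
The plan is to derive Fact~\ref{fac:Km-form} directly from Lemma~\ref{lem:nabla2-H-conditional-law} rather than invoking Corollary~\ref{cor:nabla2-F-conditional-law}, since the Fact is stated as an exact identity in $q_\bm$ (with no small-$\iota$ perturbation absorbed into an error matrix). The key observation is that $P_{\spn(\bx,\bm)}^\perp$ annihilates every rank-2 correction that arises in the conditional law of $\nabla^2 \tH_{N,t}(\bm)$, as well as the non-isotropic part of the TAP regularizer.

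First I would rewrite the conditioning. From \eqref{eq:nabla-cF-sTAP},
\[
\nabla \cF_\sTAP(\bm) = \nabla \tH_{N,t}(\bm) + \xi'_t(\la\bx,\bm\ra_N)\bx - \lt(\fr{1}{1-q_\bm} + (1-q_\bm)\xi''(q_\bm)\rt)\bm,
\]
so the event $\{\nabla \cF_\sTAP(\bm)=\bzero\}$ coincides with $\{\nabla \tH_{N,t}(\bm)=\tbz\}$ for the $(\bx,\bm)$-measurable vector
\[
\tbz = -\xi'_t(\la\bx,\bm\ra_N)\bx + \lt(\fr{1}{1-q_\bm} + (1-q_\bm)\xi''(q_\bm)\rt)\bm,
\]
and crucially $\tbz\in\spn(\bx,\bm)$. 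I would then split $\nabla^2\cF_\sTAP(\bm)=\nabla^2\tH_{N,t}(\bm)+\bR(\bm)$, where $\bR(\bm)$ is the Hessian of the deterministic part $N\xi_t(\la\bx,\cdot\ra_N)+\fr{N}{2}\theta(\|\cdot\|_N^2)+\fr{N}{2}\log(1-\|\cdot\|_N^2)$ at $\bm$. Using $\theta'(s)=-(1-s)\xi''(s)$, a short calculation gives
\[
\bR(\bm) = -\lt((1-q_\bm)\xi''(q_\bm)+\fr{1}{1-q_\bm}\rt)\bI + c_1(\bm)\fr{\bm\bm^\top}{N} + c_2(\bm)\fr{\bx\bx^\top}{N}
\]
for scalars $c_1,c_2$ whose explicit form is immaterial, since $\bm\bm^\top$ and $\bx\bx^\top$ are killed by $P_{\spn(\bx,\bm)}^\perp$.

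Next I would apply Lemma~\ref{lem:nabla2-H-conditional-law} with $\tbz$ in place of $\bz$: conditional on $\nabla \tH_{N,t}(\bm)=\tbz$, the matrix $\nabla^2 \tH_{N,t}(\bm)$ equals $\bM$ plus rank-2 corrections proportional to $\bm\tbz^\top+\tbz\bm^\top$ and $\bm\bm^\top$. Since $\tbz\in\spn(\bx,\bm)$, all such corrections have image in $\spn(\bx,\bm)$ and therefore vanish after conjugating by $P_{\spn(\bx,\bm)}^\perp$. Combined with the previous display, this yields
\[
K(\bm) = -\lt((1-q_\bm)\xi''(q_\bm)+\fr{1}{1-q_\bm}\rt) P_{\spn(\bx,\bm)}^\perp + P_{\spn(\bx,\bm)}^\perp\,\bM\,P_{\spn(\bx,\bm)}^\perp.
\]

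Finally I would identify the law of the projected $\bM$ as an $(N-2)\times(N-2)$ matrix. Choosing the orthonormal basis of Lemma~\ref{lem:nabla2-H-conditional-law} with $\be_1=\bm/\|\bm\|_2$ and $\be_2\in \spn(\bx,\bm)\cap\be_1^\perp$, the vectors $(\be_3,\ldots,\be_N)$ form an orthonormal basis of $\spn(\bx,\bm)^\perp$, so in this basis $K(\bm)$ is represented by the sub-block $(\bM(i,j))_{3\le i,j\le N}$. By the Lemma these entries are independent centered Gaussians with variance $2\xi''(q_\bm)/N$ on the diagonal and $\xi''(q_\bm)/N$ off the diagonal, matching exactly the variances of $\sqrt{\xi''(q_\bm)\cdot(N-2)/N}\,\bG$ for $\bG\sim\GOE(N-2)$. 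There is no substantive obstacle: each step is an exact algebraic identity followed by a rank argument, and the only care needed is to retain $q_\bm$ (rather than $q_\ast$) throughout by bypassing Corollary~\ref{cor:nabla2-F-conditional-law}.
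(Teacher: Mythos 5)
Your proof is correct and follows essentially the route the paper intends: the paper presents Fact~\ref{fac:Km-form} as a consequence of Corollary~\ref{cor:nabla2-F-conditional-law}, whose content is exactly your computation (Lemma~\ref{lem:nabla2-H-conditional-law} plus the observation that the projection $P_{\spn(\bx,\bm)}^\perp$ annihilates every rank-two correction, the error matrix, and the non-isotropic part of the deterministic Hessian). Your choice to go directly to Lemma~\ref{lem:nabla2-H-conditional-law} to retain the exact $q_\bm$ dependence is precisely what the paper anticipates when it notes, just before that lemma, that the $\bm\in\cS_0$ assumption is dropped ``for use in Fact~\ref{fac:Km-form} below.''
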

The next fact is verified by direct calculation.
\begin{fac}
    \label{fac:nabla-tH-law}
    For any $\bm$, $\nabla \tH_{N,t}(\bm)$ is Gaussian, with variance $\xi'_t(q_\bm) + q_\bm \xi''(q_\bm)$ in the direction of $\bm$ and $\xi'_t(q_\bm)$ in all directions orthogonal to $\bm$.
\end{fac}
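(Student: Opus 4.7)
The plan is a direct covariance calculation. Recall that by Eq.~\eqref{eq:HtildeDef}, $\tH_{N,t}(\bsig) = \tH_N(\bsig) + \la \bB_t, \bsig \ra$ is (after centering) a Gaussian process, and from the covariance of $\tH_N$ together with the linear term $\la \bB_t,\bsig \ra$ one checks the centered covariance is
\[
    \mathrm{Cov}\bigl(\tH_{N,t}(\bsig^1),\tH_{N,t}(\bsig^2)\bigr)
    = N\xi\bigl(\<\bsig^1,\bsig^2\>_N\bigr) + t\,\<\bsig^1,\bsig^2\>
    = N\xi_t\bigl(\<\bsig^1,\bsig^2\>_N\bigr),
\]
using $\xi_t(s) = \xi(s) + ts$. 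In particular $\tH_{N,t}$ is a.s.\ smooth, so $\nabla \tH_{N,t}(\bm)$ is a Gaussian vector in $\bbR^N$, and its law is determined by its covariance.

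I would compute this covariance by differentiating the covariance function in both arguments and evaluating on the diagonal. Writing $\<\bsig^1,\bsig^2\>_N = \tfrac{1}{N}\sum_k \sigma^1_k\sigma^2_k$,
\[
    \partial_{\sigma^1_i}\partial_{\sigma^2_j}\, N\xi_t\bigl(\<\bsig^1,\bsig^2\>_N\bigr)
    = \xi'_t\bigl(\<\bsig^1,\bsig^2\>_N\bigr)\delta_{ij}
    + \tfrac{1}{N}\xi''_t\bigl(\<\bsig^1,\bsig^2\>_N\bigr)\sigma^1_j\sigma^2_i,
\]
and setting $\bsig^1=\bsig^2=\bm$ together with $\xi''_t = \xi''$ yields
\[
    \Cov\bigl(\nabla\tH_{N,t}(\bm)\bigr)
    = \xi'_t(q_\bm)\,\bI_N + \xi''(q_\bm)\,\frac{\bm\bm^\top}{N}.
\]

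It then remains to read off the two claimed variances. For the unit vector $\bm/\tnorm{\bm}_2$ we obtain
\[
    \tnorm{\bm}_2^{-2}\,\bm^\top\Cov\bigl(\nabla\tH_{N,t}(\bm)\bigr)\bm
    = \xi'_t(q_\bm) + \xi''(q_\bm)\tfrac{\tnorm{\bm}_2^2}{N}
    = \xi'_t(q_\bm) + q_\bm\,\xi''(q_\bm),
\]
while for any unit $\bu \perp \bm$ the rank-one term vanishes and we get $\xi'_t(q_\bm)$, as claimed. There is no real obstacle here; the only point worth double-checking is the passage of the derivative inside the expectation, which is justified by smoothness of $\xi_t$ and the finite-dimensional Gaussian structure of the coupling vector $\bG$ defining $H_N$.
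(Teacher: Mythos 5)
Your proof is correct and is exactly the "direct calculation" the paper alludes to (the paper offers no written proof beyond that remark): differentiating the covariance $N\xi_t(\<\bsig^1,\bsig^2\>_N)$ in both arguments and evaluating on the diagonal gives $\Cov(\nabla\tH_{N,t}(\bm)) = \xi'_t(q_\bm)\bI_N + \xi''(q_\bm)\bm\bm^\top/N$, from which both claimed variances follow. Nothing further is needed.
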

We will need the following technical lemma, which we prove in Subsection~\ref{subsec:goe-determinants}.
\begin{lem}
    \label{lem:kacrice-fullspace-computation}
    For all $\iota > 0$ sufficiently small, there exists a constant $C>0$ such that
    \[
        \int_{\cS_\iota}
        \EE\lt[
            |\det K(\bm)|
            \big|
            \nabla \cF_\sTAP(\bm) = \bzero
        \rt]
        \varphi_{\nabla \cF_\sTAP(\bm)}(\bzero)
        ~\de^N (\bm)
        \le C.
    \]
\end{lem}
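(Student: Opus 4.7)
The plan is to recognize $I$ as a Kac--Rice-type quantity and reduce to bounding a $2\times 2$ Schur complement. Let $V := \spn(\bx,\bm)$, $P_V := P_{\spn(\bx,\bm)}$, and $P^\perp := P_{\spn(\bx,\bm)}^\perp$. The change-of-variables identity for $\delta$-functions of non-degenerate gradients combined with Fubini gives
\[
    I = \EE\lt[\sum_{\bm \in \cS_\iota \cap \Crt(\cF_\sTAP)} \frac{|\det K(\bm)|}{|\det \nabla^2\cF_\sTAP(\bm)|}\rt].
\]
Since $K(\bm)$ is the restriction of $\nabla^2\cF_\sTAP(\bm)$ to $V^\perp$, the Schur complement identity gives $|\det\nabla^2\cF_\sTAP(\bm)|=|\det K(\bm)|\cdot|\det S(\bm)|$, where $S(\bm)$ is the $2\times 2$ Schur complement, so the summand simplifies to $1/|\det S(\bm)|$.

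On the event $\cE_0$ from Proposition~\ref{ppn:local-concavity-and-conditioning}, the only critical point of $\cF_\sTAP$ in $\cS_\iota$ is $\bm^\sTAP$, so it suffices to show $|\det S(\bm^\sTAP)|$ is bounded below by a positive constant with probability $1-e^{-cN}$. Using the explicit form of $\nabla^2\cF_\sTAP(\bm^\sTAP)$ conditional on $\nabla\cF_\sTAP(\bm^\sTAP)=\bzero$ from Corollary~\ref{cor:nabla2-F-conditional-law}, we write $S(\bm^\sTAP) = A - \bB K^{-1}\bB^\top$, where $A$ is an explicit $2\times 2$ matrix with $O(1)$-size deterministic part and $O(N^{-1/2})$ Gaussian fluctuations, and $\bB = P_V \bM P^\perp$ is the off-diagonal block of the GOE-like matrix $\bM$ from Lemma~\ref{lem:nabla2-H-conditional-law}. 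By standard random-matrix concentration, $\bB K^{-1}\bB^\top$ converges as $N\to\infty$ to an explicit deterministic $2\times 2$ matrix, expressible via the Stieltjes transform of the shifted-GOE spectrum of $K$ and the variance coefficients from \eqref{eq:def-bM}. A direct computation using $\xi'_t(q_\ast)=q_\ast/(1-q_\ast)$ yields the limit $S_*$ in closed form, and verifies $|\det S_*|>0$ as a consequence of the strict inequality \eqref{eq:amp-works}. This yields the desired bound on the $\cE_0$ contribution.

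The bad event $\cE_0^c$ is handled by Cauchy--Schwarz: the contribution is at most $\PP(\cE_0^c)^{1/2}$ times the $L^2$ norm of the sum, which by a second-moment Kac--Rice computation (relying on the operator-norm bounds of Proposition~\ref{ppn:gradients-bounded}) is at most $e^{CN}$ for some constant $C$. Since $\PP(\cE_0^c)\le e^{-cN}$ with $c$ made sufficiently large via the constants in Propositions~\ref{ppn:amp-performance} and \ref{ppn:tap-existence-uniqueness}, this term is $o(1)$. The main obstacle is the invertibility of $S_*$: the random-matrix concentration is standard, but verifying that the $2\times 2$ determinant is nonzero requires carefully tracking cancellations between the deterministic part of $A$ and the Stieltjes-transform contribution from $\bB K^{-1}\bB^\top$, ultimately reducing invertibility to a strict form of \eqref{eq:amp-works}.
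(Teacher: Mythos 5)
Your reduction is genuinely different from the paper's: you convert the integral into $\EE\sum_{\bm\in\Crt\cap\cS_\iota}1/|\det S(\bm)|$ via a weighted Kac--Rice identity and the Schur-complement factorization $|\det\nabla^2\cF_\sTAP(\bm)|=|\det K(\bm)|\,|\det S(\bm)|$. On the good event this works cleanly: by Proposition~\ref{ppn:local-concavity-and-conditioning} there is a unique critical point $\bm^\sTAP$ in $\cS_\iota$ with $\nabla^2\cF_\sTAP(\bm^\sTAP)\preceq -C^\spec_{\min}\bI$, and the Schur complement of a symmetric matrix $\preceq -c\bI$ is itself $\preceq -c\bI_2$ (test the quadratic form on $(\bv,-K^{-1}\bB^\top\bv)$), so $|\det S(\bm^\sTAP)|\ge (C^\spec_{\min})^2$ and the good-event contribution is at most $(C^\spec_{\min})^{-2}$. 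You do not even need the random-matrix computation of $S_*$ that you flag as the main obstacle.

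The genuine gap is the bad event. The weight $1/|\det S(\bm)|$ is unbounded: it blows up at critical points whose Hessian degenerates in the $\spn(\bx,\bm)$ directions. Its second moment under the Kac--Rice density is not finite: conditional on $\nabla\cF_\sTAP(\bm)=\bzero$, $\det S(\bm)$ is a polynomial in nondegenerate Gaussians and so has a bounded density near $0$, whence $\EE\lt[|\det S(\bm)|^{-2}\mid\nabla\cF_\sTAP(\bm)=\bzero\rt]=\infty$. The claim that the $L^2$ norm of the sum is at most $e^{CN}$ therefore fails, and Cauchy--Schwarz gives nothing. Even granting a hypothetical $e^{CN}$ bound, the rate $c$ in $\PP(\cE_0^c)\le e^{-cN}$ is a fixed, possibly very small, constant produced by Propositions~\ref{ppn:amp-performance} and \ref{ppn:tap-existence-uniqueness}; it is not a tunable parameter, so one cannot ``make $c$ sufficiently large'' to beat $C$. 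This is precisely the difficulty the paper's proof is built to avoid: it never passes through the counting identity, but instead bounds $\EE[|\det K(\bm)|\mid\nabla\cF_\sTAP(\bm)=\bzero]$ and $\varphi_{\nabla\cF_\sTAP(\bm)}(\bzero)$ pointwise by explicit exponentials $Ce^{Nf_1(q_\bm)}$ and $Ce^{Nf_2(q_\bm,q_\bx)}$ (via Lemma~\ref{lem:goe-det-precise} and Fact~\ref{fac:nabla-tH-law}), adds the volumetric exponent $f_3$, and then verifies by direct calculation that $F=f_1+f_2+f_3$ satisfies $F(q_\ast,q_\ast)=0$, $\nabla F(q_\ast,q_\ast)=0$, and is strictly concave nearby, so the Laplace integral over $\cS_\iota$ is $O(1)$. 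That exact cancellation at $(q_\ast,q_\ast)$ is the substantive content of the lemma; your argument tries to obtain it for free from the almost-sure uniqueness and nondegeneracy of the critical point, but that only controls the integral on an event and leaves the complementary event, where the integrand is not uniformly integrable, uncontrolled.
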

\begin{proof}[Proof of Proposition~\ref{ppn:local-concavity-and-conditioning}\ref{itm:conditioning-bound}]
    We will apply Lemma~\ref{lem:conditioning-lemma-v2} with $\cF_\sTAP$ for $\cF$, $\bm^\sAMP$ for $\bm_0$, the interior of $\cS_\iota$ for $D$, and $C^\spec_{\min}/2$ for $c_\spec$.
    Note that \eqref{eq:def-eps-local-concavity} implies $\eps \le c_{\spec}^2/10c_{\op}$.
    We next verify that the event $\cE$ is contained in the event $\cE_{\cond}$ defined in Lemma~\ref{lem:conditioning-lemma-v2}.
    Suppose $\cE$ holds.
    Then event $\cH(c_{\op})$ holds by part \ref{itm:gradients-bounded}.
    $\tnorm{\nabla \cF_\sTAP(\bm^\sAMP)}_N \le \eps$ by definition, and by parts \ref{itm:gradients-bounded}, \ref{itm:crit-unique}, and \ref{itm:amp-near-crit},
    \[
        \lambda_{\max} (\nabla^2 \cF_\sTAP(\bm^\sAMP))
        \le \lambda_{\max} (\nabla^2 \cF_\sTAP(\bm^\sTAP))
        + c_{\op} \norm{\bm^\sAMP - \bm^\sTAP}_{\op,N}
        \le -C^{\spec}_{\min} + c_{\op} \iota / 2
        \le -C^{\spec}_{\min} / 2
    \]
    for small enough $\iota$.
    Thus $\cG(\eps,c_\spec)$ holds.
    We have $\norm{\bm^\sAMP}_N \le 1$ because $\bm^\sAMP \in \cS_{\iota/2}$, by part \ref{itm:amp-near-crit}.
    Also, \eqref{eq:def-eps-local-concavity} implies
    $\fr{5\eps}{c_\spec} \le \fr{\iota}{2}$, so $U = \Ball^N(\bm^\sAMP,5\eps/c_\spec) \subseteq \cS_\iota$.

    By \eqref{eq:nabla-cF-sTAP}, $\EE \nabla \cF_\sTAP(\bm)$ is continuous in $\bm$, and by Fact~\ref{fac:nabla-tH-law}, $\Cov(\nabla \cF_\sTAP(\bm))$ is uniformly lower bounded for all $\bm \in \cS_\iota$.
    This verifies the regularity condition in Lemma~\ref{lem:conditioning-lemma-v2}.
    By this lemma,
    \[
        \EE[X\ind\{\cE\}]
        \le
        \int_{S_\iota}
        \EE\lt[
            |\det \nabla^2 \cF_\sTAP(\bm)|
            X \ind\{\cE\}
            \big|
            \nabla \cF_\sTAP(\bm) = \bzero
        \rt]
        \varphi_{\nabla \cF_\sTAP(\bm)}(\bzero)
        ~\de^N (\bm).
    \]
    By H\"older's inequality,
    \baln
        &\EE\lt[
            |\det \nabla^2 \cF_\sTAP(\bm)|
            X \ind\{\cE\}
            \big|
            \nabla \cF_\sTAP(\bm) = \bzero
        \rt] \\
        &\le
        \EE\lt[
            |\det \nabla^2 \cF_\sTAP(\bm)|^{1+\delta^{-1}}
            \ind\{\cE\}
            \big|
            \nabla \cF_\sTAP(\bm) = \bzero
        \rt]^{\delta/(1+\delta)}
        \EE\lt[
            X^{1+\delta}
            \ind\{\cE\}
            \big|
            \nabla \cF_\sTAP(\bm) = \bzero
        \rt]^{1/(1+\delta)}.
    \ealn
    On event $\cE$, the eigenvalues of $\nabla^2 \cF_\sTAP(\bm)$ lie in $[-C^\spec_{\max},-C^\spec_{\min}]$ and interlace those of $K(\bm)$.
    So,
    \[
        |\det \nabla^2 \cF_\sTAP(\bm)| \le (C^\spec_{\max})^2 |\det K(\bm)|.
    \]
    Thus,
    \baln
        &\EE\lt[
            |\det \nabla^2 \cF_\sTAP(\bm)|^{1+\delta^{-1}}
            \ind\{\cE\}
            \big|
            \nabla \cF_\sTAP(\bm) = \bzero
        \rt]^{\delta/(1+\delta)} \\
        &\le
        (C^\spec_{\max})^2
        \EE\lt[
            |\det K(\bm)|^{1+\delta^{-1}}
            \big|
            \nabla \cF_\sTAP(\bm) = \bzero
        \rt]^{\delta/(1+\delta)} \\
        &\le
        (C^\spec_{\max})^2 C'_\delta
        \EE\lt[
            |\det K(\bm)|
            \big|
            \nabla \cF_\sTAP(\bm) = \bzero
        \rt].
    \ealn
    for some $C'_\delta > 0$.
    The last estimate is by Fact~\ref{fac:Km-form}, \eqref{eq:amp-works}, and Lemma~\ref{lem:goe-concentration}, similarly to \eqref{eq:goe-concentration-application}.
    Combining,
    \baln
        \EE[X\ind\{\cE\}]
        &\le
        (C^\spec_{\max})^2 C'_\delta
        \int_{\cS_\iota}
        \EE\lt[
            |\det K(\bm)|
            \big|
            \nabla \cF_\sTAP(\bm) = \bzero
        \rt]
        \varphi_{\nabla \cF_\sTAP(\bm)}(\bzero)
        ~\de^N (\bm) \\
        &\qquad \times
        \sup_{\bm \in \cS_\iota}
        \EE\lt[
            X^{1+\delta}
            \ind\{\cE\}
            \big|
            \nabla \cF_\sTAP(\bm) = \bzero
        \rt]^{1/(1+\delta)}.
    \ealn
    Finally, by Lemma~\ref{lem:kacrice-fullspace-computation}, this integral is bounded by a constant $C>0$.
    Thus the result holds with $C_\delta = (C^\spec_{\max})^2 C'_\delta C$.
\end{proof}

\subsection{Determinant concentration and estimate of Kac--Rice integral}
\label{subsec:goe-determinants}

In this subsection, we provide the deferred proofs of Lemmas~\ref{lem:goe-concentration} and \ref{lem:kacrice-fullspace-computation}.
These are the final ingredients to the proof of Proposition~\ref{ppn:local-concavity-and-conditioning}.
\begin{proof}[Proof of Lemma~\ref{lem:goe-concentration}]
    For any compact $K\subseteq (2,+\infty)$, we may pick $\eps > 0$ such small enough that $r \ge 2+2\eps$ for all $r\in K$.
    Let $\cE_\eps$ be the event that $\tnorm{\bG}_{\op} \le 2+\eps$.
    It is classical that $\PP(\cE_\eps) \ge 1-e^{-cN}$.
    For $r\in K$, let
    \[
        f(x) = \log \max(|r-x|,\eps),
    \]
    which is $\eps^{-1}$-Lipschitz.
    Let $\lambda_1,\ldots,\lambda_N$ be the eigenvalues of $\bG$ and define
    \[
        \Tr f(\bG) = \sum_{i=1}^N f(\lambda_i).
    \]
    By \cite[Theorem 1.1(b)]{guionnet2000concentration}, for all $s\ge 0$,
    \beq
        \label{eq:tr-f-concentration}
        \PP \lt(
            |\Tr f(\bG) - \EE \Tr f(\bG)| \ge s
        \rt) \le 2\exp(-\eps^2 s^2/8).
    \eeq
    Note that $|\det (r\bI - \bG)| \le \exp (\Tr f(\bG))$, and equality holds if $\bG \in \cE_\eps$.
    Thus,
    \baln
        \EE \lt[|\det (r\bI - \bG)|^t \rt]
        &\le \EE \lt[ \exp(t\Tr f(\bG)) \rt], &
        \EE \lt[|\det (r\bI - \bG)| \rt]
        &\ge \EE \lt[ \exp(\Tr f(\bG)) \ind\{\cE_\eps\} \rt].
    \ealn
    By \eqref{eq:tr-f-concentration}, there exists $C_{\eps,t}$ depending on $\eps,t$ such that
    \[
        \EE \lt[ \exp(t\Tr f(\bG)) \rt]
        \le C_{\eps,t} \exp(t \EE \Tr f(\bG)).
    \]
    By Cauchy--Schwarz,
    \[
        \EE \lt[ \exp(\Tr f(\bG)) \ind\{\cE_\eps^c\} \rt]
        \le \EE \lt[ \exp(2\Tr f(\bG))\rt]^{1/2} \PP(\cE_\eps^c)^{1/2}
        \le C_{\eps,2}^{1/2} e^{-cN/2} \exp(\EE \Tr f(\bG)),
    \]
    which implies
    \baln
        \EE \lt[ \exp(\Tr f(\bG)) \ind\{\cE_\eps\} \rt]
        &\ge \EE \lt[ \exp(\Tr f(\bG)) \rt]
        - C_{\eps,2}^{1/2} e^{-cN/2} \exp(\EE \Tr f(\bG)) \\
        &\ge (1 - C_{\eps,2}^{1/2} e^{-cN/2}) \exp(\EE \Tr f(\bG)).
    \ealn
    Thus,
    \[
        \fr{\EE \lt[|\det (r\bI - \bG)|^t \rt]^{1/t}}{\EE \lt[|\det (r\bI - \bG)| \rt]}
        \le \fr{C_{\eps,t}^{1/t} \exp(\EE \Tr f(\bG))}{(1 - C_{\eps,2}^{1/2} e^{-cN/2}) \exp(\EE \Tr f(\bG))}
    \]
    is bounded by a constant depending only on $\eps,t$.
\end{proof}
\begin{lem}
    \label{lem:goe-det-precise}
    Let $\bG \sim \GOE(N)$.
    For all $r$ in any compact subset of $(2,+\infty)$, there exists $C>0$ such that
    \[
        \EE [|\det (r\bI - \bG)|] \le C \exp(N\Phi(r)),
    \]
    where
    \[
        \Phi(r) = \fr14 r^2 - \fr12 - \fr14 r \sqrt{r^2-4} + \log \fr{r + \sqrt{r^2-4}}{2}.
    \]
\end{lem}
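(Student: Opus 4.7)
The key observation is that $\Phi(r) = \int_{-2}^{2} \log(r-x)\rho_{\smc}(x)\,dx$ for $r>2$, where $\rho_{\smc}$ is the semicircle density, as can be checked by differentiating in $r$ and recognizing the Stieltjes transform $(r-\sqrt{r^2-4})/2$. Thus the target bound is a statement that the expected log-determinant of $r\bI-\bG$ is at most $N$ times its semicircle prediction, up to $O(1)$ error. My strategy is to combine the Guionnet--Zeitouni concentration estimate already used in the proof of Lemma~\ref{lem:goe-concentration} with a classical estimate on the mean of a smooth linear spectral statistic of GOE.

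Pick $\eps>0$ small enough that $r>2+2\eps$ uniformly over the given compact subset, and split against the event $\cE_\eps:=\{\|\bG\|_{\op}\le 2+\eps\}$, which has probability $1-e^{-cN}$. The complementary contribution is negligible: by Cauchy--Schwarz and a crude Hadamard bound giving $\EE[\det(r\bI-\bG)^2]\le e^{C_rN}$, we get $\EE[|\det(r\bI-\bG)|\ind_{\cE_\eps^c}]\le e^{-c'N}$, which is $O(e^{N\Phi(r)})$. It therefore suffices to bound the contribution of $\cE_\eps$. On $\cE_\eps$ all eigenvalues $\lambda_i$ of $\bG$ lie in $[-(2+\eps),2+\eps]$, so $|\det(r\bI-\bG)|=\exp(\sum_i\log(r-\lambda_i))=\exp(\Tr f(\bG))$, where $f(x)=\log\max(|r-x|,\eps)$ is exactly the truncated Lipschitz function used in Lemma~\ref{lem:goe-concentration} (the truncation being inactive on the spectral window). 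Hence
\[
    \EE[|\det(r\bI-\bG)|\ind_{\cE_\eps}]\le \EE\exp(\Tr f(\bG)).
\]
Applying the Guionnet--Zeitouni subgaussian concentration estimate \eqref{eq:tr-f-concentration} with the $\eps^{-1}$-Lipschitz function $f$ yields $\EE\exp(\Tr f(\bG))\le C_\eps\exp(\EE\Tr f(\bG))$, by a standard Gaussian moment generating function bound applied to the subgaussian centered variable $\Tr f(\bG)-\EE\Tr f(\bG)$.

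The remaining and principal step is to establish the sharp mean bound
\[
    \EE\Tr f(\bG)\le N\Phi(r)+O(1),
\]
with the $O(1)$ constant uniform over $r$ in the given compact subset and over our choice of $\eps$. Since $f$ is $C^2$ on a neighborhood of $[-2,2]=\mathrm{supp}(\rho_{\smc})$ and coincides with $x\mapsto\log(r-x)$ there, the leading term $N\int f\,d\rho_{\smc}=N\Phi(r)$ is the semicircle prediction. The $O(1)$ correction is the classical bias of a smooth linear spectral statistic of GOE: for any fixed $C^2$ test function $\varphi$ supported where the density of states has positive limit,
\[
    \EE\Tr \varphi(\bG)=N\int \varphi\,d\rho_{\smc}+b(\varphi)+o(1),
\]
where $b(\varphi)$ depends linearly and continuously on $\varphi$ in a suitable $C^2$ sense. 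This can be derived either from the $1/N$ expansion of the expected empirical measure via the loop equations / Harer--Zagier recursion for GOE moments, or by direct Laplace-type analysis of the explicit joint eigenvalue density $p(\lambda)\propto\prod_{i<j}|\lambda_i-\lambda_j|\exp(-N\sum\lambda_i^2/4)$.

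The main technical obstacle is getting $O(1)$ rather than $o(N)$ bias in the preceding step, since an $o(N)$ error would only yield $e^{o(N)}\exp(N\Phi(r))$ instead of the required $C\exp(N\Phi(r))$. The derivation above is uniform in $r$ on the compact set: $\eps$ can be chosen uniformly, the concentration constant $C_\eps$ is fixed, the bias $b(f_r)$ is a continuous function of $r$ on the compact subset (hence bounded), and the Cauchy--Schwarz exponential bound for the $\cE_\eps^c$ contribution is uniform. Combining the three ingredients gives
\[
    \EE|\det(r\bI-\bG)|\le e^{-c'N}+C_\eps\exp(N\Phi(r)+O(1))\le C\exp(N\Phi(r)),
\]
as claimed.
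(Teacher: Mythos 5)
Your approach is genuinely different from the paper's. The paper proves this lemma in one line by invoking the exact asymptotics of $\EE|\det(x\bI-\bG)|$ from \cite[Lemmas 2.1 and 2.2(i)]{belius2022triviality} (after matching normalizations), whereas you re-derive the bound from the identity $\Phi(r)=\int\log(r-\lambda)\,\rho_\smc(\de\lambda)$ — which is correct; differentiating $\Phi$ does give the Stieltjes transform $(r-\sqrt{r^2-4})/2$ and the constants match at $r\to\infty$ — combined with the Guionnet--Zeitouni concentration already used in Lemma~\ref{lem:goe-concentration} and the classical $O(1)$ bias expansion $\EE\Tr\varphi(\bG)=N\int\varphi\,\de\rho_\smc+b(\varphi)+o(1)$ for smooth linear eigenvalue statistics of the GOE. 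What the paper's route buys is that the entire $O(1)$-precision burden is carried by a single ready-made determinant asymptotic; what your route buys is self-containedness relative to that specific reference and reuse of machinery already present in the paper, at the cost of importing a different (but equally classical and true) $O(1)$-precision input from the linear-statistics literature. In both cases the genuinely sharp step is a citation, so I regard your argument as an acceptable alternative, provided you state precisely which version of the bias result you invoke and note that $f$ must first be smoothed away from $[-2,2]$ (its kinks at $r\pm\eps$ lie at distance $\ge\eps$ from the bulk, so this changes $\EE\Tr f$ only by $e^{-cN}$ terms).

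Two blemishes to fix. First, the split on $\cE_\eps$ is unnecessary and, as written, your treatment of $\cE_\eps^c$ does not close: Cauchy--Schwarz gives $\EE[|\det(r\bI-\bG)|\ind_{\cE_\eps^c}]\le e^{C_rN/2}e^{-cN/2}$, and the large-deviation rate $c=c(\eps)$ for $\PP(\tnorm{\bG}_\op>2+\eps)$ is small for small $\eps$ while $C_r$ is of order $\log(r^2+1)$, so this product need not be $e^{-c'N}$ nor even $O(e^{N\Phi(r)})$. The fix is to drop the split entirely: since $f(x)=\log\max(|r-x|,\eps)\ge\log|r-x|$ pointwise, the inequality $|\det(r\bI-\bG)|\le\exp(\Tr f(\bG))$ holds on the whole probability space, and your subsequent steps apply directly to $\EE\exp(\Tr f(\bG))$. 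Second, be explicit that the uniform-in-$r$ constant in $\EE\Tr f(\bG)\le N\Phi(r)+O(1)$ requires the bias functional to be continuous in $r$ on the compact set in a norm for which the family $\{x\mapsto\log(r-x)\}$ is bounded; this is true but should be said, since it is exactly where a lossy $o(N)$ estimate would ruin the conclusion.
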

\begin{proof}
    Follows from \cite[Lemma 2.1 and 2.2(i)]{belius2022triviality} with $N+1$ for $N$ and $\sqrt{\fr{N}{2(N+1)}} r$ for $x$.
    Note that the matrix $\GOE_{N-1}(N^{-1})$ therein is defined with typical spectral radius $\sqrt{2}$, while our $\GOE(N)$ has spectral radius $2$.
\end{proof}

\begin{proof}[Proof of Lemma~\ref{lem:kacrice-fullspace-computation}]
    Throughout this proof, $C>0$ is a constant, uniform over $\bm \in \cS_\iota$, which may change from line to line.
    Let $\|\bm\|^2_N = q_\bm$ and $\<\bx,\bm\>_N= q_\bx$, so that $q_\bm,q_\bx \in [q_\ast - \iota, q_\ast + \iota]$.
    By Fact~\ref{fac:Km-form}, for $\bG \sim \GOE(N-2)$,
    \baln
        \EE \lt[
            |\det K(\bm)|
            \big|
            \nabla \cF_\sTAP(\bm) = \bzero
        \rt]
        &= \lt(\xi''(q_\bm) \cdot \fr{N-2}{N} \rt)^{(N-2)/2}
        \EE \lt[ \lt|
            \det \lt( \sqrt{\fr{N}{N-2}} r_\bm  \bI - \bG \rt)
        \rt|\rt] \\
        \text{where} \quad
        r_\bm &=
        (1-q_\bm) \xi''(q_\bm)^{1/2} + \fr{1}{(1-q_\bm) \xi''(q_\bm)^{1/2}}.
    \ealn
    By \eqref{eq:amp-works}, for $q_\bm \in [q_\ast-\iota, q_\ast+\iota]$, $r_\bm$ takes values in a compact subset of $(2,+\infty)$.
    By Lemma~\ref{lem:goe-det-precise},
    \[
        \EE \lt[
            |\det K(\bm)|
            \big|
            \nabla \cF_\sTAP(\bm) = \bzero
        \rt]
        \le C \exp(Nf_1(q_\bm)), \qquad
        f_1(q_\bm) := \fr12 \log \xi''(q_\bm) + \Phi(r_\bm).
    \]
    By \eqref{eq:amp-works},
    \[
        \sqrt{r_\bm^2 - 4} = \fr{1}{(1-q_\bm) \xi''(q_\bm)^{1/2}} - (1-q_\bm) \xi''(q_\bm)^{1/2}.
    \]
    So, $f_1$ simplifies to
    \[
        f_1(q_\bm) = \fr12 (1-q_\bm)^2 \xi''(q_\bm) - \log (1-q_\bm).
    \]
    On the other hand, by \eqref{eq:nabla-cF-sTAP},
    \[
        \nabla \cF_\sTAP(\bm)
        = \nabla \tH_{N,t}(\bm) + \xi'_t(q_\bx) \lt(\bx - \fr{q_\bx}{q_\bm} \bm \rt)
        - \lt((1-q_\bm) \xi''(q_\bm) + \fr{1}{1-q_\bm} - \fr{q_\bx \xi'_t(q_\bx)}{q_\bm} \rt) \bm.
    \]
    Since $\bx - \fr{q_\bx}{q_\bm} \bm$ is orthogonal to $\bm$, Fact~\ref{fac:nabla-tH-law} yields
    \[
        \varphi_{\nabla \cF_\sTAP(\bm)}(\bzero) \le C \exp(Nf_2(q_\bm,q_\bx))
    \]
    where
    \baln
        f_2(q_\bm,q_\bx)
        &= -\fr12 \log (2\pi \xi'_t(q_\bm)) - \fr{\xi'_t(q_\bx)^2}{2\xi'_t(q_\bm)}\lt(1 - \fr{q_\bx^2}{q_\bm}\rt) \\
        &- \fr{q_\bm}{2(\xi'_t(q_\bm) + q_\bm \xi''(q_\bm))} \lt((1-q_\bm) \xi''(q_\bm) + \fr{1}{1-q_\bm} - \fr{q_\bx \xi'_t(q_\bx)}{q_\bm} \rt)^2.
    \ealn
    Combining the above,
    \balnn
        \notag
        &\int_{\cS_\iota}
        \EE \lt[
            |\det K(\bm)|
            \big|
            \nabla \cF_\sTAP(\bm) = \bzero
        \rt]
        \varphi_{\nabla \cF_\sTAP(\bm)}(\bzero)
        ~\de^N (\bm) \\
        \label{eq:fullspace-complexity-2d-integral}
        &\le CN \int_{q_\ast-\iota}^{q_\ast+\iota} \int_{q_\ast-\iota}^{q_\ast+\iota}
        \exp\lt(N(f_1(q_\bm) + f_2(q_\bm,q_\bx) + f_3(q_\bm,q_\bx))\rt)
        ~\de q_\bx \de q_\bm.
    \ealnn
    Here $CN \exp(Nf_3(q_\bm,q_\bx))$ is a volumetric factor and
    \[
        f_3(q_\bm,q_\bx) = \fr12 + \fr12 \log (2\pi (q_\bm - q_\bx^2)).
    \]
    Let
    \baln
        F(q_\bm,q_\bx) &= f_1(q_\bm) + f_2(q_\bm,q_\bx) + f_3(q_\bm,q_\bx) \\
        &= \fr12 + \fr12 \log \fr{q_\bm - q_\bx^2}{\xi'_t(q_\bm) (1-q_\bm)^2}
        + \fr12 (1-q_\bm)^2 \xi''(q_\bm)
        - \fr{\xi'_t(q_\bx)^2}{2\xi'_t(q_\bm)}\lt(1 - \fr{q_\bx^2}{q_\bm}\rt) \\
        &- \fr{q_\bm}{2(\xi'_t(q_\bm) + q_\bm \xi''(q_\bm))} \lt((1-q_\bm) \xi''(q_\bm) + \fr{1}{1-q_\bm} - \fr{q_\bx \xi'_t(q_\bx)}{q_\bm} \rt)^2.
    \ealn
    To conclude, we will verify that $F(q_\ast,q_\ast) = 0$, $\nabla F(q_\ast,q_\ast) = 0$, and $F$ is $\Omega(1)$-strongly concave over $q_\bm,q_\bx \in [q_\ast - \iota, q_\ast + \iota]$.
    This will imply that the integral in \eqref{eq:fullspace-complexity-2d-integral} is $O(N^{-1})$ and finish the proof.

    Recall from \eqref{eq:def-qt} that $\xi'_t(q_\ast) = \fr{q_\ast}{1-q_\ast}$.
    The following identity will be used repeatedly in the calculations below to simplify the final term in $F$ and its derivatives:
    \[
        \fr{1}{\xi'_t(q_\ast) + q_\ast \xi''(q_\ast)}
        \lt((1-q_\ast) \xi''(q_\ast) + \fr{1}{1-q_\ast} - \xi'_t(q_\ast) \rt)
        = \fr{1-q_\ast}{q_\ast}.
    \]
    Using this, we verify that
    \baln
        F(q_\ast,q_\ast)
        &= \fr12 + \fr12 (1-q_\ast)^2 \xi''(q_\ast) - \fr{q_\ast}{2} - \fr{q_\ast}{2(\fr{q_\ast}{1-q_\ast} + q_\ast \xi''(q_\ast))} \lt((1-q_\ast) \xi''(q_\ast) + 1\rt)^2 \\
        &= \fr12 (1-q_\ast)^2 \xi''(q_\ast) + \fr{1-q_\ast}{2} - \fr{1-q_\ast}{2} \lt((1-q_\ast) \xi''(q_\ast) + 1\rt) = 0.
    \ealn
    We also calculate
    \baln
        \fr{\partial F}{\partial q_\bm}(q_\bm,q_\bx)
        &=\fr{1}{2(q_\bm - q_\bx^2)} - \fr{\xi''(q_\bm)}{2\xi'_t(q_\bm)} + \fr{1}{1-q_\bm}
        - (1-q_\bm) \xi''(q_\bm) + \fr12 (1-q_\bm)^2 \xi^{(3)}(q_\bm) \\
        &+ \fr{\xi'_t(q_\bx)^2 \xi''(q_\bm)}{2\xi'_t(q_\bm)^2} \lt(1 - \fr{q_\bx^2}{q_\bm}\rt)
        - \fr{\xi'_t(q_\bx)^2}{2\xi'_t(q_\bm)} \fr{q_\bx^2}{q_\bm^2} \\
        &-
        \fr{\xi'_t(q_\bm) - q_\bm \xi''(q_\bm) - q_\bm^2 \xi^{(3)}(q_\bm)}{2(\xi'_t(q_\bm) + q_\bm \xi''(q_\bm))^2}
        \lt((1-q_\bm) \xi''(q_\bm) + \fr{1}{1-q_\bm} - \fr{q_\bx \xi'_t(q_\bx)}{q_\bm} \rt)^2 \\
        &- \fr{q_\bm}{\xi'_t(q_\bm) + q_\bm \xi''(q_\bm)}
        \lt((1-q_\bm) \xi''(q_\bm) + \fr{1}{1-q_\bm} - \fr{q_\bx \xi'_t(q_\bx)}{q_\bm} \rt) \\
        &\qquad \times \lt(-\xi''(q_\bm) + (1-q_\bm) \xi^{(3)}(q_\bm) + \fr{1}{(1-q_\bm)^2} + \fr{q_\bx \xi'_t(q_\bx)}{q_\bm^2} \rt), \\
        \fr{\partial F}{\partial q_\bx}(q_\bm,q_\bx)
        &= - \fr{q_\bx}{q_\bm - q_\bx^2}
        - \fr{\xi'_t(q_\bx)\xi''(q_\bx)}{\xi'_t(q_\bm)}\lt(1 - \fr{q_\bx^2}{q_\bm}\rt)
        + \fr{q_\bx\xi'_t(q_\bx)^2}{q_\bm\xi'_t(q_\bm)}  \\
        &+ \fr{\xi'_t(q_\bx) + q_\bx \xi''(q_\bx)}{\xi'_t(q_\bm) + q_\bm \xi''(q_\bm)}
        \lt((1-q_\bm) \xi''(q_\bm) + \fr{1}{1-q_\bm} - \fr{q_\bx \xi'_t(q_\bx)}{q_\bm} \rt).
    \ealn
    Thus
    \baln
        \fr{\partial F}{\partial q_\bm}(q_\ast,q_\ast)
        &= \fr{1}{2q_\ast(1-q_\ast)} - \fr{(1-q_\ast)\xi''(q_\ast)}{2q_\ast} + \fr{1}{1-q_\ast}
        - (1-q_\ast) \xi''(q_\ast) + \fr12 (1-q_\ast)^2 \xi^{(3)}(q_\ast) \\
        &+ \fr{(1 - q_\ast) \xi''(q_\ast)}{2}
        - \fr{q_\ast}{2(1-q_\ast)}
        -\fr{(1-q_\ast)^2}{2q_\ast^2}
        \lt(\fr{q_\ast}{1-q_\ast} - q_\ast \xi''(q_\ast) - q_\ast^2 \xi^{(3)}(q_\ast)\rt) \\
        &- (1-q_\ast)
        \lt(-\xi''(q_\ast) + (1-q_\ast) \xi^{(3)}(q_\ast) + \fr{1}{(1-q_\ast)^2} + \fr{1}{1-q_\ast} \rt) = 0,
    \ealn
    and
    \[
        \fr{\partial F}{\partial q_\bx}(q_\ast,q_\ast)
        = - \fr{1}{1-q_\ast}
        - (1 - q_\ast) \xi''(q_\ast) + \xi'_t(q_\ast)
        + \lt((1-q_\ast) \xi''(q_\ast) + \fr{1}{1-q_\ast} - \xi'_t(q_\ast) \rt)
        = 0.
    \]
    By similar calculations, we find the following formulas for the second derivative.
    Let
    \[
        \Delta_0 = \xi''(q_\ast) - \fr{1}{(1-q_\ast)^2} \stackrel{\eqref{eq:amp-works}}{<} 0
    \]
    and
    \[
        \Delta_1 = \fr{(1-q_\ast)^3 \Delta_0^3 - q_\ast (1-q_\ast) \Delta_0^2}{2q_\ast^2 (1 + (1-q_\ast) \xi''(q_\ast))}, \qquad
        \Delta_2 = - \fr{(1-q_\ast)^2}{q_\ast} \Delta_0^2 + \Delta_0.
    \]
    Then
    \[
        \fr{\partial^2 F}{\partial q_\bm^2} (q_\ast,q_\ast) = \Delta_1 + \Delta_2, \qquad
        \fr{\partial^2 F}{\partial q_\bm \partial q_\bx} (q_\ast,q_\ast) = -\Delta_2, \qquad
        \fr{\partial^2 F}{\partial q_\bx^2} (q_\ast,q_\ast) = \Delta_2.
    \]
    It follows that
    \[
        \nabla^2 F(q_\ast,q_\ast) = \Delta_1 (1,0)^{\otimes 2} + \Delta_2 (1,-1)^{\otimes 2} \preceq -CI_2
    \]
    for some $C>0$ depending only on $\xi$.
    Since $\nabla^2 F$ is clearly locally Lipschitz around $(q_\ast,q_\ast)$, $\nabla^2 F(q_\bm,q_\bx) \preceq -CI_2/2$ for all $q_\bm,q_\bx \in [q_\ast - \iota, q_\ast + \iota]$ for suitably small $\iota$.
    This concludes the proof.
\end{proof}

\subsection{Algorithmic guarantees and Lipschitz continuity of correction}
\label{sec:LipCorrection}

In this subsection, we prove Proposition~\ref{ppn:additional-event-pre-conditioning}.

\begin{proof}[Proof of Proposition~\ref{ppn:additional-event-pre-conditioning}\ref{itm:crit-band}]
    By \eqref{eq:amp-dominate-gibbs},
    \[
        \mu_t(\Band(\bm^\sAMP,[q_\ast - \iota/2, q_\ast + \iota/2])) \ge 1-e^{-cN}.
    \]
    Since $\tnorm{\bm^\sAMP - \bm^\sTAP}_N \le \iota/2$, we have
    \[
        \Band(\bm^\sTAP,[q_\ast - \iota, q_\ast + \iota])
        \supseteq
        \Band(\bm^\sAMP,[q_\ast - \iota/2, q_\ast + \iota/2]).
    \]
\end{proof}

\begin{proof}[Proof of Proposition~\ref{ppn:additional-event-pre-conditioning}\ref{itm:gd-near-crit}]
    On $K_N$, the maps $\bm \mapsto \lambda_{\max}(\nabla^2 \cF_\sTAP(\bm))$ and $\bm \mapsto \lambda_{\min}(\nabla^2 \cF_\sTAP(\bm_+))$ are $O(1)$-Lipschitz (over $\norm{\bm}_N \le 1-\eps$, for any $\eps>0$).
    Combined with \eqref{eq:mtap-well-conditioned}, this implies
    \[
        \spec(\nabla^2 \cF_\sTAP(\bm)) \subseteq \lt[-2C^\spec_{\max}, -\fr12 C^\spec_{\min}\rt],
        \qquad \forall \tnorm{\bm - \bm^\sTAP}_N \le \iota.
    \]
    Thus $\nabla^2 \cF_\sTAP$ is strongly concave and well-conditioned in the convex region $\tnorm{\bm - \bm^\sTAP}_N \le \iota$.
    It is classical (see e.g. \cite{nesterov2003introductory}) that for suitable $\eta > 0$, gradient descent
    \[
        \bu^{k+1} = \bu^k - \eta \nabla \cF_\sTAP(\bu^k)
    \]
    initialized from $\bu^0$ in this region satisfies
    \[
        \tnorm{\bu^k - \bm^\sTAP}_N
        \le (1-\eps)^k \tnorm{\bu^0 - \bm^\sTAP}_N
        \le \iota (1-\eps)^k.
    \]
    for some $\eps > 0$.
    In particular $\bu^0 = \bm^\sAMP$ is in this region.
    Recalling $\bm^\GD = \bu^{K_\GD(N)}$ and $K_\GD(N) = \lfloor K^*_\GD \log N\rfloor$, we conclude
    \[
        \tnorm{\bm^\GD - \bm^\sTAP}_N
        \le \iota (1-\eps)^{K_\GD(N)} \le N^{-10}
    \]
    for suitably large $K^*_\GD$.
    This implies part~\ref{itm:gd-near-crit}.
\end{proof}
We now turn to the proof of part \ref{itm:LipCorr}. 
Recall from below \eqref{eq:GammaStarCorr} that $\bI_{N-1}$ denotes the identity operator on $\Ts_\bm$; we sometimes write this as $\bI_{N-1}^\bm$ to emphasize the dependence on $\bm$.
\begin{lem}
    \label{lem:mtap-looks-like-goe}
    Let $\gamma_\ast = (1-q_\ast)^{-1} + (1-q_\ast) \xi''(q_\ast)$.
    Let $\iota, \bm^\sTAP \in \cS_\iota$ be as in Proposition~\ref{ppn:local-concavity-and-conditioning}.
    There exists $\iota' = o_\iota(1)$ such that with probability $1-e^{-cN}$, ($\bm^\sTAP$ is defined and)
    \beq
        \label{eq:event-bA2-spec}
        \spec ~\bA^{(2)}(\bm^\sTAP) \subseteq [-(2+\iota')\sqrt{\xi''(q_\ast)},(2+\iota')\sqrt{\xi''(q_\ast)}]
    \eeq
    and
    \beq
        \label{eq:event-bA2-trinv}
        \lt|\fr1N \Tr \lt((\gamma_\ast \bI_{N-1} - \bA^{(2)}(\bm^\sTAP))^{-1}\rt) - (1-q_\ast) \rt| \le \iota'.
    \eeq
\end{lem}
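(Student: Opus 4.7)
The plan is to reduce to the case of a deterministic $\bm \in \cS_\iota$ via the conditioning bound of Proposition~\ref{ppn:local-concavity-and-conditioning}\ref{itm:conditioning-bound}, and then exploit the explicit Gaussian conditional law of $\nabla^2 \tH_{N,t}(\bm)$ given $\nabla\cF_\sTAP(\bm)=\bzero$ to recognize $\bA^{(2)}(\bm)$ on $\Ts_\bm$ as a scaled GOE matrix. Let $\cB$ denote the event that $\bm^\sTAP$ is well defined yet either \eqref{eq:event-bA2-spec} or \eqref{eq:event-bA2-trinv} fails. Splitting by $\cE$ and applying the conditioning bound with $X=\ind\{\cB\}$,
\[
\PP(\cB) \le \PP(\cE^c) + C_\delta \sup_{\bm \in \cS_\iota}\PP\big(\cB \,\big|\, \nabla\cF_\sTAP(\bm)=\bzero\big)^{1/(1+\delta)},
\]
and on $\cE$ the critical point is unique in $\cS_\iota$, so under the conditioning $\bm^\sTAP = \bm$ almost surely; it suffices to prove the conditional probability is $e^{-cN}$.

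Fix $\bm \in \cS_\iota$ with $q_\bm = \norm{\bm}_N^2$. Conditioning on $\nabla\cF_\sTAP(\bm)=\bzero$ is equivalent, by \eqref{eq:nabla-cF-sTAP}, to conditioning on $\nabla\tH_{N,t}(\bm)=\bz$ for a specific $\bz$. By Lemma~\ref{lem:nabla2-H-conditional-law}, $\nabla^2\tH_{N,t}(\bm)$ then decomposes as an explicit rank-$\le 3$ deterministic correction (a combination of $\bm\bz^\top+\bz\bm^\top$ and $\bm\bm^\top$) plus $\bM$. Since every term in the deterministic part contains $\bm$ as a factor, projecting left and right by $P_\bm^\perp$ annihilates it, so
\[
\bA^{(2)}(\bm) = P_\bm^\perp \nabla^2\tH_{N,t}(\bm)P_\bm^\perp = P_\bm^\perp \bM P_\bm^\perp
\]
conditionally. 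Reading off the variances from \eqref{eq:def-bM} in any orthonormal basis $(\be_2,\ldots,\be_N)$ of $\Ts_\bm$, the entries are independent centered Gaussians of variance $2\xi''(q_\bm)/N$ on the diagonal and $\xi''(q_\bm)/N$ off-diagonal, so $\bA^{(2)}(\bm) \stackrel{d}{=} \sqrt{\xi''(q_\bm)(N-1)/N}\cdot \bG$ with $\bG \sim \GOE(N-1)$.

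Under \eqref{eq:amp-works} one has
\[
\frac{\gamma_\ast}{\sqrt{\xi''(q_\ast)}} = \frac{1}{(1-q_\ast)\sqrt{\xi''(q_\ast)}} + (1-q_\ast)\sqrt{\xi''(q_\ast)} > 2
\]
strictly (AM--GM with equality forbidden by \eqref{eq:amp-works}). Since $|q_\bm-q_\ast| \le \iota$, the edge $2\sqrt{\xi''(q_\bm)(N-1)/N}$ differs from $2\sqrt{\xi''(q_\ast)}$ by $O(\iota) + o_N(1)$, and the classical bound $\tnorm{\bG}_\op = 2 + o(1)$ with probability $1-e^{-cN}$ (e.g.~Ledoux's Gaussian concentration) gives \eqref{eq:event-bA2-spec} with $\iota'=o_\iota(1)$. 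For \eqref{eq:event-bA2-trinv}, the identity
\[
\gamma_\ast^2 - 4\xi''(q_\ast) = \lt(\fr{1}{1-q_\ast} - (1-q_\ast)\xi''(q_\ast)\rt)^2
\]
combined with the semicircle Stieltjes transform $m_{\semic}(z) = (z-\sqrt{z^2-4})/2$ gives $\frac{1}{\sqrt{\xi''(q_\ast)}} m_{\semic}(\gamma_\ast/\sqrt{\xi''(q_\ast)}) = 1-q_\ast$ by direct substitution. Gaussian concentration applied to the $O(1)$-Lipschitz map $\bG \mapsto \frac{1}{N-1}\Tr((z\bI-\bG)^{-1})$ for $z$ bounded away from $2$, together with the $O(\iota)$ perturbation from replacing $q_\ast$ by $q_\bm$ in the scaling, yields \eqref{eq:event-bA2-trinv} with probability $1-e^{-cN}$.

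Combining, for every fixed $\bm \in \cS_\iota$ the conditional probability of $\cB$ is at most $e^{-cN}$; the conditioning bound then lifts this to $\PP(\cB) \le e^{-c'N}$. The main technical point will be verifying that the variances listed in \eqref{eq:def-bM} for indices $i,j \ge 2$ produce exactly the GOE covariance structure after the $P_\bm^\perp$ projection (not merely up to lower-order error), so that the reduction to an unconditioned $\GOE(N-1)$ is clean and the spectral/Stieltjes concentration can be quoted off the shelf.
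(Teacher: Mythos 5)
Your proof is correct and follows essentially the same route as the paper's: reduce to a fixed $\bm\in\cS_\iota$ via Proposition~\ref{ppn:local-concavity-and-conditioning}\ref{itm:conditioning-bound}, identify the conditional law of $\bA^{(2)}(\bm)$ as a scaled $\GOE(N-1)$, use the AM--GM identity $\gamma_\ast-2\sqrt{\xi''(q_\ast)}>0$ (strict by \eqref{eq:amp-works}), and conclude by spectral-norm concentration and concentration of the resolvent trace around the semicircle integral (the paper cites Guionnet--Zeitouni where you invoke Lipschitz Gaussian concentration, which requires the same standard regularization near the pole). One caveat, shared with the paper's own write-up: under the planted law $H_N(\bsig)=N\xi(\<\bx,\bsig\>_N)+\tH_N(\bsig)$, so $\bA^{(2)}(\bm)=P_\bm^\perp\nabla^2 H_N(\bm)P_\bm^\perp$ carries an additional rank-one term $\xi''(q_\bx)P_\bm^\perp\bx\bx^\top P_\bm^\perp/N$ of operator norm $\approx\xi''(q_\ast)(1-q_\ast)$ that your two-sided projection does \emph{not} annihilate; it is harmless because \eqref{eq:amp-works} makes this spike subcritical (strength $\xi''(q_\ast)(1-q_\ast)<\sqrt{\xi''(q_\ast)}$, so no outlier eigenvalue) and a rank-one perturbation shifts the normalized resolvent trace by only $O(1/N)$, but this deserves a sentence rather than being absorbed into the claimed equality.
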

\begin{proof}
    Let $\cE_\spec$ be the event that \eqref{eq:event-bA2-spec}, \eqref{eq:event-bA2-trinv} both hold, and let $\cE$ be as in Proposition~\ref{ppn:local-concavity-and-conditioning}.
    By Proposition~\ref{ppn:local-concavity-and-conditioning}\ref{itm:conditioning-bound} with $\delta = 1/2$,
    \baln
        \PP(\cE_\spec^c)
        &\le \PP(\cE^c)
        + \PP(\cE_\spec^c \cap \cE) \\
        &\le e^{-cN}
        + C_{1/2}
        \sup_{\bm \in \cS_\iota}
        \PP \lt[
            \cE_\spec^c \cap \cE \big| \nabla \cF_\sTAP(\bm) = \bzero
        \rt]^{1/2}.
    \ealn
    We will show that this probability is $e^{-cN}$, uniformly in $\bm \in \cS_\iota$.
    Note that on $\cE$, we have deterministically $\bm^\sTAP = \bm$.

    Let $q_\bm = \norm{\bm}_N^2 = \in [q_\ast - \iota, q_\ast + \iota]$.
    One checks analogously to Fact~\ref{fac:derivative-laws} that conditional on $\nabla \cF_\sTAP(\bm) = \bzero$, we have $\bA^{(2)}(\bm) =_d \sqrt{\xi''(q_\bm) \fr{N-1}{N}} \bG$, $\bG \sim \GOE(N-1)$.
    It is classical that $\spec(\bG) \subseteq [-2-\iota,2+\iota]$ with probability $1-e^{-cN}$, so \eqref{eq:event-bA2-spec} holds with conditional probability $1-e^{-cN}$.
    Note that by \eqref{eq:amp-works},
    \beq
        \label{eq:gamma-ast-bounded-from-bulk}
        \gamma_\ast - 2 \sqrt{\xi''(q_\ast)}
        = \fr{1}{1-q_\ast} \lt(1 - (1-q_\ast) \xi''(q_\ast)^{1/2}\rt)^2 > 0.
    \eeq
    So, for small enough $\iota$, when \eqref{eq:event-bA2-spec} holds the matrix $\gamma_\ast \bI_{N-1} - \bA^{(2)}(\bm)$ is positive semidefinite with smallest eigenvalue bounded away from $0$.
    Recall the semicircle measure
    \beq
        \label{eq:def-semicircle}
        \rho_\smc(\lambda) = \fr{1}{2\pi} \sqrt{4-\lambda^2} ~\de \lambda.
    \eeq
    Applying \cite[Theorem 1.1(b)]{guionnet2000concentration} as in the proof of Lemma~\ref{lem:goe-concentration} shows that with probability $1-e^{-cN}$,
    \[
        \lt|\fr1N \Tr\lt((\gamma_\ast \bI_{N-1} - \bA^{(2)}(\bm))^{-1} \rt)
        - \int \fr{\rho_\smc(\de \lambda)}{\gamma_\ast - \sqrt{\xi''(q_\bm)} \lambda}\rt|
        \le \iota.
    \]
    This integral evaluates as
    \[
        \int \fr{\rho_\smc(\de \lambda)}{\gamma_\ast - \sqrt{\xi''(q_\ast)} \lambda} + o_\iota(1)
        = 1-q_\ast + o_\iota(1).
    \]
    Thus, for suitable $\iota'$, \eqref{eq:event-bA2-trinv} holds with conditional probability $1-e^{-cN}$, as desired.
\end{proof}
\begin{lem}
    \label{lem:bA2-bA3-lipschitz}
    Suppose the event $K_N$ in Proposition~\ref{ppn:gradients-bounded} holds.
    For any $\delta > 0$, there exists $L$ such that for all $\delta \le \norm{\bm_1}_N, \norm{\bm_2}_N \le 1$, (treating $\bA^{(2)}(\bm_i)$ as a matrix in $\bbR^{N\times N}$, and $\bA^{(3)}(\bm_i)$ as a tensor in $(\bbR^N)^{\otimes 3}$)
    \baln
        \tnorm{\bA^{(2)}(\bm_1) - \bA^{(2)}(\bm_2)}_{\op,N}
        &\le L \tnorm{\bm_1-\bm_2}_N, \\
        \tnorm{\bA^{(3)}(\bm_1) - \bA^{(3)}(\bm_2)}_{\op,N}
        &\le L \tnorm{\bm_1-\bm_2}_N.
    \ealn
\end{lem}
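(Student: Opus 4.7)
}
The plan is to view each $\bA^{(k)}(\bm)$ as the ambient derivative $\nabla^k H_N(\bm)$ projected, in each of its $k$ modes, onto the tangent space $\Ts_\bm$. Writing $P_\bm := \bI_N - \bm\bm^\top/\|\bm\|_2^2$ for the orthogonal projector onto $\Ts_\bm$, we have
\[
\bA^{(2)}(\bm) = P_\bm\,\nabla^2 H_N(\bm)\,P_\bm,\qquad
\bA^{(3)}(\bm) = \nabla^3 H_N(\bm) \circ \bigl(P_\bm,P_\bm,P_\bm\bigr),
\]
meaning that $\bA^{(3)}(\bm)_{i_1i_2i_3} = \sum_{j_1j_2j_3}(P_\bm)_{i_1j_1}(P_\bm)_{i_2j_2}(P_\bm)_{i_3j_3}\,\partial_{j_1}\partial_{j_2}\partial_{j_3}H_N(\bm)$. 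So it suffices to Lipschitz-control (i) the ambient derivatives $\bm\mapsto\nabla^k H_N(\bm)$ in $\|\cdot\|_{\op,N}$ and (ii) the projector $\bm\mapsto P_\bm$ in $\|\cdot\|_{\op}$, and then combine these via a telescoping decomposition.

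For (i), on $K_N$ we have $\|\nabla^{k+1}H_N(\bm)\|_{\op,N}\le C_{k+1}$ uniformly over $\|\bm\|_N\le 1$. For any unit vectors $\bsig^1,\dots,\bsig^k$ in $\|\cdot\|_N$, setting $\bm_t = \bm_2 + t(\bm_1-\bm_2)$, the fundamental theorem of calculus applied to the multilinear form $(\bsig^1,\dots,\bsig^k)\mapsto \la \nabla^k H_N(\bm), \bsig^1\otimes\cdots\otimes\bsig^k\ra$ gives
\[
\bigl\la \nabla^k H_N(\bm_1) - \nabla^k H_N(\bm_2), \bsig^1\otimes\cdots\otimes\bsig^k\bigr\ra
= \int_0^1 \bigl\la \nabla^{k+1}H_N(\bm_t),\, (\bm_1-\bm_2)\otimes\bsig^1\otimes\cdots\otimes\bsig^k\bigr\ra\, \de t,
\]
so taking a supremum over $\|\bsig^i\|_N\le 1$, dividing by $N$, and using the definition of $\|\cdot\|_{\op,N}$ yields $\|\nabla^k H_N(\bm_1)-\nabla^k H_N(\bm_2)\|_{\op,N}\le C_{k+1}\|\bm_1-\bm_2\|_N$ for $k=2,3$. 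For (ii), direct differentiation of $\bm\mapsto \bm\bm^\top/\|\bm\|_2^2$ shows that on $\{\|\bm\|_N\ge \delta\}$, the map $\bm\mapsto P_\bm$ is Lipschitz in operator norm, with
\[
\|P_{\bm_1} - P_{\bm_2}\|_{\op} \le \frac{2}{\delta}\,\|\bm_1-\bm_2\|_N.
\]

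To combine these, telescope each projector one mode at a time and the ambient derivative once. For $\bA^{(2)}$,
\[
\bA^{(2)}(\bm_1) - \bA^{(2)}(\bm_2)
= (P_{\bm_1}-P_{\bm_2})\nabla^2 H_N(\bm_1)P_{\bm_1}
+ P_{\bm_2}\bigl[\nabla^2 H_N(\bm_1)-\nabla^2 H_N(\bm_2)\bigr]P_{\bm_1}
+ P_{\bm_2}\nabla^2 H_N(\bm_2)(P_{\bm_1}-P_{\bm_2}),
\]
and since $\|\nabla^2 H_N(\bm)\|_{\op,N}=\|\nabla^2 H_N(\bm)\|_{\op}\le C_2$, each summand is $\le O((C_2/\delta+C_3)\|\bm_1-\bm_2\|_N)$. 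For $\bA^{(3)}$, use the analogous three-term telescope
\[
P_{\bm_1}^{\otimes 3} - P_{\bm_2}^{\otimes 3}
= (P_{\bm_1}-P_{\bm_2})\otimes P_{\bm_1}\otimes P_{\bm_1}
+ P_{\bm_2}\otimes(P_{\bm_1}-P_{\bm_2})\otimes P_{\bm_1}
+ P_{\bm_2}\otimes P_{\bm_2}\otimes(P_{\bm_1}-P_{\bm_2})
\]
acting on $\nabla^3 H_N(\bm_1)$, plus the one-term swap $P_{\bm_2}^{\otimes 3}\bigl[\nabla^3 H_N(\bm_1)-\nabla^3 H_N(\bm_2)\bigr]$. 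The key bookkeeping point is that applying any operator $Q$ with $\|Q\|_{\op}\le\lambda$ in a single mode of a $k$-tensor cannot increase $\|\cdot\|_{\op,N}$ beyond a factor $\lambda$, because in the sup defining $\|\cdot\|_{\op,N}$ it amounts to replacing $\bsig^i$ by $Q^\top\bsig^i$ with $\|Q^\top\bsig^i\|_N\le\lambda\|\bsig^i\|_N$. Combined with (i)--(ii) this gives $\|\bA^{(3)}(\bm_1)-\bA^{(3)}(\bm_2)\|_{\op,N}\le O((C_3/\delta+C_4)\|\bm_1-\bm_2\|_N)$.

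There is no substantive obstacle: the proof is a straightforward product-rule/mean-value argument, and the only care needed is to unwrap the tensor operator norm $\|\cdot\|_{\op,N}$ correctly in the 3-tensor case to confirm that each mode-projection is a contraction in this norm. The final Lipschitz constant $L$ depends only on $\delta,C_2,C_3,C_4$.
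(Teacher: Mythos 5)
Your proof is correct and follows essentially the same route as the paper: write $\bA^{(k)}(\bm)$ as the ambient derivative $\nabla^k H_N(\bm)$ with the projector $P_\bm^\perp$ applied in each mode, telescope the difference one factor at a time, and control the ambient derivatives via $K_N$ and the projector via $\norm{\bm_i}_N \ge \delta$ (the paper only writes out the $\bA^{(2)}$ case and declares $\bA^{(3)}$ analogous, which you spell out). The only quibble is that your claimed constant $2/\delta$ for $\tnorm{P_{\bm_1}-P_{\bm_2}}_{\op}$ is optimistic (a careful bound gives a higher power of $1/\delta$), but this is immaterial since the lemma only asserts existence of some $L=L(\delta)$.
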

\begin{proof}
    Let $\Proj_\bm^\perp$ denote the projection operator to the orthogonal complement of $\bm$.
    Then $\bA^{(2)}(\bm) = P_\bm^\perp \nabla^2 H_N(\bm) P_\bm^\perp$.
    So,
    \baln
        \tnorm{\bA^{(2)}(\bm_1) - \bA^{(2)}(\bm_2)}_{\op,N}
        &\le
        \tnorm{P_{\bm_1}^\perp \nabla^2 H_N(\bm_1) P_{\bm_1}^\perp
        - P_{\bm_1}^\perp \nabla^2 H_N(\bm_1) P_{\bm_2}^\perp}_{\op,N} \\
        &+
        \tnorm{P_{\bm_1}^\perp \nabla^2 H_N(\bm_1) P_{\bm_2}^\perp
        - P_{\bm_2}^\perp \nabla^2 H_N(\bm_1) P_{\bm_2}^\perp}_{\op,N} \\
        &+
        \tnorm{P_{\bm_2}^\perp \nabla^2 H_N(\bm_1) P_{\bm_2}^\perp
        - P_{\bm_2}^\perp \nabla^2 H_N(\bm_2) P_{\bm_2}^\perp}_{\op,N} \\
        &\le 2\tnorm{P_{\bm_1}^\perp - P_{\bm_2}^\perp}_{\op,N} \max(\tnorm{\nabla^2 H_N(\bm_1)}_{\op,N},\tnorm{\nabla^2 H_N(\bm_2)}_{\op,N}) \\
        &+ \tnorm{\nabla^2 H_N(\bm_1) - \nabla^2 H_N(\bm_2)}_{\op,N}
    \ealn
    On event $K_N$,
    \baln
        \tnorm{\nabla^2 H_N(\bm_1)}_{\op,N},\tnorm{\nabla^2 H_N(\bm_2)}_{\op,N} &\le C_2, \\
        \tnorm{\nabla^2 H_N(\bm_1) - \nabla^2 H_N(\bm_2)}_{\op,N} &\le C_3 \norm{\bm_1 - \bm_2}_N.
    \ealn
    Finally, for a constant $C_\delta$ depending on $\delta$,
    \baln
        \tnorm{P_{\bm_1}^\perp - P_{\bm_2}^\perp}_{\op,N}
        &= \norm{
            \fr{\bm_1\bm_1^\top}{\norm{\bm_1}^2}
            - \fr{\bm_2\bm_2^\top}{\norm{\bm_2}^2}
        }_{\op,N} \\
        &\le \norm{
            \fr{\bm_1\bm_1^\top}{\norm{\bm_1}^2}
            - \fr{\bm_1\bm_1^\top}{\norm{\bm_2}^2}
        }_{\op,N} + \norm{
            \fr{\bm_1\bm_1^\top}{\norm{\bm_2}^2}
            - \fr{\bm_2\bm_2^\top}{\norm{\bm_2}^2}
        }_{\op,N}
        \le C_\delta \norm{\bm_1-\bm_2}_N.
    \ealn
    This proves the inequality for $\bA^{(2)}$.
    The proof for $\bA^{(3)}$ is analogous.
\end{proof}

\begin{lem}
    There exists $L>0$ such that with probability $1-e^{-cN}$, for all $\bm_1,\bm_2 \in \Ball_N(\bm^\sTAP,\iota)$
    (treating $\bQ(\bm_i)$ as a matrix in $\bbR^{N\times N}$)
    \baln
        \tnorm{\bQ(\bm_1)}_{\op,N} &\le L\,, \\
        \tnorm{\bQ(\bm_1) - \bQ(\bm_2)}_{\op,N} &\le L \norm{\bm_1 - \bm_2}_N\,.
    \ealn
\end{lem}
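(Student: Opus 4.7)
The strategy is to (i) show a uniform spectral gap between $\gamma_{\ast,N}(\bm)$ and $\lambda_{\max}(\bA^{(2)}(\bm))$ on the whole ball $\Ball_N(\bm^\sTAP,\iota)$, which immediately yields the first bound; and (ii) establish Lipschitz continuity of $\bm\mapsto\gamma_{\ast,N}(\bm)$ via the implicit function theorem, and then conclude the second bound via the resolvent identity. Throughout, we work on the intersection of $K_N$, the event of Proposition~\ref{ppn:local-concavity-and-conditioning}, and the event of Lemma~\ref{lem:mtap-looks-like-goe}, which has probability $1-e^{-cN}$.

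For step (i), Lemma~\ref{lem:mtap-looks-like-goe} gives $\gamma_{\ast,N}(\bm^\sTAP)=\gamma_\ast+o_\iota(1)$ and $\lambda_{\max}(\bA^{(2)}(\bm^\sTAP))\le (2+o_\iota(1))\sqrt{\xi''(q_\ast)}$; the crucial gap
\[
    \gamma_\ast-2\sqrt{\xi''(q_\ast)}=\tfrac{1}{1-q_\ast}\bigl(1-(1-q_\ast)\sqrt{\xi''(q_\ast)}\bigr)^{2}>0
\]
is strict by condition \eqref{eq:amp-works}. Lemma~\ref{lem:bA2-bA3-lipschitz} then shows that $\lambda_{\max}(\bA^{(2)}(\bm))\le \lambda_{\max}(\bA^{(2)}(\bm^\sTAP))+L_1\iota$ on the ball. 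Provided we also control $\gamma_{\ast,N}(\bm)$, the matrix $\gamma_{\ast,N}(\bm)\bI_{N-1}-\bA^{(2)}(\bm)$ has smallest eigenvalue uniformly bounded below by some constant $c_0>0$, whence $\|\bQ(\bm)\|_{\op,N}\le c_0^{-1}$.

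For step (ii), define $F(\gamma,\bm)=\tfrac{1}{N}\Tr\bigl((\gamma\bI_{N-1}-\bA^{(2)}(\bm))^{-1}\bigr)-(1-\tnorm{\bm}_N^2)$; by definition $F(\gamma_{\ast,N}(\bm),\bm)=0$. Since the spectrum of $\bA^{(2)}(\bm)$ lies in a bounded interval and $\gamma_{\ast,N}(\bm)$ is bounded (near $\gamma_\ast$) with the gap above, each eigenvalue $\lambda_i$ of $\bA^{(2)}(\bm)$ satisfies $c_0\le \gamma_{\ast,N}(\bm)-\lambda_i\le c_1$ uniformly, so
\[
    \partial_\gamma F(\gamma_{\ast,N}(\bm),\bm)=-\tfrac{1}{N}\sum_i(\gamma_{\ast,N}(\bm)-\lambda_i)^{-2}\in[-c_0^{-2},-c_1^{-2}],
\]
hence is bounded away from $0$. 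The dependence of $F$ on $\bm$ enters through $\bA^{(2)}(\bm)$ and $\tnorm{\bm}_N^2$, both Lipschitz (the former by Lemma~\ref{lem:bA2-bA3-lipschitz}); using $\partial_{\bm}\Tr(\gamma\bI-\bA^{(2)})^{-1}$ expressed via $\Tr\bigl((\gamma\bI-\bA^{(2)})^{-2}\partial_{\bm}\bA^{(2)}\bigr)$ together with $\|\bQ(\bm)\|_{\op}\le c_0^{-1}$ shows $\|\partial_{\bm} F\|_N$ is uniformly bounded. The implicit function theorem then gives $|\gamma_{\ast,N}(\bm_1)-\gamma_{\ast,N}(\bm_2)|\le L_\gamma\tnorm{\bm_1-\bm_2}_N$. (A small remark: $\gamma_{\ast,N}$ is defined on $\Ball_N(\bm^\sTAP,\iota)$ for all small $\iota$ because the gap persists throughout the ball by the continuity in (i).)

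Finally, using the resolvent identity with $\bM_i:=\gamma_{\ast,N}(\bm_i)\bI-\bA^{(2)}(\bm_i)$ (after extending the operators to $\bbR^N$, e.g.\ by $P_{\bm_i}^{\perp}$, whose projectors are themselves Lipschitz by the computation at the end of the proof of Lemma~\ref{lem:bA2-bA3-lipschitz}),
\[
    \bQ(\bm_1)-\bQ(\bm_2)=\bQ(\bm_1)\bigl(\bM_2-\bM_1\bigr)\bQ(\bm_2),
\]
so combining the uniform bound $\|\bQ(\bm_i)\|_{\op,N}\le c_0^{-1}$ with $\|\bM_2-\bM_1\|_{\op,N}\le (L_\gamma+L_1)\tnorm{\bm_1-\bm_2}_N$ from the previous two steps yields the desired Lipschitz estimate with $L=\max(c_0^{-1},\,c_0^{-2}(L_\gamma+L_1))$. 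The main obstacle in this plan is the control of $\gamma_{\ast,N}(\bm)$ via the implicit function theorem, since it requires quantitative lower bounds on $|\partial_\gamma F|$ that rely on the spectral gap established in step (i); once that gap is in hand, both assertions follow from fairly standard perturbative arguments.
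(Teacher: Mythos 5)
Your proposal is correct and follows essentially the same route as the paper: uniform localization of $\gamma_{\ast,N}(\bm)$ near $\gamma_\ast$ and the spectral gap \eqref{eq:gamma-ast-bounded-from-bulk} via Lemma~\ref{lem:mtap-looks-like-goe} and Lemma~\ref{lem:bA2-bA3-lipschitz}, Lipschitz continuity of $\gamma_{\ast,N}$ from the well-conditioned defining trace equation, and then the resolvent identity. The only cosmetic difference is that you package the Lipschitz bound on $\gamma_{\ast,N}$ as an implicit function theorem argument, whereas the paper manipulates the trace identity directly using a rotation operator $\bR$ between $\Ts_{\bm_1}$ and $\Ts_{\bm_2}$ to align the tangent spaces -- a bookkeeping step your parenthetical about extending via $P_{\bm_i}^\perp$ handles a bit tersely but adequately.
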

\begin{proof}
    Suppose $K_N$ holds and \eqref{eq:event-bA2-spec}, \eqref{eq:event-bA2-trinv} from Lemma~\ref{lem:mtap-looks-like-goe} hold.
    Then, for some $\iota'' = o_\iota(1)$ and all $\bm \in \Ball_N(\bm^\sTAP,\iota)$,
    \beq
        \label{eq:event-bA2-spec2}
        \spec ~\bA^{(2)}(\bm) \subseteq [-(2+\iota'')\sqrt{\xi''(q_\ast)},(2+\iota'')\sqrt{\xi''(q_\ast)}]
    \eeq
    and
    \beq
        \label{eq:event-bA2-trinv2}
        \lt|\fr1N \Tr \lt((\gamma_\ast \bI_{N-1} - \bA^{(2)}(\bm))^{-1}\rt) - (1-q_\ast) \rt| \le \iota''.
    \eeq
    When \eqref{eq:event-bA2-spec2} holds, the calculation \eqref{eq:gamma-ast-bounded-from-bulk} shows $\gamma_\ast$ is bounded away from $\spec~\bA^{(2)}(\bm)$.
    Thus,
    \[
        \gamma \mapsto \fr1N \Tr\lt((\gamma \bI_{N-1} - \bA^{(2)}(\bm))^{-1}\rt)
    \]
    has derivative $\Omega(1)$ in a neighborhood of $\gamma_\ast$.
    It follows from \eqref{eq:event-bA2-trinv2} that $\gamma_{\ast,N}(\bm) = \gamma_\ast + o_\iota(1)$ uniformly for all $\bm \in \Ball_N(\bm^\sTAP,\iota)$.
    This is also bounded away from $\spec~\bA^{(2)}(\bm)$, so
    \[
        \tnorm{\bQ(\bm)}_{\op,N}
        = \tnorm{(\gamma_{\ast,N}(\bm) \bI_N - \bA^{(2)}(\bm))^{-1}}_{\op,N}
    \]
    is bounded.
    Let $\bm_1,\bm_2 \in \Ball_N(\bm^\sTAP,\iota)$. 
    There exists a rotation operator $\bR$ from $\Ts_{\bm_1}$ to $\Ts_{\bm_2}$ such that $\norm{\bR - \bI_{N-1}^{\bm_1}}_{\op,N} \le \norm{\bm_1-\bm_2}_N$.
    Recall $q_\bm = \norm{\bm}_N^2$. 
    The definition of $\gamma_{\ast,N}(\bm)$ implies
    \baln
        q_{\bm_2} - q_{\bm_1}
        &= \fr1N \Tr\lt(
            (\gamma_{\ast,N}(\bm_1) \bI_{N-1}^{\bm_1} - \bA^{(2)}(\bm_1))^{-1}
            - (\gamma_{\ast,N}(\bm_2) \bI_{N-1}^{\bm_1} - \bA^{(2)}(\bm_2))^{-1}
        \rt) \\
        &= \fr1N \Tr\bigg(
            \bQ(\bm_1)
            \lt(
                (\gamma_{\ast,N}(\bm_2) - \gamma_{\ast,N}(\bm_1)) \bI_N
                - (\bA^{(2)}(\bm_1) - \bR^{-1}\bA^{(2)}(\bm_2)\bR)
            \rt) \\
            &\qquad \qquad \qquad 
            \bR^{-1}
            \bQ(\bm_2)
            \bR
        \bigg).
    \ealn
    Thus,
    \balnn
        \notag
        &(\gamma_{\ast,N}(\bm_1) - \gamma_{\ast,N}(\bm_2)) \fr1N \Tr(\bQ(\bm_1) \bR^{-1} \bQ(\bm_2) \bR) \\
        \label{eq:lipschitz-corr-Q-estimate}
        &= q_{\bm_2} - q_{\bm_1}
        + \fr1N \Tr\bigg(
            \bQ(\bm_1)
            (\bA^{(2)}(\bm_1) - \bR^{-1} \bA^{(2)}(\bm_2)\bR)
            \bR^{-1} \bQ(\bm_2) \bR
        \bigg).
    \ealnn
    Note that
    \[
        \tnorm{\bA^{(2)}(\bm_1) - \bR^{-1} \bA^{(2) \bR}(\bm_2)}_{\op,N}
        \le \tnorm{\bA^{(2)}(\bm_1)}_{\op,N} \tnorm{\bI_{N-1}^{\bm_1} - \bR}_{\op,N}
        + \tnorm{\bA^{(2)}(\bm_1) - \bA^{(2)}(\bm_2)}_{\op,N},
    \]
    and thus the absolute value of the right-hand side of \eqref{eq:lipschitz-corr-Q-estimate} is upper bounded by
    \[
        |q_{\bm_2} - q_{\bm_1}|
        + \tnorm{\bQ(\bm_1)}_{\op,N}
        \tnorm{\bQ(\bm_2)}_{\op,N}
        \tnorm{\bA^{(2)}(\bm_1) - \bR^{-1} \bA^{(2)}\bR(\bm_2)}_{\op,N}
        \le L \norm{\bm_1 - \bm_2}_N,
    \]
    by Lemma~\ref{lem:bA2-bA3-lipschitz}.
    As discussed above, $\bQ(\bm_1), \bR^{-1} \bQ(\bm_2) \bR \succeq c \bI_{N-1}^{\bm_1}$ for some constant $c>0$, so
    \[
        \fr1N \Tr(\bQ(\bm_1) \bR^{-1} \bQ(\bm_2) \bR)
        \ge \fr1N \Tr((c\bI_{N-1}^{\bm_1})^2)
        \ge c^2/2
    \]
    is bounded away from $0$.
    It follows that, after adjusting $L$,
    \[
        |\gamma_{\ast,N}(\bm_1) - \gamma_{\ast,N}(\bm_2)|
        \le L\norm{\bm_1-\bm_2}_N.
    \]
    Finally, (adjusting $L$ again)
    \baln
        &\tnorm{\bQ(\bm_1) - \bQ(\bm_2)}_{\op,N} \\
        &= \norm{
            \bQ(\bm_1)
            \lt(
                (\gamma_{\ast,N}(\bm_1) - \gamma_{\ast,N}(\bm_2)) \bI_N
                - (\bA^{(2)}(\bm_1) - \bA^{(2)}(\bm_2))
            \rt)
            \bQ(\bm_2)
        }_{\op,N} \\
        &\le \tnorm{\bQ(\bm_1)}_{\op,N}
        \tnorm{\bQ(\bm_2)}_{\op,N}
        \lt(
            |\gamma_{\ast,N}(\bm_1) - \gamma_{\ast,N}(\bm_2)|
            + \tnorm{\bA^{(2)}(\bm_1) - \bA^{(2)}(\bm_2)}_{\op,N}
        \rt) \\
        &\le
        L \tnorm{\bm_1 - \bm_2}_N.
    \ealn
\end{proof}

\begin{proof}[Proof of Proposition~\ref{ppn:additional-event-pre-conditioning}\ref{itm:LipCorr}]
    For any $\norm{\bv}_2 = 1$,
    \baln
        2|\la \bcorr(\bm_1) - \bcorr(\bm_2), \bv \ra|
        &= |\la \bA^{(3)}(\bm_1) \otimes \bv, \bQ(\bm_1) \otimes \bQ(\bm_1) \ra
        - \la \bA^{(3)}(\bm_2) \otimes \bv, \bQ(\bm_2) \otimes \bQ(\bm_2) \ra| \\
        &\le |\la (\bA^{(3)}(\bm_1) - \bA^{(3)}(\bm_2)) \otimes \bv, \bQ(\bm_1) \otimes \bQ(\bm_1) \ra| \\
        &+ |\la \bA^{(3)}(\bm_2) \otimes \bv, (\bQ(\bm_1) - \bQ(\bm_2)) \otimes \bQ(\bm_1) \ra| \\
        &+ |\la \bA^{(3)}(\bm_2) \otimes \bv, \bQ(\bm_2) \otimes (\bQ(\bm_1) - \bQ(\bm_2)) \ra|.
    \ealn
    By the previous two lemmas, this is bounded by $\fr{L}{\sqrt{N}} \norm{\bm_1 - \bm_2}_N$, for some $L>0$.
    Since this holds for all $\bv$, we have $\norm{\bcorr(\bm_1) - \bcorr(\bm_2)}_2 \le \fr{L}{2\sqrt{N}}$, and thus $\norm{\bcorr(\bm_1) - \bcorr(\bm_2)}_N \le \fr{L}{2N}$.
\end{proof}


%
%
\section{Local computation of magnetization: proof of Proposition \ref{ppn:local-barycenter}}
\label{sec:LocalComputation}

Recall that $H_{N,t}(\bsig)=H_N(\bsig)+\<\by_t,\bsig\>$ with $\by_t=t\bx +\bB_t$, and define
\[
    \bx^\perp = \bx - \fr{\<\bx,\bm\>_N}{\|\bm\|_N^2} \bm\,.
\]
as well as the bands  (for $\|\bm\|_N^2=q$)
\begin{align}
   \Band_*(\iota) &:= \Band(\bm,I(\iota)) \cap \Band(\bx,I(\iota))\, , \;\;\;\; I(\iota):=[q_*-\iota,q_*+\iota]\, ,\\
    D_N(a,b) &= \Big\{
        \bsig \in S_N :
        \<\bsig,\bm\>_N = aq,\, 
        \<\bsig,\bx\>_N = b
    \Big\}.
\end{align}
We recall the definition of truncated magnetization from   Proposition \ref{ppn:local-barycenter}:
\begin{align}
    \tbm_{2\iota}(\bm) = \fr{
        \int_{\Band_*(2\iota)}
        \bsig \exp(H_{N,t}(\bsig)) ~\mu_0(\de \bsig)
    }{
        \int_{\Band_*(2\iota)}
        \exp(H_{N,t}(\bsig)) ~\mu_0(\de \bsig)
    }.
\end{align}

In Sections \ref{sec:FirstLocal} and \ref{sec:SecondLocal} we will prove Proposition \ref{ppn:local-barycenter}. For the readers' convenience, we reproduce the statement below. 

\begin{repppn}{ppn:local-barycenter}
Define
$\bA_2:=\bA^{(2)}(\bm)$, $\bA_3:=\bA^{(3)}(\bm)$ as per Eq.~\eqref{eq:A2A3-Def}
and $\gamma_* = \gamma_{*,N}$ as per Eq.~\eqref{eq:GammaStarCorr}. Surther recall the definition
of $\cS_{\iota}$ on Eq.~\eqref{eq:SiotaDef}, namely $\cS_\iota := \lt\{
        \bm \in \bbR^N :
        |\<\bm,\bx\>_N - q_\ast|,
        |\<\bm,\bm\>_N - q_\ast| \le \iota
    \rt\}$.
Then we have, for appropriate constant $\delta,\iota > 0$,
\begin{align}\label{eq:corr}
& \sup_{\bm \in \cS_\iota} \EE \lt[\tnorm{\bm + \bcorr(\bm) - \tbm_{2\iota}(\bm)}_N^{2+\delta}
            \big | \nabla \cF_\sTAP(\bm) = \bzero \rt] \nonumber\\
    &= \sup_{\bm \in \cS_\iota} \EE\left[ N^{-1-\delta/2}\lt(\left.\sum_{i=1}^{N} \left(\big[\tbm_{2\iota}(\bm)-\bm\big]_i - \left(\frac{1}{2}  \< \bA_3, \bQ_{i,\cdot}
    \otimes\bQ\>\right)\right)^{2}\rt)^{1+\delta/2}   \right| \nabla \cF_\sTAP(\bm) = \bzero \right] \nonumber \\
    &\le N^{-1-\delta}\,,\\
    \bQ& := (\gamma_* \bI - \bA_2)^{-1}\, .
\end{align}
\end{repppn}

Our approach to proving Proposition \ref{ppn:local-barycenter} is based on decomposing
\begin{align}
\Band_*(2\iota) &=  \cup_{r,s\in I(2\iota)} \Band(\bm,\{r\}) \cap \Band(\bx,\{s\}) = 
\cup_{a,b\in L(2\iota)} D_N(a,b)\, ,
\end{align}
where, for $q =\|\bm\|_N^2$, $c = \<\bx,\bm\>_N$, 
\[
    L(2\iota) = \lt\{
        (a,b) : \, qa \in I(2\iota) , b \in I(2\iota)
    \rt\}.
\]
Note that for $\bm \in \cS_\iota$, we have $q,c \in I(\iota)$, and thus $L(2\iota)$ is a neighborhood of $(0,0)$ of radius of order $\iota$.
For any $r,s\in I(2\iota)$, we will see that the Hamiltonian restricted to $\Band(\bm,\{r\}) \cap \Band(\bx,\{s\})$
(conditional on $\nabla \cF_\sTAP(\bm)=\bzero$)
is equivalent to that of a mixed $p$-spin model in its replica symmetric phase with a small magnetic
field. We will therefore devote Section \ref{sec:FirstLocal} to study this problem. In Section \ref{sec:SecondLocal}
we will use this result, and integrate it over $a,b$ to prove Proposition \ref{ppn:local-barycenter}.

\subsection{Conditional magnetization per band }
\label{sec:FirstLocal}

As anticipated, in this section
we will compute a good approximation to the magnetization for general spherical models with small external field.
While we will apply this result to the effective Hamiltonian in the band, hence in dimension $N-2$,
throughout this section, we adopt general notations for such a model, cf. Eq.~\eqref{eq:def-HN} and recast $N-2$ as $N$.
We write
\begin{align}
H(\bsig)& =\<\bu,\bsig\> +H_{\ge2}(\bsig) = \<\bu,\bsig\> + \sum_{p\ge 2}H_p(\bsig) \,,\label{eq:FirstLocal}\\
H_{p}(\bsig)&=\frac{1}{N^{(p-1)/2}}\sum_{i_{1},\ldots,i_{p}=1}^Ng_{i_{1},\ldots,i_{p}}\sigma_{i_{1}}\dots\sigma_{i_{p}},
   \qquad
     g_{i_1,\ldots,i_p} \stackrel{i.i.d.}{\sim} \cN(0,\beta_p^2)\, .
\end{align}
We will write throughout $H_{\ge i}(\bsig) = \sum_{p\ge i}H_p(\bsig)$. We recast the mixture of
$H_{\ge2}$ as $\xi(s) = \sum_{p\ge 2} \beta_p^2 s^p$.

The results of this subsection hold for all models satisfying the replica symmetry condition \eqref{eq:replica-symmetry}, which we reproduce for convenience:
\beq
    \label{eq:replica-symmetry-repeat}
    \xi''(0) < 1, \qquad \xi(q) + q + \log(1-q) < 0 \quad \forall q\in (0,1).
\eeq
This holds under the main condition \eqref{eq:amp-works} by integrating twice and, as we will see in \eqref{eq:tilde-xi-satisfies-replica-symmetry}, will hold for the effective model on the band $\Band(\bm,\{r\}) \cap \Band(\bx,\{s\})$, for all $r,s\in I(2\iota)$.
Note that the first inequality in \eqref{eq:replica-symmetry-repeat} implies $\beta_2^2 < 1/2$.

We will always assume $\|\bu\|_{2}\le c_0\sqrt{N}$, with $c_0$ a small constant, and in some lemmas $\|\bu\|_{2}\le N^{c_0}$.
We will denote by $\mu(\de\bsig) \propto \exp(H(\bsig))\,\mu_0(\de\bsig)$ the corresponding Gibbs measure.

Note that we can view $H_{2}$ as a quadratic form with $H_2(\bsig)=\<\bsig,\bW^{(2)}\bsig\>$
(with entries $W^{(2)}_{ij}=(g_{ij}+g_{ji})/2\sqrt{N}$).
Hence $\bW^{(2)}$ is a GOE matrix scaled by $\beta_2/\sqrt{2}$. We will work in the orthonormal
basis diagonalizing $\bW^{(2)}$ and its the spectrum of  be $\bLambda=(\Lambda_i)_{i\le N}$,
with $\Lambda_1\ge \Lambda_2\ge\cdots\ge\Lambda_N$.
We will occasionally identify $\bLambda$ with the diagonal matrix with diagonal entries $\Lambda_i$.
We also write $\bW^{(3)}$ for the symmetric 3rd-order tensor such that $H_3(\bsig) = \la \bW^{(3)}, \bsig^{\otimes 3} \ra$, written in the basis of eigenvectors of $\bW^{(2)}$.
That is, $\bW^{(3)}$ is obtained by rotating $\tbW^{(3)}$ with entries $\tW^{(3)}_{ijk}= (g_{ijk}+{\rm permutations})/6N$.

Given a symmetric matrix $\bA\in \bbR^{N\times N}$, we define
\begin{align}
    G(\gamma) = G(\gamma;\bA,\bu) &:= \gamma - \fr{1}{2N} \log \det(\gamma \bI - \bA ) + \fr{1}{4N} \<\bu,(\gamma \bI - \bA)^{-1} \bu\>,
    \label{eq:GDef}\\
    \gamma_\ast = \gamma_\ast(\bA,\bu) & = \argmin_{\gamma > \lambda_{\max}(\bA)} G(\gamma;\bA,\bu)\, . \label{eq:GammaDef}
\end{align}
Note that $G$ is convex with $\lim_{\gamma \downarrow \lambda_{\max}(\bA)} G'(\gamma) = -\infty$, $\lim_{\gamma \uparrow +\infty} G'(\gamma) = +\infty$, so $\gamma_\ast$ is also the unique solution to $G'(\gamma) = 0$.
We will omit the argument $\bA$ or $\bu$
whenever clear from the context (in particular, we typically omit $\bu$ and omit $\bA$ when $\bA=\bLambda$
is the diagonal matrix containing the eigenvalues of $\bW$).
\begin{lem}\label{lemma:LocalEffective1}
\label{lem:spin}
There exits $c_0>0$ such that, for $\|\bu\|\le N^{c_0}$,
and under the additional assumptions above, the following
holds. Let $\gamma_*=\gamma_*(\bLambda)$ and define, for $j\le N$
\begin{align}
    \hm_{j}:= \frac{u_{j}}{2(\gamma_{*}-\Lambda_{j})}
    +\frac{1}{2(\gamma_{*}-\Lambda_{j})} \sum_{i=1}^N
    \frac{W^{(3)}_{jii}}{\gamma_{*}-\Lambda_{i}}\, .
\end{align}
Then, for some $c>0$, with probability $1-N^{-c}$, the following holds for all $i\in [N]$:
\[
\int\sigma_{i}\,\mu(\de\bsig)= (1+O(N^{-c}))\Big(\hm_i+O(N^{-1/2-c})\Big).
\]
\end{lem}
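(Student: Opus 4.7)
The plan is to analyze the Gibbs measure $\mu$ via a Laplace-type reduction to a Gaussian integral on $\bbR^N$, plus a perturbative treatment of $H_3$ and $H_{\ge 4}$. First, I would lift the spherical constraint with a Lagrange multiplier / Hubbard--Stratonovich transform, writing
\[
\int_{S_N} f(\bsig) e^{H(\bsig)} \mu_0(\de\bsig) \propto \int_\gamma e^{N\gamma}\int_{\bbR^N} f(\bsig)\, e^{-\gamma\|\bsig\|^2 + H_{\ge 2}(\bsig)+\la\bu,\bsig\ra}\,\de\bsig\, \de\gamma ,
\]
and analyzing the outer integral by saddle point. The function $G(\gamma)$ of \eqref{eq:GDef} is the large-$N$ exponent of the inner integral for $f\equiv 1$ when $H_3,H_{\ge 4}$ are omitted, and the replica-symmetry assumption \eqref{eq:replica-symmetry-repeat} forces $\gamma_\ast-\lambda_{\max}(\bW^{(2)}) = \Omega(1)$ with probability $1-e^{-cN}$, making $G$ strongly convex near $\gamma_\ast$ and the inner integrand strongly log-concave in $\bsig$ at $\gamma=\gamma_\ast$. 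Standard Laplace estimates then reduce $\mu$ to the Gaussian reference law on $\bbR^N$ with diagonal covariance $\Sigma_{jj}=(2(\gamma_\ast-\Lambda_j))^{-1}$ and mean $\hbm^{(0)}_j=u_j/(2(\gamma_\ast-\Lambda_j))$ in the eigenbasis, up to a multiplicative error $1+O(N^{-c})$.

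Second, I would incorporate the cubic term by expanding $e^{H_3(\bsig)}=1+H_3(\bsig)+\tfrac{1}{2}H_3(\bsig)^2+\cdots$ under this Gaussian reference and applying Stein's lemma:
\[
\mathrm{Cov}_{\cN}\bigl(\sigma_i, H_3(\bsig)\bigr) = \sum_{j}\Sigma_{ij}\,\EE_{\cN}\bigl[\partial_j H_3(\bsig)\bigr],
\]
and since $\Sigma$ is diagonal in the eigenbasis and the Gaussian second moment is $\EE[\sigma_k\sigma_l]=\hbm^{(0)}_k\hbm^{(0)}_l + \delta_{kl}\Sigma_{kk}$, the connected part produces the diagonal contraction
\[
\Sigma_{ii}\cdot c\!\!\sum_k W^{(3)}_{ikk}\,\Sigma_{kk} \;=\; \frac{1}{2(\gamma_\ast-\Lambda_i)}\sum_k\frac{W^{(3)}_{ikk}}{\gamma_\ast-\Lambda_k},
\]
with the combinatorial constant $c$ determined by the paper's symmetrization convention for $\bW^{(3)}$. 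This gives exactly the second term of $\hm_i$. The cross terms involving $\hbm^{(0)}$ are of order $\|\hbm^{(0)}\|_\infty \cdot N^{-1/2}\le N^{-1/2-c}$ since $\|\bu\|\le N^{c_0}$, and higher powers of $H_3$ produce additional factors of $\|\Sigma\|_{\op}^{1/2}=O(1)$ but with extra Wick contractions of the random tensor giving $O(N^{-1/2-c})$.

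Third, for $H_{\ge 4}$ I would run the same Gaussian expansion, invoking the deterministic bound $\sup_{\|\bsig\|_N\le 1}\|\nabla^k H_{\ge 4}(\bsig)\|_{\op,N}=O(1)$ from Proposition \ref{ppn:gradients-bounded}. Each insertion of a field from $H_{\ge 4}$ against the Gaussian reference carries an extra $O(N^{-1/2})$ factor from $\Sigma$, which combined with the $O(1)$ free energy cost shows that $H_{\ge 4}$ contributes $O(N^{-1-c})$ to $\int\sigma_i\,\mu(\de\bsig)$, safely inside the error term. A union bound over $i\in[N]$, using sharp Gaussian tail estimates on $W^{(3)}_{ijj}$ and $\gamma_\ast-\Lambda_j$, upgrades pointwise to the uniform statement with probability $1-N^{-c}$.

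The main obstacle is the rigorous spherical-to-Gaussian reduction in Step 1 with a genuinely multiplicative $1+O(N^{-c})$ error uniform in $i$: this requires localization of the Lagrange-multiplier integral to a window $|\gamma-\gamma_\ast|\lesssim N^{-1/2+\varepsilon}$, together with uniform control on the inner Gaussian integral when $H_3,H_{\ge 4}$ are reinserted. The replica-symmetry condition \eqref{eq:replica-symmetry-repeat} is doing the heavy lifting here, through the spectral gap $\gamma_\ast-\lambda_{\max}(\bW^{(2)})=\Omega(1)$ and the resulting strict log-concavity of the exponent in $\bsig$, both of which are standard but must be verified with the present small external field $\bu$.
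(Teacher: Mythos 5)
Your overall architecture (a Lagrange-multiplier/saddle-point reduction to a Gaussian reference law with covariance $\mathrm{diag}((2(\gamma_*-\Lambda_j))^{-1})$, followed by a Wick/Stein contraction of $\sigma_i$ against $H_3$ to produce the term $\frac{1}{2(\gamma_*-\Lambda_i)}\sum_k W^{(3)}_{ikk}(\gamma_*-\Lambda_k)^{-1}$) identifies the correct leading-order mechanism, and your Step 1 for the purely quadratic model is essentially the paper's Lemma \ref{lem:laplace} (which implements exactly this contour/saddle computation via the inverse Laplace transform). The gap is in Steps 2--3. You cannot Taylor-expand $e^{H_3(\bsig)+H_{\ge 4}(\bsig)}$ under the Gaussian reference and truncate: for each fixed $\bsig$ with $\|\bsig\|_N\approx 1$, $H_3(\bsig)$ is a centered Gaussian of variance $N\beta_3^2$, so every term $H_3^k/k!$ contributes at the exponential scale and the ``higher Wick contractions give $O(N^{-1/2-c})$'' claim does not survive the resummation. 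Worse, once the spherical constraint is lifted the integral $\int_{\bbR^N} e^{-\gamma\|\bsig\|^2+H_3(\bsig)}\,\de\bsig$ diverges ($H_3$ grows cubically in some directions), so the higher-degree terms cannot be carried inside the Hubbard--Stratonovich representation at all; they must be handled on the sphere.

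What actually makes the argument work---and what constitutes most of the paper's proof---is a partial-annealing scheme: one restricts to a typical set $T(\delta)$, proves overlap concentration at scale $N^{-1/2+c}$ for the quadratic reference model (Lemma \ref{lem:overlap-2}), and then runs a second-moment computation (Lemmas \ref{lem:high-acc-anneal} and \ref{lem:anneal}) showing that the quenched integrals $\int_T \sigma_1 e^{H}$ match their expectations over the couplings of degree $\ge 4$ and the degree-$3$ couplings \emph{not} involving the distinguished index, up to the required additive error $O(N^{-1/2-c})\cdot Z$. The variance bound in that step hinges on expanding $e^{NW(\bsig^1,\bsig^2)}-1$ in the overlap on the near-orthogonal set and on the moment estimates of Lemma \ref{lem:laplace-moments-2}; none of this is supplied by ``standard Laplace estimates.'' Finally, the surviving degree-$3$ couplings $\tg_{1ij}$ are not treated perturbatively at the level of the measure but exactly, as a rank-one-in-$\sigma_1$ perturbation $\sigma_1 N^{-1}\tilde\bG$ of the quadratic form for $\bsig_{-1}$, whose effect is extracted from the expansion of $\log\det(\gamma\bI-\bLambda_{-1}-\sigma_1 N^{-1}\tilde\bG)$; this is where the $W^{(3)}_{jii}$ term really comes from. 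Your proposal would need to be restructured along these lines to close the gap.
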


Together with further estimates, we will use Lemma \ref{lem:spin}
to prove the following lemma, which is the main result of the section.
\begin{lem}
\label{lem:correction-band}
Let $\alpha \ge 2$. There exists $c_0>0$ such that, for $\|\bu\|\le N^{c_0}$, and under the additional assumptions above,
we have for some $c>0$ that 
\begin{align}
\bbE\left[\left\|\int\bsig\,\mu(\de\bsig)-\hbm\right\|_{N}^{\alpha}  \right] = O(N^{-\alpha/2-c}).
\end{align}
\end{lem}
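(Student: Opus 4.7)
The plan is to square and sum the pointwise error estimate from Lemma~\ref{lem:spin} and combine it with an a priori bound on the typical size of $\hbm$. In the replica-symmetric regime each coordinate of $\hbm$ should be $O(N^{-1/2})$, so a pointwise error of order $N^{-1/2-c}$ aggregated over $N$ coordinates will give an $L^2$ norm of order $N^{-1/2-c}$, which is exactly the order of magnitude the target bound demands.

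First I would control $\|\hbm\|_N$. Using $\|\bu\|\le N^{c_0}$ together with standard GOE spectrum concentration, I would show that on an event $\cE_{\GOE}$ of probability $1-e^{-cN}$, the gap $\gamma_*-\Lambda_1$ is bounded below by a positive constant (using that \eqref{eq:replica-symmetry-repeat} forces $\gamma_*$ strictly above the top of the limiting semicircle, and that $\gamma_*$ is a continuous functional of the empirical spectrum). Estimating the two terms of $\hm_j$ on $\cE_{\GOE}$ then yields
\[
\|\hbm\|_N^2 \;\lesssim\; \frac{\|\bu\|_N^2}{(\gamma_*-\Lambda_1)^2} + \frac{1}{N} \;=\; O(N^{-1+2c_0}),
\]
the second summand coming from bounding $N^{-1}\sum_j\bigl(\sum_i W^{(3)}_{jii}/(\gamma_*-\Lambda_i)\bigr)^2$ via its second moment.

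Next, let $\cE_{\rm spin}$ denote the good event of Lemma~\ref{lem:spin}, and set $\tau_i := \int\sigma_i\,\mu(\de\bsig)-\hm_i$. On $\cE_{\GOE}\cap \cE_{\rm spin}$ (combined probability $1-O(N^{-c})$) the pointwise estimate yields $|\tau_i|\le C N^{-c}|\hm_i|+C N^{-1/2-c}$, and squaring and averaging gives
\[
\Bigl\|{\textstyle\int}\bsig\,\mu(\de\bsig)-\hbm\Bigr\|_N^2 \;\le\; 2C^2 N^{-2c}\|\hbm\|_N^2 + 2C^2 N^{-1-2c} \;=\; O(N^{-1-\tilde c})
\]
for some $\tilde c>0$, provided we choose $c_0<c$. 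Raising to the power $\alpha/2$ and restricting the expectation to this event yields the desired $O(N^{-\alpha/2-\alpha\tilde c/2})$.

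The main obstacle is the complementary event. The deterministic bound $\|\int\bsig\,\mu(\de\bsig)\|_N\le 1$ combined with polynomial moment bounds on $\|\hbm\|_N$ (via the explicit resolvent formulas) makes the contribution from $\cE_{\GOE}^c$ (of exponentially small probability) negligible. The delicate piece is $\cE_{\rm spin}^c$, whose stated probability $N^{-c}$ need not be small enough on its own to absorb an $L^\alpha$ contribution. I expect the cleanest resolution to come from re-examining the proof of Lemma~\ref{lem:spin}: its underlying Gaussian-concentration estimates should in fact yield a probability of the form $1-\exp(-cN^{\eta})$ for some $\eta>0$, which together with the above moment bounds makes the bad-event contribution exponentially small. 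A direct alternative would be to apply Gaussian concentration to the Lipschitz functional $\bg\mapsto \|\int\bsig\,\mu(\de\bsig)-\hbm\|_N$, whose Lipschitz constant in the disorder can be estimated explicitly from the formulas defining $\hbm$.
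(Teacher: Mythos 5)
Your first step — aggregating the coordinatewise estimate of Lemma~\ref{lem:spin} into $\|\int\bsig\,\mu(\de\bsig)-\hbm\|_N^2=O(N^{-1-\tilde c})$ on the good event, with $\|\hbm\|_N^2=O(N^{-1+2c_0})$ controlled via the resolvent formula and the spectral gap $\gamma_*-\Lambda_1$ — matches the paper's argument. You also correctly identified the real difficulty: the good event of Lemma~\ref{lem:spin} only has probability $1-N^{-c}$, and the trivial bound $\|\int\bsig\,\mu(\de\bsig)\|_N\le 1$ on its complement contributes $N^{-c}$ to $\bbE\|\cdot\|_N^\alpha$, which swamps the target $N^{-\alpha/2-c'}$.

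However, neither of your two proposed resolutions works, and the actual fix is a different (and already available) ingredient. Upgrading Lemma~\ref{lem:spin} to probability $1-\exp(-cN^\eta)$ is not realistic: its proof (via Lemma~\ref{lem:anneal} and Eq.~\eqref{eq:HighAcc-PartitionFun}) rests on second- and $2k$-th-moment bounds followed by Markov's inequality, so polynomial failure probability is intrinsic to the method, not an artifact of lazy bookkeeping. Likewise, Gaussian concentration of $\bg\mapsto\|\int\bsig\,\mu(\de\bsig)-\hbm\|_N$ only controls fluctuations around the mean (at a scale set by a Lipschitz constant that is itself hard to make $o(1)$ at the required $N^{-1/2-c}$-per-coordinate scale), and says nothing about the mean being small. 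What the paper uses instead is Lemma~\ref{lem:high-prob}: by taking $k$ large in the moment estimate \eqref{eq:Magn-No-Field} of Lemma~\ref{lem:high-acc-anneal}, one gets the \emph{a priori} bound $\|\int\bsig\,\mu(\de\bsig)\|^2\le N^{\eps}$ (unnormalized, i.e.\ $\|\cdot\|_N^2\le N^{-1+\eps}$) with probability $1-N^{-C}$ for \emph{any} fixed $C$, at the cost only of shrinking $c_0$. On the intersection of the Lemma~\ref{lem:spin} bad event with this event the contribution to $\bbE\|\cdot\|_N^\alpha$ is $N^{-c}\cdot N^{(-1+\eps)\alpha/2}=O(N^{-\alpha/2-c/2})$ for $\eps$ small, and the residual probability-$N^{-C}$ event is absorbed by the trivial bound with $C$ chosen large. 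The lesson is that you do not need to improve the probability of the sharp estimate; you need a near-optimal crude estimate whose failure probability you can push below any fixed polynomial, and the large-$k$ moment method supplies exactly that.
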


The rest of this section is devoted to the proof of Lemma~\ref{lemma:LocalEffective1} and
Lemma \ref{lem:correction-band}.
%
%
\subsubsection{Quadratic Hamiltonians}
We begin by proving
several supporting lemmas about quadratic models. The Laplace transform allows us to compute accurately various statistics of quadratic models. We note that the use of Laplace transforms in studying spherical quadratic models has been utilized before, for example for analyzing the fluctuation of the free energy in \cite{baik2016fluctuations}. We will however need accurate control over a number of statistics beyond the free energy. 

\begin{lem}
\label{lem:laplace}
For $\bA\in\bbR^{N\times N}$ a GOE matrix scaled by $\alpha < 1/2$, and $\bu \in \bbR^N$ such that $\|\bu\|^2 \le \eps N$ for $\eps>0$ depending only on $1/2-\alpha$, 
there exists a constant $c>0$ such that, defining and  $G(\gamma) = G(\gamma;\bA)$,
$\gamma_*= \gamma_*(\bA)$, we have that the following claim holds with probability at least $1-\exp(-cN)$: 
%
\begin{align}
\int e^{\langle\bsig,\bA\bsig\rangle+\langle \bu,\bsig\rangle} \mu_0(\de\bsig)
=(1+O(N^{-c}))\sqrt{\frac{2}{G''(\gamma_*)}} \, (2e)^{-N/2}\, e^{NG(\gamma_*)},\label{eq:laplace-1}
\end{align}
and, for $\bv_k$ any eigenvector of $\bA$ (uniformly over $k$)
\begin{equation}
\frac{\int\<\bv_k,\bsig\> e^{\<\bsig,\bA\bsig\>+\< \bu,\bsig\>}\,\mu_0(\de\bsig)}{\int e^{\<\bsig,\bA\bsig\>+\< \bu,\bsig\>}\,\mu_0(\de\bsig)}=(1+O(N^{-c}))
\frac{\<\bv_k,\bu\>}{2(\gamma_{*}-\lambda_k(\bA))}\, .\label{eq:laplace-mean}
\end{equation}
\end{lem}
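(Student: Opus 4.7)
The plan is to reduce the $S_N$-integral to a one-dimensional contour integral via a Fourier representation of the sphere constraint, and then apply Laplace's method. Using $\delta(\|\bsig\|^2-N) = (2\pi i)^{-1}\int_{c-i\infty}^{c+i\infty} e^{-\gamma(\|\bsig\|^2-N)}\,\de\gamma$ together with the elementary identity $\int_{S_N} f\,\mu_0(\de\bsig) = (2\sqrt{N}/|S_N|)\int_{\bbR^N}\delta(\|\bsig\|^2-N)\,f(\bsig)\,\de\bsig$, one has, for any $c > \lambda_{\max}(\bA)$,
\[
    \int_{S_N} f(\bsig)\,\mu_0(\de\bsig) = \fr{\sqrt{N}}{|S_N|\pi i} \int_{c-i\infty}^{c+i\infty} e^{N\gamma}\int_{\bbR^N} e^{-\gamma\|\bsig\|^2} f(\bsig)\,\de\bsig\,\de\gamma.
\]
Applied to $f(\bsig) = e^{\<\bsig,\bA\bsig\>+\<\bu,\bsig\>}$ and $f(\bsig) = \<\bv_k,\bsig\>e^{\<\bsig,\bA\bsig\>+\<\bu,\bsig\>}$, the inner Gaussian integrals are explicit: the first is $\pi^{N/2}\det(\gamma\bI-\bA)^{-1/2}\exp\{\tfrac14\<\bu,(\gamma\bI-\bA)^{-1}\bu\>\}$, while the second has an extra prefactor $\<\bv_k,\bu\>/[2(\gamma-\lambda_k(\bA))]$ coming from the Gaussian mean computed in the eigenbasis of $\bA$. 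Both reduce to $(2\pi i)^{-1}\int P(\gamma) e^{NG(\gamma)}\,\de\gamma$ (times explicit volume factors) with $P \in \{1,\;\<\bv_k,\bu\>/[2(\gamma-\lambda_k(\bA))]\}$ and $G$ as in \eqref{eq:GDef}.

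The saddle-point analysis is then carried out by deforming to $\mathrm{Re}\,\gamma = \gamma_*$. Three ingredients, each holding with probability $1-e^{-cN}$, are required. First, $\gamma_* - \lambda_{\max}(\bA)$ must be bounded below by a constant depending on $1/2-\alpha$: in the $\bu = 0$ case the semicircle Stieltjes transform gives the explicit saddle $\gamma_*^{\mathrm{lim}} = \tfrac12+2\alpha^2$, which exceeds $2\alpha$ by $\tfrac12(1-2\alpha)^2 > 0$; GOE edge concentration $\lambda_{\max}(\bA) \le 2\alpha+o(1)$ and the implicit-function shift $\gamma_*(\bA,\bu)-\gamma_*(\bA,\bzero) = O(\|\bu\|^2/N)$ preserve this separation provided $\eps$ is small in $1/2-\alpha$. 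Second, $G''(\gamma_*) = \tfrac{1}{2N}\Tr(\gamma_*\bI - \bA)^{-2} + \tfrac{1}{2N}\<\bu,(\gamma_*\bI - \bA)^{-3}\bu\>$ is bounded above and below by positive constants by the same argument. Third, the identity $\mathrm{Re}\log(\gamma_*+it-\lambda_j) = \tfrac12\log[(\gamma_*-\lambda_j)^2+t^2]$ gives the explicit tail decay $\mathrm{Re}[NG(\gamma_*+it)-NG(\gamma_*)] = -\tfrac14\sum_j\log(1+t^2/(\gamma_*-\lambda_j)^2) + O(\|\bu\|^2 t^2/N)$, which is quadratic in $t$ near the origin and bounded above by $-\tfrac{N}{2}\log|t|$ at infinity; standard Laplace asymptotics then yield
\[
    (2\pi i)^{-1}\int_{c-i\infty}^{c+i\infty} P(\gamma)e^{NG(\gamma)}\,\de\gamma = (1+O(N^{-c}))\,P(\gamma_*)\,\fr{e^{NG(\gamma_*)}}{\sqrt{2\pi NG''(\gamma_*)}}
\]
for both choices of $P$, which are analytic and non-vanishing in a fixed-size neighborhood of $\gamma_*$.

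Combining with Stirling's formula $|S_N| = 2\pi^{N/2}N^{(N-1)/2}/\Gamma(N/2) = (1+O(1/N))\pi^{(N-1)/2}(2e)^{N/2}$ produces the prefactor $\sqrt{2/G''(\gamma_*)}(2e)^{-N/2}$ in \eqref{eq:laplace-1}; the mean \eqref{eq:laplace-mean} then emerges as the ratio of the two contour integrals, in which all common normalizing factors cancel and one is left precisely with the value $P(\gamma_*) = \<\bv_k,\bu\>/[2(\gamma_*-\lambda_k(\bA))]$, with the claimed $(1+O(N^{-c}))$ multiplicative error transferring directly from the saddle-point expansion. The main obstacle is ensuring uniformity in $k \in [N]$: one needs $1/(\gamma_*-\lambda_k(\bA))$ uniformly bounded and $P$ analytic on a $k$-independent neighborhood of $\gamma_*$, both of which reduce to the same edge-separation estimate. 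This also clarifies why $\alpha < 1/2$ is sharp: at $\alpha = 1/2$ the saddle $\gamma_*$ collides with the spectral edge, so $1/(\gamma_*-\lambda_N(\bA))$ and $G''(\gamma_*)$ both blow up and the Laplace expansion ceases to be valid.
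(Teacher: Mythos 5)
Your proposal is correct and follows essentially the same route as the paper: the delta-function (Fourier) representation of the sphere constraint is precisely the inverse Laplace transform of the radial integral that the paper uses, and both reduce to the identical contour integral $\fr{1}{2\pi i}\int P(\gamma)e^{NG(\gamma)}\,\de\gamma$ analyzed by a saddle point at $\gamma_*$, with the same three ingredients (edge separation $\gamma_*-\lambda_{\max}(\bA)\ge\delta$, two-sided bounds on $G''(\gamma_*)$, and the explicit tail decay of $\Re\,G(\gamma_*+iz)$) and the same Stirling factor. The only cosmetic difference is that you make the limiting saddle $\gamma_*^{\mathrm{lim}}=\tfrac12+2\alpha^2$ and the gap $\tfrac12(1-2\alpha)^2$ explicit, which the paper leaves implicit.
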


\begin{proof}
By a change of basis, we can assume that $\bA=\bLambda$ is diagonal (and its entries ordered).
Let
\begin{align}
E(\ell):=\ell^{N/2-1}
\int\exp\Big(\langle \bu,\bsig\rangle\sqrt{\ell}+\langle\bLambda,\bsig^{\otimes 2}\rangle\ell\Big)\,
\mu_0(\de\bsig)\, .
\end{align}
Then the Laplace transform of $E$ is given by
\[
F(t)=\int_{0}^{\infty} e^{-t\ell}\, E(\ell)\,\de\ell,
\]
and one has (for $\Re(\gamma)>\Lambda_1=\max_{i\le N}\Lambda_i$)
\[
E(\ell)=\frac{1}{2\pi i}\int_{N\gamma-i\infty}^{N\gamma+i\infty} e^{t\ell}\, F(t)\, \de t.
\]
We evaluate, for $\Re(t)>\Lambda_1$,
\begin{align*}
F(Nt) & =\int_{0}^{\infty}\int e^{-Nt\ell}\exp\Big(\langle \bu,\bsig\rangle\sqrt{\ell}+\langle\bLambda,\bsig^{\otimes 2}\rangle\ell\Big)\ell^{N/2-1}\mu_0(\de\bsig)\,  \de\ell\\
 & =\frac{\Gamma(N/2)}{(N\pi)^{N/2}}\int_{\bbR^{N}}\exp\Big(-t\|\by\|^{2}+\langle \bu,\by\rangle+\langle\bLambda,\by^{\otimes 2}\rangle\Big)\, \de\by\\
 & =\frac{\Gamma(N/2)}{(N\pi)^{N/2}}\int_{\bbR^{N}}\exp\Big\{-\sum_{i=1}^{N}(t-\Lambda_{i})y_{i}^{2}+\sum_{i=1}^{N}u_{i}y_{i}\Big\}\, \de\by\\
 & =\frac{\Gamma(N/2)}{N^{N/2}}\exp\left\{-\frac{1}{2}\sum_{i=1}^{N}\log(t-\Lambda_{i})+\sum_{i=1}^{N}\frac{u_{i}^{2}}{4(t-\Lambda_{i})}\right\}
 \, .
\end{align*}
Hence, by the inverse Laplace transform, for all $\gamma \in \bbR$, $\gamma > \max_{i\le N} \Lambda_i$,
\begin{align}
E(1) & =\frac{N}{2\pi i}\int_{\gamma-i\infty}^{\gamma+i\infty} e^{Nt}\, F(Nt)\,\de t\nonumber\\
 & =\frac{\Gamma(N/2)}{2\pi N^{N/2-1}}\int_{-\infty}^{\infty}\exp\left\{N(\gamma+iz)-\frac12 \sum_{i=1}^{N}\log(\gamma+iz-\Lambda_{i})+\sum_{i=1}^{N}\frac{u_{i}^{2}}{4(\gamma+iz-\Lambda_{i})}\right\}\, \de z\nonumber\\
 & =\frac{\Gamma(N/2)}{2\pi N^{N/2-1}}\int_{-\infty}^{\infty}\exp\big(NG(\gamma+iz)\big)\de z,\label{eq:E1Formula}
\end{align}
where $G(x) = G(x;\bLambda)$ is defined as per Eq.~\eqref{eq:GDef}.

Let $\gamma_{*}$ be defined as per Eq.~\eqref{eq:GammaDef}.
Per the discussion below \eqref{eq:GammaDef}, $\gamma_*$ is the unique solution to $G'(\gamma_*) = 0$.
Explicitly,
\[
N-\frac{1}{2}\sum_{i=1}^{N}\frac{1}{\gamma_{*}-\Lambda_{i}}-\sum_{i=1}^{N}\frac{u_{i}^{2}}{4(\gamma_{*}-\Lambda_{i})^{2}}=0.
\]
Our assumption on $\alpha$ and $\|\bu\|$ implies that $\gamma_* - \max_i \bLambda_i > \delta$ for an appropriate $\delta$ depending only on $1/2-\alpha$. 
We will set $\gamma=\gamma_*$ in Eq.~\eqref{eq:E1Formula}. Note
that
\[
\Re(G(\gamma_{*}+iz)-G(\gamma_{*}))=-\frac{1}{4N}\sum_{i=1}^{N}\log(1+z^{2}/(\gamma_{*}-\Lambda_{i})^{2})-\frac{1}{4N}\sum_{i=1}^{N}\frac{u_{i}^{2}z^{2}}{(\gamma_{*}-\Lambda_{i})(z^{2}+(\gamma_{*}-\Lambda_{i})^{2})}.
\]
For $|z|\in((\log N)/\sqrt{N},1)$, we have $\Re(G(\gamma_{*}+iz)-G(\gamma_{*}))<-cz^{2}$,
and for $|z|\ge1$, we have $\Re(G(\gamma_{*}+iz)-G(\gamma_{*}))<-c\log(1+cz^{2})$.
This implies that
\[
\int_{|z|>\frac{\log N}{\sqrt{N}}}\exp\Big(NG(\gamma_{*}+iz)-NG(\gamma_{*})\Big)\, \de z< e^{-c(\log N)^{2}}\, .
\]
On the other hand, for $|z|\le(\log N)/\sqrt{N}$ we use the
Taylor expansion: 
\begin{align*}
G(\gamma_{*}+iz)&=G(\gamma_{*})+\sum_{j=1}^{k}\frac{(iz)^{j}}{j!}G^{(j)}(\gamma_{*})+\Err_{N,k+1}\, ,\\
\Err_{N,k+1} & \le \frac{C}{k!}
\left(\frac{\log N}{\sqrt{N}}\right)^{k}\sup_{|z|\le(\log N)/\sqrt{N}}\big|G^{(k)}(\gamma_{*}+iz)\big|\, .
\end{align*}
We have that
\[
G^{(1)}(z)=1-\frac{1}{2N}\sum_{i=1}^{N}\frac{1}{z-\Lambda_{i}}-\frac{1}{4N}\sum_{i=1}^{N}\frac{u_{i}^{2}}{(z-\Lambda_{i})^{2}},
\]
\[
G^{(j)}(z)=\frac{(-1)^{j}(j-1)!}{2N}\sum_{i=1}^{N}\frac{1}{(z-\Lambda_{i})^{j}}+\frac{(-1)^{j}j!}{4N}\sum_{i=1}^{N}\frac{u_{i}^{2}}{(z-\Lambda_{i})^{j+1}}.
\]
In particular, with probability $1-\exp(-cN)$ over $\bA$, $\sup_{|z|\le(\log N)/\sqrt{N}}|G^{(j)}(\gamma_{*}+iz)|\le j!C^{j}$
for a finite constant $C>0$ as long as $\|\bu\|^{2}\le N$. Hence,
we have (for $\ell_N:=(\log N)/\sqrt{N}$ and $J_N:=[-\ell_N,\ell_N]$)
\begin{align*}
 \int_{J_N}\exp\Big(N G(\gamma_{*}+iz)-N G(\gamma_{*})\Big)\, \de z
 & =\int_{J_N} e^{- N G^{(2)}(\gamma_{*}) z^{2}/2}\,
 \exp\left(O(N\ell_N^3)\right)\, \de z\\
 & =\sqrt{\frac{2\pi}{NG^{(2)}(\gamma_{*})}}\lt(1+O(N^{-1/2+\eps})\rt).
\end{align*}
%
Together with Eq.~\eqref{eq:E1Formula}, we get
\begin{align*}
E(1) = \frac{\Gamma(N/2)}{2\pi N^{N/2-1}} e^{NG(\gamma_*)}
\left\{\sqrt{\frac{2\pi}{NG^{(2)}(\gamma_{*})}}+O(N^{-1+\eps})\right\}\, ,
\end{align*}
which yields \eqref{eq:laplace-1} by Stirling's formula.

By a similar argument, we obtain the integral of the spin
$\sigma_k = \langle \bv_k,\bsig\rangle$ (recall we are working in the basis in which $\bA$ is diagonal). Let
\[
E_{k}(\ell):=\ell^{N/2-1}\int \sqrt{\ell}\sigma_{k}
\exp\Big(\langle \bu,\bsig\rangle\sqrt{\ell}+\langle\bLambda,\bsig^{\otimes 2}\rangle\ell\Big)\,
\mu_0(\de\bsig).
\]
Then the Laplace transform can be evaluated as
\begin{align*}
F_{k}(Nt) & =
\int_{0}^{\infty}\int e^{-Nt\ell} \sqrt{\ell}\sigma_{k} \exp\Big(\langle \bu,\bsig\rangle\sqrt{\ell}+\langle\bLambda,\bsig^{\otimes 2}\rangle\ell\Big)\ell^{N/2-1}\mu_0(\de\bsig)\,  \de\ell \\
&=\frac{\Gamma(N/2)}{(N\pi)^{N/2}}\int_{\bbR^{N}}y_{k}\exp\left(-t\|\by\|^{2}+\langle \bu,\by\rangle+\langle\bLambda,\by^{\otimes 2}\rangle\right) \de \by\\
 & = \frac{\Gamma(N/2)}{N^{N/2}} \frac{u_{k}}{2(t-\Lambda_{k})}
 \exp\left\{-\frac{1}{2}\sum_{i=1}^{N}\log(t-\Lambda_{i})+\sum_{i=1}^{N}\frac{u_{i}^{2}}{4(t-\Lambda_{i})}\right\} .
\end{align*}
Then we apply the same strategy as for computing $E(1)$. By inverse Laplace transform:
\begin{align}
E_{k}(1)=\frac{\Gamma(N/2)}{2\pi N^{N/2-1}}u_k\, \int_{-\infty}^{\infty}
\frac{1}{2(\gamma_*+iz-\Lambda_k)}\exp\big(NG(\gamma_*+iz)\big)\de z,\label{eq:E1_kFormula}
\end{align}
We make a negligible error in restricting to $J_N:=[-\ell_N,\ell_N]$
(for $\ell_N:=(\log N)/\sqrt{N}$)
\begin{align*}
E_{k}(1)&=\frac{\Gamma(N/2)}{2\pi N^{N/2-1}}u_k\, \left\{\int_{J_N}
\frac{1}{2(\gamma_*+iz-\Lambda_k)}\exp\big(NG(\gamma_*+iz)\big)\de z +O(e^{-c(\log N)^2})\right\}\\
&=\frac{\Gamma(N/2)}{2\pi N^{N/2-1}}u_k\, \left\{
\sqrt{\frac{2\pi}{NG^{(2)}(\gamma_{*})}}\frac{1}{2(\gamma_*-\Lambda_k)} +O(N^{-1+\eps})\right\}\, .
\end{align*}
Comparing with the above, we get
\[
E_{k}(1)=(1+O(N^{-c}))\frac{u_{k}}{2(\gamma_{*}-\Lambda_{k})}\int e^{\langle\bsig,\bA\bsig\rangle+\< \bu,\bsig\>}
\mu_0(\de\bsig).
\]
This gives (\ref{eq:laplace-mean}).
\end{proof}

\begin{lem}\label{lem:laplace-moments}
    Let $\bA\in\bbR^{N\times N}$ be a GOE matrix scaled by $\alpha < 1/2$. Assume that $\bu\in \bbR^N$ is such that $\|\bu\|\le N^{c_0}$. For $\ell\in [L]$, consider a collection of pairs of indices $(i_\ell,j_\ell)$ with $i_\ell\ne j_\ell\in [2k]$ and integers $r_\ell \ge 3$. Let $R = \sum_{\ell=1}^{L} r_\ell$. We have that the following claim holds with probability at least $1-\exp(-cN)$:

    Uniformly in $h\in [N]$,
    \begin{align}
        &\frac{\int \prod_{i=1}^{2k}\bsig_h^i\prod_{\ell=1}^{L}\<\bsig^{i_\ell},\bsig^{j_{\ell}}\>^{r_\ell}\exp\lt(\sum_{i=1}^{2k}\<\bu,\bsig^i\>+\<\bsig^i,\bLambda\bsig^i\>\rt)\mu_0^{\otimes 2k}(\de\bsig)}{\int \exp\lt(\sum_{i=1}^{2k}\<\bu,\bsig^i\>+\<\bsig^i,\bLambda\bsig^i\>\rt)\mu_0^{\otimes 2k}(\de\bsig)} \nonumber\\
        &=O_{k,L}\lt(|u_h|^{2k-2\min(k,L)} N^{(R-\min(k,L))/2} (1+\|\bu\|)^{2R}\rt). \label{eq:laplace-moments}
    \end{align}
\end{lem}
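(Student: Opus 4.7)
The plan is to extend the Laplace-transform / saddle-point method of Lemma~\ref{lem:laplace} to the multi-replica setting, and then evaluate the resulting Gaussian moment by Wick's theorem together with a diagrammatic bookkeeping argument.

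Since $\mu^{\otimes 2k}$ factors across replicas, I apply the inverse Laplace transform representation of Lemma~\ref{lem:laplace} independently to each replica $\bsig^i$, after first diagonalizing $\bW^{(2)}$ so that the quadratic form is $\<\by^i,\bLambda\by^i\>$. The integrand $\prod_{i=1}^{2k}\sigma^i_h\prod_\ell\<\bsig^{i_\ell},\bsig^{j_\ell}\>^{r_\ell}$ is polynomial of total degree $2k+2R$, so under the substitution $\by^i=\sqrt{\ell_i}\,\bsig^i$ the extra polynomial prefactor perturbs each of the $2k$ contour saddles by only $O(1/N)$ and produces a $(1+O(N^{-c}))$ multiplicative correction that is shared by numerator and denominator.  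After the common $Z^{2k}$ cancels, the ratio in \eqref{eq:laplace-moments} equals, up to $(1+O(N^{-c}))$,
\[
\EE\lt[\prod_{i=1}^{2k} y^i_h\prod_{\ell=1}^L\<\by^{i_\ell},\by^{j_\ell}\>^{r_\ell}\rt], \qquad \by^1,\dots,\by^{2k}\stackrel{\text{i.i.d.}}{\sim}\cN(\boldsymbol{\mu},\Sigma),
\]
with $\mu_a=u_a/(2(\gamma_\ast-\Lambda_a))$ and $\Sigma=\mathrm{diag}(\sigma_a^2)$, $\sigma_a^2=1/(2(\gamma_\ast-\Lambda_a))$.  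Under \eqref{eq:replica-symmetry-repeat} and $\|\bu\|\le N^{c_0}$, the defining equation $G'(\gamma_\ast)=0$ together with $\beta_2^2<1/2$ yields $\gamma_\ast-\lambda_{\max}(\bLambda)\ge c>0$ with probability $1-e^{-cN}$, so $\mu_a$ and $\sigma_a^2$ are uniformly $O(1)$.

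Next, expand each overlap as $\<\by^{i_\ell},\by^{j_\ell}\>^{r_\ell}=\sum_{a_{\ell,1},\dots,a_{\ell,r_\ell}}\prod_s y^{i_\ell}_{a_{\ell,s}}y^{j_\ell}_{a_{\ell,s}}$ and compute the Gaussian expectation by Wick's theorem for non-centered product Gaussians: the result is a sum over partial matchings on the $2k+2R$ variables, with each matched pair contributing the covariance (necessarily same replica, same coordinate) and each singleton contributing its mean $\mu_a$.  Classify a diagram by the number $p\in\{0,\ldots,2k\}$ of Wick pairs involving one of the $h$-spins $y^i_h$.  A pair $\{y^i_h,y^i_{a_{\ell,s}}\}$ pins $a_{\ell,s}=h$, and because the twin variable $y^{j_\ell}_{a_{\ell,s}}$ is forced to share the same summation index, it is also pinned to coordinate $h$; the twin can then be a singleton (contributing $\mu_h=O(|u_h|)$), pair with the other $h$-spin $y^{j_\ell}_h$, or pair with a further same-replica variable already pinned to $h$.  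Since each overlap factor involves exactly two distinct replicas, it can absorb at most one $h$-pair per replica, hence at most two overall, so $p\le 2\min(k,L)$.

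The bound then follows by scale counting: the $2k-p$ unpaired $h$-spins give $|u_h|^{2k-p}$; the $p$ $h$-pairs each give $\sigma_h^2=O(1)$ with no free index to sum; each of the $\min(k,L)$ overlaps that absorbs at least one $h$-pair loses one coordinate sum (trading an $N^{1/2}$ for $O(1)$); and the remaining unpinned overlap variables are handled by the elementary Wick estimate $\EE|\<\by^i,\by^j\>|^r\le C_r N^{r/2}(1+\|\bu\|)^{2r}$ for $i\ne j$, which itself follows from $\EE\<\by^i,\by^j\>=\|\boldsymbol{\mu}\|^2=O(\|\bu\|^2)$ and $\Var\<\by^i,\by^j\>=\sum_a\sigma_a^4+2\sum_a\mu_a^2\sigma_a^2=O(N+\|\bu\|^2)$.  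Multiplying and maximizing over $p\le 2\min(k,L)$ yields $|u_h|^{2k-2\min(k,L)}N^{(R-\min(k,L))/2}(1+\|\bu\|)^{2R}$, with the (bounded) number of diagrams absorbed into $O_{k,L}$.  The main obstacle is the diagram classification: one must verify that no tangled pairing with $p<2\min(k,L)$ can beat the extremal count by concatenating long same-coordinate chains across overlap variables, which reduces to using the saddle identity $G'(\gamma_\ast)=0$, i.e.\ $\sum_a\sigma_a^2+\|\boldsymbol{\mu}\|^2=N$, to bound each internal chain against the free coordinate sums it consumes.
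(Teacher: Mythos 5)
Your proposal follows essentially the same route as the paper's proof: a multivariate Laplace transform reduces the ratio to a Gaussian moment $\frak{G}=\EE[\prod_i y^i_h\prod_\ell\<\by^{i_\ell},\by^{j_\ell}\>^{r_\ell}]$ with exactly the mean $u_a/(2(\gamma_*-\Lambda_a))$ and variance $1/(2(\gamma_*-\Lambda_a))$ the paper uses, followed by a Wick/combinatorial classification of pairings according to how many external $h$-spins are matched into overlap factors (your parameter $p$ plays the role of the paper's $b(\cR),d(\cR),f(\cR)$ bookkeeping), and the same restriction of the inverse-transform contour to $|x_i|\le(\log N)/\sqrt N$. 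The scale counting, the constraint $p\le 2\min(k,L)$, and the final maximization match the paper's argument, so this is the same proof in slightly different diagrammatic language.
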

\begin{proof}
    As before, we perform a change of basis and assume that $\bA=\bLambda$ is diagonal. 
    Consider
    \begin{align*}
        &E(z_1,\dots,z_{2k}) \\
        &:= (\prod_{i=1}^{2k}z_i)^{N/2-1}\int \prod_{i=1}^{2k}(\bsig_h^i\sqrt{z_i})\prod_{\ell=1}^{L}\<\bsig^{i_\ell}\sqrt{z_{i_\ell}},\bsig^{j_{\ell}}\sqrt{z_{j_\ell}}\>^{r_\ell}\exp\lt(\sum_{i=1}^{2k}\<\bu,\bsig^i\>\sqrt{z_i}+\<\bsig^i,\bLambda\bsig^i\>z_i\rt)\mu_0^{\otimes 2k}(\de\bsig).
    \end{align*}
    Then the multivariate Laplace transform of $E$ is
    \begin{align*}
        F(N(t_1,\dots,t_{2k})) &= \frac{\Gamma(N/2)^{2k}}{(N\pi)^{kN}} \int_{\bbR^N} \prod_{i=1}^{2k}y^i_h \prod_{\ell=1}^{L}\<\by^{i_\ell},\by^{j_\ell}\>^{r_\ell} \exp\lt(-\sum_{i=1}^{2k}\lt(t_i\|\by^i\|^2 - \<\bu,\by^i\> - \<\bLambda, (\by^i)^{\otimes2}\>\rt)\rt) \de\by\\
        &= \frac{\Gamma(N/2)^{2k}}{N^{kN}}\exp\lt\{-\sum_{i=1}^{2k}\lt(\frac{1}{2}\sum_{h'=1}^{N}\log(t_i-\Lambda_{h'})+\sum_{h'=1}^{N}\frac{u_{h'}^2}{4(t_i - \Lambda_{h'})}\rt)\rt\} \cdot \frak{G},
    \end{align*}
    where, for 
    $\by^i = \frac{1}{2}(t_i-\bLambda)^{-1}\bu + \bw^i$ and $\bw^i$ independently distributed according $\cN\lt(0,\frac{1}{2}(t_i-\bLambda)^{-1}\rt)$,
    \begin{align}
        \frak{G} &:= \bbE \lt[\prod_{i=1}^{2k}y^i_h \prod_{\ell=1}^{L}\<\by^{i_\ell},\by^{j_\ell}\>^{r_\ell}\rt] \nonumber\\
        &= \bbE \bigg[\prod_{i=1}^{2k}\lt(\frac{u_h}{2(t_i-\Lambda_h)}+w^i_h\rt) \nonumber\\
        &\,\, \prod_{\ell=1}^{L}\lt(\<\bw^{i_\ell},\bw^{j_\ell}\>+\frac{1}{2}\<\bw^{i_\ell},(t_{j_\ell}-\bLambda)^{-1}\bu\>+\frac{1}{2}\<\bw^{j_\ell},(t_{i_\ell}-\bLambda)^{-1}\bu\>+\frac{1}{4}\<(t_{j_\ell}-\bLambda)^{-1}\bu,(t_{i_\ell}-\bLambda)^{-1}\bu\>\rt)^{r_\ell}\bigg]. \label{eq:moment-G}
    \end{align}
    Assume that $\min_{i \in [2k]}\Re(t_i - \max_{h'} \Lambda_{h'}) > c > 0$, and recall that $\|\bu\|\le N^{c_0}$. Define $\cR$ a tuple of sets of length $2k+R$, where, for $1\le a\le 2k$, $\cR_a$ is a subset of $\{a\}$, and for $a>2k$, $\cR_a$ is a subset of the pair of indices $\{i_\ell,j_\ell\}$ in the corresponding term. As such, each tuple $\cR$ represents a term in the expansion of (\ref{eq:moment-G}). If there exists an index in $[2k]$ that appears an odd number of times among the sets in $\cR$, then the contribution of the corresponding term to (\ref{eq:moment-G}) is $0$. Consider the tuples $\cR$ where each index appears an even number of times. Let $B(\cR)$ be the collection of indices $a\le 2k$ where $\cR_a = \{a\}$, and let $b(\cR)=|B(\cR)|$. The indices $B(\cR)$ must appear an odd number of times among the remaining sets $(\cR_j)_{j=2k+1}^{2k+R}$. In each possible way to pick out terms among $(\cR_j)_{j=2k+1}^{2k+R}$ so that each index in $B(\cR)$ appears at least once, let $d(\cR)$ denote the number of sets $|\cR_j|=1$ among these terms. 
    Among the remaining terms, each of the index in $B(\cR)$ not covered by the $d(\cR)$ sets can be matched to terms among $(\cR_j)_{j=2k+1}^{2k+R}$. Consider an arbitrary way to pair up all remaining indices appearing in the terms into pairs; let $f(\cR) \le R - d(\cR) - (b(\cR)-d(\cR))/2$ be the number of such pairs. For each such term and fixed pairing, we can upper bound its contribution to (\ref{eq:moment-G}) by 
    \[
        O\lt(|u_h|^{2k-b(\cR)+d(\cR)}N^{f(\cR)/2} (1+\|\bu\|^2)^{R}\rt) \le O\lt(\max_{a\le \min(k,L)}|u_h|^{2k-2a} N^{(R-a)/2} (1+\|\bu\|)^{2R}\rt),
    \]
    noting the constraints $0\le d(\cR) \le b(\cR)$, $f(\cR) \le R - d(\cR) - (b(\cR)-d(\cR))/2$.
    Thus, we have
    \begin{align*}
        \frak{G} = O_{k,L}\lt(\max_{a\le \min(k,L)}|u_h|^{2k-2a} N^{(R-a)/2} (1+\|\bu\|)^{2R}\rt).
    \end{align*}
    Hence, for $R=\sum_\ell r_\ell \ge 3L$,
    \begin{align*}
        \frak{G} = O_{k,L}\lt(|u_h|^{2k-2\min(k,L)} N^{(R-\min(k,L))/2} (1+\|\bu\|)^{2R}\rt).
    \end{align*}
    Taking the inverse Laplace transform and integrating on $t_i = \gamma_* + ix_i$, for $\gamma_*$ defined in Eq.~(\ref{eq:GammaDef}), noting that, similar to Lemma \ref{lem:laplace}, we can restrict the integration to the range $x_i \in [-\ell_N,\ell_N]$ for $\ell_N = (\log N)/\sqrt{N}$, we obtain that
    \begin{align*}
        &\frac{\int \prod_{i=1}^{2k}\bsig_h^i\prod_{\ell=1}^{L}\<\bsig^{i_\ell},\bsig^{j_{\ell}}\>^{r_\ell}\exp\lt(\sum_{i=1}^{2k}\<\bu,\bsig^i\>+\<\bsig^i,\bLambda\bsig^i\>\rt)\mu_0^{\otimes 2k}(\de\bsig)}{\int \exp\lt(\sum_{i=1}^{2k}\<\bu,\bsig^i\>+\<\bsig^i,\bLambda\bsig^i\>\rt)\mu_0^{\otimes 2k}(\de\bsig)} \\
        &=O_{k,L}\lt(|u_h|^{2k-2\min(k,L)} N^{(R-\min(k,L))/2} (1+\|\bu\|)^{2R}\rt).
    \end{align*}
\end{proof}

The next lemma states that, under a purely quadratic Hamiltonian, and
for small field, the overlap concentrates near zero.
\begin{lem}[Overlap concentration in quadratic models]\label{lem:overlap-2}
Define
\begin{align*}
A_2(t)^c:=\{(\bsig^1,\bsig^2)\in S_N\times S_N: |\<\bsig^1,\bsig^2\>_N|\ge t\}\, .
\end{align*}
Assuming that $\|\bu\|^2 \le \delta N$ for $\delta$ sufficiently small, we have for some constant $c>0$ that, with probability $1-e^{-cN}$,
\begin{align}\label{eq:tail}
\frac{\int_{A_2(t)^c}\exp(H_{\le2}(\bsig^{1})+H_{\le2}(\bsig^{2}))
\mu_0^{\otimes 2}(\de\bsig)}{\int \exp(H_{\le2}(\bsig^{1})+H_{\le2}(\bsig^{2}))\mu^{\otimes 2}_0(\de\bsig)}\le
\exp\Big\{-cN\big(t-\|\bu\|_N\big)_+^2\Big\}.
\end{align}
\end{lem}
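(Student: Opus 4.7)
The plan is to apply Chernoff's inequality and evaluate the resulting moment generating function via a Hubbard--Stratonovich identity, reducing the two-replica computation to a Gaussian integral of single-replica partition functions that I will control with Lemma~\ref{lem:laplace}. On $S_N\times S_N$, one has $\la\bsig^1,\bsig^2\ra = \tfrac12\|\bsig^1+\bsig^2\|^2-N$, so for $\lambda>0$ the identity $e^{(\lambda/2)\|\by\|^2} = \EE_{\bh\sim\cN(\bzero,\lambda\bI_N)}[e^{\la\bh,\by\ra}]$ yields
\[
\EE_{\mu^{\otimes2}}\bigl[e^{\lambda\la\bsig^1,\bsig^2\ra}\bigr] = e^{-\lambda N}\,\EE_{\bh\sim\cN(\bzero,\lambda\bI_N)}\!\left[\Bigl(\tfrac{Z_1(\bu+\bh)}{Z_1(\bu)}\Bigr)^{\!2}\right], \qquad Z_1(\bv):=\int e^{\la\bv,\bsig\ra+H_2(\bsig)}\,\mu_0(\de\bsig),
\]
while the $\lambda<0$ case is handled analogously via $\|\bsig^1-\bsig^2\|^2$, producing $\EE_\bh[Z_1(\bu+\bh)Z_1(\bu-\bh)/Z_1(\bu)^2]$. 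Chernoff applied to both tails of $\la\bsig^1,\bsig^2\ra$ then reduces the lemma to an upper bound on $\psi_N(\lambda):=\log\EE_{\mu^{\otimes2}}[e^{\lambda\la\bsig^1,\bsig^2\ra}]$.

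By Lemma~\ref{lem:laplace}, on an event of probability $1-e^{-cN}$ in the disorder, $\log Z_1(\bv) = N\Psi(\bv)+O(1)$ uniformly for $\|\bv\|^2\le\eps N$, where $\Psi(\bv):=G(\gamma_\ast(\bv);\bLambda,\bv)$. The envelope theorem applied to \eqref{eq:GammaDef} gives $\nabla\Psi(\bv) = \tfrac{1}{2N}\bQ(\bv)\bv$ with $\bQ(\bv):=(\gamma_\ast(\bv)\bI-\bLambda)^{-1}$, of operator norm $O(1)$ by the spectral gap $\gamma_\ast(\bv)-\lambda_{\max}(\bLambda)\ge c>0$ (valid since $\alpha<1/2$ and $\bu,\bh$ are small). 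Second-order Taylor expansion around $\bv=\bu$ then yields
\[
\Psi(\bu+\bh)-\Psi(\bu) = \tfrac{1}{2N}\la\bQ\bu,\bh\ra + \tfrac{1}{4N}\la\bQ\bh,\bh\ra + O\bigl(\|\bh\|^3/N\bigr), \qquad \bQ:=\bQ(\bu).
\]
Executing the Gaussian integral over $\bh$ and then invoking the first-order condition $G'(\gamma_\ast;\bu)=0$, which rearranges to $\Tr\bQ = 2N-\tfrac12\la\bu,\bQ^2\bu\ra$, the factor $e^{\lambda\Tr\bQ/2}$ arising from the Gaussian integration cancels the $e^{-\lambda N}$ prefactor up to a correction of size $O(\lambda N\|\bu\|_N^2)$. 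Combined with the $\tfrac{\lambda^2}{4}\Tr\bQ^2=O(\lambda^2 N)$ contribution, this gives $\psi_N(\lambda) \le C_1 \lambda N \|\bu\|_N^2 + C_2 \lambda^2 N$ for $|\lambda|\le\lambda_0$, with $C_1,C_2,\lambda_0>0$ depending only on $\xi$ (the $\lambda<0$ case following from the symmetric expansion $\Psi(\bu+\bh)+\Psi(\bu-\bh)-2\Psi(\bu) = \tfrac{1}{2N}\la\bQ\bh,\bh\ra+O(\|\bh\|^4/N)$).

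Substituting into Chernoff and optimizing $|\lambda|=\Theta((t-C_1\|\bu\|_N^2)_+)$ yields $\mu^{\otimes2}(|\la\bsig^1,\bsig^2\ra|\ge tN)\le \exp(-cN(t-C_1\|\bu\|_N^2)_+^2)$, and since the hypothesis $\|\bu\|^2\le\delta N$ forces $\|\bu\|_N\le 1$ (hence $\|\bu\|_N^2\le\|\bu\|_N$), the bound stated in the lemma follows after adjusting $c$. The regime of large $t$, where the Chernoff optimizer would exceed $\lambda_0$, can be covered by the crude estimate $\mu_0^{\otimes2}(|\la\bsig^1,\bsig^2\ra|\ge tN)\le e^{-cNt^2}$ combined with a deterministic Radon--Nikodym bound $\de\mu/\de\mu_0\le e^{c'N}$ from Lemma~\ref{lem:laplace}.

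The main technical difficulty will be to control the cubic Taylor remainder $O(\|\bh\|^3/N)$ uniformly over the random Gaussian $\bh$: it is negligible on the typical event $\|\bh\|^2 \le 2\lambda N$ (of probability $1-e^{-cN}$), but the rare-event contribution from $\|\bh\|\gg\sqrt{\lambda N}$ must be handled separately via Gaussian tail bounds together with a crude pointwise bound on $Z_1(\bu+\bh)/Z_1(\bu)$ extracted from Lemma~\ref{lem:laplace} in its validity range $\|\bu+\bh\|^2\le\eps N$.
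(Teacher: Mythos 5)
Your overall strategy coincides with the paper's: both proofs are Chernoff bounds on the overlap under the two-replica tilted measure, exploiting the fact that the tilted Hamiltonian is still quadratic. Where you differ is in how the tilted two-replica partition function is evaluated: the paper treats $H_{\le2}(\bsig^1)+H_{\le2}(\bsig^2)+2\theta\la\bsig^1,\bsig^2\ra$ directly via a two-variable inverse Laplace transform, landing on a coupled saddle-point function built from $\log((z_1-\Lambda_i)(z_2-\Lambda_i)-\theta^2)$ and showing $G_\theta(\gamma_*(\theta))-G(\gamma_*)=O(\theta^2)+O(\theta\|\bu\|^2/N)$; you decouple the replicas with a Hubbard--Stratonovich field $\bh$ and reuse the single-replica asymptotics of Lemma~\ref{lem:laplace} as a black box, pushing the work into a Gaussian integral over $\bh$ of $\exp\{2N(\Psi(\bu+\bh)-\Psi(\bu))\}$. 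Your route buys modularity (no new saddle-point computation) at the price of having to control $\Psi$ to second order with remainders, uniformly over the random shift $\bh$; note that you only need the \emph{upper} bound of Lemma~\ref{lem:laplace} uniformly in $\bv=\bu+\bh$ (the lower bound is needed only at $\bv=\bu$), which keeps the uniformity issue manageable, but you should say so explicitly.

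Two steps need repair. First, the cubic remainder is not negligible and cannot simply be discarded on a typical event, because it sits inside an exponential that is then integrated over $\bh$. One has $|\nabla^3\Psi(\bv)[\bh^{\otimes3}]|=O((\|\bu\|+\|\bh\|)\|\bh\|^3/N^2)$, so after multiplying by $2N$ the remainder is of order $\|\bu\|_N\lambda^{3/2}N+\lambda^2 N$ at typical $\|\bh\|^2\asymp\lambda N$ --- the same order as your budget, not smaller. The correct handling is: on $\{\|\bh\|^2\le K\lambda N\}$ bound the remainder by $\kappa\|\bh\|^2$ with $\kappa=O(\|\bu\|_N\sqrt{\lambda}+\lambda)$, absorb $2\kappa\bI$ into the quadratic form before performing the Gaussian integral (this shifts $\tfrac{\lambda}{2}\Tr\bQ$ by $\kappa\lambda N$), and then use $\|\bu\|_N\lambda^{3/2}\le\tfrac12(\|\bu\|_N^2\lambda+\lambda^2)$ to land back inside $C_1\lambda N\|\bu\|_N^2+C_2\lambda^2N$. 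Without this the claimed form of $\psi_N(\lambda)$ is asserted rather than proved. Second, your fallback for large $t$ is broken: the deterministic bound $\de\mu/\de\mu_0\le e^{c'N}$ has $c'$ of order $\|\bA\|_{\op}+\|\bu\|_N$, which generically exceeds the entropy cost $-\tfrac12\log(1-t^2)$ of the event $\{|\la\bsig^1,\bsig^2\ra_N|\ge t\}$ unless $t$ is extremely close to $1$, so this argument fails for, say, $t=0.9$. Fortunately it is unnecessary: since $|\la\bsig^1,\bsig^2\ra_N|\le1$ forces $t\le1$ and hence $(t-\|\bu\|_N)_+^2\le(t-\|\bu\|_N)_+$, the Chernoff bound with $\lambda$ capped at $\lambda_0$ already gives $\exp\{-\tfrac{\lambda_0}{2}N(t-C_1\|\bu\|_N^2)\}\le\exp\{-cN(t-\|\bu\|_N)_+^2\}$ in the regime where the unconstrained optimizer would exceed $\lambda_0$. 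With these two repairs (and the uniformity remark above) your argument goes through.
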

\begin{proof}
Consider the Hamiltonian $H(\bsig^{1},\bsig^{2})=H_{\le2}(\bsig^{1})+H_{\le2}(\bsig^{2})+2\theta\langle\bsig^{1},\bsig^{2}\rangle$. Let $\Lambda_{i}$ be the eigenvalues of the
quadratic component $\bA$ of $H$. Using the Laplace transform as in Lemma
\ref{lem:laplace},
\begin{align*}
 & \int \exp\Big(\langle \bu,\bsig^{1}+\bsig^{2}\rangle+\langle \bA,(\bsig^1)^{\otimes 2}+(\bsig^2)^{\otimes 2}\rangle\Big)\exp\left(2\theta\langle\bsig^{1},\bsig^{2}\rangle\right)\mu_0^{\otimes 2}(\de\bsig)\\
 & = \left(\frac{\Gamma(N/2)}{(2\pi)N^{N/2-1}}\right)^2 \int_{-\infty}^{\infty}\exp\Big(2N\tilde{G}_{\theta}(\gamma_1+iz_1, \gamma_2+iz_2)\Big)\de z_1\de z_2,
\end{align*}
where
\[
\tilde{G}_{\theta}(z_1,z_2)=\frac{z_1+z_2}{2}-\frac{1}{4N}\sum_{i=1}^{N}\log((z_1-\Lambda_{i})(z_2-\Lambda_i)-\theta^2)+\frac{1}{8N}\sum_{i=1}^{N}\frac{u_{i}^{2}(z_1+z_2-2\Lambda_i + 2\theta)}{(z_1-\Lambda_i)(z_2-\Lambda_i)-\theta^2}.
\]

We also denote
\[
G_\theta(z) = G_\theta(z,z) = z-\frac{1}{4N}\sum_{i=1}^{N}\log((z-\Lambda_{i})^2-\theta^2)+\frac{1}{4N}\sum_{i=1}^{N}\frac{u_{i}^{2}(z-\Lambda_i + \theta)}{(z-\Lambda_i)^2-\theta^2}.
\]
Let $\gamma_{*}(\theta)$ be a stationary point of $G_\theta$ on $\mathbb{R}$
so that $\gamma_{*}(\theta)>\max\Lambda_{i}+\theta$ (there exists
a unique such point), and $\gamma_* = \gamma_*(0)$.

As in Lemma \ref{lem:laplace}, noting that
\begin{align}
&\exp\left(2\Re\left(\log((\gamma_*(\theta)-\Lambda_i + iz_1)(\gamma_*(\theta)-\Lambda_i + iz_2)-\theta^2) - \log((\gamma_*(\theta)-\Lambda_i)^2-\theta^2)\right)\right) \nonumber\\
&= \left(\frac{(\gamma_*(\theta)-\Lambda_i)^2-\theta^2-z_1z_2}{(\gamma_*(\theta)-\Lambda_i)^2-\theta^2}\right)^2 + \left(\frac{(\gamma_*(\theta)-\Lambda_i)(z_1+z_2)}{(\gamma_*(\theta)-\Lambda_i)^2-\theta^2}\right)^2 \nonumber\\
&= \frac{((\gamma_*(\theta)-\Lambda_i)^2-\theta^2)^2 + (z_1z_2)^2 + 2\theta^2 z_1z_2 + (z_1^2+z_2^2)(\gamma_* (\theta)-\Lambda_i)^2}{((\gamma_*(\theta)-\Lambda_i)^2-\theta^2)^2} \nonumber\\
&\ge 1 + \frac{z_1^2+z_2^2}{(\gamma_*(\theta)-\Lambda_i)^2-\theta^2}. \label{eq:log-est}
\end{align}
Furthermore,
\begin{align}
    &\Re\left(\frac{(\gamma_*(\theta)-\Lambda_i + \theta) + i(z_1+z_2)/2}{N((\gamma_*(\theta)-\Lambda_i)^2-\theta^2-z_1z_2 + i (\gamma_*(\theta)-\Lambda_i)(z_1+z_2))}-\frac{1}{N(\gamma_*(\theta)-\Lambda_i-\theta)}\right) \nonumber\\
    &= \frac{-(z_1z_2)^2-\theta (\gamma_*(\theta)-\Lambda_i+\theta)(z_1+z_2)^2/2-((\gamma_*(\theta)-\Lambda_i)^2-\theta^2)(z_1^2 + z_2^2)/2}{N(\gamma_*(\theta)-\Lambda_i-\theta)(((\gamma_*(\theta)-\Lambda_i)^2 - \theta^2 - z_1z_2)^2 + ((\gamma_*(\theta)-\Lambda_i)(z_1+z_2))^2)} \nonumber \\
    &\le 0. \label{eq:u-est}
\end{align}
Given (\ref{eq:log-est}) and (\ref{eq:u-est}), we can proceed as in Lemma \ref{lem:laplace} to restrict the integral over $z_1$ and $z_2$ to the range $|z_1|,|z_2|<(\log N)/\sqrt{N}$, incurring an error $e^{-c(\log N)^2}$. Then by similarly expanding around $(\gamma_*(\theta),\gamma_*(\theta))$, we obtain
\begin{align*}
 & \int \exp\Big(\langle \bu,\bsig^{1}+\bsig^{2}\rangle+\langle \bA,(\bsig^1)^{\otimes 2}+(\bsig^2)^{\otimes 2}\rangle\Big)\exp\left(2\theta\langle\bsig^{1},\bsig^{2}\rangle\right)\mu_0^{\otimes 2}(\de\bsig)\\
 & = \left(\frac{\Gamma(N/2)}{(2\pi)N^{N/2-1}}\right)^2  e^{2NG_{\theta}(\gamma_*(\theta))} \lt\{\frac{2\pi}{N \det(\nabla^2 \tilde{G}_\theta(\gamma_*(\theta),\gamma_*(\theta)))^{1/2}} + O(N^{-3/2+\epsilon})\rt\}
\end{align*}

When $\|\bu\|^{2} \le \delta N$, we have for $G$ as in (\ref{eq:GDef}) that
\[
G_{\theta}(\gamma_{*}(\theta))-G(\gamma_{*})=O(\theta^{2})+O(\theta \|\bu\|^2/N).
\]
On the other hand, by Lemma \ref{lem:laplace},
\begin{align*}
&\int \exp(\langle \bu,\bsig^{1}+\bsig^{2}\rangle+\langle \bA,(\bsig^1)^{\otimes 2}+(\bsig^2)^{\otimes 2}\rangle)\mu_0^{\otimes 2}(\de\bsig)\\
&=(1+O(N^{-c})) \left(\frac{\Gamma(N/2)}{(2\pi)N^{N/2-1}}\right)^2 \left(\sqrt{\frac{2\pi}{NG''(\gamma_*)}} + O(N^{-3/2+\eps})\right)^2.
\end{align*}
In particular, 
\begin{align*}
&\frac{\int_{t\le|\langle\bsig^{1},\bsig^{2}\rangle_N|}\exp(H_{\le2}(\bsig^{1})+H_{\le2}(\bsig^{2}))\mu_0^{\otimes 2}(\de\bsig)}{\int_{\bsig^{1},\bsig^{2}}\exp(H_{\le2}(\bsig^{1})+H_{\le2}(\bsig^{2}))\mu_0^{\otimes 2}(\de\bsig)} \\
&\le \exp(-2N\theta t) \frac{\int_{\bsig^{1},\bsig^{2}}\exp(\langle \bu,\bsig^{1}+\bsig^{2}\rangle + N\langle \bA,(\bsig^{1})^{\otimes 2}+(\bsig^{2})^{\otimes 2}\rangle)\exp\left(2\theta\langle\bsig^{1},\bsig^{2}\rangle\right)\mu_0^{\otimes 2}(\de\bsig)}{\int_{\bsig^{1},\bsig^{2}}\exp(\langle \bu,\bsig^{1}+\bsig^{2}\rangle + N\langle \bA,(\bsig^{1})^{\otimes 2}+(\bsig^{2})^{\otimes 2}\rangle)\mu_0^{\otimes 2}(\de\bsig)} \\
&\le\exp\Big(-2N\theta t + O(N\theta^2) + O(\theta \|\bu\|^2)\Big).
\end{align*}
Optimizing over $\theta$, we obtain Eq.~\eqref{eq:tail}.
\end{proof}

We will also need the following lemma, giving an accurate expansion of moments of overlaps in perturbations of quadratic Hamiltonians.
\begin{lem}\label{lem:laplace-moments-2}
    Let $\bA\in\bbR^{N\times N}$ be a GOE matrix scaled by $\alpha < 1/2$ with eigenvalues given by $\bLambda$. Let $\bDel\in \bbR^{N\times N}$ be an independent GOE matrix scaled by $\beta>0$ and $|\zeta_1|, |\zeta_2| \le C(\log N)/\sqrt{N}$. For
$i=1,2$, let $\tilde{\bLambda}_i = \bLambda + \zeta_i \bDel$. Assume that $\bu\in \bbR^N$ is such that $\|\bu\|\le N^{c_0}$. Let $r\ge 0$ and $L>0$. We have that the following claim holds with probability at least $1-\exp(-cN)$: There exist $C_{i,j} = O_{r,L} (\|\bu\|^{2r}+N^{\lfloor r/2\rfloor})$ for $i,j\le L$ such that
    \begin{align}
        &\frac{\int \<\bsig^{1},\bsig^{2}\>^{r}\exp\lt(\sum_{i=1}^{2}\<\bu,\bsig^i\>+\<\bsig^i,\tilde{\bLambda}_i\bsig^i\>\rt)\mu_0^{\otimes 2}(\de\bsig)}{\int \exp\lt(\sum_{i=1}^{2}\<\bu,\bsig^i\>+\<\bsig^i,{\bLambda}\bsig^i\>\rt)\mu_0^{\otimes 2}(\de\bsig)} \nonumber\\
        &= C_{0,0} + \sum_{i,j=0, (i,j)\ne (0,0)}^{L}C_{i,j}\zeta_1^i\zeta_2^j + O_L(N^{-L/2}+e^{-N^c}). \label{eq:laplace-moments}
    \end{align}
\end{lem}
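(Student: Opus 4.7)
The strategy is to carry the Laplace-transform saddle-point analysis of Lemmas~\ref{lem:laplace} and \ref{lem:laplace-moments} through with the perturbed quadratic form $\tilde{\bLambda}_i = \bLambda + \zeta_i\bDel$ kept intact, and then Taylor-expand the resulting closed-form expression in $(\zeta_1,\zeta_2)$. On the event of probability $1-e^{-cN}$ on which $\|\bLambda\|_\op$ and $\|\bDel\|_\op$ are $O(1)$, for $|\zeta_i|\le C(\log N)/\sqrt{N}$ the resolvent $(\gamma\bI-\tilde{\bLambda}_i)^{-1}$ is jointly analytic in $(\gamma,\zeta_i)$ on a fixed neighborhood of $(\gamma_*,0)$, and
\[
\tilde{G}_i(\gamma;\zeta_i) := \gamma - \frac{1}{2N}\log\det(\gamma\bI-\tilde{\bLambda}_i) + \frac{1}{4N}\<\bu,(\gamma\bI-\tilde{\bLambda}_i)^{-1}\bu\>
\]
has a unique stationary point $\tilde{\gamma}_{*,i}(\zeta_i)>\lambda_{\max}(\tilde{\bLambda}_i)$ lying in a fixed compact set bounded away from that spectrum. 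Repeating the Laplace inversion and steepest-descent calculation of Lemma~\ref{lem:laplace} with $\tilde{\bLambda}_i$ in place of $\bLambda$, together with the Gaussian-integral bookkeeping of the $\frak{G}$-factor from Lemma~\ref{lem:laplace-moments} to accommodate the $\<\bsig^1,\bsig^2\>^r$ insertion, yields
\[
\frac{\text{Num}(\zeta_1,\zeta_2)}{\text{Den}} = \Phi(\zeta_1,\zeta_2)\exp\Big(N\sum_{i=1}^{2}\bigl(\tilde{G}_i(\tilde{\gamma}_{*,i};\zeta_i)-G(\gamma_*)\bigr)\Big)
\]
up to a relative error $O(e^{-c(\log N)^2})$ from the tails of the $z$-integrals, where $\Phi$ collects the Gaussian prefactor and the $\<\bsig^1,\bsig^2\>^r$-moment contribution.

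Both factors on the right are analytic in $(\zeta_1,\zeta_2)$ on the relevant domain, so the proof reduces to Taylor-expanding each to separate order $L$ in $\zeta_1$ and $\zeta_2$ and controlling coefficients and remainders. For the exponential, the envelope theorem gives $\frac{d}{d\zeta_i}\tilde{G}_i(\tilde{\gamma}_{*,i};\zeta_i) = \partial_{\zeta_i}\tilde{G}_i(\tilde{\gamma}_{*,i};\zeta_i)$, and iterating the chain rule using the stationarity condition expresses each $\frac{d^k}{d\zeta_i^k}\tilde{G}_i(\tilde{\gamma}_{*,i};\zeta_i)\big|_{\zeta_i=0}$ as a rational function of the normalized traces $N^{-1}\Tr\bigl[((\gamma_*\bI-\bLambda)^{-1}\bDel)^j\bigr]$ and the quadratic forms $N^{-1}\<\bu,(\gamma_*\bI-\bLambda)^{-1}(\bDel(\gamma_*\bI-\bLambda)^{-1})^j\bu\>$. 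Standard GOE concentration, as already used in the proof of Lemma~\ref{lem:laplace}, keeps each such quantity $O(1)$ with probability $1-e^{-cN}$, so $N\bigl(\tilde{G}_i(\tilde{\gamma}_{*,i};\zeta_i)-G(\gamma_*)\bigr) = \sum_{k\ge 1}\zeta_i^k\beta_{i,k}$ with $\beta_{i,k}=O_k(1)$. Expanding the exponential to bi-degree $(L,L)$ then produces bounded Taylor coefficients together with a remainder of size $O_L((\log N)^{L+1}N^{-(L+1)/2})$, which fits within the stated $O_L(N^{-L/2})$ error.

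For the prefactor $\Phi(\zeta_1,\zeta_2)$, the value at $\zeta=0$ reduces (by the same Gaussian computation as in Lemma~\ref{lem:laplace-moments}) to an expectation of $\<\bsig^1,\bsig^2\>^r$ against the product Gaussian measure derived from the resolvent $(\gamma_*\bI-\bLambda)^{-1}$. The pair-counting argument of Lemma~\ref{lem:laplace-moments}, specialized to $k=0$ and $R=r$, gives $\Phi(0,0)=O_r(\|\bu\|^{2r}+N^{\lfloor r/2\rfloor})$, each contracted pair of overlap factors weighing $O(\|\bu\|^2/N)$ when absorbed into a $\bu$-insertion and $O(1/N)$ otherwise. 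Each higher Taylor coefficient $\partial_{\zeta_1}^i\partial_{\zeta_2}^j\Phi(0,0)/(i!j!)$ merely introduces, at each order in $\zeta_i$, a few additional $\bDel$-insertions into the same Gaussian integral whose contribution is again $O(1)$ by GOE concentration, so the bound $O_{r,L}(\|\bu\|^{2r}+N^{\lfloor r/2\rfloor})$ carries over uniformly to every $C_{i,j}$ with $i,j\le L$; combining with the exponential expansion and multiplying the error terms then gives the claimed form. The main obstacle lies in this last step: re-running the combinatorial pair-counting of Lemma~\ref{lem:laplace-moments} uniformly across Taylor orders $(i,j)$ up to $(L,L)$ and checking that the additional $\bDel$-insertions contribute only $O_L(1)$ factors by GOE concentration, leaving the base bound $O(\|\bu\|^{2r}+N^{\lfloor r/2\rfloor})$ intact. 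This is essentially the same bookkeeping as in the proof of that lemma, with the perturbative $\bDel$-corrections tracked explicitly.
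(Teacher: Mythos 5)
Your overall strategy matches the paper's: multivariate Laplace transform, steepest descent at a saddle near $\gamma_*$, then a Taylor expansion of the result in $(\zeta_1,\zeta_2)$, with the Gaussian pair-counting of Lemma~\ref{lem:laplace-moments} controlling the prefactor $\Phi$. (The paper keeps the contour at the unperturbed $\gamma_*$ and verifies $|\tilde G_i'(\gamma_*)|\le N^{-1+o(1)}$ instead of moving to the perturbed saddle $\tilde\gamma_{*,i}$, but that is a cosmetic difference, and your bound $\|\partial_{\zeta_i}(\gamma\bI-\tilde\bLambda_i)^{-1}\|_{\op}=O(\beta)$ for the prefactor is exactly what the paper uses.)

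The gap is in your claim that $N\bigl(\tilde G_i(\tilde\gamma_{*,i};\zeta_i)-G(\gamma_*)\bigr)=\sum_{k\ge1}\zeta_i^k\beta_{i,k}$ with $\beta_{i,k}=O_k(1)$. The quantities entering $\beta_{i,k}$ are the \emph{un-normalized} traces $\Tr\bigl[((\gamma_*\bI-\bLambda)^{-1}\bDel)^k\bigr]$, not $N^{-1}$ times them. At first order, $\beta_{i,1}=\tfrac12\Tr\bigl((\gamma_*\bI-\bLambda)^{-1}\bDel\bigr)+\tfrac14\<\bu,(\gamma_*\bI-\bLambda)^{-1}\bDel(\gamma_*\bI-\bLambda)^{-1}\bu\>$ is a centered Gaussian of variance $\Theta(1)$, hence only $O(\sqrt N)$ on an event of probability $1-e^{-cN}$, not $O(1)$. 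Worse, at second order $\beta_{i,2}\approx\tfrac14\Tr\bigl(((\gamma_*\bI-\bLambda)^{-1}\bDel)^2\bigr)$ has mean $\tfrac{\beta^2}{4N}\bigl(\Tr(\gamma_*\bI-\bLambda)^{-1}\bigr)^2+O(1)=\Theta(N)$, because the stationarity condition forces $\Tr(\gamma_*\bI-\bLambda)^{-1}=2N+O(N^{2c_0})$. Thus $\zeta_i^2\beta_{i,2}=\Theta(\zeta_i^2N)$ can be as large as $(\log N)^2$: the exponential factor is genuinely not $1+O(\zeta)$, the coefficient of $\zeta_i^{2m}$ in its expansion is $\Theta(N^m)$, and neither your uniform bound $C_{i,j}=O_{r,L}(\|\bu\|^{2r}+N^{\lfloor r/2\rfloor})$ nor your remainder estimate $O_L((\log N)^{L+1}N^{-(L+1)/2})$ follows from the argument as written. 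This quadratic-in-$\zeta$ correction is not an artifact of the method: it is precisely the quantity $D_2$ that the paper must track explicitly in the proof of Lemma~\ref{lem:spin}, where it shifts the effective Gaussian weight on $\sigma_1$ by $\tfrac32\beta_3^2\sigma_1^2$. To close the gap you would need to isolate these un-normalized trace contributions in the exponent and account for them explicitly (showing how they are absorbed into, or cancel from, the claimed expansion), rather than dismissing them via concentration of normalized traces.
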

\begin{proof}
    Consider
    \begin{align*}
        E(z_1,z_{2})
        &:= (\prod_{i=1}^{2}z_i)^{N/2-1}\int \<\bsig^{1}\sqrt{z_{1}},\bsig^{2}\sqrt{z_{2}}\>^{r}\exp\lt(\sum_{i=1}^{2}\<\bu,\bsig^i\>\sqrt{z_i}+\<\bsig^i,\tilde{\bLambda}_i\bsig^i\>z_i\rt)\mu_0^{\otimes 2}(\de\bsig).
    \end{align*}
    Then the multivariate Laplace transform of $E$ is
    \begin{align*}
        F(N(t_1,t_{2})) &= \frac{\Gamma(N/2)^{2}}{(N\pi)^{N}} \int_{\bbR^N} \<\by^{1},\by^{2}\>^{r} \exp\lt(-\sum_{i=1}^{2}\lt(t_i\|\by^i\|^2 - \<\bu,\by^i\> - \<\tilde{\bLambda}_i, (\by^i)^{\otimes2}\>\rt)\rt) \de\by\\
        &= \frac{\Gamma(N/2)^{2}}{N^{N}}\exp\lt\{-\sum_{i=1}^{2}\lt(\frac{1}{2}\log \det(t_i \bI_N - \tilde{\bLambda}_i)+\sum_{h'=1}^{N}\frac{1}{4N}\<{(t_i \bI_N - \tilde{\bLambda}_i)^{-1}, \bu\bu^T}\>\rt)\rt\} \cdot \frak{G}(\sigma),
    \end{align*}
    where, for $\bZ^i = (t_i \bI_N -\tilde{\bLambda}_i)^{-1}$, 
    $\by^i = \frac{1}{2}\bZ^i\bu + \bw^i$ and $\bw^i$ independently distributed according $\cN\lt(0,\frac{1}{2}\bZ^i\rt)$,
    \begin{align}
        \frak{G}(t_1,t_2;\sigma_1,\sigma_2) &:= \bbE \lt[ \<\by^{1},\by^{2}\>^{r}\rt] \nonumber\\
        &= \bbE \bigg[ \bigg(\<\bw^{1},\bw^{2}\>+\frac{1}{2}\<\bw^{1},\bZ^2\bu\>+\frac{1}{2}\<\bw^{2},\bZ^1\bu\> +\frac{1}{4}\<\bZ^1\bu,\bZ^2\bu\>\bigg)^{r}\bigg].\label{eq:moment-G}
    \end{align}
    Let $\frak{G}_0(t_1,t_2) = \frak{G}(t_1,t_2;0,0)$. Note that $\frak{G}$ is a rational function of $t_i$, and hence extends to complex values of $t_i$. We next consider the Taylor expansion in $\zeta_1,\zeta_2$ of $\frak{G}$.
    Write $\bw^i = (\frac{1}{2}\bZ^i)^{1/2} \tbw^i$ for $\tbw^i \sim \cN(0,\bI_N)$. Note that
    \begin{align*}
        \|\partial_{\zeta_1}^i \bZ^1\|_{\op} = O_{i}(\beta).
    \end{align*}
    We can thus bound the derivatives of $\frak{G}(t_1,t_2;\zeta_1,\zeta_2)$ for $|t_1-\gamma_*|, |t_2-\gamma_*| \le C(\log N)/\sqrt{N}$ as
    \begin{align}
        |\partial_{\zeta_1}^i \partial_{\zeta_2}^j \frak{G}(t_1,t_2;\zeta_1,\zeta_2)|\le O_{r,i+j}\lt(\|\bu\|^{2r} + N^{r/2}\rt)
    \end{align}
    for $r$ even, and
    \begin{align}
        |\partial_{\zeta_1}^i \partial_{\zeta_2}^j \frak{G}(t_1,t_2;\zeta_1,\zeta_2)|\le O_{r,i+j}\lt(\|\bu\|^{2r} + \|\bu\|^2 N^{(r-1)/2}\rt)
    \end{align}
    for $r$ odd.
    We can thus write
    \begin{align*}
        \frak{G}(t_1,t_2;\zeta_1,\zeta_2) = \frak{G}_0(t_1,t_2)+\sum_{i,j\le L, (i,j)\ne (0,0)}C_{i,j} \zeta_1^i\zeta_2^j + O(\max(\zeta_1,\zeta_2)^{L+1}),
    \end{align*}
    where $|C_{i,j}| = O_{r,i+j}\lt(\|\bu\|^{2r} + N^{\lfloor r/2\rfloor}\rt)$.

    Let
    \begin{align}
        F(\zeta_1,\zeta_2) :=\frac{\int \<\bsig^{1},\bsig^{2}\>^{r}\exp\lt(\sum_{i=1}^{2}\<\bu,\bsig^i\>+\<\bsig^i,\tilde{\bLambda}_i\bsig^i\>\rt)\mu_0^{\otimes 2}(\de\bsig)}{\int \exp\lt(\sum_{i=1}^{2}\<\bu,\bsig^i\>+\<\bsig^i,{\bLambda}\bsig^i\>\rt)\mu_0^{\otimes 2}(\de\bsig)}.
    \end{align}
    Next, we take the inverse Laplace transform and integrate on $t_i = \gamma_* + ix_i$, for $\gamma_*$ defined in Eq.~(\ref{eq:GammaDef}). We note that, for $G(\gamma) = G(\gamma; \bLambda, \bu)$ and $\tilde{G}_i(\gamma) = G(\gamma; \tilde{\bLambda}_i, \bu)$,
    \begin{align*}
        \tilde{G}_i'(z) &= G'(z) + \frac{1}{2N}\Tr((z-\bLambda)^{-1}(\bI - (z-\bLambda)(z-\tilde{\bLambda}_i)^{-1})) \\
        &\qquad + \frac{1}{4N} \<\bu, (z-\bLambda)^{-1}(\bI - (z-\bLambda)(z-\tilde{\bLambda}_i)^{-2}(z-\bLambda))(z-\bLambda)^{-1}\bu\>.
    \end{align*}
    Moreover, $(z-\bLambda)(z-\tilde{\bLambda})^{-1} = (\bI - \zeta_i\bDel(z-\bLambda)^{-1})^{-1}$, and $(z-\bLambda)(z-\tilde{\bLambda}_i)^{-2}(z-\bLambda) = (\bI - \zeta_i\bDel(z-\bLambda)^{-1})^{-1}(\bI - (z-\bLambda)^{-1}\zeta_i\bDel)^{-1}$. Expanding in $\zeta_i \bDel$, we can show that for $|\zeta_i| \le C(\log N)/\sqrt{N}$, $|\tilde{G}_i'(\gamma_*)| \le N^{-1+o(1)}$. Hence, by an argument similar to Lemma \ref{lem:laplace}, we can restrict the integration on $t_i = \gamma_* + ix_i$ to the range $x_i \in [-\ell_N,\ell_N]$ for $\ell_N = (\log N)/\sqrt{N}$, and obtain that
    \begin{align*}
        F(\zeta_1,\zeta_2) = F(0,0) + \sum_{i,j\le L, (i,j)\ne (0,0)}C_{i,j}\zeta_1^i\zeta_2^j + O_L(N^{-L/2}+e^{-N^c}). 
    \end{align*}
\end{proof}

%
%
\subsubsection{Estimates of restricted partition functions}

In this section we estimate modified partition functions that are obtained
by suitable restrictions of the integral over $\bsig$, always under the assumption
\eqref{eq:replica-symmetry-repeat}.
Namely, for any Borel set $U\subseteq (S_N)^{\otimes m}$,
\begin{align}
    Z_m(U):=\int_U e^{\sum_{i=1}^mH(\bsig^i)}\mu_0^{\otimes m}(\de\bsig)\, ,
\end{align}
with subscript omitted if $m=1$. If $U=S_N$, we write simply $Z=Z(S_N)$.
We also denote by $Z_{\le 2,m}(U)$ the same integral whereby $H(\bsig)$ is
replaced by $H_{\le 2}(\bsig)$:

\begin{align}
    Z_m(U):=\int_U e^{\sum_{i=1}^mH_{\le 2}(\bsig^i)}\mu_0^{\otimes m}(\de\bsig)\, ,
\end{align}
We will occasionally omit the subscript $m$ when the dimension of $U$ is clear from the context.

Throughout this section, we follow the notations
$\<\bx,\by\>_N = \langle \bx,\by\rangle/N$, so $\langle \bx,\bx\rangle_N = \|\bx\|_N^2$.

As for the restrictions, an important role is played by the typical set:
\begin{align}
        T(\delta)=\left\{\bsig\in S_N:\int_{\bsig':|\<\bsig',\bsig\>_N|>\delta}
        e^{H(\bsig')}\mu_0(\de\bsig')<e^{-c_1(\delta)N}\min\lt(\int e^{H(\bsig)}\mu_0(\de\bsig); e^{N\xi(1)/2}\rt)\right\}\, .\label{eq:TypicalDef}
    \end{align}
We further define $A_{m}(\delta)\subseteq (S_N)^m$ to be the set of $m$-uples
of vectors which are nearly orthogonal. Namely:
\begin{align}\label{eq:Am_def}
A_{m}(\delta):=\Big\{(\bsig^i)_{i\le m}:\, \bsig^i\in S_N, |\<\bsig^i,\bsig^j\>_N|\le \delta\,\,\forall i\neq j\Big\}\, .
\end{align}
Finally, we consider  the set of $m$-uples in $T=T(\delta)$ that are nearly orthogonal:
\begin{align}\label{eq:AmT_def}
A_{m}(T,\delta):=\Big\{(\bsig^i)_{i\le m}:\, \bsig^i\in T, |\<\bsig^i,\bsig^j\>_N|\le \delta\,\,\forall i\neq j\Big\}\, .
\end{align}
In particular $A_{m}(T,\delta) = T^{m}\cap A_m(\delta)$.

Our first lemma establishes that, under the Gibbs measure, non-typical points are exponentially rare.
\begin{lem}[Most points are typical]\label{lem:truncate}
    For any $\delta > 0$, there exists $u(\delta), c_1(\delta),c_2(\delta) > 0$ such that the following holds.
    Let $H(\bsig)$ be defined as per Eq.~\eqref{eq:FirstLocal} and suppose $\norm{\bu} \le u(\delta) \sqrt{N}$.
    Let $T(\delta)$ be defined as per Eq.~\eqref{eq:TypicalDef}.

    Then, with probability at least $1-\exp(-c_2(\delta)N)$,
    \begin{align}\label{eq:truncate}
        Z(T(\delta))\ge(1-e^{-Nc_2(\delta)})\cdot Z\,  .
    \end{align}
    Furthermore, there is $c_3(\delta)>0$ such that, with probability at least $1-\exp(-c_3(\delta)N)$,
    \begin{align}\label{eq:truncate-quad}
        Z_{\le 2}(T(\delta)^c) \le e^{-c_3(\delta)N} Z_{\le 2}.
    \end{align}
   Finally
   \beq
        \label{eq:atypical-small-in-expectation}
        \EE \lt[\int_{T(\delta)^c} e^{H_{\ge 2}(\bsig)} ~\mu_0(\de \bsig)\rt]
        \le e^{-c_1(\delta) N} \EE Z_{\ge 2}.
    \eeq
\end{lem}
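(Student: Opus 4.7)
The three estimates share a common skeleton: first establish the expectation-level bound \eqref{eq:atypical-small-in-expectation} (Part 3) via a planting change of measure and the replica-symmetric free-energy upper bound, then upgrade to the high-probability bounds \eqref{eq:truncate} and \eqref{eq:truncate-quad} by combining Markov with Gaussian concentration of $\log Z$ and $\log Z_{\le 2}$.

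The heart of the argument is Part 3. By Fubini,
\[
    \EE \int_{T(\delta)^c} e^{H_{\ge 2}(\bsig)} \mu_0(\de\bsig)
    = \int_{S_N} \EE\big[\ind\{\bsig \in T(\delta)^c\}\, e^{H_{\ge 2}(\bsig)}\big]\, \mu_0(\de\bsig),
\]
and for each fixed $\bsig$ the density $e^{H_{\ge 2}(\bsig)}/\EE e^{H_{\ge 2}(\bsig)} = e^{H_{\ge 2}(\bsig) - N\xi(1)/2}$ implements the planted change of measure of Subsection~\ref{subsec:planted}, under which $H_{\ge 2}(\cdot) \stackrel{d}{=} N\xi(\la\bsig,\cdot\ra_N) + \tilde H_{\ge 2}(\cdot)$ for an independent copy $\tilde H_{\ge 2}$. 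It then suffices to show $\PP_{\mathrm{pl},\bsig}(\bsig \in T(\delta)^c) \le e^{-cN}$ uniformly in $\bsig$. My plan is to bound the planted band partition function $\int_{|\la \bsig',\bsig\ra_N|\ge\delta}e^{H(\bsig')}\mu_0(\de\bsig')$ by discretizing the overlap $q$, applying Proposition~\ref{ppn:parisi-rs-ub} to each thin band (with the natural effective mixture after projecting onto the orthogonal complement of $\bsig$), and passing from per-band expectations to a uniform high-probability bound via a net in $q$, a Lipschitz estimate on the band free energy, and Borell--TIS. The RS hypothesis $\xi(q) + q + \log(1-q) < 0$ forces the per-band free energy to be strictly smaller than $\xi(1)/2$, and compactness of $\{|q|\ge\delta\}$ yields a gap $2c(\delta)>0$. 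Matching this gap against the RS lower bound $\log Z \ge N\xi(1)/2 - o_N(N)$ (which holds with probability $1-e^{-cN}$ by concentration of $\log Z$) then gives the required estimate as soon as $c_1(\delta) < c(\delta)$.

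Part 1 follows by running the same planting computation for the full $H = H_{\ge 2} + \la\bu,\cdot\ra$: the linear term contributes a factor $e^{\la\bu,\bsig\ra}$ that appears in both $\EE Z(T(\delta)^c)$ and $\EE Z$ and cancels in the ratio, while the resulting $O(\|\bu\|_N)$ perturbation of band free energies is absorbed into $c(\delta)$ by choosing $u(\delta)$ small enough. This yields $\EE Z(T(\delta)^c) \le e^{-cN}\EE Z$; Markov then delivers $Z(T(\delta)^c) \le e^{-cN/2}\EE Z$ with probability $1-e^{-cN/2}$, and Borell--TIS applied to the $O(1/\sqrt{N})$-Lipschitz-in-disorder functional $\log Z$, combined with the RS identity $\EE \log Z / N \to \xi(1)/2 = \log \EE Z / N + O(\|\bu\|_N^2)$, gives $Z \ge e^{-\iota N}\EE Z$ for arbitrarily small $\iota$ with the same type of probability; choosing $\iota$ small relative to $c$ finishes Part 1. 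For Part 2 I will condition on $H_{\ge 3}$ (independent of $H_{\le 2}$) and apply the planting change of measure only to $H_{\le 2}$: the planted $H_{\le 2}$ acquires a weak quadratic signal $\beta_2^2 \la\bsig,\cdot\ra^2/N$ which under $\beta_2^2 = \tfrac12\xi''(0) < \tfrac12$ keeps the quadratic model in its high-temperature phase, and Lemma~\ref{lem:laplace} supplies sharp concentration of $\log Z_{\le 2}$ (fluctuations of order $\log N$), so the same Markov + concentration bootstrap delivers \eqref{eq:truncate-quad}.

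The main technical obstacle will be the uniform-in-$\bsig$ large-deviation control of the planted band integrals in Part 3: assembling a net argument in $q\in[\delta,1)$ with the right Lipschitz estimate on band free energies, and verifying that the strict inequality in Proposition~\ref{ppn:parisi-rs-ub} is preserved under the combined $N\xi(\la\bsig,\cdot\ra_N)$ drift and $\la\bu,\cdot\ra$ field, so that the gap $c(\delta)$ survives. Once this uniform planted band estimate is in hand, integrating over $\bsig \in S_N$ and running the Markov + Borell--TIS bootstraps for $\log Z$ and $\log Z_{\le 2}$ is routine.
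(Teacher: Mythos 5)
Your proposal is correct and follows essentially the same route as the paper: the deterministic bound $|\langle \bu,\bsig\rangle|\le u(\delta)N$ to absorb the external field, Markov's inequality on the atypical mass, and concentration of $\fr1N\log Z_{\ge 2}$ (resp. $\fr1N\log Z_{\le 2}$) to convert the expectation bounds into the high-probability statements \eqref{eq:truncate} and \eqref{eq:truncate-quad}. The one difference is that for the core estimate \eqref{eq:atypical-small-in-expectation} the paper simply invokes \cite[Proposition 3.1]{huang2023constructive} (noting that its proof gives an exponentially small, not just $o(1)$, error), whereas you re-derive it via the planted change of measure, the per-band replica-symmetric upper bound of Proposition~\ref{ppn:parisi-rs-ub}, and a net/Lipschitz/Borell--TIS argument; that is precisely the argument underlying the cited result (and mirrors Lemma~\ref{lem:key-error} in this paper), so you are doing more work but not taking a genuinely different path.
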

\begin{proof}
    The second inequality in \eqref{eq:replica-symmetry-repeat} is 
    termed ``strictly RS" in \cite{huang2023constructive}, see Eq. (2.7) therein.
    By Proposition 3.1 of that paper,
    \[
        \EE \int_{T(\delta)} e^{H_{\ge 2}(\bsig)} ~\mu_0(\de \bsig)
        \ge (1-e^{-c_1(\delta) N}) e^{N\xi(1)/2}.
    \]
    (While this proposition states a bound of $(1-o(1)) \exp(N\xi(1)/2)$, its proof shows the $1-o(1)$ is in fact $1-e^{-c_1(\delta) N}$.)
    As $\EE Z_{\ge 2} = \exp(N\xi(1)/2)$, for $Z_{\ge 2} := \int_{S_N} \exp H_{\ge 2}(\bsig) ~\mu_0(\de \bsig)$,
    this implies Eq.~\eqref{eq:atypical-small-in-expectation}.

    By Markov's inequality, with probability $1-e^{-c_1(\delta) N/5}$,
    \[
        \int_{T(\delta)^c} e^{H_{\ge 2}(\bsig)} ~\mu_0(\de \bsig)
        \le e^{-4c_1(\delta) N/5} \EE Z_{\ge 2}.
    \]
    By \cite[Proposition 2.3]{talagrand2006spherical}, \eqref{eq:replica-symmetry-repeat} implies that $\fr1N \log Z_{\ge 2} \to_p \xi(1)/2$.
    By standard concentration properties of $\fr1N \log Z_{\ge 2}$, with probability $1-e^{-c_2(\delta) N}$,
    \[
        Z_{\ge 2} \ge e^{-c_1(\delta) N/5} \EE Z_{\ge 2}.
    \]
    On the intersection of these events,
    \[
        \int_{T(\delta)^c} e^{H_{\ge 2}(\bsig)} ~\mu_0(\de \bsig)
        \le e^{-3c_1(\delta) N/5} Z_{\ge 2}.
    \]
    Finally, set $u(\delta) = c_1(\delta) / 5$, so that for all $\bsig \in S_N$,
    \[
        |H(\bsig) - H_{\ge 2}(\bsig)| = |\la \bu, \bsig \ra| \le c_1(\delta) N/5.
    \]
    Thus
    \[
        \int_{T(\delta)^c} e^{H(\bsig)} ~\mu_0(\de \bsig)
        \le e^{-c_1(\delta) N/5}
        \int_{S_N} e^{H(\bsig)} ~\mu_0(\de \bsig).
    \]
    The conclusion (\ref{eq:truncate}) follows with $c(\delta) = \min(c_1(\delta)/6, c_2(\delta)/2)$.

    Finally, from Markov's inequality, we have with probability $1-e^{-c_3(\delta)N}$ that $$Z_{\le 2}(T(\delta)^c) \le e^{-c_3(\delta)N} e^{N\xi_{\le2}(1)/2}.$$ Then (\ref{eq:truncate-quad}) follows from standard concentration properties.
\end{proof}

The next lemma states that we can anneal over terms
of degree higher than $2$ in the Hamiltonian. This will be the most important
technical result of the section.
\begin{lem}
\label{lem:high-acc-anneal}
Let $H(\bsig)$ be defined as per Eq.~\eqref{eq:FirstLocal} and define
 $T=T(\delta)$ as in Eq.~\eqref{eq:TypicalDef}. Assume that $\|\bu\| \le N^{c_0}$ for $c_0$ sufficiently small given $\xi$.
Under assumption \eqref{eq:replica-symmetry-repeat},
for all $L,k>0$
and $\eps>0$, there exist $C=C(L,k)>0$ such that
the following holds with probability at least $1-\exp(-N/C)$
\begin{align}
\bbE_{\ge3}\Big\{\big(Z(T)-\bbE_{\ge3}Z(T)\big)^{2k}\Big\}
\le C\, N^{-L/2}\, \big(\bbE_{\ge3}Z(T)\big)^{2k}\, ,
\label{eq:ZT_MomentBound}
\end{align}
and further
\begin{align}\label{eq:HighAcc-PartitionFun}
\PP\Big\{\big| Z-\bbE_{\ge3}Z(T) \big|>\eps\, \bbE_{\ge3}Z(T)\Big\} \le C \eps^{-2L}N^{-L/2}  + e^{-N/C}.
\end{align}
We also have, with probability at least $1-\exp(-N/C)$
\begin{align}\label{eq:Z3T}
    \bbE_{\ge3} Z(T) = (1+O(e^{-N/C})) \bbE_{\ge3} Z.
\end{align}
Further, letting $(\bv_k)_{k\le N}$ be the basis of
eigenvectors of $\bW_2$, for each $i\in [N]$,
\begin{align}
\PP\left(\int_{T}\<\bv_i,\bsig\> \, e^{H(\bsig)}\mu_0(\de\bsig)\ge N^{\eps}\|\bu\|^{Ck}(|\<\bv_i,\bu\>|+C\, N^{-1/2})\mathbb{E}_{\ge3}\int e^{H(\bsig)}\mu_0(\de\bsig)\right) \le C \lt(N^{-2\eps k}+ e^{-N/C}\rt).\label{eq:Magn-No-Field}
\end{align}
\end{lem}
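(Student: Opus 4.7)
\medskip
\noindent\textbf{Proof proposal.} The plan is to prove the four assertions by computing moments of $Z(T)$ and of the magnetization integral under the randomness of $H_{\ge 3}$, conditional on the quadratic disorder $H_{\le 2}$. Since $H_{\ge 3}(\bsig)$ is a centered Gaussian with covariance $\bbE[H_{\ge 3}(\bsig^1)H_{\ge 3}(\bsig^2)] = N\xi_{\ge 3}(\<\bsig^1,\bsig^2\>_N)$, where $\xi_{\ge 3}(s) = \sum_{p\ge 3}\beta_p^2 s^p = O(s^3)$ near $0$, direct Gaussian integration gives
\[
    \bbE_{\ge 3} Z(T)^{2k} = e^{kN\xi_{\ge 3}(1)} \int_{T^{2k}} e^{\sum_j H_{\le 2}(\bsig^j)} \exp\Big(N\sum_{j<j'}\xi_{\ge 3}(\<\bsig^j,\bsig^{j'}\>_N)\Big) \, \mu_0^{\otimes 2k}(\de\bsig),
\]
and $(\bbE_{\ge 3} Z(T))^{2k} = e^{kN\xi_{\ge 3}(1)} Z_{\le 2}(T)^{2k}$. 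Thus the ratio $\bbE_{\ge 3} Z(T)^{2k}/(\bbE_{\ge 3} Z(T))^{2k}$ equals the $2k$-replica average $\bbE[\exp(N\sum_{j<j'}\xi_{\ge 3}(\<\bsig^j,\bsig^{j'}\>_N))]$, where the $\bsig^j$ are i.i.d.\ draws from the quadratic Gibbs measure restricted to $T$, and the same for $Z(T)^m$ with any $m$.

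\medskip
\noindent The next step is to Taylor-expand this exponential to order $L$ and control each term via overlap moments. Lemma \ref{lem:laplace-moments} gives precise moment estimates $\bbE \prod_\ell \<\bsig^{i_\ell},\bsig^{j_\ell}\>^{r_\ell}$ under the purely quadratic model, of order $N^{-(\sum r_\ell)/2}$ up to logarithmic factors (in particular $O(N^{-1/2})$ per cross overlap). Since $\xi_{\ge 3}(q) = O(q^3)$, each factor $N\xi_{\ge 3}(\<\bsig^j,\bsig^{j'}\>_N)$ contributes typically $O(N^{-1/2})$, and the $m$-th Taylor term contributes $O(N^{-m/2})$. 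The truncation to $T$ is used to replace the quadratic-only measure by the quadratic restricted to typical points: by the definition of $T$ in \eqref{eq:TypicalDef} and Lemma \ref{lem:overlap-2}, pairs of $\bsig^j\in T$ have overlaps concentrated near $0$ with stretched-exponential tails, so excluding the tail contribution to the integrals costs at most $e^{-cN}$. Combining these yields $\bbE_{\ge 3} Z(T)^{2k}/(\bbE_{\ge 3} Z(T))^{2k} = 1 + O(N^{-L/2})$; the same expansion applied to $\bbE_{\ge 3} Z(T)^{m}$ for $m \le 2k$ and the binomial expansion of $(Z(T) - \bbE_{\ge 3} Z(T))^{2k}$ (whose $m=0,1$ terms cancel the constant and linear part in the above expansion) yields \eqref{eq:ZT_MomentBound}.

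\medskip
\noindent To pass from \eqref{eq:ZT_MomentBound} to the probability bound \eqref{eq:HighAcc-PartitionFun}, write $Z = Z(T) + Z(T^c)$; Lemma~\ref{lem:truncate}\eqref{eq:truncate} controls $Z(T^c) \le e^{-cN} Z$ with probability $1-e^{-cN}$, and Markov's inequality applied to the $2k$-th moment of $Z(T)-\bbE_{\ge 3}Z(T)$ delivers the stated $\eps^{-2L}N^{-L/2}$ tail bound. For \eqref{eq:Z3T}, observe $\bbE_{\ge 3} Z(T) = e^{N\xi_{\ge 3}(1)/2} Z_{\le 2}(T)$ and $\bbE_{\ge 3} Z = e^{N\xi_{\ge 3}(1)/2} Z_{\le 2}$, so the claim reduces to $Z_{\le 2}(T) = (1+O(e^{-cN})) Z_{\le 2}$, which is exactly \eqref{eq:truncate-quad} from Lemma~\ref{lem:truncate}.

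\medskip
\noindent The magnetization bound \eqref{eq:Magn-No-Field} follows the same template: letting $M_i := \int_T \<\bv_i,\bsig\>\, e^{H(\bsig)}\mu_0(\de\bsig)$, Gaussian integration yields
\[
    \bbE_{\ge 3} M_i^{2k} = e^{kN\xi_{\ge 3}(1)} \int_{T^{2k}} \prod_j \<\bv_i,\bsig^j\> \, e^{\sum_j H_{\le 2}(\bsig^j)} \exp\Big(N\!\sum_{j<j'}\!\xi_{\ge 3}(\<\bsig^j,\bsig^{j'}\>_N)\Big) \mu_0^{\otimes 2k}(\de\bsig).
\]
Expanding the cross-overlap exponential to a finite order $L$ and applying Lemma~\ref{lem:laplace-moments} to each resulting monomial (with $h$ the coordinate of $\bv_i$, since we work in the eigenbasis of $\bW^{(2)}$) produces terms of size $O_{k,L}(|\<\bv_i,\bu\>|^{2k-2a} N^{-(k-a)/2-a/2}(1+\|\bu\|)^{O_{k,L}(1)})$ for $a=0,\dots,k$. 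Up to the polynomial slack $\|\bu\|^{Ck}$ this is bounded by $(|\<\bv_i,\bu\>|+CN^{-1/2})^{2k}(\bbE_{\ge 3} Z)^{2k}$, and Markov's inequality with $k$ chosen large in $1/\eps$ delivers \eqref{eq:Magn-No-Field}. The main obstacle throughout is the combinatorial bookkeeping in the Taylor expansion: one must show that the only way an expansion term can have anomalously large size is when it exactly reproduces a factor of $|\<\bv_i,\bu\>|^2$ per replica pair, and all other combinations gain a factor $N^{-1/2}$; this is precisely what the cases of Lemma~\ref{lem:laplace-moments} encode, but converting those into the stated clean bound requires keeping track of which replicas are paired by $H_{\ge 3}$ contractions versus by the $\<\bv_i,\bsig^j\>$ insertions.
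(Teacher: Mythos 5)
Your overall architecture matches the paper's: anneal over $H_{\ge 3}$, restrict to near-orthogonal typical tuples, Taylor-expand the cross-overlap exponential, control each monomial via the quadratic-model overlap moments (Lemma~\ref{lem:laplace-moments}), and finish with Markov. However, the central quantitative step is wrong as stated. The uncentered ratio $\bbE_{\ge3}Z(T)^{2k}/(\bbE_{\ge3}Z(T))^{2k}$ is \emph{not} $1+O(N^{-L/2})$ for arbitrary $L$: the first Taylor term $N\sum_{j<j'}\xi_{\ge3}(\<\bsig^j,\bsig^{j'}\>_N)$ has nonvanishing expectation of order $N^{-1}$ or larger under the quadratic replica measure (e.g.\ $N\beta_4^2\,\bbE\<\bsig^1,\bsig^2\>_N^4\asymp N^{-1}$, plus field-dependent contributions), so the ratio is only $1+O(N^{-1+c})$. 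The $N^{-L/2}$ rate for the \emph{centered} $2k$-th moment comes entirely from a combinatorial cancellation that your sketch does not supply: in the binomial expansion $\sum_r(-1)^r\binom{2k}{r}(\bbE_{\ge3}Z(T))^{2k-r}\bbE_{\ge3}Z(A_r(T,\delta))$, every Taylor monomial whose pairs touch $n<2k$ distinct replicas carries the coefficient $\sum_{r}(-1)^r\binom{2k-n}{r-n}=0$; since a degree-$\ell$ monomial touches at most $2\ell$ replicas, truncating the Taylor expansion at order $L<k$ annihilates the \emph{entire} main term, leaving only the Taylor remainder of size $N^{-L/2}$ (the paper's $\Err_2$) plus boundary errors. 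Your statement that ``the $m=0,1$ terms cancel the constant and linear part'' only buys cancellation to order $N^{-1}$, i.e.\ $L=2$; it cannot produce \eqref{eq:ZT_MomentBound} for large $L$, and the crucial constraint $L<k$ linking the Taylor order to the moment order never appears in your argument.

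A second, smaller but genuine error: the factorizations $\bbE_{\ge3}Z(T)=e^{N\xi_{\ge3}(1)/2}Z_{\le2}(T)$ and $(\bbE_{\ge3}Z(T))^{2k}=e^{kN\xi_{\ge3}(1)}Z_{\le2}(T)^{2k}$ are false, because $T=T(\delta)$ in \eqref{eq:TypicalDef} is defined through the full Hamiltonian $H$ and is therefore itself random under $\bbE_{\ge3}$. Consequently your derivation of \eqref{eq:Z3T} from \eqref{eq:truncate-quad} does not go through as written; the paper instead writes $\bbE_{\ge3}Z(T)=\bbE_{\ge3}Z-\bbE_{\ge3}Z(T^c)$, bounds $\bbE_{\ge3}Z(T^c)$ by Markov over $\bW^{(2)}$ using \eqref{eq:atypical-small-in-expectation}, and lower-bounds $\bbE_{\ge3}Z=e^{N\xi_{\ge3}(1)/2}Z_{\le2}$ via Lemma~\ref{lem:laplace}. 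The same issue forces the paper to introduce Lemmas~\ref{lem:key-error} and \ref{lemma:A-T-comparison} (the $\Err_1$ terms) to exchange restriction to $T^{2k}$ with restriction to $A_{2k}(\delta)$ at exponentially small annealed cost; your proposal silently assumes this exchange is free.
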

Before proving this lemma, we state and prove a number of key estimates.

Our first lemma establishes that (in expectation) the partition function
in $A_{2k}(\delta)$ is dominated by the subset $A_{2k}(\delta,T)$.
\begin{lem}[Orthogonal frames are mostly typical]\label{lem:key-error}
    Define
 $T=T(\delta)$ as in Eq.~\eqref{eq:TypicalDef}. We have for $\delta>0$ sufficiently small and appropriate $c, c'>0$ that, if $\|\bu\| \le c'\sqrt{N}$,
    \begin{align}\label{eq:key-error}
    \bbE\, Z_{2k}\big(\{(\bsig^i)_{i\le 2k}\in A_{2k}(\delta): \bsig^1\in T^c\big\} \big)
    \le e^{-cN}  \bbE\, Z_{2k}\big(A_{2k}(\delta)\big).
\end{align}
As a consequence,
 \begin{align}\label{eq:key-error-2}
    \bbE\, Z_{2k}\big( A_{2k}(\delta,T) \big)
    \ge (1-e^{-cN})  \bbE\, Z_{2k}\big(A_{2k}(\delta)\big).
\end{align}
\end{lem}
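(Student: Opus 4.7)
The second conclusion \eqref{eq:key-error-2} follows from \eqref{eq:key-error} by the union bound $A_{2k}(\delta)\setminus A_{2k}(T,\delta) \subseteq \bigcup_{i=1}^{2k}\{(\bsig^j) \in A_{2k}(\delta): \bsig^i \in T^c\}$ combined with the symmetry among replicas, so I focus on proving \eqref{eq:key-error}. The plan is to linearize the event $\{\bsig^1 \in T^c\}$ using its defining inequality \eqref{eq:TypicalDef}, thereby introducing an extra replica $\bsig^0$:
\[
\ind\{\bsig^1 \in T^c\} \le \frac{e^{c_1(\delta) N}}{\min(Z, e^{N\xi(1)/2})} \int_{|\<\bsig^0,\bsig^1\>_N|>\delta} e^{H(\bsig^0)}\,\mu_0(\de\bsig^0).
\]
Under \eqref{eq:replica-symmetry-repeat}, the replica-symmetric upper bound (Proposition~\ref{ppn:parisi-rs-ub}) together with standard concentration of $N^{-1}\log Z$ implies that the event $\cE := \{Z \ge e^{N\xi(1)/2 - \eps N}\}$ holds with probability $1-e^{-c'(\eps)N}$; on $\cE$ the prefactor above is at most $e^{(c_1(\delta)+\eps)N - N\xi(1)/2}$. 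The contribution of $\cE^c$ to the left-hand side of \eqref{eq:key-error} is controlled by Cauchy--Schwarz together with small moments of $Z$.

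Substituting and taking the Gaussian expectation turns the task into bounding the $(2k+1)$-replica integral
\[
\bbE \int_V \prod_{i=0}^{2k} e^{H(\bsig^i)}\,\mu_0^{\otimes(2k+1)}(\de\bsig) = \int_V \exp\Bigl(\sum_i \<\bu,\bsig^i\> + \tfrac{(2k+1)N\xi(1)}{2} + N\sum_{0\le i<j\le 2k}\xi(\<\bsig^i,\bsig^j\>_N)\Bigr)\,\mu_0^{\otimes(2k+1)}(\de\bsig),
\]
where $V = \{(\bsig^i)_{i=0}^{2k}: (\bsig^j)_{j=1}^{2k}\in A_{2k}(\delta),\ |\<\bsig^0,\bsig^1\>_N|>\delta\}$. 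For $(\bsig^j)_{j=1}^{2k}\in A_{2k}(\delta)$, the pairwise overlaps among $\bsig^1,\ldots,\bsig^{2k}$ are at most $\delta$ in absolute value, and since $\xi'(0)=0$ these cross terms contribute only $O(Nk^2\delta^2)$ to the exponent — matching the analogous factor in the computation of $\bbE Z_{2k}(A_{2k}(\delta))$. After cancelling this common contribution, the ratio reduces to the restricted single-replica integral
\[
I(\bsig^1,\ldots,\bsig^{2k}) := \int_{|\<\bsig^0,\bsig^1\>_N|>\delta} \exp\Big(\<\bu,\bsig^0\> + \tfrac{N\xi(1)}{2} + N\sum_{i=1}^{2k}\xi(\<\bsig^0,\bsig^i\>_N)\Big)\,\mu_0(\de\bsig^0),
\]
and the goal becomes $I \le e^{N\xi(1)/2 - c(\delta)N + O(Nk^2\delta^2)}$ for some $c(\delta)>0$.

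This last step is the main technical obstacle. Parametrizing $\bsig^0 = q\bsig^1 + \sqrt{1-q^2}\brho$ with $\brho$ orthogonal to $\bsig^1$, the Jacobian contributes $(1-q^2)^{(N-3)/2}$ and the dominant cross term contributes exponent $N\xi(q) + \tfrac{N}{2}\log(1-q^2)$. The elementary inequality $q + \log(1-q) \le \tfrac12\log(1-q^2)$ for $q\in[0,1)$ (and its analog for $q<0$) combined with the strict RS inequality in \eqref{eq:replica-symmetry-repeat} yields $\xi(q) + \tfrac12 \log(1-q^2) \le -c(\delta)$ uniformly in $|q|>\delta$. The remaining cross terms $\xi(\<\bsig^0,\bsig^i\>_N)$ for $i\ge 2$ are handled by concentration: with $\bsig^1,\ldots,\bsig^{2k}\in A_{2k}(\delta)$ fixed and $\bsig^0$ uniform on its band $\<\bsig^0,\bsig^1\>_N=q$, the overlaps $\<\bsig^0,\bsig^i\>_N$ concentrate near $q\<\bsig^1,\bsig^i\>_N = O(\delta)$ with probability $1-e^{-cN}$, contributing only $O(Nk^2\delta^2)$ to the exponent; the atypical portion is bounded crudely using the Lipschitz continuity of $H$ (Proposition~\ref{ppn:gradients-bounded}). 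Combining with the prefactor $e^{(c_1(\delta)+\eps)N - N\xi(1)/2}$, choosing $\eps$ small and taking $c_1(\delta)$ in Lemma~\ref{lem:truncate} sufficiently smaller than $c(\delta)$, and picking $\delta$ small enough to absorb $O(Nk^2\delta^2)$, yields \eqref{eq:key-error}.
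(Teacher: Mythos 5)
Your reduction of \eqref{eq:key-error-2} to \eqref{eq:key-error} and your linearization of $\ind\{\bsig^1\in T^c\}$ via the defining inequality of $T(\delta)$ (introducing an extra replica $\bsig^0$ and controlling the prefactor $1/\min(Z,e^{N\xi(1)/2})$ on the event $\{Z\ge e^{N\xi(1)/2-\eps N}\}$) are reasonable; the contribution of the complementary event is in fact handled more simply than you suggest, since there $Z^{2k}\le e^{kN\xi(1)-2k\eps N}$ deterministically, which is already exponentially below $\bbE Z_{2k}(A_{2k}(\delta))$ --- Cauchy--Schwarz with $\bbE Z^{4k}$ would actually be dangerous, as high moments of $Z$ are not controlled by $(\bbE Z)^{4k}$ under \eqref{eq:replica-symmetry-repeat} alone. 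The fatal problem is in your final step. After annealing, you must show $\sup_{|q|>\delta}\{\xi(q)+\tfrac12\log(1-q^2)\}\le -c(\delta)$, and you derive this from $\xi(q)+q+\log(1-q)<0$ via the inequality $q+\log(1-q)\le\tfrac12\log(1-q^2)$. But that inequality points the wrong way: it gives $\xi(q)+\tfrac12\log(1-q^2)\ge \xi(q)+q+\log(1-q)$, so the strict RS condition tells you nothing about the sign of $\xi(q)+\tfrac12\log(1-q^2)$. The condition $\xi(q)+\tfrac12\log(1-q^2)<0$ is the (strictly stronger) second-moment condition, and it is not implied by \eqref{eq:replica-symmetry-repeat}, nor by \eqref{eq:amp-works}. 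In the regime between the second-moment threshold and the RS threshold --- which the lemma must cover --- your annealed bound on the restricted integral $I$ simply fails. (A secondary issue: your "crude" bound on the atypical $\brho$-region costs $e^{-cN\delta^2+2kN\xi(1)}$, which is not small; this would have to be handled by the generalized Bessel reduction to a single radial variable, but that lands you on the same problematic quantity $\xi(\tilde q)+\tfrac12\log(1-\tilde q^2)$.)

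The paper avoids this by never fully annealing over the disorder seen by the extra replica. It conditions on the values $\sum_{i\le 2k}H_{\ge 2}(\bsig^i)$ at the frame, splits off the conditional mean of $H_{\ge 2}(\brho)$, and applies the \emph{quenched} replica-symmetric upper bound (via the Parisi formula / \cite{huang2023constructive}) to the centered process restricted to the band $\<\brho,\bsig^1\>_N=q_1$, $|q_1|>\delta$. That route produces the exponent $\tfrac12\xi(1)+\tfrac12\big(\xi(\tilde q)+\tilde q+\log(1-\tilde q)\big)$ rather than $\tfrac12\xi(1)+\xi(\tilde q)+\tfrac12\log(1-\tilde q^2)$, and the former is exactly what the strict RS condition controls. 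The event that $\sum_i H_{\ge 2}(\bsig^i)$ is atypically large is then disposed of by a Chernoff/tilting bound that exploits the restriction to $A_{2k}(\delta)$. To repair your proof you would need to replace the annealed evaluation of $I$ by this conditional/quenched free-energy bound; as written, the argument only proves the lemma under the second-moment condition.
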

\begin{proof}
    We have
    \begin{align*}
        \bbE\Big\{H_{\ge2}(\brho) \Big| \sum_{i=1}^{2k}H_{\ge2}(\bsig^i)\Big\} &= \frac{\bbE H_{\ge2}(\brho)\sum_{i=1}^{2k}H_{\ge2}(\bsig^i)}{\bbE (\sum_{i=1}^{2k}H_{\ge2}(\bsig^i))^2} \sum_{i=1}^{2k} H_{\ge2}(\bsig^i)\\
        &= \frac{\sum_{i=1}^{2k}\xi(\langle \brho,\bsig^i\rangle_N)}{\sum_{i,j\in [2k]}\xi(\langle \bsig^i,\bsig^j\rangle_N)} \sum_{i=1}^{2k} H_{\ge2}(\bsig^i),
    \end{align*}
    and for $\widehat{H}(\brho) = H_{\ge2}(\rho)-\bbE[H_{\ge2}(\brho) | \sum_{i=1}^{2k}H_{\ge2}(\bsig^i)]$,
    \begin{align*}
        \bbE\big[\widehat{H}(\brho^1)\widehat{H}(\brho^2)\big] = \xi(\langle \brho^1,\brho^2\rangle_N) - \frac{(\sum_{i=1}^{2k} \xi(\langle\brho^1,\bsig^i\rangle_N)(\sum_{i=1}^{2k} \xi(\langle\brho^2,\bsig^i\rangle_N)}{\sum_{i,j\in [2k]}\xi(\langle \bsig^i,\bsig^j\rangle_N)}.
    \end{align*}

    For each $|q_1| \ge \delta$, and $q_2,\dots,q_{2k}\in [0,1]$, consider the band $\Band_*(\{\bsig^i\})$
    of vectors $\brho$ with $\langle \brho,\bsig^i\rangle = q_i$ for all $i\in [2k]$. Write $\brho = \bx + \sqrt{1-\tilde{q}^2} \btau$ where $\bx \in \mathrm{span}(\bsig^1,\dots,\bsig^{2k})$ and $\|\btau\|^2=N$, $\btau\perp \mathrm{span}(\bsig^1,\dots,\bsig^{2k})$. Define the process $\overline{H}(\btau) = \widehat{H}(\brho)$, which is a $p$-spin model with corresponding mixture $\txi(t) = \txi(t;\bq,(\bsig^i)_{i=1}^{2k})$
    given by
     \begin{align*}
\txi(t;\bq,(\bsig^i)_{i=1}^{2k}) = \xi(\tilde{q}^2+(1-\tilde{q}^2)t\big) -\frac{\big(\sum_{i=1}^{2k}
\xi(q_i)\big)^2}{\sum_{i,j\in [2k]}\xi(\langle \bsig^i,\bsig^j\rangle_N)}\, .
     \end{align*}

We define the free energy
  \begin{align*}
  \Phi(\bq; (\bsig^i)_{i=1}^{2k}) := \frac{1}{N}\log\int_{\Band_*(\{\bsig^i\})} e^{H_{\ge2}(\brho)}\mu_0(\de\brho)\, .
  \end{align*}
 Following the proof of Lemma 3.3 of \cite{huang2023constructive}, the replica-symmetric bound implies
 that the following holds with high probability:
    \begin{align}\label{eq:replica-sym}
        \Phi(\bq; (\bsig^i)_{i=1}^{2k}) \le \frac{\sum_{i=1}^{2k}\xi(q_i)}{\sum_{i,j\in [2k]}\xi(\langle \bsig^i,\bsig^j\rangle_N)} \sum_{i=1}^{2k} H_{\ge2}(\bsig^i) + \frac{1}{2}\xi(1) - \frac{1}{2}\xi(\tilde{q}) +\frac{1}{2}\tilde{q} + \frac{1}{2} \log(1-\tilde{q}) + o_N(1).
    \end{align}
    By the generalized Bessel inequality, we have
    \begin{align*}
        \sum_{i=1}^{2k} \langle \bx,\bsig^i\rangle_N^2 \le \|\bx\|_N^2 (2k)^{-1}\sum_{i,j\in [2k]} \langle \bsig^i,\bsig^j\rangle_N^2 = \|\bx\|_N^2(2k)^{-1}(2k+(2k)^2\delta^2).
    \end{align*}
    Hence, \[\tilde{q}^2=\|\bx\|_N^2 \ge \frac{1}{1+2k\delta^2} \sum_{i=1}^{2k}q_i^2,\]
    and since $\xi(0)= \xi'(0)= 0$, this implies
    \begin{align*}
    \sum_{i=1}^{2k}\xi(q_i) \le \xi\big((1+2k\delta^2)^{1/2}\tilde{q}\big)\, .
\end{align*}
    We pick $\delta$ sufficiently small in $c$ and $k$, and $\eta$ small in $\delta$. Given $\sum_{i=1}^{2k}H_{\ge2}(\bsig^i) = EN$
    where $E\le \sum_{i,j\in [2k]}\xi(\langle \bsig^i,\bsig^j\rangle_N) + \eta$, whenever $q_1 \ge \delta$, we have by assumption \eqref{eq:replica-symmetry-repeat}, with high probability
    \begin{align*}
\Phi(\bq; (\bsig^i)_{i=1}^{2k}) \le \frac{1}{2}\xi(1)-10\eta.
    \end{align*}
    Integrating over the $(q_i)_{i\le 2k}$ and using Gaussian concentration,
    we deduce that for $E\le \sum_{i,j\in [2k]}\xi(\langle \bsig^i,\bsig^j\rangle_N) + \eta$,
     we have
    \begin{align*}
    \bbP\left\{\left.\int_{\brho:\langle\brho,\bsig_1\rangle_N > \delta} e^{H_{\ge2}(\brho)}\mu_0(\de\brho) \le e^{N(\xi(1)/2-9\eta)}\right| \sum_{i=1}^{2k}H_{\ge2}(\bsig^i) = EN\right\} \ge 1-e^{-c(\eta)N}.
    \end{align*}

    Up until now we worked with the Hamiltonian $H_{\ge 2}(\bsig)$, which does not include the term linear in $\bsig$.
    Recall that $H(\bsig) = \<\bu,\bsig\>+H_{\ge 2}(\bsig)$ and $\|\bu\|\le c'\sqrt{N}$ so $
|H(\bsig) - H_{\ge2}(\bsig)| \le |\langle \bu,\bsig\rangle|\le c'N$, assuming that $c'<\eta$, we have
    \begin{align*}
    \bbP\left\{\left.\int_{\brho:\langle\brho,\bsig_1\rangle_N > \delta} e^{H(\brho)}\mu_0(\de\brho) \le e^{N(\xi(1)/2-8\eta)}\right| \sum_{i=1}^{2k}H_{\ge2}(\bsig^i) = EN\right\} \ge 1-e^{-c(\eta)N}.
    \end{align*}
    Hence, under the same conditions
    \begin{align*}
    \bbP\left\{\bsig^1\in T^c\left|\sum_{i=1}^{2k}H_{\ge2}(\bsig^i) = EN\right.\right\} \le
    e^{-c(\eta)N}\, .
    \end{align*}

Define the event
\begin{align*}
\cE(\{\bsig^i\}) :=\left\{\sum_{i=1}^{2k}H_{\ge2}(\bsig^i) \ge N\lt(\sum_{i,j\in [2k]}\xi(\langle \bsig^i,\bsig^j\rangle_N) + \eta\rt)\right\}\, .
\end{align*}
    Thus, since $|H(\bsig) - H_{\ge2}(\bsig)|\le c'N$, we can then conclude that
    \begin{align*}
    &\bbE\left\{ \int_{A_{2k}(\delta):\bsig^1\in T^c} e^{\sum_{i=1}^{2k}H(\bsig^i)} \mu_0^{\otimes 2k} (\de \bsig)
    \right\} =\bbE\left\{ \int_{A_{2k}(\delta)} \bfone_{\bsig^1\in T^c}e^{\sum_{i=1}^{2k}H(\bsig^i)} \mu_0^{\otimes 2k} (\de \bsig)
    \right\}\\
    &=\bbE\left\{ \int_{A_{2k}(\delta)} \bbP\Big\{\bsig^1\in T^c\Big|\sum_{i=1}^{2k}H_{\ge2}(\bsig^i) \Big\}e^{\sum_{i=1}^{2k}H(\bsig^i)} \mu_0^{\otimes 2k} (\de \bsig)
    \right\}\\
        &\le e^{-c(\eta)N+c'N} \bbE \int_{A_{2k}(\delta)}  e^{\sum_{i=1}^{2k}H_{\ge2}(\bsig^i)} \mu_0^{\otimes 2k} (\de \bsig)  + e^{c'N}\bbE \int_{A_{2k}(\delta)}  e^{\sum_{i=1}^{2k}H_{\ge2}(\bsig^i)}
        \bfone_{\cE(\{\bsig^i\})}\mu_0^{\otimes 2k} (\de \bsig)\\
        &\le e^{-c N} \bbE \int_{A_{2k}(\delta)}  e^{\sum_{i=1}^{2k}H(\bsig^i)} \mu_0^{\otimes 2k} (\de \bsig).
    \end{align*}
    Here we assume $c'<c(\eta)/4$ and $c = c(\eta)/4$, and in the last step we used, for $U(\{\bsig^i\}):= \sum_{i,j\in [2k]}\xi(\langle \bsig^i,\bsig^j\rangle_N)$,
    \begin{align*}
    \bbE \int_{A_{2k}(\delta)}  e^{\sum_{i=1}^{2k}H_{\ge2}(\bsig^i)}
        \bfone_{\cE(\{\bsig^i\})}\mu_0^{\otimes 2k} (\de \bsig)\le
        \int_{A_{2k}(\delta)}  \exp\Big\{N(1-s+s^2)U(\{\bsig^i\})-Ns\eta\Big\}\mu_0^{\otimes 2k} (\de \bsig)\, ,
    \end{align*}
    and chose $\delta$, $s$ suitably small.
\end{proof}

The next lemma shows that integrals of $S_N^{2k}$ with the product Gibbs measure
are very precisely approximated by integral over tuples that are very close to orthogonal.
\begin{lem}[Near-orthogonal tuples dominate]\label{lem:OrthogonalTuples}
    For $\delta>0$ sufficiently small and appropriate $c,c_0>0$, if $\|\bu\| \le N^{c_0/2}$, then
    with probability $1-e^{-cN}$ over $\bW^{(2)}$, the following holds:
\begin{enumerate}
\item For quadratic Hamiltonians, the unrestricted partition function of $2k$ replicas is dominated
by its restriction to $A_{2k}(N^{-1/2+c})$:
    \begin{align}\label{eq:quad-error-1}
        Z_{\le 2,2k}\big(A_{2k}(N^{-1/2+c})\big) \ge (1-e^{-N^{c}})\cdot \big(Z_{\le 2}\big)^{2k}.
    \end{align}
\item The contribution of $A_{2k}(\delta)\setminus A_{2k}(N^{-1/2+c}) =\{(\bsig^i)_{i\le 2k}:\max_{i\ne j} |\langle \bsig^i,\bsig^j\rangle_N| \in [N^{-1/2+c}, \delta]\}$ is small:
    \begin{align}
&\bbE_{\ge 3} Z_{2k}\big(A_{2k}(\delta)\setminus A_{2k}(N^{-1/2+c})\big) \le e^{-N^{c}+Nk\xi_{\ge 3}(1)}
\big(Z_{\le 2}\big)^{2k}
.\label{eq:quadratic-tail}
\end{align}
\item Annealing the restricted partition function over $H_{\ge 3}$ is roughly equivalent
to complete annealing:
    \begin{align}\label{eq:quad-error-2}
    \bbE_{\ge3}Z_{2k}\big(A_{2k}(\delta)\big)
     \ge e^{-4k(\|\bu\|+1)\sqrt{N}} \bbE Z_{2k}\big(A_{2k}(\delta)\big) .
    \end{align}
\end{enumerate}
\end{lem}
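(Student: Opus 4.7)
My plan is to prove the three parts sequentially, using Lemma~\ref{lem:overlap-2} for the two-replica overlap tail bound, the Gaussian moment generating function to integrate out $H_{\ge 3}$, and Lemma~\ref{lem:laplace} together with Gaussian concentration for the quadratic partition function. For part~(1), I would apply Lemma~\ref{lem:overlap-2} with threshold $t = N^{-1/2+c}$. Under the hypothesis $\|\bu\|\le N^{c_0/2}$ with $c_0 < 2c$, one has $\|\bu\|_N = O(N^{(c_0-1)/2}) = o(t)$, so the bound reads $Z_{\le 2,2}(A_2(t)^c)\le \exp(-cN^{2c}/4)(Z_{\le 2})^2$. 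Since $H_{\le 2}$ decouples across replicas, this extends trivially to $2k$ replicas by integrating out the remaining $2k-2$ copies, and a union bound over the $\binom{2k}{2}$ pairs yields \eqref{eq:quad-error-1}.

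For part~(2), I would first integrate out $H_{\ge 3}$ using
\[
\bbE_{\ge 3}\Big[\exp\Big(\sum_i H_{\ge 3}(\bsig^i)\Big)\Big] = \exp\Big(\tfrac{N}{2}\sum_{i,j}\xi_{\ge 3}(\<\bsig^i,\bsig^j\>_N)\Big).
\]
The diagonal $(i=j)$ contribution equals exactly $Nk\xi_{\ge 3}(1)$. Since $\xi_{\ge 3}(x)=O(x^3)$ near $0$, the off-diagonal contribution on $A_{2k}(\delta)$ is bounded by $\tfrac{N}{2}(2k)(2k-1)\sup_{|x|\le\delta}|\xi_{\ge 3}(x)| = O(Nk^2\delta^3)$, which for small enough $\delta$ is absorbed into a slightly enlarged prefactor. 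What remains is to control $Z_{\le 2,2k}(A_{2k}(\delta)\setminus A_{2k}(N^{-1/2+c}))$, which is precisely the complementary-event version of part~(1) and yields the $e^{-N^c}(Z_{\le 2})^{2k}$ factor claimed in \eqref{eq:quadratic-tail}.

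For part~(3), after cancelling the common $e^{Nk\xi_{\ge 3}(1)}$ factor from $\bbE_{\ge 3}Z_{2k}(A_{2k}(\delta))$ and $\bbE Z_{2k}(A_{2k}(\delta))$ (up to the negligible off-diagonal corrections handled in part~(2)), and using part~(1) to replace $Z_{\le 2, 2k}(A_{2k}(\delta))$ and its expectation by $(Z_{\le 2})^{2k}$ and $\bbE[(Z_{\le 2})^{2k}]$ up to $(1-e^{-N^c})^{\pm 1}$ factors, the inequality \eqref{eq:quad-error-2} reduces to showing
\[
(Z_{\le 2})^{2k} \ge e^{-4k(\|\bu\|+1)\sqrt{N}}\,\bbE[(Z_{\le 2})^{2k}]
\]
with probability $1-e^{-cN}$. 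This is a concentration statement for the quadratic spin-glass partition function: the formula $Z_{\le 2}\simeq C\cdot e^{NG(\gamma_*;\bA,\bu)}$ from Lemma~\ref{lem:laplace}, combined with the fact that $\log Z_{\le 2}$ is Lipschitz in the rescaled entries of $\bW^{(2)}$ with constant of order $\|\bu\|+1$, yields by Borell--TIS a $\sqrt{N}$-scale concentration of $\log Z_{\le 2}$ around its mean with probability $1-e^{-cN}$; the Jensen gap $\log\bbE[(Z_{\le 2})^{2k}]-2k\bbE\log Z_{\le 2}$ is $O(k^2)$ and is dominated by the $\sqrt{N}$-term for large $N$.

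The main obstacle is part~(3): tracking the $\sqrt{N}$-scale concentration together with the precise $\|\bu\|$-dependence of the Lipschitz constant requires careful direct differentiation of $G(\gamma_*;\bA,\bu)$ to obtain a gradient whose $\ell_2$ norm matches the stated $(\|\bu\|+1)$ factor in the exponent. Without this bookkeeping, a naive Gaussian concentration yields only $O(N)$-scale deviations at probability $1-e^{-cN}$, which is far too weak for the $\sqrt{N}$-scale bound required here.
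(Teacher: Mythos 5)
Your part~(1) is correct and is essentially the paper's argument: decouple the $2k$ replicas, apply Lemma~\ref{lem:overlap-2} at threshold $t=N^{-1/2+c}$, and union bound over pairs. The gap is in parts~(2) and~(3), and it is the same gap in both places: you bound the off-diagonal annealing factor $\exp\lt(N\sum_{i<j}\xi_{\ge3}(\<\bsig^i,\bsig^j\>_N)\rt)$ \emph{uniformly} on $A_{2k}(\delta)$ by $e^{O(Nk^2\delta^3)}$ and then multiply by the quadratic tail estimate $e^{-N^{c'}}(Z_{\le2})^{2k}$ coming from part~(1). But $e^{O(Nk^2\delta^3)}$ is exponentially large in $N$ for any fixed $\delta>0$, while the tail estimate at threshold $N^{-1/2+c}$ is only stretched-exponentially small ($2c<1$), so the product $e^{CNk^2\delta^3}\cdot e^{-N^{c'}}$ diverges; it cannot be ``absorbed into a slightly enlarged prefactor'' without destroying \eqref{eq:quadratic-tail}. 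The fix, which is what the paper does, is a shell decomposition: for each overlap scale $t\in[N^{-1/2+c},\delta]$ one compares the quadratic Gibbs mass of the shell, $e^{-cN(t-\|\bu\|_N)_+^2}$ from Lemma~\ref{lem:overlap-2}, against the annealing gain \emph{at the same scale}, $e^{N(2k)^2\xi_{\ge3}(t+\eps_N)}=e^{O(Nk^2t^3)}$; since $t\le\delta$ is small the quadratic decay wins pointwise in $t$, each shell contributes at most $e^{-cNt^2/2}\le e^{-cN^{2c}/2}$, and one sums over polynomially many shells. The matched-scale comparison is the missing idea.

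Part~(3) inherits this problem (you again need to pass from $A_{2k}(\delta)$ to $A_{2k}(N^{-1/2+c})$, where the $\xi_{\ge3}$ off-diagonal factor is genuinely $1+O(N^{-1/2+3c})$, before any cancellation of $e^{Nk\xi_{\ge3}(1)}$ is legitimate; for the fully annealed side this requires a deterministic second-moment computation using $\beta_2^2<1/2$, not part~(2), which concerns the $\bW^{(2)}$-quenched quantity $\bbE_{\ge3}$). Beyond that, your route through $\bbE[(Z_{\le2})^{2k}]$ and Borell--TIS concentration of $\log Z_{\le2}$ is genuinely different from the paper's and, as you note, requires controlling the Lipschitz constant via the Gibbs overlap, which is itself only a high-probability statement and needs a restriction/extension argument. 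The paper avoids all of this: once restricted to $A_{2k}(N^{-1/2+c})$ the replicas essentially decouple, so $\bbE Z_{\le2,2k}(A_{2k}(N^{-1/2+c}))\le e^{2k\|\bu\|\sqrt N}e^{k\beta_2^2N+O(N^{2c})}$ by a one-line annealed computation, while $(Z_{\le2})^{2k}\ge e^{-o(\sqrt N)}e^{k\beta_2^2N}$ with probability $1-e^{-cN}$ directly from the Laplace-transform asymptotics of Lemma~\ref{lem:laplace}; comparing these two displays gives \eqref{eq:quad-error-2} with no moment bounds on $Z_{\le2}$ and no concentration of $\log Z_{\le2}$ needed. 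I would recommend adopting the shell decomposition for part~(2) and the direct Laplace-transform comparison for part~(3).
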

\begin{proof}
    \noindent{\bf Proof of 1.} By Lemma \ref{lem:overlap-2}, for some constants $c_1,C_1>0$ that, with probability $1-e^{-cN}$ over $\bW^{(2)}$,
    \begin{align*}
    Z_{\le 2, 2k}\big( A_{2k}(N^{-1/2+c})\big)
        &\le \sum_{i\neq j}\int_{S_N^{2k}}\bfone_{|\<\bsig^i,\bsig^j\>_N|>N^{-1/2+c}}e^{\sum_{i=1}^{2k}H_{\le2}(\bsig^i)} \mu_0^{\otimes 2k}(\de\bsig)\\
        &\le e^{-N^c} \int_{S_N^{2k}}e^{\sum_{i=1}^{2k}H_{\le2}(\bsig^i)} \mu_0^{\otimes 2k}(\de\bsig),
    \end{align*}
    yielding (\ref{eq:quad-error-1}).

    \noindent{\bf Proof of 2.} By a direct calculation, for any set $U\subseteq (S_N)^{2k}$:
    \begin{align*}
        \bbE_{\ge3} Z_{2k}\big(U\big)
        =e^{Nk\xi_{\ge 3}(1)}\int_{U} e^{\sum_{i=1}^{2k}H_{\le2}(\bsig^i)} \exp\lt(N\sum_{i<j<2k} \xi_{\ge3}(\langle \bsig^i,\bsig^j\rangle_N)\rt)\mu_0^{\otimes 2k} (\de \bsig)\,.
    \end{align*}

    Applying Lemma \ref{lem:overlap-2}, we have for $t>0$ and $\eps_N = N^{-1/2+c}$ that, with probability $1-e^{-cN}$ over $\bW^{(2)}$,
\begin{align}
&\frac{1}{(Z_{\le 2})^{2k}} e^{-Nk\xi_{\ge 3}(1)}\bbE_{\ge 3}Z_{2k}
\Big(A_{2k}(t+\eps_N)\setminus A_{2k}(t)\Big)\le\\
&\frac{1}{(Z_{\le 2})^{2k}}\int_{\max_{i\ne j} |\langle \bsig^i,\bsig^j\rangle_N| \in [t,t+\eps_N]}\exp\lt(\sum_{i=1}^{2k}H_{\le2}(\bsig^{i})+N\sum_{i\ne j\in [2k]}\xi_{\ge3}(\langle\bsig^{i},\bsig^{j}\rangle_N)\rt)\mu_0^{\otimes 2k}(\de\bsig) \le \nonumber \\
&\phantom{AAAAAAAAA AAAAAAAA} \le \exp\lt\{-cN\big(t-\|\bu\|_N^2\big)_+^2+N(2k)^2\xi_{\ge3}(t+\eps_N)\rt\}\, . 
\end{align}
Under the assumption $\|\bu\|_N^2\le N^{c_0-1}$, $c_0<c+1/2$, summing over the range $N^{-1/2+c} < |t|\le\delta$, we obtain the following
%
%
with probability $1-e^{-cN}$ over $\bW^{(2)}$,
\begin{align*}
&\int_{\max_{i\ne j} |\langle \bsig^i,\bsig^j\rangle_N| \in [N^{-1/2+c}, \delta]}\exp\lt(\sum_{i=1}^{2k}H_{\le2}(\bsig^{i})+N\sum_{i\ne j\in [2k]}\xi_{\ge3}(\langle\bsig^{i},\bsig^{j}\rangle_N)\rt)\mu_0^{\otimes 2k}(\de\bsig)\nonumber\\
&\le \exp(-N^{c})\int_{S_N^{2k}}\exp\lt(\sum_{i=1}^{2k}H_{\le2}(\bsig^{i})\rt)
\mu^{\otimes 2k}_0(\de\bsig).
\end{align*}
This gives (\ref{eq:quadratic-tail}).

 \noindent{\bf Proof of 3.} Note that
\begin{align}
e^{-Nk\xi_{\ge 3}(1)}\bbE_{\ge 3} &Z_{2k}\big( A_{2k}(N^{-1/2+c})\big) =
\nonumber\\
 &=\int_{A_{2k} (N^{-1/2+c})}\exp\lt(\sum_{i=1}^{2k}H_{\le2}(\bsig^{i})+N\sum_{i\ne j\in [2k]}\xi_{\ge3}(\langle\bsig^{i},\bsig^{j}\rangle_N)\rt)\mu_0^{\otimes 2k}(\de\bsig)\nonumber\\
&=(1+O(N^{-1/2+3c}))\cdot Z_{\le 2, 2k}\Big(A_{2k} (N^{-1/2+c})\Big).\label{eq:quad-last}
\end{align}
Therefore, using Eq.~\eqref{eq:quad-error-1}, we get
\begin{align}
\bbE_{\ge 3} &Z_{2k}\big( A_{2k}(N^{-1/2+c})\big) =(1+O(N^{-1/2+3c}))e^{Nk\xi_{\ge 3}(1)}\big(Z_{\le2}\big)^{2k}
\, .\label{eq:Z2kZ2k}
\end{align}

Also,
\begin{align}
    &\bbE Z_{\le 2, 2k}\Big(A_{2k} (N^{-1/2+c})\Big)  \le e^{2k\|\bu\|\sqrt{N}} \exp\big(k\beta_2^2N+(2k)^2\beta_2^2N^{2c}\big). \label{eq:bbE-up}
\end{align}
On the other hand, Lemma \ref{lem:laplace} readily implies that with probability at least $1-e^{-cN}$,
\begin{align}
   (Z_{\le 2})^{2k} &\ge e^{-o(\sqrt{N})} \exp(k\beta_2^2N). \label{eq:bbE-low}
\end{align}
Combining Eqs.~\eqref{eq:bbE-up} and \eqref{eq:bbE-up}, we get
\begin{align}
    &\bbE Z_{\le 2, 2k}\big(A_{2k} (N^{-1/2+c})\big)
    \le e^{2k(1+\|\bu\|)\sqrt{N}}   (Z_{\le 2})^{2k} \, .
    \end{align}
Finally, using Eq.~\eqref{eq:Z2kZ2k} together with the last display, we get
 \begin{align}\label{eq:quad-error-2-ALMOST}
    \bbE_{\ge3}Z_{2k}\big(A_{2k}(N^{-1/2+c})\big)
     \ge e^{-3k(\|\bu\|+1)\sqrt{N}}
     \bbE Z_{2k}\big(A_{2k}(N^{-1/2+c})\big) .
    \end{align}
Combining this with Eq.~\eqref{eq:quadratic-tail} yields the claim.
%

\end{proof}

\begin{lem}\label{lemma:A-T-comparison}
For any $m\ge 2$, there exists a constant $c>0$ such that, for $T=T(\delta)$,
\begin{align}
Z_m\big(A_{m}(T,\delta)\big) \le Z(T)^m\le (1+e^{-cN})\cdot Z_m\big(A_{m}(\delta)\big) +e^{-cN+Nm\xi(1)/2}\, .
\end{align}
\end{lem}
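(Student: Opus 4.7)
The left inequality is immediate: since $A_m(T,\delta) = T^m \cap A_m(\delta) \subseteq T^m$ and the integrand $e^{\sum_i H(\bsig^i)}$ is nonnegative, integrating over a smaller set yields a smaller value, so $Z_m(A_m(T,\delta)) \le Z_m(T^m) = Z(T)^m$.

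For the right inequality, my plan is to decompose
\[
Z(T)^m = Z_m(T^m \cap A_m(\delta)) + Z_m(T^m \setminus A_m(\delta)) \le Z_m(A_m(\delta)) + \cE_m,
\]
where $\cE_m := Z_m(T^m \setminus A_m(\delta))$. Since $T^m \setminus A_m(\delta) \subseteq \bigcup_{i\ne j} T^m \cap \{|\<\bsig^i,\bsig^j\>_N|>\delta\}$, a union bound combined with Fubini and the defining property of $T(\delta)$ (applied to the inner integral in $\bsig^j$ with $\bsig^i \in T$ fixed) yields
\[
\cE_m \le m(m-1) \cdot Z(T)^{m-1} \cdot e^{-c_1(\delta) N} \min\bigl(Z,\, e^{N\xi(1)/2}\bigr).
\]

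To convert this error into the form asserted in the lemma, I will split into two cases based on the size of $Z(T)$. In the first case $Z(T) \le e^{N\xi(1)/2}$, I use $Z(T)^{m-1} \le e^{N(m-1)\xi(1)/2}$ and $\min(Z,e^{N\xi(1)/2}) \le e^{N\xi(1)/2}$ to obtain $\cE_m \le m^2 e^{-c_1 N + Nm\xi(1)/2}$, which fits into the additive error term after adjusting $c$. In the second case $Z(T) > e^{N\xi(1)/2}$, the quantity $\min(Z,e^{N\xi(1)/2})$ is bounded by $Z(T)$, so $\cE_m \le m^2 e^{-c_1 N} Z(T)^m$; rearranging gives $Z(T)^m (1 - m^2 e^{-c_1 N}) \le Z_m(A_m(\delta))$, hence $Z(T)^m \le (1+e^{-cN}) Z_m(A_m(\delta))$ for $N$ sufficiently large and $c < c_1$. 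In both cases the claimed inequality holds, with the final constant $c$ being a suitable fraction of $c_1(\delta)$.

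There is no real obstacle here: the argument is purely bookkeeping built directly on the definition \eqref{eq:TypicalDef} of $T(\delta)$ and a union bound over pairs $(i,j)$. The only subtlety worth flagging is that one must not naively bound $\min(Z,e^{N\xi(1)/2}) \cdot Z(T)^{m-1}$ by $e^{Nm\xi(1)/2}$ in general (since $Z(T)$ could a priori exceed $e^{N\xi(1)/2}$); the two-case split above handles precisely this issue, absorbing the large-$Z(T)$ regime into the multiplicative $(1+e^{-cN})$ factor rather than the additive term.
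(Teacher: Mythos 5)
Your proof is correct and follows essentially the same route as the paper: the same decomposition $Z(T)^m = Z_m(A_m(T,\delta)) + Z_m(T^m\setminus A_m(\delta))$, the same union bound over pairs combined with the defining property of $T(\delta)$, yielding the same error term $O(m^2)\,Z(T)^{m-1}e^{-c_1(\delta)N}\min(Z,e^{N\xi(1)/2})$. The only (cosmetic) difference is in how this error is absorbed — you split into cases according to whether $Z(T)\le e^{N\xi(1)/2}$, while the paper applies the AM--GM inequality to $Z(T)^{m-1}e^{N\xi(1)/2}$ to distribute it between the multiplicative $(1+e^{-cN})$ factor and the additive $e^{-cN+Nm\xi(1)/2}$ term; both devices are valid and equivalent in effect.
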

\begin{proof}
The left hand inequality is obvious since $A_{m}(T,\delta)\subseteq T^{\otimes m}$. For the right inequality
consider first the case $m=2$. Then we have
\begin{align}
Z(T)^2&\le Z_2\big(A_{2}(T,\delta)\big) +\int_{T\times T} \bfone_{|\<\bsig^1,\bsig^2\>_N|\ge \delta}e^{H(\bsig^1)+H(\bsig^2)}\mu_0^{\otimes 2}(\de\bsig)\\
&\le  Z_2\big(A_{2}(T,\delta)\big) +\int_{T} e^{H(\bsig^1)}
\left[ \int_T \bfone_{|\<\bsig^1,\bsig^2\>_N|\ge \delta}e^{H(\bsig^2)}\mu_0(\de\bsig^2)\right] \mu_0(\de\bsig^1)\\
& \le  Z_2\big(A_{2}(T,\delta)\big) +\int_{T} e^{H(\bsig^1)}e^{-\delta N+N\xi(1)/2}
\mu_0(\de\bsig^1)\\
& \le  Z_2\big(A_{2}(T,\delta)\big)  +e^{-N\delta +N\xi(1)/2}
Z(T) \label{eq:Z2-beforeLast} \\
& \le Z_2\big(A_{2}(T,\delta)\big)  +e^{-N\delta+N\xi(1)}
+e^{-N\delta}Z(T)^2  \, .
\end{align}
where in the last step we used the AM-GM inequality.
Solving this inequality for $Z(T)^2$, we get:
\begin{align}
Z(T)^2\le (1+e^{-cN})Z_2\big(A_{2}(T,\delta)\big)+ 2 e^{-\delta N+N\xi(1)} \, .\label{eq:ZTT}
\end{align}
which proves the claim for $m=2$.

Consider now $m\ge 2$. Note that
\begin{align*}
&  \int_{T(\delta)^{m}}e^{\sum_{i=1}^{m}H(\bsig^{i})}\mu_0^{\otimes m}(\de\bsig) - \int_{A_{m}(T(\delta),\delta)}e^{\sum_{i=1}^{m}H(\bsig^{i})}\mu_0^{\otimes 2m}(\de\bsig) \nonumber\\
 &\le \sum_{i\ne j} \left(\int_{T(\delta)}e^{H(\bsig)}\mu_0(\de\bsig)\right)^{m-2} \int_{\bsig^i,\bsig^j \in T(\delta): |\<\bsig^i,\bsig^j\>_N|>\delta} e^{H(\bsig^i)+H(\bsig^j)}\mu_0(\de\bsig^i)\mu_0(\de\bsig^j)
\end{align*}
whence
\begin{align*}
Z(T)^m-Z_m\big(A_{m}(T,\delta)\big)&
\le m^2 Z_2(T^{\otimes 2}\setminus A_{2}(T,\delta) )\cdot  Z(T)^{m-2}\\
&\le m^2  \cdot Z(T)^{m-1}\cdot
e^{-N\delta+N\xi(1)/2}\, ,
\end{align*}
where in the last inequality we used Eq.~\eqref{eq:Z2-beforeLast}.
Using again the AM-GM inequality, we get
\begin{align*}
Z(T)^m-Z_m\big(A_{m}(T,\delta)\big) \le m^2e^{-N\delta}Z(T)^m+
m^2 e^{-N\delta+Nm\xi(1)/2} \, ,
\end{align*}
which yields the claim.
\end{proof}

\subsubsection{Proof of Lemma \ref{lem:high-acc-anneal}}
We next prove Lemma \ref{lem:high-acc-anneal}.
In the proof, we let $c$ denote small absolute constants that can change from line to line. We will first prove the partition
function estimate, Eq.~\eqref{eq:HighAcc-PartitionFun}
and then the magnetization estimate,
Eq.~\eqref{eq:Magn-No-Field}.

\noindent{\bf Estimating the partition function, Eq.~\eqref{eq:HighAcc-PartitionFun}.}  By Eq.~\eqref{eq:quadratic-tail} in Lemma~\ref{lem:OrthogonalTuples}, with probability $1-e^{-cN}$
over $\bW^{(2)}$,
\begin{align}
&\int_{\max_{i\ne j} |\langle \bsig^i,\bsig^j\rangle_N| \in [N^{-1/2+c}, \delta]}\exp\lt(\sum_{i=1}^{2k}H_{\le2}(\bsig^{i})+N\sum_{i< j \le 2k}\xi_{\ge3}(\langle\bsig^{i},\bsig^{j}\rangle_N)\rt)\mu_0^{\otimes 2k}(\de\bsig)\nonumber\\
&\le \exp(-N^{c})\int_{S_N^{2k}}\exp\lt(\sum_{i=1}^{2k}H_{\le2}(\bsig^{i})\rt)
\mu^{\otimes 2k}_0(\de\bsig).
\end{align}
On $A_{2k}(N^{-1/2+c})) = \{|\langle\bsig^{i},\bsig^{j}\rangle_N|\le N^{-1/2+c}\}\;\; \forall
i\ne j\}$, we can expand
\begin{align*}
\exp\left(N\sum_{i < j}\xi_{\ge3}(\langle\bsig^{i},\bsig^{j}\rangle_N)\right)
=\sum_{\ell=0}^{L-1}\frac{1}{\ell!}\Big(N\sum_{i<j}\xi_{\ge3}(\langle\bsig^{i},\bsig^{j}\rangle_N)
\Big)^{\ell}+
O(N^{-L/2+3cL}).
\end{align*}
Thus, for $T=T(\delta)$, the following holds with probability at least $1-e^{-cN}$ over $\bW_2$,
\begin{align}
& \bbE_{\ge3}\Big\{\big(Z(T)-\bbE_{\ge3}Z(T)\big)^{2k}\Big\}\\
&\stackrel{(a)}{\le} \sum_{r\le2k}\binom{2k}{2k-r}(-1)^{r}\big(\bbE_{\ge3}Z(T)\big)^{2k-r}\cdot\bbE_{\ge3}Z\big(A_r(T,\delta)\big)\nonumber\\
&\phantom{AAA}
+e^{-Nc}\sum_{r\le2k}\binom{2k}{2k-r}\big(\bbE_{\ge3}Z(T)\big)^{2k-r}\cdot\Big(\bbE_{\ge3}Z\big(A_r(T,\delta)\big) + e^{Nr\xi(1)/2}\Big) \nonumber\\
&\stackrel{(b)}{\le} \sum_{r\le2k}\binom{2k}{2k-r}(-1)^{r}\big(\bbE_{\ge3}Z(T)\big)^{2k-r}\cdot\bbE_{\ge3}Z\big(A_r(T,\delta)\big)\\
&\phantom{AAA}
+e^{-Nc}\max_{r\le2k}e^{N(2k-r)\xi(1)/2}\cdot\Big(\bbE_{\ge3}Z\big(A_r(T,\delta)\big) + e^{Nr\xi(1)/2}\Big)
\, ,\label{eq:Zdev2}
\end{align}
where $(a)$ follows from Lemma \ref{lem:OrthogonalTuples}, $(b)$ holds because $\bbE_{\ge3}Z(T)
\le e^{c'N}\bbE Z$ with the claimed probability  by Markov inequality.

We define the error terms
\begin{align}
    \Err_1 & := e^{-cN+Nk\xi(1)} +
    e^{-cN}\max_{1\le r\le 2k}\Big(\bbE_{\ge3} Z(A_{r}(T,\delta))\Big)^{2k/r}+
    \bbE_{\ge3}Z(A_{2k}(\delta)\cap\{\bsig^1 \in T^c\}\big),\\
       \Err_2 & := N^{-L/2} e^{Nk\xi_{\ge3}(1)}Z_{\le 2, 2k}(A_{2k}(\delta))\, ,
\end{align}
so that the bound \eqref{eq:Zdev2} implies
\begin{align}
& \bbE_{\ge3}\Big\{\big(Z(T)-\bbE_{\ge3}Z(T)\big)^{2k}\Big\}\le \sum_{r\le2k}\binom{2k}{2k-r}(-1)^{r}\big(\bbE_{\ge3}Z(T)\big)^{2k-r}\cdot\bbE_{\ge3}Z\big(A_r(T,\delta)\big)+O_k(\Err_1)\, .
\label{eq:Zdev3}
\end{align}
Next note that
\begin{align*}
&\big(\bbE_{\ge3}Z(T)\big)^{2k-r}\cdot\bbE_{\ge3}Z\big(A_{r}(T,\delta)\big)
\\
&=e^{Nk\xi_{\ge 3}(1)}\lt(\int_{T(\delta)} e^{H_{\le2}(\bsig)}\mu_0(\de\bsig)\rt)^{2k-r}\cdot\\
 & \qquad\qquad
 \cdot\int_{A_{r}(\delta)}
 e^{\sum_{i=1}^{r}H_{\le2}(\bsig^{i})}\exp\lt(N\sum_{i< j}\xi_{\ge3}(\langle\bsig^{i},\bsig^{j}\rangle_N)\rt)\mu^{\otimes r}_0(\de\bsig)  + O_k(\Err_1) \\
 &= e^{Nk\xi_{\ge 3}(1)}\int_{A_{2k}(\delta)}  \exp\left(\sum_{i'=1}^{2k-r} H_{\le 2}((\bsig')^{i'}) + \sum_{i=1}^{r}H_{\le 2}(\bsig^i)\right) \cdot \\
 &\qquad \qquad \cdot \left\{\sum_{\ell=0}^{L-1} \frac{1}{\ell!} \left(N\sum_{i< j\le r}\xi_{\ge3}(\langle \bsig^i,\bsig^j\rangle_N)\right)^{\ell}\right\} \mu^{\otimes r}_0(\de\bsig)\mu^{\otimes (2k-r)}_0(\de\bsig') + O_k(\Err_1+\Err_2),
\end{align*}
where the last inequality holds with probability $1-e^{-cN}$ over $\bW^{(2)}$
 by Eq.~\eqref{eq:quadratic-tail}.

Substituting in Eq.~\eqref{eq:Zdev3}, we get
\begin{align}
&\bbE_{\ge3}\Big\{\big(Z(T)-\bbE_{\ge3}Z(T)\big)^{2k}\Big\}\nonumber\\
 &\le e^{Nk\xi_{\ge3}(1)}\int_{A_{2k}(\delta)}e^{\sum_{i=1}^{2k}H_{\le2}(\bsig^{i})}
 \sum_{\ell\le L}\frac{1}{\ell!}\sum_{r\le2k}(-1)^{r}\sum_{S\subseteq [2k]:|S|=r}
 \Big(N\sum_{i< j\in S}\xi_{\ge3}(\langle\bsig^{i},\bsig^{j}\rangle_N)\Big)^{\ell}\mu_0^{\otimes 2k}(\de\bsig)   \nonumber\\
 & \qquad \qquad \qquad + O_k(\Err_1+\Err_2), \label{eq:Taylor}
\end{align}

We can expand  the $\ell$-th power in \eqref{eq:Taylor}, thus getting a sum indexed
by sets of pairs  $S= \{(i_{t},j_{t}):t\le\ell\}\subseteq \binom{[2k]}{2}$. Denoting by
$n(S)$ the number of distinct elements of $[2k]$ appearing in $S$, the coefficient
of such therm is
its coefficient is, for $n(S)<2k$,
\[
\sum_{\ell\le r\le2k}(-1)^{r}\binom{2k-n(S)}{r-n(S)}=0
\]
for $|\{i_{t},j_{t}:t\le\ell\}|<2k$. Hence, taking $L<k$, we have
\beq
\label{eq:E-ge3-variance-bound}
\bbE_{\ge3}\Big\{\big(Z(T)-\bbE_{\ge3}Z(T)\big)^{2k}\Big\}=O_k(\Err_1+\Err_2)\, .
\eeq

We now estimate the error terms.

\noindent{\em Error term $\Err_2$.}
Using Lemma \ref{lem:truncate}, we have
\begin{align*}
    \big(\bbE_{\ge3}Z(T)\big)^{2k} &= \Big(\bbE_{\ge3}Z - \bbE_{\ge3}Z(T^c)\Big)^{2k} \nonumber\\
    &\ge (1-e^{-c N/8}) \big(\bbE_{\ge3}Z\big)^{2k}  \\
    &\ge c\, e^{Nk\xi_{\ge3}(1)}\big(Z_{\le 2}\big)^{2k}\\
    & \ge c\, e^{Nk\xi_{\ge3}(1)} Z_{\le 2}\big(A_{2k}(\delta)\big). 
\end{align*}
From this estimate, we obtain with probability at least $1-\exp(-cN/8)$ over $\bW^{(2)}$ that
\begin{align}
    \Err_2 &\le C\cdot N^{-L/2}\cdot \big(\bbE_{\ge 3 }Z(T)\big)^{2k} .\label{eq:Err2Final}
\end{align}

\noindent{\em Error term $\Err_1$.} Using Lemma \ref{lem:key-error}
by Markov inequality, with probability $1-\exp(-c N/2)$ over $\bW^{(2)}$,
\begin{align}
    &\bbE_{\ge3}  Z\big(A_{2k}(\delta)\cap\{\bsig^1 \in T^c\}\big) \le e^{-cN/2} \bbE
    Z(A_{2k}(\delta)).
\end{align}
Further using  Eq.~\eqref{eq:quad-error-2} in Lemma \ref{lem:OrthogonalTuples}, and using the assumption on
$\|\bu\|_2$,
 with probability $1-\exp(-c N/4)$ over $\bW^{(2)}$,
\begin{align}
    &\bbE_{\ge3}  Z\big(A_{2k}(\delta)\cap\{\bsig^1 \in T^c\}\big) \le e^{-cN/2} \bbE_{\ge3}
    Z(A_{2k}(\delta)). \label{eq:key-error-2}
\end{align}
Hence, with probability at least $1-\exp(-c N/8)$ over $\bW^{(2)}$,
\begin{align}\label{eq:Err1-almost}
  \Err_1 & \le e^{-cN+Nk\xi(1)} +
    e^{-cN}\max_{1\le r\le 2k}\Big(\bbE_{\ge3} Z(A_{r}(\delta))\Big)^{2k/r}
\end{align}

Further, with probability at least $1-\exp(-cN/8)$ over $\bW^{(2)}$,
\begin{align}
    \bbE_{\ge3} Z\big(A_{r}(\delta)\big)  &= \bbE_{\ge3} Z\big(A_{r}(N^{-1/2+c})\big)+
    \bbE_{\ge3} Z\big(A_r(\delta)\setminus A_{r}(N^{-1/2+c})\big)\nonumber\\
    & \le 2\, e^{Nr\xi_{\ge 3}(1)/2}  Z_{\le 2}\big(A_{r}(N^{-1/2+c})\big)+ e^{-N^c+Nr\xi_{\ge 3}(1)/2} (Z_{\le 2})^{r}
    \, ,\label{eq:secErrorTerm}
\end{align}
where in the last line we used Eq.~\eqref{eq:quadratic-tail}, and the fact that
\begin{align*}
\bbE_{\ge3} Z\big(A_{r}(N^{-1/2+c})\big)
&= \int_{A_{r}(N^{-1/2+c})} e^{\sum_{i=1}^{r}H_{\le 2}(\bsig^i)}
    \exp\left(\frac{N}{2}\sum_{i,j\le r}\xi_{\ge 3}(\<\bsig^i,\bsig^j\>_N )\right)\mu_0^{\otimes r}(\de \bsig)\\
& \le \big(1+O(N^{-1/2+3c})\big)\, e^{Nk\xi_{\ge 3}(1)}  Z_{\le 2}\big(A_{r}(N^{-1/2+c})\big)   \, .
\end{align*}
Using Eq.~\eqref{eq:laplace-1} in Eq.~\eqref{eq:secErrorTerm}, we get
\begin{align}
  \bbE_{\ge3} Z\big(A_{r}(\delta)\big) \le N^C e^{Nr\xi(1)/2}\, ,
\end{align}
whence Eq.~\eqref{eq:Err1-almost} simplifies to
\begin{align}
\Err_1 &\le e^{-cN+Nr\xi(1)}\, .\label{eq:Err1-almost2}
\end{align}
On the other hand, by Lemma \ref{lem:truncate} and Markov inequality, with probability $1-\exp(-c N/4)$ over $\bW^{(2)}$,
\begin{align*}
    \bbE_{\ge3}\int_{T(\delta)^c} e^{H(\bsig)} \mu_0(\de\bsig) &\le e^{-cN/4} e^{N\xi(1)/2}.
\end{align*}
Using Lemma \ref{lem:laplace}, we obtain that, with probability at least $1-\exp(-cN/8)$ over $\bW^{(2)}$,
\begin{align*}
    \bbE_{\ge3} Z &= \int e^{N\xi_{\ge3}(1)/2} e^{H_{\le2}(\bsig)}\mu_0(\de\bsig) \\
    &\ge e^{N\xi(1)/2-cN/10}\, ,
\end{align*}
whence Eq. \eqref{eq:Err1-almost2} yields
\begin{align}
\Err_1 &\le e^{-cN/16}\big(\bbE_{\ge3} Z(T)\big)^{2k}.\label{eq:Err1Final}
\end{align}

We also note here the estimate 
\begin{align}
    \bbE_{\ge3} Z(T) = \bbE_{\ge3} Z - \bbE_{\ge3} Z(T^c) \ge (1-e^{-cN/10})\bbE_{\ge3} Z,
\end{align}
which holds with probability at least $1-\exp(-cN/8)$ over $\bW^{(2)}$, as claimed in Eq.~(\ref{eq:Z3T}). 

Combining the error estimates \eqref{eq:Err1Final}, \eqref{eq:Err2Final}
in the moment bound \eqref{eq:E-ge3-variance-bound},
we get, with probability at least
$1-\exp(-Nc)$ with respect to $\bW_2$,
\begin{align}
\bbE_{\ge3}\Big\{\big(Z(T)-\bbE_{\ge3}Z(T)\big)^{2k}\Big\}
\le C\, N^{-L/2}\, \big(\bbE_{\ge3}Z(T)\big)^{2k}\, .
\label{eq:ZT_MomentBound-}
\end{align}
Adjusting $c$, we have
\[
\bbP\left(\left|Z-\bbE_{\ge3}Z(T(\delta))\right|>\eps\, \bbE_{\ge3}Z(T(\delta))\right)\le
\eps^{-2L}N^{-L/2}+ e^{-cN}.
\]


\noindent{\bf Estimating the magnetization, Eq.~\eqref{eq:Magn-No-Field}.}
We next apply the same argument to the magnetization.
First, we note that
\begin{align}
  \bbE_{\ge3}\left\{\left(\int_{T(\delta)}\sigma_{1}e^{H(\bsig)}\mu_0(\de\bsig)\right)^{2k}\right\}
 & =\bbE_{\ge3}\int_{T(\delta)^{2k}}\prod_{i=1}^{2k}\sigma_{1}^{i}\, e^{\sum_{i=1}^{2k}H(\bsig^{i})} \, \mu_0^{\otimes 2k}(\de\bsig)\\
 & =\bbE_{\ge3}\int_{A_{2k}(\delta)}\prod_{i=1}^{2k}\sigma_{1}^{i}\,
 e^{\sum_{i=1}^{2k}H(\bsig^{i})}\mu^{\otimes 2k}(\de\bsig) + \Err_{3}\, ,\label{eq:MagnFirstDecomp}
 \end{align}
where
\begin{equation}\label{eq:Err3}
    \Err_{3} := \bbE_{\ge3}\int_{T(\delta)^{2k}}\prod_{i=1}^{2k}\sigma_{1}^{i}\,
 e^{\sum_{i=1}^{2k}H(\bsig^{i})}\mu^{\otimes 2k}(\de\bsig)
 - \bbE_{\ge3}\int_{A_{2k}(\delta)}\prod_{i=1}^{2k}\sigma_{1}^{i}\,
 e^{\sum_{i=1}^{2k}H(\bsig^{i})}\mu^{\otimes 2k}(\de\bsig) .
\end{equation}
We have
\begin{align}
&\lt|\int_{T(\delta)^{2k}} \prod_{i=1}^{2k}\sigma_1^ie^{\sum_{i=1}^{2k}H(\bsig^{i})}\mu_0^{\otimes 2k}(\de\bsig) - \int_{A_{2k}(T(\delta),\delta)}\prod_{i=1}^{2k}\sigma_1^ie^{\sum_{i=1}^{2k}H(\bsig^{i})}\mu_0^{\otimes 2k}(\de\bsig)\rt| \nonumber\\
 &= \lt|\int_{T(\delta)^{2k}: \max_{i\ne j}|\<\bsig^i,\bsig^j\>_N|>\delta} \prod_{i=1}^{2k}\sigma_1^ie^{\sum_{i=1}^{2k}H(\bsig^{i})}\mu_0^{\otimes 2k}(\de\bsig)\rt| \nonumber\\
 & \stackrel{(a)}{\le} N^k (2k)^2 e^{-c N + N\xi(1)/2}
 Z\big(T(\delta)\big)^{2k-1} \nonumber \\
 & \stackrel{(b)}{\le}  e^{-cN/(2k)+Nk\xi(1)} + e^{-cN/(2k)}
 Z\big(T(\delta)\big)^{2k}\nonumber \\
 & \stackrel{(c)}{\le}  e^{-cN/(2k)+Nk\xi(1)} + e^{-cN/(2k)}
 \lt(\bbE_{\ge 3} Z\big(T(\delta)\big)\rt)^{2k}\, ,\label{eq:MM-up-magnetization}
\end{align}
where  in $(a)$ we used Lemma \ref{lem:truncate},
in $(b)$ the AM-GM inequality, and  $(c)$
holds with probability at least $1-\exp(-cN)$
by Eq.~\eqref{eq:ZT_MomentBound}.

Using Eq.~\eqref{eq:MM-up-magnetization}
and Lemma \ref{lem:laplace}
we obtain that, with probability at least $1-e^{-cN}$ over $\bW^{(2)}$,
 \begin{align}
 |\Err_{3}| \le e^{-cN}\lt(\bbE_{\ge3}Z\big(T(\delta)\big)\rt)^{2k}. \label{eq:Err3-bound}
\end{align}

Turning to the main term in Eq.~\eqref{eq:MagnFirstDecomp},
\begin{align*}
    &\bbE_{\ge3}\int_{A_{2k}(\delta)}\prod_{i=1}^{2k}\sigma_{1}^{i}\,
 e^{\sum_{i=1}^{2k}H(\bsig^{i})}\mu^{\otimes 2k}(\de\bsig)\\
    &=e^{Nk\xi_{\ge3}(1)}\int_{A_{2k}(\delta)} \prod_{i=1}^{2k} \sigma_1^i \exp\left\{\sum_{i=1}^{2k}H_{\le2}(\bsig^{i})+\frac{N}{2}\sum_{i\ne j}\xi_{\ge3}(\<\bsig^{i},\bsig^{j}\>_N)
\right\} \mu_0^{\otimes 2k}(\de \bsig)
\end{align*}
By Eqs.~\eqref{eq:quad-error-1} and \eqref{eq:quadratic-tail}
in Lemma \ref{lem:OrthogonalTuples}, we can bound
\begin{align*}
    &\bbE_{\ge3}\int_{A_{2k}(\delta)}\prod_{i=1}^{2k}\sigma_{1}^{i}\,
 e^{\sum_{i=1}^{2k}H(\bsig^{i})}\mu^{\otimes 2k}(\de\bsig)\\
    &=e^{Nk\xi_{\ge3}(1)}\int_{A_{2k}(N^{-1/2+c})} \prod_{i=1}^{2k} \sigma_1^i \exp\left\{\sum_{i=1}^{2k}H_{\le2}(\bsig^{i}) + \frac{N}{2}\sum_{i\ne j}\xi_{\ge3}(\< \bsig^i,\bsig^j\>_N)
\right\} \mu_0^{\otimes 2k}(\de \bsig) \\
&\qquad \qquad + O\lt( N^{k} e^{-N^c+Nk\xi_{\ge3}(1)}
Z_{\le 2,2k}\big(A_{2k}(\delta)\big)\rt).
\end{align*}
To bound the first term, using Lemma \ref{lem:overlap-2},
\begin{align*}
    &\int_{A_{2k}(\delta)} \prod_{i=1}^{2k} \sigma_1^i \exp\left\{\sum_{i=1}^{2k}H_{\le2}(\bsig^{i})
\right\} \mu_0^{\otimes 2k}(\de \bsig) \\
&= \int_{S_N^{2k}} \prod_{i=1}^{2k} \sigma_1^i \exp\left\{\sum_{i=1}^{2k}H_{\le2}(\bsig^{i})
\right\} \mu_0^{\otimes 2k}(\de \bsig) + O_k\lt(N^k
e^{-c\delta^2N}\big(Z_{\le 2}\big)^{2k}\rt)\\
&= \lt(\int_{S_N} \sigma_1 \exp\left\{H_{\le2}(\bsig)
\right\} \mu_0^{}(\de \bsig)\rt)^{2k} + O_k\lt(N^k
e^{-c\delta^2N}\big(Z_{\le 2})^{2k}\rt).
\end{align*}
By Lemma \ref{lem:laplace}, we then obtain
\begin{align}
    &\int_{A_{2k}(\delta)} \prod_{i=1}^{2k} \sigma_1^i \exp\left\{\sum_{i=1}^{2k}H_{\le2}(\bsig^{i})
\right\} \mu_0^{\otimes 2k}(\de \bsig)\le C_k \lt({|u_1|}^{2k}+N^k e^{-c\delta^2N}\rt)\lt(Z_{\le 2}\rt)^{2k}. \label{eq:A2k-magnetization}
\end{align}

On the other hand, by taking the Taylor expansion of $\exp\lt\{\frac{N}{2}\sum_{i\ne j}\xi_{\ge3}(\<\bsig^i,\bsig^j\>_N)\rt\}$ up to terms of order $L=Ck$ for $C>2$, we obtain that, for $\xi_{\ge3, \le \ell}(s) = \sum_{3\le p\le \ell} \beta_p^2 s^p$,
\begin{align}
    &\frac{1}{(Z_{\le 2})^{2k}}\int_{A_{2k}(N^{-1/2+c})} \prod_{i=1}^{2k} \sigma_1^i \lt(\exp\lt(N\sum_{i< j}\xi_{\ge3}(\<\bsig^i,\bsig^j\>_N)\rt)-1\rt)
    e^{\sum_{i=1}^{2k}H_{\le2}(\bsig^{i})} \mu_0^{\otimes 2k}(\de \bsig)\nonumber \\
&= O(N^{-k}) + \frac{1}{(Z_{\le 2})^{2k}}\sum_{\ell\le L} \frac{N^{\ell}}{\ell!}\int_{A_{2k}(N^{-1/2+c})} \prod_{i=1}^{2k} \sigma_1^i \lt(\sum_{i< j}\xi_{\ge3}(\<\bsig^i,\bsig^j\>_N)\rt)^{\ell}
e^{\sum_{i=1}^{2k}H_{\le2}(\bsig^{i})}
\mu_0^{\otimes 2k}(\de \bsig)\nonumber\\
&= O(N^{-k}) + \frac{1}{(Z_{\le 2})^{2k}}\sum_{\ell\le L} \frac{N^{\ell}}{\ell!}\int_{A_{2k}(N^{-1/2+c})} \prod_{i=1}^{2k} \sigma_1^i \lt(\sum_{i< j}\xi_{\ge3,\le 4k}(\<\bsig^i,\bsig^j\>_N)\rt)^{\ell}
e^{\sum_{i=1}^{2k}H_{\le2}(\bsig^{i})}
\mu_0^{\otimes 2k}(\de \bsig)\nonumber\\
&\stackrel{(a)}{=} O(N^{-k}+e^{-N^c}) + \frac{1}{(Z_{\le 2})^{2k}}\sum_{\ell\le L} \frac{N^{\ell}}{\ell!}\int_{S_N^{2k}} \prod_{i=1}^{2k} \sigma_1^i \lt(\sum_{i< j}\xi_{\ge3,\le 2k}(\<\bsig^i,\bsig^j\>_N)\rt)^{\ell}e^{\sum_{i=1}^{2k}H_{\le2}(\bsig^{i})} \mu_0^{\otimes 2k}(\de \bsig)\nonumber\\
&\stackrel{(b)}{=} O(N^{-k}+e^{-N^c}) + (1+\|\bu\|)^{O(k^2)} O_{k}\lt(\sum_{\ell \le k}|u_1|^{2k-2\ell}N^{-\ell} + \sum_{k<\ell \le L} N^{-\ell/2-k/2}\rt) \\
&= (1+\|\bu\|)^{O(k^2)}O(N^{-k}+e^{-N^c}+|u_1|^{2k})\, ,
\label{eq:LastEst-Magn}
\end{align}
where in $(a)$ we used again
Lemma \ref{lem:OrthogonalTuples} and in $(b)$
Lemma \ref{lem:laplace-moments}.

We thus have from Eqs.~\eqref{eq:Err3-bound},
\eqref{eq:A2k-magnetization}, \eqref{eq:LastEst-Magn},
\begin{align*}
&\bbE_{\ge3}\left\{\Big(\int_{T(\delta)}\sigma_{1} e^{H(\bsig)}\mu_0(\de\bsig)\Big)^{2k}\right\} \\
&\le C_k(1+\|\bu\|)^{Ck^2}\lt({|u_1|}^{2k}+N^{-k}+N^ke^{-N^c}+e^{-cN}\rt)\lt(\bbE_{\ge3}
Z\big(T(\delta)\big)\rt)^{2k}.
\end{align*}
The desired claim \eqref{eq:Magn-No-Field} follows from Markov Inequality upon adjusting the constant $c$.

\subsubsection{Magnetization in the band: Proof of Lemma
\ref{lem:correction-band}}

In the remaining of this section, we denote by $\mu$ the Gibbs measure associated to $H(\bsig)$, i.e.
$$\mu(\de\bsig) \propto \exp(H(\bsig))\, \mu_0(\de\bsig).$$

In the following we estimate the components of
$\<\bsig\> = (\<\sigma_1\>,\dots,\<\sigma_N\>)$
in the basis of eigenvectors of the quadratic part of the Hamiltonian $\bW^{(2)}$.
For simplicity of notation, we consider the
component $\<\sigma_1\>$ but we emphasize that
this does not necessarily correspond to the largest
(or smallest) eigenvalue of $\bW^{(2)}$.
Defining $\bsig_{-1}=(\sigma_{2},\dots,\sigma_{N})$, we have
\begin{align}
\int\sigma_{1} e^{H(\bsig)}\mu_0(\de\bsig)
= \frac{1}{Z} \int\sigma_{1}e^{\sigma_{1}u_{1}+\Lambda_{1}\sigma_{1}^{2}}\hat{E}(\sigma_{1})
\de\sigma_1\, .
\end{align}
where we defined
\[
\hat{E}(\sigma_{1})= C_N
(1-\sigma_{1}^2/N)^{(N-3)/2}\int \exp\Big(\frac{\sigma_1}{N}\sum_{i,j=2}^N\tg_{1ij}\sigma_{i}\sigma_{j}\Big)
e^{H_{\sigma_1}(\bsig_{-1})}
\mu_{0,\sqrt{N-\sigma_{1}^2}}(\de\bsig_{-1})\, ,
\]
$\mu_{0,\rho}$ denotes the uniform measure over the sphere of radius $\rho$,
\[
C_N:=\frac{\Gamma(N-1)}{\Gamma((N-1)/2)^22^{N-2}\sqrt{N}} =
\frac{1}{\sqrt{2\pi}}+O(N^{-1})\, ,
\]
and \[
H_{\sigma_1}(\bsig_{-1}) := \sum_{i=2}^{N} (\sigma_iu_i + \Lambda_i\sigma_i^2) + N^{-1}\sum_{i,j,k>1} g^{(3)}_{ijk}\sigma_i\sigma_j\sigma_k + \sum_{p\ge4} H_p(\bsig).\] Here $\tg_{1ij}$ is the sum of $g$ over permutations of $(1,i,j)$.
In particular $\tg_{1ij} = \tg_{1ji}$
\begin{align}
(\tg_{1ij})_{1<i<j} \sim_{iid}\cN(0,3\beta_3^2/2)\,,\;\;\;\;
(\tg_{1ii})_{1<i} \sim_{iid}\cN(0,3\beta_3^2)\, .
\end{align}
We set $\hat{E}(\sigma_{1})=0$ for $|\sigma_1|>\sqrt{N}$.

By Lemma \ref{lem:laplace}  and Lemma \ref{lem:high-acc-anneal}, with probability $1-e^{-cN} - N^{-C}$,
\begin{align*}
Z &=(1+O(N^{-c}))\sqrt{\frac{2}{G''(\gamma_*)}}
\cdot\exp\Big\{N\Big[\xi_{\ge3}(1) -\frac{1}{2}\log (2e)+G(\gamma_*)\Big]\Big\}\, ,
\end{align*}
where $G(\gamma)$ and $\gamma_{*}$ where defined in Eqs.~\eqref{eq:GDef} and \eqref{eq:GammaDef}.

In estimating $\<\sigma_{1}\>$, we first anneal over $\bg_{\ge4}$
and $\bg_{3-}:=(g_{ijk}: 1<i<j<k)$. We have
\begin{align*}
E(\sigma_1):=\bbE_{\bg_{3-},\bg_{\ge4}}[\hat{E}(\sigma_{1})] &= C_N\Big(1-\frac{\sigma_{1}^2}{N}\Big)^{(N-3)/2}
\int\exp\left(\frac{\sigma_{1}}{N}\sum_{i,j=2}^N\tg_{1ij}\sigma_{i}\sigma_{j}\right)\\
&\qquad \qquad
\exp\Big\{H_{\le2}(\bsig_{-1})+N\xi_{\ge4}(1)/2+N\beta_{3}^{2}(1-\sigma_{1}^{2}/N)^{3}/2\Big\}
\mu_{0,\sqrt{N-\sigma_{1}^2}}(\de\bsig_{-1}).
\end{align*}
The next lemma show that this expectation is an accurate approximation of
$\hat{E}(\sigma_{1})$.
\begin{lem}
\label{lem:anneal}We have for an appropriate $c \in (0,1/8)$ that, with probability $1-N^{-c}$,
\begin{align*}
 & \int\sigma_{1} e^{u_{1}\sigma_{1}+\Lambda_{1}\sigma_{1}^{2}}\hat{E}(\sigma_{1})\, \de\sigma_{1}=\\
  & = \int\sigma_{1} e^{u_{1}\sigma_{1}+\Lambda_{1}\sigma_{1}^{2}}E(\sigma_{1})\de\sigma_{1}
 + O\lt(N^{-1/2+c}(|u_1|+N^{-1/2}) \int e^{u_{1}\sigma_{1}+\Lambda_{1}\sigma_{1}^{2}}E(\sigma_1)\de\sigma_{1}\rt).
\end{align*}
\end{lem}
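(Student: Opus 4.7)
The plan is to apply Lemma~\ref{lem:high-acc-anneal} to the conditional Hamiltonian defining $\hat E(\sigma_1)$, thereby reducing the statement to a pointwise high-moment concentration of $\hat E(\sigma_1)$ around its mean $E(\sigma_1)$. For each fixed $\sigma_1$, the quantity $\hat E(\sigma_1)/(C_N(1-\sigma_1^2/N)^{(N-3)/2})$ is the partition function of a mixed $p$-spin model on the sphere of radius $\sqrt{N-\sigma_1^2}$ in $\bsig_{-1}$, with quadratic part equal to $\bLambda$ restricted to coordinates $\{2,\ldots,N\}$ plus the perturbation $(\sigma_1/N)[\tilde g_{1ij}]_{i,j>1}$, cubic couplings $\bg_{3-}$, higher-degree parts from $\bg_{\ge 4}$, and external field combining $\bu_{-1}$ with small $\sigma_1^2 g^{(3)}_{11k}$-type corrections. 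For $|\sigma_1|\le N^{c_0/2}$ this perturbation has operator norm $o(1)$, so the effective quadratic matrix remains GOE-like and the effective mixture still satisfies~\eqref{eq:replica-symmetry-repeat} by continuity.

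Applying the $2k$-th moment bound~\eqref{eq:ZT_MomentBound} of Lemma~\ref{lem:high-acc-anneal} to this effective model, conditionally on $(\bLambda,\bu,\tilde g_{1ij},\sigma_1)$ and treating $(\bg_{3-},\bg_{\ge 4})$ as the annealed randomness, gives for any $L,k\in\bbN$, with probability $1-e^{-cN}$ over the conditioning,
\[
\EE_{\bg_{3-},\bg_{\ge 4}}\big[(\hat E(\sigma_1)-E(\sigma_1))^{2k}\big]^{1/(2k)} \le C_{k,L}\,N^{-L/(4k)}\,E(\sigma_1);
\]
the gap between $E(\sigma_1)$ and its typical-set truncation is negligible by Lemma~\ref{lem:truncate}. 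For $\sigma_1$ outside the range $|\sigma_1|\le\log N$, both integrands in Lemma~\ref{lem:anneal} are exponentially small in $N$ due to the factor $(1-\sigma_1^2/N)^{(N-3)/2}$; inside this range, uniform control in $\sigma_1$ follows from a polynomial-sized net together with the Lipschitz continuity in $\sigma_1$ of the effective Hamiltonian.

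Minkowski's integral inequality then yields
\[
\Big\|\int \sigma_1 e^{u_1\sigma_1+\Lambda_1\sigma_1^2}(\hat E(\sigma_1)-E(\sigma_1))\,\de\sigma_1\Big\|_{L^{2k}} \le C_{k,L}\,N^{-L/(4k)}\int|\sigma_1|e^{u_1\sigma_1+\Lambda_1\sigma_1^2}E(\sigma_1)\,\de\sigma_1.
\]
A Laplace-type analysis of $E(\sigma_1)$, using Lemma~\ref{lem:laplace} to evaluate the inner quadratic partition function, shows that under the weight $e^{u_1\sigma_1+\Lambda_1\sigma_1^2}E(\sigma_1)$ the variable $\sigma_1$ is approximately Gaussian with $O(1)$ variance and mean $O(|u_1|)$, so that $\int|\sigma_1|e^{u_1\sigma_1+\Lambda_1\sigma_1^2}E(\sigma_1)\,\de\sigma_1 \le C(1+|u_1|)\int e^{u_1\sigma_1+\Lambda_1\sigma_1^2}E(\sigma_1)\,\de\sigma_1$. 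Choosing $L=4k$ with $k$ large and applying Markov's inequality then gives, with probability $1-N^{-c}$, the bound in the lemma, since $N^{-1+c/2}(1+|u_1|)\le N^{-1/2+c}(|u_1|+N^{-1/2})$ for small $c>0$ and all $|u_1|\le N^{c_0}$.

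The main technical obstacle is the first step: ensuring that the conditional $(N-1)$-dimensional model satisfies the hypotheses of Lemma~\ref{lem:high-acc-anneal} uniformly over $\sigma_1$ in the typical range and over the $\sigma_1$-dependent perturbation $(\sigma_1/N)[\tilde g_{1ij}]$ of its quadratic part. One must verify that the strict replica-symmetric condition~\eqref{eq:replica-symmetry-repeat} remains valid for the perturbed effective mixture, and that the high-probability events in the proof of Lemma~\ref{lem:high-acc-anneal} can be coupled across different $\sigma_1$. Since the perturbation has operator norm $o(1)$, this reduces to continuity arguments for the quantities appearing in the proof; the subsequent Minkowski and Laplace estimates are then standard.
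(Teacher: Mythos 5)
There is a genuine gap at the heart of your argument: the Minkowski step discards exactly the cancellation that the lemma is about. The bound you must prove is $O\big(N^{-1/2+c}(|u_1|+N^{-1/2})\big)$ \emph{relative to} $\int e^{u_1\sigma_1+\Lambda_1\sigma_1^2}E(\sigma_1)\,\de\sigma_1$; when $u_1$ is small this is a relative error of order $N^{-1+c}$. No pointwise concentration of $\hat E(\sigma_1)$ around $E(\sigma_1)$ can deliver this after the triangle inequality, because the pointwise relative fluctuation of $\hat E(\sigma_1)/E(\sigma_1)$ coming from the quenched cubic disorder $\bg_{3-}$ is genuinely of order $N^{-1/2}$ (the standard $N^{-(p-2)/2}$ scaling for $p=3$), so your estimate $\|\int\sigma_1 e^{\cdots}(\hat E-E)\|_{L^{2k}}\le \sup_{\sigma_1}\|\hat E(\sigma_1)/E(\sigma_1)-1\|_{L^{2k}}\cdot\int|\sigma_1|e^{\cdots}E$ bottoms out at $N^{-1/2}(1+|u_1|)\int e^{\cdots}E$, with no mechanism to produce the extra factor $(|u_1|+N^{-1/2})$. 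On top of this, the rate you quote is not even available from Lemma~\ref{lem:high-acc-anneal}: the moment bound \eqref{eq:ZT_MomentBound} is proved only for $L<k$ (the combinatorial cancellation of the alternating sum requires it), so the extractable relative error from Markov is at best roughly $N^{-1/4}$, and your choice $L=4k$ is not permitted.

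The statement is true because the fluctuation $\hat E(\sigma_1)-E(\sigma_1)$ is, to leading order, nearly even in $\sigma_1$ (equivalently, nearly decorrelated from the sign of $\sigma_1$), so the signed, $\sigma_1$-weighted integral is much smaller than the pointwise deviations. The paper's proof captures this by computing the second moment of the \emph{signed weighted} quantity directly: $\mathbb{E}_{\bg_{\ge4},\bg_{3-}}\big[\big(\int_{T(\delta)}\sigma_1(e^{H}-\mathbb{E}_{\bg_{\ge4},\bg_{3-}}e^{H})\mu_0(\de\bsig)\big)^2\big]$ is a two-replica integral $Q(T\times T)$ with kernel $e^{NW(\bsig^1,\bsig^2)}-1$ and the factor $\sigma_1^1\sigma_1^2$; restricting to nearly orthogonal pairs, Taylor-expanding in the overlap, and applying the parity analysis of Lemma~\ref{lem:laplace-moments-2} to moments of the form $(\sigma_1^1\sigma_1^2)^{b+1}(\sigma_1^1)^i(\sigma_1^2)^j$ produces the factors $|u_1|^2$ and $N^{-1}$ that yield the $O(N^{-2}+N^{-1}|u_1|^2)$ second-moment bound, and hence the claimed error after Markov. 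Your reduction to a one-replica concentration statement cannot reproduce this two-replica, parity-based mechanism.
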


Before proving Lemma \ref{lem:anneal}, we use it to prove Lemma \ref{lem:spin}.
\begin{proof}[Proof of Lemma \ref{lem:spin}]
For $U(\sigma_1):=N\xi_{\ge4}(1)/2+N\beta_{3}^{2}(1-\sigma_{1}^{2}/N)^{3}/2$, we
have
\[
E(\sigma_{1})=C_N
\Big(1-\frac{\sigma_{1}^2}{N}\Big)^{(N-3)/2} \int
\exp\left(\frac{\sigma_{1}}{N}\sum_{i,j=2}^N\tg_{1ij}\sigma_{i}\sigma_{j}\right)\, e^{H_{\le2}(\bsig_{-1})+U(\sigma_1)}
\, \mu_{0,\sqrt{N-\sigma_{1}^2}}(\de\bsig_{-1})\, .
\]
Again by Lemma \ref{lem:laplace}, for $\bV =\bV(\sigma_1) := \bLambda_{-1}+\bDel$, where $\bLambda_{-1}$ is the
diagonal matrix with entries corresponding to the
spectrum of $\bW^{(2)}$, with $\Lambda_1$ replaced by $0$, and
$\bDel:=\sigma_1N^{-1}\tilde{\bG}$ with $\tilde{G}_{ij} = \tg_{1ij}$,
\begin{align}
E(\sigma_{1})&=(1+O(N^{-1}))\frac{1}{(2e)^{(N-1)/2}\sqrt{2\pi}}(1-\sigma_1^2/N)^{-1}\sqrt{\frac{2}{G_{\sigma_1}''(\gamma_*(\sigma_1))}} \exp\big(U(\sigma_1)+N G_{\sigma_1}(\gamma_*(\sigma_1))\big),
\label{eq:Esigma1}
\end{align}
where we defined
\begin{align}
G_{\sigma_1}(\gamma) &:= (1-\sigma_1^{2}/N)\gamma-\frac{1}{2N}\log\det(\gamma \bI_{N-1}-\bV)+\frac{1}{4N}\<\bu,
(\gamma \bI_{N-1}-\bV)^{-1}\bu\>\, ,\\
\gamma_*(\sigma_1) &= \arg\max G_{\sigma_1}(\gamma)\, .
\end{align}

 By Lemma \ref{lem:anneal}, we have
\begin{align}
    \label{eq:ApplyAnneal}
\int\sigma_{1}\,\mu(\de\bsig)=
\frac{\int \sigma_1e^{u_{1}\sigma_{1}+\Lambda_{1}\sigma_{1}^{2}}
E(\sigma_1)\de\sigma_{1}}{\int e^{u_{1}\sigma_{1}+\Lambda_{1}\sigma_{1}^{2}}
E(\sigma_1)\de\sigma_{1}}+O\lt(N^{-1/2+c}(|u_1|+N^{-1/2})\rt)\, .
\end{align}
We next estimate these integrals by
approximating their argument for small
$\sigma_1$. Note that by Lemma \ref{lem:overlap-2}
and Lemma \ref{lem:truncate}, we can restrict these
integrals to $|\sigma_1|\le C\log N$ making a negligible error.

It is easy to see that, for $\sigma_1=0$, we recover $G_{\sigma_1}(\gamma)=G_0(\gamma)$, where $G_0(\gamma)$ is the same function
defined in Eq.~\eqref{eq:GDef}, with $N$ replaced by $N-1$.
To leading order, we can expand
\begin{align*}
&G_{\sigma_{1}}(\gamma) =\\ & = (1-\sigma_{1}^{2}/N)\gamma-\frac{1}{2N}\log\det(\gamma \bI-\bV)+\frac{1}{4N}\<\bu,(\gamma \bI-\bV)^{-1}\bu\> \\
 & =(1-\sigma_{1}^{2}/N)\gamma-\frac{1}{2N}\log\det(\gamma \bI-\bV)+\frac{1}{4N}\<\bu,(\bI+(\gamma \bI-\bLambda_{-1})^{-1}\bDel+\bE_N)(\gamma \bI-\bLambda_{-1})^{-1}\bu\>.
\end{align*}
where $\|\bE_N\|_{\op} = O(N^{-1})$ with probability
$1-\exp(-cN)$ over $\bW^{(3)}$.Therefore
\begin{align}
G_{\sigma_{1}}(\gamma) -G_0(\gamma)
=&-\frac{\gamma\sigma_1^2}{N}+\frac{1}{2N}\log(\gamma-\Lambda_1)-\frac{1}{2N}
\log\det\big(\bI-(\gamma \bI-\bLambda_{-1})^{-1/2}\bDel(\gamma \bI-\bLambda_{-1})^{-1/2}\big)\nonumber\\
&+\frac{1}{4N}\<\bu,(\gamma \bI-\bLambda_{-1})^{-1}\bDel(\gamma \bI-\bLambda_{-1})^{-1}\bu\>
+O(\|\bu\|^2/N^2)\, . \label{eq:GdiffFirst}
\end{align}
on $\gamma>\max_i\Lambda_i+\eps$.
Since the above difference (and its derivative with respect to $\lambda$) is of order $\sigma_1/\sqrt{N}$
and $G$ is strongly convex in a neighborhood of $\gamma_*$, it follows that
$\gamma_*(\sigma_1) = \gamma_*+O(\sigma_1^2/N)$.
We will therefore restrict ourselves to $|\gamma-\gamma_*|\le CN^{-1}(\log N)^2$.

 We next expand the log-determinant term in the difference.
 Defining
 \begin{align}
 D_{2} :=\sum_{i,j=1}^N\big(N^{-1}(\gamma \bI-\bLambda_{-1})^{-1/2}\tilde\bG(\gamma \bI-\bLambda_{-1})^{-1/2}\big)_{ij}^{2} ,
 \end{align}
we have
\begin{align}
\Tr\Big((\gamma \bI-\bLambda_{-1})^{-1/2}\bDel(\gamma \bI-\bLambda_{-1})^{-1/2})&=\frac{\sigma_{1}}{N}\sum_{i\neq 1} (\gamma - \Lambda_i)^{-1} \tg_{1ii}+O(N^{-1}),\\
\Tr\Big(\big(\gamma \bI-\bLambda_{-1})^{-1/2}\bDel(\gamma \bI-\bLambda_{-1})^{-1/2}\big)^{2}\Big)&=D_{2}\sigma_{1}^{2}\,,\\
\Tr\Big(\big((\gamma \bI-\bLambda_{-1})^{-1/2}\bDel(\gamma \bI-\bLambda_{-1})^{-1/2}\big)^{k}\Big)&=O(N^{-1}) \;\;\;\;\;\mbox{ for $k\ge 3$}.
\end{align}
Thus, with high probability,
\begin{align*}
\frac{1}{2N}\log\det\Big(\bI-(\gamma \bI-\bLambda_{-1})^{-1/2}\bDel(\gamma \bI-\bLambda_{-1})^{-1/2}\Big)=-
\frac{\sigma_1}{2N^2}\sum_{i\neq 1} (\gamma - \Lambda_i)^{-1} \tg_{1ii}-\frac{D_{2}}{4N}\sigma_1^{2}+O(N^{-2})\, .
\end{align*}
For $\gamma = \gamma_*(\sigma_1)=\gamma_*+O(\sigma_1^2/N)$, we can compute
\[
\mathbb{E}D_{2}=\frac{3\beta_3^2}{2N^2}\Big(\sum_{i\ne1}(\gamma_{*}-\Lambda_{i})^{-1}\Big)^{2}+O(N^{-1})
\]
and
\[
\textrm{Var}(D_{2})=\frac{\beta_{3}^{4}}{N^4}
\, O\Big((\sum_{i\ne1}\Big(\gamma_{*}-\Lambda_{i})^{-2}\Big)^{2}\Big)=O(N^{-2}).
\]
Furthermore, recalling the stationarity condition
$G'(\gamma_{*})=0$, which yields
\begin{align*}
\frac{1}{2N}\sum_{i=1}^N\frac{1}{\gamma_{*}-\Lambda_{i}}=1+\frac{1}{4N}\sum_{i=1}^N\frac{u_{i}^{2}}{(\gamma_{*}-\Lambda_{i})^{2}}
\end{align*}
which yields (for $\|\bu\|\le
N^{c_0}$) $\sum_{i\ge 1}
(\gamma_{*}-\Lambda_{i})^{-1}=2N+O(N^{2c_0})$, and therefore
\begin{align}
\label{eq:ED2}
\mathbb{E}D_{2}=6\beta_3^2+O(N^{-1})\, .
\end{align}

Substituting the above estimates in Eq.~\eqref{eq:GdiffFirst}
the following holds with probability at least
$1-\exp(-N^c)$, for $|\sigma_1|\le C\log N$,
\begin{align*}
\min_{\gamma}G_{\sigma_{1}}(\gamma)-G_0(\gamma_{*}) & =
-\frac{\gamma_*\sigma_1^2}{N} + \frac{1}{2N}\log(\gamma_{*}-\Lambda_{1})+\frac{\sigma_1}{2N^2}\sum_{i=1}^N (\gamma_* - \Lambda_i)^{-1}\tg_{1ii}+\frac{D_{2}}{4N}\sigma_{1}^{2}+O(N^{-2+3c_0}).
\end{align*}
Letting $a_N:=C\log N$, and using Eq.~\eqref{eq:Esigma1},
\begin{align*}
 & \int\sigma_1e^{\sigma_1u_{1}+\Lambda_{1}\sigma_1^{2}}E(\sigma_1)\de\sigma_1\\
 & =\frac{1}{(2e)^{(N-1)/2}\sqrt{2\pi}}\sqrt{\frac{2}{G''(\gamma_*)}}\int_{[-a_N,a_N]}(1-\sigma_1^2/N)^{-1}\sigma_1\\
 & \qquad\qquad  \exp\bigg\{NG_0(\gamma_{*})+\frac{1}{2}\log(\gamma_{*}-\Lambda_{1})+U(\sigma_1)+\sigma_1\lt(u_{1}+\frac{1}{2N}\sum_{i}(\gamma - \Lambda_i)^{-1}\tg_{1ii}\rt)\\
 &\qquad \qquad \qquad \qquad -\lt(-\Lambda_{1}-\frac{1}{4}D_{2}+\gamma_{*}\rt)\sigma_1^{2}+O(N^{-1+3c_0})\bigg\}\de\sigma_1+
 \delta_N\\
 & \stackrel{(a)}{=}(2e)^{-(N-1)/2}\sqrt{\frac{1}{\pi G''(\gamma_*)}}\int_{[-a_N,a_N]}\sigma_1\exp\bigg\{NG_0(\gamma_{*})+\frac{1}{2}\log(\gamma_{*}-\Lambda_{1})\\
 & \qquad+U(\sigma_1)+\sigma_1\lt(u_{1}+\frac{1}{2N}\sum_{i}(\gamma - \Lambda_i)^{-1}\tg_{1ii}\rt)-\lt(-\Lambda_{1}-\frac{3}{2}\beta_{3}^{2}+\gamma_{*}\rt)\sigma_1^{2}+O(N^{-1+3c_0})\bigg\}\de\sigma_1+\delta_N\\
 & =(1+O(N^{-1}))(2e)^{-(N-1)/2}\sqrt{\frac{1}{\pi G''(\gamma_*)}}\int_{[-a_N,a_N]}\sigma_1\exp\bigg\{NG_0(\gamma_{*})+\frac{1}{2}\log(\gamma_{*}-\Lambda_{1}) \\
 & \qquad+ \frac{N}{2}(\xi_{\ge 4}(1) + \beta_3^2)/2+\sigma_1\lt(u_{1}+\frac{1}{2N}\sum_{i}(\gamma - \Lambda_i)^{-1}\tg_{1ii}\rt)-(-\Lambda_{1}+\gamma_{*})\sigma_1^{2}+O(N^{-1+3c_0})\bigg\}\de\sigma_1
 +\delta_N\, ,
\end{align*}
where in $(a)$ we used Eq.~\eqref{eq:ED2}, and
\begin{align}
|\delta_N|\le N^{-1}\int e^{\sigma_1u_{1}+\Lambda_{1}\sigma_1^{2}}E(\sigma_1)\de\sigma_1\, .
\end{align}
Therefore, we obtain
\begin{align}
\frac{\int\sigma_1\exp(\sigma_1u_{1}+\Lambda_{1}\sigma_1^{2})E(\sigma_1)\de\sigma_1}{\int
\exp(\sigma_1u_{1}+\Lambda_{1}\sigma_1^{2})E(\sigma_1)\de\sigma_1
} & =\frac{u_{1}+N^{-1}\sum_{i}(\gamma_* - \Lambda_i)^{-1}\tg_{1ii}}{2(\gamma_{*}-\Lambda_{1})}+O(N^{-1}). \label{eq:cal-E}
\end{align}
which completes the proof using Eq.~\eqref{eq:ApplyAnneal}.
\end{proof}
Finally, we prove Lemma \ref{lem:anneal}. The main idea is that
the error in annealing can be controlled by accurate estimates of certain quantities involving overlap
over the quadratic model on $\bsig_{-1}$, which follows from Laplace
transform and expansion of the dependence on $\sigma_1$.
\begin{proof}[Proof of Lemma \ref{lem:anneal}]
Define
\begin{align*}
W(\bsig^1,\bsig^2) &:= \frac{1}{N}\bbE\Big\{\big(H_3(\bsig_{-1}^1)+H_{\ge 4}(\bsig^1)\big)\big(H_3(\bsig_{-1}^2)+H_{\ge 4}(\bsig^2)\big)\Big\}\\
&=\beta_{3}^{2}\langle\bsig_{-1}^{1},\bsig_{-1}^{2}\rangle_N^{3}
+\xi_{\ge4}\big(\langle\bsig_{-1}^{1},\bsig_{-1}^{2}\rangle_N+\sigma_1^{1}\sigma_1^{2}/N\big)\, ,
\end{align*}
where, with an abuse of notation,
$H_3(\bsig_{-1}^a):=N^{-1}\sum_{i,j,k=2}^N\sigma^a_i\sigma^a_j\sigma^a_k$ (and a similar notation will be used for $H_{\le 2}(\bsig_{-1}^a)$
below). Note that $W(\bsig^1,\bsig^1) = \xi_{\ge4}(1) + \beta_3^2 (1 - (\sigma_1^1)^2/N)^3$.
For a Borel set $U\subseteq S_N^2$, define
\begin{align*}
    Q(U) &:= \int_{U} \sigma_1^{1}\sigma_1^{2}\,
    e^{u_{1}(\sigma_1^{1}+\sigma_1^{2})+\Lambda_{1}((\sigma_1^{1})^{2}+(\sigma_1^{2})^{2})}\cdot \\
 & \qquad\qquad \cdot \exp\left\{N^{-1}\Big(\sigma_1^{1}\sum_{i,j=2}^N\tg_{1ij}\sigma_{i}^{1}\sigma_{j}^{1}+\sigma_1^{2}\sum_{i,j=2}^N\tg_{1ij}\sigma_{i}^{2}\sigma_{j}^{2}\Big)+H_{\le2}(\bsig_{-1}^{1})+H_{\le2}(\bsig_{-1}^{2})\right\}\\
 & \qquad\qquad\qquad\qquad
 e^{N[W(\bsig^1,\bsig^1)+W(\bsig^2,\bsig^2)]/2}
\big\{\exp[N W(\bsig^1,\bsig^2)]-1\big\}\mu^{\otimes 2}_0(\de\bsig)
\, .
\end{align*}
Expanding the square and taking expectation, we obtain
\begin{align*}
 & \mathbb{E}_{\bg_{\ge4},\bg_{3-}}\left[\left\{ \int_{T(\delta)}\sigma_1\lt(e^{H(\bsig)} - \mathbb{E}_{\bg_{\ge4},\bg_{3-}} e^{H(\bsig)}\rt) \,\mu_0(\de\bsig)\right\} ^{2}\right] = Q\big(T(\delta)\times T(\delta)\big)\, .
\end{align*}
Further, writing $T=T(\delta)$, and
$A_2=A_2(N^{-1/2+c})$,
we obtain that, with probability at
least $1-\exp(-N^c)$,
\begin{align*}
\big|Q(A_2) -Q(T\times T)\big|
&= N\cdot Q\Big(A_2\setminus T\times T\Big)
+  N\cdot Q\Big( T\times T\setminus A_2\Big)\\
&\stackrel{(a)}{\le}
N\cdot Z_{\le 2,2}\Big(A_2\setminus T\times T\Big)
\, e^{N\xi_{\ge 3}(1)}
+  N\cdot Z_{\le 2,2}\Big( T\times T\setminus A_2\Big)\, e^{N\xi_{\ge 3}(1)}\\
&\stackrel{(b)}{\le} e^{-cN} (Z_{\le 2})^2e^{N\xi_{\ge 3}(1)}+
 e^{-N^c} (Z_{\le 2})^2e^{N\xi_{\ge 3}(1)}
\end{align*}
where in $(a)$ we used the fact that $|\sigma_1^1\sigma_1^2|\le N$,
and in $(b)$ the first term was bounded by
using $Z_{\le 2,2}((T\times T)^c)\le 2 Z_{\le 2}(T^c) Z_{\le 2}$
and applying Lemma \ref{lem:truncate}, see
Eq.~\eqref{eq:truncate-quad}, and the second by
$Z_{\le 2,2}( T\times T\setminus A_2) \le
Z_{\le 2,2}(A_2^c)$ and using Lemma \ref{lem:OrthogonalTuples},
Eq.~\eqref{eq:quad-error-1}.
Hence we conclude that
\begin{align}
    \mathbb{E}_{\bg_{\ge4},\bg_{3-}}&\left[\left\{ \int_{T(\delta)}\sigma_1
    \lt(e^{H_{}(\bsig)} - \mathbb{E}_{\bg_{\ge4},\bg_{3-}} e^{H_{}(\bsig)}\rt)  \,\mu_0(\de\bsig)\right\} ^{2}\right]\\
    &\phantom{AAAAAAAAA}=  Q(A_2(N^{-1/2+c}))+O\lt(e^{-N^c+N\xi_{\ge 1}(1)} (Z_{\le2})^2
    \rt) .\label{eq:HenceQ2}
\end{align}

By Taylor expansion, always using the shorthand
$A_2= A_2(N^{-1/2+c})$,
\begin{align*}
    &Q(A_2) = \int_{A_2} \sigma_1^{1}\sigma_1^{2}\, \exp\Big\{
    \sum_{i=1}^2 \Big(\big(u_1\sigma_1^i+\Lambda_1(\sigma_1^i)^2\big)
    +H_{\le2}(\bsig^{i}_{-1})+N W(\bsig^i,\bsig^i)\Big)\Big\}\\
    &\qquad \exp \Big\{\frac{\sigma_1^{1}}{N}\sum_{i,j=2}^N\tg_{1ij}\sigma_{i}^{1}\sigma_{j}^{1}
+\frac{\sigma_1^{2}}{N}\sum_{i,j=2}^N\tg_{1ij}\sigma_{i}^{2}\sigma_{j}^{2}\Big\} \cdot \bigg\{\sum_{\ell=1}^{L} \frac{1}{\ell!} (NW(\bsig^1,\bsig^2))^{\ell}+ O(N^{-L/2+c})\bigg\}\, \mu_0^{\otimes 2}(\de\bsig)\, .
\end{align*}

We estimate each term
\begin{align}
    T_{\ell}(a,b) &:= \int_{A_2} \sigma_1^{1}\sigma_1^{2}\, \exp\Big\{
    \sum_{i=1}^2 \Big(\big(u_1\sigma_1^i+\Lambda_1(\sigma_1^i)^2\big)
    +H_{\le2}(\bsig^{i}_{-1})+NW(\bsig^i,\bsig^i)\Big)\Big\}
    \label{eq:Taylor-T-ell}\\
    &\qquad \qquad \exp \Big\{\frac{\sigma_1^{1}}{N}\sum_{1<i<j}\tg_{1ij}\sigma_{i}^{1}\sigma_{j}^{1}
+\frac{\sigma_1^{2}}{N}\sum_{i,j=2}^N\tg_{1ij}\sigma_{i}^{2}\sigma_{j}^{2}\Big\} \cdot N^{\ell} \<\bsig_{-1}^1,\bsig_{-1}^2\>_N^a (\sigma_1^1\sigma_1^2/N)^{b}\, \mu_0^{\otimes 2}(\de\bsig)\, .
\nonumber
\end{align}
We can restrict ourselves to terms with $a\ge 3\ell$ and $b=0$, or $a+b\ge 3\ell+1$, since these are the terms that can arise in $Q(A_2)$. Let
\begin{align*}
    \widehat{T}_{\ell}(a,b) &:= \int_{S_N^2} \sigma_1^{1}\sigma_1^{2}\, \exp\Big\{
    \sum_{i=1}^2 \Big(\big(u_1\sigma_1^i+\Lambda_1(\sigma_1^i)^2\big)
    +H_{\le2}(\bsig^{i}_{-1})+NW(\bsig^i,\bsig^i)\Big)\Big\}\\
    &\qquad \qquad \exp \Big\{\frac{\sigma_1^{1}}{N}\sum_{i,j=2}^N\tg_{1ij}\sigma_{i}^{1}\sigma_{j}^{1}
+\frac{\sigma_1^{2}}{N}\sum_{i,j=2}^N
\tg_{1ij}\sigma_{i}^{2}\sigma_{j}^{2}\Big\} \cdot N^{\ell} \<\bsig_{-1}^1,\bsig_{-1}^2\>_N^a (\sigma_1^1\sigma_1^2/N)^{b}\, \mu_0^{\otimes 2}(\de\bsig)\, .
\end{align*}
By Lemma \ref{lem:OrthogonalTuples}, Eq.~\eqref{eq:quad-error-1}, we have
\begin{align*}
    |T_{\ell}(a,b)-\widehat{T}_{\ell}(a,b)| \le e^{-N^c}.
\end{align*}
Applying Lemma \ref{lem:laplace-moments-2}, we have, for appropriate $C_{i,j} = O(\|\bu\|^{2a}+N^{\lfloor a/2\rfloor})$,
\begin{align*}
    {|\widehat{T}_{\ell}(a,b)|} &\le N^{\ell-b-a}\int_{S_N^2} (\sigma_1^{1}\sigma_1^{2})^{b+1}\, \exp\Big\{
    \sum_{i=1}^2 \Big(\big(u_1\sigma_1^i+\Lambda_1(\sigma_1^i)^2\big)
    +H_{\le2}(\bsig_{-1}^{i})+NW(\bsig^i,\bsig^i)\Big)\Big\}\\
    &\qquad \qquad \lt\{ C_{0,0} + \sum_{i,j=0, (i,j)\ne (0,0)}^{L}C_{i,j}N^{-(i+j)/2}(\sigma_1^1)^i(\sigma_1^2)^j + O_L(N^{-L/2}) \rt\}\, \mu_0^{\otimes 2}(\de\bsig).
\end{align*}
Note that when $b+1+i$ or $b+1+j$ is odd,
\begin{align*}
    &\frac{\int_{S_N^2} (\sigma_1^{1}\sigma_1^{2})^{b+1}\, \exp\Big\{
    \sum_{i=1}^2 \Big(\big(u_1\sigma_1^i+\Lambda_1(\sigma_1^i)^2\big)
    +H_{\le2}(\bsig^{i}_{-1})+W(\bsig^i,\bsig^i)\Big)\Big\}C_{i,j}(\sigma_1^1)^i(\sigma_1^2)^j \, \mu_0^{\otimes 2}(\de\bsig)}{\int_{S_N^2} \, \exp\Big\{
    \sum_{i=1}^2 \Big(\big(u_1\sigma_1^i+\Lambda_1(\sigma_1^i)^2\big)
    +H_{\le2}(\bsig^{i}_{-1})+W(\bsig^i,\bsig^i)\Big)\Big\}\, \mu_0^{\otimes 2}(\de\bsig)}\\
    &= O_{b+i+j}\lt(|u_1|(1+|u_1|)^{2(b+1)+i+j}(\|\bu\|^{2a} + N^{\lfloor a/2\rfloor})\rt).
\end{align*}
When both of them are odd,
\begin{align*}
    &\frac{\int_{S_N^2} (\sigma_1^{1}\sigma_1^{2})^{b+1}\, \exp\Big\{
    \sum_{i=1}^2 \Big(\big(u_1\sigma_1^i+\Lambda_1(\sigma_1^i)^2\big)
    +H_{\le2}(\bsig^{i}_{-1})+W(\bsig^i,\bsig^i)\Big)\Big\}C_{i,j}(\sigma_1^1)^i(\sigma_1^2)^j \, \mu_0^{\otimes 2}(\de\bsig)}{\int_{S_N^2} \, \exp\Big\{
    \sum_{i=1}^2 \Big(\big(u_1\sigma_1^i+\Lambda_1(\sigma_1^i)^2\big)
    +H_{\le2}(\bsig^{i}_{-1})+W(\bsig^i,\bsig^i)\Big)\Big\}\, \mu_0^{\otimes 2}(\de\bsig)}\\
    &= O_{b+i+j}\lt(|u_1|^2(1+|u_1|)^{2(b+1)+i+j}(\|\bu\|^{2a} + N^{\lfloor a/2\rfloor})\rt).
\end{align*}
Otherwise, when $b+1+i$ and $b+1+j$ are both even,
\begin{align*}
    &\frac{\lt|\int_{S_N^2} (\sigma_1^{1}\sigma_1^{2})^{b+1}\, \exp\Big\{
    \sum_{i=1}^2 \Big(\big(u_1\sigma_1^i+\Lambda_1(\sigma_1^i)^2\big)
    +H_{\le2}(\bsig^{i}_{-1})+W(\bsig^i,\bsig^i)\Big)\Big\}C_{i,j}(\sigma_1^1)^i(\sigma_1^2)^j \, \mu_0^{\otimes 2}(\de\bsig) \rt|}{\int_{S_N^2} \, \exp\Big\{
    \sum_{i=1}^2 \Big(\big(u_1\sigma_1^i+\Lambda_1(\sigma_1^i)^2\big)
    +H_{\le2}(\bsig^{i}_{-1})+W(\bsig^i,\bsig^i)\Big)\Big\}\, \mu_0^{\otimes 2}(\de\bsig)}\\
    &\le O_{b+i+j}\lt((1+|u_1|)^{2(b+1)+i+j}(\|\bu\|^{2a}+N^{\lfloor a/2\rfloor})\rt).
\end{align*}
Therefore, under the assumption $\|\bu\|\le N^{c_0}$, for $\ell \le L$,
\begin{align*}
    &\frac{|\widehat{T}_{\ell}(a,b)|}{\int_{S_N^2} \, \exp\Big\{
    \sum_{i=1}^2 \Big(\big(u_1\sigma_1^i+\Lambda_1(\sigma_1^i)^2\big)
    +H_{\le2}(\bsig^{i}_{-1})+W(\bsig^i,\bsig^i)\Big)\Big\}\, \mu_0^{\otimes 2}(\de\bsig)} \\
    &\le
    (N^{\lfloor a/2\rfloor}+\|\bu\|^{2a}) \cdot\bigg[O_{L}\lt(|u_1|^2\sum_{i,j\le L} N^{\ell-b-a-(i+j)/2}\rt)
     + \sum_{\substack{i,j\le L\\
     i,j = b+1 \mod 2}} O_{L}\lt(N^{\ell-b-a-(i+j)/2}\rt)\\
     &\qquad \qquad \qquad \qquad \qquad \qquad + \sum_{\substack{i,j\le L\\
     i \ne j \mod 2}} O_{L}\lt(|u_1|N^{\ell-b-a-(i+j)/2}\rt)\bigg]\\
    &= O_{L}(N^{-2}+N^{-3/2}|u_1|+N^{-1}|u_1|^2) = O_L(N^{-1}|u_1|^2+N^{-2}),
\end{align*}
where in the last step we used the fact that $\ell\ge 1$,
and  $a\ge 3\ell$ when $b=0$, or $a+b\ge 3\ell+1$,
otherwise.

Take $L=4$, and combining the terms in Eq.~\eqref{eq:Taylor-T-ell}, we obtain
\begin{align*}
    Q(A_2(N^{-1/2+c})) &\le O\lt((N^{-2}+N^{-1}|u_1|)(Z_{\le2})^2e^{N\xi_{\ge 3}(1)}\rt),
\end{align*}
and therefore, using Eq.~\eqref{eq:HenceQ2}
\begin{align*}
    \mathbb{E}_{\bg_{\ge4},\bg_{3-}}&\left[\left\{ \int_{T(\delta)}\sigma_1
    \lt(e^{H_{}(\bsig)} - \mathbb{E}_{\bg_{\ge4},\bg_{3-}} e^{H_{}(\bsig)}\rt)  \,\mu_0(\de\bsig)\right\} ^{2}\right]=   O\lt((N^{-2}+N^{-1}|u_1|^2)(Z_{\le2})^2e^{N\xi_{\ge 3}(1)}\rt).
\end{align*}
Thus, with probability at least $1-N^{-c}$, we have
\begin{align*}
    \lt|\int_{T(\delta)}\sigma_1
    \lt(e^{H_{}(\bsig)} - \mathbb{E}_{\bg_{\ge4},\bg_{3-}} e^{H_{}(\bsig)}\rt)  \,\mu_0(\de\bsig) \rt| \le Z_{\le2} N^c(N^{-1} + N^{-1/2}|u_1|).
\end{align*}
This yields the desired claim upon using Lemma \ref{lem:truncate}.
\end{proof}

We note that (\ref{eq:Magn-No-Field}) in Lemma \ref{lem:high-acc-anneal} immediately gives the following high
probability bound on the magnetization.
\begin{lem}
\label{lem:high-prob}For any $\eps,C>0$, there exists $c_0>0$ such that, for $\|\bu\| \le N^{c_0}$, with probability at
least $1-N^{-C}$, we have
\begin{align}
\left\|\int \bsig\mu(\de\bsig)\right\|_{}^{2}\le N^{\eps}\, , \label{eq:highprob-mag}
\end{align}
for $N$ sufficiently large.
\end{lem}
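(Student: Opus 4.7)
The plan is to apply the coordinate-wise tail bound \eqref{eq:Magn-No-Field} in the basis $(\bv_i)_{i\le N}$ diagonalizing $\bW^{(2)}$, take a union bound over $i\in [N]$, then square and sum the resulting estimates to recover the norm.

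First I would normalize by the partition function. Write $\int\bsig\,\mu(\de\bsig) = Z^{-1}\int\bsig\, e^{H(\bsig)}\mu_0(\de\bsig)$. By Lemma \ref{lem:truncate}, uniformly in $i$,
\[
\Big|\int_{T(\delta)^c}\<\bv_i,\bsig\>\, e^{H(\bsig)}\mu_0(\de\bsig)\Big| \le \sqrt{N}\int_{T(\delta)^c} e^{H(\bsig)}\mu_0(\de\bsig) \le \sqrt{N}\, e^{-cN} Z,
\]
so we may restrict the integral to $T(\delta)$ at the cost of a negligible additive error. Next, using \eqref{eq:HighAcc-PartitionFun} and \eqref{eq:Z3T}, with probability $1-O(N^{-L/2})-e^{-N/C}$ (for $L$ as large as needed) we have $Z \ge \tfrac12\bbE_{\ge 3}Z(T(\delta)) \ge \tfrac13 \bbE_{\ge 3}Z$.

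Second, I would pick parameters and apply \eqref{eq:Magn-No-Field} coordinate-wise. Fix $\eps' = \eps/3$ and choose the integer $k = k(\eps,C)$ large enough that $2\eps' k \ge C+2$. Then \eqref{eq:Magn-No-Field} gives, for each fixed $i\in[N]$,
\[
\PP\!\left(\,\Big|\int_{T(\delta)}\<\bv_i,\bsig\>\, e^{H(\bsig)}\mu_0(\de\bsig)\Big| \ge N^{\eps'}\|\bu\|^{Ck}\big(|\<\bv_i,\bu\>|+CN^{-1/2}\big)\bbE_{\ge 3}Z\right) \le C\big(N^{-2\eps' k}+e^{-N/C}\big).
\]
A union bound over the $N$ eigenvectors $\bv_i$ gives total failure probability $\le CN\cdot N^{-2\eps' k} + CNe^{-N/C} \le N^{-C-1} + e^{-N/(2C)}$, which is at most $N^{-C}/2$ for $N$ large.

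Third, I would combine and take the squared norm. On the intersection of the above events,
\[
\big|\<\bv_i,\textstyle\int\bsig\,\mu(\de\bsig)\>\big| \le 4N^{\eps'}\|\bu\|^{Ck}\big(|\<\bv_i,\bu\>|+CN^{-1/2}\big) + O(\sqrt{N}\, e^{-cN}),
\qquad \forall i\in[N],
\]
so, using $\sum_i \<\bv_i,\bu\>^2 = \|\bu\|^2$,
\[
\Big\|\int\bsig\,\mu(\de\bsig)\Big\|^{2} = \sum_{i=1}^N \<\bv_i,\textstyle\int\bsig\,\mu(\de\bsig)\>^{2} \le C'\, N^{2\eps'}\|\bu\|^{2Ck}\big(\|\bu\|^2 + 1\big) + Ne^{-cN}.
\]
Choosing $c_0 = c_0(\eps,C) > 0$ so small that $\|\bu\|^{2Ck}(\|\bu\|^{2}+1)\le N^{\eps'}$ whenever $\|\bu\|\le N^{c_0}$ (e.g.\ $c_0 \le \eps'/(4Ck+4)$), we conclude $\|\int\bsig\,\mu(\de\bsig)\|^{2}\le N^{3\eps'} = N^{\eps}$ for all $N$ large.

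The only delicate step is the coordination of parameters: the constant $k$ in \eqref{eq:Magn-No-Field} must be taken \emph{large} (to beat the union-bound loss of $N$ and produce failure probability $N^{-C}$), while the constant $c_0$ controlling $\|\bu\|$ must then be taken \emph{small} to ensure $\|\bu\|^{Ck}$ does not spoil the polynomial gain $N^{\eps'}$. Choosing $k$ first (as a function of $\eps,C$) and $c_0$ afterwards (as a function of $k,\eps$) resolves this dependence.
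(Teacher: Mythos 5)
Your proposal is correct and follows essentially the same route as the paper's proof: apply the coordinate-wise tail bound \eqref{eq:Magn-No-Field} in the eigenbasis of $\bW^{(2)}$ with $k$ chosen large in terms of $C$ and $\eps$, union bound over the $N$ coordinates, normalize by $Z$ via \eqref{eq:HighAcc-PartitionFun} and \eqref{eq:Z3T}, sum the squares using $\sum_i\<\bv_i,\bu\>^2=\|\bu\|^2$, and finally take $c_0$ small relative to $Ck$. The only (harmless) addition is your explicit treatment of the $T(\delta)^c$ contribution via Lemma \ref{lem:truncate}, which the paper leaves implicit.
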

\begin{proof}
We work, as before, in the basis of eigenvectors of the quadratic
part $\bW_2$ of the Hamiltonian.
    By (\ref{eq:Magn-No-Field}), with $k = 4C/\eps$,  with probability at least $1-N^{-2C}$,
    \begin{align*}
        \int_T \sigma_i e^{H(\bsig)}\mu_0(\de\bsig) \le N^{\eps/4} \|\bu\|^{Ck}(|u_i|+CN^{-1/2}) \bbE_{\ge3} \int e^{H(\bsig)}\mu_0(\de\bsig).
    \end{align*}
    By (\ref{eq:HighAcc-PartitionFun}) with $L=4C$ and the union bound over $i\in [N]$, we then have, with probability at least $1-\eps^{-8C}N^{-C}$, for all $i\in [N]$,
    \begin{align*}
        \frac{1}{Z}\int \sigma_i e^{H(\bsig)}\mu_0(\de\bsig) \le N^{\eps/2} \|\bu\|^{Ck}(|u_i|+CN^{-1/2}).
    \end{align*}
    Assuming that $c_0$ is chosen so that $c_0L < \eps/4$, we then obtain (\ref{eq:highprob-mag}).
\end{proof}

Lemma \ref{lem:correction-band} now follows.
\begin{proof}[Proof of Lemma \ref{lem:correction-band}]
Let $\hat{\bm} = \bm + \bcorr(\bm)$. From
Lemma \ref{lem:spin} we have, with probability at least $1-N^{-c}$
\begin{align*}
\|\langle\bsig\rangle-\hat{\bm}\|^{2}
&\le O\big(N^{-c}+N^{-c}\|\bu\|^2\big)\, .
\end{align*}
Therefore, using Lemma \ref{lem:high-prob} and the trivial bound $\|\langle\bsig\rangle\|\le\sqrt{N}$,
we can pick $\eps>0$ sufficiently small and $k$ sufficiently large such that, upon adjusting the constant $c$,
\begin{align*}
\mathbb{E}[\|\langle\bsig\rangle-\hat{\bm}\|^{\alpha}]
&=O(N^{-c \alpha}+N^{-c \alpha}\|\bu\|^{\alpha})+N^{-C}+O(N^{\eps \alpha}\cdot N^{-c}) \\
&= O(N^{-c/2}). \qedhere
\end{align*}
\end{proof}

\subsection{Integrating over bands}
\label{sec:SecondLocal}

Using the results in the previous section, we will complete the proof of Proposition \ref{ppn:local-barycenter}. We will assume the setup of Proposition \ref{ppn:local-barycenter}. We sample $\bx \sim \mu_\unif$, $\by = t\bx + \bB_t$, and $\tH(\,\cdot\, ) \sim \mu_{\nul}$ (the Gaussian process with covariance  $\EE \tH(\bsig^1) \tH(\bsig^2) = N\xi(\<\bsig^1,\bsig^2\>)$)
with $\bx, \bB, \tH$ independent. We define the tilted disorder $H(\bsig) = \tH(\bsig) + \<\by,\bsig\> + N\xi(\<\bx,\bsig\>_N)$,
so that $(\bx,H,\by)\sim \bbP$ are distributed according to the planted model, cf. Eq.~\eqref{subsec:planted}. (For simplicity of notation, we drop the dependence on $t$ in the notation of $H,\by$ in this section compared to the notation in Section \ref{sec:analysis-mean-alg}.) In this section, we will estimate the mean of the Gibbs measure given by $H$.

Recall that
\[
    \cF_\sTAP(\bm) = N \xi(\<\bx, \bm\>_N) + \tH(\bm) + \<\by, \bm\> + \fr{N}{2} \theta(\norm{\bm}_N^2) + \fr{N}{2} \log(1-\norm{\bm}_N^2),
\]
where $\theta(s) = \xi(1)-\xi(s)-(1-s)\xi'(s)$.

Let $\bm\in\mathbb{R}^{N}$ and $q=\|\bm\|_N^{2}$. The following lemma follows from standard calculations. 
\begin{lem}
The distribution of $\tH(\bsig)$ given $\nabla{\cal F}_{\TAP}(\bm)=0$
is a Gaussian process with
\begin{align}\label{eq:mean-cond}
&N^{-1}\mathbb{E}[\tH(\bsig)\mid\nabla{\cal F}_{\TAP}(\bm)=0,\by,\bx]\nonumber\\
&=\frac{\xi'(\langle \bm,\bsig\rangle_N)\langle \bz,\bsig\rangle_N}{\xi'(q)}-\frac{\xi''(q)\langle \bm,\bz\rangle_N}{\xi'(q)\zeta(q)}\xi'(\langle \bm,\bsig\rangle_N)\langle \bm,\bsig\rangle_N,
\end{align}
with $\zeta(q)=\xi'(q)+q\xi''(q)$ and $\bz=-\by-\xi'(\langle \bx,\bm\rangle_N)\bx+(1-q)\xi''(q)\bm+\frac{\bm}{1-q}$,
and covariance
\begin{align}
 & N^{-1}\Cov[\tH(\bsig^{1}),\tH(\bsig^{2})\mid\nabla{\cal F}_{\TAP}(\bm)=0,\by,\bx]\nonumber\\
 & =\xi(\langle\bsig^{1},\bsig^{2}\rangle_N)-\frac{\xi'(\langle \bm,\bsig^{1}\rangle_N)\xi'(\langle \bm,\bsig^{2}\rangle_N)}{\xi'(q)}\langle\bsig^{1},\bsig^{2}\rangle_N\nonumber \\
 &\qquad \qquad +\frac{\xi''(q)\xi'(\langle \bm,\bsig^{1}\rangle_N)\xi'(\langle \bm,\bsig^{2}\rangle_N)\langle \bm,\bsig^{1}\rangle_N\langle \bm,\bsig^{2}\rangle_N}{\xi'(q)\zeta(q)}. \label{eq:cov-cond}
\end{align}
\end{lem}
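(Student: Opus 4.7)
\medskip

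\noindent\textbf{Proof plan.} The statement is a standard Gaussian conditioning computation, so the plan is to (i) reduce conditioning on $\nabla \cF_\TAP(\bm)=\bzero$ to conditioning on a linear functional of $\tH$, (ii) compute the relevant covariances, and (iii) apply the conditional-mean/covariance formula for Gaussian vectors.

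First I would compute $\nabla \cF_\sTAP(\bm)$ directly from the definition of $\cF_\sTAP$. Using $\theta'(s) = -(1-s)\xi''(s)$ and the chain rule, one finds
\[
\nabla \cF_\sTAP(\bm) = \nabla \tH(\bm) + \by + \xi'(\la\bx,\bm\ra_N)\bx - (1-q)\xi''(q)\bm - \frac{\bm}{1-q},
\]
with $q=\|\bm\|_N^2$. Hence, on $\{\nabla \cF_\sTAP(\bm)=\bzero\}$ we have exactly $\nabla \tH(\bm) = \bz$, where $\bz$ is the vector defined in the lemma. Since $\bx,\by$ are measurable with respect to the conditioning (and independent of $\tH$ apart from through this linear relation), conditioning on $\nabla \cF_\sTAP(\bm) = \bzero$ given $(\bx,\by)$ is equivalent to conditioning the Gaussian process $\tH$ on the linear event $\nabla \tH(\bm)=\bz$.

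Next I would compute, from $\EE \tH(\bsig^1)\tH(\bsig^2) = N\xi(\la\bsig^1,\bsig^2\ra_N)$, the cross- and self-covariances
\[
\Cov(\tH(\bsig),\nabla \tH(\bm)) = \xi'(\la\bsig,\bm\ra_N)\,\bsig,
\qquad
\Cov(\nabla \tH(\bm)) = \xi'(q)\bI_N + \xi''(q)\,\frac{\bm\bm^\top}{N}.
\]
A Sherman--Morrison inversion yields
\[
\Cov(\nabla \tH(\bm))^{-1} = \frac{1}{\xi'(q)}\bI_N - \frac{\xi''(q)}{\xi'(q)\,\zeta(q)}\cdot\frac{\bm\bm^\top}{N},
\qquad \zeta(q)=\xi'(q)+q\xi''(q).
\]

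Finally I would plug these into the standard Gaussian conditional formulas. The conditional mean is
\[
\EE[\tH(\bsig)\mid \nabla \tH(\bm)=\bz]
= \xi'(\la\bsig,\bm\ra_N)\,\bsig^\top\Cov(\nabla \tH(\bm))^{-1}\bz,
\]
and expanding using $\la\bsig,\bz\ra = N\la\bsig,\bz\ra_N$, etc., produces exactly \eqref{eq:mean-cond} after dividing by $N$. For the covariance,
\[
\Cov(\tH(\bsig^1),\tH(\bsig^2)\mid \nabla \tH(\bm)=\bz)
= N\xi(\la\bsig^1,\bsig^2\ra_N) - \xi'(\la\bsig^1,\bm\ra_N)\xi'(\la\bsig^2,\bm\ra_N)\,(\bsig^1)^\top\Cov(\nabla \tH(\bm))^{-1}\bsig^2,
\]
and the same Sherman--Morrison substitution reproduces \eqref{eq:cov-cond} after dividing by $N$. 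Gaussianity of the conditional law is automatic since $(\tH(\bsig),\nabla \tH(\bm))$ is jointly Gaussian.

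The main (only) hazard is bookkeeping: keeping straight which gradient is taken with respect to which slot in the double derivative of $N\xi(\la\bm_1,\bm_2\ra_N)$ (which is what produces the $\xi'(q)\bI + \xi''(q)\bm\bm^\top/N$ structure, rather than $\xi'(q)\bI$ alone), and converting consistently between $\la\cdot,\cdot\ra$ and $\la\cdot,\cdot\ra_N$ when dividing through by $N$. There is no real analytical obstacle.
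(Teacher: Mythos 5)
Your proposal is correct and is exactly the ``standard calculation'' the paper invokes without writing out: the identity $\nabla\cF_\sTAP(\bm)=\nabla\tH(\bm)-\bz$ reduces the conditioning to a linear Gaussian conditioning, and your covariance computations and Sherman--Morrison inversion of $\xi'(q)\bI+\xi''(q)\bm\bm^\top/N$ reproduce \eqref{eq:mean-cond} and \eqref{eq:cov-cond} after dividing by $N$. This is the same style of argument the paper carries out explicitly in Lemma~\ref{lem:nabla2-H-conditional-law}, so nothing is missing.
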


Let $\bsig^{\perp} = \Proj_{\{\bx,\bm\}^\perp}(\bsig)$ be the projection of $\bsig$ on $\{\bx,\bm\}^\perp$, and similarly define $\by^{\perp}$, $\bz^{\perp}$. Define the band
\begin{align}
D_{N}(a,b):=\Big\{\bsig\in S_N: \;\;
\langle \bsig, \bm\rangle_N = aq \mbox{ and }\langle \bsig, \bx\rangle_N =  b\Big\}\, ,
\end{align}
and let $r(a,b) = \|\bsig - \bsig^{\perp}\|_N^2$ for $\bsig\in D_N(a,b)$.

Throughout the rest of the section, we will condition on the event $\nabla \cF_\TAP(\bm)=0$, and on $\by-\by^{\perp}$ and $\bx$. Conditional on $\nabla \cF_\TAP(\bm)=0,\by-\by^{\perp},\bx$, we can write
\[
N^{-1}H(\bsig) = \xi(b) + \frac{\xi'(aq)\langle \bz,\bsig\rangle_N}{\xi'(q)}-\frac{\xi''(q)\xi'(aq)aq\langle \bm,\bz\rangle_N}{\xi'(q)\zeta(q)} + N^{-1}\hH(\bsig^\perp)+\langle \by^\perp,\bsig^\perp\rangle_N + \langle \by - \by^{\perp},\bsig - \bsig^{\perp}\rangle_N,
\]
where $\hH$ is a centered Gaussian process with covariance
\begin{align*}
&N^{-1}\mathrm{Cov}(\hH(\bsig^{\perp,1}), \hH(\bsig^{\perp,2})) \\
&= \xi\lt(r(a,b) + \langle \bsig^{\perp,1},\bsig^{\perp,2}\rangle_N\rt) - \frac{\xi'(aq)^2}{\xi'(q)}\langle  \bsig^{\perp,1},\bsig^{\perp,2}\rangle_N - \frac{\xi'(aq)^2r(a,b)}{\xi'(q)} + \frac{\xi'(aq)^2 \xi''(q) (aq)^2}{\zeta(q)\xi'(q)}.
\end{align*}

Let $\tilde{\bsig} = \bsig^{\perp}/\|\bsig^\perp\|_N$. We can then write
\begin{align}\label{eq:(a,b)-reduction}
&\int_{D_N(a,b)} e^{H(\bsig)}\mu^{a,b}_0(\de\bsig) \nonumber \\
&= \exp\left(N\left[\xi(b) + \frac{\xi'(aq)\langle \bz+\by,\bsig\rangle_N}{\xi'(q)}-\frac{\xi''(q)\xi'(aq)aq\langle \bm,\bz\rangle_N}{\xi'(q)\zeta(q)} + \lt(1-\frac{\xi'(aq)}{\xi'(q)}\rt)\langle \by - \by^{\perp},\bsig - \bsig^{\perp}\rangle_N \right]\right) \nonumber\\
&\qquad \qquad \int_{S_{N-2}} \exp\left(N\lt(1-\frac{\xi'(aq)}{\xi'(q)}\rt)(1-r(a,b))^{1/2}\langle \by^\perp, \tilde{\bsig}\rangle_N + \tilde{\thH}(\tilde{\bsig}) + \frac{N-3}{2} \log(1-r(a,b))\right)  \mu_0(\de\tilde{\bsig}) \nonumber\\
&= \exp\left( N\Gamma_{N}(\by,\bm;a,b) + \frac{N-3}{2}\log(1-r(a,b))\right)
\int_{S_{N-2}} e^{N^{1/2}g_{a,b}+{\thH}(\tilde{\bsig})} \mu_0(\de\tilde{\bsig}),
\end{align}
where $\mu^{a,b}_0$ is the measure induced on $D_N(a,b)$ by $\mu_0$,
we defined $\Gamma_N$ via
\begin{align}
    \Gamma_{N}(\by,\bm;a,b) &: =
    \xi(b) + \frac{\xi'(aq)\langle \bz+\by,\bsig\rangle_N}{\xi'(q)}-\frac{\xi''(q)\xi'(aq)aq\langle \bm,\bz\rangle_N}{\xi'(q)\zeta(q)} + \lt(1-\frac{\xi'(aq)}{\xi'(q)}\rt)\langle \by - \by^{\perp},\bsig - \bsig^{\perp}\rangle_N \, , 
\end{align}
and $\thH$ is a Hamiltonian on $\tilde{\bsig}$ with mixture 
$\tilde\xi(q) = \sum_{k\ge 1}\tilde\xi_k$ given by
\begin{align}
\tilde{\xi}_1 &= (1-r(a,b))\left(\xi'(r(a,b)) -\frac{\xi'(aq)^2}{\xi'(q)} + \lt(1-\frac{\xi'(aq)}{\xi'(q)}\rt)^2t\right) =: \tilde{\gamma}_1^2,\label{eq:MixtureAB1}\\
\tilde{\xi}_2 &= \frac{1}{2} \xi''(r(a,b)) (1-r(a,b))^2 =: \tilde{\gamma}_2^2 ,\\
\tilde{\xi}_{p} &=  \frac{1}{p!}\xi^{(p)}(r(a,b))(1-r(a,b))^p , \;\;\,\, p\ge 3\, .
\label{eq:MixtureAB3}
\end{align}
Finally, $g_{a,b}$ is a Gaussian independent of $\thH$ with standard deviation $\tilde{\gamma}_0$ given by 
\begin{align}
 \tilde{\gamma}_0^2 &:= \xi(r(a,b)) - \frac{\xi'(aq)^2r(a,b)}{\xi'(q)} + \frac{\xi'(aq)^2 \xi''(q) (aq)^2}{\zeta(q)\xi'(q)} \, .
 \end{align}
%
Note that
\begin{align*}
    \tilde\xi_{\ge 2}(s)
    &= \sum_{p\ge 2} \frac{1}{p!} \xi^{(p)}(r(a,b))(1-r(a,b))^p s^p \\
    &= \xi(r(a,b) + (1-r(a,b))s)-\xi(r(a,b))-\xi'(r(a,b))(1-r(a,b))s
\end{align*}
and therefore
\begin{align}
    \notag
    \tilde\xi_{\ge 2}''(s)
    &= (1-r(a,b))^2 \xi''(r(a,b) + (1-r(a,b))s) \\
    \label{eq:tilde-xi-satisfies-replica-symmetry}
    &\stackrel{\eqref{eq:amp-works}}{<}
    \fr{(1-r(a,b))^2}{(1 - (r(a,b) + (1-r(a,b))s))^2} 
    = \fr{1}{(1-s)^2}.
\end{align}
Integrating twice shows $\tilde\xi_{\ge 2}$ satisfies condition \eqref{eq:replica-symmetry-repeat}, and thus the results in Subsection~\ref{sec:FirstLocal} apply to $\tilde\xi_{\ge 2}$.
Similarly, note that
\beq
    \label{eq:sum>=3}
    \tilde\xi_{\ge 3}(1)
    = \xi(1)-\xi(r(a,b))-\xi'(r(a,b))(1-r(a,b)) - \frac{1}{2} \xi''(r(a,b))(1-r(a,b))^2.
\eeq
Following Subsection~\ref{sec:FirstLocal}, we write the quadratic component of $\thH$ as $\<\bA^{(2)}, \tilde{\bsig}^{\otimes 2}\>$ for $\bA^{(2)}=\bA^{(2)}(a,b)$ a GOE matrix scaled by $\tilde{\gamma}_2/\sqrt{2}$.
Recall the definition of $G(\gamma) =G(\gamma;\bA,\bu)$ in
Eq.~\eqref{eq:GDef}.
We take $\bu$ to be the external field $\bu=\tilde{\gamma}_1 \bg$,
and $\bA=\bA^{(2)}$. Note that $\bu$ and $\bA^{(2)}$ depend on the parameters $a,b$. Let $\gamma_{a,b} = \arg \min_{z>z_*} G(z; \bA^{(2)}, \bu)$, $z_*:= \lambda_{\max}(\bA^{(2)})$.
From Lemma \ref{lem:high-acc-anneal} Eqs.~\eqref{eq:HighAcc-PartitionFun} and \eqref{eq:Z3T} and Lemma \ref{lem:laplace}, when
\begin{align*}
\tilde\gamma^2_1 = (1-r(a,b))\left(\xi'(r(a,b)) -\frac{\xi'(aq)^2}{\xi'(q)} + \lt(1-\frac{\xi'(aq)}{\xi'(q)}\rt)^2t\right) \le N^{c_0-1},
\end{align*}
we have (with probability at least $1-N^{-c}$, conditional on $\nabla \cF_\TAP(\bm)=0,\by-\by^{\perp},\bx$) that
\begin{align}\label{eq:approx-Z}
&\int_{D_N(a,b)} e^{H(\bsig)}\mu^{a,b}_0(\de\bsig) \nonumber \\
&= (1+O(N^{-c}))(2e)^{-(N-2)/2}\sqrt{\frac{2}{NG''(\gamma_{a,b}; \bA^{(2)}, \bu)}} \nonumber\\
&\qquad \qquad \exp\bigg(N\bigg[N^{-1/2} g_{a,b} + \Gamma_N(\by,\bm;a,b) + \frac{N-3}{2N}\log(1-r(a,b))  + \min_{z>z_*} G(z; \bA^{(2)}, \bu) \nonumber\\
&\qquad \qquad \qquad \qquad + \frac{1}{2} \lt(\xi(1)-\xi(r(a,b))-\xi'(r(a,b))(1-r(a,b)) - \frac{1}{2} \xi''(r(a,b))(1-r(a,b))^2\rt)\bigg]\bigg),
\end{align}
where we have simplified using Eq.~(\ref{eq:sum>=3}).
By independence of $\bu, \tilde{W}^{(2)}$, and the fact that $\tilde{W}^{(2)}$ is a GOE matrix scaled by $\tilde{\gamma}_2/\sqrt{2}$, the following
holds with probability at least $1-\exp(-N^c)$ provided
$z>\tilde{\gamma}_2\sqrt{2}+\delta$ for some constant $\delta>0$
\
\begin{equation}
G(z; \bA^{(2)}, \bu) = G_{a,b}(z) + O(1/N),
\end{equation}
where
\begin{equation}\label{eq:Gab}
    G_{a,b}(z) := z - \frac{1}{2}\lt(\psi(z\sqrt{2}/\tilde{\gamma}_2)+\log (\tilde{\gamma}_2/\sqrt{2})\rt) + \frac{1}{4}  \left(\tilde{\gamma}_1^2 + (1-r(a,b))\lt(1-\frac{\xi'(aq)}{\xi'(q)}\rt)^2t\right)\phi(z\sqrt{2}/\tilde{\gamma}_2),
\end{equation}
and, for $x>2$,
\[
\phi(x) = \frac{1}{2}(x-\sqrt{x^2-4}), \qquad \psi(x) = \frac{1}{2}((x-\sqrt{x^2-4})/2)^2-\log((x-\sqrt{x^2-4})/2).
\]
Note that $\phi(x) = \int (x-u)^{-1} \mu_{\semic}(\de u)$ and $\psi(x) = \int \log(x-u) \mu_{\semic}(\de u)$ where $\mu_{\semic}$ is the semicircular law. Moreover, $\psi'(x)=\phi(x)$.

Thus,
\begin{align}
\int_{D_{N}(a,b)} e^{H(\bsig)}\mu^{a,b}_0(\de\bsig) &= \sqrt{\frac{2}{NG''(\gamma_{a,b}; \bA^{(2)}, \bu)}}\exp\bigg(NE(a,b) + N^{1/2}  g_{a,b} + O(1)\bigg),\label{eq:IntDNab}
\end{align}
where we define
\begin{align}
E(a,b) &:= -\frac{N-2}{2N} \ln(2e) + \frac{N-3}{2N}\log (1-r(a,b)) + \min_{z>\tilde{\gamma}_2\sqrt{2}} G_{a,b}(z) + \Gamma_N(\by,\bm;a,b) \nonumber\\
& \qquad + \frac{1}{2} \lt(\xi(1)-\xi(r(a,b))-\xi'(r(a,b))(1-r(a,b)) - \frac{1}{2} \xi''(r(a,b))(1-r(a,b))^2\rt).
\end{align}
Let $b_* = \<\bx,\bm\>_N$. Note that $r(1,b_*) = q$. Furthermore, we have
\begin{align}
    r(a,b) = a^2q + \frac{(b-a\<\bx,\bm\>_N)^2}{1-\<\bx,\bm\>_N^2/q}.
\end{align}
We will next verify several properties of $E(a,b)$, starting with the observation that $(a,b)=(1,b_*)$ is a stationary point of $E$.
\begin{lem}\label{lem:Grad-E}
We have $\nabla E(a,b)|_{(a,b)=(1,b_*)} = 0$. (Here $\nabla$ denotes gradient with respect to $(a,b)$.)
\end{lem}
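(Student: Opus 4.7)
The plan is to compute $\partial_a E$ and $\partial_b E$ at $(a,b)=(1,b_*)$ term-by-term, exploiting the structural simplifications at this point: $r(1,b_*)=q$ and $aq=q$, hence $\xi'(aq)/\xi'(q)=1$, which forces $\tilde\gamma_1^{2}=(1-q)[\xi'(q)-\xi'(q)^{2}/\xi'(q)+0]=0$. One checks further that the full auxiliary coefficient $A(a,b):=\tilde\gamma_1^{2}+(1-r)(1-\xi'(aq)/\xi'(q))^{2}t$ appearing inside $G_{a,b}$ vanishes together with $\partial_a A|_{(1,b_*)}$, and that the projection coordinates $(\beta_1,\beta_2)$ of $\bsig-\bsig^{\perp}$ in the basis $(\bx,\bm)$ reduce to $(0,1)$. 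These observations eliminate entire clusters of terms before any calculation.

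For the $b$-direction, the explicit formula $r(a,b)=a^{2}q+(b-ab_*)^{2}/(1-b_*^{2}/q)$ yields $\partial_b r|_{(1,b_*)}=0$, so every $r$-dependent piece contributes zero; since $G_{a,b}$ depends on $b$ only through $r$, also $\partial_b g|_{(1,b_*)}=0$, where $g(a,b):=\min_{z>\tilde\gamma_2\sqrt 2}G_{a,b}(z)$. Only $\Gamma_N$ remains. Using $\bz+\by=-\xi'(b_*)\bx+\lambda_m\bm$ with $\lambda_m:=(1-q)\xi''(q)+1/(1-q)$, and noting that the prefactor $(1-\xi'(aq)/\xi'(q))|_{(1,b_*)}=0$ kills the cross-term with $\bsig-\bsig^\perp$, one obtains $\partial_b\Gamma_N|_{(1,b_*)}=\xi'(b_*)-\xi'(b_*)=0$.

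The $a$-direction is the substantive computation. By the envelope theorem, $\partial_a g|_{(1,b_*)}=\partial_a G_{a,b}(z^{*})$ with $z$ held fixed at the minimizer. Since $A$ and $\partial_a A$ vanish at $(1,b_*)$, only the $\psi$- and $\log(\tilde\gamma_2/\sqrt 2)$-pieces contribute. At $A=0$ the first-order condition reduces to $\phi(u^{*})=\sqrt 2\,\tilde\gamma_2$ with $u^{*}=z^{*}\sqrt 2/\tilde\gamma_2$; the explicit inverse $\phi^{-1}(v)=v+1/v$, valid on $v\in(0,1)$ (guaranteed by \eqref{eq:amp-works}), gives $z^{*}=\tilde\gamma_2^{2}+1/2$ and $\phi(u^{*})u^{*}=2z^{*}$. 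Substituting collapses $\partial_a g|_{(1,b_*)}$ to $\tfrac12\,\partial_a(\tilde\gamma_2^{2})$; by chain rule on $\tilde\gamma_2^{2}=\tfrac12\xi''(r)(1-r)^{2}$ with $\partial_a r|_{(1,b_*)}=2q$, this evaluates to $\tfrac{q(1-q)^{2}}{2}\xi'''(q)-q(1-q)\xi''(q)$.

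Finally, for $\partial_a\Gamma_N|_{(1,b_*)}$ the key identity is
\[
    \<\bm,\bz\>_N=-\<\bm,\by\>_N-\xi'(b_*)b_*+q\lambda_m,
\]
which follows by dotting the definition $\bz=-\by-\xi'(b_*)\bx+(1-q)\xi''(q)\bm+\bm/(1-q)$ with $\bm$. Expanding the four summands of $\Gamma_N$ using $\zeta(q)=\xi'(q)+q\xi''(q)$ and $(\beta_1,\beta_2)|_{(1,b_*)}=(0,1)$, the coefficient multiplying $q\xi''(q)/\xi'(q)$ cancels identically via this identity, leaving $\partial_a\Gamma_N|_{(1,b_*)}=q\lambda_m=q(1-q)\xi''(q)+q/(1-q)$. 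Summing with $\partial_a g|_{(1,b_*)}$, the polynomial piece contribution $-\tfrac{q}{2}\xi'''(q)(1-q)^{2}$ (from differentiating $\tfrac12[\xi(1)-\xi(r)-\xi'(r)(1-r)-\tfrac12\xi''(r)(1-r)^{2}]$, whose $r$-derivative is $-\tfrac14\xi'''(r)(1-r)^{2}$), and $-q/(1-q)$ from the log-volume piece (reading its coefficient as $\tfrac12$; the $\tfrac{N-3}{2N}$ vs $\tfrac12$ discrepancy is $O(1/N)$, absorbed into the $O(1)$ error in the free-energy representation \eqref{eq:IntDNab}), all $\xi'''(q)$, $\xi''(q)$, and $1/(1-q)$ terms cancel pairwise, yielding $\partial_a E|_{(1,b_*)}=0$. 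The main difficulty is purely the algebraic bookkeeping: once one isolates the vanishing of $A$ and $\partial_a A$ at $(1,b_*)$, solves the scalar minimization explicitly as $z^{*}=\tilde\gamma_2^{2}+1/2$, and invokes the identity above, the cancellation is forced.
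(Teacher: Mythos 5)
Your proposal is correct and follows essentially the same route as the paper: the envelope theorem for $\min_z G_{a,b}(z)$, the observation that the field coefficient $\tilde\gamma_1^{2}+(1-r)(1-\xi'(aq)/\xi'(q))^{2}t$ and its gradient vanish at $(1,b_*)$ so that $\nabla\min_z G=\tfrac12\nabla(\tilde\gamma_2^{2})$, the explicit minimizer $z_*=\tilde\gamma_2^{2}+1/2$, the facts $\partial_b r|_{(1,b_*)}=0$ and $\partial_a r|_{(1,b_*)}=2q$, and the evaluation $\partial_a\Gamma_N|_{(1,b_*)}=q(1-q)\xi''(q)+q/(1-q)$, $\partial_b\Gamma_N|_{(1,b_*)}=0$, followed by the same pairwise cancellations. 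The minor algebraic reorganizations (the explicit inverse $\phi^{-1}(v)=v+1/v$, the $\<\bm,\bz\>_N$ identity, and flagging the $\tfrac{N-3}{2N}$ versus $\tfrac12$ discrepancy as $O(1/N)$) do not change the argument.
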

\begin{proof}
We first compute $\nabla \min_z G_{a,b}(z)$. Let $z_*(a,b) = \arg \min_{z}G_{a,b}(z)$ and $z_* = \arg \min_{z} G_{1,b_*}(z)$ so
\begin{equation}\label{eq:stationary}
\partial_z G_{a,b}(z)|_{z_*(a,b)}=0 \Leftrightarrow 1 - \frac{1}{\sqrt{2}\tilde{\gamma}_2} \phi(z\sqrt{2}/\tilde{\gamma}_2)=0.
\end{equation}
For $\alpha \in \{a,b\}$,
\begin{equation}\label{eq:dz}
\partial_\alpha z_*(a,b) = (\partial_z^2 G_{a,b}(z))^{-1}|_{(a,b,z_*(a,b))}  \partial_\alpha \partial_z G_{a,b}(z)|_{(a,b,z_*(a,b))}.
\end{equation}
A quick calculation shows that $z_* = 1/2 + \tilde{\gamma}_2^2$ when $(a,b)=(1,b_*)$, and for $\alpha \in \{a,b\}$,
\begin{align*}
\partial_\alpha \min_z G_{a,b}(z)|_{(a,b)}
&= \partial_\alpha G_{a,b}(z_*(a,b))|_{(a,b)}.
\end{align*}
Also note that
\begin{equation}
\left.\nabla \lt(\tilde{\gamma}_1^2 + (1-r(a,b))\lt(1-\frac{\xi'(aq)}{\xi'(q)}\rt)^2t \rt)
\right|_{(1,b_*)} = 0; \,\, \left.\lt(\tilde{\gamma}_1^2 + (1-r(a,b))\lt(1-\frac{\xi'(aq)}{\xi'(q)}\rt)^2t\rt)\right|_{(1,b_*)} = 0.
\end{equation}
From the definition of $G$ and the stationary condition (\ref{eq:stationary}), we obtain that
\begin{equation} \label{eq:G}
\nabla \min_z G_{a,b}(z)|_{(1,b_*)} = 
\frac{1}{2} \left(-\tilde{\gamma}_2^{-1} + \sqrt{2}\tilde{\gamma}_2^{-2} z_*\phi(z_*\sqrt{2}/\tilde{\gamma}_2)\right)\nabla \tilde{\gamma}_2 = \tilde{\gamma}_2 \nabla \tilde{\gamma}_2 = \frac{1}{2}\nabla(\tilde{\gamma}_2^2).
\end{equation}
Furthermore, \[\nabla \tilde{\gamma}_2^2 = -\xi''(r(a,b))(1-r(a,b))\nabla r(a,b) + \frac{1}{2}\xi'''(r(a,b))(1-r(a,b))^2 \nabla r(a,b).\]

We have
\begin{align}
&\nabla \left(\frac{1}{2}\log (1-r(a,b)) +  \frac{1}{2} (\xi(1)-\xi(r(a,b))-\xi'(r(a,b))(1-r(a,b)) - \frac{1}{2} \xi''(r(a,b))(1-r(a,b))^2) \right)\nonumber\\
& = \frac{-1}{2(1-r(a,b))} \nabla r(a,b) - \frac{1}{4} \xi'''(r(a,b))(1-r(a,b))^2 \nabla r(a,b), \label{eq:r}
\end{align}
and furthermore $\partial_a r(a,b)|_{(1,b_*)} = 2q$, $\partial_b r(a,b)|_{(1,b_*)} = 0$ and $r(1,b_*) = q$.

Recall
\[
\langle \bz+\by,\bsig\rangle_N = -\xi'(\<\bx,\bm\>_N)b+aq\lt((1-q)\xi''(q)+\frac{1}{1-q}\rt).
\]
Moreover,
\[
\<\by-\by^\perp, \bsig-\bsig^\perp\>_N = a\<\by,\bm\>_N + \frac{b-a\<\bx,\bm\>_N}{1-\<\bx,\bm\>_N^2/q} \lt(\<\by,\bx\>_N - \<\bx,\bm\>_N\<\by,\bm\>_N/q\rt).
\]
Hence, 
\begin{align*}
&\frac{\xi'(aq)\langle \by+\bz,\bsig\rangle_N}{\xi'(q)} - \frac{\xi''(q)\xi'(aq)aq\langle \bm,\bz\rangle_N}{\xi'(q)\zeta(q)} + \lt(1-\frac{\xi'(aq)}{\xi'(q)}\rt)\langle \by - \by^\perp, \bsig - \bsig^\perp\rangle_N\\
&= \frac{\xi'(aq)(-\xi'(\<\bx,\bm\>_N)b+aq((1-q)\xi''(q)+\frac{1}{1-q}))}{\xi'(q)}-\frac{\xi''(q)\xi'(aq)aq\<\bm,\bz\>_N}{\xi'(q)\zeta(q)} \\
&\qquad \qquad \qquad + \lt(1-\frac{\xi'(aq)}{\xi'(q)}\rt)\lt(a\<\by,\bm\>_N + \frac{b-a\<\bx,\bm\>_N}{1-\<\bx,\bm\>_N^2/q} \lt(\<\by,\bx\>_N - \<\bx,\bm\>_N\<\by,\bm\>_N/q\rt)\rt)\\
&= \frac{\xi'(aq)(-\xi'(\<\bx,\bm\>_N)b+aq((1-q)\xi''(q)+\frac{1}{1-q}))}{\xi'(q)} \\
&\qquad \qquad \qquad  + \lt(1-\frac{\xi'(aq)}{\xi'(q)}\rt)\frac{b-a\<\bx,\bm\>_N}{1-\<\bx,\bm\>_N^2/q} (\<\by,\bx\>_N-\<\bx,\bm\>_N\<\by,\bm\>_N/q)
\\&\qquad \qquad \qquad \qquad  + \frac{\xi''(q)\xi'(aq)aq\lt(\xi'(\<\bx,\bm\>_N)\<\bx,\bm\>_N - q\xi''(q)(1-q)-\frac{q}{1-q}\rt)}{\xi'(q)\zeta(q)}  \\
&\qquad \qquad \qquad \qquad +\<\by,\bm\>_N\lt(\frac{\xi''(q)\xi'(aq)aq}{\xi'(q)\zeta(q)} + a\lt(1-\frac{\xi'(aq)}{\xi'(q)}\rt)\rt).
\end{align*}
Note that
\[
\left.\partial_a\lt(\lt(1-\frac{\xi'(aq)}{\xi'(q)}\rt)\frac{b-a\<\bx,\bm\>_N}{1-\<\bx,\bm\>_N^2/q}\rt) \right|_{(1,b_*)} = 
\left.\partial_b\lt(\lt(1-\frac{\xi'(aq)}{\xi'(q)}\rt)\frac{b-a\<\bx,\bm\>_N}{1-\<\bx,\bm\>_N^2/q}\rt) \right|_{(1,b_*)} = 0,
\]
and $\partial_a (\xi'(aq)aq)|_{(1,q)} = q\zeta(q)$ so
\[
\partial_a\lt(\frac{\xi''(q)\xi'(aq)aq}{\xi'(q)\zeta(q)} + a\lt(1-\frac{\xi'(aq)}{\xi'(q)}\rt)\rt) |_{(1,b_*)} = \partial_b\lt(\frac{\xi''(q)\xi'(aq)aq}{\xi'(q)\zeta(q)} + a\lt(1-\frac{\xi'(aq)}{\xi'(q)}\rt)\rt) |_{(1,b_*)} = 0.
\]
Thus, we can compute
\begin{align}
& \partial_a \lt(\frac{\xi'(aq)\langle \by+\bz,\bsig\rangle_N}{\xi'(q)} - \frac{\xi''(q)\xi'(aq)aq\langle \bm,\bz\rangle_N}{\xi'(q)\zeta(q)}+ \lt(1-\frac{\xi'(aq)}{\xi'(q)}\rt)\langle \by - \by^\perp, \bsig - \bsig^\perp\rangle_N\rt)|_{(1,b_*)}\nonumber\\
&= q(1-q)\xi''(q) + \frac{q}{1-q}.\label{eq:sa}
\end{align}
Similarly,
\begin{align}\label{eq:sb}
&\partial_b \lt(\frac{\xi'(aq)\langle \by+\bz,\bsig\rangle_N}{\xi'(q)} - \frac{\xi''(q)\xi'(aq)aq\langle \bm,\bz\rangle_N}{\xi'(q)\zeta(q)}+ \lt(1-\frac{\xi'(aq)}{\xi'(q)}\rt)\langle \by - \by^\perp, \bsig - \bsig^\perp\rangle_N\rt)|_{(1,b_*)} \nonumber\\
&= -\xi'(\<\bx,\bm\>_N).
\end{align}


Combining (\ref{eq:G}), (\ref{eq:r}), (\ref{eq:sa}), (\ref{eq:sb}), we obtain the desired claim that $\nabla E(a,b)|_{(a,b)=(1,b_*)} = 0$.
\end{proof}

\begin{lem}\label{lem:anneal-cal}
    We have 
    \begin{align}\label{eq:anneal}
        \bbE\lt[\int_{D_N(a,b)} e^{H(\bsig)} \mu_0^{a,b}(\de\bsig)
    \Big| \nabla\cF_{\sTAP}(\bm) = \bfzero, \bx,\by-\by^{\perp},g_{a,b}\rt] &= 
    \exp\Big\{N\hat{E}(a,b) - \log(2e)+\sqrt{N} g_{a,b}\Big\},
    \end{align}
    where 
    \begin{align}
        \hat{E}(a,b) &:= \frac{1}{N} \ln(2e) + \frac{N-3}{2N}\log (1-r(a,b)) + \lt(\frac{1}{2}\tilde{\gamma}_2^2 + \frac{1}{2}\tilde{\gamma}_1^2\rt) + \Gamma_N(\by,\bm;a,b) \nonumber \\
        &\qquad + \frac{1}{2} \lt(\xi(1)-\xi(r(a,b))-\xi'(r(a,b))(1-r(a,b)) - \frac{1}{2} \xi''(r(a,b))(1-r(a,b))^2\rt).
    \end{align}
    Furthermore, $E(a,b)$ is uniformly upper bounded by $\hat{E}(a,b)$, $E(1,b_*) = \hat{E}(1,b_*)$, and $\nabla E(1,b_*) = \nabla \hat{E}(1,b_*)=0$. 
\end{lem}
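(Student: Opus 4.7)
My plan breaks into four parts mirroring the four assertions of Lemma~\ref{lem:anneal-cal}.

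First, for the identity \eqref{eq:anneal}, I would start from the reduction \eqref{eq:(a,b)-reduction}. Conditional on $\nabla\cF_\sTAP(\bm)=\bfzero$, $\bx$, $\by-\by^{\perp}$, and $g_{a,b}$, the process $\thH$ remains a centered Gaussian field on $S_{N-2}$ with covariance $N\tilde\xi(\langle\cdot,\cdot\rangle_N)$ and is independent of $g_{a,b}$. For each fixed $\tilde\bsig$ with $\|\tilde\bsig\|_N=1$, Gaussian integration gives $\bbE\exp\thH(\tilde\bsig)=\exp(N\tilde\xi(1)/2)$. Fubini together with the normalization of $\mu_0$ then yields $\bbE\int_{S_{N-2}}e^{\thH(\tilde\bsig)}\mu_0(\de\tilde\bsig)=\exp(N\tilde\xi(1)/2)$. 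Combining this with the prefactors in \eqref{eq:(a,b)-reduction} and using $\tilde\xi(1)=\tilde\gamma_1^{2}+\tilde\gamma_2^{2}+\tilde\xi_{\ge 3}(1)$ together with the simplification \eqref{eq:sum>=3}, all terms reassemble into $\exp(N\hat E(a,b)-\log(2e)+\sqrt{N}g_{a,b})$; the stray $-\log(2e)$ is absorbed by matching $N\cdot\frac{1}{N}\log(2e)$ from the first summand of $\hat E$.

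Second, for the uniform bound $E(a,b)\le\hat E(a,b)$, I would apply Jensen's inequality $\bbE\log X\le\log\bbE X$ to $X=\int_{D_N(a,b)}e^{H(\bsig)}\mu_0^{a,b}(\de\bsig)$ given the stated conditioning. By \eqref{eq:IntDNab}, with probability $1-N^{-c}$ the quenched log-integral equals $NE(a,b)+\sqrt{N}g_{a,b}+O(\log N)$. The a priori Lipschitz control from Proposition~\ref{ppn:gradients-bounded} gives sufficient tail decay of $\log X$ to integrate on the complementary low-probability event, so that $\bbE\log X=NE(a,b)+\sqrt{N}g_{a,b}+o(N)$. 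Combined with the annealed identity from the first step, Jensen yields $E(a,b)\le\hat E(a,b)+o(1)$ uniformly in $(a,b)$.

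Third, for the equality $E(1,b_*)=\hat E(1,b_*)$, I evaluate the difference
\[
\hat E(a,b)-E(a,b)=\tfrac{1}{2}\log(2e)+\tfrac{1}{2}\bigl(\tilde\gamma_1^{2}+\tilde\gamma_2^{2}\bigr)-\min_{z>\tilde\gamma_2\sqrt{2}}G_{a,b}(z)
\]
at $(a,b)=(1,b_*)$, where $r(1,b_*)=q$ and $\xi'(aq)/\xi'(q)=1$, so $\tilde\gamma_1^{2}=0$ and the $(1-\xi'(aq)/\xi'(q))^2 t$ contribution in \eqref{eq:Gab} vanishes. This reduces $G_{1,b_*}$ to a one-parameter minimization; the stationarity condition $\phi(z\sqrt{2}/\tilde\gamma_2)=\sqrt{2}\tilde\gamma_2$ gives $z_*=\tfrac{1}{2}+\tilde\gamma_2^{2}$, and using the identity $\psi=\tfrac{1}{2}\phi^{2}-\log\phi$ one computes $\min_z G_{1,b_*}=\tfrac{1}{2}\log(2e)+\tfrac{1}{2}\tilde\gamma_2^{2}$. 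Substituting back shows the difference vanishes.

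Finally, $\nabla E(1,b_*)=0$ is exactly Lemma~\ref{lem:Grad-E}. To obtain $\nabla\hat E(1,b_*)=0$ without a fresh calculation, observe that the first three parts make $(1,b_*)$ a global minimum of the nonnegative function $\hat E-E$. Hence $\nabla(\hat E-E)(1,b_*)=0$, and combining with Lemma~\ref{lem:Grad-E} gives the claim. The main obstacle is the Jensen step, where passing from the high-probability characterization in \eqref{eq:IntDNab} to an expectation requires controlling $\log X$ on the rare bad event; this is done using Gaussian concentration of the spin glass free energy restricted to a band, which is standard. The remaining arguments are essentially arithmetic given the explicit forms of $G_{a,b}$, $\phi$, $\psi$.
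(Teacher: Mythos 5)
Your proposal is correct and follows essentially the same route as the paper: a direct annealed (Fubini/Gaussian) computation for \eqref{eq:anneal}, a quenched-versus-annealed comparison for $E\le\hat E$, an explicit evaluation of $\min_z G_{1,b_*}$ via the stationarity condition $\phi(z_*\sqrt 2/\tilde\gamma_2)=\sqrt2\tilde\gamma_2$ and $z_*=\tfrac12+\tilde\gamma_2^2$ for the equality at $(1,b_*)$, and the first-order condition at the minimum of $\hat E-E\ge 0$ combined with Lemma~\ref{lem:Grad-E} for $\nabla\hat E(1,b_*)=0$. The only deviation is that the paper derives the inequality by applying Markov's inequality to the \emph{quadratic} partition function (comparing the high-probability lower bound of Lemma~\ref{lem:laplace} with its annealed expectation), which avoids the lower-tail control of $\log X$ on rare events that your Jensen argument requires; your handling of that tail via Proposition~\ref{ppn:gradients-bounded} is adequate, so both routes go through.
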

\begin{proof}
    Eq~(\ref{eq:anneal}) follows from a direct calculation.
    For the last claim, let $\tilde{\gamma}_2 = \tilde{\gamma}_2(a, b)$ and $\tilde{\gamma}_1 = \tilde{\gamma}_1(a, b)$. Given a quadratic Hamiltonian $H_{\le 2}(\bsig) = \tilde{\gamma}_2 \<\bsig,\bA\bsig\>/\sqrt{2} + \tilde{\gamma}_1\<\bg,\bsig\>$ where $\bA$ is a GOE matrix and $\bg \sim \cN(0,\bI_N)$, 
\begin{align*}
	\bbE\lt[\int e^{H_{\le 2}(\bsig)}\mu_0(\de\bsig)\rt] =e^{N\tilde{\gamma}_2^2/2+N\tilde{\gamma}_1^2/2}. 
\end{align*}
On the other hand, by Lemma \ref{lem:laplace}, we have, for $\tilde{\gamma}_1$ sufficiently small, with probability at least $1-e^{-cN}$, 
\begin{align*}
	\int e^{H_{\le 2}(\bsig)}\mu_0(\de\bsig) \ge 
 \exp\Big\{(1-o(1))N\Big(\min_{z>\tilde{\gamma}_2\sqrt{2}} G_{a,b}(z)-
 \frac{1}{2}\log(2e)
 \Big)\Big\}.
\end{align*}
Since this holds for all $N$, Markov inequality implies  
\[
	\min_{z>\tilde{\gamma}_2\sqrt{2}}  G_{a,b}(z) - \frac{1}{2}\log(2e) \le  \frac{1}{2}\tilde{\gamma}_2^2+ \frac{1}{2} \tilde{\gamma}_1^2\, .
\]
    The last claim follows immediately upon this observation. 
\end{proof}

When $\<\bx,\bm\>_N=q$, $\<\by,\bm\>_N = t$, $\<\by,\bx\>_N=t$, $\|\by\|^2 = t+t^2$, under the constraint $\xi'(q) + t = \frac{q}{1-q}$, we can simplify $\hat{E}(a,b)$ as
\begin{align}
    \tilde{E}(a,b) &:= \frac{1}{N} \ln(2e) + \frac{N-3}{2N}\log (1-r(a,b)) + \frac{1}{2}\tilde{\gamma}_1^2+ \xi(b)  -b\xi'(aq) + \frac{\xi'(aq)aq}{(1-q)\xi'(q)}+at\lt(1-\frac{\xi'(aq)}{\xi'(q)}\rt)\nonumber\\
&\qquad + \frac{1}{2} \lt(\xi(1)-\xi(r(a,b))-\xi'(r(a,b))(1-r(a,b))\rt).
\end{align}
Indeed, under these values and constraints, 
\begin{align*}
\Gamma_N(\by,\bm; a,b) &= \xi(b) + \frac{\xi'(aq)\langle \by+\bz,\bsig\rangle}{\xi'(q)} - \frac{\xi''(q)\xi'(aq)aq\langle \bm,\bz\rangle}{\xi'(q)\theta(q)} + \lt(1-\frac{\xi'(aq)}{\xi'(q)}\rt)\langle \by - \by^\perp, \bsig - \bsig^\perp\rangle\\
&= \xi(b)-\frac{\xi''(q)\xi'(aq)aq(-t-\xi'(q)q+\frac{q}{1-q}+q(1-q)\xi''(q))}{\xi'(q)\theta(q)}\\
&\qquad \qquad \qquad +\frac{\xi'(aq)(-\xi'(q)b+aq((1-q)\xi''(q)+\frac{1}{1-q}))}{\xi'(q)} + t\lt(1-\frac{\xi'(aq)}{\xi'(q)}\rt)\\
&= \xi(b)-b\xi'(aq) + \frac{\xi'(aq)aq}{(1-q)\xi'(q)}+at\lt(1-\frac{\xi'(aq)}{\xi'(q)}\rt).
\end{align*}
Furthermore, in this case, $b_* = q$.

\begin{lem}\label{lem:Hess-E}
For $\eps>0$ sufficiently small, there is $\eta>0$ such 
that for all $(a,b)$ satisfying $|aq-q|+|b-q|\le \eps$, we have
$\nabla^2\tilde{E}(a,b)\preceq -\eta \bI_2$.
\end{lem}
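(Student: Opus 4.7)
The plan is to compute the Hessian $\nabla^{2}\tilde E(a,b)$ explicitly at the critical point $(a,b)=(1,q)$ (which by Lemma~\ref{lem:Grad-E} is a critical point), verify strict negative definiteness there, and then extend to a neighborhood by continuity of the Hessian (since $\tilde E$ is smooth in $(a,b)$).

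The first move is to change coordinates from $(a,b)$ to $(a,u)$ via $u=b-aq$. In these coordinates $r(a,u)=a^{2}q+u^{2}/(1-q)$ satisfies $r_{u}(1,0)=0$ and $r_{au}(1,0)=0$, so most contributions to the mixed partial vanish automatically. A direct inspection shows that the only remaining contributions to $\partial_{a}\partial_{u}\tilde E(1,0)$ come from $\xi(aq+u)$ (giving $+q\xi''(q)$) and $-(aq+u)\xi'(aq)$ (giving $-q\xi''(q)$), which cancel. Thus the Hessian is diagonal at $(1,0)$ in $(a,u)$ coordinates, and the problem reduces to establishing strict negativity of the two diagonal entries.

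For $\partial_u^{2}\tilde E(1,0)$, summing the contributions of $(1/2)\log(1-r)$, $\xi(b)$, $(1/2)\tilde\gamma_1^{2}$, and the bracket $(1/2)[\xi(1)-\xi(r)-\xi'(r)(1-r)]$ (using $h(1,0)=h_u(1,0)=0$ for $h=\tilde\gamma_1^{2}/(1-r)$), one obtains
\[
\partial_u^{2}\tilde E(1,0)=-\tfrac{1}{(1-q)^{2}}+\xi''(q),
\]
which is negative by~\eqref{eq:amp-works}. For $\partial_a^{2}\tilde E(1,0)$ one computes all contributions (from $F(r)$, $(1/2)\tilde\gamma_1^{2}$, $\xi(b)$, $-b\xi'(aq)$, $\phi(a):=\xi'(aq)aq/((1-q)\xi'(q))$, and $\psi(a):=at(1-\xi'(aq)/\xi'(q))$) and uses the critical-point relation $\xi'(q)+t=q/(1-q)$ throughout. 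This relation produces a decisive cancellation: the $\xi'''(q)$-contributions from $\phi''(1)+\psi''(1)=q[2\xi''(q)+q\xi'''(q)]$ combine with those from $\tilde\gamma_1^{2}$, $F(r)$, and $-b\xi'(aq)$ to leave no $\xi'''(q)$ dependence. The resulting formula is
\[
\partial_a^{2}\tilde E(1,0)=-\tfrac{q(1+q)}{(1-q)^{2}}+q(q+2)\xi''(q)+\tfrac{q^{2}\xi''(q)^{2}(1-q)(t-\xi'(q))}{\xi'(q)^{2}}.
\]

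The hard part will be verifying that this expression is strictly negative. The natural tools are \eqref{eq:amp-works} (which gives $\xi''(q)<1/(1-q)^{2}$), the bound $t\ge 0$ (equivalently $\xi'(q)\le q/(1-q)$), and the Cauchy--Schwarz-type inequality $q\xi''(q)\ge\xi'(q)$ (from $p(p-1)\ge p$ for $p\ge 2$), which together bound the ratios $\xi''(q)/\xi'(q)$ and $\mu:=(1-q)^{2}\xi''(q)$ in terms of $\rho:=(1-q)\xi'(q)/q\in(0,1]$. One needs to combine these carefully; I would parametrise by $\mu,\rho\in(0,1]$ and reduce the inequality to a quadratic inequality $f(\mu)<0$ on the admissible region, using Cauchy--Schwarz to pin $\mu$ between $\rho(1-q)$ and $1$. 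Finally, transforming back to $(a,b)$ via $(a,b)\mapsto(a,u=b-aq)$ gives
\[
\nabla^{2}\tilde E(1,q)=\begin{pmatrix}A+q^{2}B & -qB\\ -qB & B\end{pmatrix},
\]
with determinant $AB$ and trace $A+(q^{2}+1)B$; given $A,B<0$ this is strictly negative definite. Continuity of the Hessian (which is smooth in $(a,b)$ away from $r=1$) then propagates the bound $\nabla^{2}\tilde E(a,b)\preceq-\eta\bI_{2}$ to the neighborhood $|aq-q|+|b-q|\le\eps$ for $\eps$ small, concluding the proof.
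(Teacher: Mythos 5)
Your overall strategy coincides with the paper's: compute the Hessian at the critical point $(1,q)$, verify strict negative definiteness there, and extend by continuity. The change of variables $u=b-aq$ that diagonalizes the Hessian is a clean way to organize the computation (it is equivalent to the paper's identity $\partial_a\partial_b\tilde E(1,q)=-q\,\partial_b^2\tilde E(1,q)$), and your $uu$-entry $\xi''(q)-(1-q)^{-2}$ is correct and negative by \eqref{eq:amp-works}.

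The gap is in the $aa$-entry. Your formula carries the factor $(t-\xi'(q))$ in the $\xi''(q)^2$ term, i.e.\ you have taken the coefficient of $(1-\xi'(aq)/\xi'(q))^2$ inside $\tilde\gamma_1^2$ to be $t$. The quantity that must appear there is $\|\by^{\perp}\|_N^2=t-\tfrac{t^2(1-q)}{q}=:v$, the squared norm of the component of $\by$ orthogonal to $\mathrm{span}(\bm,\bx)$ (the displayed definition \eqref{eq:MixtureAB1} also writes $t$, but the paper's Hessian computation uses $v$, and this is essential for the lemma to hold). With your $t$-version the inequality you defer to the ``hard part'' is simply false: take $\xi(s)=1.6\,s^{10}$, which satisfies \eqref{eq:amp-works} since $1.6<\beta_{\sSL}(10)^2\approx 1.656$, and $q=0.8$, so $t=4-\xi'(0.8)\approx 1.853\ge 0$ and $\xi''(0.8)\approx 24.16<25$; then your expression evaluates to $-36+54.12-4.78\approx +13.3>0$. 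Hence no combination of \eqref{eq:amp-works}, $t\ge 0$, and $q\xi''(q)\ge\xi'(q)$ can establish negativity, and the parametrization by $\mu,\rho$ you sketch cannot close the argument. With $v$ in place of $t$ one has $v-\xi'(q)=-(1-q)\xi'(q)^2/q$, all dependence on $\xi'(q)$ cancels, and the diagonal entry collapses to $q^2\Delta_0-q(1-q)^2\Delta_0^2$ with $\Delta_0=\xi''(q)-(1-q)^{-2}<0$, which is manifestly negative --- there is no hard part, and this is exactly the sum-of-squares identity on which the paper's proof ends. As written, your proof therefore rests on an incorrect formula and leaves its decisive inequality both unproved and unprovable.
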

\begin{proof}
We have
\begin{align}\label{eq:bb}
\partial_b^2 \tilde{E}(a,b)|_{(1,q)}
&= 2\xi''(q) - \left(\frac{1}{2(1-r(a,b))} + \frac{1}{2}\xi''(r(a,b))(1-r(a,b))\right) \partial_b^2(r(a,b))|_{(1,q)} \nonumber\\
&= -\frac{1}{(1-q)^2} + \xi''(q),
\end{align}
and
\begin{align}\label{eq:ab}
\partial_{b}\partial_a \tilde{E}(a,b)|_{(1,q)}
&=  - \left(\frac{1}{2(1-r(a,b))} + \frac{1}{2}\xi''(r(a,b))(1-r(a,b))\right) \partial_{a,b}(r(a,b))|_{(1,q)} \nonumber\\
&= -q\xi''(q) + \frac{q}{(1-q)^2}.
\end{align}
Finally, we compute
\begin{align}
&\partial_a^2 \tilde{E}(a,b)|_{(1,q)}\nonumber \\ 
&= -q^3\xi'''(q) + q(2\xi''(q)+q\xi'''(q)) + (1-q)(q^2\xi'''(q)+q\xi''(q)-q^2\xi''(q)^2/\xi'(q) + q^2 v \xi''(q)^2/\xi'(q)^2) \nonumber\\
&\qquad - \frac{q(2q+1)}{(1-q)^2} -2q^2(1-q)\xi'''(q) + q(2q-1)\xi''(q)\nonumber\\
&= - \frac{q(2q+1)}{(1-q)^2} + q(q+2)\xi''(q) + (1-q)(-q^2\xi''(q)^2/\xi'(q) + q^2 v \xi''(q)^2/\xi'(q)^2).
\end{align}
Using the constraints $v = t - \frac{t^2(1-q)}{q}$, and that $t = \frac{q}{1-q}-\xi'(q)$, we can simplify
\begin{align}\label{eq:aa}
\partial_a^2 \tilde{E}(a,b)|_{(1,q)}
&= - \frac{q(2q+1)}{(1-q)^2} + q(q+2)\xi''(q) - q(1-q)^2\xi''(q)^2.
\end{align}

Consider a change of variable $\tilde{a} = aq$ and let $\uE (\tilde{a},b) = \tilde{E}(\tilde{a}/q,b)$. Combining (\ref{eq:bb}), (\ref{eq:ab}), (\ref{eq:aa}), under the condition $\xi''(q)<\frac{1}{(1-q)^2}$, that $\uE(\tilde{a},b)$ is strictly concave at $(\tilde{a},b) = (q,q)$ is equivalent to
\begin{align*}
&\frac{1}{q} \lt((1-q)^2\xi''(q)^2 - (q+2)\xi''(q) + \frac{(2q+1)}{(1-q)^2}\rt) > \lt(\frac{1}{(1-q)^2}-\xi''(q)\rt) \\
&\Leftrightarrow \frac{1}{q} \lt((1-q)\xi''(q)-\frac{1}{1-q}\rt)^2 + \frac{1}{(1-q)^2} > 0.\qedhere
\end{align*}
\end{proof}

Notice that, for $(a,b)$ in a neighborhood of $(1,b_*)$, the Hessian of $\hat{E}(a,b)$ is a continuous rational function of $q$, $\xi(b)$, $\xi(q)$, $\xi'(q)$, $\xi''(q)$, $\xi'''(q)$, $\xi'(\<\bx,\bm\>_N)$, and $\<\bx,\by\>_N, \<\bx,\bm\>_N, \<\by,\bm\>_N, \|\by\|_N^2$. Hence, we have the following
implication of the previous lemma.
\begin{cor}\label{cor:NegDef}
There exist $\eps,\eta>0$ such that,  for $|\<\bx,\bm\>_N-q|\le \eps$, $|\<\by,\bm\>_N-t|\le \eps$, $|\<\by,\bx\>_N-t|\le \eps$, and $|\|\by\|_N^2-t|\le \eps$, 
all $(a,b)$ such that $|aq-a_*q|+|b-b_*|\le \eps$, we have 
$\nabla^2\hat{E}(a,b)\preceq -\eta\bI_2$. (Here $(a_*,b_*) =(1,q)$.)
\end{cor}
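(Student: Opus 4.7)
The plan is to deduce the corollary directly from Lemma \ref{lem:Hess-E} by a continuity argument, using the observation (stated immediately before the corollary) that the entries of $\nabla^2\hat{E}(a,b)$ are continuous rational functions of $q$, $\xi$ and its first three derivatives evaluated at $b$, $q$, $\<\bx,\bm\>_N$, and the inner products $\<\bx,\by\>_N$, $\<\bx,\bm\>_N$, $\<\by,\bm\>_N$, $\|\by\|_N^2$.

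First, I will identify the exact relationship between $\hat{E}$ and $\tilde{E}$. Recall that $\tilde{E}(a,b)$ was obtained from $\hat{E}(a,b)$ by specializing the four inner products to their ``ideal'' planted-model values $\<\bx,\bm\>_N=q$, $\<\by,\bm\>_N=\<\by,\bx\>_N=t$, $\|\by\|_N^2=t+t^2$ and then using the identity $\xi'(q)+t=q/(1-q)$ (which is exactly the defining relation for $q_\ast$, cf.\ Eq.~\eqref{eq:def-qt}; here $q=q_\ast$). Writing $\bp=(\<\bx,\bm\>_N,\<\by,\bm\>_N,\<\by,\bx\>_N,\|\by\|_N^2)$ and $\bp_\ast=(q,t,t,t+t^2)$, one has $\hat E(a,b;\bp_\ast)=\tilde E(a,b)$ identically in $(a,b)$.

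Next, by Lemma \ref{lem:Hess-E}, there exist $\eps_0,\eta_0>0$ such that $\nabla^2\tilde{E}(a,b)\preceq-\eta_0\bI_2$ for all $(a,b)$ with $|aq-q|+|b-q|\le\eps_0$. Since the map $(a,b,\bp)\mapsto\nabla^2\hat E(a,b;\bp)$ is a continuous rational function (its denominators involve only $\xi'(q)$, $\zeta(q)=\xi'(q)+q\xi''(q)$, and $1-\<\bx,\bm\>_N^2/q$, all of which are bounded away from zero in a neighborhood of $\bp_\ast$ under assumption \eqref{eq:amp-works}), by uniform continuity on the compact set $\{|aq-q|+|b-q|\le\eps_0\}\times\{|\bp-\bp_\ast|\le 1\}$, there exists $\eps\in(0,\eps_0]$ such that
\[
\sup_{|aq-q|+|b-q|\le\eps_0}\bigl\|\nabla^2\hat E(a,b;\bp)-\nabla^2\tilde E(a,b)\bigr\|_{\mathrm{op}}\le \eta_0/2
\]
whenever $|\bp-\bp_\ast|\le\eps$. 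Setting $\eta=\eta_0/2$ and using Weyl's inequality gives $\nabla^2\hat E(a,b;\bp)\preceq -\eta\bI_2$ on the stated neighborhood.

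The only real content here is Lemma \ref{lem:Hess-E}; there is no substantive obstacle in the continuity step beyond verifying that the denominators appearing in $\hat E$ and its derivatives are uniformly bounded away from $0$ and $\infty$ in the chosen neighborhood, which is immediate from the strict positivity of $\xi'(q)$, $\zeta(q)$, $1-q$, and $1-\<\bx,\bm\>_N^2/q$ at $\bp_\ast$ under \eqref{eq:amp-works}. Hence no further calculation is required.
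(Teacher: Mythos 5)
Your proposal is correct and follows essentially the same route as the paper: the paper derives the corollary from Lemma \ref{lem:Hess-E} by exactly the observation you make, namely that $\nabla^2\hat{E}(a,b)$ is a continuous rational function of $(a,b)$ and of the quantities $\xi(b)$, $\xi(q),\ldots,\xi'''(q)$, $\xi'(\<\bx,\bm\>_N)$ and the four inner products, which specializes to $\nabla^2\tilde{E}(a,b)$ at the ideal planted values, so strict negative definiteness persists on a small parameter neighborhood. Your additional care about the denominators being bounded away from zero is a sensible (if routine) point that the paper leaves implicit.
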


We will next prove several simple preliminary estimates before giving the proof of Proposition \ref{ppn:local-barycenter}.

Recall that on $D_N(a,b)$, we have defined the Hamiltonian $\thH(\tilde{\bsig})$,
which is a spin glass with mixture given by Eqs.~\eqref{eq:MixtureAB1} to \eqref{eq:MixtureAB3}. Let $\bA^{(p)}(a,b) = \nabla^{p} \thH(\bzero)$ and $\bu(a,b) = \nabla \thH(\bzero)$. 

By Lemma \ref{lem:Grad-E}, Lemma \ref{lem:anneal-cal}, and Corollary \ref{cor:NegDef} and the preceding remark, there is a unique local maxima $(a_*,b_*) = (1,b_*)$ of $E(a,b)$ and $\widehat{E}(a,b)$ with $|qa_*-q|+|b_*-q|\le \eps$, and  $\widehat{E}(a,b)$ is strongly concave at $(a_*,b_*)$. In particular, there is $\eta>0$ such that, for sufficiently small $\eps$ and $(a,b)$ such that $|qa-qa_*|+|b-b_*|\le \eps$, we have 
\begin{align}\label{eq:quad-upper-E}
    E(a,b) \le E(a_*,b_*) - \eta (|qa-qa_*|^2+|b-b_*|^2). 
\end{align}
For each $a,b$ let $\bm(a,b)$ be the unique point in $V := \spn(\bm,\bx)$ with $\|\bm(a,b)\|_N^2=qa$ and $\<\bm(a,b),\bx\>_N=b$. 

The following lemma follows from standard control on suprema of Gaussian processes (see, e.g. \cite[Lemma A.3]{montanari2023solving}). 
\begin{lem}\label{lem:lip-grad}
    For $\eps>0$ sufficiently small there exist $c=c(\eps)$,
    $C=C(\eps) >0$ depending uniquely on $\eps$ such that the following holds with probability at least $1-e^{-cN}$ conditional on $\nabla \cF_\TAP(\bm)=0$. For  $(a,b)$ 
    such that $|qa-qa_*|+|b-b_*| \le \eps$, we have 
    \[
    \nabla H(\bm(a,b)) = \nabla H(\bm) + \nabla^2 H(\bm) (\bm(a,b)-\bm) + \Err,
    \]
    where $\|\Err\| \le CN^{-1/2} \|\bm(a,b)-\bm\|^2$. Furthermore, $\Proj_V^{\perp}(\nabla H(\bm)) = 0$, and for $\bv\in V$, $\|\nabla^2 H(\bm) \bv\|^2 \le C \|\bv\|^2$. (Here $V = \spn(\bm,\bx)$.)
\end{lem}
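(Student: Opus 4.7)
\textbf{Proof plan for Lemma \ref{lem:lip-grad}.}
All three statements follow from the event $K_N$ of Proposition~\ref{ppn:gradients-bounded} combined with the TAP stationarity $\nabla\cF_\sTAP(\bm)=\bfzero$. Since conditioning on $\nabla\cF_\sTAP(\bm)=\bfzero$ amounts to imposing a finite-dimensional linear constraint on the Gaussian process $\tH$, the resulting conditional process is still Gaussian with uniformly bounded (in fact, reduced) covariance, so the same concentration argument used in Proposition~\ref{ppn:gradients-bounded} shows that $K_N$ continues to hold with conditional probability $1-e^{-cN}$.

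For the Taylor estimate, I would apply the integral form of Taylor's theorem:
\[
\nabla H(\bm(a,b))-\nabla H(\bm)-\nabla^2 H(\bm)(\bm(a,b)-\bm)=\int_0^1(1-s)\,\nabla^3 H(\bm_s)[\bm(a,b)-\bm]^{\otimes 2}\,\de s,
\]
where $\bm_s=\bm+s(\bm(a,b)-\bm)$. For $\eps$ sufficiently small, $\|\bm_s\|_N\le 1$ along the whole segment, so $\|\nabla^3 H(\bm_s)\|_{\op,N}\le C_3$ on $K_N$ (the explicit planted contribution $\xi'''(\<\bx,\bsig\>_N)\bx^{\otimes 3}/N^2$ is negligible in $\|\cdot\|_{\op,N}$). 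The definition of $\|\cdot\|_{\op,N}$ yields, for any 3-tensor $\bA$ with $\|\bA\|_{\op,N}\le C_3$ and any $\bu\in\bbR^N$,
\[
\|\bA[\bu,\bu,\cdot]\|=\sup_{\|\bv\|\le 1}|\<\bA,\bu\otimes\bu\otimes\bv\>|\le C_3\, N\,\|\bu\|_N^2\|\bv\|_N\le C_3\sqrt{N}\,\|\bu\|_N^2=\frac{C_3}{\sqrt{N}}\,\|\bu\|^2,
\]
from which the claimed bound $\|\Err\|\le C N^{-1/2}\|\bm(a,b)-\bm\|^2$ is immediate.

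For the projection claim, I rewrite $\nabla H$ in terms of $\nabla\tH$ and the planted terms. Using $H(\bsig)=\tH(\bsig)+\<\by,\bsig\>+N\xi(\<\bx,\bsig\>_N)$, together with $\by=t\bx+\bB_t$ and $\xi'_t(r)=\xi'(r)+t$, the identity \eqref{eq:nabla-cF-sTAP} becomes
\[
\nabla\cF_\sTAP(\bm)=\nabla H(\bm)+\Big(\tfrac{1}{1-q}+(1-q)\xi''(q)\Big)\bm,
\]
after absorbing $\bB_t$ from $\tH_{N,t}$ and the $t\bx$ summand from $\xi'_t$. Setting the left-hand side to zero forces $\nabla H(\bm)\in\spn(\bm)\subseteq V$, so $\Proj_V^{\perp}(\nabla H(\bm))=0$.

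For the Hessian bound, I note that $\nabla^2 H(\bm)=\nabla^2\tH(\bm)+\xi''(\<\bx,\bm\>_N)\,\bx\bx^\top/N$, so on $K_N$,
\[
\|\nabla^2 H(\bm)\|_{\op}\le\|\nabla^2\tH(\bm)\|_{\op,N}+|\xi''(\<\bx,\bm\>_N)|\,\tfrac{\|\bx\|^2}{N}\le C_2+|\xi''(1)|,
\]
which gives $\|\nabla^2 H(\bm)\bv\|^2\le C\|\bv\|^2$ for all $\bv\in\bbR^N$, in particular for $\bv\in V$. The only mild subtlety in the whole argument is the tensor-norm conversion in the Taylor step, since the $\|\cdot\|_{\op,N}$ normalization differs from the usual injective norm by a factor of $N^{(k-2)/2}$; once this is unpacked, the proof reduces to routine bookkeeping given $K_N$.
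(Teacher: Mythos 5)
Your proof is correct, and it fleshes out exactly what the paper intends: the paper offers no written argument for this lemma beyond the remark that it "follows from standard control on suprema of Gaussian processes," and your route (conditional Gaussian concentration to keep $K_N$, integral-form Taylor with the $\|\cdot\|_{\op,N}$-to-Euclidean conversion producing the $N^{-1/2}$, and the observation that $\nabla\cF_\sTAP(\bm)=\bzero$ forces $\nabla H(\bm)\in\spn(\bm)$) is the natural way to supply the missing details. Two cosmetic points: the scalar multiple of $\bm$ in your identity for $\nabla\cF_\sTAP$ should carry a minus sign (since $\theta'(s)=-(1-s)\xi''(s)$ and the gradient of $\tfrac{N}{2}\log(1-\|\bm\|_N^2)$ is $-\bm/(1-\|\bm\|_N^2)$; the paper's own Eq.~\eqref{eq:nabla-cF-sTAP} has the same sign slip, as one sees by comparing with \eqref{eq:amp-limiting-gradient}), though this is immaterial for concluding $\Proj_V^\perp(\nabla H(\bm))=0$; and the planted cubic term $\xi'''(\<\bx,\cdot\>_N)\bx^{\otimes 3}/N^2$ is not negligible but $O(1)$ in $\|\cdot\|_{\op,N}$ — which is all you need, since boundedness of $\nabla^3 H$ is the only input to the Taylor remainder.
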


As a corollary of Lemma \ref{lem:lip-grad},  we obtain the following control on the effective fields $\bu(a,b) = (1-r(a,b))^{1/2}\Proj_V^{\perp}(\nabla H(\bm))$.  
\begin{lem}\label{lem:lip-field}
    For $\eps,\delta>0$ sufficiently small, the following holds with probability at least $1-e^{-cN}$. There exists $\bu_1,\bu_2 \in \bbR^{N-3}$ with $\|\bu_1\|,\|\bu_2\| = O(N^{1/2})$ such that, for any $(a,b)$ with $|qa-qa_*|+|b-b_*| \le \eps$, we have $\|\bu(a,b) - (qa-qa_*)\bu_1 - (b-b_*)\bu_2\| \le CN^{1/2} (|qa-qa_*|^2+|b-b_*|^2)$. 
    
    Furthermore, for $\gamma > \delta + \bbE\lambda_{\max}(\bA^{(2)}(a_*,b_*))$ and $i,j\in \{1,2\}$, there is $c=c(\delta)>0$ such that, with probability $1-e^{-N^c}$, $\<\bu_i, (\gamma \bI - \bA^{(2)}(a_*,b_*))^{-1} \bu_j\>$, concentrates in a window of size $O(N^{1/2+c})$ around its expectation. 
\end{lem}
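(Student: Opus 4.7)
The plan is to prove the two statements separately: first the Taylor linearization of $\bu(a,b)$ at $(a_*,b_*)$, then Gaussian concentration of the resolvent bilinear form.

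For the linearization, I will use the identity $\bu(a,b) = (1-r(a,b))^{1/2}\Proj_V^\perp\bigl(\nabla H(\bm(a,b))\bigr)$ coming from the construction of $\thH$ in Section~\ref{sec:SecondLocal}. Applying Lemma~\ref{lem:lip-grad} on its high-probability event yields
\[
\nabla H(\bm(a,b)) = \nabla H(\bm) + \nabla^2 H(\bm)\bigl(\bm(a,b)-\bm\bigr) + \Err,\qquad \|\Err\|\le CN^{-1/2}\|\bm(a,b)-\bm\|^2.
\]
Projecting onto $V^\perp$ kills the zeroth-order term, because $\bm$ being a TAP critical point gives $\Proj_V^\perp\nabla H(\bm)=\bzero$. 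Since $(a,b)\mapsto\bm(a,b)$ is a smooth $V$-valued map near $(a_*,b_*)$ with $\|\partial_a\bm\|,\|\partial_b\bm\|=O(N^{1/2})$, a first-order Taylor expansion produces deterministic $\bv_1,\bv_2\in V$ of norm $O(N^{1/2})$ such that $\bm(a,b)-\bm = (qa-qa_*)\bv_1+(b-b_*)\bv_2+\Err'$ with $\|\Err'\|\le CN^{1/2}(|qa-qa_*|^2+|b-b_*|^2)$. Setting $\bu_i := (1-r(a_*,b_*))^{1/2}\Proj_V^\perp\bigl(\nabla^2 H(\bm)\bv_i\bigr)$, the bound $\|\nabla^2 H(\bm)\bv\|\le C\|\bv\|$ from Lemma~\ref{lem:lip-grad} gives $\|\bu_i\|=O(N^{1/2})$; combining the three error sources $\nabla^2 H(\bm)\Err'$, $\Err$, and the smooth variation of the scalar prefactor $(1-r(a,b))^{1/2}$ then yields the claimed $O(N^{1/2}(|qa-qa_*|^2+|b-b_*|^2))$ remainder.

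For the concentration, I define $F_{ij}(\bg) := \la\bu_i,R\bu_j\ra$ with $R := (\gamma\bI-\bA^{(2)}(a_*,b_*))^{-1}$ and $\bg$ the underlying Gaussian disorder (conditioned on $\nabla\cF_\sTAP(\bm)=\bzero$, $\bx$, $\by-\by^\perp$, $g_{a,b}$ as in Section~\ref{sec:SecondLocal}; this conditioning only adds a deterministic shift and a covariance reduction on a fixed-dimensional subspace, preserving the Gaussianity and Lipschitz bounds below). Standard GOE concentration of the top eigenvalue gives $\|R\|_\op\le 2/\delta$ on an event $\cE_\spec$ of probability $1-e^{-cN}$. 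Both $\bu_i$ and $\bA^{(2)}(a_*,b_*)$ are linear functions of $\bg$; the GOE-like covariance of $\nabla^2 H(\bm)$ (Lemma~\ref{lem:nabla2-H-conditional-law}) gives $\|\Cov(\bu_i)\|_\op=O(1)$, whence the Jacobian $\partial_\bg\bu_i$ has operator norm $O(1)$, while $\bA^{(2)}$ is $O(N^{-1/2})$-Lipschitz in operator norm as a GOE-type matrix of bounded spectral radius. Using $\nabla_\bg R=-R(\nabla_\bg\bA^{(2)})R$ and the product rule, on $\cE_\spec\cap K_N$,
\[
|\nabla_\bg F_{ij}| \le 2\|R\|_\op\|\bu_j\|\cdot O(1) + \|\bu_i\|\|\bu_j\|\|R\|_\op^2\cdot O(N^{-1/2}) = O(N^{1/2}).
\]
Extending $F_{ij}$ from this event to a globally $O(N^{1/2})$-Lipschitz function on Gaussian space via Kirszbraun's theorem, as in the proof of Proposition~\ref{ppn:amp-overlaps}, Borell--TIS then yields $\PP(|F_{ij}-\bbE F_{ij}|>N^{1/2+c})\le 2e^{-N^{2c}/C}$, which gives the claim after relabeling $2c\mapsto c$.

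The main obstacle is the Lipschitz bookkeeping in the concentration step: naive bounds on the product $\la\bu_i,R\bu_j\ra$ would give a Lipschitz constant of order $N$ rather than $N^{1/2}$, so it is crucial that one exploits the operator-norm bound $O(1)$ on $\partial_\bg\bu_i$ (coming from the GOE-like covariance of $\nabla^2 H(\bm)$) together with the $O(N^{-1/2})$ operator-norm Lipschitz constant of $\bA^{(2)}$. A secondary subtlety is that the argument takes place under a codimension-$N$ conditioning on $\nabla\cF_\sTAP(\bm)=\bzero$; one must verify that the resulting Gaussian structure still supports both Borell--TIS and the spectral concentration of $\bA^{(2)}$, which follows from standard Gaussian conditioning formulas.
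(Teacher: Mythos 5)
Your proof is correct and follows essentially the same route as the paper's own (very terse) argument: part one is the first-order Taylor expansion of $\nabla H(\bm(a,b))$ from Lemma~\ref{lem:lip-grad} combined with $\Proj_V^{\perp}(\nabla H(\bm))=\bzero$, and part two is Lipschitz concentration of the resolvent bilinear form under the spectral event, with the same $O(N^{1/2})$ Lipschitz bookkeeping (resolvent perturbation plus the $O(N^{-1/2})$ operator-norm Lipschitz constant of the GOE-type matrix). Your write-up in fact supplies more detail than the paper does, and your handling of the conditioning on $\nabla\cF_\sTAP(\bm)=\bzero$ is consistent with the paper's setup.
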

\begin{proof}
The first part follows from  Lemma \ref{lem:lip-grad}, using 
$\Proj_V^{\perp}(\nabla H(\bm)) = \bfzero$.

The second part holds by concentration of Lipschitz functions of Gaussian random variables. Indeed note that $\bu_i$ depend linearly on  $\bA^{(2)}(a_*,b_*)$ 
as well as on independent Gaussian random variables. Under the high probability
event  $\bbE\lambda_{\max}(\bA^{(2)}(a_*,b_*))+\delta/2\le \lambda_{\max}(\bA^{(2)}(a_*,b_*))\le C$, the quantity 
$\<\bu_i, (\gamma \bI - \bA^{(2)}(a_*,b_*))^{-1} \bu_j\>$ is indeed Lipschitz in these
Gaussians as well as on $\bA^{(2)}(a_*,b_*)$.
\end{proof}

Let 
\begin{align}
R := \big\{(a,b): q|a-a_*|+|b-b_*|\le N^{-1/2+c}\big\}\, .
\end{align}
Recall the random shifts $g_{a,b}$ in Eq.~(\ref{eq:(a,b)-reduction}). We have the following control on $g_{a,b}$, again from standard control on Gaussian processes.
\begin{lem}\label{lem:lip-shift}
    We have that $g_{a,b}$, for $(a,b)\in R$, forms a Gaussian process with $\bbE[(g_{a,b}-g_{a',b'})^2] = O(\|\bm(a,b)-\bm(a',b')\|_N^2)$. Furthermore, with probability at least $1-e^{-cN}$, we have, for all $(a,b)\in R$ that 
    \[
        |g_{a,b} - g_{a_*,b_*}| \le C (|q(a-a_*)|+|b-b_*|).  
    \]
\end{lem}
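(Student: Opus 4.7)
My plan is to identify $g_{a,b}$ as an explicit linear functional of the underlying disorder, derive the covariance bound by direct computation, and upgrade to the uniform bound via smoothness of $\tH$ on the high-probability event $K_N$ of Proposition~\ref{ppn:gradients-bounded}.

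Following the decomposition leading to \eqref{eq:(a,b)-reduction}, fix the conditioning on $\nabla \cF_\sTAP(\bm) = \bzero$, $\bx$, and $\by-\by^\perp$, and write $\bsig \in D_N(a,b)$ as $\bsig = \bm(a,b) + \sqrt{1-r(a,b)}\,\bU\tilde{\bsig}$. Split the random part of the exponent, namely $\hH(\bsig^\perp) + N\<\by^\perp,\bsig^\perp\>_N$, by the spherical-harmonic expansion in $\tilde{\bsig}$: the constant-in-$\tilde{\bsig}$ piece is by construction $\sqrt{N}\,g_{a,b}$. Since the $\by^\perp$-field is linear in $\tilde{\bsig}$, it integrates to zero over the sphere and contributes only to the degree-one coefficient $\tilde{\gamma}_1^2$ of $\thH$; therefore $g_{a,b}$ is a linear functional of the centered Gaussian process $\hH$ alone, and hence jointly Gaussian as $(a,b)$ varies.

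For the covariance bound I would evaluate the conditional covariance \eqref{eq:cov-cond} at $\bsig^i = \bm(a_i,b_i) + \sqrt{1-r(a_i,b_i)}\,\bU\tilde{\bsig}^i$ and read off the $\<\tilde{\bsig}^1,\tilde{\bsig}^2\>_N^0$-coefficient of the Taylor expansion in $\xi$. This yields
\[
    \bbE[g_{a,b}g_{a',b'}] = \xi(\<\bm(a,b),\bm(a',b')\>_N) - \frac{\xi'(aq)\xi'(a'q)\<\bm(a,b),\bm(a',b')\>_N}{\xi'(q)} + \frac{\xi''(q)\xi'(aq)\xi'(a'q)aa'q^2}{\xi'(q)\zeta(q)},
\]
which reduces to $\tilde{\gamma}_0^2(a,b)$ on the diagonal. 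Since $\bm(a,b)$ is affine in $(a,b)$ with non-degenerate Jacobian on $R$, Taylor-expanding the expression $\bbE[(g_{a,b}-g_{a',b'})^2] = \tilde{\gamma}_0^2(a,b) + \tilde{\gamma}_0^2(a',b') - 2\bbE[g_{a,b}g_{a',b'}]$ around the diagonal and using the smoothness of $\xi$ on $[-1,1]$ gives a positive-semidefinite quadratic form in $(a-a',b-b')$ comparable up to constants to $\|\bm(a,b)-\bm(a',b')\|_N^2$, as required.

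For the uniform bound I would write $g_{a,b}$ as a finite sum of contractions of the tensors $\nabla^k \tH(\bm(a,b))$ for $k = 0,1,2,\ldots$ against deterministic tensors smooth in $(a,b)$ on $R$; this expansion arises from Taylor-expanding $\hH(\bsig^\perp)$ in $\bsig^\perp$ around $\bm(a,b)$ and integrating term by term against the uniform measure on $\tilde{\bsig} \in S_{N-2}$ (even-degree integrals producing trace-type contributions). The partial derivatives $\partial_a g_{a,b}, \partial_b g_{a,b}$ have analogous representations, and on the event $K_N$ of Proposition~\ref{ppn:gradients-bounded} both $\|\nabla^k \tH(\bm(a,b))\|_{\op,N}$ and their $(a,b)$-derivatives are bounded by deterministic constants uniformly on $R$. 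Integrating along the straight segment from $(a_*,b_*)$ to $(a,b)$ then yields $|g_{a,b}-g_{a_*,b_*}| \le C(|q(a-a_*)| + |b-b_*|)$ uniformly on $R$, on an event of probability at least $1-e^{-cN}$. The main obstacle is handling the spherical-harmonic projection carefully enough to obtain a tractable tensor expansion whose $(a,b)$-derivatives are controlled term-by-term on $K_N$; in practice it should suffice to Taylor-expand to a fixed finite order in $\sqrt{1-r(a,b)}\,\bU\tilde{\bsig}$ (using that the mixture $\xi$ has summable coefficients) and absorb the tail into the same bounds.
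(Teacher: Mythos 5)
Your identification of $g_{a,b}$ as the degree-zero chaos component of the conditioned band Hamiltonian, your observation that the $\by^\perp$-field only feeds into the degree-one part, and your computation of the increment variance by reading off the constant coefficient of the conditional covariance \eqref{eq:cov-cond} are all correct, and this is exactly the "standard calculation" the paper invokes for the first claim. (Indeed, up to the deterministic conditional-mean shift, $\sqrt{N}\,g_{a,b}$ can be realized simply as the conditional fluctuation of $\tH$ at the band center $\bm(a,b)$, since $\Cov(\tH(\bm(a,b)),\tH(\bsig))=N\xi(r(a,b))$ is constant over $D_N(a,b)$; no higher trace contractions are needed.)

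The uniform bound is where your argument has a genuine gap. On $K_N$ you only control $\|\nabla\tH(\bm)\|_{\op,N}\le C$, i.e.\ $|\la \nabla\tH(\bm(a,b)),\bv\ra|\le CN\|\bv\|_N$ for \emph{every} direction $\bv$. Your segment integration therefore yields
\[
|g_{a,b}-g_{a_*,b_*}|\;\le\; N^{-1/2}\sup\|\nabla\tH\|_2\cdot\|\bm(a,b)-\bm(a_*,b_*)\|_2\;\le\; C\sqrt{N}\,\big(|q(a-a_*)|+|b-b_*|\big),
\]
which is off by a factor of $\sqrt{N}$, and that loss is fatal for the application (one needs $e^{\sqrt{N}(g_{a,b}-g_{a',b'})}=O(1)$ on $R_+$). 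The point you are missing is that the directional derivative $\la\nabla\tH(\bm(a,b)),\partial_a\bm\ra$ along the two \emph{fixed} directions $\partial_a\bm,\partial_b\bm$ has typical size only $O(\sqrt{N})$ — a factor $\sqrt{N}$ below the worst case over all directions that $K_N$ certifies — and the deterministic event $K_N$ cannot detect this. To capture it one must treat $\{g_{a,b}-g_{a_*,b_*}\}_{(a,b)\in R}$ as a Gaussian process over the two-dimensional index set and use a comparison or chaining argument; this is exactly what the paper does, bounding the supremum via Sudakov--Fernique against the linear process $\la\bg,\bm(a,b)-\bm(a_*,b_*)\ra/\sqrt{N}$, whose supremum lives at the scale of the increment standard deviation because the increments range over a two-dimensional subspace. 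Your derivative/Taylor-expansion machinery cannot recover this missing $\sqrt{N}$ no matter how carefully the spherical-harmonic projection is handled, because every term is being bounded through operator norms of $\nabla^k\tH$ rather than through the fluctuations of the specific low-dimensional family of linear functionals involved.
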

\begin{proof}
    The first claim follows from a standard calculation, and the second claim 
    follows Sudakov-Fernique inequality, comparing with the linear process
    $\<\bg,\bm(a,b)-\bm(a_*,b_*)\>/\sqrt{N}$  for $\bg$ a standard normal vector.
\end{proof}

\begin{lem}\label{lem:lip-quad}
    The scaled GOE matrices $\bA^{(2)}(a,b)$ for $(a,b)\in R$ form a Gaussian process with metric 
    \begin{align*}
    \bbE \Big\{\big\|\bA^{(2)}(a,b)-\bA^{(2)}(a',b')\big\|^2_{\mathrm{F}} \Big\} \le CN(|qa-qa'|^2+|b-b'|^2)\, .
    \end{align*}

    Furthermore, for any $\eta>0$ there exist constants $c,C>0$ such that,
    with probability at least $1-e^{-cN}$, 
    \begin{align}
    & \|\bA^{(2)}(a,b)-\bA^{(2)}(a',b')\|^2_{\mathrm{F}} \le CN(|qa-qa'|^2+|b-b'|^2)^{1+\eta}\, ,\;\;\;\;
    \forall (a,b), (a',b')\in R\, ,
    \label{eq:BoundSupA2_Frobenius}\\
    &\|\bA^{(2)}(a,b)-\bA^{(2)}(a',b')\|^2_{\op} \le C(|qa-qa_*|^2+|b-b_*|^2)^{1+\eta}\, ,
    \;\;\;\;
    \forall (a,b), (a',b')\in R\, .
    \label{eq:BoundSupA2_Operator}
    \end{align}
      and
    \begin{align}
    & \sup_{(a,b)\in R}\|\bA^{(2)}(a,b)-\bA^{(2)}(a_*,b_*)\|^2_{\mathrm{F}} \le CN^{2c}\, ,
     \label{eq:BoundSupA2_Frobenius_R}\\
    &\sup_{(a,b)\in R} \|\bA^{(2)}(a,b)-\bA^{(2)}(a_*,b_*)\|^2_{\op} \le CN^{-1+2c}\, .
    \label{eq:BoundSupA2_Operator_R}
    \end{align}
\end{lem}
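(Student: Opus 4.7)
The strategy is to use the explicit representation
\[
  \bA^{(2)}(a,b) = (1-r(a,b))\,\bU^\top \nabla^2 H(\bm(a,b))\, \bU,
\]
where $\bU \in \bbR^{N\times (N-2)}$ is an orthonormal basis of $\spn(\bm,\bx)^\perp$ and $\bm(a,b) \in \spn(\bm,\bx)$ is the point with $\<\bm(a,b),\bm\>_N = aq$, $\<\bm(a,b),\bx\>_N = b$. This realizes $(a,b)\mapsto \bA^{(2)}(a,b)$ as a smooth matrix-valued Gaussian process. First I would compute, from the covariance $\bbE H(\bsig^1)H(\bsig^2) = N\xi(\<\bsig^1,\bsig^2\>_N)$, the entry-wise covariance of $\bA^{(2)}(a,b) - \bA^{(2)}(a',b')$ via Taylor expansion in $\bm(a,b)-\bm(a',b')$ and $r(a,b)-r(a',b')$, both of which are $O(|qa-qa'|+|b-b'|)$ on $R$. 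This yields that each entry is centered Gaussian with variance $O(N^{-1}(|qa-qa'|^2+|b-b'|^2))$, and summing over the $N^2$ entries gives the first claim.

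For the pointwise high-probability Frobenius bound, I would apply Hanson--Wright to $\|\bA^{(2)}(a,b)-\bA^{(2)}(a',b')\|_{\mathrm{F}}^2$: the underlying covariance has trace of order $N(|qa-qa'|^2+|b-b'|^2)$ and operator norm of order $(|qa-qa'|^2+|b-b'|^2)$, so the quadratic form concentrates within a $N^\eta$ factor with probability $1-e^{-cN^{2\eta}}$. The pointwise operator bound is the standard GOE estimate: $\bA^{(2)}(a,b)-\bA^{(2)}(a',b')$ is a symmetric Gaussian matrix with entry-variance $O(N^{-1}(|qa-qa'|^2+|b-b'|^2))$, so $\|\cdot\|_{\op} = O((|qa-qa'|^2+|b-b'|^2)^{1/2})$ with probability $1-e^{-cN}$ by a Bai--Yin type bound combined with Borell--TIS.

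To pass from pointwise to uniform bounds over $R\times R$, I would take an $N^{-K}$-grid (for $K$ large) of $R\times R$, apply the union bound over the polynomially many pairs, and extend to all pairs by Lipschitz continuity. Lipschitz continuity of $\bA^{(2)}(a,b)$ in $(a,b)$ holds on the event $K_N$ of Proposition \ref{ppn:gradients-bounded}, which bounds $\|\nabla^3 H\|_{\op,N}$ and hence controls the variation of $\bU^\top \nabla^2 H(\bm(a,b))\bU$ with $\bm(a,b)$. The extra $(\cdot)^\eta$ factor absorbs both the $\sqrt{\log N}$ from the union bound and the discretization error from the grid. The uniform bounds \eqref{eq:BoundSupA2_Frobenius_R} and \eqref{eq:BoundSupA2_Operator_R} then follow immediately from \eqref{eq:BoundSupA2_Frobenius} and \eqref{eq:BoundSupA2_Operator} by specializing to $(a',b') = (a_*,b_*)$ and using $|qa-qa_*|^2+|b-b_*|^2 \le 2N^{-1+2c}$ throughout $R$.

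The main obstacle will be the Taylor-expansion argument in the first step: $\bm(a,b)$ lies in the random subspace $\spn(\bm,\bx)$ and the computations sit under the conditioning $\nabla \cF_\sTAP(\bm) = \bzero$ used throughout Section \ref{sec:LocalComputation}, so the covariance of $\nabla^2 H(\bm(a,b))$ must be computed modulo this conditioning (using Lemma \ref{lem:nabla2-H-conditional-law} and the explicit form of $\bU$). Once this bookkeeping is done, the Gaussian process structure of $\bA^{(2)}(a,b)$ in the remaining randomness is the usual one, and the rest of the argument reduces to standard concentration and chaining for Gaussian matrix processes.
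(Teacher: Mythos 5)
Your proposal follows essentially the same route as the paper's (very terse) proof: compute the canonical metric by a direct covariance calculation, get pointwise concentration of the Frobenius norm from a chi-squared/Hanson--Wright bound and of the operator norm from standard Gaussian-matrix norm estimates, and upgrade to uniformity over $R\times R$ by a net/chaining argument with Lipschitz interpolation on the event $K_N$. The paper does exactly this, citing the chi-squared tail $\bbP(\|\cdot\|_{\mathrm{F}}^2>\kappa\,\bbE\|\cdot\|_{\mathrm{F}}^2)\le 2e^{-cN^2[(\kappa-1)\wedge(\kappa-1)^2]}$ and the analogous operator-norm tail, followed by chaining on $R$.

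One caveat about your explanation of the exponent $1+\eta$. On $R$ the base $|qa-qa'|^2+|b-b'|^2$ is at most $O(N^{-1+2c})\ll 1$, so raising it to the power $1+\eta$ makes the right-hand sides of \eqref{eq:BoundSupA2_Frobenius} and \eqref{eq:BoundSupA2_Operator} \emph{smaller}, not larger; the factor $(\cdot)^\eta$ therefore cannot ``absorb the $\sqrt{\log N}$ from the union bound and the discretization error'' as you assert --- slack of that kind would have to enter as a multiplicative constant or an $N^{o(1)}$ prefactor, which is in fact all that your pointwise estimates (and the paper's) produce: the squared Frobenius norm concentrates \emph{at} $\Theta(N(|qa-qa'|^2+|b-b'|^2))$, i.e.\ exponent $1$. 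The paper's one-line proof does not justify the $1+\eta$ exponent either, and the downstream application in Lemma~\ref{lem:lip-quad-G} only uses exponent $1$, so this does not affect the rest of the argument; but your stated mechanism for producing the extra $\eta$ is not the right one. A second, fixable, parameter issue: allowing a deviation of size $N^\eta$ in Hanson--Wright yields failure probability $e^{-cN^{2\eta}}$, which is weaker than the required $e^{-cN}$ when $\eta<1/2$; taking a constant multiplicative deviation $\kappa>1$ gives $e^{-cN^2}$ for the Frobenius norm and $e^{-cN}$ for the operator norm, which is what the union bound over the polynomial-size net needs.
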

\begin{proof}
The bound on the canonical distance of $\bA^{(2)}$ follows from a straightforward calculation.
 
 The bounds \eqref{eq:BoundSupA2_Frobenius} and \eqref{eq:BoundSupA2_Frobenius_R}
 follow from chaining on $R$, together with the standard 
    bound on chi-squared random variables
    \begin{align*}
        \bbP \lt( \|\bA^{(2)}(a,b) - \bA^{(2)}(a',b')\|^2_{\mathrm{F}} > \kappa \, \bbE \|\bA^{(2)}(a,b) - \bA^{(2)}(a',b')\|^2_{\mathrm{F}}\rt) \le 
        2e^{-cN^2[(\kappa-1)\wedge (\kappa-1)^2]},
    \end{align*}
    for $\kappa > 1$. 
    
    The bound \eqref{eq:BoundSupA2_Operator} and  \eqref{eq:BoundSupA2_Operator_R} follow from a similar chaining argument.
    Indeed, $\bA^{(2)}(a,b) - \bA^{(2)}(a',b')$ is a matrix with independent entries with variance 
    bounded by $C(|qa-qa_*|^2+|b-b_*|^2)/N$, whence by standard estimates on the norm of 
    Gaussian random matrices, the following holds for all $\kappa>\kappa_0$
    \begin{align*}
        \bbP \lt( \|\bA^{(2)}(a,b) - \bA^{(2)}(a',b')\|^2_{\op} > \kappa \, N^{-1}\bbE \|\bA^{(2)}(a,b) - \bA^{(2)}(a',b')\|^2_{\mathrm{F}}\rt) \le 
        2e^{-cN\kappa}.
    \end{align*}
\end{proof}

Recall $G(\gamma; \bA, \bu)$ in (\ref{eq:GDef}) and $G_{a,b}(\gamma)$ in (\ref{eq:Gab}). 
The next lemma gives control over $G$ and $G_{a,b}$ for $(a,b)\in R$. 
\begin{lem}\label{lem:lip-quad-G}
    Given a compact interval $I \subseteq [M,\infty)$,
    $M:=\eps + \bbE \lambda_{\max} (\bA^{(2)}(a_*,b_*))$, the following holds with probability at least $1-e^{-N^\delta}$ for appropriate $C_0,c,\delta>0$ depending on $\eps>0$:
    \begin{enumerate}
        \item We have 
        \begin{align}\label{eq:G-fluc}
            &\sup_{\gamma\in I, (a,b)\in R} \lt|N(G(\gamma; \bA^{(2)}(a,b), \bu(a,b))-G_{a,b}(\gamma)) - N(G(\gamma; \bA^{(2)}(a_*,b_*), \bu(a_*,b_*))-G_{a_*,b_*}(\gamma))\rt| \nonumber\\
            &= O(N^{-1/2+c}). 
        \end{align}
        \item We have 
        \begin{align}\label{eq:G-sym}
            &\sup_{\substack{(a,b),(a',b')\in R: (a,b)+(a',b')=2(a_*,b_*)\\ \gamma \in I}} \lt|NG(\gamma; \bA^{(2)}(a,b), \bu(a,b)) - NG(\gamma; \bA^{(2)}(a',b'), \bu(a',b'))\rt| \nonumber \\
            &= O(N^{-1/2+c}). 
        \end{align}
        \item The event in Lemma \ref{lem:laplace} holds uniformly in $(a,b)\in R$.
        Namely, 
        \begin{align}\label{eq:G-laplace-unif}
            &Z_{\le 2}(a,b) = \int e^{\<\bsig,\bA^{(2)}(a,b)\bsig\>+\<\bu(a,b),\bsig\>} \mu_0^{a,b}(\de\bsig) \nonumber\\
            &= (1+\Err_{a,b}(N)) (2e)^{-(N-2)/2} \sqrt{\frac{2}{G''(\gamma_{a,b}; \bA^{(2)}(a,b), \bu(a,b))}} e^{NG(\gamma_{a,b}; \bA^{(2)}(a,b), \bu(a,b))}\, ,
        \end{align}
        where $\sup_{(a,b)\in R} |\Err_{a,b}(N)|\le C_0,N^{-c}$.
    \end{enumerate}
\end{lem}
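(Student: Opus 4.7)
All three claims are uniform-in-$(a,b,\gamma) \in R \times I$ upgrades of the pointwise Laplace asymptotics in Lemma~\ref{lem:laplace}, obtained via chaining supported by the Gaussian-process regularity of $(a,b) \mapsto (\bA^{(2)}(a,b), \bu(a,b))$ from Lemmas~\ref{lem:lip-quad} and \ref{lem:lip-field}. I first treat part 3, then parts 1 and 2 together.

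For part 3, \eqref{eq:G-laplace-unif}: Lemma~\ref{lem:laplace} at a fixed $(a,b)$ requires $\tilde\gamma_2(a,b)/\sqrt 2 < 1/2$, which holds uniformly on $R$ by continuity together with \eqref{eq:amp-works}, and $\|\bu(a,b)\|^2 \le \varepsilon N$. Direct computation gives $\tilde\gamma_1(a_*,b_*) = 0$ (since $a_* q = b_* = q$ and $\xi'(q) + t = q/(1-q)$), hence $\bbE \|\bu(a,b)\|^2 = (N-2) \tilde\gamma_1(a,b)^2 = O(N^{2c})$ throughout $R$ by smoothness of $\tilde\gamma_1^2$. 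Taking an $N^{-10}$-net $\cN \subseteq R$ of cardinality $O(N^{20})$, applying Lemma~\ref{lem:laplace} at each net point, and union bounding yields \eqref{eq:G-laplace-unif} on $\cN$ with probability $1 - e^{-cN/2}$. To interpolate, both $\log Z_{\le 2}(a,b)$ and $NG(\gamma_{a,b}; \bA^{(2)}(a,b), \bu(a,b))$ are polynomial-in-$N$-Lipschitz in $(a,b) \in R$, by Gaussian concentration of the log-partition function and by explicit differentiation of $G$ at its saddle, the latter using \eqref{eq:BoundSupA2_Operator_R} and Lemma~\ref{lem:lip-field}.

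For parts 1 and 2, \eqref{eq:G-fluc} and \eqref{eq:G-sym}: set $F_\gamma(a,b) := N\bigl(G(\gamma; \bA^{(2)}(a,b), \bu(a,b)) - G_{a,b}(\gamma)\bigr)$. Pointwise $F_\gamma = O(1)$ by the linear-spectral-statistic concentration of \cite{guionnet2000concentration}; we must show $F_\gamma(a,b) - F_\gamma(a_*,b_*) = O(N^{-1/2+c})$ uniformly on $R$. With $\bM := \bA^{(2)}(a,b) - \bA^{(2)}(a_*,b_*)$ and $\bR(\gamma) := (\gamma - \bA^{(2)}(a_*,b_*))^{-1}$, the resolvent expansion gives
\begin{align*}
-\tfrac12 \log \det(\gamma - \bA^{(2)}(a,b)) + \tfrac12 \log \det(\gamma - \bA^{(2)}(a_*,b_*)) = \tfrac12 \Tr[\bR(\gamma) \bM] + \tfrac14 \Tr[(\bR(\gamma) \bM)^2] + O(\|\bM\|_F^3),
\end{align*}
with an analogous expansion for $\tfrac14\langle \bu,(\gamma-\bA^{(2)})^{-1}\bu\rangle$ around the linear decomposition of $\bu(a,b)$ from Lemma~\ref{lem:lip-field}. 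By \eqref{eq:BoundSupA2_Frobenius_R} and \eqref{eq:BoundSupA2_Operator_R}, the second- and third-order remainders contribute $O(N^{-1+2c})$ to $F_\gamma(a,b) - F_\gamma(a_*,b_*)$ and are exactly absorbed into the corresponding Taylor expansion of $NG_{a,b}(\gamma)$. The centered first-order term $\tfrac12(\Tr[\bR \bM] - \bbE \Tr[\bR \bM])$ is a Gaussian functional of the fresh randomness in $\bM$ with variance $O(\bbE\|\bM\|_F^2 / N) = O(N^{-1+2c})$, so it is $O(N^{-1/2+c})$ with probability $1-e^{-N^c}$. Chaining on an $N^{-10}$-net of $R$ together with Borel--TIS upgrades this to a uniform bound, proving part 1. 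Part 2 then follows because, for symmetric pairs $(a,b) + (a',b') = 2(a_*, b_*)$, the linear-in-perturbation contributions to $NG(\gamma; \bA^{(2)}, \bu)$ are odd under reflection through $(a_*, b_*)$ and cancel, leaving only the second-order residual $O(N^{-1+2c})$.

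\textbf{Main obstacle.} The delicate step is controlling $F_\gamma(a,b) - F_\gamma(a_*,b_*)$ at scale $N^{-1/2+c}$ rather than the naive pointwise $O(1)$. This relies on the fact that $\bA^{(2)}(a,b)$ and $\bA^{(2)}(a_*,b_*)$ are correlated Gaussians with $\bbE\|\bM\|_F^2 = O(N^{2c})$ (much smaller than the naive $O(N)$), so that the first-order linear trace $\Tr[\bR \bM]$ has small variance; carefully matching its deterministic part to the Taylor expansion of $G_{a,b}$, and tracking the cancellation through the parametrization, is the crux.
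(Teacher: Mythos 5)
Your proposal is correct and follows essentially the same route as the paper: decompose $\bA^{(2)}(a,b)=\bA^{(2)}(a_*,b_*)+\bM$ with $\bbE\|\bM\|_{\mathrm{F}}^2=O(N^{2c})$ from Lemma~\ref{lem:lip-quad}, expand the log-determinant and the field quadratic form in the resolvent of $\bA^{(2)}(a_*,b_*)$, control the centered first-order trace by its $O(N^{-1+2c})$ variance, let the second-order expectation be absorbed by $G_{a,b}$, and upgrade to uniformity by chaining over a polynomial net; part 3 is likewise a net-plus-continuity upgrade of Lemma~\ref{lem:laplace}. One small imprecision in part 2: the random linear terms $\Tr[\bR\,\bM(a,b)]$ and $\Tr[\bR\,\bM(a',b')]$ do not cancel exactly under reflection (the increments of the Gaussian process are not exact negatives of each other), so you cannot claim an $O(N^{-1+2c})$ residual; the paper instead uses that each linear term is individually $O(N^{-1/2+c})$ by the part-1 analysis while the second-order expectations and the field quadratic form are exactly even in the displacement and hence agree for the symmetric pair, which still yields the claimed $O(N^{-1/2+c})$ bound.
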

\begin{proof}
    We can represent $\bA^{(2)}(a,b) = \bA^{(2)}(a_*,b_*) + \bDel(a,b)$ where each entry of $\bDel(a,b)$ forms an independent Gaussian process with metric $\bbE[(\Delta_{i,j}(a,b) - \Delta_{i,j}(a',b'))^2]^{1/2} \le CN^{-1/2}(q|a-a'|+|b-b'|)$, and $\bDel(a_*,b_*)=0$. 

    Letting $\bQ_*(\gamma) = \gamma \bI - \bA^{(2)}(a_*,b_*)$, we can expand 
    \begin{align}
        &G(\gamma; \bA^{(2)}(a,b), \bu(a,b)) =\nonumber \\
        &= \gamma - \frac{1}{2N} \log \det (\gamma \bI-\bA^{(2)}(a,b)) + \frac{1}{4N} \<\bu(a,b), (\gamma \bI-\bA^{(2)}(a,b))^{-1}\bu(a,b)\>\nonumber\\
        &= G(\gamma; \bA^{(2)}(a_*,b_*), \bu(a_*,b_*)) - \frac{1}{2N} \log \det \lt(\bI - \bQ_*(\gamma)^{-1/2}\bDel(a,b)\bQ_*(\gamma)^{-1/2}\rt)\label{eq:Gvariation}\\
        &\qquad \qquad + \frac{1}{4N}\<\bu(a,b), (\bQ_*(\gamma) - \bDel(a,b))^{-1} \bu(a,b)\> - \frac{1}{4N}\<\bu(a,b), \bQ_*(\gamma)^{-1} \bu(a,b)\>. \nonumber
    \end{align}
    
    Next, for $k\ge 2$, let 
    \begin{align}
        X_k(a,b) = \Tr\lt(\lt(\bQ_*(\gamma)^{-1/2}\bDel(a,b)\bQ_*(\gamma)^{-1/2}\rt)^k\rt).
    \end{align}
    We have 
    \begin{align*}
        |X_k(a,b) - X_k(a',b')| \le C_k N^{-(k-1)(1/2 -c)+1/2} \|\bA^{(2)}(a,b)-\bA^{(2)}(a',b')\|_{\mathrm{F}},
    \end{align*}
    under the event in Lemma \ref{lem:lip-quad}.
    Recall that this also guarantees
      \begin{align}
         \sup_{(a,b)\in R}\Big\{\|\bDel(a,b)
        \|_{\op}\vee
       \|\bQ_*(\gamma)^{-1/2}\bDel(a,b)\bQ_*(\gamma)^{-1/2}
        \|_{\op}\Big\}\le C N^{-1/2+c}\, ,\label{eq:DeltaNormBound}
    \end{align}
  Hence, under the event in Lemma \ref{lem:lip-quad}, we have $|X_k(a,b) - X_k(a',b')| \le N^{-(k-1)(1/2-c)-L+1}$ whenever $q|a-a'|+|b-b'| < N^{-L}$.

    Let $\bbP_\bDel$ and $\bbE_\bDel$ denote probability and expectation with respect to $\bDel(a,b)$ only, i.e. conditional on $\bA^{(2)}(a_*,b_*)$.
    Also let
    \[
        \bM(a,b) = \bQ_*(\gamma)^{-1/2}\bDel(a,b)\bQ_*(\gamma)^{-1/2}
    \]
    be the matrix appearing in $X_k(a,b)$.
    On the $\bA^{(2)}(a_*,b_*)$-measurable, probability $1-e^{-cN}$ event that $\bQ_*(\gamma)^{-1/2}$ is bounded in operator norm, $\bM(a,b)$ is (conditional on $\bA^{(2)}(a_*,b_*)$, in a suitable basis) a random matrix with independent centered gaussian entries, with variances not equal but bounded uniformly by $N^{-2+2c}$.
    It is well known, cf. \cite[Chapter 2]{anderson2010introduction} that tracial moments of $\bM(a,b)$ amount to certain (weighted) cycle counts, and from this a routine calculation gives
    \[
        \bbE_\bDel X_k(a,b) = \begin{cases}
            0 & \text{$k$ odd}, \\
            O_k(N^{1-k(1/2-c)}) & \text{$k$ even},
        \end{cases}
        \qquad 
        {\Var}_\bDel [X_k(a,b)] = O_k(N^{-k(1-2c)}).
    \]
    (The last estimate amounts to computing cycle counts for $\EE[ \Tr(\bM(a,b)^k)^2 ]$ and $\EE [\Tr(\bM(a,b)^k)]^2$, cf. \cite[Proof of Lemma 2.1.7]{anderson2010introduction}.)
    For any fixed $(a,b) \in R$, Gaussian hypercontractivity gives
    \[
        \bbP_\bDel \lt\{|X_k(a,b) - \bbE_\bDel X_k(a,b)|\ge t\sqrt{{\Var}_\bDel(X_k(a,b))} \rt\} \le \exp(-t^{c_k}),
    \]
    because $X_k(a,b)$ is a degree $k$-polynomial in the entries of $\bM(a,b)$.
    By a union bound over a $N^{-L}$-net of $R$ (of size $N^{2L}$), with probability $1-e^{-N^{c_k\delta}}$ over $\bDel(a,b)$ the following holds.
    
    For $k$ even, uniformly in $(a,b)\in R$
    \[
         \Tr\lt(\lt(\bQ_*(\gamma)^{-1/2}\bDel(a,b)\bQ_*(\gamma)^{-1/2}\rt)^k\rt) - \bbE\Tr\lt(\lt(\bQ_*(\gamma)^{-1/2}\bDel(a,b)\bQ_*(\gamma)^{-1/2}\rt)^k\rt)
         = O_k (N^{\delta-k(1/2-c)}),
    \]
    and for $k$ odd, 
    \[
        \sup_{(a,b)\in R} \Tr\lt(\lt(\bQ_*(\gamma)^{-1/2}\bDel(a,b)\bQ_*(\gamma)^{-1/2}\rt)^k\rt) = O_k (N^{\delta-k(1/2-c)}).
    \] 
  
    Recall that $|\log(1-x)+x+x^2/2|\le |x|^3$ for all $|x|\le 1/4$.
    Therefore, uniformly in $(a,b)\in R$,
    for all $\delta$, $c$ small enough
    \begin{align}\label{eq:G-logdetest}
        &\log \det \lt(\bI - \bQ_*(\gamma)^{-1/2}\bDel(a,b)\bQ_*(\gamma)^{-1/2}\rt)\nonumber \\
        &= -\sum_{k=1}^{2}\frac{1}{k}\Tr\lt(\lt(\bQ_*(\gamma)^{-1/2}\bDel(a,b)\bQ_*(\gamma)^{-1/2}\rt)^k\rt) + O(N^{-1+c})\nonumber \\
        &= -\bbE \Tr\lt(\lt(\bQ_*(\gamma)^{-1/2}\bDel(a,b)\bQ_*(\gamma)^{-1/2}\rt)^2\rt) + O(N^{-1/2+c+\delta}). 
    \end{align}

    We next turn to the term $\<\bu(a,b), (\bQ_*(\gamma)-\bDel(a,b))^{-1}\bu(a,b)\>$ in Eq.~\eqref{eq:Gvariation}. From Lemma \ref{lem:lip-field}, there are $\bu_1,\bu_2$ with $\|\bu_1\|,\|\bu_2\| = O(N^{1/2})$ such that 
    letting $\bu_0(a,b):=q(a-a_*) \bu_1 + (b-b_*)\bu_2$, we have
    $\|\bu(a,b) - \bu_0(a,b)\|\le CN^{-1/2+2c}$ 
    and $\|\bu_0(a,b)\|\le CN^{c}$   for any $(a,b)\in R$
    Therefore, 
    with probability $1-e^{-N^\delta}$,     
    \begin{align*}
        &\<\bu_0(a,b), (\bQ_*(\gamma)-\bDel(a,b))^{-1}\bu_0(a,b)\> \\
        &=
          \<\bu_0(a,b), \bQ_*(\gamma)^{-1}\bu_0(a,b)\> + \<\bu_0(a,b), \bQ_*(\gamma)^{-1}\bDel(a,b)\bQ_*(\gamma)^{-1}\bu_0(a,b)\> + O(N^{-1+3c})\\
        &= \<\bu_0(a,b), \bQ_*(\gamma)^{-1}\bu_0(a,b)\> + O(N^{-1+3c+\delta} ). 
    \end{align*}
    where the first estimate follows from Eq.~\eqref{eq:DeltaNormBound}, and the second from independence of $\bDel(a,b)$ and $\bu_1,\bu_2$, together with the fact that 
    the entries of $\bDel(a,b)$ have variance bounded by $N^{-2+2c}$.
    Therefore, we obtain that, with probability $1-e^{-N^\delta}$, 
    \begin{align}\label{eq:G-fieldest}
        &\<\bu(a,b), (\bQ_*(\gamma)-\bDel(a,b))^{-1}\bu(a,b)\> \nonumber\\
        &= \<q(a-a_*) \bu_1 + (b-b_*)\bu_2, \bQ_*(\gamma)^{-1}(q(a-a_*) \bu_1 + (b-b_*)\bu_2)\> + O(N^{-1+4c+\delta}). 
    \end{align}
    By similarly taking a union bound over a net of $R$ of radius $N^{-L}$ and using the continuity in Lemma \ref{lem:lip-quad}, we can guarantee Eq.~(\ref{eq:G-fieldest}) uniformly in $(a,b)\in R$. 

    Combining the last conclusion in Lemma \ref{lem:lip-field}, Eqs.~(\ref{eq:G-logdetest}) and (\ref{eq:G-fieldest}) and a union bound over $\gamma$, over any compact interval of $\gamma$, upon changing $\delta$, with probability at least $1-e^{-N^{\delta}}$, 
    that 
    \begin{align*}
    &\sup_{\gamma, (a,b)\in R} \lt|N(G(\gamma; \bA^{(2)}(a,b), \bu(a,b)) - G_{a,b}(\gamma) - G(\gamma; \bA^{(2)}(a_*,b_*),\bu(a_*,b_*)) + G_{a_*,b_*}(\gamma))\rt| \\
    &\le O(N^{-1/2+c}). 
    \end{align*} 
    Thus, we have, with probability at least $1-e^{-N^{\delta}}$, uniformly in $(a,b)\in R$ and $\gamma$, that Eq.~(\ref{eq:G-fluc}) holds. 

    Given $(a,b),(a',b')\in R$ such that $(a,b)+(a',b')=2(a_*,b_*)$, we have that 
    \begin{align*}
        \bbE \Tr\lt(\lt(\bQ_*(\gamma)^{-1/2}\bDel(a,b)\bQ_*(\gamma)^{-1/2}\rt)^2\rt) = \bbE \Tr\lt(\lt(\bQ_*(\gamma)^{-1/2}\bDel(a',b')\bQ_*(\gamma)^{-1/2}\rt)^2\rt).
    \end{align*}
    Combining with Eqs.~(\ref{eq:G-logdetest}) and (\ref{eq:G-fieldest}), we then obtain Eq.~(\ref{eq:G-sym}).

    Finally, we recall that for each $(a,b)\in R$, the event in Lemma \ref{lem:laplace} holds with probability at least $1-e^{-cN}$. On the other hand, for appropriate $C>C'>1$, using Lemma \ref{lem:lip-field} and Lemma \ref{lem:lip-quad}, with probability $1-e^{-cN}$, uniformly over $(a,b),(a',b')\in R$ with $|q(a-a')|+|b-b'|\le N^{-C}$, we have $Z_{\le 2}(a,b) = (1+O(N^{-C'}))Z_{\le 2}(a',b')$. Similar continuity estimates hold for the right hand side of Eq.~(\ref{eq:G-laplace-unif}). Taking a net of radius $N^{-C}$ of $R$ and apply the union bound, we obtain Eq.~(\ref{eq:G-laplace-unif}) uniformly in $(a,b)\in R$. 
\end{proof}

\begin{proof}[Proof of Proposition \ref{ppn:local-barycenter}]
Consider appropriate constants $c>c'>0$. 
Define
\begin{align}
D_N(\eps) &: =\Big\{\bsig \in S_N:\; N^{-1/2+c} \le
|\<\bsig,\bm\>_N-a_*q|+|\<\bsig,\bx\>_N-b_*| \le \eps
\Big\}\, ,\\
\hat{D}_N(\eps) &: =\Big\{(a,b) \in \bbR^2:\; N^{-1/2+c} \le
|aq-a_*q|+|b-b_*| \le \eps
\Big\}\, ,
\end{align}
Using Markov's Inequality and that the annealing upper bound $\widehat{E}(a,b)$ is strongly concave for $(a,b)\in \hat{D}_N(\eps)$
(see Lemma \ref{lem:anneal-cal} and Corollary \ref{cor:NegDef}),
we obtain that with probability $1-\exp(-N^{c'})$, for all sufficiently small $\eps>0$, there is $\eta>0$ such that
\begin{align}
\int_{D_N(\eps)} e^{H(\bsig)} \mu_0(\de\bsig) &
= \int_{\hat{D}_N(\eps)}\left\{\int e^{H(\bsig)} \mu^{a,b}_0(\de\bsig)
\right\}\, \de a\de b \nonumber\\
&\le e^{-\eta (|qa-qa_*|^2+|b-b_*|^2) + N\widehat{E}(a_*,b_*)+\sqrt{N}g_{a,b}}\nonumber\\
&=e^{-\eta (|qa-qa_*|^2+|b-b_*|^2)+NE(a_*,b_*)+\sqrt{N} g_{ab}}.\label{eq:AnnealedBoundFinal}
\end{align}

Denote by $\<\, \cdot \, \>_{a,b}$ the average with respect to the Gibbs measure restricted to band $D_N(a,b)$, namely with respect to
$\mu^{a,b}(\de\bsig)\propto \exp\{NH(\bsig)\}\, \mu^{a,b}_0(\de\bsig)$. Note that $\bu(a_*,b_*)=0$, and for $|qa-qa_*|+|b-b_*|\le N^{-1/2+c}$, we have $\|\bu(a,b)\| = O(N^{c'})$.

Recall $\gamma_{a,b} = \arg \min_{z>\lambda_{\max}(\bA^{(2)}(a,b))} G(z; \bA^{(2)}(a,b), \bu(a,b))$. Let
\begin{align*}
    \corr(a,b) &:= \frac{1}{2}(\gamma_{a,b} - \bA^{(2)}(a,b))^{-1} \bu(a,b) \\
    &\qquad \qquad \qquad + \frac{1}{2}(\gamma_{a,b} - \bA^{(2)}(a,b))^{-1} \langle \bA^{(3)}(a,b), (\gamma_{a,b} - \bA^{(2)}(a,b))^{-1}\rangle.
\end{align*}
We have $\corr(a_*,b_*) = \corr(\bm) + O(N^{-c})$, where $\corr(\bm)$ is defined as per Eq.~(\ref{eq:def-corr}).

Let
\begin{align*}
    Z(a,b) := \int e^{H(\bsig)}\mu_0^{a,b}(\bsig),
\end{align*}
and \begin{align*}
    Z := \int_{\Band_*(2\iota)}
        \exp(H_{N,t}(\bsig)) ~\mu_0(\de \bsig).
\end{align*}
Recall the definitions
\begin{align*}
    R &= \{(a,b)\in \bbR^2: |qa-qa_*|+|b-b_*|\le N^{-1/2+c}\}\, , \\
    R_+& = \{(a,b)\in \bbR^2: |qa-qa_*|+|b-b_*|\le N^{-1/2}\}\, .
\end{align*}
Using Eq.~\eqref{eq:AnnealedBoundFinal}, we have that, with probability
at least $1-\exp(-N^{c'})$, 
\begin{align*}
    \tbm_{2\iota}(\bm) &= \fr{
        \int_{\Band_*(2\iota)}
        \bsig \exp(H_{N,t}(\bsig)) ~\mu_0(\de \bsig)
    }{
        \int_{\Band_*(2\iota)}
        \exp(H_{N,t}(\bsig)) ~\mu_0(\de \bsig)
    } \\
    &= O(e^{-\eta N^{2c}}) + \int_{R} \frac{Z(a,b)}{Z} \<\bsig\>_{a,b} \de(a,b).
\end{align*}
Let 
\[
    Z_T(a,b) = \int \ind\{\bsig \in T(a,b)\} e^{H(\bsig)} \mu_0^{a,b}(\de \bsig),
    \qquad 
    Z_T = \int_{\Band_*(2\iota)} \ind\{\bsig \in T(a,b)\} e^{H(\bsig)} \mu_0(\de \bsig),
\]
where $T(a,b) \subseteq D_N(a,b)$ is the typical set \eqref{eq:TypicalDef} defined for the effective model on $D_N(a,b)$.
Recall from Lemma~\ref{lem:truncate} that for each $(a,b) \in R$, with probability $1-e^{-cN}$,
\beq
    \label{eq:ZT-event}
    Z_T(a,b) \ge (1-e^{-cN}) Z(a,b),  \qquad 
    \bbE_{\ge 3} Z_T(a,b) \ge (1-e^{-cN}) \bbE_{\ge 3} Z(a,b).
\eeq
By a union bound over a $e^{-cN/10}$-net of $R$ and standard continuity properties of $H$, with probability $1-e^{-cN/2}$ this holds simultaneously for all $(a,b) \in R$.
By integrating, on this event we also have
\begin{equation}\label{eq:compare-Z-ZT}
    Z_T \ge (1-e^{-cN}) Z, \qquad 
    \bbE_{\ge 3} Z_T \ge (1-e^{-cN}) \bbE_{\ge 3} Z.
\end{equation}
Note that, by Eq.~\eqref{eq:ZT_MomentBound} in Lemma 
\ref{lem:high-acc-anneal},
for $k>L\ge 1$, the following holds with probability
at least $1-e^{-cN}$, 
\begin{align}\label{eq:estZ-0}
    & \bbE_{\ge3} \lt[(Z_T(a,b)-\bbE_{\ge3} Z_T(a,b))^{2k}\rt] \le C_LN^{-L} \lt(\bbE_{\ge3}Z_T(a,b)\rt)^{2k},
\end{align}
and therefore
\begin{align}\label{eq:estZ-1}
    & \bbE_{\ge3} \lt[Z_T(a,b)^{2k}\rt] \le (1+C_LN^{-L}) \lt(\bbE_{\ge3}Z_T(a,b)\rt)^{2k}.
\end{align}
Again by standard continuity estimates and the union bound over a net of $R$ of radius $e^{-c'N}$, the above estimates hold uniformly in $(a,b)\in R$ with probability at least $1-e^{-cn/2}$. By Eq.~(\ref{eq:compare-Z-ZT}), the same estimates hold for $Z$ in place of $Z_T$ uniformly in $(a,b)\in R$ with probability at least $1-e^{-c'N}$. 
 
By Eqs.~\eqref{eq:G-fluc}, \eqref{eq:G-laplace-unif} of Lemma \ref{lem:lip-quad-G}, together with Eqs.~\eqref{eq:(a,b)-reduction} and Lemma \ref{lem:lip-shift} (which implies that $e^{\sqrt{N}(g_{a,b} - g_{a',b'})} = O(1)$ for all $(a,b), (a',b') \in R_+$), with probability at least $1-e^{-N^\delta}$, uniformly in $(a,b)\in R_+$, 
\begin{align} \label{eq:estZ-3}
    \bbE_{\ge3}Z(a,b) \ge \Omega\lt(\bbE_{\ge3}Z(a_*,b_*)\rt).
\end{align}
From strict concavity of $E(a,b)$ at $(a_*,b_*)$ (see \eqref{eq:quad-upper-E}), and the simple estimate 
\begin{align}\label{eq:concave-est}
    &\sup_{a,b} \lt( N^{1/2}(|q(a-a_*)|+|b-b_*|) - \eta N (|q(a-a_*)|^2+|b-b_*|^2)\rt ) \nonumber \\
    &= O_\eta(1) - \eta N (|q(a-a_*)|^2+|b-b_*|^2),
\end{align} 
we obtain that, uniformly in $(a,b)\in R$, 
\begin{align} \label{eq:estZ-3'}
    \bbE_{\ge3}Z(a,b) &\le 
    O\lt({\bbE_{\ge3}Z(a_*,b_*)} \cdot e^{- \eta N (|qa-qa_*|^2+|b-b_*|^2)/2}\rt). 
\end{align}

Further, by  Lemma \ref{lem:high-acc-anneal}, we also have
\begin{align} \label{eq:estZ-2}
\PP\Big\{\big| Z_T-\bbE_{\ge3}Z_T \big|>\frac{1}{2}\, \bbE_{\ge3}Z_T\Big\} \le C N^{-L/2}  .
\end{align}
 Since $R_+$ has volume $\Theta(N^{-1})$, on the event in \eqref{eq:estZ-3} we have 
\[
    \bbE_{\ge3} Z 
    \ge \int_{R_+} \bbE_{\ge3} Z(a,b) ~\de(a,b) 
    = \Omega(N^{-1} \bbE_{\ge3} Z(a_*,b_*)).
\]
Furthermore, when \eqref{eq:estZ-3'} holds we have
\[
    \bbE_{\ge3} Z
    = \int_R \bbE_{\ge3} Z(a,b) ~\de(a,b) 
    \le O(N^{-1} \bbE_{\ge3} Z(a_*,b_*))),
\]
where the $N^{-1}$ comes from integrating the exponential in \eqref{eq:estZ-3'}.
Thus, with probability at least $1-e^{-N^\delta}$,
\begin{align}
  \qquad \bbE_{\ge3}Z = \Theta(N^{-1} \bbE_{\ge3}Z(a_*,b_*)).\label{eq:E3Z_EZ}
\end{align}

%
Let $\cE$ denote the event that estimates \eqref{eq:ZT-event}, \eqref{eq:estZ-0}, \eqref{eq:estZ-1}, \eqref{eq:estZ-3}, \eqref{eq:estZ-3'}, \eqref{eq:estZ-2}, \eqref{eq:E3Z_EZ} all hold. 
By the above, we have $\bbP(\cE)\ge 1-CN^{-L/2}$. Further, for $\hat{Z}(a,b)=e^{NE(a,b)}$ and $\hat{Z} = \int_R \hat{Z}(a,b)\de(a,b)$, 
\begin{align}
\int_R \bbE\lt [\bfone_\cE\lt(\frac{Z(a,b)}{Z}\rt)^{2k}\rt] \de(a,b)= O\lt( \int_R
\bbE\lt [\bfone_\cE\lt(\frac{\hat{Z}(a,b)}{\hat{Z}}\rt)^{2k}\rt] \de(a,b) \rt)
\end{align}
Under the event $\cE$, from Eqs.~\eqref{eq:estZ-3}, \eqref{eq:estZ-3'}, we thus obtain 
\begin{align}\label{eq:estZ-4}
    \int_R \bbE\lt [\bfone_{\cE}\lt(\frac{Z(a,b)}{Z}\rt)^{2}\rt]^{1/2} \de(a,b) = O(1),
\end{align}

By Jensen and Cauchy-Schwarz inequality,
\begin{align*}
    & \bbE\lt[\bfone_{\cE}\lt\|\int_{R} \frac{Z(a,b)}{Z} (\<\bsig\>_{a,b} - \bcorr(a,b) - \bm(a,b))\de(a,b)\rt\|^{2+\delta} \rt] \\
    &\le \int_{R}\bbE\lt [ \bfone_{\cE}\frac{Z(a,b)}{Z} \lt\|(\<\bsig\>_{a,b} - \bcorr(a,b) - \bm(a,b))\rt\|^{2+\delta} \rt] \de(a,b)\\
    &\le \int_{R} \bbE\lt [\bfone_{\cE}\lt(\frac{Z(a,b)}{Z}\rt)^2\rt]^{1/2} \bbE\lt[\lt\|(\<\bsig\>_{a,b} - \bcorr(a,b) - \bm(a,b))\rt\|^{2(2+\delta)}\rt]^{1/2} \de(a,b).
\end{align*}
By Lemma \ref{lem:correction-band}, we have
\begin{align}
    \bbE\lt[\lt\|\<\bsig\>_{a,b} - \corr(a,b) - \bm(a,b)\rt\|^{2(2+\delta)}\rt] \le N^{-c}.
\end{align}
Combining with Eq.~(\ref{eq:estZ-4}), we obtain that
\begin{align}
    \bbE\lt[\bfone_{\cE}\lt\|\int_{R} \frac{Z(a,b)}{Z} (\<\bsig\>_{a,b} - \corr(a,b) - \bm(a,b))\de(a,b)\rt\|^{2+\delta} \rt]\le O(N^{-c}).
\end{align}

On the other hand, by Lemma \ref{lem:lip-field}, with probability $1-e^{-cN}$, there are $\bu_1,\bu_2$ with $\|\bu_1\|,\|\bu_2\| = O(N^{1/2})$ such that, for $|qa-qa_*|+|b-b_*|\le N^{-1/2+c}$, 
\begin{align}
    \|\bu(a,b) - (q(a-a_*)\bu_1+(b-b_*)\bu_2)\| = O(N^{-1/2+2c}).  
\end{align}
Using this, letting $\oZ = \bbE_{\ge 3}Z(a_*,b_*)$ and 
defining $\oa:= a-a_*$, $\ob:=b-b_*$, we have
\begin{align*}
    &\int_R \frac{Z(a,b)}{Z} \bu(a,b)\de(a,b)\\
    &= \int_R \frac{Z(a,b) - \oZ}{Z} (q\oa\bu_1+\ob\bu_2)\de(a,b) + \int_R \frac{\oZ}{Z} (q\oa\bu_1+\ob\bu_2)\de(a,b) + O(N^{-1/2+2c})\\
    &= \int_R \frac{Z(a,b) - \oZ}{Z} (q\oa\bu_1+\ob\bu_2)\de(a,b) + O(N^{-1/2+2c}) \\
    &= \int_R \frac{Z(a,b) - \bbE_{\ge3}Z(a,b)}{Z} (q\oa\bu_1+\ob\bu_2)\de(a,b) \\
    &\qquad \qquad + \int_R \frac{\bbE_{\ge3}Z(a,b) - \bbE_{\ge3}Z(a_*,b_*)}{Z} (q\oa\bu_1+\ob\bu_2)\de(a,b) + O(N^{-1/2+2c}). 
\end{align*}
 Furthermore, by H\"older's inequality on the measure $\fr{\ind\{(a,b)\in R\} \de(a,b)}{\Vol(R)}$, 
\baln
    &\bbE\lt[\bfone_{\cE}\lt\|\int_R \frac{Z(a,b) - \bbE_{\ge3}Z(a,b)}{Z} (q(a-a_*)\bu_1+(b-b_*)\bu_2) \de(a,b)\rt\|^{2+\delta}\rt]\\
    &\le \int_R \bbE \lt[\lt(\bfone_{\cE}\lt|\frac{Z(a,b) - \bbE_{\ge3}Z(a,b)}{Z}\rt| \lt\|q(a-a_*)\bu_1+(b-b_*)\bu_2\rt\| \Vol(R) \rt)^{2+\delta}\rt] \fr{\de(a,b)}{\Vol(R)}
\ealn
By \eqref{eq:estZ-3'} and \eqref{eq:E3Z_EZ}, $Z = \Omega(N^{-1} Z_{\ge3}(a,b))$. 
Moreover, we have the estimates $|q(a-a_*)|,|b-b_*| \le N^{-1/2 + c}$ by definition of $R$, $\Vol(R) \le N^{-1+2c}$, and $\norm{\bu_1},\norm{\bu_2} \le \sqrt{N}$.
Combining these estimates, the last display is bounded by
\beq
    \label{eq:holder-intermediate-bd}
    O(N^{3c(2+\delta)}) 
    \int_R \bbE \lt[\bfone_{\cE}\lt|\frac{Z(a,b) - \bbE_{\ge3}Z(a,b)}{Z_{\ge 3}(a,b)}\rt|^{2+\delta}\rt] \fr{\de(a,b)}{\Vol(R)}
\eeq
Finally, since $\cE$ contains the event that \eqref{eq:ZT-event}, \eqref{eq:estZ-0} holds for $(a,b)$, for any $(a,b) \in R$ we have the estimate
\baln
    &\bbE \lt[\bfone_{\cE}\lt|\frac{Z(a,b) - \bbE_{\ge3}Z(a,b)}{Z_{\ge 3}(a,b)}\rt|^{2+\delta}\rt] \\
    &\qquad \le \bbE \lt[
        \bfone(\text{\eqref{eq:ZT-event}, \eqref{eq:estZ-0} holds for $(a,b)$})
        \bbE_{\ge 3} 
        \lt|\frac{Z(a,b) - \bbE_{\ge3}Z(a,b)}{Z_{\ge 3}(a,b)}\rt|^{2+\delta}
    \rt] \\
    &\qquad \le \bbE \lt[
        \bfone(\text{\eqref{eq:ZT-event}, \eqref{eq:estZ-0} holds for $(a,b)$})
        \frac{(\bbE_{\ge 3} |Z_T(a,b) - \bbE_{\ge3}Z_T(a,b)|^{2k})^{(2+\delta)/2k}}{|Z_{\ge 3}(a,b)|^{2+\delta}}
    \rt] + e^{-cN},
\ealn
and by \eqref{eq:estZ-0} this is bounded by $N^{-1/2}$.
Then, for $c$ small enough, \eqref{eq:holder-intermediate-bd} is bounded by $O(N^{-c})$. 

By Eqs.~(\ref{eq:G-laplace-unif}) and (\ref{eq:G-sym}) of Lemma \ref{lem:lip-quad-G}, 
\begin{align*}
    \lt\|\int_R \frac{\bbE_{\ge3}Z(a,b) - \bbE_{\ge3}Z(a_*,b_*)}{Z} (q\oa\bu_1+\ob\bu_2)\de(a,b)\rt\| = O(N^{-c}). 
\end{align*}
Similarly, we have
\begin{align*}
    \bbE \lt[\lt\|\int_R \frac{Z(a,b)}{Z} \bm(a,b) \de(a,b) - \int_R \frac{\bbE_{\ge3}Z(a,b)}{Z} \bm(a,b) \de(a,b)\rt\|^{2+\delta}\rt] &= O(N^{-c}),
\end{align*}
Again by Eqs.~(\ref{eq:G-laplace-unif}) and (\ref{eq:G-sym}) of Lemma \ref{lem:lip-quad-G}, noting that $\bm(a,b)+\bm(a',b')=2\bm$ if $(a,b)+(a',b')=2(a_*,b_*)$, 
\begin{align*}
    \bbE \lt[\lt\|\int_R \frac{\bbE_{\ge3}Z(a,b)}{Z} \bm(a,b) \de(a,b) - \bm\rt\|^{2+\delta}\rt] &= O(N^{-c}),
\end{align*}
Thus, we obtain 
\begin{align}
    \bbE\lt[\lt\|\tbm_{2\iota}(\bm) - \bm - \corr(\bm)\rt\|^{2+\delta}\rt] \le O(N^{1+\delta/2}e^{-\eta N^{c}}) + O(N^{-c}) + O(N^{1+\delta/2} \cdot N^{-L}) = O(N^{-c}).
\end{align}
Proposition \ref{ppn:local-barycenter} then follows.
\end{proof}

\section{Lognormal fluctuations of partition function}
\label{sec:partition-fn-fluctuations}

\begin{proof}[Proof of Lemma~\ref{lem:partition-fn-fluctuations}]
    Recall that $H_{N,2}$ denotes the degree-$2$ part of $H_N$, which is of the form
    \[
        H_{N,2}(\bsig) = \fr{\xi''(0)^{1/2}}{2} \la \bG \bsig, \bsig \ra,
    \]
    for $\bG \sim \GOE(N)$. Let
    \[
        Z_{N,2} = \int_{S_N} \exp H_{N,2}(\bsig) ~\de \mu_0(\bsig).
    \]
    It follows from \cite[Theorem 1.2]{baik2016fluctuations} (with $w_2=2, W_4=3$) that, with $\sigma^2 = -\fr12 \log (1-\xi''(0))$ and $W \sim \cN(-\fr12 \sigma^2, \sigma^2)$,
    \beq
        \label{eq:ZN2-fluctuations}
        \fr{Z_{N,2}}{\EE Z_{N,2}}
        = \fr{Z_{N,2}}{\exp(N\xi''(0)/2)}
        \stackrel{d}{\to} \exp(W).
    \eeq
    Recall that the results in Section~\ref{sec:FirstLocal} only assume \eqref{eq:replica-symmetry} rather than \eqref{eq:amp-works}, and thus apply in the present proof.
    Let $\delta>0$ be small and $T=T(\delta)$ as in \eqref{eq:TypicalDef}, and recall the restricted partition function
    \[
        Z_N(T) = \int_T \exp H_N(\bsig) ~\de \mu_0(\bsig).
    \]
    By \eqref{eq:atypical-small-in-expectation}, in Lemma
    \ref{lem:truncate}, we have
    \[
        \EE[Z_N - Z_N(T)] \le e^{-cN} \EE [Z_N].
    \]
    By Markov's inequality, applied respectively to the randomness of $H_N$ and $H_{N,2}$, with probability $1-e^{-cN}$,
    \[
        (Z_N - Z_N(T))\vee {\EE}_{\ge 3}[Z_N - Z_N(T)] \le e^{-cN} \EE [Z_N].
    \]
    By \eqref{eq:E-ge3-variance-bound} (for $k=2$), we also have, with probability $1-o(1)$,
    \[
        {\EE}_{\ge 3}\lt[\lt(
            Z_N(T) - {\EE}_{\ge 3}[Z_N(T)]
        \rt)^2\rt] = o(1) \EE [Z_N]^2.
    \]
    Thus with probability $1-o(1)$,
    \[
        |Z_N(T) - {\EE}_{\ge 3} Z_N(T)| \le o(1) \EE [Z_N].
    \]
    On the intersection of these events,
    \[
        \lt|\fr{Z_N}{\EE Z_N} - \fr{\EE_{\ge 3} Z_N}{\EE Z_N} \rt|
        \le
        \fr{|Z_N - Z_N(T)|}{\EE Z_N}
        + \fr{|Z_N(T) - \EE_{\ge 3}Z_N(T)|}{\EE Z_N}
        + \fr{|\EE_{\ge 3} Z_N - \EE_{\ge 3}Z_N(T)|}{\EE Z_N}
        = o(1).
    \]
    Since
    \[
        \fr{\EE_{\ge 3} Z_N}{\EE Z_N} = \fr{Z_{N,2}}{\EE Z_{N,2}},
    \]
    the result follows from \eqref{eq:ZN2-fluctuations}.
\end{proof}

\section{Completing the proof of Theorem~\ref{thm:main}}

The following two propositions are the final ingredients in the proof of Theorem~\ref{thm:main}.
Let $\delta, L$ be as in Algorithm~\ref{alg:main} and $T' = \delta L$.
\begin{ppn}
    \label{ppn:yT-close-in-TV}
    Let $(H_N,\by_{T'})$ be sampled from the marginal of the planted distribution $\bbP$ (as defined in Eq.~\eqref{eq:planted-process}).
    Let $\by^L$ be generated as in Algorithm~\ref{alg:main}, run on input $H_N$.
    Then,
    \[
        \bbE_{H_N} \TV\lt(
            \cL(\by_{T'} | H_N),
            \cL(\by^L | H_N)
        \rt) = o_N(1).
    \]
\end{ppn}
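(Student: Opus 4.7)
My plan is to bound the conditional KL divergence via Girsanov's theorem and then apply Pinsker's inequality. Since TV takes values in $[0,1]$ and Corollary~\ref{cor:contiguous} gives contiguity of the marginal laws of $H_N$ under $\mu_\pl$ and $\mu_\nul$, it suffices to show the stronger statement $\EE_{H_N \sim \mu_\pl} \TV(\cL_\bbP(\by_{T'}|H_N), \cL(\by^L|H_N)) = o_N(1)$: contiguity applied to the event $\{\TV > \eps\}$ for each fixed $\eps > 0$ then transfers this to $H_N \sim \mu_\nul$. Conditional on $H_N$, I interpret the algorithm as the continuous-time Ito process
\[
    d\tilde\by_t = \bm^\alg(H_N, \tilde\by_{\lfloor t/\delta\rfloor \delta}, \lfloor t/\delta\rfloor \delta)\, dt + d\bB_t, \qquad \tilde\by_0 = \bzero,
\]
which matches $\by^L$ in law at $t = T'$, and compare it to the planted SDE $d\by_t = \bm(\by_t, t)\, dt + d\bB_t$. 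Both drifts are bounded (in $\|\cdot\|_N$) by $1$ on the trajectories, so Girsanov applies and yields
\[
    \KL\bigl(\cL_\bbP(\by_{[0,T']}|H_N) \,\big\|\, \cL(\tilde\by_{[0,T']}|H_N)\bigr) = \frac{1}{2}\, \EE_\bbP\!\left[\int_0^{T'}\! \bigl\|\bm(\by_t, t) - \bm^\alg(H_N, \by_{\lfloor t/\delta\rfloor \delta}, \lfloor t/\delta\rfloor \delta)\bigr\|^2 dt \,\Big|\, H_N \right].
\]
Taking expectation over $H_N \sim \mu_\pl$ and combining Pinsker with Jensen reduces the proposition to showing $\EE_\bbP \int_0^{T'} \|\bm(\by_t, t) - \bm^\alg\|^2\, dt = o_N(1)$.

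I split the integrand via the triangle inequality into an \emph{algorithmic} term $\|\bm(\by_{\ell\delta}, \ell\delta) - \bm^\alg(H_N, \by_{\ell\delta}, \ell\delta)\|^2$ and a \emph{discretization} term $\|\bm(\by_t, t) - \bm(\by_{\ell\delta}, \ell\delta)\|^2$. The algorithmic part is controlled by Theorem~\ref{thm:mean}, which yields $\EE_\bbP \|\bm(\by_{\ell\delta}, \ell\delta) - \bm^\alg\|^2 = N\cdot o(N^{-1}) = o(1)$ at each $t = \ell\delta$; assuming the rate is uniform in $t \in [0, T']$, integration gives $\delta \sum_\ell o(1) = T' \cdot o(1) = o_N(1)$. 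The discretization part is handled by the key observation that, conditional on $H_N$ under $\bbP$, the process $t \mapsto \bm(\by_t, t) = \EE[\bsig \,|\, \by_{[0,t]}, H_N]$ is a square-integrable $\bbR^N$-valued martingale in $t$ (a consequence of $\by_t = t\bsig + \bB_t$ and the Markov property). Orthogonality of martingale increments gives $\EE \|\bm(\by_t) - \bm(\by_{\ell\delta})\|^2 = \EE\|\bm(\by_t)\|^2 - \EE\|\bm(\by_{\ell\delta})\|^2$. Integrating over each step and telescoping, the total discretization error is at most $\delta \cdot \EE_\bbP \|\bm(\by_{T'}, T')\|^2 \le \delta N$, which equals $N^{-3}$ for $\delta = N^{-4}$, and is in particular $o_N(1)$.

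The main obstacle is the uniformity of the $o_N(1)$ rate in Theorem~\ref{thm:mean} over $t \in [0, T']$: this is required because the sum has $L = T'/\delta = \Theta(N^4)$ terms, so any non-uniform blow-up would be catastrophic. Concretely, one needs to verify that the constants appearing upstream — notably those in Proposition~\ref{ppn:amp-performance} (the AMP state-evolution limit and the associated Gibbs concentration), Proposition~\ref{ppn:local-concavity-and-conditioning} (the local landscape and Kac--Rice conditioning bound), and Proposition~\ref{ppn:local-barycenter} (the local barycenter estimate) — can all be selected uniformly in $t$ on the compact interval $[0, T']$; this should follow from continuity of $q_*(t)$ and related model-dependent quantities in $t$, but deserves careful checking. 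A secondary technicality is justifying Girsanov along algorithmic drifts defined via a computation rather than a closed-form function: this is not a genuine issue since the algorithm is deterministic in its inputs, so $\bm^\alg$ is a measurable function of $(H_N, \by, t)$, and the uniform boundedness of $\bm$ and $\bm^\alg$ makes Novikov's condition trivially satisfied.
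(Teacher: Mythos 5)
Your proposal is correct and follows the same skeleton as the paper's proof: represent $\by^L$ as the terminal value of an SDE with piecewise-constant drift $\bm^\alg(H_N,\by_{\ell\delta},\ell\delta)$, bound the path-space KL via Girsanov, pass to the marginal by data processing, and finish with Pinsker and Jensen. The algorithmic error term is handled identically (Theorem~\ref{thm:mean} applied at each grid time $\ell\delta$).

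Where you genuinely diverge is the discretization term $\EE\|\bm(\by_t,t)-\bm(\by_{\ell\delta},\ell\delta)\|^2$. The paper bounds this pointwise in $t$ by a direct perturbation estimate: writing $H_{N,t}=H_{N,\ell\delta}+\la s\bx+\sqrt{s}\bg,\cdot\ra$ with $s=t-\ell\delta\le\delta=N^{-4}$, the Hamiltonian shift is uniformly $O(1/N)$ on the sphere (on a high-probability event), and a ratio-of-Gibbs-averages computation gives $\|\bm(\by_t,t)-\bm(\by_{\ell\delta},\ell\delta)\|=O(N^{-1/2})$. You instead exploit that $t\mapsto\bm(\by_t,t)=\EE[\bsig\mid\cF_t,H_N]$ is a Doob martingale (which does require the observation, valid here since $\|\bsig\|^2$ is constant on $S_N$, that the posterior given the path depends only on the endpoint $\by_t$), so increments are orthogonal and the integrated error telescopes to $\delta\,\EE\|\bm(\by_{T'},T')\|^2\le\delta N=N^{-3}$. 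Both arguments are valid; yours is cleaner, gives a sharper aggregate bound, and avoids the Lipschitz/Gibbs-ratio computation entirely, at the price of only controlling the time-integrated error rather than the pointwise one (which is all that is needed). Your flagged concern about uniformity in $t$ of the $o_N(1)$ in Theorem~\ref{thm:mean} is legitimate, but note it applies equally to the paper's own proof of Lemma~\ref{lem:girsanov-error}, which invokes Theorem~\ref{thm:mean} for each of the $L$ grid times; it is not a defect specific to your route. Your opening contiguity reduction is superfluous (the proposition already samples $H_N$ from the planted marginal), but harmless.
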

\begin{ppn}
    \label{ppn:mala-accuracy}
    Let $(H_N,\bsig,\by_{T'})$ be sampled from the marginal of the planted distribution $\bbP$.
    Let $\brho^\sMALA$ be the (random) output of {\rm MALA} run on $\tnu_{H_N,\by_{T'}}^{\proj}$ (recall Eq.~\eqref{eq:ProjMeasure}) and $\hbsig = \bsig_{\by_{T'}}(\brho^\sMALA)$ (recall Eq.~\eqref{eq:stereographic-proj-inv}).
    Then,
    \[
        \bbE_{H_N,\by_{T'}} \TV\lt(
            \cL(\bsig | H_N,\by_{T'}),
            \cL(\hbsig | H_N,\by_{T'})
        \rt) = o_N(1).
    \]
\end{ppn}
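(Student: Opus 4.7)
The plan is to chain together four approximations via the triangle inequality for total variation, using that TV is invariant under the measurable bijection $\bT_{\by_{T'}}$ (and its inverse $\bsig_{\by_{T'}}$) between $S_N \cap \{\la \bsig,\by_{T'}\ra > 0\}$ and $\bbR^{N-1}$. Under the planted law $\bbP$, we have $\cL(\bsig \mid H_N, \by_{T'}) = \mu_{H_N,\by_{T'}}$. Let $\cH_+ := \{\bsig \in S_N : \la \bsig, \by_{T'}\ra > 0\}$, and write $\mu_+ := \mu_{H_N,\by_{T'}}(\,\cdot\, \mid \cH_+)$.

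The decomposition is:
\begin{align*}
\TV\!\big(\cL(\bsig \mid H_N,\by_{T'}),\, \cL(\hbsig \mid H_N,\by_{T'})\big)
&\le \TV(\mu_{H_N,\by_{T'}},\, \mu_+) \\
&\quad + \TV\!\big((\bT_{\by_{T'}})_\# \mu_+,\, \nu^{\proj}_{H_N,\by_{T'}}\big) \\
&\quad + \TV\!\big(\nu^{\proj}_{H_N,\by_{T'}},\, \tnu^{\proj}_{H_N,\by_{T'}}\big) \\
&\quad + \TV\!\big(\tnu^{\proj}_{H_N,\by_{T'}},\, \cL(\brho^\sMALA \mid H_N,\by_{T'})\big).
\end{align*}
The second term is identically zero by the forward-referenced Lemma~\ref{lem:push-forward-cap-law}; the third term has $\bbE$ bounded by $o_N(1)$ by the forward-referenced Corollary~\ref{cor:tnu-nu-approx}; the fourth term is at most $1/N$ by the MALA stopping criterion in Algorithm~\ref{alg:main}.

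The remaining task is to control the first term, i.e., to show that the hemisphere $\cH_+$ has mass $1-o_N(1)$ under $\mu_{H_N,\by_{T'}}$ in expectation over the planted law. First I will use Nishimori's property (Fact~\ref{fac:nishimori}): if $\tbsig \sim \mu_{H_N,\by_{T'}}$ conditional on $(H_N,\by_{T'})$, then $(\tbsig, \by_{T'}) \stackrel{d}{=} (\bsig, \by_{T'})$ under $\bbP$. Since $\by_{T'} = T'\bsig + \bB_{T'}$ and $\|\bsig\|_N=1$, standard concentration gives $\la \tbsig, \by_{T'}\ra_N = T' + O_P(T'^{1/2} N^{-1/2} + 1)$, which is positive with probability $1-e^{-cN}$ provided $T'$ is a sufficiently large constant. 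Therefore
\[
\bbE_{H_N,\by_{T'}}\!\big[\mu_{H_N,\by_{T'}}(\cH_+^c)\big]
= \PP_{\bbP}\!\big(\la \tbsig, \by_{T'}\ra \le 0\big) \le e^{-cN},
\]
and $\TV(\mu_{H_N,\by_{T'}}, \mu_+) \le \mu_{H_N,\by_{T'}}(\cH_+^c)$, so this term is $o_N(1)$ in expectation as well.

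The main obstacle is ensuring the bounds on the third term (the strongly log-concave approximation) and the hemisphere term can be invoked \emph{in expectation over $\bbP$}. By Corollary~\ref{cor:contiguous}, $\bbP$ and $\bbQ$ are mutually contiguous, so any event that has probability $1-o_N(1)$ under $\bbQ$ (which is the natural law for stating the log-concave approximation) also has probability $1-o_N(1)$ under $\bbP$, and we can pass from high-probability bounds to $\bbE_\bbP$ bounds by bounding TV by $1$ on the exceptional event. Assembling these four estimates yields the claim.
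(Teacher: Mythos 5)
Your decomposition is correct and is essentially the paper's argument, just written out more explicitly: the paper's proof chains Proposition~\ref{ppn:tHproj-concave} (log-concavity, hence the MALA guarantee), Corollary~\ref{cor:tnu-nu-approx}, Lemma~\ref{lem:push-forward-cap-law}, and the hemisphere-mass statement from Lemma~\ref{lem:nu-conc-small-radius} in exactly the way your four-term triangle inequality does. The one place you genuinely diverge is the first term: the paper controls $\mu_{H_N,\by_{T'}}(\la\bsig,\by_{T'}\ra\le 0)$ via Lemma~\ref{lem:nu-conc-small-radius}, which rests on the overlap-concentration result Proposition~\ref{ppn:amp-dominate-gibbs-2} (the Gibbs measure concentrates on $\<\bsig,\bx\>_N \ge 1-1/T$, and $\bx$ is nearly aligned with $\hby$); your route via Nishimori's property plus Gaussian concentration of $\la\bx,\by_{T'}\ra$ is more elementary and avoids that machinery entirely, at the cost of only giving the hemisphere bound in expectation over $\bbP$ rather than as a high-probability statement for fixed $(H_N,\by_{T'})$ — which is all the proposition needs. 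Two small points: (i) your fourth term is not justified merely by "the stopping criterion in Algorithm~\ref{alg:main}" — the substance is that MALA can actually reach $\TV \le 1/N$ in polynomial time, which requires Proposition~\ref{ppn:tHproj-concave} (strong log-concavity and smoothness of $\tnu_{H_N,\by_{T'}}^{\proj}$) together with \cite[Theorem 3]{chewi2021optimal}, so you should cite that proposition explicitly; (ii) the appeal to contiguity is unnecessary here, since Corollary~\ref{cor:tnu-nu-approx} and Lemma~\ref{lem:nu-conc-small-radius} are already stated with probability $1-o_N(1)$ under the planted law, but it is harmless.
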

\begin{proof}[Proof of Theorem~\ref{thm:main}]
    Let $\cK : \sH_N \times \bbR^N \to \bbR^N$ be the random map that, given input $(H_N,\by)$, generates $\brho^\sMALA$ by running MALA on $\nu_{H_N,\by}$ and outputs $\hbsig = \bsig_\by(\brho^\sMALA)$.
    Let $(H_N,\bsig,\by_{T'})$ be sampled from the marginal of $\bbP$.
    Let $\bbP_{\alg,H_N}$ denote the law of the output of $\by^L$ generated by Algorithm~\ref{alg:main} on input $H_N$.
    Then,
    \baln
        \bbE_{H_N \sim \bbP} \TV(\mu_{H_N},\mu^\alg)
        &= \bbE_{H_N \sim \bbP} \TV\lt(
            \bbE_{\by_{T'} \sim \bbP(\cdot | H_N)}
            \cL(\bsig | H_N, \by_{T'}),
            \bbE_{\by^L \sim \bbP_{\alg,H_N}}
            \cL(\cK(H_N,\by^L))
        \rt) \\
        &\le \bbE_{H_N \sim \bbP} \TV\lt(
            \bbE_{\by_{T'} \sim \bbP(\cdot | H_N)}
            \cL(\bsig | H_N, \by_{T'}),
            \bbE_{\by_{T'} \sim \bbP(\cdot | H_N)}
            \cL(\cK(H_N,\by_{T'}))
        \rt) \\
        &\qquad + \bbE_{H_N \sim \bbP} \TV\lt(
            \bbE_{\by_{T'} \sim \bbP(\cdot | H_N)}
            \cL(\cK(H_N,\by_{T'})),
            \bbE_{\by^L \sim \bbP_{\alg,H_N}}
            \cL(\cK(H_N,\by^L))
        \rt) \\
        &\le \bbE_{(H_N,\by_{T'}) \sim \bbP} \TV\lt(
            \cL(\bsig | H_N, \by_{T'}),
            \cL(\cK(H_N,\by_{T'}))
        \rt) \\
        &\qquad + \bbE_{H_N \sim \bbP} \TV\lt(
            \cL(\by_{T'} | H_N),
            \cL(\by^L | H_N)
        \rt).
    \ealn
    The last inequality is by data processing.
    By Propositions~\ref{ppn:yT-close-in-TV} and \ref{ppn:mala-accuracy}, the final bound is $o_N(1)$.
    Thus, with probability $1-o_N(1)$ over $H_N \sim \bbP$, $\TV(\mu_{H_N},\mu^\alg) = o_N(1)$.
    By Corollary~\ref{cor:contiguous}, the same is true for $H_N \sim \bbQ$.
\end{proof}

\subsection{TV-closeness of Euler discretization: Proof of Proposition \ref{ppn:yT-close-in-TV}}

We prove Proposition~\ref{ppn:yT-close-in-TV} by an application of
Girsanov's theorem, an approach introduced
\cite{chen2022sampling} in a related context.
For all $0\le \ell \le L-1$, define
\[
    \hbm(\by,\ell \delta) = \bm^\alg(H_N,\by,\ell \delta)
\]
to be the output of Algorithm~\ref{alg:mean} with these inputs.
Then, define the process $(\hby_t)_{t\in [0,T]}$ by $\hby_0 = \bfzero$ and, for $t\in [\ell \delta, (\ell+1) \delta)$,
\beq
    \label{eq:sl-sde-discretized-drift}
    \de \hby_t = \hbm(\hby_{\ell \delta},\ell \delta)~\de t + \de\bB_t.
\eeq
On each interval $[\ell \delta, (\ell+1) \delta)$, the drift in \eqref{eq:sl-sde-discretized-drift} is constant, so this SDE can be integrated directly: conditional on $H_N, \hby_{\ell \delta}$,
\[
    \hby_{(\ell+1) \delta}
    =
    \hby_{\ell \delta}
    + \delta \bm^\alg(H_N,\hby_{\ell \delta}, \ell \delta)
    + \bB_{(\ell+1)\delta}-\bB_{\ell\delta}.
\]
Note that $\bB_{(\ell+1)\delta}-\bB_{\ell\delta} =_d \sqrt{\delta} \bw^\ell$ for $\bw^\ell \sim \cN(0,\bI_N)$, so this is precisely the Euler discretization in Algorithm~\ref{alg:main}.
It follows that
\beq
    \label{eq:euler-disc-matches-discretized-sde}
    \cL(\hby_{T} | H_N) =\cL(\by^L | H_N).
\eeq
\begin{lem}
    \label{lem:KL-girsanov}
    Given $H_N$, let $(\by_t)_{t\in [0,T]}$ be sampled from \eqref{eq:sl-sde} and $(\hby_t)_{t\in [0,T]}$ be sampled from \eqref{eq:sl-sde-discretized-drift}.
    Then,
    \[
        \bbE_{H_N \sim \bbP}
        \KL(\cL(\by_{T} | H_N),\cL(\hby_{T} | H_N))
        \le
        \fr12
        \sum_{\ell = 0}^{L-1}
        \int_{\ell \delta}^{(\ell + 1)\delta}
        \bbE_{\bbP}
        \norm{\hbm(\by_{\ell \delta},\ell \delta) - \bm(\by_t,t)}^2
        ~\de t.
    \]
\end{lem}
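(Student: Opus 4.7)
The plan is to derive the bound as a direct application of Girsanov's theorem together with the data-processing inequality for Kullback--Leibler divergence. Both $(\by_t)_{t\in[0,T]}$ and $(\hby_t)_{t\in[0,T]}$ are $\bbR^N$-valued It\^o diffusions driven by an $N$-dimensional Brownian motion and differ only in their drift: under $\bbP(\cdot | H_N)$, the path $\by_t$ has (adapted) drift $\bm(\by_t,t)$, while $\hby_t$ has the piecewise-constant drift
\[
    \hb(\by,t) := \hbm(\by_{\ell\delta},\ell\delta), \qquad t\in [\ell\delta,(\ell+1)\delta),
\]
which, on each subinterval, depends only on the value of the path at the left endpoint.

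First, I would apply the data-processing inequality: letting $\bbQ_1^{H_N}$ and $\bbQ_2^{H_N}$ denote, respectively, the conditional laws of $(\by_t)_{t\in[0,T]}$ and $(\hby_t)_{t\in[0,T]}$ on path space, the push-forward under the evaluation-at-$T$ map yields
\[
    \KL\big(\cL(\by_T | H_N),\,\cL(\hby_T | H_N)\big) \le \KL\big(\bbQ_1^{H_N},\,\bbQ_2^{H_N}\big).
\]
Next, I would invoke Girsanov's theorem to control the right-hand side. Both drifts are bounded for fixed $H_N$: $\bm(\by,t)$ is the mean of the tilted measure $\mu_{H_N,\by}$ supported on $S_N$, hence $\|\bm(\by,t)\| \le \sqrt{N}$, and $\hbm(\by,t) = \bm^{\GD} + \bcorr(\bm^{\GD})$ is similarly uniformly bounded (with high probability and after the natural truncation to handle the rare event of failure of Proposition~\ref{ppn:additional-event-pre-conditioning}; one can cap the drift at norm $C\sqrt{N}$ without affecting the argument). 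Boundedness of the drifts trivially verifies Novikov's condition, so Girsanov's formula applies and gives
\[
    \KL\big(\bbQ_1^{H_N},\,\bbQ_2^{H_N}\big)
    = \fr12 \,\bbE\!\left[\int_0^T \big\|\bm(\by_t,t) - \hb(\by,t)\big\|^2\,\de t \,\Big|\, H_N\right]
    = \fr12 \sum_{\ell=0}^{L-1} \int_{\ell\delta}^{(\ell+1)\delta} \bbE\!\left[\big\|\bm(\by_t,t) - \hbm(\by_{\ell\delta},\ell\delta)\big\|^2 \,\big|\, H_N\right]\de t,
\]
where the expectation is taken under $\bbQ_1^{H_N}$, i.e. under the SDE \eqref{eq:sl-sde}.

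Finally, I would take expectation over $H_N\sim\bbP$ on both sides, combine the two displays above, and recognize that the resulting double expectation is precisely $\bbE_{\bbP}\|\hbm(\by_{\ell\delta},\ell\delta) - \bm(\by_t,t)\|^2$, yielding the claimed bound. The only non-routine step is verifying integrability/Novikov; since both drifts are a priori bounded on the relevant (high-probability) event, this presents no obstacle, and standard truncation handles the exceptional set.
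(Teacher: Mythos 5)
Your proposal is correct and follows essentially the same route as the paper: a Girsanov change of measure identifying the law of $(\hby_t)$ as a drift-tilt of the law of $(\by_t)$, combined with the data-processing inequality to pass from path-space KL to the time-$T$ marginal, and the observation that the bounded drifts make Novikov's condition automatic. The paper likewise justifies Girsanov by noting $\|\hbm(\by_{\ell\delta},\ell\delta)\|,\|\bm(\by_t,t)\|\le\sqrt{N}$, so no truncation argument is even needed.
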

\begin{proof}
    Fix any realization of $H_N$.
    For $0\le \ell \le L-1$ and $t\in [\ell \delta, (\ell+1) \delta)$, define the process
    \[
        \bb_t = \hbm(\by_{\ell \delta},\ell \delta) - \bm(\by_t,t).
    \]
    Let
    \[
        \cE_t = \exp \lt(
            \int_0^t \la \bb_s, \de \bB_s\ra
            - \fr12 \int_0^t \norm{\bb_s}^2 ~\de s
        \rt).
    \]
    Let $Q$ be the probability measure (conditional on $H_N$) under which $(\bB_t)_{t\in [0,T]}$ is a Brownian motion and let $P$ be the probability measure with $\fr{\de P}{\de Q} = \cE_T$.
    By Girsanov's theorem \cite[Theorem 5.22]{le2016brownian},
    \[
        \bbeta_t = \bB_t - \int_0^t \bb_s ~\de s
    \]
    is a Brownian motion under $P$.
    (Since $\norm{\hbm(\by_{\ell \delta},\ell \delta)}, \norm{\bm(\by_t,t)} \le \sqrt{N}$, $\bb_t$ is a.s. bounded, and thus the conditions of Girsanov's theorem are satisfied.)
    The SDE \eqref{eq:sl-sde} rearranges as
    \[
        \de \by_t = (\bm(\by_t,t) + \bb_t)~\de t + \de \bbeta_t
        = \hbm(\by_{\ell \delta},\ell \delta)~\de t + \de \bbeta_t, \qquad t\in [\ell \delta, (\ell + 1)\delta).
    \]
    Thus, under $P$, the law of $(\by_t)_{t\in [0,T]}$ is that of $(\hby_t)_{t\in [0,T]}$.
    By data processing,
    \[
        \KL(\cL(\by_T | H_N), \cL(\hby_T | H_N))
        \le \KL(Q,P)
        = \bbE_Q \log \fr{\de Q}{\de P}
        = \fr12 \int_0^T \bbE_Q \norm{\bb_t}^2 ~\de t.
    \]
    The result follows by taking expectation over $H_N$.
\end{proof}

\begin{lem}
    \label{lem:girsanov-error}
    For all $0\le \ell \le L-1$, $t \in [\ell \delta, (\ell+1)\delta)$, we have $\bbE_{\bbP} \norm{\hbm(\by_{\ell \delta},\ell \delta) - \bm(\by_t,t)}^2 = o_N(1)$.
\end{lem}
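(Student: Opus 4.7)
The plan is to split the error with the triangle inequality
\[
\|\hbm(\by_{\ell\delta},\ell\delta) - \bm(\by_t,t)\|^2
\le 2\|\hbm(\by_{\ell\delta},\ell\delta) - \bm(\by_{\ell\delta},\ell\delta)\|^2
+ 2\|\bm(\by_{\ell\delta},\ell\delta) - \bm(\by_t,t)\|^2,
\]
and to bound the two summands separately in expectation under $\bbP$. For the first summand (the algorithmic error), note that under $\bbP$ the marginal law of $(H_N,\by_{\ell\delta})$ is exactly the one that Theorem~\ref{thm:mean} takes as input when the time parameter is $\ell\delta$. Therefore Theorem~\ref{thm:mean} gives
\[
\bbE_{\bbP}\|\hbm(\by_{\ell\delta},\ell\delta) - \bm(\by_{\ell\delta},\ell\delta)\|^2
= N\cdot \bbE_{\bbP}\|\hbm(\by_{\ell\delta},\ell\delta) - \bm_{\ell\delta}\|_N^2
= N\cdot o(N^{-1}) = o_N(1).
\]

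For the second summand (the discretization error), the key observation is that, for measures supported on $S_N$, $\bm(\,\cdot\,,t)$ does not depend on $t$ (see the comment preceding Proposition~\ref{ppn:amp-performance}), so we just need to control $\bm(\by_{\ell\delta}) - \bm(\by_t)$. The Jacobian of $\by\mapsto\bm(\by)$ is the covariance matrix of $\mu_{H_N,\by}$, and since this measure is supported on $S_N$ we have $\|\Cov(\mu_{H_N,\by})\|_{\op}\le N$ pointwise. Consequently $\bm$ is $N$-Lipschitz in $\by$, and
\[
\bbE_{\bbP}\|\bm(\by_{\ell\delta}) - \bm(\by_t)\|^2 \le N^2\,\bbE_{\bbP}\|\by_{\ell\delta}-\by_t\|^2.
\]

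It then remains to estimate $\bbE_{\bbP}\|\by_{\ell\delta}-\by_t\|^2$ for $t-\ell\delta\le\delta$. From the SDE \eqref{eq:sl-sde} one has $\by_t-\by_{\ell\delta} = \int_{\ell\delta}^t\bm(\by_s)\,\de s + (\bB_t-\bB_{\ell\delta})$, and using $\|\bm(\by_s)\|^2\le N$ together with $\bbE\|\bB_t-\bB_{\ell\delta}\|^2 = N(t-\ell\delta)$ we obtain
\[
\bbE_{\bbP}\|\by_{\ell\delta}-\by_t\|^2 \le 2(t-\ell\delta)^2 N + 2N(t-\ell\delta) \le 4N\delta.
\]
Hence the discretization error is at most $8N^3\delta$; with the choice $\delta = N^{-4}$ from Algorithm~\ref{alg:main}, this is $O(N^{-1}) = o_N(1)$. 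Combined with the algorithmic error bound above, this gives the desired estimate. The only real subtlety is matching the Theorem~\ref{thm:mean} hypothesis to the planted marginal at the intermediate time $\ell\delta$, which is immediate from the definition of $\bbP$; everything else is a soft estimate.
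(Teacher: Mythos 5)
Your proposal is correct. The first summand is handled exactly as in the paper: Theorem~\ref{thm:mean} applied at time $\ell\delta$ under the planted marginal gives $N\cdot o(N^{-1})=o_N(1)$. For the second summand (the discretization error) you take a genuinely different and somewhat softer route. The paper writes $\bm(\by_{\ell\delta},\ell\delta)-\bm(\by_t,t)$ as a ratio of replicated Gibbs integrals, observes that the Hamiltonian perturbation $\Delta_{t,\ell\delta}(\bsig)=\la s\bx+\sqrt{s}\bg,\bsig\ra$ is uniformly bounded by $3\sqrt{\delta}N=3/N$ on the event $\{\norm{\bg}\le 2\sqrt{N}\}$, and deduces $\norm{\bm(\by_{\ell\delta})-\bm(\by_t)}=O(N^{-1/2})$ directly, handling the complementary event with the trivial bound. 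You instead use the deterministic identity $\nabla_\by\,\bm(\by)=\Cov(\mu_{H_N,\by})\preceq N\,\bI$ (valid for every $\by$ because the measure is supported on $S_N$, and already invoked in Section~\ref{subsec:sl} to get well-posedness of the SDE) to conclude that $\bm$ is globally $N$-Lipschitz, and then bound $\bbE\norm{\by_t-\by_{\ell\delta}}^2\le 4N\delta$ from the SDE. Both arguments land on an $O(N^{-1})$ bound for the expected squared discretization error. Your version avoids conditioning on a high-probability event and is arguably cleaner, at the price of a worse dependence on $\delta$: you need $N^3\delta=o(1)$ whereas the paper's perturbative expansion only needs $\sqrt{\delta}\,N=o(1)$. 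With $\delta=N^{-4}$ as fixed in Algorithm~\ref{alg:main}, both conditions hold comfortably, so nothing is lost.
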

\begin{proof}
    We first estimate
    \[
        \bbE_{\bbP} \norm{\hbm(\by_{\ell \delta},\ell \delta) - \bm(\by_t,t)}^2
        \le 2\bbE_{\bbP} \norm{\hbm(\by_{\ell \delta},\ell \delta) - \bm(\by_{\ell \delta},\ell \delta)}^2
        + 2\bbE_{\bbP} \norm{\bm(\by_{\ell \delta},\ell \delta) - \bm(\by_t,t)}^2.
    \]
    The first term on the right-hand side is $o_N(1)$ by Theorem~\ref{thm:mean}, so it suffices to bound the second term.
    Recall that for $(H_N,\bx,(\by_t)_{t\in [0,T]}) \sim \bbP$, conditional on $(H_N,\by_t)$ the posterior law on $\bx$ is $\mu_t(\bsig) \propto e^{H_{N,t}(\bsig)}$, for $H_{N,t}(\bsig)$ as in \eqref{eq:HNt-with-y}.
    Furthermore, for $s = t - \ell \delta$, $\bg \sim \cN(0,I_N)$,
    \[
        H_{N,t}(\bsig) = H_{N,\ell \delta}(\bsig) + \la s \bx + \sqrt{s} \bg, \bsig \ra.
    \]
    Let $\bDel_{t,\ell\delta}(\bsig) = H_{N,t}(\bsig) - H_{N,\ell\delta}(\bsig)$.
    With probability $1-e^{-cN}$, $\norm{\bg} \le 2\sqrt{N}$.
    Let $\cE$ denote this event.
    On $\cE$,
    \beq
        \label{eq:girsanov-error-H-difference}
        \sup_{\bsig \in S_N} \norm{\bDel_{t,\ell\delta}(\bsig)}
        \le \delta \sqrt{N}\norm{\bx} + \sqrt{\delta N} \norm{\bg}
        \le 3\sqrt{\delta} N
        = 3/N.
    \eeq
    So,
    \baln
        \bm(\by_{\ell \delta},\ell \delta) - \bm(\by_t,t)
        &= \fr{
            \int_{S_N} \bsig e^{H_{N,\ell \delta}(\bsig)}
        }{
            \int_{S_N} e^{H_{N,\ell \delta}(\bsig)}
        }
        - \fr{
            \int_{S_N} \bsig e^{H_{N,t}(\bsig)}
        }{
            \int_{S_N} e^{H_{N,t}(\bsig)}
        } \\
        &= \fr{
            \iint \bsig^1 (
            e^{H_{N,\ell \delta}(\bsig^1) + H_{N,t}(\bsig^2)}
            - e^{H_{N,\ell \delta}(\bsig^2) + H_{N,t}(\bsig^1)}
            )~\mu_0^{\otimes 2}(\de \bsig)
        }{
            \iint
            e^{H_{N,\ell \delta}(\bsig^1) + H_{N,t}(\bsig^2)}
            ~\mu_0^{\otimes 2}(\de \bsig)
        } \\
        &= \fr{
            \iint \bsig^1 (
                e^{\bDel_{t,\ell\delta}(\bsig^1)}
                - e^{\bDel_{t,\ell\delta}(\bsig^2)}
            )
            e^{H_{N,\ell\delta}(\bsig^1) + H_{N,\ell\delta}(\bsig^2)}
            ~\mu_0^{\otimes 2}(\de \bsig)
        }{
            \iint
            e^{\bDel_{t,\ell\delta}(\bsig^2)}
            e^{H_{N,\ell\delta}(\bsig^1) + H_{N,\ell\delta}(\bsig^2)}
            ~~\mu_0^{\otimes 2}(\de \bsig)
        }.
    \ealn
    By \eqref{eq:girsanov-error-H-difference},
    \[
        \norm{\bsig^1} |
            e^{\bDel_{t,\ell\delta}(\bsig^1)}
            - e^{\bDel_{t,\ell\delta}(\bsig^2)}
        |
        = O(N^{-1/2})
    \]
    for all $\bsig^1,\bsig^2 \in S_N$, and thus $\norm{\bm(\by_{\ell \delta},\ell \delta) - \bm(\by_t,t)} = O(N^{-1/2})$.
    So,
    \baln
        \bbE_{\bbP} \norm{\bm(\by_{\ell \delta},\ell \delta) - \bm(\by_t,t)}^2
        &\le \bbE_{\bbP} \ind\{\cE\} \norm{\bm(\by_{\ell \delta},\ell \delta) - \bm(\by_t,t)}^2
        + \bbE_{\bbP} \ind\{\cE^c\} \norm{\bm(\by_{\ell \delta},\ell \delta) - \bm(\by_t,t)}^2 \\
        &\le O(N^{-1/2}) + e^{-cN} \cdot 4N
        = o_N(1).
    \ealn
\end{proof}
\begin{proof}[Proof of Proposition~\ref{ppn:yT-close-in-TV}]
    By \eqref{eq:euler-disc-matches-discretized-sde} and Lemmas~\ref{lem:KL-girsanov} and \ref{lem:girsanov-error},
    \[
        \bbE_{H_N \sim \bbP}
        \KL(\cL(\by_T | H_N),\cL(\by^L | H_N))
        = o_N(1).
    \]
    The result follows from Pinsker's inequality and Jensen's inequality:
    \baln
        \bbE_{H_N \sim \bbP}
        \KL(\cL(\by_T | H_N),\cL(\by^L | H_N))
        &\ge 
        2\bbE_{H_N \sim \bbP}
        \lt[\TV(\cL(\by_T | H_N),\cL(\by^L | H_N))^2\rt] \\
        &\ge 2\lt[
            \bbE_{H_N \sim \bbP}
            \TV(\cL(\by_T | H_N),\cL(\by^L | H_N))
        \rt]^2.
    \ealn
\end{proof}

\subsection{Log-concavity of late measures}

In this subsection, we prove Proposition~\ref{ppn:mala-accuracy}.
Let $\be_1,\ldots,\be_N$ be the standard basis.
By a change of coordinates, we may assume without loss of generality that $\hby = \by / \norm{\by}_N = \be_N \sqrt{N}$ and $\bU = (\be_1,\ldots,\be_{N-1})$.
\begin{lem}
    \label{lem:push-forward-cap-law}
    For any $\by \neq \bzero$, the push-forward of $\mu_{H_N,\by}(\, \cdot \, | \la \bsig,\by \ra > 0)$ under the stereographic projection $\bT_\by$ is $\nu_{H_N,\by}^{\proj}$, defined in \eqref{eq:ProjMeasureNoApprox}.
\end{lem}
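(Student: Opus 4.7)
The plan is a direct change-of-variables calculation. By definition $\mu_{H_N,\by}(\de\bsig \mid \la\bsig,\by\ra > 0)$ restricted to the hemisphere $S_N^+ = S_N \cap \{\la\bsig,\by\ra > 0\}$ is proportional to $\exp H_{N,\by}(\bsig)$ times the uniform surface measure on $S_N^+$. The map $\bsig_\by \colon \bbR^{N-1} \to S_N^+$ is the inverse of the stereographic projection $\bT_\by$, so it suffices to compute the Jacobian factor $J(\brho)$ for which $\bsig_{\by,\ast}^{-1}(\de S) = J(\brho) \de\brho$, multiply by $\exp H_{N,\by}(\bsig_\by(\brho))$, and verify this equals a constant times $\exp H_{N,\by}^{\proj}(\brho)$.

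After a change of orthonormal basis we may assume $\hby = \sqrt{N}\,\be_N$ and $\bU = [\be_1,\ldots,\be_{N-1}]$, so that $\bsig_\by(\brho) = (1+\norm{\brho}_N^2)^{-1/2}(\hby + \bU\brho)$. I will compute the Gram matrix $G_{ij} = \la \partial_i \bsig_\by, \partial_j \bsig_\by\ra$. Writing $s = \norm{\brho}_N^2 = \|\brho\|^2/N$ and using $\la \bU\brho,\bU\be_i\ra = \rho_i$ and $\la \hby,\bU\be_i\ra = 0$, a short calculation gives
\[
    G = (1+s)^{-1}\Bigl(I_{N-1} - \frac{1}{N(1+s)}\brho\brho^\top\Bigr).
\]
Then by the matrix-determinant lemma,
\[
    \det G = (1+s)^{-(N-1)} \cdot \Bigl(1 - \frac{s}{1+s}\Bigr) = (1+s)^{-N},
\]
so $J(\brho) = \sqrt{\det G} = (1+\norm{\brho}_N^2)^{-N/2}$.

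Feeding this into the change-of-variables formula,
\[
    \bT_{\by,\ast}\bigl[\mu_{H_N,\by}(\,\cdot\mid \la\bsig,\by\ra>0)\bigr](\de\brho)
    \;\propto\; \exp\bigl(H_{N,\by}(\bsig_\by(\brho))\bigr)\,(1+\norm{\brho}_N^2)^{-N/2}\,\de\brho,
\]
and the exponent equals $H_{N,\by}(\bsig_\by(\brho)) - (N/2)\log(1+\norm{\brho}_N^2) = H_{N,\by}^{\proj}(\brho)$ by definition. This is $\nu_{H_N,\by}^{\proj}$ up to normalization, and the normalization is forced by both measures being probabilities. The only potentially annoying step is the Jacobian computation, but this reduces to a rank-one determinant and is essentially the classical formula for inverse stereographic projection of a sphere of radius $\sqrt{N}$; I do not foresee any real obstacle.
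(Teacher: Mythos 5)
Your proposal is correct and follows essentially the same route as the paper: both compute the Gram matrix $\bD\bsig_\by^\top\bD\bsig_\by = (1+\norm{\brho}_N^2)^{-1}\bigl(\bI_{N-1} - \brho\brho^\top/(N(1+\norm{\brho}_N^2))\bigr)$ of the inverse stereographic projection, evaluate its determinant via the rank-one update to get the Jacobian factor $(1+\norm{\brho}_N^2)^{-N/2}$, and observe that this is exactly the $-\fr{N}{2}\log(1+\norm{\brho}_N^2)$ term in the definition of $H_{N,\by}^{\proj}$. Your Gram-matrix entries and determinant check out, so there is nothing to fix.
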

\begin{proof}
    Note that (denoting by $\bD F$ the Jacobian of map $F$):
    \[
        \bD \bsig_\by(\brho)^\top
        = \fr{[\bI_{N-1},\bzero]}{\sqrt{1 + \norm{\brho}_N^2}}
        - \fr{\brho \bsig_\by(\brho)^\top/N}{1 + \norm{\brho}_N^2}.
    \]
    Since $[\bI_{N-1},\bzero] \bsig_\by(\brho) = \fr{\brho}{\sqrt{1+\norm{\brho}_N^2}}$, we have
    \[
        \bD \bsig_\by(\brho)^{\top}
         \bD\bsig_\by(\brho)
        = \fr{\bI_{N-1}}{1 + \norm{\brho}_N^2}
        - \fr{\brho \brho^\top / N}{(1 + \norm{\brho}_N^2)^2}
        = \fr{\bI_{N-1}}{1 + \norm{\brho}_N^2} \lt(
            \bI_{N-1} - \fr{\brho \brho^\top / N}{1 + \norm{\brho}_N^2}
        \rt).
    \]
    The stereographic projection thus incurs a change of density factor of
    \[
        \det(\bD\bsig_\by(\brho)^{\top}\bD \bsig_\by(\brho))^{1/2}
        = (1 + \norm{\brho}_N^2)^{-N/2}.
    \]
    This precisely accounts for the term $-\fr{N}{2} \log(1 + \norm{\brho}_N^2)$ in \eqref{eq:def-Hproj}.
\end{proof}

\begin{lem}
    \label{lem:nu-conc-small-radius}
    For sufficiently large $T$, with probability $1-o_N(1)$ over $(H_N,\by_T)$ as in Proposition~\ref{ppn:mala-accuracy}, $\nu_{H_N,\by_T}^{\proj}(\norm{\brho}_N^2 \le \eps_0) = 1-o_N(1)$ and  $\mu_{H_N,\by_T}(\<\bsig,\by_T\>_N\le 0)=o_N(1)$.
\end{lem}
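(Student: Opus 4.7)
My plan is to reduce both assertions to a concentration fact about $\<\bx, \hby_T\>_N$ under $\bbP$, by combining the stereographic identity with Nishimori's property (Fact~\ref{fac:nishimori}).

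Since $\|\hby_T\|_N = 1$ and the columns of $\bU$ are orthogonal to $\by_T$, the definition \eqref{eq:stereographic-proj-inv} gives the identity $\<\bsig_{\by_T}(\brho), \hby_T\>_N = (1+\|\brho\|_N^2)^{-1/2}$. In particular, setting $\alpha := (1+\eps_0)^{-1/2}$, the preimage of $\{\|\brho\|_N^2 \le \eps_0\}$ under the stereographic projection $\bT_{\by_T}$ equals $\{\bsig \in S_N : \<\bsig, \hby_T\>_N \ge \alpha\}$, which is contained in $\{\bsig : \<\bsig, \by_T\>_N > 0\}$. By Lemma~\ref{lem:push-forward-cap-law}, therefore,
\[
\nu_{H_N,\by_T}^\proj(\|\brho\|_N^2 \le \eps_0) = \frac{\mu_{H_N,\by_T}(\<\bsig,\hby_T\>_N \ge \alpha)}{\mu_{H_N,\by_T}(\<\bsig,\by_T\>_N > 0)}.
\]

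Both conclusions will follow once we show that $\mu_{H_N,\by_T}(\<\bsig,\hby_T\>_N \ge \alpha) = 1 - o_N(1)$ with probability $1-o_N(1)$ over $(H_N,\by_T)\sim \bbP$: the denominator in the display above is then at least as large and hence also $1-o_N(1)$, yielding the first claim, while the second claim $\mu_{H_N,\by_T}(\<\bsig,\by_T\>_N \le 0) = o_N(1)$ is then immediate from containment. By Nishimori (Fact~\ref{fac:nishimori}), $\mu_{H_N,\by_T}$ is the conditional law of $\bx$ given $(H_N,\by_T)$ under $\bbP$, so
\[
\EE_\bbP\lt[\mu_{H_N,\by_T}(\<\bsig,\hby_T\>_N < \alpha)\rt] = \PP_\bbP[\<\bx,\hby_T\>_N < \alpha].
\]
By Markov's inequality applied to the non-negative random variable on the left, it thus suffices to show that the right-hand side is $o_N(1)$ for $T$ sufficiently large.

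To that end, write $\by_T = T\bx + \bB_T$ with $\bB_T \sim \cN(0,T\bI_N)$ independent of $\bx$. Then conditionally on $\bx$, $\<\bx,\bB_T\>_N \sim \cN(0,T/N)$, while $\|\bB_T\|_N^2$ has mean $T$ and variance $2T^2/N$; hence $\<\bx,\by_T\>_N = T + o_N(1)$ and $\|\by_T\|_N^2 = T^2 + T + o_N(1)$ in probability, so $\<\bx,\hby_T\>_N \to \sqrt{T/(T+1)}$ in probability as $N\to\infty$. For any $T > 1/\eps_0$ we have $\sqrt{T/(T+1)} > \alpha$, hence $\PP_\bbP[\<\bx,\hby_T\>_N < \alpha] = o_N(1)$. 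No substantive obstacle arises; the only quantitative input is choosing $T$ large enough that $\sqrt{T/(T+1)}$ strictly exceeds $(1+\eps_0)^{-1/2}$, and the rest is a clean Nishimori-plus-concentration reduction.
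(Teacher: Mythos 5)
Your proof is correct, but it takes a genuinely different — and lighter — route than the paper's. The paper deduces the cap concentration from Proposition~\ref{ppn:amp-dominate-gibbs-2} (concentration of the Gibbs overlap with the planted signal at $q_\ast$, whose proof rests on the replica-symmetric Parisi upper bound), combined with the lower bound $q_\ast(T) > 1 - 1/T$ and the near-alignment $\la \bx, \hby_T\ra_N \approx \sqrt{1-1/(T+1)}$. You instead bypass the landscape machinery entirely: the explicit stereographic identity $\la \bsig_{\by}(\brho), \hby\ra_N = (1+\|\brho\|_N^2)^{-1/2}$ converts the event $\{\|\brho\|_N^2 \le \eps_0\}$ exactly into the cap $\{\la\bsig,\hby_T\ra_N \ge (1+\eps_0)^{-1/2}\}$, and then the Nishimori identity (Fact~\ref{fac:nishimori}) plus Markov reduces the Gibbs-measure statement to the elementary prior computation $\la \bx,\hby_T\ra_N \to_p \sqrt{T/(T+1)}$, which exceeds $(1+\eps_0)^{-1/2}$ once $T > 1/\eps_0$. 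This is essentially the mechanism of the paper's own Lemma~\ref{lem:band-recursion-one-step} applied at time $T$ and renormalized, so all the ingredients you use are already validated in the paper; what your argument buys is independence from the Parisi-based overlap concentration, while what it gives up is nothing relevant here (your Markov step yields only an $o_N(1)$ rate unless you push the exponential tail of $\la\bx,\hby_T\ra_N$ through, but the lemma asks for no more). Your handling of the conditioning — noting that the cap is contained in the half-space $\la\bsig,\by_T\ra_N>0$, so the numerator lower-bounds the denominator and both conclusions follow from a single estimate — is also clean and correct.
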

\begin{proof}
    Let $(H_N,\bx,\by_T)$ be a sample from $\bbP$, and let $q_\ast = q_\ast(T)$ be as in Fact~\ref{fac:qt-unique}.
    Note that $q_\ast > 1 - \fr{1}{T}$, as
    \[
        \xi'_T\big(1 - 1/T\big) \ge T + \xi'_T\big(1 - 1/T\big) \ge T > T-1 = \fr{1-1/T}{1/T}.
    \]
    By Proposition~\ref{ppn:amp-dominate-gibbs-2}, with probability $1-o_N(1)$,
    \[
        \mu_{H_N,\by_T}(\<\bsig,\bx\>_N \ge 1 - 1/T) = 1-o_N(1).
    \]
    With probability $1-o_N(1)$, we have $\norm{\by}_N = \sqrt{T(T+1)}+o_N(1)$, so
    \[
        \la \bx, \hby \ra
        = \fr{\la \bx, \by \ra}{\norm{\by}_N}
        = \sqrt{1 - \fr{1}{T+1}} +o_N(1).
    \]
    On this event, $\{\bsig \in S_N : \<\bsig,\bx\>_N \ge 1 - 1/T\} \subseteq \{\bsig \in S_N : \<\bsig,\hby\>_N \ge 1 - 2/T\}$.
    So, with probability $1-o_N(1)$,
    \[
        \mu_{H_N,\by_T}(\<\bsig,\hby\>_N \ge 1 - 2/T) = 1-o_N(1).
    \]
 (This of course implies $\mu_{H_N,\by_T}(\<\bsig,\by_T\>_N\le 0)=o_N(1)$.)
    For sufficiently large $T$, the stereographic projection $\bT_\by$ maps $\{\bsig \in S_N : \<\bsig,\hby\>_N \ge 1 - 2/T\}$ into $\{\brho \in \bbR^{N-1} : \norm{\brho}_N^2 \le \eps_0\}$.
    The conclusion follows from Lemma~\ref{lem:push-forward-cap-law}.
\end{proof}

\begin{cor}
    \label{cor:tnu-nu-approx}
    Recall definition \eqref{eq:ProjMeasure} of $\nu_{H_N,\by_T}^{\proj}$, $\tnu_{H_N,\by_T}^{\proj}$.
    For sufficiently large $T$, with probability $1-o_N(1)$ over $(H_N,\by_T)$, $\TV(\nu_{H_N,\by_T}^{\proj},\tnu_{H_N,\by_T}^{\proj}) = o_N(1)$.
\end{cor}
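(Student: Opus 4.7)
The plan is to exploit the fact that $\varphi(x) = 0$ on $[0,\eps_0]$, so the two Hamiltonians $H_{N,\by}^{\proj}$ and $\tH_{N,\by}^{\proj}$ coincide on the ball $A := \{\brho \in \bbR^{N-1} : \norm{\brho}_N^2 \le \eps_0\}$. The measures $\nu_{H_N,\by_T}^{\proj}$ and $\tnu_{H_N,\by_T}^{\proj}$ are therefore proportional to one another on $A$, and so the TV reduces entirely to mass escaping $A$ together with a partition function correction.

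First, I would invoke Lemma~\ref{lem:nu-conc-small-radius}: for $T$ a sufficiently large constant, with probability $1-o_N(1)$ over $(H_N,\by_T)$ we have $\nu_{H_N,\by_T}^{\proj}(A) = 1-o_N(1)$. Condition on this event throughout. Next, compare partition functions. Since $\varphi \ge 0$, we have $\tH_{N,\by_T}^{\proj}(\brho) \le H_{N,\by_T}^{\proj}(\brho)$ pointwise, so $\tZ(\by_T) \le \hZ(\by_T)$. Since $\tH_{N,\by_T}^{\proj} = H_{N,\by_T}^{\proj}$ on $A$, also
\[
\tZ(\by_T) \ge \int_A e^{H_{N,\by_T}^{\proj}(\brho)}\,\de\brho = \hZ(\by_T)\cdot \nu_{H_N,\by_T}^{\proj}(A) = (1-o_N(1))\,\hZ(\by_T).
\]
Hence the ratio $\tZ(\by_T)/\hZ(\by_T) = 1 - o_N(1)$, which in particular forces $\tnu_{H_N,\by_T}^{\proj}(A) = 1 - o_N(1)$ as well.

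Now I would bound the TV by splitting the integral of the density difference over $A$ and $A^c$. On $A$, the two densities are $\hZ(\by_T)^{-1} e^{H_{N,\by_T}^{\proj}(\brho)}$ and $\tZ(\by_T)^{-1} e^{H_{N,\by_T}^{\proj}(\brho)}$, so
\[
\tfrac{1}{2}\int_A \bigl|\tfrac{\de\nu_{H_N,\by_T}^{\proj}}{\de\brho} - \tfrac{\de\tnu_{H_N,\by_T}^{\proj}}{\de\brho}\bigr|\,\de\brho
= \tfrac{1}{2}\bigl|1 - \hZ(\by_T)/\tZ(\by_T)\bigr|\cdot \nu_{H_N,\by_T}^{\proj}(A) = o_N(1).
\]
On $A^c$, the contribution is bounded by $\tfrac{1}{2}(\nu_{H_N,\by_T}^{\proj}(A^c)+\tnu_{H_N,\by_T}^{\proj}(A^c)) = o_N(1)$ using the two concentration bounds established above. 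Summing the two pieces yields $\TV(\nu_{H_N,\by_T}^{\proj},\tnu_{H_N,\by_T}^{\proj}) = o_N(1)$.

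The only genuinely substantive input here is Lemma~\ref{lem:nu-conc-small-radius}; the rest is a bookkeeping exercise. Consequently no real obstacle is anticipated, though care is needed to confirm that the probability-$1-o_N(1)$ event produced by that lemma (stated under the planted law $\bbP$) transfers to the distribution of $(H_N,\by_T)$ relevant for the present corollary, which is immediate by Corollary~\ref{cor:contiguous} if needed.
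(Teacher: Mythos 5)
Your proposal is correct and follows essentially the same route as the paper: both arguments rest on the facts that $\varphi$ vanishes on $[0,\eps_0]$ (so the two densities are proportional on the ball $\{\norm{\brho}_N^2\le\eps_0\}$), that $\varphi\ge 0$ makes the truncated partition function no larger outside the ball, and that Lemma~\ref{lem:nu-conc-small-radius} forces both measures to concentrate there. Your version merely spells out the TV split over $A$ and $A^c$ a bit more explicitly; the substance is identical.
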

\begin{proof}
    Since $\varphi(x) = 0$ for $x\in [0,\eps_0]$, and $\varphi(x) \ge 0$ for $x > \eps_0$, we have
    \baln
        \int_{\norm{\brho}_N^2 \le \eps_0}
        \exp \tH_{N,\by_T}^{\proj}(\brho)
        ~\de \brho
        &= \int_{\norm{\brho}_N^2 \le \eps_0}
        \exp H_{N,\by_T}^{\proj}(\brho)
        ~\de \brho, \\
        \int_{\norm{\brho}_N^2 > \eps_0}
        \exp \tH_{N,\by_T}^{\proj}(\brho)
        ~\de \brho
        &\le \int_{\norm{\brho}_N^2 > \eps_0}
        \exp H_{N,\by_T}^{\proj}(\brho)
        ~\de \brho.
    \ealn
    Combined with Lemma~\ref{lem:nu-conc-small-radius}, it follows that with probability $1-o_N(1)$,
    \[
        \tnu_{H_N,\by_T}^{\proj}(\norm{\brho}_N^2 \le \eps_0)
        \ge \nu_{H_N,\by_T}^{\proj}(\norm{\brho}_N^2 \le \eps_0)
        \ge 1-o_N(1).
    \]
    Since $\tnu_{H_N,\by_T}^{\proj}$ and $\nu_{H_N,\by_T}^{\proj}$ are furthermore proportional on $\{\norm{\brho}_N^2 \le \eps_0\}$, the conclusion follows.
\end{proof}

\begin{ppn}
    \label{ppn:tHproj-concave}
    For sufficiently large $T$, there exist $C_{\min},C_{\max} > 0$ (depending on $T$) such that with probability $1-o_N(1)$, for all $\brho \in \bbR^{N-1}$,
    \[
        -C_{\max} \bI_{N-1}
        \preceq \nabla^2 \tH_{N,\by_T}^{\proj}(\brho)
        \preceq -C_{\min} \bI_{N-1}.
    \]
\end{ppn}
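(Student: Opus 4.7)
The plan is to split $\tH_{N,\by_T}^{\proj} = \cH_{\rm det} + \cH_{\rm rand}$, where with $x := \norm{\brho}_N^2$ we set
\[
    \cH_{\rm det}(\brho) := \la \by_T,\bsig_{\by_T}(\brho)\ra - \fr{N}{2}\log(1+x) - \fr{TN}{2}\varphi(x), \qquad
    \cH_{\rm rand}(\brho) := H_N(\bsig_{\by_T}(\brho)),
\]
and to show separately that (a) $\nabla^2\cH_{\rm det}(\brho)$ has spectrum in $[-C(T),\, -T\eps_0/2]$ uniformly in $\brho$ on a probability $1-o_N(1)$ event, and (b) $\|\nabla^2\cH_{\rm rand}(\brho)\|_\op \le C_{\rm rand}$ uniformly in $\brho$ on the event $K_N$ of Proposition~\ref{ppn:gradients-bounded}. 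Combining, for $T$ sufficiently large we take $C_{\min} := T\eps_0/4$ and $C_{\max} := C(T) + C_{\rm rand}$.

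For (a), using $\bU^\top \hby_T = 0$ and $\|\hby_T\|_2^2 = N$ one computes $\la\by_T,\bsig_{\by_T}(\brho)\ra = N\|\by_T\|_N(1+x)^{-1/2}$, so $\cH_{\rm det}$ depends on $\brho$ only through $x$. Direct differentiation yields $\nabla\cH_{\rm det} = -\alpha(x)\brho$ and $\nabla^2\cH_{\rm det} = -\alpha(x)\bI_{N-1} - \fr{2\alpha'(x)}{N}\brho\brho^\top$, where
\[
    \alpha(x) := \fr{\|\by_T\|_N}{(1+x)^{3/2}} + \fr{1}{1+x} + T\varphi'(x),
\]
so the eigenvalues are $-\alpha(x)$ (tangential to $\brho$) and $-[\alpha(x)+2x\alpha'(x)]$ (radial). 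Algebraic rearrangement gives
\[
    \alpha(x) + 2x\alpha'(x) = \|\by_T\|_N\cdot\fr{1-2x}{(1+x)^{5/2}} + \fr{1-x}{(1+x)^2} + T\big[\varphi'(x) + 2x\varphi''(x)\big],
\]
\[
    \alpha(x) = \fr{\|\by_T\|_N - T}{(1+x)^{3/2}} + \fr{1}{1+x} + T\big[\tfrac{1}{(1+x)^{3/2}} + \varphi'(x)\big].
\]
On the probability $1-o_N(1)$ event $\{|\|\by_T\|_N - T| \le 1\}$ (which holds for $\by_T$ from $\bbP$ by Gaussian concentration applied to $\by_T = T\bx + \bB_T$, and transfers to $\by^L$ via Proposition~\ref{ppn:yT-close-in-TV}), the structural inequalities \eqref{eq:varphi-desiderata} combined with the trivial bounds $|(1-2x)/(1+x)^{5/2}|,\, |(1-x)/(1+x)^2| \le 1$ on $x \ge 0$ directly give $\min\big(\alpha(x),\,\alpha(x)+2x\alpha'(x)\big) \ge T\eps_0 - 2$ uniformly in $x\ge 0$, which is $\ge T\eps_0/2$ once $T\ge 4/\eps_0$. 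The matching upper bound $\max\big(\alpha,\alpha+2x\alpha'\big) \le C(T)$ follows by choosing $\varphi$ in Fact~\ref{fac:varphi-exists} with $\varphi'$ and $\varphi'+2x\varphi''$ uniformly bounded on $[0,\infty)$, e.g.\ taking $\varphi$ affine for large $x$.

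For (b), the chain rule gives
\[
    \nabla^2\cH_{\rm rand}(\brho) = \bD\bsig^\top[\nabla^2 H_N(\bsig_{\by_T}(\brho))]\bD\bsig + \sum_{k=1}^N [\nabla H_N(\bsig_{\by_T}(\brho))]_k\,\nabla^2 \bsig_{\by_T,k}(\brho),
\]
where $\bD\bsig := \bD\bsig_{\by_T}(\brho)$. Since $\bsig_{\by_T}(\brho)\in S_N$, event $K_N$ supplies $\|\nabla H_N\|_2 \le C_1\sqrt{N}$ and $\|\nabla^2 H_N\|_\op \le C_2$ at this point. The identity $\bD\bsig^\top\bD\bsig = (1+x)^{-1}\bI - (1+x)^{-2}\brho\brho^\top/N \preceq (1+x)^{-1}\bI$ bounds the first chain-rule term by $C_2/(1+x)$ in operator norm. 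For the second term, differentiating the explicit formula $\bsig_{\by_T,k}(\brho) = (\hby_T + \bU\brho)_k(1+x)^{-1/2}$ twice expands each contribution into rank-one and scalar pieces; Cauchy--Schwarz then bounds the operator norm of each resulting piece by a universal constant times a scalar factor of the form $(1+x)^{-a}x^{b/2}$ with $0\le b\le 2a$, each uniformly bounded on $[0,\infty)$. This yields $\|\nabla^2\cH_{\rm rand}\|_\op \le C_{\rm rand}$ uniformly in $\brho$, completing the proof. The principal technical nuisance is the bookkeeping in (b) to verify that every scalar factor emerging from $\nabla^2\bsig_{\by_T,k}$ stays bounded in $x\ge 0$; the rest is direct differentiation together with the two structural inequalities \eqref{eq:varphi-desiderata} that $\varphi$ is engineered to satisfy.
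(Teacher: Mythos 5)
Your proof is correct and follows essentially the same route as the paper's: isolate the dominant contribution (the tilt $\la\by_T,\bsigma_{\by_T}(\brho)\ra$ together with the log-Jacobian and the $T\varphi$ regularizer), compute its two radial/tangential eigenvalues, control them via the two inequalities in \eqref{eq:varphi-desiderata} on the event $\norm{\by_T}_N \approx T$, and bound everything involving $H_N$ by an $O(1)$ matrix on the event $K_N$. The only cosmetic difference is that you package the dominant part as an exact radial function $\alpha(\norm{\brho}_N^2)$ before differentiating, whereas the paper expands the full Hessian first and then discards the $O(1)$ terms; the resulting eigenvalue expressions and constants ($C_{\min}$ of order $T\eps_0$) are the same.
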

\begin{proof}
    Let $\by = \by_T$ $\hby = \by/\|\by\|_N$, and assume without loss of generality
    $\hby = \sqrt{N}\, \be_N$.
    Let $\bU^\top = [\bI_{N-1}, \bzero] \in \bbR^{(N-1) \times N}$ be the projection onto the orthogonal complement of $\hby$ .

    A direct calculation shows
    \baln
        \nabla^2 \tH_{N,\by}^{\proj}(\brho)
        &= \fr{\la \nabla H_{N,\by}(\bsig_\by(\brho)), \bsig_\by(\brho) \ra}{N(1+\norm{\brho}_N^2)}
        \lt(
            - \bI_{N-1} + \fr{3\brho\brho^\top}{N(1+\norm{\brho}_N^2)}
        \rt) \\
        &+ \fr{\la \nabla^2 H_{N,\by}(\bsig_\by(\brho)), \bsig_\by(\brho)^{\otimes 2} \ra}{N(1+\norm{\brho}_N^2)^2} \cdot \fr{\brho\brho^\top}{N}
        + \fr{\bU^\top \nabla^2 H_{N,\by}(\bsig_\by(\brho)) \bU}{1 + \norm{\brho}_N^2} \\
        &- \fr{
            \brho \bsig_\by(\brho)^\top \nabla^2 H_{N,\by}(\bsig_\by(\brho)) \bU
            + \bU^\top\nabla^2 H_{N,\by}(\bsig_\by(\brho)) \bsig_\by(\brho) \brho^\top
        }{N(1+\norm{\brho}_N^2)} \\
        &- \fr{
            \brho \nabla H_{N,\by}(\bsig_\by(\brho))^\top \bU
            + \bU^{\top} \nabla H_{N,\by}(\bsig_\by(\brho)) \brho^\top
        }{N(1+\norm{\brho}_N^2)^{3/2}} \\
        &- \lt(
            T\varphi'(\norm{\brho}_N^2) + \fr{1}{1+\norm{\brho}_N^2}
        \rt) \bI_{N-1}
        - \lt(
            T\varphi''(\norm{\brho}_N^2) - \fr{1}{(1+\norm{\brho}_N^2)^2}
        \rt) \fr{2\brho\brho^\top}{N}.
    \ealn
 
    By Proposition~\ref{ppn:gradients-bounded}, there exists $C>0$ (independent of $T$) such that with probability $1-o_N(1)$,
    \[
        \sup_{\bsig \in S_N}
        \norm{\nabla H_N(\bsig)}_N,
        \sup_{\bsig \in S_N}
        \norm{\nabla^2 H_N(\bsig)}_{\op}
        \le C.
    \]
    We will show that on this event,
    \balnn
        \notag
        \nabla^2 \tH_{N,\by}^{\proj}(\brho)
        &= \fr{\norm{\by}_N}{(1+\norm{\brho}_N^2)^{3/2}} \lt(
            - \bI_{N-1} + \fr{3\brho\brho^\top}{N(1+\norm{\brho}_N^2)}
        \rt) - T\varphi'(\norm{\brho}_N^2) \bI_{N-1} \\
        \label{eq:nabla2-tHproj-approximation}
        &- T\varphi''(\norm{\brho}_N^2) \cdot \fr{2\brho\brho^\top}{N}
        + O(1),
    \ealnn
    where $O(1)$ denotes a matrix of operator norm $O(1)$, independent of $T$.
    Note that
    \[
        \fr{\la \nabla H_{N,\by}(\bsig_\by(\brho)), \bsig_\by(\brho) \ra}{N(1+\norm{\brho}_N^2)}
        = \fr{\la \nabla H_N(\bsig_\by(\brho)) + \by, \bsig_\by(\brho) \ra}{N(1+\norm{\brho}_N^2)}
        = \fr{\la \nabla H_N(\bsig_\by(\brho)), \bsig_\by(\brho) \ra}{N(1+\norm{\brho}_N^2)}
        + \fr{\norm{\by}_N}{(1+\norm{\brho}_N^2)^{3/2}}.
    \]
    The first term on the right-hand side is bounded independently of $T$, as
    \[
        \fr{|\la \nabla H_N(\bsig_\by(\brho)), \bsig_\by(\brho) \ra|}{N}
        \le \norm{\nabla H_N(\bsig_\by(\brho))}_N \norm{\bsig_\by(\brho)}_N.
    \]
    Similarly, all other terms in the expansion of $\nabla^2 \tH_{N,\by}^{\proj}(\brho)$ above, aside from $T\varphi'(\norm{\brho}_N^2) \bI_{N-1}$ and $T\varphi''(\norm{\brho}_N^2) \cdot \fr{2\brho\brho^\top}{N}$, are bounded independently of $T$, due to the following inequalities:
    \baln
        \norm{\nabla^2 H_{N,\by}(\bsig_\by(\brho))}_{\op}
        &= \norm{\nabla^2 H_N(\bsig_\by(\brho))}_{\op}
        = O(1), \\
        \fr{\norm{\bU^\top \nabla H_{N,\by}(\bsig_\by(\brho)) \brho^\top}_{\op}}{N}
        &\le \norm{\bU^\top \nabla H_{N,\by}(\bsig_\by(\brho))}_N \norm{\brho}_N \\
        &= \norm{\bU^\top \nabla H_N(\bsig_\by(\brho))}_N \norm{\brho}_N
        \le O(1) \norm{\brho}_N,
    \ealn
    and $\norm{\brho\brho^\top}_{\op}/N = \norm{\brho}_N^2$, $\norm{\brho\bsig_\by(\brho)^\top}_{\op}/N = \norm{\brho}_N$.
    (Note that each of these terms, each copy of $\norm{\brho}_N^2$ in the resulting bound is compensated by at least one copy of $1+\norm{\brho}_N^2$ in the denominator.)
    This proves \eqref{eq:nabla2-tHproj-approximation}.

    With probability $1-o_N(1)$, we have $\norm{\by}_N = \sqrt{T(T+1)}+o_N(1)$.
    On this event, \eqref{eq:nabla2-tHproj-approximation} yields
    \[
        \nabla^2 \tH_{N,\by}^{\proj}(\brho)
        = T (-\bM(\brho) + o_T(1)),
    \]
    where $o_T(1)$ denotes a matrix with operator norm vanishing with $T$ and
    \[
        \bM(\brho) = \fr{\bI_{N-1}}{(1+\norm{\brho}_N^2)^{3/2}} - \fr{3\brho\brho^\top}{N(1+\norm{\brho}_N^2)^{5/2}}
        + \varphi'(\norm{\brho}_N^2) \bI_{N-1} + \varphi''(\norm{\brho}_N^2) \cdot \fr{2\brho\brho^\top}{N}.
    \]
    From this it is clear that $-C_{\max} \bI_{N-1} \preceq \nabla^2 \tH_{N,\by_T}^{\proj}(\brho)$ for suitable $C_{\max}$.
    For the other direction, note that $\bM(\brho)$ has eigenvalue $\fr{1}{(1+\norm{\brho}_N^2)^{3/2}} + \varphi'(\norm{\brho}_N^2)$ in all directions orthogonal to $\brho$, and
    \[
        \fr{1 - 2\norm{\brho}_N^2}{(1+\norm{\brho}_N^2)^{5/2}}
        + \varphi'(\norm{\brho}_N^2)
        + 2\norm{\brho}_N^2 \varphi''(\norm{\brho}_N^2)
    \]
    in the direction of $\brho$.
    By \eqref{eq:varphi-desiderata}, $\bM(\brho) \succeq \eps_0 \bI_{N-1}$, and thus $\nabla^2 \tH_{N,\by_T}^{\proj}(\brho) \preceq -C_{\min} \bI_{N-1}$ for $C_{\min} = T\eps_0 / 2$.
\end{proof}
Finally, we verify that $\varphi$ satisfying \eqref{eq:varphi-desiderata} exists.
\begin{fac}
    \label{fac:varphi-exists}
    For suitable $C > 0$, the function
    \[
        \varphi(x) = C \ind\{x>\eps_0\} \lt(
            x - \fr{\eps_0^2}{x} - 2 \eps_0 \log \fr{x}{\eps_0}
        \rt)
    \]
    is nonnegative, twice continuously differentiable, and satisfies \eqref{eq:varphi-desiderata}.
\end{fac}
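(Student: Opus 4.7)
The plan is to verify the three claimed properties (nonnegativity, $C^2$ regularity, and the two inequalities in \eqref{eq:varphi-desiderata}) by direct elementary computation, choosing $C$ at the end large enough to absorb any deficit.

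First I would compute, for $x > \eps_0$,
\begin{align*}
    \varphi'(x) = C\lt(1 - \fr{\eps_0}{x}\rt)^2, \qquad
    \varphi''(x) = \fr{2C\eps_0}{x^2}\lt(1 - \fr{\eps_0}{x}\rt).
\end{align*}
Both of these (and $\varphi$ itself) tend to $0$ as $x\downarrow \eps_0$, and they vanish identically on $[0,\eps_0]$, giving matching one-sided limits at $\eps_0$. Hence $\varphi\in C^2(\bbR_{\ge 0})$. Nonnegativity follows since $\varphi(\eps_0)=0$ and $\varphi'\ge 0$ on $[\eps_0,\infty)$.

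Next I would verify \eqref{eq:varphi-desiderata} by splitting at $x=\eps_0$. On $[0,\eps_0]$ the $\varphi$-contributions vanish, so I need $(1+x)^{-3/2} \ge \eps_0$ and $(1-2x)(1+x)^{-5/2} \ge \eps_0$ for $x\in [0,\eps_0]$. Both left-hand sides are decreasing on this interval, and at $x=\eps_0=0.1$ they evaluate to $(1.1)^{-3/2}\approx 0.87$ and $0.8\cdot (1.1)^{-5/2}\approx 0.63$ respectively, comfortably exceeding $\eps_0$. On $(\eps_0,\infty)$, a short algebraic simplification gives
\begin{align*}
    \varphi'(x) + 2x\varphi''(x) = C\lt(1-\fr{\eps_0}{x}\rt)\lt(1+\fr{3\eps_0}{x}\rt),
\end{align*}
so each of the two desired inequalities has the form ``bounded function of $x$ plus nonnegative multiple of $C$ is $\ge \eps_0$.'' Both ``bounded functions'' — namely $(1+x)^{-3/2}$ and $(1-2x)(1+x)^{-5/2}$ — have finite infimum over $[\eps_0,\infty)$ (the latter, a negative minimum $-2/(5/2)^{5/2}$ attained at $x=3/2$), while the $C$-multiplier is bounded below by a positive constant on any set of the form $[\eps_0+\eta,\infty)$.

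The only mildly delicate point is handling the narrow ``transition region'' $x\in(\eps_0,\eps_0+\eta)$ where the $C$-multiplier is small and cannot be used to absorb negativity. I would deal with this by continuity: at $x=\eps_0$ each inequality holds with a margin of order $0.5$, so choosing $\eta$ small enough preserves the inequality on $(\eps_0,\eps_0+\eta)$ regardless of $C$. Then for $x\ge \eps_0+\eta$, I pick $C$ large enough that $C(1-\eps_0/x)(1+3\eps_0/x)\ge \eps_0 + 2/(5/2)^{5/2}$ uniformly, which clears the negative contribution. Hence the two parameters $\eta$ and $C$ can be selected consistently to complete the verification.
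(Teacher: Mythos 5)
Your proof is correct and takes essentially the same route as the paper's: compute $\varphi'(x)=C(1-\eps_0/x)^2$ and $\varphi''(x)=\fr{2C\eps_0}{x^2}(1-\eps_0/x)$, match the one-sided limits at $\eps_0$ for $C^2$ regularity and integrate $\varphi'\ge 0$ for nonnegativity, then verify \eqref{eq:varphi-desiderata} by a bare-term check near $\eps_0$ and by taking $C$ large farther out. The only cosmetic difference is that the paper splits at $x=2\eps_0$ and exploits monotonicity of $\varphi'$ (via $\varphi''\ge 0$) to reduce to the single condition $C_0+\varphi'(2\eps_0)\ge\eps_0$, whereas you split at $\eps_0+\eta$ and bound the multipliers $(1-\eps_0/x)^2$ and $(1-\eps_0/x)(1+3\eps_0/x)$ below by positive constants; both are sound.
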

\begin{proof}
    Note that for $x > \eps_0$,
    \baln
        \varphi'(x) &= C\lt(1 - \fr{\eps_0}{x}\rt)^2, &
        \varphi''(x) &= \fr{2C\eps}{x^2} \lt(1 - \fr{\eps_0}{x}\rt).
    \ealn
    Thus $\lim_{x\downarrow \eps_0} \varphi''(x) = 0$, so $\varphi$ is twice continuously differentiable.
    Note that $\varphi' \ge 0$, so integrating shows $\varphi \ge 0$.
    Let
    \[
        C_0 = \min_{x \ge 0} \fr{1-2x}{(1+x)^{5/2}}
    \]
    and set $C$ so that $C_0 + \varphi'(2\eps_0) \ge \eps_0$.
    Note $\varphi'' \ge 0$, and thus $\varphi'$ is increasing; thus \eqref{eq:varphi-desiderata} holds for all $x \ge 2\eps_0$.
    For all $x\in [0,2\eps_0]$, we verify that
    \[
        \fr{1}{(1+x)^{3/2}}
        \ge \fr{1-2x}{(1+x)^{5/2}}
        \ge \fr{1-4\eps_0}{(1+2\eps_0)^{5/2}}
        \ge \eps_0,
    \]
    so \eqref{eq:varphi-desiderata} holds.
\end{proof}

\begin{proof}[Proof of Proposition~\ref{ppn:mala-accuracy}]
    By Proposition~\ref{ppn:tHproj-concave}, $\tnu_{H_N,\by_T}^{\proj}$ is $O(1)$-smooth and strongly log-concave.
    By \cite[Theorem 3]{chewi2021optimal}, MALA run for time $\chi_{\mbox{\rm\tiny log-conc}} = \mathrm{poly}(N)$ outputs $\brho^\sMALA \sim \nu^{\sMALA}$, where $\TV(\nu^{\sMALA},\tnu_{H_N,\by_T}^{\proj}) \le 1/N$.
    Combined with Corollary~\ref{cor:tnu-nu-approx}, we find that (with probability $1-o_N(1)$),
    $\TV(\nu_{H_N,\by_T}^{\proj},\nu^\sMALA) = o_N(1)$.
    Lemma~\ref{lem:push-forward-cap-law} completes the proof.
\end{proof}

\section{Failure of stochastic localization in complementary regime}
\label{sec:sl-hardness}

In this section, we prove Theorem~\ref{thm:sl-hardness}.
Similarly to Subsection~\ref{subsec:planted}, we may analyze the process \eqref{eq:gen-sl-sde} by passing to a planted model.
For any $T>0$, let $\cbbP, \cbbQ \in \cP(S_N \times \sH_N \times C([0,T],\bbR^N \times \cdots \times (\bbR^N)^{\otimes J}))$ be the laws of $(\bsig,H_N,(\vby_t)_{t\in [0,T]})$, generated as follows.
\begin{itemize}
    \item Under $\cbbQ$,
    \[
        H_N \sim \mu_\nul, \qquad
        \bsig \sim \mu_{H_N}, \qquad
        \by^j_t = \tau_j(t) \bsig^{\otimes j} + \bB^j_{\tau_j(t)}, \quad \forall j=1,\ldots,J,
    \]
    for $(\bB^1_t,\ldots,\bB^J_t)_{t\ge 0}$ independent of $(H_N,\bsig)$.
    Equivalently, $H_N \sim \mu_\nul$, $(\vby_t)_{t\ge 0}$ is given by the SDE \eqref{eq:gen-sl-sde}, and for any odd $j$ such that $\lim_{t\to\infty} \tau_j(t) = \infty$, $\bsig$ is the unique solution to $\bsig^{\otimes j} = \lim_{t\to\infty} \by^j_t/\tau_j(t)$.
    \item Under $\cbbP$,
    \[
        (H_N,\bsig) \sim \mu_\pl, \qquad
        \by^j_t = \tau_j(t) \bsig^{\otimes j} + \bB^j_{\tau_j(t)}, \quad \forall j=1,\ldots,J,
    \]
    for $(\bB^1_t,\ldots,\bB^J_t)_{t\ge 0}$ independent of $(H_N,\bsig)$.
    Equivalently, we can generate first $H_N$, then $(\vby_t)_{t\ge 0}$ by \eqref{eq:gen-sl-sde}, and finally $\bsig$ as above.
    Furthermore, the law of $(H_N,\bsig) \sim \mu_\pl$ can be described by either \eqref{eq:planted-HN-first} or \eqref{eq:planted-bsig-first}.
\end{itemize}
Analogously to Proposition~\ref{ppn:planted-likelihood-ratio}, we have
\[
    \fr{\de \cbbP}{\de \cbbQ}(\bsig,H_N,(\vby_t)_{t\in [0,T]}) = \fr{Z(H_N)}{\bbE Z(H_N)},
\]
and this ratio is tight by Lemma~\ref{lem:partition-fn-fluctuations}.
Thus $\cbbP$ and $\cbbQ$ are mutually contiguous.

Therefore, it suffices to analyze the AMP iteration \eqref{eq:gen-amp} under $\cbbP$.
Similarly to \eqref{eq:HNt-with-y}, we find that conditional on $\vby_t$, the posterior law of $\bsig$ under $\cbbP$ is
\[
    \cmu_t(\de \bsig) = \fr{1}{Z} \exp \ccH_{N,t}(\bsig) \mu_0(\de \bsig),
\]
where
\[
    \ccH_{N,t}(\bsig) = N \xi(\<\bx,\bsig\>_N) + \tH_N(\bsig) + \sum_{j=1}^J \fr{1}{N^{j-1}} \la \by^j_t, \bsig^{\otimes j} \ra
    \stackrel{d}{=} N \cxi_t(\<\bx,\bsig\>_N) + \tH_{N,t}(\bsig),
\]
for $\tH_{N,t}$ a spin glass with mixture
\[
\cxi_t(q) = \xi(q) + \sum_{j=1}^J\tau_j(t)^2 q^j\, .
\]
Let $q_\sAMP = q_\sAMP(t)$ be the smallest solution to $\cxi'_t(q) = \fr{q}{1-q}$.
Note that a solution exists because $\cxi'_t(0) \ge 0$ and $\lim_{q\uparrow 1} \fr{q}{1-q} = +\infty$.
\begin{ppn}
    \label{ppn:sl-hardness-amp}
    We have
    \[
        \lim_{k\to\infty} \plim_{N\to\infty} \<\bx,\cbm^k\>_N
        = \lim_{k\to\infty} \plim_{N\to\infty} \<\cbm^k,\cbm^k\>_N
        = q_\sAMP.
    \]
    Consequently, for all $1\le j\le J$,
    \[
        \lim_{k\to\infty}
        \lim_{N\to\infty}
        \bbE
        \fr{1}{N^j}
        \norm{\bx^{\otimes j} - (\cbm^k)^{\otimes j}}_2^2
        = 1 - q_\sAMP^j.
    \]
\end{ppn}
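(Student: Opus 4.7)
The plan is to mirror the state evolution analysis of Proposition~\ref{ppn:amp-overlaps} in Section~\ref{sec:Proof-amp-performance}, applied to the generalized iteration \eqref{eq:gen-amp} run under the planted measure $\cbbP$. Under $\cbbP$, the posterior $\cmu_t$ on $\bsig$ given $\vby_t$ has Hamiltonian $\ccH_{N,t}$ whose centered part is (in distribution) a spin glass with mixture $\cxi_t$ and whose remainder is a planted component at $\bx$; the iteration \eqref{eq:gen-amp} is precisely the tensor AMP associated to this model. The tensor AMP state evolution theorem (\cite[Theorem~1]{bayati2011dynamics} via \cite[Proposition~3.1]{el2021optimization}, already used in Proposition~\ref{ppn:state-evolution}) then applies with $\cxi_t$ in place of $\xi_t$, and the covariance recursion $\widecheck\gamma_{k+1} = \cxi'_t((1-\cq_k)\widecheck\gamma_k)$, $\Sigma_{k+1,j+1} = \cxi'_t((1-\cq_k)(1-\cq_j)\EE[W_kW_j])$ solves, by the same induction as in Lemma~\ref{lemma:gamma_vs_q}, to $\widecheck\gamma_k = \cq_k/(1-\cq_k)$ and $\Sigma_{k,j} = \cq_{k\wedge j}/(1-\cq_{k\wedge j})$. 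Repeating the calculation in \eqref{eq:se-limiting-overlap-xm}, \eqref{eq:se-limiting-overlap-mm} yields
\[
\plim_{N\to\infty}\<\bx,\cbm^k\>_N = \plim_{N\to\infty}\<\cbm^k,\cbm^k\>_N = \cq_k.
\]

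To send $k\to\infty$, I would analyze the scalar recursion $\cq_{k+1} = f(\cq_k)$ with $f(q) := \cxi'_t(q)/(1+\cxi'_t(q))$. Since $\cxi'_t$ is nonnegative and increasing on $[0,1)$, so is $f$, with $f(0)\ge 0$ and $f(1^-)<1$; its fixed points are exactly the solutions of $\cxi'_t(q)=q/(1-q)$. On $[0,q_\sAMP]$ one has $\cxi'_t(q)\ge q/(1-q)$ (else $q_\sAMP$ would not be the smallest nonnegative solution), hence $f(q)\ge q$, so $\cq_k$ is monotone increasing, bounded above by $q_\sAMP$, and converges to a fixed point in $[0,q_\sAMP]$ --- which must be $q_\sAMP$. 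Combined with the overlap limits above, this proves the first assertion. The consequence about tensor norms then follows from the identity
\[
\fr{1}{N^j}\norm{\bx^{\otimes j}-(\cbm^k)^{\otimes j}}_2^2
= 1 - 2\<\bx,\cbm^k\>_N^j + \<\cbm^k,\cbm^k\>_N^j,
\]
upon passing from convergence in probability to convergence in expectation by uniform integrability. The latter follows from the Lipschitz-in-the-disorder property used at the end of the proof of Proposition~\ref{ppn:amp-overlaps} (extending $\cbm^k$ via Kirszbraun and using Gaussian concentration), which gives uniformly subgaussian tails on both inner-product powers.

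The main technical obstacle is verifying that the tensor AMP state evolution theorem does apply to \eqref{eq:gen-amp}, which simultaneously mixes contributions of several tensor orders $j$ through a common scalar denoiser $\cbw^k\mapsto(1-\cq_k)\cbw^k$ and a single Onsager correction $(1-\cq_k)\cxi''(\cq_k)\cbm^{k-1}$. Conceptually this is no harder than the single-tensor case, since all contributions aggregate into the scalar mixture $\cxi_t$; but the bookkeeping --- in particular the identification of the correct Onsager correction for the mixed-order drift $\sum_{j=1}^J N^{-(j-1)}((\cbm^k)^{\otimes j-1},\by^j_t)_\sym$ --- needs to be done carefully. Once this is established, the remainder of the argument is a direct transcription of Section~\ref{sec:Proof-amp-performance}.
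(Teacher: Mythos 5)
Your proposal is correct and follows essentially the same route as the paper: state evolution identifies $\plim\<\bx,\cbm^k\>_N=\plim\<\cbm^k,\cbm^k\>_N=\cq_k$, the scalar recursion $\cq_{k+1}=\cxi'_t(\cq_k)/(1+\cxi'_t(\cq_k))$ increases monotonically to its smallest fixed point $q_\sAMP$, and the tensor-norm claim follows from the expansion $1-2\<\bx,\cbm^k\>_N^j+\<\cbm^k,\cbm^k\>_N^j$. Your additional remarks (the $f(q)\ge q$ justification of monotonicity, the uniform-integrability step for convergence in expectation, and the caveat about verifying tensor state evolution for the mixed-order iteration) are all points the paper treats more tersely but do not change the argument.
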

\begin{proof}
    Since $q \mapsto \fr{\cxi'_t(q)}{1 + \cxi'_t(q)}$ is increasing, the sequence $(\cq_k)_{k\ge 0}$ defined in \eqref{eq:def-cq} is increasing.
    Furthermore, if $\cq_k \le q_\sAMP$, then
    \[
        \cq_{k+1} = \fr{\cxi'_t(\cq_k)}{1 + \cxi'_t(\cq_k)}
        \le \fr{\cxi'_t(q_\sAMP)}{1 + \cxi'_t(q_\sAMP)}
        = q_\sAMP,
    \]
    and therefore by induction $(\cq_k)_{k\ge 0}$ is bounded above by $q_\sAMP$.
    As the limit of $(\cq_k)_{k\ge 0}$ must be a fixed point of $q \mapsto \fr{\cxi'_t(q)}{1 + \cxi'_t(q)}$, we have $\lim_{k\to\infty} \cq_k = q_\sAMP$.
    By state evolution, similarly to the proof of Proposition~\ref{ppn:amp-performance}, the first conclusion follows.
    Since
    \[
        \fr{1}{N^j}
        \norm{\bx^{\otimes j} - (\cbm^k)^{\otimes j}}_2^2
        = \<\bx,\bx\>_N^j
        - 2 \<\bx,\cbm^k\>_N^j
        + \<\cbm^k,\cbm^k\>_N^j,
    \]
    the second conclusion follows from the first.
\end{proof}
Let
\beq
    \label{eq:def-qbayes}
    Q_\bayes = Q_\bayes(t)
    = \argmax_{q\in [0,1)} \lt\{
        \cxi_t(q) + q + \log(1-q)
    \rt\}
    \subseteq [0,1)
\eeq
be the set of all maximizers of this quantity, and let
\[
    q_\bayes = q_\bayes(t) = \inf Q_\bayes(t).
\]
\begin{lem}
    \label{lem:finite-zeros}
    For any $t$, the equation $\cxi'_t(q) = \fr{q}{1-q}$ has finitely many solutions $q\in [0,1)$.
    Moreover, $Q_\bayes(t)$ is a finite set for all $t$.
    If $T_1 \subseteq [0,+\infty)$ is the set of $t_1$ such that $|Q_\bayes(t_1)| > 1$, then for each $t_1\in T_1$, there exists $\delta > 0$ such that $(t_1-\delta,t_1+\delta) \cap T_1 = \{t_1\}$.
\end{lem}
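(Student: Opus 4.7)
I would prove the three assertions in turn. For the first, define $F(q) := \cxi'_t(q) - q/(1-q)$; this is real-analytic on $[0,1)$, since the assumption $\sum_p 2^p \gamma_p^2 < \infty$ makes $\xi(s) = \sum_p \gamma_p^2 s^p$ extend analytically past $s = 1$, so $\cxi'_t(q) = \xi'(q) + \sum_j j \tau_j(t) q^{j-1}$ is analytic, as is $q/(1-q)$. Since $\cxi'_t(1)$ is finite while $q/(1-q) \to \infty$, $F(q) \to -\infty$ as $q \to 1^-$, so $F \not\equiv 0$; analyticity then forces zeros to be isolated, and the limit behavior rules out accumulation at $q = 1$, so there are only finitely many. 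For the second claim, set $f(t,q) := \cxi_t(q) + q + \log(1-q)$ and observe that any $q \in Q_\bayes(t) \subseteq [0,1)$ is either an interior critical point (so $\partial_q f(t,q) = F(q) = 0$) or the boundary point $0$; in the latter case, $\partial_q f(t,0) = \tau_1(t) \le 0$ combined with $\tau_1(t) \ge 0$ gives $F(0) = 0$. Thus $Q_\bayes(t) \subseteq F^{-1}(0)$, which is finite.

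For the third claim, I will argue by contradiction: assume there is a sequence $t_n \to t_1$ with $t_n \in T_1$ and $t_n \ne t_1$, and for each $n$ pick distinct $q_n < q_n'$ in $Q_\bayes(t_n)$. By compactness and upper semicontinuity of the argmax correspondence (using continuity of $f$ in $(t,q)$), along a subsequence $q_n \to q^*$ and $q_n' \to q^{**}$ with $q^* \le q^{**}$ and both in $Q_\bayes(t_1)$. The key structural input is that $\partial_t f(t,q) = \sum_j \tau_j'(t) q^j$ is strictly increasing in $q$ on $[0,1)$: its $q$-derivative $\sum_j j \tau_j'(t) q^{j-1}$ is strictly positive for $q > 0$, using that at least one $\tau_j'(t)$ is positive by the normalization $\sum_j \tau_j'(t) = 1$. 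In the case $q^* < q^{**}$, an envelope-type argument will dispatch the contradiction. Combining $f(t_n, q_n) \ge f(t_n, q^{**})$ (from $q_n \in Q_\bayes(t_n)$) with $f(t_1, q_n) \le f(t_1, q^{**})$ (from $q^{**} \in Q_\bayes(t_1)$), subtracting, and expressing the $t$-difference as an integral yields
\[
\int_{t_1}^{t_n} \bigl[\partial_t f(s, q_n) - \partial_t f(s, q^{**})\bigr] \, ds \ge 0,
\]
whose integrand converges uniformly in $s$ to $\partial_t f(t_1, q^*) - \partial_t f(t_1, q^{**}) < 0$ by strict monotonicity. For $n$ large this forces $t_n \le t_1$; the symmetric argument applied to $(q_n', q^*)$ forces $t_n \ge t_1$, contradicting $t_n \ne t_1$.

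The main obstacle is the case $q^* = q^{**}$, in which two distinct critical points of $f(t_n, \cdot)$ coalesce at a single critical point of $f(t_1, \cdot)$, forcing $q^*$ to be degenerate. Assuming $q^* > 0$ (the edge case $q^* = 0$ can be treated similarly, inspecting the first nonzero mixed derivative), we have $\partial_t \partial_q f(t_1, q^*) = \sum_j j \tau_j'(t_1) (q^*)^{j-1} > 0$, so the implicit function theorem applied to the zero set of $\partial_q f$ yields a $C^1$ function $t = T(q)$ on a neighborhood of $q^*$ parametrizing the local critical points; the critical points of $f(t,\cdot)$ near $q^*$ for a given $t$ are then exactly $T^{-1}(t)$. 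I will verify that $T$ is strictly monotone through $q^*$, so $T^{-1}(t)$ is a singleton for $t$ near $t_1$, contradicting $q_n \ne q_n'$. This reduces to a perturbation analysis of $\partial_q f(t_1 + \delta, q^* + u) = 0$: using the Taylor expansion of $\partial_q f$ in $q$ (analytic) and in $t$ ($C^1$), the leading-order balance is $u^{K-1} \sim C\delta$, where $K$ is the order of the degenerate local max and $C > 0$. Because $K$ must be even (so that $\partial_q^K f(t_1, q^*) < 0$ produces a local max with vanishing lower derivatives), $K - 1$ is odd, so this equation admits a unique real root that depends monotonically on $\delta$. The essential ingredient making this work is again the strict monotonicity of $\partial_t f$ in $q$, which rules out the symmetric splitting of a degenerate max into two equally-valued ones that can occur for less structured perturbations.
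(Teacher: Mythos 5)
Your treatment of the first two assertions is correct and essentially the paper's: the paper clears the pole at $q=1$ by working with the analytic function $(1-q)\cxi'_t(q)-q$ on a disc containing $[0,1]$, while you use the divergence of $q/(1-q)$ to rule out accumulation of zeros at $1$; both arguments then reduce $Q_\bayes(t)$ to the zero set of $\cxi'_t(q)-q/(1-q)$ by noting that a maximizer at the boundary point $0$ forces $\cxi'_t(0)=\tau_1(t)=0$.

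For the third assertion you take a genuinely different route, and it is worth spelling out what each buys. Writing $f(t,q)=\cxi_t(q)+q+\log(1-q)$, the paper surrounds each $\tq\in Q_\bayes(t_1)$ by a small interval $I_\tq$, computes the envelope derivative $\fr{\partial}{\partial t}\max_{q\in I_\tq}f(t,q)\big|_{t=t_1}=\sum_j\tau'_j(t_1)\tq^j$, and uses strict monotonicity of $q\mapsto\sum_j\tau'_j(t_1)q^j$ to conclude that only one interval carries global maximizers for $t\neq t_1$ nearby. Your sequential-compactness argument in the case $q^*<q^{**}$ is the same monotonicity input packaged as an integral comparison, and it is correct. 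Where your proof genuinely adds something is the case $q^*=q^{**}$: two maximizers of $f(t_n,\cdot)$ coalescing onto a single degenerate maximizer of $f(t_1,\cdot)$. The paper's argument only shows that all maximizers fall into a single $I_\tq$, not that the surviving interval contains a unique one, so this case is a real issue and not an artifact of your method. Your mechanism --- parametrize the critical set by $t=T(q)$ via the implicit function theorem (using $\partial_t\partial_q f(t_1,q^*)=\sum_j j\tau'_j(t_1)(q^*)^{j-1}>0$ for $q^*>0$), then show $T'=-\partial_q^2 f/\partial_t\partial_q f$ is sign-definite off $q^*$ because the leading term of $-\partial_q^2 f$ is $c(K-1)u^{K-2}$ with $K$ even --- is sound. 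To close it one must check that the $t$-variation of $\partial_q^2 f$ along the curve does not swamp this leading term; since $T'(q^*)=0$ only gives $T(q^*+u)-t_1=o(|u|)$ a priori, you need a short bootstrap (a Gronwall bound on $|T'|$ yields $|T(q^*+u)-t_1|=O(|u|^{K-1})$) before concluding strict monotonicity of $T$, hence injectivity, hence the contradiction with $T(q_n)=T(q_n')=t_n$.

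The one step that does not go through as written is the edge case $q^*=0$. There $\partial_t\partial_q f(t_1,0)=\tau'_1(t_1)$, which may vanish (all the mass of $\tau'$ can sit on tensor orders $j\ge 2$), so the implicit function theorem in the direction you use it is unavailable; and your proposed fix of ``inspecting the first nonzero mixed derivative'' is not available either, because $\tau$ is only assumed continuously differentiable, so no higher $t$-derivatives exist. This sub-case needs its own short argument: if $\partial_q^2 f(t_1,0)\neq 0$ one can solve for $q$ as a function of $t$ instead, noting that any boundary maximizer at $0$ is automatically a critical point so uniqueness of the local critical branch suffices; the doubly degenerate situation requires a direct expansion of $\partial_q f(t,q)=\tau_1(t)+(2\tau_2(t)+\xi''(0)-1)q+\cdots$ using only the $C^1$ control on $\tau$. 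It is a corner of a corner, but the lemma is asserted for all $t$, so it should be closed explicitly.
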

\begin{proof}
    Let $f_t(q) = (1-q)\cxi'_t(q) - q$, so any solution to $\cxi'_t(q) = \fr{q}{1-q}$ is a zero of $f_t$.
    Note that $f_t$ is not identically zero: if it were, then $\cxi'_t(q) = \fr{q}{1-q}$, contradicting that the coefficients $\gamma_p^2$ of $\xi$ satisfy $\sum_{p\ge 2} 2^p \gamma_p^2 < \infty$.
    Since $f_t$ is complex analytic in the unit disc, its zero set has no limit point, and in particular it has finitely many zeros in $[0,1)$.
    This shows that there are finitely many solutions to $\cxi'_t(q) = \fr{q}{1-q}$.

    Note that $\fr{\de}{\de q}(\cxi_t(q) + q + \log(1-q)) = \cxi'_t(q) - \fr{q}{1-q}$.
    Any interior maximizer of \eqref{eq:def-qbayes} must therefore satisfy the stationarity condition $\cxi'_t(q) = \fr{q}{1-q}$.
    Since $\cxi'_t(0) \ge 0$, $0$ can be a maximizer only if it also solves this equation.
    Thus $Q_\bayes(t)$ is finite.

    Consider an arbitrary $t_1\in T_1$ and let $Q=Q_\bayes(t_1)$.
    For each $\tq \in Q$, let $I_\tq = [\tq-\eps,\tq+\eps]$, where $\eps>0$ is small enough that these intervals do not overlap.
    By continuity, for sufficiently small $\delta$ and all $t \in (t_1-\delta,t_1+\delta)$, all maximizers of $\cxi_t(q) + q + \log(1-q)$ lie in $\bigcup_{\tq\in Q} I_\tq$.
    Let
    \baln
        m(t,\tq) &= \max_{q\in I_\tq} \lt\{
            \cxi_t(q) + q + \log(1-q)
        \rt\}, &
        q(t,\tq) &= \argmax_{q\in I_\tq} \lt\{
            \cxi_t(q) + q + \log(1-q)
        \rt\}.
    \ealn
    Note that $q(t_1,\tq) = \tq$ for each $\tq \in Q$.
    Since the maximum of $\cxi_{t_1}(q) + q + \log(1-q)$ is attained over $I_\tq$ uniquely at $\tq$, by continuity $\lim_{t\to t_1} q(t,\tq) = \tq$.

    For $\tq \in Q$, $t \in (t_1,t_1+\delta)$, we have
    \baln
        \fr{m(t,\tq) - m(t_1,\tq)}{t-t_1}
        &\ge \fr{\cxi_t(q(t_1,\tq)) - \cxi_{t_1}(q(t_1,\tq))}{t-t_1}
        = \sum_{j=1}^J
        \tau'_j(t_1) q(t_1,\tq)^j
        + O(t-t_1), \\
        \fr{m(t,\tq) - m(t_1,\tq)}{t-t_1}
        &\le \fr{\cxi_t(q(t,\tq)) - \cxi_{t_1}(q(t,\tq))}{t-t_1}
        = \sum_{j=1}^J
        \tau'_j(t_1) q(t,\tq)^j
        + O(t-t_1).
    \ealn
    Taking the limit $t\downarrow t_1$ yields
    \[
        \lim_{t \downarrow t_1}
        \fr{m(t,\tq) - m(t_1,\tq)}{t-t_1}
        = \sum_{j=1}^J
        \tau'_j(t_1) \tq^j.
    \]
    A similar argument shows the left-derivative is also equal to this.
    Therefore
    \[
        \fr{\partial}{\partial t}
        m(t,\tq) \big|_{t=t_1}
        = \sum_{j=1}^J
        \tau'_j(t_1) \tq^j.
    \]
    This quantity is distinct for different $\tq \in Q$.
    Therefore, for all $t \in (t_1-\delta,t_1+\delta) \setminus \{t_1\}$, $|Q_\bayes(t)| = 1$.
\end{proof}
\begin{ppn}
    \label{ppn:sl-hardness-bayes}
    Suppose $t \not \in T_1$ satisfies $q_\bayes(t) > 0$.
    Let $\cxi_t(q) = \sum_{p\ge 1} \beta_p^2 q^p$ (where we suppress the dependence of the $\beta_p$ on $t$).
    For any $p$ such that $\beta_p > 0$, we have
    (recall the definition of  $\bm_p(\vby_t,t)$ in Eq.~\eqref{eq:Mj_def}):
    \[
        \lim_{N\to\infty} \bbE \fr{1}{N^p}
        \norm{\bx^{\otimes p} - \bm_p(\vby_t,t)}_2^2
        = 1 - q_\bayes^p.
    \]
\end{ppn}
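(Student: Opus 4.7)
Step 1 (Nishimori reduction). First I would reduce the claim to showing concentration of the planted overlap. Since $\bm_p(\vby_t, t) = \bbE[\bsig^{\otimes p} \mid \vby_t]$, expanding the squared norm gives
\[
\bbE \fr{1}{N^p}\|\bx^{\otimes p} - \bm_p\|_2^2 = 1 - 2\bbE_\cbbP \<\bx, \bsig\>_N^p + \bbE_\cbbP \<\bsig, \bsig'\>_N^p,
\]
where $\bsig, \bsig'$ are conditionally independent samples from $\cmu_t$ given $(H_N, \vby_t)$. Under $\cbbP$, Bayes' rule forces the planted $\bx$ and a fresh replica $\bsig$ to be conditionally i.i.d.\ from $\cmu_t$, so $(\bx, \bsig) \stackrel{d}{=} (\bsig', \bsig)$ (the Nishimori identity), collapsing the display to $1 - \bbE_\cbbP \<\bx, \bsig\>_N^p$. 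It therefore suffices to show $\bbE_\cbbP \<\bx, \bsig\>_N^p \to q_\bayes^p$, which by boundedness $|\<\bx, \bsig\>_N| \le 1$ follows from convergence in probability $\<\bx, \bsig\>_N \to q_\bayes$.

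Step 2 (Band decomposition and RS upper bound). Next I would mimic the strategy of Proposition~\ref{ppn:amp-dominate-gibbs-2}. Define the band partition function $Z(q) := \int_{\Band(\bx, q)} e^{\ccH_{N,t}(\bsig)}\,d\mu_{(q)}(\bsig)$, so that the density of $\<\bx, \bsig\>_N$ under $\cmu_t$ with respect to Lebesgue measure is proportional to $Z(q)$. Applying the spherical Parisi formula (Proposition~\ref{ppn:parisi-rs-ub}) to the effective spin glass on each band, with the test point $u = q/(1+q)$ used in the proof of Proposition~\ref{ppn:band-model-rs-ub} but now adapted to the mixture $\cxi_t$, yields the in-probability upper bound $\fr1N \log Z(q) \le h(q) + o(1)$ with
\[
h(q) := \fr12 \Big(\cxi_t(1) + \cxi_t(|q|) + |q| + \log(1-|q|)\Big).
\]
Up to an additive constant, $h$ on $[0, 1)$ matches the objective in \eqref{eq:def-qbayes}, so its maximizers on $[0, 1)$ form $Q_\bayes$, which is the singleton $\{q_\bayes\}$ by the hypothesis $t \notin T_1$; the analogous bound for $q < 0$ is strictly worse by the parity argument from Proposition~\ref{ppn:band-model-rs-ub}, using that at least one odd $\tau_j(t)$ eventually grows.

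Step 3 (Matching lower bound, net argument, and conclusion). The crux is the matching free-energy lower bound $\fr1N \log Z \ge h(q_\bayes) - o(1)$ in probability, which I would establish by exploiting Bayes-optimality of the planted model: a Guerra-Toninelli interpolation adapted to the spherical Bayes-optimal setting (where Nishimori pins the overlap fluctuations at the saddle-point $q_\bayes$) matches the RS upper bound from Step 2, giving $\fr1N \log Z \to h(q_\bayes)$, with the uniqueness hypothesis $t \notin T_1$ combined with Lemma~\ref{lem:finite-zeros} ensuring no ambiguity in the variational formula. Given the matching bounds $\fr1N \log Z \ge h(q_\bayes) - o(1)$ and $\fr1N \log Z(q) \le h(q) + o(1)$, together with the strict separation $h(q) < h(q_\bayes)$ for $q$ bounded away from $q_\bayes$, the net argument from the proof of Proposition~\ref{ppn:amp-dominate-gibbs-2} (using Lipschitz control on $\fr1N \log Z(q)$ via the event $K_N$ of Proposition~\ref{ppn:gradients-bounded}) upgrades these to $\cmu_t(|\<\bx, \bsig\>_N - q_\bayes| > \iota) \le e^{-cN}$ with probability $1-o(1)$, for every $\iota > 0$, closing the argument via Step 1. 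The main obstacle is the matching lower bound via Bayes-optimal interpolation: while standard in Ising Bayes-optimal spin glasses, implementing it for the spherical model with the higher-order tensor observations of \eqref{eq:gen-sl-sde} requires careful bookkeeping of the saddle-point as $t$ approaches the exceptional (nondifferentiable) times $t' \in T_1$.
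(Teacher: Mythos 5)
Your Step 1 reduction is correct (Nishimori gives $\bbE\fr{1}{N^p}\|\bx^{\otimes p}-\bm_p\|_2^2 = 1-\bbE\la\<\bx,\bsig\>_N^p\ra$, and in fact you only need convergence of this single moment, not full overlap concentration), and your Step 2 upper bound on band free energies is a valid application of Proposition~\ref{ppn:parisi-rs-ub}. The genuine gap is Step 3: the matching lower bound $\fr1N\log Z \ge h(q_\bayes)-o(1)$ is the entire content of the proposition in this regime, and you only gesture at it via an unspecified ``Guerra--Toninelli interpolation adapted to the spherical Bayes-optimal setting.'' You correctly flag this as the main obstacle, but no argument is supplied, and the difficulty is real: since \eqref{eq:amp-works} fails here, you cannot simply reuse the equality case of Proposition~\ref{ppn:band-model-rs-ub}. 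What actually closes this gap (and is what the paper does in Lemma~\ref{lem:P-differentiable}) is to apply the full spherical Parisi formula to the planted Hamiltonian and verify the RS extremality conditions \eqref{eq:parisi-extremality-1}--\eqref{eq:parisi-extremality-2} at $q_\bayes$ directly: condition \eqref{eq:parisi-extremality-2} holds because $q_\bayes$ is by definition the \emph{global} maximizer in \eqref{eq:def-qbayes}, and \eqref{eq:parisi-extremality-1} follows from stationarity plus the second-order condition $\cxi''_t(q_\bayes)\le (1-q_\bayes)^{-2}$ at a maximizer, using convexity of $\hxi'$. No Bayes-optimal interpolation is needed; the lower bound comes for free from the Parisi formula once these conditions are checked.

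It is also worth noting that the paper's proof avoids your Steps 2--3 entirely. Rather than proving overlap concentration, it perturbs the coefficient $\beta_p$ of $\cxi_t$, identifies the limiting free energy $P(\tbeta_p)$ with the RS expression in a neighborhood of $\beta_p$, shows $P$ is differentiable there with $P'(\beta_p)=\beta_p(1+q_\bayes^p)$, and then uses convexity of the finite-$N$ free energy in $\tbeta_p$ to pass the derivative through the limit. Gaussian integration by parts identifies $F_N'(\beta_p)$ with exactly the combination $2\bbE\<\bx^{\otimes p},\bm_p\>/N^p - \bbE\|\bm_p\|_2^2/N^p$, which yields the claim for each $p$ with $\beta_p>0$ without any localization of the overlap. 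Your route would also work once the free-energy lower bound is in place (the net argument from Proposition~\ref{ppn:amp-dominate-gibbs-2} then gives concentration at $q_\bayes$, using $t\notin T_1$ for uniqueness of the maximizer), and it would prove the stronger statement that the overlap concentrates; but as written the decisive step is missing.
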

We first prove a preparatory lemma.
In what follows, we let $\tbeta_{p'}=\beta_{p'}$ be fixed for all $p'\neq p$ and treat $\tbeta_p$ as a variable.
Define $\txi(q) = \sum_{p'\ge 1} \tbeta_{p'}^2 q^{p'}$; we sometimes emphasize the dependence on $\tbeta_p$ by writing $\txi^{\tbeta_p}(q)$.
Let $\cP$ denote the Parisi functional for spherical spin glasses, see e.g. \cite[Equation (1.12)]{talagrand2006spherical}.
(In the proof below we will only need the replica-symmetric case of this functional, which is given in Proposition~\ref{ppn:parisi-rs-ub}.)
Further, for $q\in (-1,1)$, let
\[
    \txi_q(s) = \txi(q^2 + (1-q^2)s) - \txi(q^2),
\]
and define
\[
    P(\tbeta_p)
    = \sup_{q\in [0,1)} \lt\{
        \txi(q)
        + \cP(\txi_q)
        + \fr12 \log(1-q^2)
    \rt\}
\]
\begin{lem}
    \label{lem:P-differentiable}
    Assume the setting of Proposition~\ref{ppn:sl-hardness-bayes}.
    For all $\tbeta_p$ in a neighborhood of $\beta_p$,
    \beq
        \label{eq:P-differentiable-P}
        P(\tbeta_p)
        = \fr12 \sup_{q\in [0,1)} \lt\{
            \txi(1) + \txi(q)
            + q + \log(1-q)
        \rt\}.
    \eeq
    Furthermore, $P$ is differentiable at $\beta_p$, with
    \beq
        \label{eq:P-differentiable-Ppr}
        P'(\beta_p) = \beta_p (1 + q_\bayes^p).
    \eeq
\end{lem}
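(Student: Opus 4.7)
The plan is to prove Eq.~\eqref{eq:P-differentiable-P} by matching upper and lower bounds via Proposition~\ref{ppn:parisi-rs-ub}, then obtain Eq.~\eqref{eq:P-differentiable-Ppr} by applying the envelope theorem to the resulting variational expression. For the upper bound, I will apply Proposition~\ref{ppn:parisi-rs-ub} to $\cP(\txi_q)$ with the tailored choice $u = q/(1+q)$. Using $\txi_q(u) = \txi(q) - \txi(q^2)$, $u/(1-u) = q$, and $\log(1-u) = -\log(1+q)$, the RS bound collapses to
\begin{equation*}
    \txi(q) + \cP(\txi_q) + \tfrac12 \log(1-q^2) \le \tfrac12\bigl(\txi(1) + \txi(q) + q + \log(1-q)\bigr),
\end{equation*}
and taking the supremum over $q\in[0,1)$ gives the ``$\le$'' direction of Eq.~\eqref{eq:P-differentiable-P}.

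For the matching lower bound, I will set $q = q_\bayes(\tbeta_p) \in (0,1)$ and verify the equality conditions \eqref{eq:parisi-extremality-1} and \eqref{eq:parisi-extremality-2} of Proposition~\ref{ppn:parisi-rs-ub} with $\hxi = \txi_{q_\bayes}$ and $u_* = q_\bayes/(1+q_\bayes)$. After the substitution $v = q_\bayes + (1-q_\bayes)s$, condition \eqref{eq:parisi-extremality-2} simplifies to $\txi(v) + v + \log(1-v) \le \txi(q_\bayes) + q_\bayes + \log(1-q_\bayes)$ for $v \in [q_\bayes,1)$, which is immediate from $q_\bayes$ being a global maximizer of this function. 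For condition \eqref{eq:parisi-extremality-1}, the change of variables $w = q_\bayes^2 + (1-q_\bayes^2)\rho$ yields $g'(\rho) = (1-q_\bayes^2)\phi(w)$ with
\begin{equation*}
    \phi(w) := \txi'(w) - \frac{w - q_\bayes^2}{(1-q_\bayes)^2}.
\end{equation*}
The function $\phi$ is convex (since $\txi''' \ge 0$), satisfies $\phi(q_\bayes^2) = \txi'(q_\bayes^2) \ge 0$, $\phi(q_\bayes) = 0$ by the first-order condition $\txi'(q_\bayes) = q_\bayes/(1-q_\bayes)$, and $\phi'(q_\bayes) = \txi''(q_\bayes) - 1/(1-q_\bayes)^2 \le 0$ by the second-order condition at the interior maximum. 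The subgradient inequality at $q_\bayes$ then gives $\phi(w) \ge \phi'(q_\bayes)(w - q_\bayes) \ge 0$ for all $w \le q_\bayes$, so $g' \ge 0$ on $[0, u_*]$ and $g$ is maximized at $u_*$. Thus both extremality conditions hold and Eq.~\eqref{eq:P-differentiable-P} is an equality at $\tbeta_p$. To propagate to a neighborhood of $\beta_p$, the proof of Lemma~\ref{lem:finite-zeros} applies verbatim to variation in $\tbeta_p$: distinct $\tq \in Q_\bayes$ yield distinct derivatives $2\tbeta_p \tq^p$ (using $\beta_p > 0$), so $\{\tbeta_p : |Q_\bayes(\tbeta_p)| > 1\}$ is discrete, and combined with $|Q_\bayes(\beta_p)| = 1$ and $q_\bayes(\beta_p) > 0$ this gives $q_\bayes(\tbeta_p) \in (0,1)$ continuously on a neighborhood, where the argument above applies unchanged.

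For Eq.~\eqref{eq:P-differentiable-Ppr}, observe that $f(q,\tbeta_p) := \txi(1) + \txi(q) + q + \log(1-q)$ is affine in $\tbeta_p^2$ for each fixed $q$, so by Eq.~\eqref{eq:P-differentiable-P} the map $\tbeta_p \mapsto P(\tbeta_p)$ is convex in $\tbeta_p^2$ on this neighborhood, with subdifferential at $\tbeta_p$ equal to $\{\tbeta_p(1 + q^p) : q \in Q_\bayes(\tbeta_p)\}$. Since $|Q_\bayes(\beta_p)| = 1$ by hypothesis, this subdifferential collapses to the singleton $\{\beta_p(1 + q_\bayes^p)\}$, so $P$ is differentiable at $\beta_p$ with the claimed derivative. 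The main technical step is the verification of condition \eqref{eq:parisi-extremality-1}: one must leverage convexity of $\txi'$ together with the first- and second-order optimality at the interior maximum $q_\bayes$ to promote pointwise conditions at $q_\bayes$ into the global lower bound $\phi \ge 0$ on $[q_\bayes^2, q_\bayes]$ via the subgradient inequality.
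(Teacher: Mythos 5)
Your proof is correct and follows essentially the same route as the paper: the replica-symmetric upper bound from Proposition~\ref{ppn:parisi-rs-ub} with $u=q/(1+q)$, verification of the extremality conditions \eqref{eq:parisi-extremality-1}--\eqref{eq:parisi-extremality-2} at $q_\bayes$ to obtain equality, and then differentiability from uniqueness of the maximizer at $\beta_p$ (you phrase this via convexity in $\tbeta_p^2$ and the subdifferential of a supremum of affine functions, where the paper runs the equivalent two-sided difference-quotient bounds). One point in your favor: the paper delegates the extremality check to ``arguing identically to Proposition~\ref{ppn:band-model-rs-ub}'', whose proof invokes the strict inequality \eqref{eq:amp-works} (not assumed in Section~\ref{sec:sl-hardness}), whereas your verification of \eqref{eq:parisi-extremality-1} via the tangent-line inequality for the convex function $\phi$ correctly uses only the non-strict second-order condition $\txi''(q_\bayes)\le (1-q_\bayes)^{-2}$ at the interior maximizer, which is all that is available and all that is needed.
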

\begin{proof}
    By Proposition~\ref{ppn:parisi-rs-ub} with $u = \fr{q}{1+q}$, for all $q\in [0,1)$,
    \beq
        \label{eq:P-differentiable-P-ub}
        \cP(\txi_q)
        \le \fr12 \lt\{
            \txi_q(1) - \txi_q(u) + \fr{u}{1-u} + \log(1-u)
        \rt\}
        = \fr12 \lt\{
            \txi(1) - \txi(q) + q - \log(1+q)
        \rt\},
    \eeq
    and thus
    \[
        P(\tbeta_p)
        \le \fr12 \sup_{q\in [0,1)} \lt\{
            \txi(1)
            + \txi(q)
            + q + \log(1-q)
        \rt\}.
    \]
    Since $\lim_{q\uparrow 1} \log(1-q) = -\infty$, the supremum is attained.
    Let $q(\tbeta_p)$ denote the maximizer.
    Arguing identically to the proof of Proposition~\ref{ppn:band-model-rs-ub}, \eqref{eq:P-differentiable-P-ub} is an equality at $q = q(\tbeta_p)$.
    This proves \eqref{eq:P-differentiable-P}.

    Note that $q(\beta_p) = q_\bayes$ by definition.
    Since $t\not\in T_1$, the maximum in \eqref{eq:P-differentiable-P} at $\tbeta_p = \beta_p$ is attained uniquely at $q_\bayes$.
    By continuity, $\lim_{\tbeta_p \to \beta_p} q(\tbeta_p) = q_\bayes$ as well.
    Note that for any $\tbeta_p > \beta_p$,
    \baln
        \fr{P(\tbeta_p)-P(\beta_p)}{\tbeta_p-\beta_p}
        &\ge \fr{\txi^{\tbeta_p}(1) + \txi^{\tbeta_p}(q(\beta_p)) - \txi^{\beta_p}(1) - \txi^{\beta_p}(q(\beta_p))}{\tbeta_p-\beta_p}
        = 2\beta_p (1 + q(\beta_p)^p) + O(\tbeta_p - \beta_p), \\
        \fr{P(\tbeta_p)-P(\beta_p)}{\tbeta_p-\beta_p}
        &\le \fr{\txi^{\tbeta_p}(1) + \txi^{\tbeta_p}(q(\tbeta_p)) - \txi^{\beta_p}(1) - \txi^{\beta_p}(q(\tbeta_p))}{\tbeta_p-\beta_p}
        = 2\beta_p (1 + q(\tbeta_p)^p) + O(\tbeta_p - \beta_p).
    \ealn
    Taking the limit $\tbeta_p \downarrow \beta_p$ yields
    \[
        \lim_{\tbeta_p \downarrow \beta_p}
        \fr{P(\tbeta_p)-P(\beta_p)}{\tbeta_p-\beta_p}
        = 2\beta_p (1 + q_\bayes^p).
    \]
    A similar argument shows the left derivative also equals this, proving \eqref{eq:P-differentiable-Ppr}.
\end{proof}

\begin{proof}[Proof of Proposition~\ref{ppn:sl-hardness-bayes}]
    Let $\tH_N$ be a spin glass Hamiltonian with mixture $\txi$, and let
    \[
        F_N(\tbeta_p)
        = \fr1N \bbE \log \int_{S_N}
        \exp\lt\{
            N \txi(\<\bx,\bsig\>_N)
            + \tH_N(\bsig)
        \rt\}
        ~\de \mu_0(\bsig).
    \]
    Since the restriction of $\tH_N$ to the band $\<\bx,\bsig\>_N = q$ is a spin glass with mixture $\txi_q$, the Parisi formula \cite[Theorem 1.1]{talagrand2006spherical} implies
    \[
        \lim_{N\to\infty} F_N(\tbeta_p)
        = \sup_{q\in (-1,1)} \lt\{
            \txi(q)
            + \cP(\txi_q)
            + \fr12 \log(1-q^2)
        \rt\}.
    \]
    This equals $P(\tbeta_p)$ because the supremum over $(-1,0]$ is clearly at most the supremum over $[0,1)$.
    By H\"older's inequality, $F_N(\tbeta_p)$ is convex in $\tbeta_p$.
    So, for any $\delta > 0$,
    \[
        \fr{F_N(\beta_p)- F_N(\beta_p-\delta)}{\delta}
        \le F'_N(\beta_p)
        \le \fr{F_N(\beta_p+\delta)- F_N(\beta_p)}{\delta}.
    \]
    Differentiability of $P$ (by Lemma~\ref{lem:P-differentiable}) then implies
    \beq
        \label{eq:F-deriv-matches-P}
        \lim_{N\to\infty} F'_N(\beta_p)
        = P'(\beta_p)
        = \beta_p (1 + q_\bayes^p).
    \eeq
    Let $\la \cdot \ra$ denote average w.r.t. the Gibbs measure corresponding to Hamiltonian $\ccH_{N,t}$, which coincides in law with $N \txi(\<\bx,\bsig\>_N) + \tH_N(\bsig)$ for $\tbeta_p = \beta_p$.
    Note that $\bm_p(\vby_t,t) = \la \bsig^{\otimes p} \ra$.
    We calculate that
    \baln
        F'_N(\beta_p)
        &= 2 \beta_p \bbE \la \<\bx,\bsig\>_N^p\ra + \beta_p \lt(1 - \bbE \la \<\bsig,\bsig\>_N^p\ra\rt) \\
        &= \beta_p \lt(
            1 +
            2\bbE \fr{\la \bx^{\otimes p}, \bm_p(\vby_t,t) \ra}{N^p}
            - \bbE \fr{\la \bm_p(\vby_t,t), \bm_p(\vby_t,t) \ra}{N^p}
        \rt).
    \ealn
    Comparing with \eqref{eq:F-deriv-matches-P} shows
    \[
        \lim_{N\to\infty} \lt\{
            2\bbE \fr{\la \bx^{\otimes p}, \bm_p(\vby_t,t) \ra}{N^p}
            - \bbE \fr{\la \bm_p(\vby_t,t), \bm_p(\vby_t,t) \ra}{N^p}
        \rt\}
        = q_\bayes^p.
    \]
    Since
    \[
        \bbE \fr{1}{N^p} \norm{\bx^{\otimes p} - \bm_p(\vby_t,t)}_2^2
        =
        1 - 2\bbE \fr{\la \bx^{\otimes p}, \bm_p(\vby_t,t) \ra}{N^p}
        +\bbE \fr{\la \bm_p(\vby_t,t), \bm_p(\vby_t,t) \ra}{N^p},
    \]
    the result follows.
\end{proof}
\begin{lem}\label{lemma:MoreThanOne}
    If there exists $q\in [0,1)$ such that $\xi''(q) > \fr{1}{(1-q)^2}$, then there exists $t \ge 0$ such that $\cxi'_t(q) = \fr{q}{1-q}$ has more than one solution.
\end{lem}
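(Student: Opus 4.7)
The plan is to fix any $q_0 \in [0,1)$ with $\xi''(q_0) > (1-q_0)^{-2}$ (by continuity of $\xi''$ and of $q \mapsto (1-q)^{-2}$, I may assume $q_0 > 0$) and to exhibit a time $t \ge 0$ at which the function $f_t(q) := \cxi'_t(q) - q/(1-q)$ has at least two zeros in $[0,1)$. The key enabling observation is that the strict inequality on $\xi''$ at $q_0$ is preserved under any of the allowed tilts: for every $t \ge 0$,
\[
f_t'(q_0) \;=\; \xi''(q_0) - \frac{1}{(1-q_0)^2} + \sum_{j\ge 2} j(j-1)\tau_j(t)\, q_0^{j-2} \;>\; 0,
\]
since the extra contributions are nonnegative. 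Moreover the boundary behavior $f_t(0) = \tau_1(t) \ge 0$ and $\lim_{q\to 1^-} f_t(q) = -\infty$ holds for every $t$, the latter because $\cxi'_t$ is bounded near $q=1$ (using $\sum_p 2^p \gamma_p^2 < \infty$ and $J < \infty$).

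Next I will track the scalar function $g(t) := f_t(q_0)$, which is continuous in $t$, equals $\xi'(q_0) - q_0/(1-q_0)$ at $t=0$, and tends to $+\infty$ as $t\to\infty$ because some odd $j^* \le J$ has $\tau_{j^*}(t)\to\infty$ while $q_0^{j^*-1}>0$. I will then split on the sign of $g(0)$. If $g(0) \le 0$, the intermediate value theorem gives $t_0 \ge 0$ with $f_{t_0}(q_0) = 0$, so $q_0$ is one zero; combined with $f'_{t_0}(q_0) > 0$ (so $f_{t_0}$ is strictly positive just above $q_0$) and $\lim_{q\to 1^-} f_{t_0}(q) = -\infty$, the IVT produces a second zero in $(q_0,1)$. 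If instead $g(0) > 0$, I simply take $t=0$: since $\xi$ has no constant or linear part, $f_0(0) = \xi'(0) = 0$ is already one zero, and $f_0(q_0) = g(0) > 0$ together with $f_0(q) \to -\infty$ yields another zero in $(q_0,1)$.

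There is no real obstacle; the two cases together exhaust all possibilities, and the argument is elementary once one notices that evaluating and differentiating $f_t$ at the fixed point $q_0$ converts the problem into a one-dimensional intermediate value argument on $g(t)$.
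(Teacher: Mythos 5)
Your proof is correct, but it takes a genuinely different route from the paper's. The paper argues by contradiction: assuming $g_t(q)=\cxi'_t(q)-\frac{q}{1-q}$ has a unique zero $q_{\sAMP}(t)$ for every $t$, it shows $q_{\sAMP}(t)$ is continuous, increasing, and sweeps all of $[0,1)$; it then picks two points $q_1<q_2$ with $g_0(q_1)<g_0(q_2)$ (from non-monotonicity of $g_0$, which is where the hypothesis on $\xi''$ enters), observes that the tilt $\sum_j j\tau_j(t)q^{j-1}$ is monotone in $q$ so the ordering $g_t(q_1)<g_t(q_2)$ persists, and chooses $t$ with $q_{\sAMP}(t)=q_1$ to force $g_t(q_2)>0$ and hence a second zero. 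You instead work directly with a single test point $q_0>0$ where $f_t'(q_0)>0$ \emph{uniformly in $t$} (because the tilts only add nonnegative terms to the second derivative), run a one-dimensional intermediate value argument in $t$ on $t\mapsto f_t(q_0)$ to make $q_0$ itself a zero (or, if $f_0(q_0)>0$, fall back on the zero at $q=0$ of the untilted equation), and then use the positive derivative at that zero together with $f_t(q)\to-\infty$ as $q\uparrow 1$ to produce a second zero. The two exploit the same structural fact---the tilt is nonnegative and monotone---but your version avoids the contradiction scaffolding and, in particular, the continuity/surjectivity argument for $q_{\sAMP}(t)$, which in the paper leans on the assumed uniqueness; the cost is the explicit two-case split on the sign of $f_0(q_0)$, which you handle correctly (in the second case $q=0$ and the zero in $(q_0,1)$ are distinct since $q_0>0$, which your continuity reduction guarantees).
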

\begin{proof}
    Let $g_t(q) = \cxi'_t(q) - \fr{q}{1-q}$, so solutions to $\cxi'_t(q) = \fr{q}{1-q}$ are zeros of $g_t$.
    Suppose for contradiction that for all $t\ge 0$, $g_t$ has unique zero $q_\sAMP(t)$.
    Then, for all $t$, $g_t > 0$ on $[0,q_\sAMP(t))$ (this is vacuous if $q_\sAMP(t)=0$) and $g_t < 0$ on $(q_\sAMP(t),1)$.
    Note that for each $q$, $g_t(q)$ is continuous and increasing in $t$, and thus $q_\sAMP(t)$ is also continuous and increasing.

    Recall that $\tnorm{\tau(t)}_1 = t$ for all $t$.
    For each $q\in (0,1)$,
    \beq
        \label{eq:gt-lb}
        g_t(q) \ge \sum_{j=1}^J j\tau_j(t) q^{j-1} - \fr{q}{1-q}
        \ge \tnorm{\tau(t)}_1 q^{J-1} - \fr{q}{1-q}
        = tq^{J-1} - \fr{q}{1-q}.
    \eeq
    It follows that $g_t(q) > 0$ for sufficiently large $t$.
    Thus $\lim_{t\to+\infty} g_\sAMP(t) = 1$, so $q_\sAMP(t)$ ranges over all of $[0,1)$ as $t$ ranges over $[0,+\infty)$.

    Since $\xi''(q) > \fr{1}{(1-q)^2}$ for some $q\in [0,1)$, the function $g_0$ is not monotonically decreasing.
    Let $0\le q_1<q_2 < 1$ be such that $g_0(q_1) < g_0(q_2)$.
    Note that
    \[
        g_t(q_1) - g_0(q_1)
        = \sum_{j=1}^J j\tau_j(t) q_1^{j-1}
        \le \sum_{j=1}^J j\tau_j(t) q_2^{j-1}
        = g_t(q_2) - g_0(q_2).
    \]
    Thus $g_t(q_1) < g_t(q_2)$.
    Set $t$ such that $q_1 = q_\sAMP(t)$, so that $g_t(q_1) = 0$.
    This implies that $g_t(q_2) > 0$, and therefore $g_t$ has another zero in $[q_2,1)$.
\end{proof}

\begin{lem}
    If there exists $t \ge 0$ such that $\cxi'_t(q) = \fr{q}{1-q}$ has more than one solution, then there exists a nontrivial interval $I = [t_-,t_+] \subseteq [0,+\infty)$ such that for all $t'\in I$, $q_\sAMP(t') \neq q_\bayes(t')$.
\end{lem}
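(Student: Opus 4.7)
The plan is to identify an interval ending at the first saddle-node bifurcation time $t^*$ of the smallest zero of $g_t(q) := \cxi_t'(q) - q/(1-q)$, on which a monotonicity identity forces the Bayes potential $f_t(q) := \cxi_t(q) + q + \log(1-q)$ at its global maximum to strictly exceed $f_t(q_\sAMP(t))$.

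First, I would exploit the strict $t$-monotonicity of $g_t$: $\partial_t g_t(q) = \sum_{j=1}^J j\,\tau_j'(t)\,q^{j-1} > 0$ for $q > 0$, since $\sum_j \tau_j'(t) = 1$ (from $\tnorm{\tau(t)}_1 = t$) forces at least one $\tau_j' > 0$. Consequently no new zero of $g_t$ appears to the left of $q_\sAMP(t)$, and $q_\sAMP(t)$ varies continuously by the implicit function theorem until it merges with its adjacent local minimum of $f_t$ at a saddle-node bifurcation time $t^*$. This $t^*$ is finite because $g_t(q) \to +\infty$ at each fixed $q\in (0,1)$, so any zero of $g_t$ bounded away from $1$ must eventually disappear. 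On $(t_0, t^*)$, set $q_a(t) := q_\sAMP(t)$ and let $q_b(t)$ denote the next local maximum of $f_t$; both extend smoothly as non-degenerate local maxima by IFT (after a small perturbation of $t_0$, if needed, to resolve any tangent zero of $g_{t_0}$).

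Differentiating $\Delta(t) := f_t(q_b(t)) - f_t(q_a(t))$ and using $g_t(q_{a,b}(t)) = 0$ yields
\[
    \Delta'(t) = \sum_{j=1}^J \tau_j'(t)\bigl[q_b(t)^j - q_a(t)^j\bigr] > 0,
\]
so $\Delta$ is strictly increasing on $(t_0, t^*)$. At $t = t^*$, $q_a(t^*)$ becomes a horizontal inflection of $f_{t^*}$ (no longer a strict local max), while $q_b(t^*)$ remains a strict local max; since $f_{t^*}$ is non-decreasing through the flat point and strictly increases from there to $q_b(t^*)$, one has $\Delta(t^*) > 0$. By continuity of $\Delta$ on $(t_0, t^*]$, there exists $\delta > 0$ such that $\Delta(t) > 0$ throughout $[t^* - \delta, t^*]$, and on this interval $f_t(q_\sAMP(t)) < f_t(q_b(t)) \le \max_q f_t(q)$, which rules out $q_\sAMP(t) \in Q_\bayes(t)$ and hence forces $q_\bayes(t) \ne q_\sAMP(t)$. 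Taking $I = [t^* - \delta, t^* - \delta/2]$ gives the desired nontrivial closed interval.

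The main obstacle is the initial reduction to a non-degenerate configuration at $t_0$: the hypothesis only guarantees two zeros of $g_{t_0}$, which may include a tangent. Sign-counting (using $g_{t_0}(0) = \tau_1(t_0) \ge 0$ and $g_{t_0}(1^-) = -\infty$) shows the degenerate cases split into two sub-cases: either the tangent is at the larger zero, in which case $q_\bayes(t_0) \ne q_\sAMP(t_0)$ already and an interval is produced immediately by continuity of $q_\sAMP$ and the discreteness of $T_1$ from Lemma~\ref{lem:finite-zeros}; or the tangent is at the smaller zero, in which case perturbing $t_0$ upward splits it into two simple zeros (by $\partial_t g_t > 0$), delivering the generic two-local-max configuration required for the bifurcation argument above. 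The boundary case $\tau_1(t_0) = 0$ (where $q=0$ is a persistent zero and $q_\sAMP = 0$) is handled analogously by taking $q_b$ to be the smallest positive local max, with the same conclusion $q_\bayes(t) \ge q_b(t) > 0 = q_\sAMP(t)$ on the resulting interval.
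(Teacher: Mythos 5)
Your central mechanism is the same as the paper's: you increase $t$ until the negative excursion of $g_t(q)=\cxi'_t(q)-\fr{q}{1-q}$ between the smallest zero and the next one disappears (your merge time $t^*$ is the paper's $t''$, the first time $\inf_{[q_1,q_2]}g_t\ge 0$), at which moment $\int_{q_\sAMP(t^*)}^{q_b}g_{t^*}>0$ gives $f_{t^*}(q_b)>f_{t^*}(q_\sAMP(t^*))$ for $f_t(q)=\cxi_t(q)+q+\log(1-q)$, and you conclude on a small interval to the left of $t^*$ by continuity. The identity $\Delta'(t)>0$ is a pleasant extra but is not needed, and the paper avoids the IFT tracking of moving critical points entirely by working with the fixed interval $[q_1,q_2]$ and the monotonicity of $t\mapsto g_t(q)$.

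The genuine problems are in your reduction to the ``generic'' configuration. First, the dichotomy should be organized by the sign of $g_{t_0}$ on $(q_1,q_2)$, not by which zero is tangent. If $q_1$ is a simple crossing and $q_2$ is a tangent, the tangency is necessarily from below (since $g_{t_0}<0$ on $(q_1,q_2)$), so $f_{t_0}$ increases up to $q_1$ and decreases afterwards; then $q_\sAMP(t_0)$ can perfectly well equal $q_\bayes(t_0)$, and your claim that ``$q_\bayes(t_0)\ne q_\sAMP(t_0)$ already'' in the tangent-at-the-larger-zero sub-case is false --- that configuration in fact requires the bifurcation argument. Second, if the tangent is at the smaller zero then $g_{t_0}>0$ on $(q_1,q_2)$, and perturbing $t_0$ \emph{upward} does not split the tangency into two simple zeros: since $\partial_t g_t>0$ for $q>0$, it removes the zero altogether and $q_\sAMP$ jumps past $q_2$ (the downward perturbation splits it; and in any case no perturbation is needed there, because $f_{t_0}(q_2)>f_{t_0}(q_1)$ directly). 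Third, in every sub-case where the conclusion holds ``already at $t_0$,'' the interval must be taken to the left of $t_0$ (for $t'>t_0$ the smallest zero of $g_{t'}$ jumps beyond $q_2$ and the comparison point is lost), so you need $t_0>0$. The paper rules out $t_0=0$ in this configuration by invoking \eqref{eq:replica-symmetry}: if $t_0=0$ then $q_1=0$ and $\xi(q_2)+q_2+\log(1-q_2)=\int_0^{q_2}g_0>0$, a contradiction. Your proof never uses this hypothesis and so leaves that case open. Finally, the appeal to the discreteness of $T_1$ is a red herring; $T_1$ concerns multiplicity of Bayes maximizers and plays no role in this lemma.
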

\begin{proof}
    Let $g_t$ be defined as in the proof of Lemma \ref{lemma:MoreThanOne} and
    $q_1 = q_\sAMP(t)$, so that $q_1$ is the smallest zero of $g_t$.
    Let $q_2 > q_1$ be the next smallest zero of $g_t$.
    Note that by Lemma~\ref{lem:finite-zeros}, either $g_t(q) > 0$ for all $q\in (q_1,q_2)$ or $g_t(q) < 0$ for all $q \in (q_1,q_2)$.

    Suppose the former case holds. 
    We will show the conclusion holds with $I = [t, t-\delta]$ for small $\delta$.
    We first show that we must have $t>0$, so this is a valid interval.
    Suppose for contradiction that $t=0$; then $q_1 = 0$.
    So, $g_0(q) = \xi'(q) - \fr{q}{1-q}$ is positive on $(0,q_2)$.
    This implies that for $q\in (0,q_2]$,
    \[
        \xi(q) + q + \log(1-q) = \int_0^q g_0(s)~\de s > 0,
    \]
    contradicting \eqref{eq:replica-symmetry}.
    Note that
    \[
        \lt(\cxi_t(q_2) + q_2 + \log(1-q_2)\rt) - \lt(\cxi_t(q_1) + q_1 + \log(1-q_1)\rt)
        = \int_{q_1}^{q_2} g_t(q)~\de q > 0.
    \]
    We claim that $q_\sAMP(t')$ is continuous on $t'\in I$, for small enough $\delta$.
    If $q_1 = 0$, this is clear because $q_\sAMP(t)$ is increasing.
    Otherwise, since $g_t(0) \ge 0$ and $q_1$ is the smallest zero of $g_t$, we have $g_t(q) > 0$ for $q\in [0,q_1)$.
    Since the $g_t(q)$ are continuous and increasing in $t$, the claim follows.
    It follows that for sufficiently small $\delta$, for all $t'\in I$ and $q'_1 = q_\sAMP(t')$,
    \[
        \lt(\cxi_{t'}(q_2) + q_2 + \log(1-q_2)\rt) - \lt(\cxi_{t'}(q'_1) + q'_1 + \log(1-q'_1)\rt) > 0.
    \]
    Thus $q_\sAMP(t') \neq q_\bayes(t')$ for all $t'\in I$.

    Finally, we consider the case that $g_t(q) < 0$ for all $q \in (q_1,q_2)$.
    Then, $g_t > 0$ on $[0,q_1)$ (vacuously if $q_1=0$) and $g_t < 0$ on $(q_1,q_2)$.
    Let $t''$ be the smallest time such that $\inf_{q\in [q_1,q_2]} g_{t''}(q) \ge 0$; this is finite by the discussion surrounding \eqref{eq:gt-lb}.
    Since $g_t(q)$ is increasing in $t$, we have $g_{t''} \ge 0$ for $q\in [0,q_2]$, with equality attained at some $q \in [q_1,q_2]$.
    By definition, $q_\sAMP(t'')$ is the smallest such $q$.
    As $f_{t''}(q_2) > g_t(q_2) = 0$, we have $q_\sAMP(t'') < q_2$.
    The result now follows from the first case.
\end{proof}

\begin{proof}[Proof of Theorem~\ref{thm:sl-hardness}]
    By the last two lemmas, there exists a nontrivial interval $I = [t_-,t_+] \subseteq [0,+\infty)$ such that $q_\sAMP(t) \neq q_\bayes(t)$ for all $t\in I$.
    Since $q_\bayes(t)$ is a maximizer of \eqref{eq:def-qbayes}, it satisfies the stationarity condition $\cxi'_t(q) = \fr{q}{1-q}$, and therefore $q_\sAMP(t) < q_\bayes(t)$.
    It also follows that $q_\bayes(t) > 0$.

    Let $U(t)$ be the number of nonzero coefficients of $\cxi_t$ of degree at most $J$.
    This is an increasing function with at most $J$ discontinuities; let $T_0$ be the set of these discontinuities.

    We will show the theorem holds with $\cI = I \setminus (T_0 \cup T_1)$.
    (Recall the definition of $T_1$ in Lemma \ref{lem:finite-zeros}.)
    This is a positive measure set by Lemma~\ref{lem:finite-zeros}.
    Consider any $t \in \cI$.
    Since $t\not\in T_0$, there exists $1\le j\le J$ such that the $q^j$ coefficient of $\cxi_t$ is positive and $\tau'_j(t) > 0$.
    By Propositions~\ref{ppn:sl-hardness-amp} and \ref{ppn:sl-hardness-bayes},
    \baln
        \lim_{k\to\infty}
        \lim_{N\to\infty}
        \bbE
        \fr{1}{N^j}
        \norm{\bx^{\otimes j} - (\cbm^k)^{\otimes j}}_2^2
        &= 1 - q_\sAMP(t)^j, \\
        \lim_{N\to\infty}
        \bbE
        \fr{1}{N^j}
        \tnorm{\bx^{\otimes j} - \bm_j(\vby_t,t)}^2
        &\le 1 - q_\bayes(t)^j.
    \ealn
    Since $q_\sAMP(t) < q_\bayes(t)$, the conclusion follows.
\end{proof}

\bibliographystyle{alpha}

\newcommand{\etalchar}[1]{$^{#1}$}

\end{document}